\newcommand{\CCC}{\mathbb{C}}
\renewcommand{\AA}{\mathbb{A}}
		\newcommand{\NN}{\mathbb{N}}	
\newcommand{\QQ}{\mathbb{Q}}		\newcommand{\RR}{\mathbb{R}}	
\renewcommand{\SS}{\mathbb{S}}
		\newcommand{\ZZ}{\mathbb{Z}}	
		\newcommand{\CB}{\mathcal{B}}	
\newcommand{\CC}{\mathcal{C}}		\newcommand{\CD}{\mathcal{D}}	
\newcommand{\CE}{\mathcal{E}}		\newcommand{\CF}{\mathcal{F}}	
\newcommand{\CG}{\mathcal{G}}			
		\newcommand{\CL}{\mathcal{L}}	
		\newcommand{\CP}{\mathcal{P}}	
		\newcommand{\CR}{\mathcal{R}}	
\newcommand{\CU}{\mathcal{U}}
\newcommand{\fka}{\mathfrak{a}}		\newcommand{\fkb}{\mathfrak{b}}	
\newcommand{\fkc}{\mathfrak{c}}		\newcommand{\fkd}{\mathfrak{d}}
\newcommand{\fku}{\mathfrak{u}}
		\newcommand{\fkB}{\mathfrak{B}}	
\newcommand{\fkC}{\mathfrak{C}}			
		\newcommand{\fkF}{\mathfrak{F}}	
\newcommand{\fkI}{\mathfrak{I}}			
\newcommand{\fkK}{\mathfrak{F}}
\newcommand{\fkU}{\mathfrak{U}}
\newcommand{\ubar}[1]{\underaccent{\bar}{#1}} 		
\newcommand{\myllangle}{\langle\!\langle}
\newcommand{\myrrangle}{\rangle\!\rangle}
\DeclarePairedDelimiter\abs{\lvert}{\rvert}		
\DeclarePairedDelimiter\norm{\lVert}{\rVert}		
\DeclarePairedDelimiter\angles{\langle}{\rangle}	
\DeclarePairedDelimiter\aangles{\myllangle}{\myrrangle}	
\DeclarePairedDelimiter\paren{(}{)}			
\DeclarePairedDelimiter\braces{\{}{\}}			
\DeclarePairedDelimiter\floor{\lfloor}{\rfloor}
	\newcommand{\bigparen}[1]{\paren[\big]{#1}}
	\newcommand{\Bigparen}[1]{\paren[\Big]{#1}}
	\newcommand{\bigbraces}[1]{\braces[\big]{#1}}
	\newcommand{\Bigbraces}[1]{\braces[\Big]{#1}}
\newcommand{\bigmid}{\mathrel{\big|}}			
\newcommand{\Bigmid}{\mathrel{\Big|}}			
\DeclareMathOperator{\st}{{\rm St}} 			
\DeclareMathOperator{\id}{id}				
\DeclareMathOperator{\aut}{Aut}				
\DeclareMathOperator{\out}{Out}				
\DeclareMathOperator{\im}{Im}				
\DeclareMathOperator{\diag}{diag}
\DeclareMathOperator{\diam}{diam}			
\DeclareMathOperator{\Hom}{Hom}		
\DeclareMathOperator{\Out}{Out}
\DeclareMathOperator{\GL}{GL}	
\DeclareMathOperator{\Gl}{GL}					
\DeclareMathOperator{\Sl}{SL}	
\theoremstyle{plain}
\newtheorem{thm}{Theorem}[section]				
\newtheorem{prop}[thm]{Proposition}		
\newtheorem{lem}[thm]{Lemma}			\newtheorem{lemma}[thm]{Lemma}			
\newtheorem{cor}[thm]{Corollary}		
\newtheorem{qu}[thm]{Question}			
\newtheorem{conj}[thm]{Conjecture}
\newtheorem{problem}[thm]{Problem}
\newtheorem*{thm*}{Theorem}			\newtheorem*{theorem*}{Theorem}		
\newtheorem*{prop*}{Proposition}		\newtheorem*{proposition*}{Proposition}
\newtheorem*{lem*}{Lemma}			\newtheorem*{lemma*}{Lemma}			
\newtheorem*{cor*}{Corollary}			\newtheorem*{corollary*}{Corollary}
\newtheorem*{qu*}{Question}			\newtheorem*{question*}{Question}
\newtheorem*{conj*}{Conjecture}			\newtheorem*{conjecture*}{Question}
\newtheorem*{fact*}{Fact}
\newtheorem*{claim*}{Claim}
\theoremstyle{definition}
\newtheorem{de}[thm]{Definition}		\newtheorem{definition}[thm]{Definition}	
		\newtheorem{convention}[thm]{Convention}
\newtheorem*{de*}{Definition}			\newtheorem{definition*}{Definition}	
\newtheorem*{notation*}{Notation}	
\newtheorem*{conv*}{Convention}			\newtheorem*{convention*}{Convention}
\theoremstyle{remark}
\newtheorem{rmk}[thm]{Remark}			\newtheorem{remark}[thm]{Remark}			
\newtheorem{exmp}[thm]{Example}			\newtheorem{example}[thm]{Example}
\DeclareMathAlphabet{\mathbit}{OT1}{cmr}{bx}{it}
\mathchardef { __egreg_#1: } = \mathcode`#1 \scan_stop:
\NewDocumentCommand{\crse}{m}
 {\group_begin:
  \egreg_embolden:
  #1
  \group_end:}
\newcommand*{\oldcrse}[1]{\bm{#1}}
\newcommand{\cid}{\mathbf{id}}
\newcommand{\cop}{\mathbin{\raisebox{-0.1 ex}{\scalebox{1.2}{$\boldsymbol{\ast}$}}}}
\newcommand{\cact}{\mathbin{\raisebox{-0.1 ex}{\scalebox{1.2}{$\boldsymbol{\cdot}$}}}}
\newcommand{\ccirc}{\mathbin{\raisebox{-0.1 ex}{\scalebox{1.2}{$\boldsymbol{\circ}$}}}}
\DeclareMathOperator{\chom}{Hom_{\mathbf{Crs}}}	
\DeclareMathOperator{\cAut}{Aut_{\mathbf{Crs}}}	
\DeclareMathOperator{\cmor}{Mor_{\mathbf{Crs}}}	
\DeclareMathOperator{\fcmor}{Mor_{\mathbf{FrCrs}}}	
\newcommand*{\mor}[1]{\operatorname{Mor}_{\Cat{#1}}}
\DeclareMathOperator{\cim}{\mathbf{Im}}
\DeclareMathOperator{\cker}{\mathbf{ker}}
\newcommand*{\cinversefn}{[\mhyphen]^{\oldcrse{-1}}}
\newcommand*{\cunit}{{\oldcrse{\unit}}} 
\newcommand{\tobj}{{\braces{\rm pt}}}
\newcommand{\terobj}{\tobj}
\newcommand{\assRel}{E_{\rm as} }
\newcommand{\invRel}{E_{\rm inv} }
\newcommand{\idRel}{E_{\rm id} }
\newcommand{\cmp}{\circ}
\newcommand*{\op}[1]{#1^{\scriptscriptstyle\rm op}}
\newcommand{\pts}[1]{{\mathfrak{i}}_{#1}}
\newcommand{\mingrp}{\varcrs[grp]{min}}
\newcommand{\mincrs}{\CE_{\rm min}}
\newcommand{\maxcrs}{\CE_{\rm max}}
\def\varcrs{\@ifnextchar[{\@withvarcrs}{\@withoutvarcrs}}
\def\@withvarcrs[#1]#2{\CE_{\rm #2}^{\rm #1}}
\def\@withoutvarcrs#1{\CE_{\rm #1}}
\def\varFcrs{\@ifnextchar[{\@withvarFcrs}{\@withoutvarFcrs}}
\def\@withvarFcrs[#1]#2{\CF_{\rm #2}^{\rm #1}}
\def\@withoutvarFcrs#1{\CF_{\rm #1}}
\newcommand*{\ctrl}[2]{#2^{#1}_{\rm ctr}}
\newcommand*{\ctreq}[2]{{\rm CE}(#1,#2)}
\newcommand*{\ctraut}[1]{{\rm CE}(#1)}
\newcommand*{\bijctraut}[1]{{\rm BijCE}(#1)}
\newcommand*{\rel}[1]{\mathrel{{\leftarrow}\boxed{\vphantom{|}\,{\scriptstyle #1}\,}{\rightarrow}}}
\newcommand*{\torel}[1]{\mathrel{{\mhyphen}\boxed{\vphantom{|}\,{\scriptstyle #1}\,}{\rightarrow}}} 
\newcommand{\ceq}{\asymp} 
\newcommand{\csub}{
                    \preccurlyeq}
\newcommand{\closefn}{\approx}
 \DeclareMathOperator{\rd}{red}
\DeclareMathOperator{\qOut}{QOut}	
\newcommand{\Ad}{{\rm Ad}}
\newcommand*{\inversefn}{(\mhyphen)^{-1}}
\newcommand*{\unit}{{e}} 
\newcommand{\dblact}{{}_{=}\alpha}
\newcommand{\Cat}[1]{\ifmmode \text{\normalfont \textbf{#1}} \else {\normalfont \textbf{#1}}\fi}
\newcommand{\mhyphen}{\textnormal{-}}
\newcommand{\variable}{\,\mhyphen\,}
\newcommand*{\ev}{{\rm ev}}
\theoremstyle{definition}
\newtheorem*{warning}{Warning}
\newtheorem*{problem*}{Problem}
\theoremstyle{remark}
\newtheorem{move}{Move}
\newcommand{\editC}[1]{{\color{olive}#1}} 
\title{An Invitation to Coarse Groups}
\author{Arielle Leitner and Federico Vigolo}
\begin{document}

\frontmatter

\maketitle

\chapter*{Abstract}
 In this monograph we lay the foundation for a theory of coarse groups and coarse actions. Coarse groups are group objects in the category of coarse spaces, and can be thought of as sets with operations that satisfy the group axioms ``up to uniformly bounded error''. In the first part of this work, we develop the theory of coarse homomorphisms, quotients, and subgroups, and prove that coarse versions of the Isomorphism Theorems hold true. We also initiate the study of coarse actions and show how they relate to the fundamental observation of Geometric Group Theory.
 
 In the second part we explore a selection of specialized topics, such as the study of coarse group structures on set\=/groups, groups of coarse automorphisms and spaces of controlled maps. Here the main aim is to show how the theory of coarse groups connects with classical subjects. These include: number theory; the study of bi\=/invariant metrics on groups; quasimorphisms and stable commutator length; $\Out(F_n)$; topological group actions.
 
 We see this text as an invitation and a stepping stone for further research.
 
 \vspace{10 em}
 
 \begin{center}
  \texttt{Version 1.0.0}
 \end{center}



\tableofcontents


\mainmatter

\chapter{Introduction} 
\label{ch:introduction}

\section{Background and Motivation} 

{\bf Coarse geometry.}
Coarse geometry is the study of the large scale geometric features of a space or, more precisely, of those properties that are invariant up to ``uniformly bounded error''.
This language allows us to formalize the idea that two spaces such as $\ZZ^n$ and $\RR^n$ look alike when seen ``from very far away''. In other words, we imagine looking at spaces through a lens that blurs the picture so that all the fine details become invisible and we only recognize the ``coarse'' shape of it. There are many reasons why one may wish to do this. For example, to use topological/analytic techniques on a discrete space like $\ZZ$ by identifying it with a continuous one like $\RR$ or, vice versa, to use combinatorial/algebraic methods on continuous spaces.

The language of coarse geometry allows us to study any space with a notion of ``uniform boundedness'', topological or otherwise. It is convenient to start exploring this idea in the context of metric spaces because the metric provides us with an intuitive meaning of uniform boundedness. Namely, a family $(B_i)_{i\in I}$ of subsets of a metric space $(X,d_X)$ is \emph{uniformly bounded} if there is a uniform upper bound on their diameters: $\diam(B_i)\leq C$ for every $i\in I$.
When this is the case, any two sequences $(x_i)_{i\in I}$ and $(y_i)_{i\in I}$ with $x_i,y_i\in B_i$ coincide up to uniformly bounded error and are hence ``coarsely indistinguishable''.
Analogously, we say that two functions $f_1,f_2\colon Z\to X$ coincide up to uniformly bounded error (a.k.a. they are \emph{close}) if there exists $C\in\RR$ so that $d_X(f_1(z),f_2(z))\leq C$ for all $z\in Z$. Again, we think of close functions as coarsely indistinguishable from one another.

Given two metric spaces $(X,d_X), (Y,d_Y)$ we only wish to consider the functions $f\colon X\to Y$ that preserve uniform boundedness, \emph{i.e.}~which send uniformly bounded subsets of $X$ to uniformly bounded subsets of $Y$. Explicitly, this happens when there exists some increasing control function $\rho\colon [0,\infty)\to [0,\infty)$ such that
\(
 d_Y(f(x),f(x'))\leq \rho\bigparen{d_X(x,x')}
\)
for every $x,x'\in X$. When this is the case, we say that $f\colon X\to Y$ is a \emph{controlled function}. 
Two metric spaces are \emph{coarsely equivalent} if there exist controlled functions $f\colon X\to Y$ and $g\colon Y\to X$ whose compositions $f\circ g$ and $g\circ f$ are close to $\id_Y$ and $\id_X$ respectively. For instance, we now see that $\ZZ^n$ and $\RR^n$ are coarsely equivalent via the natural inclusion and the Cartesian power of the integer floor function.

\

To provide some context and motivation, we note that the coarse geometry of metric spaces is the backbone of Geometric Group Theory. Any finitely generated group can be seen as a discrete geodesic metric space by equipping it with a word metric (equivalently, identifying the group elements with the vertices of a Cayley graph considered with the graph metric). These metrics depend on the choice of a finite generating set. However, they are unique up to coarse equivalence.
As a consequence, it is possible to characterize algebraic properties of groups in terms of coarse geometric invariants of their Cayley graphs. Two outstanding such results are Gromov's theorem showing that a finitely generated group is virtually nilpotent if and only if it has polynomial growth, and Stallings' result that a group has a non trivial splitting over a finite subgroup if and only if its Cayley graph has more than one end.

The Milnor--Schwarz Lemma illustrates another fundamental feature of coarse equivalences. This result states that a finitely generated group $G$ acting properly cocompactly on a metric space $X$ is coarsely equivalent to $X$. For example,  if $G=\pi_1(M)$ is the fundamental group of a compact Riemannian manifold, then the action by deck transformations induces a coarse equivalence between $G$ and the universal cover $\widetilde M$. This important observation provides a seamless dictionary between algebraic properties of groups, their actions, and geometry of metric spaces. This correspondence is one of the main raisons d'être of Geometric Group Theory: the ability to conflate groups and spaces on which they act proves to be a very powerful tool. To mention one exceptional achievement: Agol and Wise exploited this philosophy to prove the virtual Haken Conjecture, a long standing open problem in low\=/dimensional topology.

Other important applications have a more analytic flavor. For instance, Roe proved a version of the Atiyah--Singer index theorem for open manifolds and subsequently formulated a coarse version of the Baum--Connes conjecture. G. Yu proved that a huge class of manifolds satisfies this coarse Baum--Connes Conjecture and, as a consequence, the Novikov Conjecture.
Such a brief introduction cannot comprehensively cover all of coarse geometry and its role in modern mathematics, we therefore redirect the interested reader to more specialised resources
(we recommend \cite{benjamini2013coarse,bridson_metric_2013,cornulier2016metric,de_la_harpe2000topics,drutu_geometric_2018,nowak2012large,roe_lectures_2003,rosendal2017coarse} or the original works of Gromov \cite{gromov_hyperbolic_1987,niblo_geometric_1993,gromov_random_2003,gromov_spaces_2010}).

\begin{rmk}
 In Geometric Group Theory one generally speaks about quasi\=/isometries rather than coarse equivalences. However, it is an easy exercise to show that a controlled function between geodesic metric spaces is coarsely Lipschitz (\emph{i.e.}~the control function $\rho$ can be assumed to be affine $\rho(t)=Lt+A$). Since Cayley graphs are geodesic metric spaces, a coarse equivalence between finitely generated groups is indeed a quasi\=/isometry.
\end{rmk}

As already mentioned, the coarse geometric perspective is useful beyond the realm of metric spaces. For instance, to identify a topological group with any of its cocompact lattices. In this general setting however, it is more complicated to understand what the meaning of ``uniformly bounded'' should be. Fortunately, Roe \cite{roe_lectures_2003} developed an elegant axiomatization of the notion of uniform boundedness in terms of what he calls \emph{coarse structures}.
\footnote{%
Independently, an alternative formalism serving the same purpose was introduced by Protasov and Banakh on the same year \cite{protasov2003ball}. This theory gained traction and is often used in the Ukrainian school.
}

To better understand Roe's approach, it is useful to revisit the metric example.
Given a metric space $(X,d)$, equip the product $X\times X$ with the $\ell_1$\=/metric $d\paren{(x_1,x_2),(x_1',x_2')}\coloneqq d(x_1,x_1')+d(x_2,x_2')$. With this choice of metric, the distance between two points $x,y\in X$ is equal to the distance of the pair $(x,y)$ from the diagonal $\Delta_X\subset X\times X$. It follows that a family $(B_i)_{i\in I}$ of subsets of $X$ is uniformly bounded if and only if the union $\bigcup_{i\in I} B_i\times B_i$ is contained in a bounded neighborhood of $\Delta_X$. This suggests that in order to define a notion of uniform boundedness on a set $X$ it is sufficient to describe the ``bounded neighborhoods'' of the diagonal $\Delta_X\subset X\times X$.

Taking this point of view, a \emph{coarse structure} on a set $X$ is a family $\CE$ of subsets of $X\times X$ satisfying certain axioms that generalize the triangle inequality (Definition~\ref{def:p1:coarse.structure}). These sets ought to be thought of as the bounded neighborhoods of $\Delta_X$. A family $(B_i)_{i\in I}$ of subsets $X$ is uniformly bounded (a.k.a. \emph{controlled}) if the union $\bigcup_{i\in I} B_i\times B_i$ is contained in some $E\in \CE$. The axioms on $\CE$ are chosen so that subsets and uniformly bounded neighborhoods of uniformly bounded sets remain uniformly bounded. A \emph{coarse space} is a set $X$ equipped with a coarse structure $\CE$.
In other words, the slogan is that a coarse space is a space where it makes mathematical sense to say that some property holds ``up to uniformly bounded error''.

The functions between coarse spaces that we wish to consider are those functions that preserve uniform boundedness. Explicitly, if $(X,\CE)$, $(Y,\CF)$ are coarse spaces, we are interested in those functions $f\colon X\to Y$ such that $f\times f\colon X\times X\to Y\times Y$ sends elements in $\CE$ to elements in $\CF$. Such functions are called \emph{controlled}.
The reader may have realized that the definition for a coarse space is a direct analog of the classical notion of uniform space \cite{bourbaki1966general}. This is no surprise: both languages seek to axiomatize some notion of ``uniformity'' whose meaning is clear for metric spaces but elusive in general. Uniform spaces extend the idea of ``uniformly small'' and uniform continuity; coarse spaces extend the idea of ``uniformly bounded''.

As we did for metric spaces, we now wish to identify ``close functions''. Two functions between coarse spaces $f,f'\colon (X,\CE)\to (Y,\CF)$ are \emph{close} if $f(x)$ and $f'(x)$ are uniformly close as $x$ varies, \emph{i.e.}~the sets $\{f(x),f'(x)\}$ are uniformly bounded.  A \emph{coarse map} between coarse spaces $(X,\CE)$, $(Y,\CF)$ is an equivalence class $[f]$ of controlled functions $X\to Y$, where functions are equivalent if they are close. A \emph{coarse equivalence} is a coarse map $[f]\colon (X,\CE)\to (Y,\CF)$ with a \emph{coarse inverse}, \emph{i.e.}~a coarse map $[g]\colon (Y,\CF)\to (X,\CE)$ with $[f]\circ [g]=[\id_Y]$ and $[g]\circ[f]=[\id_X]$.
Two coarse spaces are \emph{coarsely equivalent} if there is a coarse equivalence between them. Coarse geometry may be thought of as the study of properties of coarse spaces that are defined up to coarse equivalence.

To close the circle of ideas and conclude this brief introduction to coarse geometry, we remark that every metric space has a natural \emph{metric coarse structure} $\CE_d$ (consisting of all subsets of bounded neighborhoods of the diagonal).
In this case, it is easy to verify that the notions of uniformly bounded, controlled, close and coarse equivalence for the coarse space $(X,\CE_{d_X})$ coincide with the corresponding notions for the metric space $(X,d_X)$.

\

\noindent{\bf Coarse groups.}
The \emph{category of coarse spaces} \Cat{Coarse} is the category with coarse spaces as objects and coarse maps as morphisms. From now on, we will use bold italic symbols to denote anything related to \Cat{Coarse}, so that $\crse{X}=(X,\CE)$ denotes a coarse space and $\crse{f\colon X\to Y}$ denotes a coarse map (\emph{i.e.}~an equivalence class of controlled maps $f\colon (X,\CE)\to (Y,\CF)$).
In one sentence, a \emph{coarse group} is a group object in the category of coarse spaces.
More concretely, a coarse group is a coarse space $\crse G=(G,\CE)$ with a unit $e\in G$ and coarse maps $\crse{\ast\colon G\times G\to G}$ and $\cinversefn\colon \crse{G\to G}$ that obey the usual group laws ``up to uniformly bounded error''.

It is an empirical observation that the group objects in an interesting category turn out to be very interesting in their own right. For example: algebraic groups, Lie groups and topological groups are all defined as group objects in their relevant categories.
Given this premise, it is natural to be interested in coarse groups!
One bewildering feature we came to appreciate while developing this work is that even basic questions regarding coarse groups end up being connected with the most disparate topics. To mention a few, we see that the study of coarse structures on $\ZZ$ quickly takes us in the realm of number theory; a similar study on more general groups connects with questions regarding the existence of bi\=/invariant metrics. Investigating non\=/abelian free groups leads us towards the theory of (stable) commutator length and $\Out(F_n)$. Banach spaces give rise to all sort of interesting examples, and, to conclude, the study of coarse homomorphisms between coarse groups is a natural generalization of the theory of quasimorphisms. We will provide a more detailed explanation of these aspects in the following sections.

To avoid confusion, from now on we will always add an adjective to the word ``group'' to specify the category we are working in. We will thus have coarse groups, topological groups, algebraic groups, and---rather unconventionally---we will use the term \emph{set\=/group} to denote groups in the classical sense, which are group objects in the category of sets.

\

\noindent{\bf Coarse actions.}
Since their conception by Galois, set\=/group actions have been just as important as the set\=/groups themselves. It is then natural to try to understand what actions should look like in the category of coarse spaces. 
Of course, there is a natural definition of \emph{coarse action} of a coarse group $\crse G$ on a coarse space $\crse Y$.
Namely, it is a coarse map $\crse{\alpha}\colon\crse G\times\crse Y\to \crse Y$ that satisfies the usual axioms for actions of set\=/groups up to uniformly bounded error.

This definition directly generalizes other classical notions, such as that of quasi\=/action.
Apart from its naturality, one interesting aspect of the theory of coarse actions is that it allows us to look at classical results from a different perspective. 
In a sense that we will make precise later on, there is a strong connection between coarse actions and left invariant coarse structures.   
Using this shift of perspective, we can then realize that the Milnor--Schwarz  Lemma is more or less equivalent to the observation that left invariant coarse structures on a set\=/group $G$ are determined by the bounded neighborhoods of the unit $e\in G$. 

\

\noindent{\bf Purpose and motivation.} 
The goal of this monograph is to lay the foundations for a theory of coarse groups. In a large part, we wrote this text as a service to the community: rather than pushing towards a specific application, we made an effort to develop a self\=/contained exposition with emphasis on examples. We also point at some of the many connections between this topic and other subjects, and we pose various problems and questions for future research.

On a first read, it may be helpful to imagine all coarse spaces as metric spaces: the case of metric coarse spaces already contains many of the conceptual difficulties of the general theory of coarse spaces. 
However, we found that Roe's formalism of coarse spaces was of great help in identifying the crucial points in many arguments.

Another reason for embracing Roe's language is that it generalizes effortlessly to various other settings and constructs. This gives rise to a richer, more flexible theory.
For example, any abelian topological group admits a natural coarse structure where the controlled sets are contained in uniformly compact neighborhoods of the diagonal. These topological coarse structures are such that any abelian topological group is coarsely equivalent to any of its cocompact lattices. One significant case is that of the group of rational numbers: $\QQ$ is a cocompact lattice in the ring of adeles $\AA_\QQ$, hence $\QQ$ and $\AA_\QQ$ are isomorphic coarse groups (the former is seen as a discrete topological group, the latter has a natural locally compact topology). This fact is analogous to the obvious remark that $\ZZ$ and $\RR$ are coarsely equivalent and, using the formalism of coarse spaces, it is just as simple to prove.
Our interest in this subject was sparked by Uri Bader's realization that this observation allows for a unified proof of the fact that the spaces of ``quasi\=/endomorphisms'' of $\ZZ$ and $\QQ$ are naturally identified with $\RR$ and $\mathbb{A}_\mathbb{Q}$ respectively \cite{bader20XX}.

The general language of coarse spaces is also well suited for dealing with large (\emph{i.e.}~non finitely generated) groups. This is the same reason why Rosendal adopted it in his recent monograph on the coarse geometry of topological groups \cite{rosendal2017coarse}. Finally, we should also point out that the theory of coarse spaces is commonly used in the operator algebras community in relation with $C^*$\=/algebras of geometric origin. We shall not pursue these topics in this monograph, but we leave them for future work.

\

\section{On this Manuscript}
This text is organized in two separate parts. The first part is mostly concerned with elementary properties and constructions of coarse groups and coarse actions. Here we prove a number of fairly basic ``utility results'' and verify that the various pieces of the theory fit well together and work as expected. This part is for the most part self contained and the proofs are rather detailed. We also made an effort to develop this using both Roe's language of coarse structures \cite{roe_lectures_2003} and the approach in terms of ``controlled coverings'' à la Dydak--Hoffland \cite{dydak_alternative_2008}. In this regard, it is a recommended exercise to the reader to translate statements and proofs of various results from one language to the other. 

Of course, not all the material presented in this part is novel. Almost all the general facts about the category of coarse spaces can be found scattered in the literature. Also various ideas that are important in the development of the theory of coarse groups and coarse actions exist in a more or less implicit form in previous works.
More detailed citations are included throughout the text, but we are sure we missed some, for which we apologise in advance.
While writing this manuscript, we decided to recollect and introduce all the necessary facts and conventions to provide a unified framework with consistent nomenclature and notation.

\

The second part of the text has a rather different flavor. The goal here is to point at possible research directions and connections with more classical subjects. To do so, we selected and developed some specific topics according to our own taste. By design, this part is much less self\=/contained, as it is largely meant to connect with pre-existing literature. For the most part, these topics can be read independently from one another.
Here the proofs are generally briefer than in the first part. We also spend more time in introducing and discussing open questions.

This work only scratches the surface of a theory of coarse groups and their coarse actions. All the topics we considered led us towards all sorts of interesting mathematics: more than we could hope to do ourselves. Rather than as a complete treatment, we see this part of the manuscript as an invitation to further research.

\

We should point out that, despite being inspired by the categorical perspective, we decided to avoid making heavy use of the categorical language in the main body of this text.  Instead, we collected various categorical remarks in the appendix.

\section{List of Findings I: Basic Theory}
{\bf Coarse groups.}
By definition, if $\crse G=(G,\CE)$ is a coarse group and $\cop\colon \crse{G\times G\to G}$ is the coarse multiplication map, any of its representatives $\ast\colon G\times G\to G$ must be controlled. The first step in our study of coarse groups, is to understand when a multiplication function on a coarse space is controlled. To do so, it is fundamental to observe that the multiplication function $\ast$ is controlled if and only if both the left multiplication $(g\ast\mhyphen)$ and the right multiplication $(\mhyphen\ast g)$ are \emph{equi controlled} as $g$ varies in $G$, see Definition~\ref{def:p1:equi controlled}. 
The intuition is that a family of maps $(f_i)_{i\in I}$ between metric spaces is equi controlled if and only if the maps are all controlled with the same control function $\rho\colon [0,\infty)\to[0,\infty)$.

This elementary observation allows us to produce our first non trivial examples of coarse groups. In fact, if $G$ is a set\=/group equipped with a bi\=/invariant metric $d$, then the left and right multiplication are equi controlled with respect to $\CE_d$. It follows that the group operation $\ast$ is controlled. The inversion $\inversefn$ is an isometry, hence it is controlled as well. It then follows that $(G,\CE_d)$ is a coarse group with coarse operations $\cop$ and $\cinversefn$ (one may observe that the converse is also true: if $(G,\CE_d)$ is a coarse group then $d$ is coarsely equivalent to a bi\=/invariant metric).

\begin{rmk}
 An alternative, more categorical, point of view to the above remark is that groups with bi\=/invariant metrics are group objects in a category of metric spaces (see Appendix~\ref{ch:appendix:metric.groups}). Considering metric coarse structures defines a functor from this category to \Cat{Coarse}, and it is easy to observe that this functor sends group objects to group objects.
\end{rmk}

We can now provide concrete examples and begin to acquire a taste for a theory of coarse groups. The easiest examples are given by choosing an abelian set\=/group and giving it a left invariant metric. This of course includes $\ZZ^n$ and $\RR^n$ with their Euclidean metrics. More generally, we may consider the additive group of any normed ring or vector space, \emph{e.g.}~a Banach space.

On the non\=/abelian side, the word metric associated with a (possibly infinite) conjugation invariant generating set is always bi\=/invariant. If $G=\angles{S}$ is a finitely generated set\=/group, the bi\=/invariant word metric $d_{\overline{S}}$ associated with the conjugation invariant set $\overline{S}=\bigcup_{g\in G}gSg^{-1}$ does not depend on the choice of finite generating set $S$ up to coarse equivalence. That is, such a set\=/group $G$ is naturally equipped with a canonical metric coarse structure $\varcrs{bw}\coloneqq \CE_{d_{\overline{S}}}$ so that $(G,\CE_{d_{\overline{S}}})$ is a coarse group (one can show that this is the finest connected coarse structure equipped with which $G$ is a coarse group, Chapter~\ref{ch:p2:coarse structures on set groups}).

One fundamental example is given by the free group $F_S$ on $S$ generators. Here the bi\=/invariant word metric $d_{\overline{S}}$ can be seen as a ``cancellation'' metric. Namely, an alternative definition for $d_{\overline{S}}$ 
is obtained by declaring that for every word $w\in F_S$ the distance $d_{\overline{S}}(e,w)$ is the minimal number of letters of $w$ that it is necessary to cancel to make it equivalent to the trivial word (the letters can appear in any position in $w$). The metric is then determined by the identity $d_{\overline{S}}(w_1,w_2)=d_{\overline{S}}(e,w_1^{-1}w_2)$. As a coarse group, $(F_S,\CE_{d_{\overline{S}}})$ is considerably more complicated than the abelian examples discussed above because it is not even \emph{coarsely abelian} (\emph{i.e.}~the distance $d_{\overline{S}}(w_1w_2 ,w_2w_1)$ is not uniformly bounded as $w_1,w_2$ range in $F_S$).

\begin{rmk}\label{rmk:intro:trivially coarse}
 Every set\=/group $G$ can be made into a \emph{trivially coarse group} by declaring that the only bounded sets are singletons (\emph{i.e.}~$G$ is given the trivial coarse structure $\mincrs$ consisting only of diagonal subsets). In other words, the coarse group $(G,\mincrs)$ is not coarse at all, because no non\=/trivial error is a bounded error.
 
This degenerate example shows how to see the theory of coarse groups as an extension of the classical theory of set\=/groups: every ``coarse'' statement or definition reduces to the relevant classical notion when specialized to trivially coarse groups. 
 We shall soon see that an important part of Geometric Group Theory can be framed in the setting of coarse actions of trivially coarse groups.
\end{rmk}

\

The above examples of coarse groups are all obtained by equipping some set\=/group $G$ with a coarse structure $\CE$ so that $\crse G=(G,\CE)$ is a coarse group (\emph{i.e.}~$\crse G$ is a \emph{coarsified set\=/group}).
In other words, these are coarse groups whose coarse operations admit representatives that satisfy the group axioms ``on the nose''. However, this need not be the case in general. 
It is not hard to show that it is always possible to find \emph{adapted} representatives so that the identities $g\ast e= e\ast g=g$ and $g\ast g^{-1}= g^{-1}\ast g=e $ hold for every $g\in G$ (Lemma~\ref{lem:p1:adapted representatives exist}). On the other hand, there are coarse groups where every representative for $\cop$ fails to be associative. In view of this, one may also say that coarse groups are coarse spaces equipped with a coarsely associative binary operation that has a unit and inverses.

\begin{rmk}
 It is worth pointing out that the lack of associativity implies that even adapted representatives that satisfy the unit and inverse axiom may be rather nasty. In particular, $\inversefn$ need not be an involution and the left multiplication $(g^{-1}\ast\variable)$ need not be an inverse of the function $(g\ast\variable)$ (in fact, these functions might not even be bijective)!
\end{rmk}

We would like to mention two more properties of coarse groups. The first one is a useful fact that makes it easier to check whether a coarse space endowed with multiplication and inverse operations is a coarse group.
 Namely, if one already knows that the multiplication  function is controlled then the inverse is automatically controlled:

\begin{prop}[Proposition~\ref{prop:p1:coarse.group.iff.heartsuit.and.equi controlled}]
 Let $\crse G=(G,\CE)$ be a coarse space, $e\in G$ and $\ast,\inversefn$ be operations that coarsely satisfy the axioms of associativity, unit and inversion. If $\ast$ is controlled then $\inversefn$ is controlled as well and $\crse G$ endowed with the coarse operations $\cop$ and $\cinversefn$ is a coarse group.
\end{prop}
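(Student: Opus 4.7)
The plan is to verify directly that $\inversefn\colon G\to G$ is controlled, i.e.\ that for every $E\in\CE$ the set $\bigbraces{(g^{-1},h^{-1}) : (g,h)\in E}$ belongs to $\CE$. The main tool is the remark recalled above the proposition: since $\ast$ is controlled, the families of left multiplications $\{L_a\coloneqq (a\ast\variable)\}_{a\in G}$ and right multiplications $\{R_a\coloneqq(\variable\ast a)\}_{a\in G}$ are equi controlled. The crucial feature is that the entourage producing the equi controlled bound is uniform in the parameter $a$, so we are free to let $a$ depend on the pair we are acting on---in particular to choose $a=g^{-1}$---even though it is precisely the map $g\mapsto g^{-1}$ whose controlledness we are trying to establish.

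The first step is to show that $\bigbraces{g^{-1}\ast h : (g,h)\in E}$ is bounded. Applying the equi controlled left multiplications to each pair $(g,h)\in E$ with parameter $a=g^{-1}$ yields an $E_1\in\CE$ containing every pair $(g^{-1}\ast g,\,g^{-1}\ast h)$. Combining with the inversion axiom, which says that $g^{-1}\ast g$ is uniformly close to $e$, we get $\bigbraces{(e,\,g^{-1}\ast h) : (g,h)\in E}\in\CE$, so $g^{-1}\ast h$ stays in a uniformly bounded neighborhood of $e$.

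Next, the unit, inversion, and associativity axioms, together with equi controllability of $L_{g^{-1}}$ applied to the uniformly small pair $(e,\,h\ast h^{-1})$, give the uniform chain
\[
 g^{-1}\;\approx\;g^{-1}\ast e\;\approx\;g^{-1}\ast(h\ast h^{-1})\;\approx\;(g^{-1}\ast h)\ast h^{-1}.
\]
By the first step the pair $(g^{-1}\ast h,\,e)$ varies in a controlled set as $(g,h)$ varies in $E$; applying the equi controlled right multiplication $R_{h^{-1}}$ to both coordinates and combining with the unit axiom $e\ast h^{-1}\approx h^{-1}$ shows that $(g^{-1}\ast h)\ast h^{-1}$ is uniformly close to $h^{-1}$. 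Concatenating the two chains yields $g^{-1}\approx h^{-1}$ uniformly in $(g,h)\in E$, which is exactly the required control on $\inversefn$.

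The main conceptual hurdle is the first step, because we are trying to bound expressions involving $\inversefn$ without yet knowing that $\inversefn$ is controlled. The resolution, as advertised in the first paragraph, is that we never apply $\inversefn$ as a coarse map; we apply the already controlled operation $L_a$ (respectively $R_a$) with a parameter $a$ that happens to be built out of the image of $\inversefn$, and equi controllability is precisely the statement that this is harmless. Everything else in the argument is a routine concatenation of the three coarse axioms with the controlledness of $\ast$.
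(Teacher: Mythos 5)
Your argument is correct and follows essentially the same route as the paper's own proof: both show $g^{-1} \rel{\CE} h^{-1}$ uniformly for $(g,h)\in E$ by building the chain $g^{-1}\approx g^{-1}\ast(h\ast h^{-1})\approx(g^{-1}\ast h)\ast h^{-1}\approx h^{-1}$, using equi controllability of $\{L_a\}$ and $\{R_a\}$ with the ``self-referential'' parameters $a=g^{-1}$ and $a=h^{-1}$, together with the inverse, unit, and associativity axioms (your isolation of the first step as ``$g^{-1}\ast h$ is uniformly near $e$'' is just a minor reordering of when the inverse axiom is invoked). Your preliminary paragraph, emphasizing that equi controllability is precisely what licenses choosing the translating parameter to depend on the pair being translated, nicely articulates the conceptual point the paper leaves implicit.
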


\begin{rmk}
 This should be contrasted with the topological setting: it is not true that if a set\=/group $G$ is given a topology $\tau$ so that the multiplication function is continuous then $(G,\tau)$ is a topological group. In fact, there are examples where the inversion is not continuous.
\end{rmk}

It is an important property of coarse groups that their coarse structure is determined ``locally''. 
Informally, this means that a family of subsets $U_i\subseteq G$ of a coarse group $\crse G$ is uniformly bounded if and only if there is some bounded set $B$  such that each $U_i$ is contained in some translate of $B$. A more formal statement is analogous to the fact that the topology of a topological group is completely determined by the set of open neighborhoods of the identity:

\begin{prop}[Proposition~\ref{prop:p1:neighbourhoods.of.identity.generate}]\label{prop:intro:det locally}
 Let $\CE,\CE'$ be two coarse structures on a set $G$, and assume that $\ast$, $e$, $\inversefn$ are fixed so that both $(G,\CE)$ and $(G,\CE')$ are coarse groups. Then $\CE\subseteq\CE'$ if and only if every $\CE$\=/bounded set containing $e$ is $\CE'$\=/bounded.
\end{prop}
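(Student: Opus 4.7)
The forward implication is immediate: if $\CE \subseteq \CE'$, then any $\CE$\=/bounded set $B$ satisfies $B \times B \in \CE \subseteq \CE'$, so $B$ is $\CE'$\=/bounded. All the work is in the converse, where one must show that knowing the bounded neighborhoods of the identity pins down the whole of $\CE$. My plan is to take an arbitrary $E \in \CE$ and prove $E \in \CE'$ in three steps: strip $E$ down to a bounded set $B \ni e$, use the hypothesis to obtain $\CE'$\=/boundedness of $B$, and then rebuild $E$ inside $\CE'$ via translates of $B$.

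I would begin by invoking Lemma~\ref{lem:p1:adapted representatives exist} to fix adapted representatives of $\cop$ and $\cinversefn$, so that $g \ast e = e \ast g = g$ and $g \ast g^{-1} = g^{-1} \ast g = e$ hold on the nose, and set
\[
 B \;=\; \{e\} \;\cup\; \{\,g_1^{-1} \ast g_2 \mid (g_1, g_2) \in E\,\}.
\]
To check that $B$ is $\CE$\=/bounded, the idea is to consider
\[
 F \;=\; \{\,((g_1^{-1}, g_1),\, (g_1^{-1}, g_2)) \mid (g_1, g_2) \in E\,\} \;\subseteq\; (G \times G)^2.
\]
After the reshuffling $((x, y), (x', y')) \leftrightarrow ((x, x'), (y, y'))$ that identifies $(G \times G)^2$ with the coarse product $(G \times G) \times (G \times G)$, the pair of first coordinates of $F$ lies in $\Delta_G$ and the pair of second coordinates equals $E$, so $F$ belongs to the product coarse structure on $G \times G$. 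Applying the $\CE$\=/controlled map $\ast$ componentwise yields $(\ast \times \ast)(F) = \{(e, g_1^{-1} \ast g_2) \mid (g_1, g_2) \in E\} \in \CE$, which witnesses $\{e\} \times B \in \CE$ and hence $B \times B \in \CE$ by the closure properties of a coarse structure. The hypothesis then gives that $B$ is also $\CE'$\=/bounded.

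The next step is to rebuild $E$ inside $\CE'$ by translating $B$ around. I would consider $D = \{(g, g \ast b) \mid g \in G,\, b \in B\}$ and run a parallel product\=/coarse\=/structure argument: the set $\{((g, e), (g, b)) \mid g \in G,\, b \in B\}$ has first coordinates in $\Delta_G$ and second coordinates in $\{e\} \times B \subseteq B \times B \in \CE'$, so it lies in the product coarse structure on $G \times G$; applying the $\CE'$\=/controlled map $\ast$ componentwise gives $D \in \CE'$. The final step is to bridge $E$ to $D$ via coarse associativity. For each $(g_1, g_2) \in E$, let $h = g_1 \ast (g_1^{-1} \ast g_2)$; then $(g_1, h) \in D$ (taking $b = g_1^{-1} \ast g_2 \in B$), and $(h, g_2)$ lies in
\[
 E_{\rm as} \;=\; \{\,(a \ast (b \ast c),\, (a \ast b) \ast c) \mid a, b, c \in G\,\}
\]
(via the choice $a = g_1$, $b = g_1^{-1}$, $c = g_2$). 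Because $\ast$ is coarsely associative with respect to both coarse structures, $E_{\rm as} \in \CE \cap \CE'$, so $E \subseteq D \circ E_{\rm as} \in \CE'$ (composition of relations), completing the converse.

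The hard part, as I see it, is the failure of strict associativity for adapted representatives: the tempting cancellation $g_1 \ast (g_1^{-1} \ast g_2) = g_2$ only holds up to the bounded error $E_{\rm as}$, which is why the reconstruction cannot present $E$ as $D$ directly but must absorb $E_{\rm as}$ via a composition of relations. The remaining work is routine bookkeeping with the product coarse structure, once the reshuffling convention between $(G \times G)^2$ and $(G \times G) \times (G \times G)$ is fixed.
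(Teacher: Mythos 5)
Your proof is correct, and the underlying argument is the same as the paper's: given $E\in\CE$, translate it by $g_1^{-1}$ on the left to extract a $\CE$-bounded neighborhood $B$ of $e$, apply the hypothesis to promote $B$ to $\CE'$, and then reconstruct $E$ inside $\CE'$ from the left-translates of $B$ together with the (coarse) associativity relation. The paper packages the same idea via the generating-set identity $\CE = \angles{\Delta_G\ast\CU_e(\CE),\mingrp}$ and tracks $\idRel$ explicitly rather than fixing adapted representatives up front, but that is purely a bookkeeping difference, not a different route.
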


\begin{rmk}
 Given a set\=/group $G$, it is not hard to explicitly characterize whether a certain family $(U_i)_{i\in I}$ of subsets of $G$ is the family of all $\CE$\=/bounded subsets of $G$ for some coarse structure $\CE$ so that $(G,\CE)$ is a coarse group. Namely, this is the case if and only if $(U_i)_{i\in I}$ is closed under taking subsets, inverses, finite products and arbitrary unions of conjugates (Subsection~\ref{sec:p1:determined locally}).
Since products of compact sets are compact, it follows that every abelian topological group $(G,\tau)$ can be given a unique coarse structure $\CE_\tau$ so that the $\CE_\tau$\=/bounded sets are the relatively compact subsets of $G$ and $(G,\CE_\tau)$ is a coarse group.
For example, this defines a topological coarsification of the additive group of the ring of adeles $\AA_\QQ$. With this coarsification, $\AA_\QQ$ is isomorphic (as a coarse group) to the topological coarsification of the discrete group $\QQ$.
\end{rmk}

\

\noindent{\bf Coarse homomorphisms, subgroups and quotients.}
A \emph{coarse homomorphism} between coarse groups is a coarse map $\crse{f\colon G\to H}$ that is compatible with the coarse group operations. Equivalently, if $f\colon G\to H$ is a representative for $\crse f$ then the image $f(g_1\ast_G g_2)$ must coincide with the product $f(g_1)\ast_H f(g_2)$ up to uniformly bounded error.

Quasimorphisms are an example of coarse homomorphisms. Recall that a $\RR$\=/quasimorphism of a  set\=/group $G$ is a function $\phi\colon G\to \RR$ such that $\abs{\phi(g_1g_2)-\phi(g_1)-\phi(g_2)}$ is uniformly bounded. Let $\CE_{\abs{\mhyphen}}$ be the metric coarse structure on $\RR$ associated with the Euclidean metric.
If $\crse G=(G,\CE)$ is a coarsified set\=/group, then $\crse \phi\colon \crse G\to(\RR,\CE_{\abs{\mhyphen}})$ is a coarse homomorphism as soon as the $\RR$\=/quasimorphism $\phi$ is controlled. 
An examination of the rich literature on $\RR$\=/quasimorphisms and bounded cohomology shows that coarse homomorphisms can differ greatly from usual homomorphisms of set\=/groups.

\begin{rmk}
 The fact that coarse structures on coarse groups are determined locally has some useful consequences for their coarse homomorphisms. Namely:
 \begin{itemize}
  \item Let $\crse G=(G,\CE)$, $\crse H=(H,\CF)$ be coarse groups and let $f\colon G\to H$ be a function so that $f(g_1\ast_G g_2)$ and $f(g_1)\ast_H f(g_2)$ (resp. $f(e_G)$ and $e_H$) coincide up to uniformly bounded error. If $f$ sends bounded sets to bounded sets then it is automatically controlled, hence $\crse f$ is a coarse homomorphism (Proposition~\ref{prop:p1:bornologous map is coarse hom}).
  \item If $\crse{f\colon G\to H}$ is a proper coarse homomorphism (\emph{i.e.}~so that preimages of bounded sets are bounded) then it is a \emph{coarse embedding} (Corollary~\ref{prop:p1:proper.hom.coarse.embedding}).
 \end{itemize} 

\end{rmk}

Naturally, two coarse groups are said to be \emph{isomorphic} if there exist coarse homomorphisms $\crse{f\colon G\to H}$ and $\crse{g\colon H\to G}$ that are coarse inverses of one another.

\

At this point the theory becomes more subtle.
We wish to declare that a coarse subgroup of a coarse group $\crse{G}$ is the image of a coarse homomorphism $\crse{f\colon H\to G}$. 
However, coarse homomorphisms are only defined up to some equivalence relation, hence so are their images. Explicitly, we define the equivalence relation of \emph{asymptoticity} (Definition~\ref{def:p1:asymptotic})  on the set of subsets of a coarse space and we define the \emph{coarse image} of a coarse map $\crse {f\colon X \to Y}$ to be the equivalence class of $f(X)\subseteq Y$ up to asymptoticity. 
For intuition, the reader should keep in mind that if $\crse X=(X,\CE_d)$ is a metric coarse space then two subsets of $X$ are asymptotic if and only if they are a finite Hausdorff distance from one another.

A \emph{coarse subgroup} of $\crse G$ (denoted $\crse{H\leq G}$) is the coarse image of a coarse homomorphism to $\crse G$. 
Alternatively, we can characterize the coarse subgroups of $\crse G=(G,\CE)$ as those equivalence classes $[H]$ where $H\subseteq G$ is a subset so that $[H]=[H\ast_G H]$ and $[H]=[H^{-1}]$ (Proposition~\ref{prop:p1:closed.asymp.classes.are.subgroups}).
Strictly speaking, a coarse subgroup $\crse{ H\leq G}$ is not a coarse group because the underlying set is only defined up to equivalence. However, whenever a representative is fixed $\crse H$ is indeed a coarse group. Moreover, different choices of representatives yield canonically isomorphic coarse groups. We are hence entitled to forget about this subtlety and treat coarse subgroups as coarse groups.

Coarse subgroups can be used to appreciate some interesting behavior in the theory of coarse groups. For example, let $V\subset\RR^d$ be a $k$\=/dimensional affine subspace and let $H\subset \ZZ^d$ be the subset of lattice points that are within distance $\sqrt k$ from $V$. Then $H$ and $V$ are asymptotic subsets of $(\RR^d,\CE_{\norm{\mhyphen}})$. Since $V+V$ is at finite Hausdorff distance from $V$, the equivalence class $[V]$ is a coarse subgroup of $(\RR^d,\CE_{\norm{\mhyphen}})$. It follows that $[H]$ is a coarse subgroup of $(\ZZ^d,\CE_{\norm{\mhyphen}})$ as well. A peculiar feature of $[H]$ is that one can choose $V$ with an irrational slope so that $H$ is not asymptotic to any subgroup of $\ZZ^d$. It is also worthwhile noting that quasicrystals in $\RR^d$ give rise to coarse subgroups as well. We should point out that, although mildly puzzling, these coarse subgroups are not very interesting as coarse group, as they are always isomorphic to $(\RR^k,\CE_{\norm{\mhyphen}})$.

Other exotic examples can be obtained by considering infinite approximate subgroups of set\=/groups. Recall that an approximate subgroup is a subset $A$ of a set\=/group $G$ so that $A=A^{-1}$ and the set of products $AA$ is contained in finitely many translates of $A$ (\emph{i.e.}~$AA\subseteq XA$ for some $X\subset G$ finite). Now, if $\CE$ is a coarse structure on $G$ so that $(G,\CE)$ is a coarse group and every finite subset of $G$ is $\CE$\=/bounded, it follows that for any approximate subgroup $A\subset G$ the asymptotic class $[A]$ is a coarse subgroup of $(G,\CE)$. We will later return to these ideas to construct coarse subgroups of $(F_2,\CE_{d_{\overline{S}}})$.

\

Quotients of coarse groups are obtained by enlarging the coarse structure. Morally, the point of coarse geometry is that points in a coarse space can only be distinguished up to uniformly bounded error. If we enlarge the coarse structure, fewer points can be distinguished, therefore this ``coarser'' coarse space is for all intents and purposes a quotient of the original space.

We then say that a \emph{coarse quotient} of a coarse group $\crse G=(G,\CE)$ is a coarse group $(G,\CF)$ so that $\CE\subseteq \CF$. This definition behaves as expected: a coarse group $\crse Q$ is isomorphic to a coarse quotient of $\crse G$ if and only if there is a coarsely surjective coarse homomorphism $\crse{q\colon G\to Q}$.

\begin{rmk}\label{rmk:intro:silly quotient}
Notice that unlike the case of set\=/groups, a coarse quotient of a coarse group need not be a ``quotient by a coarse subgroup''. For example, the coarse group $(\ZZ,\CE_{\abs{\mhyphen}})$ is a coarse quotient of the trivially coarse group $(\ZZ,\mincrs)$, however there is no coarse subgroup $[H]<(\ZZ,\mincrs)$ so that $(\ZZ,\CE_{\abs{\mhyphen}})$ is obtained by ``modding out $H$'' (recall that the $\mincrs$ is the coarse structure so that points coarsely coincide only if they they are the same, Remark~\ref{rmk:intro:trivially coarse}).
 
 It is still true that if $\crse {N\trianglelefteq G}$ is a \emph{coarsely normal} coarse subgroup of $\crse G$, there is a well\=/defined coarse quotient $\crse{G/N}$ (see below for more on this).
\end{rmk}

\

\noindent{\bf Coarse kernels and Isomorphism Theorems.}
Recall the coarse image of a coarse map is only defined up to asymptoticity. Similarly, we expect that the preimage of a coarse subspace should only be defined up to equivalence. However, the situation is even more delicate, as coarse preimages may not exist at all! 

The reason for this is present in Remark~\ref{rmk:intro:silly quotient}. If we consider the coarse map $\cid_\ZZ\colon(\ZZ,\mincrs)\to (\ZZ,\CE_{\abs{\mhyphen}})$, we see that any $\CE_{\abs{\mhyphen}}$\=/bounded set can be seen as a reasonable preimage of $0$. However, these sets are not asymptotic according to the trivial coarse structure. We are hence unable to make a canonical choice of a coarse subspace of $(\ZZ,\mincrs)$ to play the role of the coarse preimage of $[0]\crse{\subset}(\ZZ,\CE_{\abs{\mhyphen}})$.

Fortunately, for many important examples of coarse maps $\crse{f\colon X\to Y}$ we can define canonical coarse preimages. Namely it is often the case that all the reasonable candidate preimages of some coarse subspace $\crse{Z\subseteq Y}$ are asymptotic to one another. When this happens, we say that this equivalence class is the \emph{coarse preimage} of $\crse Z$ and we denote it by $\crse{f^{-1}(Z)}$ (Definition~\ref{def:p1:coarse preimage}).

We say that a coarse homomorphism of coarse groups $\crse{f\colon G\to H}$ has a \emph{coarse kernel} if $[e_H]\crse{\subseteq H}$ has a coarse preimage. In this case we denote it by $\cker(\crse{f})\coloneqq \crse{f^{-1}}([e_H])$. Given the premise, it is unsurprising that not every coarse homomorphism has a coarse kernel. However, when it does exist it is well behaved: we prove in Subsection~\ref{sec:p1:isomorphism theorems} that appropriate analogs of the isomorphism theorems relating subgroups, kernels and quotients hold true in the coarse setting as well (Theorems~\ref{thm:p1:first iso theorem}, \ref{thm:p1:second iso theorem} and \ref{thm:p1:third iso theorem}).

\

\noindent{\bf Coarse actions.}
As announced, a \emph{coarse action} $\crse{\alpha\colon G\curvearrowright Y}$ of a coarse group $\crse G=(G,\CE)$ on a coarse space $\crse Y=(Y,\CF)$ is a coarse map $\crse \alpha\colon \crse{G\times Y\to Y}$ that is coarsely compatible with the coarse group operations. That is, any representative $\alpha$ for $\crse \alpha$ is so that $\alpha(g_1,\alpha(g_2,y))$ is uniformly close to $\alpha(g_1\ast g_2,y)$ and $\alpha(e,y)$ is uniformly close to $y$ for every $g_1,g_2\in G$, $y\in Y$. Once again, in order for $\alpha$ to be controlled it is necessary that the functions $\alpha(g,\variable)$ and $\alpha(\variable,y)$ be equi controlled.

\begin{rmk}
 If $G$ is a set\=/group and $\alpha\colon G\curvearrowright(Y,d_Y)$ is an action by isometries ---or, more generally, a quasi-action--- then $\crse \alpha\colon(G,\mincrs)\curvearrowright(Y,\CE_{d_Y})$ is a coarse action of the trivially coarse group. However, it is generally not true that if $d_G$ is a bi\=/invariant metric on $G$ then $\crse \alpha\colon(G,\CE_{d_G})\curvearrowright(Y,\CE_{d_Y})$ is a coarse action: in order to make sure that $\alpha$ is controlled it is necessary that $d_Y(\alpha(g_1,y),\alpha(g_2,y))$ is uniformly bounded in terms of $d_G(g_1,g_2)$.
\end{rmk}

Prototypical examples of coarse group actions of a coarse group $\crse G$ include the actions of $\crse G$ on itself by left multiplication (more on this later) and by conjugation. A useful feature of the latter is that it allows us to define coarse abelianity, normality etc. in terms of (coarse) fixed point properties.

\

Given two coarse actions $\crse{\alpha\colon G\curvearrowright Y}$, $\crse{\alpha'\colon G\curvearrowright Y'}$ of the same coarse group $\crse G$, we say that a coarse map $\crse{f\colon Y\to Y'}$ is \emph{coarsely equivariant} if it intertwines the $\crse G$\=/actions up to uniformly bounded error (Definition~\ref{def:p1:coarse equivariant}). As we did for coarse groups, we say that a \emph{quotient coarse action} of $\crse \alpha\colon (G,\CE)\curvearrowright(Y,\CF)$ is a coarse action $\crse \alpha\colon (G,\CE)\curvearrowright(Y,\CF')$ where $\CF'\supseteq \CF$ is some coarser coarse structure on $Y$ so that the functions $\alpha(g,\variable)$ are still equi controlled (this is necessary to ensure that $\alpha$ is still controlled). Once again, this is a consistent definition because a coarsely equivariant map $\crse f\colon (Y,\CF)\to \crse Z$ is coarsely surjective if and only if it factors through a coarsely equivariant coarse equivalence $(Y,\CF')\cong \crse Z$.

Given a coarse subgroup $[H]=\crse{H< G}=(G,\CE)$, we can define a coarse action of $\crse G$ on the \emph{coarse coset space} $\crse{G/H}$ by considering an appropriate quotient of the coarse action by left multiplication $\crse G\curvearrowright \crse G$. Namely, $\crse{G/H}$ is the set $G$ equipped with the smallest coarse structure containing $\CE$ and such that $H$ is bounded and $\crse{G\curvearrowright G/H}$ is a coarse action (it is interesting to note that, as in Remark~\ref{rmk:intro:silly quotient}, we may also consider coset spaces $\crse{G/A}$ where $\crse{A\subset G}$ is any coarse subspace, not necessarily a coarse subgroup).
The following result is an ingredient in the proof of the First Isomorphism Theorem.

\begin{thm}[Theorem~\ref{thm:p1:quotient by normal is a group}]
 Let $\crse{H\leq G}$ be a coarse subgroup. The coarse space $\crse{G/H}$ can be made into a coarse group with coarse operations that are compatible with the coarse action $\crse{G\curvearrowright G/H}$ if and only if $\crse H$ is coarsely normal in $\crse G$.
\end{thm}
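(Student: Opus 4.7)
The plan is to prove both directions using the fact that, by construction, $\crse{G/H}$ is the set $G$ equipped with the smallest coarse structure $\CF$ containing $\CE$ in which $H$ is bounded and for which left multiplication remains a coarse action. A convenient starting observation is that $\CF$ is generated by $\CE$ together with the relation $\{(x,xh):x\in G,\,h\in H\}$ (the $G$-translates of $H\times H$ forced in by equi-controlledness of left multiplication), so that, up to $\CE$-bounded error, every $\CF$-controlled pair has the form $(x,xh)$ with $h\in H$. Proposition~\ref{prop:intro:det locally} moreover ensures that any coarse group structure on $(G,\CF)$ is rigidly determined by its bounded subsets, which will be used to identify the operations in the converse direction.

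For the ``$\Leftarrow$'' direction, assume $\crse H$ is coarsely normal in $\crse G$. I would re-use the original $\ast$ and $\inversefn$ of $G$ as representatives of $\cop_{G/H}$ and $\cinversefn_{G/H}$, reducing the task to verifying that they remain controlled on $(G,\CF)$. Given $\CF$-controlled pairs $(x_1,y_1),(x_2,y_2)$ of the generating form $y_i = x_i h_i$ with $h_i\in H$, write
\[
 y_1y_2 \;=\; x_1h_1x_2h_2 \;=\; (x_1x_2)\,(x_2^{-1}h_1x_2)\,h_2.
\]
Coarse normality guarantees that $x_2^{-1}h_1x_2$ lies in an $\CE$-bounded neighborhood of $H$ uniformly in $x_2$, and then equi-controlled right multiplication by $h_2$ places $(x_1x_2,y_1y_2)$ in a prescribed $\CF$-controlled set. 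An analogous computation $y_1^{-1} = x_1^{-1}(x_1 h_1^{-1} x_1^{-1})$ handles $\inversefn$. The coarse associativity, unit, and inverse axioms are then inherited from the strict identities satisfied by $\ast$ and $\inversefn$ in $\crse G$, and compatibility with $\crse{G\curvearrowright G/H}$ is automatic, since we are using the same underlying multiplication on the same underlying set.

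For the ``$\Rightarrow$'' direction, suppose $\crse{G/H}$ admits a compatible coarse group structure with coarse multiplication $\cop'$ and inversion $\cinversefn'$. Compatibility means that for every $g,x\in G$ the elements $g\cop' x$ and $gx$ agree up to $\CF$-bounded error; since every element of $G/H$ has a representative in $G$, this forces $\ast$ itself to be a representative of $\cop'$, in particular $\ast\colon (G,\CF)^2\to(G,\CF)$ is controlled, and similarly $\inversefn$ represents $\cinversefn'$. Now, in any coarse group conjugation fixes the identity up to uniformly bounded error, and $H$ is $\CF$-asymptotic to $\{e\}$; consequently $\{(h,\,ghg^{-1}):h\in H,\,g\in G\}$ is $\CF$-controlled under a suitable pairing. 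Unpacking the description of the generators of $\CF$, this produces a uniform $\CE$-bound on the deviation of $gHg^{-1}$ from $H$, which is exactly coarse normality of $\crse H$ in $\crse G$.

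\textbf{Expected main obstacle.} The principal technical point is the uniformity in $x_2\in G$ in the forward direction: the error produced by absorbing $x_2^{-1}h_1x_2$ into a bounded neighborhood of $H$, together with the ambient error from right multiplication by $h_2$, must be packaged inside a single $\CF$-controlled set depending only on the input pairs and not on $x_2$. This is exactly what coarse normality, formulated as a uniform statement about the conjugation coarse action, is designed to provide. In the converse, the delicate step is to pin down ``compatibility'' carefully enough that $\ast$ is legitimately identified as a representative of $\cop'$, thereby eliminating the apparent freedom in the choice of representative for $\cop'$.
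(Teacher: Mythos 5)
Your plan is correct and essentially matches the paper's argument. The forward direction is the computation of Lemma~\ref{lem:p1:cosets are groups}: coarse normality lets you slide $H$-elements past a group element, exactly as you write $y_1y_2=(x_1x_2)(x_2^{-1}h_1x_2)h_2$. For the converse, the paper shows $(\{e\}\times H)\ast\Delta_G\in\CF$ from controlledness of $\ast$ and then decomposes that relation, whereas you route through the conjugation coarse action to conclude $\{ghg^{-1}:g\in G,\ h\in H\}$ is $\CF$-bounded; these are equivalent repackagings of the same argument, and your version is arguably a touch cleaner conceptually. One point deserves emphasis, though: the claim that ``up to $\CE$-bounded error, every $\CF$-controlled pair has the form $(x,xh)$ with $h\in H$'' is not a convenient starting observation. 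A generated coarse structure requires closing under arbitrary finite compositions, so a priori a $\CF$-controlled pair has the form $(x,\,xh_1h_2\cdots h_n)$ with all $h_i\in H$, and it is only because $\crse H$ is a coarse \emph{subgroup} --- so that the finite products $(H^{-1}\ast H)^{\ast n}$ remain $\CE$-asymptotic to $H$ --- that this collapses to a single $H$-translate composed with an $\CE$-controlled set. In the paper this is the content of Lemma~\ref{lem:p1:quotient coarse structure as composition_cosets} and Corollary~\ref{cor:p1:quotient coarse structure as composition_cosets}, and it carries most of the technical weight; your ``unpacking the description of the generators'' in the converse is precisely an invocation of it. Relatedly, the ``main obstacle'' you flag --- uniformity in $x_2$ in the forward direction --- is the routine part, since coarse normality hands you a single $\CE$-controlled error for all $x_2$ by definition. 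If you make the structural decomposition of $\CF$ explicit, the rest of your sketch goes through as stated.
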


\

Something interesting happens when looking at orbit maps and pull\=/backs of coarse structures. Let $\crse\alpha\colon(G,\CE)\curvearrowright (Y,\CF)$ be a coarse action and fix a point $y\in Y$. We may then consider the orbit map $\alpha_{y}\colon G\to Y$ given by $\alpha_{y}(g)\coloneqq \alpha(g,y)$. We can use this function to define a \emph{pull\=/back coarse structure} $\alpha_{y}^\ast(\CF)$ on $G$. 
It follows from the definition of pull\=/back coarse structure that the orbit map $\alpha_{y}$ defines a coarse equivalence between $(G,\alpha_{y}^\ast(\CF))$ and the coarse image $\crse{\alpha_{y}(G) \subseteq Y}$.
For a set\=/group $G$ acting by isometries on a metric space $(Y,d_Y)$, this is equivalent to defining a (pseudo) metric on $G$ by letting $d_G(g_1,g_2)\coloneqq d_Y(\alpha(g_1,y),\alpha(g_2,y))$ and giving $G$ the resulting metric coarse structure $\CE_{d_G}$.

It follows from the construction that $\CE\subseteq \alpha_{y}^\ast(\CF)$ and that the left multiplication defines a coarse action $(G,\CE)\curvearrowright(G,\alpha_{y}^\ast(\CF))$. The orbit map $\crse{\alpha_{y}}\colon (G,\alpha_{y}^\ast(\CF))\to \crse Y$ is coarsely equivariant. In turn, if $\crse{\alpha_{y}}$ is coarsely surjective---in which case we say that the action is \emph{cobounded}---it follows that $(G,\CE)\curvearrowright(Y,\CF)$ is a coarse quotient of the coarse action by left multiplication of $(G,\CE)$ on itself.

It is also simple to verify that if $y'$ is a point close to $y$ then the two orbit maps give rise to the same pull\=/back coarse structure. More generally one can show with a bit of work that if $\crse {G\curvearrowright Y}$ is a cobounded coarse action then the pull\=/back coarse structure is uniquely defined up to conjugation. Putting these observations together, one obtains the following characterization of cobounded coarse actions:

\begin{prop}[Proposition~\ref{prop:p1:cobounded coarse actions isom left mult}]
 Let $\crse G=(G,\CE)$ be a coarse group. There is a natural correspondence:
  \[
  \braces{\text{\upshape cobounded coarse actions of }\crse G}/_{\text{\upshape coarsely equivariant coarse equivalence}}
 \]
 and
 \[
  \braces{\CF\text{\upshape coarse structure}\mid \CE\subseteq\CF,\text{\upshape left multiplication is a coarse action }(G,\CE)\curvearrowright(G,\CF)}/_{\text{\upshape conjugation}}.
  \]
\end{prop}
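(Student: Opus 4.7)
The plan is to construct maps in both directions and verify they are mutually inverse.

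Forward direction: given a cobounded coarse action $\crse\alpha\colon\crse{G\curvearrowright Y}$, pick a basepoint $y\in Y$ and assign to $\crse\alpha$ the pull\=/back coarse structure $\CF_y\coloneqq \alpha_y^\ast(\CF)$ on $G$. The discussion preceding the proposition already establishes that $\CE\subseteq\CF_y$ and that left multiplication defines a coarse action $(G,\CE)\curvearrowright(G,\CF_y)$, so the output lies on the right\=/hand side. To show this descends to the stated equivalence classes, I would check two things: first, that replacing $y$ by a close point $y'$ leaves $\CF_y$ unchanged, since then $\alpha_y$ and $\alpha_{y'}$ are close maps $G\to Y$ and close maps pull back the same coarse structure; second, that for an arbitrary $y'\in Y$ the coarse structure $\CF_{y'}$ differs from $\CF_y$ only by conjugation. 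Coboundedness is used exactly here: there exists $g\in G$ with $\alpha(g,y)$ close to $y'$, and the computation $\alpha_{y'}(h)=\alpha(h,y')\closefn\alpha(h,\alpha(g,y))\closefn \alpha(hg,y)=\alpha_y(hg)$ shows that $\alpha_{y'}$ and $\alpha_y\circ R_g$ are close, hence $\CF_{y'}$ equals the pull\=/back of $\CF_y$ under right multiplication by $g$. Finally, if $\crse{f\colon Y\to Y'}$ is a coarsely equivariant coarse equivalence, choosing basepoint $f(y)\in Y'$ gives orbit maps that are intertwined by $\crse f$; since $\crse f$ is a coarse equivalence, the two pull\=/back coarse structures coincide.

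Reverse direction: given $\CF\supseteq\CE$ with left multiplication a coarse action $(G,\CE)\curvearrowright(G,\CF)$, assign the coarse space $(G,\CF)$ equipped with the coarse action by left multiplication. The orbit map at $e$ is $\cid_G$, which is coarsely surjective, so this action is cobounded. Conjugate structures $\CF$ and $R_g^\ast(\CF)$ produce coarse actions related by the coarsely equivariant coarse equivalence $R_g\colon (G,\CF)\to(G,R_g^\ast(\CF))$ (here the fact that $R_g$ is equivariant for left multiplication uses only associativity up to controlled error), so the map is well defined on equivalence classes.

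To see these assignments are mutually inverse, observe that starting from $\CF$ and taking the left multiplication coarse action, the orbit map at $e$ is the identity and pulls $\CF$ back to itself on the nose. Conversely, starting from $\crse\alpha\colon\crse{G\curvearrowright Y}$ with basepoint $y$ and passing to left multiplication on $(G,\CF_y)$, the orbit map $\crse{\alpha_y}\colon (G,\CF_y)\to \crse Y$ is controlled by the very definition of $\CF_y$, coarsely surjective by coboundedness, and coarsely equivariant since $\alpha(h,\alpha(g,y))\closefn \alpha(hg,y)$; it is therefore a coarsely equivariant coarse equivalence recovering $\crse\alpha$.

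The main obstacle is the well\=/definedness on equivalence classes, and specifically checking that a basepoint change yields exactly a conjugate coarse structure rather than merely a coarsely equivalent one. The subtlety is that coboundedness supplies $g$ with $\alpha(g,y)$ only close to $y'$, so the identification of $\alpha_{y'}$ with $\alpha_y\circ R_g$ holds only up to bounded error; one must verify that this discrepancy is exactly absorbed by the pull\=/back construction, so that the equality $\CF_{y'}=R_g^\ast(\CF_y)$ holds at the level of coarse structures and not merely up to some further coarse equivalence.
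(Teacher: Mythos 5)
Your proof is correct and follows essentially the same route as the paper: in one direction you assign pull\=/back coarse structures along orbit maps, in the other you take left multiplication on $(G,\CF)$, and you use coboundedness together with the pull\=/back invariance of close maps to show that changing the basepoint yields exactly a conjugate coarse structure. The residual worry you raise at the end is already settled by the principle you invoke earlier---close maps have identical pull\=/backs---applied once more to the closeness of $\alpha_{g\cdot y}$ and $\alpha_y\circ R_g$ (which holds because the Action Diagram commutes up to closeness), so that $\CF_{y'}=\alpha_{y'}^*(\CF)=\alpha_{g\cdot y}^*(\CF)=(\alpha_y\circ R_g)^*(\CF)=R_g^*(\CF_y)$ holds as an equality of coarse structures, not merely up to coarse equivalence.
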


\

Recall that group coarse structures are determined locally (Proposition~\ref{prop:intro:det locally}). The same is true for coarse structures associated with coarse actions. Namely,  let $(G,\CE)$ be a coarse group and let $\CF,\CF'$ be two coarse structures on $G$ so that the left multiplication defines coarse actions $(G,\CE)\curvearrowright(G,\CF)$ and $(G,\CE)\curvearrowright(G,\CF')$ (such coarse structures are said to be \emph{equi left invariant}). Then $\CF\subseteq \CF'$ if and only if they every $\CF$\=/bounded subset of $G$ is also $\CF'$\=/bounded. In particular, to check whether the two coarse structures coincide it is enough to verify that they have the same bounded sets. We claim that this simple observation can be seen as the heart of the Milnor--Schwarz  Lemma.

More precisely the Milnor--Schwarz  Lemma states that if a set\=/group $G$ has a proper cocompact action on a proper geodesic metric space $(Y,d_Y)$ then it is finitely generated and quasi\=/isometric to $(Y,d_Y)$. To prove this theorem, we start by remarking that cocompactness implies that the coarse action $(G,\mincrs)\curvearrowright(Y,\CE_{d_Y})$ is cobounded, therefore the orbit map is a coarse equivalence between $(G,\alpha_{y}^\ast(\CE_{d_Y}))$ and $(Y,d_Y)$. Since the action is proper, the bounded subsets of $(G,\alpha_{y}^\ast(\CE_{d_Y}))$ are the finite ones. By the coarse geodesicity of $\alpha_{y}^\ast(\CE_{d_Y})$, it follows that $G$ is finitely generated. Let $d_G$ be a word metric associated with a generating set, then $(G,\mincrs)\curvearrowright(G,\CE_{d_G})$ is also a coarse action. Since the bounded subsets of $(G,\CE_{d_G})$ are the finite ones, it follows that $\CE_{d_G}=\alpha_{y}^\ast(\CE_{d_Y})$ and hence the orbit map is a coarse equivalence between $(G,\CE_{d_G})$ and $(Y,\CE_{d_Y})$. Finally, we already remarked that coarse equivalences between geodesic metric spaces are always quasi\=/isometries.
This argument is explained in detail in Subsection~\ref{sec:p1:Milnor_svarc} (see also Section~\ref{sec:appendix:proper coarse actions} for a brief discussion of analogous statements in more general settings).

One may say that the above proof of the Milnor--Schwarz  Lemma bypasses most of the difficulties of the more standard proofs by avoiding working in the metric space $Y$. Instead, it is based on the observation that the relevant ``coarse properties'' are well behaved under taking pull\=/back and this allows us to reduce everything to the study of equi left invariant coarse structures on set\=/groups. We can also appreciate how the argument splits into many independent observations, showing precisely which hypothesis is necessary at any single step. By extension, we anticipate that a coarse geometric perspective will bring a fair amount of clarity into various complicated geometric constructions.

\section{List of Findings II: Selected Topics}

We shall now briefly overview the topics covered in the second part of the monograph. Further motivation and references are provided in the relevant sections.

\

\noindent{\bf Coarsified set\=/groups.}
The most natural connection between the theory of coarse groups and classical set\=/group theory is certainly to be found in the study of coarsified set\=/groups. Namely, those coarse group $(G,\CE)$ where $G$ is a set\=/group. Every set\=/group $G$ has two extreme coarsifications: the trivial one $(G,\mincrs)$ and the bounded one $(G,\maxcrs)$ (the latter is the coarse structure where the whole of $G$ is bounded). From the coarse point of view, neither of these is an interesting coarsification: the former reduces to classical set\=/group theory, the latter is always isomorphic to the coarse group having only one point $(\{e\},\maxcrs)$. We are therefore interested in intermediate coarsifications.

To avoid trivial coarse structures, we will be interested in \emph{coarsely connected} coarsifications. These are coarsifications $(G,\CE)$ so that every element $g\in G$ is $\CE$\=/close to the unit $e\in G$. Typical examples are given by metric coarsifications: if $d$ is some metric on $G$ then for every $g\in G$ the distance $d(g,e)$ is some finite number and hence $g$ is $\CE_d$\=/close to $e$. The reason why coarsely connected coarsifications are particularly interesting for us is that any coarsification $(G,\CE)$ can be described in terms of a connected coarsification and a trivially coarse quotient. That is, the set of points $g\in G$ that are $\CE$\=/close to $e$ is a normal subgroup $G_0\trianglelefteq G$ and the coarse group $(G,\CE)$ factors through the quotient $(G,\CE)\to(G/G_0,\mincrs)$ (Corollary~\ref{cor:p1:ses quotient by id comp}).
The general problem we wish to address is:

\begin{problem}\label{prob:intro:unbounded coarsification}
 Which set\=/groups $G$ admit unbounded coarsely connected coarsifications $(G,\CE)$?
\end{problem}

Not every infinite set\=/group has an unbounded coarsely connected coarsification. Specifically, we say that a set\=/group $G$ is \emph{intrinsically bounded} if $(G,\maxcrs)$ is the only coarsely connected coarsification of $G$. Examples of infinite intrinsically bounded set\=/groups include the dihedral group $D_\infty$ and $\Sl(n,\ZZ)$ with $n\geq 3$ (this follows from some bounded generation properties). More generally, it follows from the work of Duszenko and McCammond--Petersen that a Coxeter group is intrinsically bounded if and only if it is of spherical or affine type \cite{duszenko2012reflection,mccammond2011bounding}. Generalizing the case of $\Sl(n,\ZZ)$, Gal, Kedra, Trost and others proved that ``many'' Chevalley groups are intrinsically bounded \cite{gal2011bi,trost2020strong}.

Problem~\ref{prob:intro:unbounded coarsification} is very much related to the existence of unbounded bi\=/invariant metrics. If $d$ is an unbounded bi\=/invariant metric on $G$ then $(G,\CE_d)$ is an unbounded coarsely connected coarsification. For ``small'' set\=/groups this actually characterizes intrinsic boundedness:

\begin{prop}[Proposition~\ref{prop:p2:intrinsically bounded iff no biinv metric}]
\label{prop:intro:bounded iff bounded metric}
 If a set\=/group $G$ is countably generated, then it is intrinsically bounded if and only if it does not admit any unbounded bi\=/invariant metric.
\end{prop}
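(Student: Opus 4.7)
The plan is to prove each direction of the biconditional by contraposition.

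For the easy direction, suppose $G$ admits an unbounded bi\=/invariant metric $d$. The discussion of bi\=/invariant metrics above implies that $(G,\CE_d)$ is a coarse group. It is coarsely connected because $d(g,e)<\infty$ for every $g\in G$, and unbounded by hypothesis on $d$, so $G$ is not intrinsically bounded.

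For the converse, suppose $G$ is countably generated and admits some unbounded coarsely connected coarsification $(G,\CE)$. The strategy is to construct an unbounded conjugation\=/invariant symmetric norm $w\colon G\to[0,\infty)$ (i.e.\ with $w(e)=0$, $w(g)>0$ for $g\neq e$, $w(g^{-1})=w(g)$, $w(hgh^{-1})=w(g)$ and $w(gh)\leq w(g)+w(h)$); then $d(g,h):=w(g^{-1}h)$ is the desired bi\=/invariant metric. Fix a countable generating set $\{s_1,s_2,\ldots\}\subseteq G$. Since $(G,\CE)$ is coarsely connected each $s_i$ lies in a bounded set, and since the bounded subsets of a coarse group are closed under inverses, finite unions, finite products and arbitrary unions of conjugates (cf.\ Subsection~\ref{sec:p1:determined locally}), I can inductively produce a chain
\[
\{e\}=A_0\subseteq A_1\subseteq A_2\subseteq\cdots
\]
of symmetric, conjugation\=/invariant bounded subsets with $s_n\in A_n$ for every $n\geq 1$. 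Set $w(e):=0$ and, for $g\neq e$,
\[
w(g):=\inf\{n_1+\cdots+n_j \,:\, j\geq 1,\ g=g_1\cdots g_j,\ g_i\in A_{n_i}\setminus\{e\},\ n_i\geq 1\}.
\]
Since every $g$ is a word in the $s_i^{\pm 1}$ and $s_i\in A_i$, the function $w$ is everywhere finite; it is sub\=/additive by construction, and symmetric and conjugation\=/invariant because each $A_n$ is; the constraints $n_i\geq 1$ and $A_0=\{e\}$ force $w(g)\geq 1$ for $g\neq e$, so $d$ really is a metric.

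The key point, and the main (mild) technical obstacle, is showing that $w$ is unbounded. Here the constraint $n_i\geq 1$ is essential: for any $N\in\NN$, any decomposition achieving $n_1+\cdots+n_j\leq N$ must have $j\leq N$, so
\[
\{g\in G : w(g)\leq N\}\;\subseteq\;\bigcup_{\substack{1\leq j\leq N\\ n_1+\cdots+n_j\leq N}} A_{n_1}A_{n_2}\cdots A_{n_j},
\]
a \emph{finite} union of finite products of bounded sets, hence bounded in $\CE$ by the closure properties recalled above. Since $(G,\CE)$ is unbounded by assumption, no sublevel set of $w$ exhausts $G$, so $w$ is unbounded, and $d(g,h)=w(g^{-1}h)$ is the required unbounded bi\=/invariant metric on $G$.
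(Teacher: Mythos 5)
Your proof is correct, but it takes a genuinely different route from the paper's. The paper first passes through Lemma~\ref{lem:p2:unbuonded bi-invariant metric iff normal subgroups}, which characterizes the absence of unbounded bi\=/invariant metrics by the two conditions (BB-a) (no strictly increasing exhaustion by normal subgroups) and (BB-b) (every bi\=/invariant word metric is bounded). Given countable (normal) generation, (BB-a) forces $G$ to be \emph{finitely} normally generated, and then (BB-b) together with Corollary~\ref{cor:p1:canonical crse_str is E_grp_fin} shows the minimal connected coarsification $\varcrs[grp]{fin}=\varcrs{bw}$ is bounded, hence $G$ is intrinsically bounded. You instead skip the (BB-a)/(BB-b) dichotomy and \emph{directly manufacture} an unbounded bi\=/invariant metric from an unbounded connected coarsification $(G,\CE)$, via a weighted word length over an exhausting chain of conjugation\=/invariant bounded sets; the constraint $n_i\geq 1$ is exactly what keeps each sublevel set inside a finite union of finite products of bounded sets and hence $\CE$\=/bounded. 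Your argument is more self\=/contained and is, in effect, a variant of Lemma~\ref{lem:p2:metric coarsifications} that works when only the group---not necessarily the coarse structure $\CE$ itself---is countably generated (you produce $d$ with $\CE_d$\=/bounded sets $\CE$\=/bounded, not $\CE_d=\CE$). What the paper's route buys in exchange is the structurally useful intermediate fact that a countably normally generated group with no unbounded bi\=/invariant metric is automatically finitely normally generated, which is reused elsewhere in the text. One small hygiene point: when invoking closure of bounded sets under finite unions, note (as you implicitly do by assuming $e$ lies in every $A_n$) that this requires connectedness of $\CE$; in a bornology one only gets closure under unions of sets that already meet.
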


We should remark that ``most'' infinite groups are not intrinsically bounded. For example, it follows by the work of Epstein--Fujiwara \cite{EF} that every infinite Gromov hyperbolic group has an unbounded bi\=/invariant metric. It follows that a random (in the sense of Gromov) infinite finitely generated group is not intrinsically bounded.

\

\noindent{\bf Coarsifications of $\ZZ$.}
The Euclidean metric shows that $\ZZ$ does have unbounded coarsely connected coarsifications. As it turns out, this is but one of its many coarsifications. It is perhaps unsurprising that any attempt to classify such coarsifications leads to interesting problems in number theory.

One source of coarsifications is obtained by picking word metrics $d_S$ associated with (infinite) generating sets $S\subset \ZZ$. It is often a delicate problem to understand whether two different generating sets $S\neq S'$ give rise to the same coarse structure $\CE_{d_S}=\CE_{d_{S'}}$. For example, if $S=P$ is the set of all prime numbers, the equality $\CE_{d_P}=\maxcrs$ is equivalent to the highly non\=/trivial statement that every natural number is equal to the sum of a bounded number of primes. One may even say that the Goldbach conjecture is a very strong effective version of this result.
In general, it appears that the only coarse structures $\CE_{d_S}$ that are relatively well understood are those where $S$ is a union of geometric sequences.

Nevertheless, choosing sparse enough generating sets $S$ suffices to show that $\ZZ$ has a continuum of distinct coarsifications.
Moreover, one can also combine these coarsifications to obtain even more coarsifications and thus prove that the set of coarse structures on $\ZZ$ is as large as possible:

\begin{prop}[Proposition~\ref{prop:p2:2^continuum of coarsification}]
 There exist $2^{2^{\aleph_0}}$ distinct connected coarsifications of $\ZZ$.
\end{prop}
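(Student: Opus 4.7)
The plan is to identify connected coarsifications of $\ZZ$ with their collections of bounded sets, and then produce $2^{2^{\aleph_0}}$ distinct such collections using an ``ideal\=/independent'' family. By Proposition~\ref{prop:intro:det locally} (and the remark following it), a connected coarse group structure on $\ZZ$ is uniquely determined by the family $\CB$ of its bounded subsets, which---since $\ZZ$ is abelian---is exactly a group ideal: a non\=/empty $\CB\subseteq\CP(\ZZ)$ closed under taking subsets, finite unions, negatives, and (set\=/theoretic) Minkowski sums, with connectedness corresponding to the inclusion of all finite sets. Since $\abs{\CP(\CP(\ZZ))}=2^{2^{\aleph_0}}$, this is the a priori upper bound we wish to match.

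The key step is to construct a family $\{A_\alpha \mid \alpha<2^{\aleph_0}\}$ of infinite subsets of $\NN\subseteq\ZZ$ that is \emph{ideal\=/independent}: for every index $\alpha$ and every finite choice $\alpha_1,\dots,\alpha_m$ distinct from $\alpha$, the set $A_\alpha$ should not be contained in any Minkowski sum $\pm A_{\alpha_1}\pm\cdots\pm A_{\alpha_m}+F$ with $F\subseteq\ZZ$ finite. I would build these from two standard ingredients: a maximal almost disjoint family $\{X_\alpha\mid \alpha<2^{\aleph_0}\}$ of infinite subsets of $\NN$ (pairwise intersections finite), and a super\=/lacunary sequence $(n_k)$ with, say, $n_{k+1}\geq 3(n_0+\cdots+n_k)$; then set $A_\alpha\coloneqq \{n_k\mid k\in X_\alpha\}$.

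Granted such a family, for each $\CS\subseteq 2^{\aleph_0}$ I let $\CB_\CS$ denote the smallest group ideal on $\ZZ$ containing all finite subsets together with $\{A_\alpha\mid \alpha\in\CS\}$, and let $\CE_\CS$ be the associated connected coarsification. Ideal\=/independence ensures that $\CS\neq \CS'$ implies $\CB_\CS\neq \CB_{\CS'}$: if $\alpha\in \CS\setminus \CS'$ then $A_\alpha\in \CB_\CS$, whereas by hypothesis $A_\alpha$ is not contained in any element of $\CB_{\CS'}$. As $\abs{\CP(2^{\aleph_0})}=2^{2^{\aleph_0}}$, this produces the desired number of distinct coarsifications.

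The main obstacle is verifying ideal\=/independence, which reduces to an arithmetic\=/combinatorial check. If an equation $n_k=\epsilon_1 n_{l_1}+\cdots+\epsilon_m n_{l_m}+f$ holds with $n_{l_j}\in A_{\alpha_j}$, $\epsilon_j\in\{\pm 1\}$, and $f\in F$, the rapid growth condition on $(n_k)$ forces the maximum of the indices $l_j$ to equal $k$ (since smaller terms cannot cancel to produce $n_k$), so $k\in X_{\alpha_j}$ for some $j$. If this were to occur for infinitely many $k\in X_\alpha$, a pigeonhole argument would yield a single $\alpha_j$ with $X_\alpha\cap X_{\alpha_j}$ infinite, contradicting almost disjointness. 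Making this bookkeeping precise---especially keeping track of how the bound $m$ on the number of summands interacts with the growth rate of $(n_k)$ and with the finite ``error'' set $F$---is the only non\=/formal point in the argument.
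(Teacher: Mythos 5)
Your high-level plan---identify connected coarsifications of $\ZZ$ with their group ideals of bounded sets, and then separate $2^{2^{\aleph_0}}$ of these via an ``ideal\=/independent'' family built from an almost disjoint family of subsets of $\NN$---is essentially the alternative construction sketched in the remark immediately following the paper's own proof, attributed to the referee. The paper's main proof takes a slightly different route: rather than almost disjoint families, it uses non-principal ultrafilters $\mu$ on $W=\braces{n!\mid n\geq 1}$, letting $\CE_\mu$ be generated by the word metrics $\abs{\mhyphen}_S$ with $S\notin\mu$; distinctness of the $\CE_\mu$ is then a consequence of Lemma~\ref{lem:p2:distinct Cayley metrics}, which asserts that $T\subseteq W$ is $\abs{\mhyphen}_S$-bounded if and only if $T\smallsetminus S$ is finite. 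Both the paper's proof and its remark funnel the key estimate through that lemma; your arithmetic argument is in effect an attempt to re\=/derive a growth\=/based version of it.

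This is where there is a genuine gap. The super\=/lacunary condition you propose, $n_{k+1}\geq 3(n_0+\cdots+n_k)$, only gives \emph{exponential} growth (the ratio $n_k/n_{k-1}$ stays bounded, by $4$), and as soon as the number of Minkowski summands $m$ exceeds that ratio, the claim ``the maximal index $l_j$ must equal $k$'' fails. Concretely, if equality holds at every step (with $n_0=1$) one gets $n_1=3$ and $n_k=4n_{k-1}$ for $k\geq 2$, so $n_k=n_{k-1}+n_{k-1}+n_{k-1}+n_{k-1}$ is a representation with all indices strictly below $k$. Choosing $X_{\alpha}$ and $X_{\alpha'}$ almost disjoint with $X_\alpha\subseteq X_{\alpha'}+1$ (for instance the even and odd integers, embedded in a larger a.d.\ family) then gives $A_\alpha\subseteq A_{\alpha'}+A_{\alpha'}+A_{\alpha'}+A_{\alpha'}$, so $A_\alpha\in\CB_{\{\alpha'\}}$ and the two ideals fail to separate. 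You flag this bookkeeping as ``the only non\=/formal point,'' but as stated it is where the argument breaks. The fix is to demand \emph{superexponential} growth, $n_k/n_{k-1}\to\infty$ (\emph{e.g.}\ $n_k=k!$): then for each fixed $m$ and each fixed finite $F$ one has $n_k>m\sum_{l<k}n_l+\max\abs{F}$ for all large $k$, and the pigeonhole argument on the finitely many $\alpha_j$ goes through. This is precisely why the paper works with $W=\braces{n!}$; its Lemma~\ref{lem:p2:distinct Cayley metrics} additionally exploits the divisibility of factorials to pin down exact word lengths, which is more than you need here but packages the estimate cleanly and allows the ultrafilter version of the argument to run with no further arithmetic.
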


A different way of defining coarsifications of $\ZZ$ is by observing that every abelian topological group $(G,\tau)$ admits a natural coarse structure $\CE_\tau$ whose bounded sets are the relatively compact subsets of $G$. This creates a link with the study of group topologies on $\ZZ$: any such topology $\tau$ defines a coarsification $\CE_\tau$. Once again, given different topologies $\tau\neq\tau'$ it is generally challenging to distinguish $\CE_\tau$ from $\CE_\tau'$. One viable strategy is to find a sequence $(a_n)_{n\in\NN}$ that $\tau$\=/converges to some point $a_\infty\in \ZZ$, but so that the set $\{a_n\mid n\in\NN\}$ is not relatively compact according to $\tau'$. This technique proves very effective to differentiate between coarsifications arising from profinite completions of $\ZZ$. More precisely, given any set $Q$ of prime numbers we define a group topology $\tau_Q$ on $\ZZ$ via the diagonal embedding in the product of the pro\=/$p$ completions $\ZZ\hookrightarrow\prod_{p\in Q}\ZZ_p$. We can then prove:

\begin{prop}[Proposition~\ref{prop:p2:different profinite coarse structures}]
 Let $Q,Q'$ be sets of primes. Then $\CE_{\tau_Q}=\CE_{\tau_{Q'}}$ if and only if $Q=Q'$. 
\end{prop}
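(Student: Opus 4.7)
The plan is to prove the nontrivial direction $Q \neq Q' \Rightarrow \CE_{\tau_Q} \neq \CE_{\tau_{Q'}}$. Since $\CE_{\tau_Q}$ and $\CE_{\tau_{Q'}}$ are both coarse group structures on $\ZZ$, Proposition~\ref{prop:intro:det locally} reduces this to finding a set $A \subseteq \ZZ$ that is relatively compact in one topology but not the other. I will use the standard identification of the Hausdorff completion of $(\ZZ, \tau_S)$ with the compact group $\prod_{r \in S} \ZZ_r$; in particular, $A$ is $\tau_S$-relatively compact if and only if its closure inside $\prod_{r \in S} \ZZ_r$ is contained in the diagonally embedded copy of $\ZZ$. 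By the symmetry of the assertion in $Q$ and $Q'$, I may assume that there exists $p \in Q \setminus Q'$.

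The heart of the argument is the construction of a sequence $(a_n)$ in $\ZZ$ that $\tau_{Q'}$-converges to $0$ but whose $\tau_Q$-limit in $\prod_{r \in Q} \ZZ_r$ lies outside $\ZZ$. First fix any $\alpha \in \ZZ_p \setminus \ZZ$ (for definiteness, one may take $\alpha := \sum_{k \geq 0} p^{k^2}$; such $\alpha$ exists because $\ZZ_p$ is uncountable while $\ZZ$ is countable). For each $n \geq 1$ set $R_n := \{r \in (Q \cup Q') \setminus \{p\} : r \leq n\}$, which is a finite set of primes all distinct from $p$. The Chinese Remainder Theorem then produces an integer $a_n$ with
\[
  a_n \equiv \alpha \pmod{p^n}
  \qquad \text{and} \qquad
  a_n \equiv 0 \pmod{r^n} \text{ for every } r \in R_n.
\]

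It remains to verify the two properties of $A := \{a_n : n \geq 1\}$. Every $r \in Q'$ is distinct from $p$ and eventually belongs to $R_n$, so $a_n \to 0$ in each $\ZZ_r$ with $r \in Q'$, i.e.\ $(a_n) \to 0$ in $\tau_{Q'}$; hence $A \cup \{0\}$ is $\tau_{Q'}$-compact and $A$ is $\tau_{Q'}$-relatively compact. Conversely, the same congruences yield $a_n \to 0$ in $\ZZ_r$ for every $r \in Q \setminus \{p\}$, while $a_n \to \alpha$ in $\ZZ_p$; the resulting limit $c \in \prod_{r \in Q} \ZZ_r$ has $p$-coordinate equal to $\alpha \notin \ZZ$, so $c$ is not in the diagonal image of $\ZZ$, showing that $A$ is not $\tau_Q$-relatively compact. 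The main delicacy is the simultaneous approximation aspect: a single integer sequence must obey prescribed behaviour at infinitely many primes of $Q \cup Q'$ at once, which is exactly what the diagonal Chinese Remainder construction supplies; once this is in place, the non-integrality of $\alpha$ is what propagates the obstruction from the single coordinate $\ZZ_p$ to the whole product, thereby separating the two coarse structures.
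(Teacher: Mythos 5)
Your proof is correct, and while the high-level strategy matches the paper's — reduce (via the local determination of coarse group structures) to comparing relatively compact subsets, then build an integer sequence that $\tau$-converges to $0$ at all primes in one set but to a non-integer in the other completion — the key construction is genuinely different. The paper builds its sequence $k_n = k_{n-1}(p_1\cdots p_n)^{a_n}$ by taking iterated products whose exponents $a_n$ are multiplicative orders of $p_1\cdots p_n$ in $(\ZZ/q^{l_n}\ZZ)^{*}$; because each $k_n$ is literally a product of high powers of the primes $p_i\in Q$, convergence to $0$ in $\tau_Q$ is automatic, but the verification that the $q$-adic limit $\xi$ exists and does not lie in $\ZZ$ requires some care (it rests on the growth of $k_n$ relative to $q^{l_n}$), and the paper must separately discuss the cases $Q$ finite and $Q$ infinite. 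Your CRT construction is cleaner: by fixing the target non-integer $\alpha\in\ZZ_p\setminus\ZZ$ in advance and prescribing $a_n$ modulo $p^n$ and modulo $r^n$ for the finitely many $r\in(Q\cup Q')\setminus\{p\}$ with $r\le n$, both convergence claims (to $0$ in every $\ZZ_r$ with $r\ne p$, and to $\alpha$ in $\ZZ_p$) become one-line residue checks, the non-integrality of the limit in $\prod_{r\in Q}\ZZ_r$ is manifest from the $p$-coordinate alone, and the infinite and finite cases are treated uniformly because $R_n$ is always finite. The only cosmetic point worth flagging is the abuse of notation $a_n\equiv\alpha\pmod{p^n}$ for $\alpha\in\ZZ_p$, which one should read as ``$a_n$ agrees with any integer truncation $\alpha_n$ of $\alpha$ modulo $p^n$''; this is standard but deserves a parenthetical.
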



\

\noindent{\bf Bi-invariant word metrics.}
An important means of defining bi\=/invariant metrics on set\=/groups is by taking word metrics associated with conjugation invariant sets. Of course, the word metrics on $\ZZ$ discussed above are one example of such metrics. We also already mentioned bi\=/invariant word metrics on finitely generated set\=/groups, where we use them to define a canonical coarsification $\varcrs{bw}$ and provide examples of non coarsely abelian  coarse groups.
One reason why these metrics are important is that they are relatively easy to define and they tend to yield ``large metrics''. For instance, the metrics used to prove Proposition~\ref{prop:intro:bounded iff bounded metric} for finitely normally generated set\=/groups are bi\=/invariant word metrics. In fact, it is not hard to show that a finitely normally generated group is intrinsically bounded if and only if the bi\=/invariant word metric $d_{\overline{S}}$ associated with a finite normally generating set $S$ is bounded.

It turns out that it is always possible to recognize whether a coarsification of a set\=/group arises from a bi\=/invariant word metric metric. Coarse structures have a notion of a ``generating set of relations'' (paragraph before Lemma~\ref{lem:p1: generated coarse structure}), and we say that a coarsely connected coarse structure is \emph{coarsely geodesic} if it is generated by a single relation (Definition~\ref{def:p1:crs_geod}). Informally, this definition is an analog of the fact that if $(X,d)$ is a geodesic metric space we can estimate the distance $d(x,y)$ between any two points as the length of the shortest sequence $x_0,\ldots,x_n$ so that $x=x_0$, $y=x_n$ and $d(x_i,x_{i+1})\leq 1$ for all $1\leq i\leq n$. The following holds:

\begin{prop}[Proposition~\ref{prop:p2:coarse geodesic coarsifications}]
 Let $G$ be a set\=/group. A coarsification $(G,\CE)$ is coarsely geodesic if and only if there exists a bi\=/invariant word metric metric $d_{\overline{S}}$ so that $\CE=\CE_{d_{\overline{S}}}$.
\end{prop}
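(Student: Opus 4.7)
The ``if'' direction is straightforward: if $d_{\overline S}$ is a bi-invariant word metric then $\CE_{d_{\overline S}}$ is generated, as a coarse structure, by the single entourage $E_1 \coloneqq \{(g, h) \mid d_{\overline S}(g, h) \leq 1\}$, because its $n$-fold composition is exactly the distance-$\leq n$ relation and every $\CE_{d_{\overline S}}$-entourage sits in such a ball around the diagonal. Coarse connectedness is automatic since $\overline S$ generates $G$, so $(G, \CE_{d_{\overline S}})$ is coarsely geodesic.

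For the converse, suppose $\CE$ is coarsely connected and generated by a single entourage $E$, which we may symmetrise so that $E = E^{-1} \supseteq \Delta_G$. The plan is to extract a conjugation-invariant, symmetric generating set $S \subset G$ out of $E$ and then show $\CE = \CE_{d_{\overline S}}$. The first step is to observe that the ``increment set''
\[
B_0 \coloneqq \{g^{-1} h \mid (g, h) \in E\}
\]
is a bounded subset of $(G, \CE)$. This is where the group structure carries its weight: since the division map $G \times G \to G$, $(g, h) \mapsto g^{-1} h$, is controlled (composing inversion and multiplication), applying it to the product entourage $E \times \Delta_G$ of $G \times G$ and specialising to the diagonal pairs $(h, h)$ yields that $\{(e, g^{-1} h) \mid (g, h) \in E\}$ is an entourage of $G$, which is exactly the statement that $B_0 \cup \{e\}$ is bounded.

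Next, I would pass to the conjugation closure $B \coloneqq \bigcup_{g \in G} g B_0 g^{-1}$; this remains bounded because, for coarsified set-groups, the family of bounded sets is closed under arbitrary unions of conjugates (the remark following Proposition~\ref{prop:intro:det locally}). Setting $S \coloneqq B \setminus \{e\}$ gives a symmetric, conjugation-invariant subset with $\overline S = S$. Coarse geodesicity together with coarse connectedness furnishes, for each $g \in G$, a chain $e = g_0, g_1, \ldots, g_n = g$ with consecutive pairs in $E$; telescoping writes $g$ as a product of the increments $g_{i-1}^{-1} g_i \in B_0 \subseteq S \cup \{e\}$, so $S$ generates $G$ and the bi-invariant word metric $d_{\overline S}$ is well-defined.

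The final step is the identification $\CE = \CE_{d_{\overline S}}$, which I would establish via Proposition~\ref{prop:intro:det locally} by comparing bounded sets containing $e$. The inclusion $\CE \subseteq \CE_{d_{\overline S}}$ follows because $(g, h) \in E$ implies $g^{-1} h \in B_0 \subseteq \overline S \cup \{e\}$, so $E$ lies in the unit ball $E_1$ of $d_{\overline S}$; conversely every $d_{\overline S}$-ball around $e$ is contained in some iterated product $B^n$, which is $\CE$-bounded since $B$ is and bounded sets are closed under finite products. The main obstacle I expect is the very first step: verifying that the increment set $B_0$ is genuinely $\CE$-bounded (rather than merely the set-theoretic image of an entourage under an uncontrolled operation) is what really uses that $\CE$ comes from a coarse group rather than an arbitrary coarse space, and it is the interaction of equi-controlled left multiplication with controlled inversion that makes this work. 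Once this is in hand, the remaining arguments are essentially a translation between the generator $E$ on one side and the generating set $\overline S$ of the word metric on the other.
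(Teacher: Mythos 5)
Your proof is correct and follows essentially the same route as the paper: extract from the generating entourage $E$ a bounded, conjugation-invariant generating set $S$ (using equi-bi-invariance to see that the increment set $\{g^{-1}h\mid (g,h)\in E\}$ is bounded), then identify $\CE$ with $\CE_{d_{\overline S}}$. The only packaging differences are that the paper first replaces $E$ by the bi-translation-invariant enlargement $\Delta_G\ast(E\cup\op E)\ast\Delta_G$ so that the section ${}_e E$ is already conjugation-invariant (avoiding your separate conjugation-closure step), and the paper closes by invoking the explicit metric formula from the proof of Lemma~\ref{lem:p2:metric coarsifications} rather than comparing bounded neighborhoods of the identity via Proposition~\ref{prop:p1:neighbourhoods.of.identity.generate}; both choices lead to the same conclusion.
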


At this point we should issue a warning to the geometric group theorists: the inclusion $\ZZ< D_\infty$ shows that the canonical coarse structures $\varcrs{bw}$ are very poorly behaved when taking finite index subgroups! This shows that bi\=/invariant metrics depend rather subtly on the algebraic properties of set\=/groups and opens the way to all sorts of questions. For example, we do not know whether the canonical coarsifications of non\=/abelian free groups $F_n$ and $F_m$ can ever be isomorphic as coarse groups if $n\neq m$.

\

\noindent{\bf Coarse groups that are not coarsified set\=/groups.}
The vast majority of the examples of coarse groups that we gave so far are coarsified set\=/groups. We already remarked that one may find a coarse group $\crse G=(G,\CE)$ so that no choice of representatives for the coarse operations $\cop$ and $\cinversefn$ makes $G$ into a set\=/group. On its own, this statement is rather unsatisfactory: it is easy to obtain such an example by ``perturbing'' a set\=/group, but such a construction misses the point of the coarse geometric approach. Rather, a much more interesting question is whether there exist a coarse group that is not isomorphic (as a coarse group) to any coarsified set\=/group.

We conjecture that this is indeed the case, and we also describe a suitable candidate. Let $(F_2,\varcrs{bw})$ be the canonical metric coarsification of the free group of rank two. Let also $f\colon F_2\to \RR$ be a homomorphism sending one generator to $1$ and the other to some irrational number $\alpha$, and let $K\coloneqq f^{-1}([0,1])$. The asymptotic equivalence class $[K]$ is a coarse subgroup of $(F_2,\varcrs{bw})$ (this is the coarse kernel of the coarse homomorphism $\crse f\colon(F_2,\varcrs{bw})\to(\RR,\CE_{\abs{\mhyphen}})$), hence it uniquely determines a coarse group up to isomorphism. We conjecture that this kernel is not isomorphic to a coarsified set\=/group. 

We prove a criterion to recognize when a coarse group is isomorphic to a coarsified set\=/group (Proposition~\ref{prop:p2:non_coarsification general}). Using this criterion, we can reduce our conjecture to a very elementary---albeit complicated---combinatorial problem (Conjecture~\ref{conj:p2:non coarsification elementary}).

\

\noindent{\bf Coarse homomorphisms and coarse automorphisms.}
Given coarse groups $\crse G$, $\crse H$, it is often interesting to study the set $\chom(\crse{G,H})$ of coarse homomorphisms $\crse{f\colon G\to H}$ and the set\=/group $\cAut(\crse G)$ of coarse automorphisms of $\crse G$. If $\crse G=(G,\CE)$, $\crse H=(H,\CF)$ are coarsified set\=/groups, it is also natural to compare $\chom(\crse{G,H})$ with the set of set\=/group homomorphisms $\Hom(G,H)$. One reason for doing so, is that such comparisons can be interpreted as rigidity phenomena of the relevant set\=/groups (or lack thereof).

To begin with, consider the metric coarsification $(\ZZ,\CE_{\abs{\mhyphen}})$ associated with the Euclidean metric on $\ZZ$. It is not hard to show that a function $\phi\colon \ZZ\to\ZZ$ defines a coarse homomorphism if and only if it is an $\RR$\=/quasimorphism. 
It is well\=/known (and easy to show) that the quotient of the set of $\RR$\=/quasimorphisms of $\ZZ$ by identifying close quasimorphisms is in natural correspondence with $\RR$. We thus see that  $\chom((\ZZ,\CE_{\abs{\mhyphen}}),(\ZZ,\CE_{\abs{\mhyphen}}))=\RR$ in a natural way. 

It is suggestive to note that when we compare coarse homomorphisms $\ZZ\to\ZZ$ with $\RR$\=/quasimorphisms we are implicitly using the isomorphism of coarse groups $(\ZZ,\CE_{\abs{\mhyphen}})\cong (\RR,\CE_{\abs{\mhyphen}})$ to identify $\chom((\ZZ,\CE_{\abs{\mhyphen}}),(\ZZ,\CE_{\abs{\mhyphen}}))$ with $\chom((\ZZ,\CE_{\abs{\mhyphen}}),(\RR,\CE_{\abs{\mhyphen}}))$. 
Going one step further, we may also replace $\ZZ$ by $\RR$ in the domain and observe that  
\[
 \chom((\RR,\CE_{\abs{\mhyphen}}),(\RR,\CE_{\abs{\mhyphen}})) = \RR
\]
In other words, every coarse endomorphisms of $(\RR,\CE_{\abs{\mhyphen}})$ is close to a linear transformation. This fact is not special about $\RR$: with some care it can be extended to other normed vector spaces:

\begin{prop}[Proposition~\ref{prop:p2:banach chom are linear}]
 If $V_1$, $V_2$ are real Banach spaces then the set of coarse homomorphisms can be identified with the space of bounded linear operators
 \[\chom((V_1,\CE_{\norm{\mhyphen}_{V_1}}),(V_2,\CE_{\norm{\mhyphen}_{V_2}}))= \mathfrak{B}(V_1,V_2). \]
\end{prop}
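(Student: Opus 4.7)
The plan is to exhibit a bijection $\chom((V_1,\CE_{\norm{\mhyphen}_{V_1}}),(V_2,\CE_{\norm{\mhyphen}_{V_2}})) \to \mathfrak{B}(V_1,V_2)$ sending a coarse homomorphism to its unique close bounded linear representative. The easy direction is that any bounded linear $T\colon V_1\to V_2$ satisfies $\norm{T(x)-T(y)}\leq \norm{T}\norm{x-y}$ (so it is controlled) and the homomorphism identity on the nose, so it represents an element of $\chom$. Moreover, two bounded linear operators $T,T'$ whose difference is bounded in norm must satisfy $n\norm{(T-T')(x)}=\norm{(T-T')(nx)}\leq C$ for all $n\in\NN$ and $x\in V_1$, forcing $T=T'$. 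Hence distinct elements of $\mathfrak{B}(V_1,V_2)$ represent distinct classes in $\chom$ and the only remaining task is to hit every class.

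For the surjective direction, pick a representative $f\colon V_1\to V_2$ of a coarse homomorphism. The defect is uniformly bounded: there is $D$ with $\norm{f(x+y)-f(x)-f(y)}_{V_2}\leq D$ for all $x,y\in V_1$. Since $V_1$ is a geodesic metric space, the controlled function $f$ is automatically coarsely Lipschitz, so there exist $L,A$ with $\norm{f(x)-f(y)}_{V_2}\leq L\norm{x-y}_{V_1}+A$. I then invoke the Hyers--Ulam stability trick: set $T(x)\coloneqq \lim_{n\to\infty}2^{-n}f(2^n x)$. A telescoping estimate using the defect bound shows the sequence is Cauchy in $V_2$, so the limit exists by completeness, satisfies $T(x+y)=T(x)+T(y)$ (additivity), and obeys $\norm{T(x)-f(x)}_{V_2}\leq D$ (so $T$ is close to $f$, and in particular represents the same coarse homomorphism).

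It remains to upgrade the additive map $T$ to a bounded $\RR$\=/linear one. From $\norm{T(x)}_{V_2}\leq \norm{f(x)-f(0)}_{V_2}+(D+\norm{f(0)-T(0)}_{V_2})\leq L\norm{x}_{V_1}+A'$, together with additivity $T(nx)=nT(x)$, I get $\norm{T(x)}_{V_2}\leq L\norm{x}_{V_1}+A'/n$ for every $n\in\NN$, hence $\norm{T(x)}_{V_2}\leq L\norm{x}_{V_1}$. Thus $T$ is Lipschitz; being additive and continuous it is $\RR$\=/linear, and the bound shows $T\in\mathfrak{B}(V_1,V_2)$. This produces the required $T$ close to $f$, and by the uniqueness remark above it depends only on the class $\crse f$.

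The main technical hurdle is the Hyers--Ulam convergence argument: once the defect $D$ is in hand one has to verify carefully that $2^{-n}f(2^nx)$ is Cauchy and that the limit inherits additivity on general pairs $(x,y)$, not just on dyadic scales. After that, the passage from additivity plus coarse Lipschitz control to genuine $\RR$\=/linearity is routine, and uniqueness is forced by the observation that a globally bounded linear map between Banach spaces must be zero.
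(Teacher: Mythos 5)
Your proposal is correct and follows essentially the same route as the paper: homogenize $f$ via $T(x)=\lim_n 2^{-n}f(2^n x)$ (which the paper delegates to its Proposition~\ref{prop:p1:homogeneization.of.qmorph}), use the vanishing of the rescaled defect to get additivity, then use the controlled/coarse-Lipschitz estimate together with $T(nx)=nT(x)$ to get a genuine Lipschitz bound and hence continuity, which upgrades $\QQ$-linearity to $\RR$-linearity. The only cosmetic difference is that the paper deduces continuity at $0$ from $\bar f(B(0;1/k))\subseteq B(0;R/k)$ whereas you derive a global Lipschitz constant directly; both are the same rescaling argument.
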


\begin{cor}
 For every $n\in\NN$ we have $\cAut(\ZZ^n,\CE_{\norm{\mhyphen}})\cong \Gl(n,\RR)$.
\end{cor}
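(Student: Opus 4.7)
The plan is to chain the coarse group isomorphism $(\ZZ^n,\CE_{\norm{\mhyphen}})\cong(\RR^n,\CE_{\norm{\mhyphen}})$ with the previous proposition and then isolate the invertible elements on the linear side. First I would invoke the fact (already established in the paper in the adeles/cocompact lattice spirit, and a direct instance of the $\ZZ\hookrightarrow\RR$ example) that the inclusion $\ZZ^n\hookrightarrow\RR^n$ is a coarse equivalence that is also a coarse homomorphism, hence induces an isomorphism of coarse groups. Since coarse group isomorphisms induce isomorphisms of automorphism set-groups in the obvious way, this reduces the problem to showing $\cAut(\RR^n,\CE_{\norm{\mhyphen}})\cong\Gl(n,\RR)$.

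Next, I would apply Proposition~\ref{prop:p2:banach chom are linear} with $V_1=V_2=\RR^n$. Because every linear operator between finite-dimensional normed spaces is automatically bounded, we get a bijection
\[
 \chom\bigpar{(\RR^n,\CE_{\norm{\mhyphen}}),(\RR^n,\CE_{\norm{\mhyphen}})}\;\longleftrightarrow\;M_n(\RR),
\]
sending the class $\crse f$ to the unique linear representative. Composition of coarse homomorphisms corresponds to composition of linear representatives: if $T_f,T_g$ are the linear representatives of $\crse f,\crse g$, then $T_f\circ T_g$ is a linear representative of $\crse f\circ \crse g$, and by uniqueness this \emph{is} the matrix product. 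Hence the bijection intertwines the monoid structures, and in particular restricts to an isomorphism of set-groups between $\cAut(\RR^n,\CE_{\norm{\mhyphen}})$ and the group of invertible elements of $M_n(\RR)$.

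It then remains to identify the invertible elements of $M_n(\RR)$ in the coarse sense with $\Gl(n,\RR)$. If $T\in\Gl(n,\RR)$, then $T^{-1}\in M_n(\RR)$ provides a linear (hence coarse) two-sided inverse, so $\crse T$ is a coarse automorphism. Conversely, suppose $T\in M_n(\RR)$ represents a coarse automorphism and let $S$ be the linear representative of its coarse inverse; then $S\circ T$ is close to $\id_{\RR^n}$, meaning that $\norm{(S\circ T-\id)(x)}$ is uniformly bounded in $x\in\RR^n$. A linear map with uniformly bounded image is necessarily zero, so $S\circ T=\id$ and similarly $T\circ S=\id$, giving $T\in\Gl(n,\RR)$. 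Equivalently: if $T$ were not invertible, its kernel would contain a line, hence an unbounded subset collapsing to $0$, which is incompatible with $\crse T$ being a coarse equivalence.

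The main obstacle I anticipate is not any individual step but rather the bookkeeping: one must be careful to check that the bijection from the previous proposition really is a monoid homomorphism under composition (not just a set bijection), and that ``close to the identity'' for a linear map on a non-compact space forces equality. Both amount to the rigidity observation that a bounded linear map with bounded image vanishes. Everything else is packaging.
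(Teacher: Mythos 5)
Your proposal is correct and follows essentially the same route as the paper: reduce via the coarse group isomorphism $(\ZZ^n,\CE_{\norm{\mhyphen}})\cong(\RR^n,\CE_{\norm{\mhyphen}})$, then use Proposition~\ref{prop:p2:banach chom are linear} (equivalently Corollary~\ref{cor:p2:coarse hom Banach space are linear}) to identify coarse endomorphisms of $\RR^n$ with $M_n(\RR)$ and its invertibles with $\Gl(n,\RR)$. The rigidity step you flag — a linear map uniformly close to $\id$ on all of $\RR^n$ equals $\id$ — is exactly the uniqueness-of-linear-representative observation that underlies the paper's identification, so nothing extra is needed.
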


Recall that an arbitrary coarse homomorphism need not have a well\=/defined coarse kernel. It is interesting to note that in the linear setting we know precisely when this is the case:
 
\begin{thm}[Theorem~\ref{thm:p2:banach chom and kernels}]
 Let $\crse T\colon (V_1,\CE_{\norm{\mhyphen}_1})\to(V_2,\CE_{\norm{\mhyphen}_2})$ be a coarse homomorphism between Banach spaces and let $T$ be its linear representative. Then $\crse T$ has a coarse kernel if and only if $T(V_1)$ is a closed subspace of $V_2$. When this is the case, $\cker(\crse T)=[\ker(T)]$.
\end{thm}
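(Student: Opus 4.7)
My plan is to use the Open Mapping Theorem as the decisive tool in the forward direction and to exploit its failure in the backward direction. Since every coarse homomorphism between Banach spaces admits a bounded linear representative $T$ by the preceding proposition, and since a subset $W \subseteq V_1$ naturally qualifies as a candidate coarse preimage of $[0_{V_2}]$ exactly when its image $T(W)$ is bounded, the problem reduces to asking when every such $W$ lies at finite Hausdorff distance from $\ker(T)$.

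For the forward direction, assume $T(V_1)$ is closed. Then $T(V_1)$ inherits a complete norm from $V_2$, so the induced map $\bar T \colon V_1/\ker(T) \to T(V_1)$ is a continuous linear bijection between Banach spaces. The Open Mapping Theorem supplies a constant $C > 0$ such that $d(v,\ker(T)) \leq C \norm{T(v)}_{V_2}$ for all $v \in V_1$. Hence for any bounded set $B \subseteq V_2$ containing $0$, the preimage $T^{-1}(B)$ is sandwiched between $\ker(T)$ and the bounded neighborhood of $\ker(T)$ of radius $C \cdot \diam(B)$. All candidate preimages are then asymptotic to $\ker(T)$, so $\cker(\crse T) = [\ker(T)]$.

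For the converse, suppose $T(V_1)$ is not closed. I will exhibit two candidate preimages that fail to be asymptotic. Since $T(V_1)$ is not complete, the injective linear map $\bar T \colon V_1/\ker(T) \to T(V_1)$ cannot be bounded below, for otherwise its image would be isomorphic to the Banach space $V_1/\ker(T)$ and hence closed in $V_2$. I can therefore select vectors $v_n \in V_1$ with $\norm{T(v_n)}_{V_2} \leq 1$ yet $d(v_n,\ker(T)) \to \infty$. The set $W \coloneqq \ker(T) \cup \{v_n : n \in \NN\}$ satisfies $T(W) \subseteq B_1(0) \subseteq V_2$, so $W$ is a legitimate candidate preimage of $[0_{V_2}]$; however the sequence $(v_n)$ witnesses that $W$ is not at finite Hausdorff distance from $\ker(T)$. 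The existence of two non\=/asymptotic candidates rules out a coarse preimage, so $\crse T$ has no coarse kernel.

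The main obstacle I foresee is not conceptual but bookkeeping: I need to confirm that the notion of ``candidate preimage'' I am using---subsets whose image is bounded in $V_2$---matches Definition~\ref{def:p1:coarse preimage} closely enough that producing two non\=/asymptotic candidates really obstructs the existence of $\crse{T^{-1}}([0_{V_2}])$. Granted this compatibility, the theorem is a functional\=/analytic repackaging of the Open Mapping Theorem, with the quotient map $\bar T \colon V_1/\ker(T) \to T(V_1)$ being a coarse equivalence in the closed case and failing to even be a coarse embedding in the non\=/closed case.
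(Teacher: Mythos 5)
Your overall strategy---Open Mapping Theorem in the forward direction, its failure in the backward direction---is the same as the paper's, and your forward argument is essentially correct. However, the gap you yourself flag as ``bookkeeping'' at the end is in fact a genuine missing step, and your converse as written is not valid.

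The principle you invoke, that ``the existence of two non-asymptotic candidates rules out a coarse preimage,'' is false. A coarse preimage is a \emph{maximal} candidate under coarse containment; two candidates can both sit below a third, strictly larger one. Your set $W = \ker(T)\cup\{v_n\}$ does show that $[\ker(T)]$ is not maximal among candidates, but that leaves open whether some larger coarse subspace is. In fact the paper's own Example~\ref{exmp:p1:Hilbert.space.monomorphism no ker} shows exactly this phenomenon: the $\|\cdot\|_2'$-balls around the origin form a strictly increasing chain of candidates, and the whole point is that one must argue there is no maximum, not merely that the ``obvious'' one is not it. Equivalently, your opening reduction---``the problem reduces to asking when every such $W$ lies at finite Hausdorff distance from $\ker(T)$''---implicitly assumes that if a coarse kernel exists then it equals $[\ker T]$, which is not given.

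The paper closes that gap as a preliminary step using Proposition~\ref{prop:p1:homogeneization.of.qmorph}. Since the coarse kernel, if it exists, is a coarse \emph{subgroup} $\crse K$ (Lemma~\ref{lem:p1:preimage of subgroups}), the inclusion $\crse K\hookrightarrow\crse V_1$ is a coarse homomorphism and hence admits a representative $\bar\iota$ with $\bar\iota(k\cop k)=2\bar\iota(k)$; thus $\bar\iota(K)$ is closed under multiplication by $2^n$. If any $x\in\bar\iota(K)$ had $T(x)\neq 0$, then $T(2^n x)=2^n T(x)$ would be unbounded, contradicting $T(K)$ bounded. Hence $K\csub\ker(T)$, and since $\ker(T)$ is itself a candidate, $\crse K=[\ker T]$. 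Note that a naive attempt to bypass this---iterating the subgroup law $k\cop_K k\approx 2k$---produces only an $O(n)$-error, so the scale-invariance supplied by homogenization is genuinely needed, not just convenient. Once you add this step, your exhibition of $W$ gives the required contradiction and your proof is complete.
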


On the non\=/abelian side, one case of special interest is where $\crse G=(G,\varcrs{bw})$ and $\crse H=(H,\varcrs{bw})$ are the canonical coarsifications of finitely generated set\=/groups. In this case every set\=/group homomorphism is controlled, so there is a natural homomorphism $\Phi\colon\Hom(G,H)\to\chom(\crse G,\crse H)$. It follows from the bi\=/invariance of $\varcrs{bw}$ that for every $h\in H$ any homomorphism $f\colon G\to H$ is close to its conjugate $c_h\circ f(x)\coloneqq h f(x) h^{-1}$. We thus obtain a quotient map 
\[
 \overline\Phi\colon\Hom(G,H)/\text{conj.}\to\chom(\crse G,\crse H).
\]
In general, $\overline{\Phi}$ need not be injective nor surjective. However, it can be studied on a case-by-case basis to give an idea of the level of compatibility of the algebraic/coarse geometric properties of $G$ and $H$.

In the case $G=H$, this $\overline{\Phi}$ also defines a homomorphism $\overline{\Phi}\colon \Out(G)\to\cAut(G,\varcrs{bw})$. Note that for $G=\ZZ^n$ the coarse structure $\varcrs{bw}$ coincides with the natural metric coarse structure $\CE_{\norm{\mhyphen}}$ and hence $\cAut(\ZZ^n,\varcrs{bw})\cong\Gl(n,\RR)$. We then see that $\overline{\Phi}$ is simply given by the inclusion $\Gl(n,\ZZ)\subset\Gl(n,\RR)$. This shows that in this case $\overline{\Phi}$ is injective and it is very far from being surjective. One may also say that there are many ``exotic'' coarse automorphisms that do not arise from automorphisms of $\ZZ^n$.

Another case of interest is that of the non\=/abelian free set\=/group $F_n$. The set\=/group $\cAut(F_n,\varcrs{bw})$ for $n\geq 2$ is rather mysterious. However, we can use the work of Hartnick--Schweitzer \cite{HS} to prove the following.

\begin{prop}[Subsection~\ref{sec:p2:coarse hom of grps with canc met}]
 The map $\overline{\Phi}\colon \Out(F_n)\to \cAut(F_n,\varcrs{bw})$ is injective. Furthermore, $\cAut(F_n,\varcrs{bw})$ is uncountable and contains torsion of arbitrary order.
\end{prop}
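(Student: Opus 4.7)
The plan is to reduce each of the three assertions to results of Hartnick--Schweitzer, after identifying $\cAut(F_n, \varcrs{bw})$ with the group of ``quasi\=/outer automorphisms'' of $F_n$ that they study.

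Injectivity of $\overline\Phi$: I would argue the contrapositive and show that any $\varphi \in \aut(F_n)$ close to $\id_{F_n}$ in the bi\=/invariant word metric must be inner. Suppose there is a uniform $C$ with $d_{\overline S}(\varphi(w), w) \leq C$ for every $w \in F_n$. As a first reduction, the abelianization map $\pi\colon F_n \to \ZZ^n$ is $1$\=/Lipschitz from $d_{\overline S}$ to the $\ell^1$ metric, so the induced $\bar\varphi \in \GL(n,\ZZ)$ satisfies $\|\bar\varphi(v) - v\|_1 \leq C$ for every $v \in \ZZ^n$ and hence must be the identity. Secondly, for every homogeneous quasimorphism $q\colon F_n \to \RR$ the composition $q \circ \varphi$ differs from $q$ by a uniformly bounded amount; as both are homogeneous this forces $q \circ \varphi = q$ on the nose. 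Faithfulness of the natural $\Out(F_n)$\=/action on homogeneous quasimorphisms modulo homomorphisms, established by Hartnick--Schweitzer, then yields that $\varphi$ is inner.

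Uncountability and torsion: Here I would invoke directly the constructions of Hartnick--Schweitzer, who produce uncountably many pairwise non\=/close bi\=/Lipschitz self\=/maps of $(F_n, d_{\overline S})$ that satisfy the coarse homomorphism identity. Each such map is automatically controlled with respect to $\varcrs{bw}$ and defines an element of $\cAut(F_n, \varcrs{bw})$; distinct maps remain distinct after passing to the coarse world because the quasimorphism test from the previous paragraph separates them. To realize $\ZZ/k\ZZ$ as a subgroup for each $k \geq 1$, I would use the finite\=/order symmetries that are built into their construction (for instance, cyclic permutations of a carefully chosen family of Brooks quasimorphisms), verifying that the $k$\=/fold composition is equal to the identity up to uniformly bounded error.

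Main obstacle: The technical heart is the Hartnick--Schweitzer rigidity theorem that $\Out(F_n)$ acts faithfully on the quasimorphism space, which is a genuine rigidity statement about free groups and not a formal consequence of the coarse\=/group framework. Once it is in hand, translating between their quasi\=/automorphism setup and coarse automorphisms of $(F_n,\varcrs{bw})$ is essentially bookkeeping; the only subtlety is checking that closeness in their sense agrees with closeness in $\varcrs{bw}$, and that their torsion elements really have the advertised order in $\cAut(F_n, \varcrs{bw})$ rather than only in some quotient.
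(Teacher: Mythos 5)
Your overall strategy is the same as the paper's: translate everything to the Hartnick--Schweitzer framework and read off the conclusions from their results. Your injectivity argument is essentially an unfolding of the proof of HS's Proposition~5.1 (that $\Out(F_n)\hookrightarrow\qOut(F_n)$), and you correctly identify this as the load-bearing rigidity input. The detour through abelianization is redundant: homomorphisms $F_n\to\RR$ are themselves homogeneous quasimorphisms factoring through $\ZZ^n$, so the conclusion $\bar\varphi=\id$ is already a special case of your ``$q\circ\varphi=q$ for every homogeneous $q$'' step, and neither $\bar\varphi=\id$ nor $q\circ\varphi=q$ alone reaches innerness without invoking HS's faithfulness theorem. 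The paper simply observes that the composite $\Out(F_n)\to\cAut(F_n,\varcrs{bw})\to Q(F_n,F_n)$ is HS's injective map and reads off injectivity of the first arrow, which amounts to the same thing.

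There is, however, one genuine gap in your treatment of uncountability and torsion: you assert that the HS maps are ``automatically controlled with respect to $\varcrs{bw}$'' via being bi-Lipschitz for $d_{\overline S}$. Hartnick and Schweitzer do \emph{not} work with the cancellation metric; their maps are controlled with respect to the coarse structure $\varcrs{G\to\RR}$, which is strictly \emph{coarser} than $\varcrs{bw}$ (indeed $(F_n,\varcrs{G\to\RR})$ is coarsely abelian while $(F_n,\varcrs{bw})$ is not). Being controlled for a coarser structure says nothing about the finer one, so ``automatic'' is precisely what it is not. The paper's Lemma~\ref{lem:p2:criterion chom canonical crsificatn} and Corollary~\ref{cor:p2:criterion chom canonical crsificatn} exist exactly to fill this gap: they show that the local verification criterion of \cite[Prop.~5.3]{HS} already forces controlledness in the finer coarse structure $\varcrs{bw}$, because $\varcrs{bw}=\varcrs[grp]{fin}$ is the \emph{minimal} connected group coarsification. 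Once that is in place, the wobbling group embedding of \cite[\S5]{HS} supplies both uncountability and torsion of arbitrary order, and distinctness in $\cAut(F_n,\varcrs{bw})$ follows from distinctness in $\qOut(F_n)$ by composing with the canonical map of Corollary~\ref{cor:p2:chom are HS_qmorph}, as you indicate. Your suggestion of using cyclic permutations of Brooks quasimorphisms is a reasonable substitute for extracting torsion, but the wobbling group is cleaner and handles uncountability and all torsion orders simultaneously.
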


\begin{rmk}
 Hartnick--Schweitzer define a notion of quasimorphism between non\=/abelian set\=/groups. We can show that every set\=/group $G$ has a canonical coarsification $(G,\varcrs{G\to\RR})$ such that the space of quasimorphisms from $G$ to $H$ á la Hartnick--Schweitzer coincides with the space of coarse homomorphisms $\chom((G,\varcrs{G\to\RR}),(H,\varcrs{H\to\RR}))$ (Proposition~\ref{prop:p2:quasim_iff_chom pullback}).
\end{rmk}

\

\noindent{\bf Fragmentary coarse spaces}
The last topic we address is the study of the space of controlled functions between coarse spaces. Ideally, one would like to define a natural coarse structure on the set of controlled functions $f\colon(X,\CE)\to(Y,\CF)$. Unfortunately, it is not possible to define such a coarse structure that is well\=/behaved under taking compositions.

For an idea of what goes wrong, consider the following example. Let $X$ be a metric space and $(f_i)_{i\in I}$, $(f_i')_{i\in I}$ be families of Lipschitz functions from $X$ to itself. One may then say that these two families are uniformly close if there is some constant $D$ so that $d(f_i(x),f_i'(x))\leq D$ for every $i\in I$, $x\in X$. When this happens, we think of these families to be coarsely indistinguishable. However, if we are now given two more families of uniformly close Lipschitz functions $(g_i)_{i\in I}$, $(g_i')_{i\in I}$ there is no reason why the compositions $(g_i\circ f_i)_{i\in I}$ and $(g_i'\circ f_i')_{i\in I}$ should again be uniformly close. We are then in the problematic situation where the composition of two indistinguishable families of functions are no longer indistinguishable.
One way to solve this issue is to only consider families of functions with uniformly bounded Lipschitz constants (albeit the bound may be arbitrarily large). As it turns out, this solution can be implemented in the full generality of coarse spaces via means of ``fragmentary coarse structures''.

A \emph{fragmentary coarse structure} $\CE$ on a set $X$ is a family of subsets of $X\times X$ that satisfies some axioms which are only marginally weaker than those of a coarse structure (specifically, we do not require that $\CE$ contains the diagonal $\Delta_X$, see Definition~\ref{def:p2:fragmentary coarse.structure}). In particular, every coarse structure is also a fragmentary coarse structure.
In analogy with coarse spaces, we may then define a category of frag\=/coarse spaces. Crucially, this category is much better suited for studying spaces of controlled functions.

If $\crse X=(X,\CE)$ and $\crse Y=(Y,\CF)$ are (frag-)coarse spaces, we define a frag\=/coarse structure $\CF^\CE$ on the set of all controlled functions $(X,\CE)\to(Y,\CF)$ (Definition~\ref{def:p2:exponential fcrse strucuture}). We denote by $\crse{Y^{X}}$ the resulting frag\=/coarse space. This definition is natural in the following sense:

\begin{thm}[Theorem~\ref{thm:p2:exponential}]\label{thm:intro:exponential}
 Let $\crse X$, $\crse Y$, $\crse Z$ be (frag-)coarse spaces. Then:
 \begin{enumerate}
  \item the evaluation map $\crse{Y^{X}\times X}\to \crse Y$ sending $(f,x)$ to $f(x)$ is controlled;
  \item the composition map $\crse{Z^{Y}\times Y^{X}\to Z^{X}}$ is controlled;
  \item there is a natural bijection 
  \[
   \braces{\text{(frag-)coarse maps }\crse{Z\times X\to Y}} \longleftrightarrow
   \braces{\text{frag-coarse maps }\crse{Z\to Y^{X}}}. 
  \]
 \end{enumerate}
\end{thm}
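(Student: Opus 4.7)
The plan is to unwind the definition of $\CF^\CE$ and verify each claim by direct entourage\=/chasing. The generating entourages of $\CF^\CE$ have the form
\[
E_{D, F} = \bigl\{(f, f') \mathrel{\big|} (f(x), f'(x')) \in F \text{ for all } (x, x') \in D\bigr\}
\]
for $D \in \CE$ (taken to contain the diagonal) and $F \in \CF$. The crucial point is that a pair $(f, f')$ lying in $E_{D, F}$ simultaneously encodes uniform closeness ($(f(x), f'(x)) \in F$ for the diagonal part of $D$) and equi\=/control (both $f \times f$ and $f' \times f'$ send $D$ into $F$). This dual function is precisely what makes compositions work.

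For part (1), given a basic product entourage $E_{D, F} \times D'$ on $\crse{Y^X \times X}$, we may assume $D' \subseteq D$ by intersecting. Then for $((f, x), (f', x'))$ in this entourage, the condition $(x, x') \in D$ directly gives $(f(x), f'(x')) \in F$. Hence the evaluation map sends this entourage into $F \in \CF$.

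For part (2), I take basic product entourages $E_{D', F'}^{Z^Y} \times E_{D, F}^{Y^X}$ on $\crse{Z^Y \times Y^X}$ and trace through the composition. For $((g, f), (g', f'))$ in this product and $(x, x') \in D$, the $Y^X$\=/factor gives $(f(x), f'(x')) \in F$. Provided one can arrange $F \subseteq D'$ (by refining the $Z^Y$\=/entourage within the filter $\CG^\CF$, using that its entourages encode equi\=/control of $g, g'$), the $Z^Y$\=/defining property applied to $(f(x), f'(x')) \in D'$ yields $(g(f(x)), g'(f'(x'))) \in F'$, showing the composite pair lies in $E_{D, F'}^{Z^X}$. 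The main obstacle will be this parameter\=/matching step; once one verifies that the filter of entourages on $Z^Y$ contains enough equi\=/controlled refinements, the estimate itself is immediate.

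Part (3) is largely formal once (1) is established. Given a frag\=/coarse map $\crse{h\colon Z \to Y^X}$, uncurry as $h^\flat(z, x) \coloneqq h(z)(x)$, so that $h^\flat = \mathrm{ev} \circ (h \times \id_X)$; controlledness follows from (1) combined with the elementary fact that $h \times \id_X$ is controlled. Conversely, given controlled $f\colon Z \times X \to Y$, set $f^\sharp(z) \coloneqq f(z, \variable)$. Each $f^\sharp(z)$ is controlled by restricting $f$ to $\{z\} \times X$. To see that $f^\sharp$ is itself a frag\=/coarse map, for any $E \in \CG$ and any $D \in \CE$ the controlledness of $f$ on the product entourage $E \times D$ produces $F \in \CF$ with $(f(z, x), f(z', x')) \in F$ whenever $(z, z') \in E$ and $(x, x') \in D$; this translates exactly to $(f^\sharp(z), f^\sharp(z')) \in E_{D, F}$ for all $(z, z') \in E$, which is the required entourage condition. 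The inverse property of the curry/uncurry constructions is immediate from the definitions, completing the bijection.
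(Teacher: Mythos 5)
Your overall strategy (direct entourage-chasing with the relations $F^E$) is the paper's approach, so you are on the right track, and parts (1) and (3) land close to the paper's argument. But your framing of $\CF^\CE$ as being ``generated'' by the entourages $E_{D,F}$ is technically wrong and is precisely what produces the obstacle you flag in part (2). By definition $\CF^\CE$ is the \emph{intersection} $\bigcap_{E\in\CE}\CF^E$, and for fixed $D,F$ the single relation $F^D$ is generally \emph{not} an element of $\CF^\CE$: take $E'\in\CE$ much larger than $D$ and a pair $(f,g)\in F^D$ that fails to be equi-controlled over $E'$; then $F^D\nsubseteq F'^{E'}$ for any $F'\in\CF$. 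So a controlled set of $\CF^\CE\otimes\CE$ is a subset of $C\otimes D'$ for some $C$ that must lie in $\CF^E$ for \emph{every} $E\in\CE$ simultaneously, not just for one chosen $D$. Your ``intersect $D'$ with $D$'' step in part (1) happens to give the right estimate because one can choose $F$ \emph{after} seeing $D'$, but as stated it does not quantify over all controlled sets of the product.

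Once you use the intersection characterization directly, the parameter-matching problem in part (2) evaporates: the parameters are chosen in nested order, so there is nothing to match. Fix $E\in\CE$. Since $C_2\in\CF^\CE\subseteq\CF^E$, pick $F\in\CF$ with $C_2\subseteq F^E$. Since $C_1\in\CD^\CF\subseteq\CD^F$, pick $D\in\CD$ with $C_1\subseteq D^F$. Then for $((f_1,g_1),(f_2,g_2))\in C_1\otimes C_2$ and $(x,x')\in E$ one gets $(f_2(x),g_2(x'))\in F$ and hence $(f_1(f_2(x)),g_1(g_2(x')))\in D$, so the composite pair lies in $D^E$; since $E$ was arbitrary, the image lies in $\CD^\CE$. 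The $F$ does not need to be forced inside some prechosen $D'$; it is produced first, and the $\crse{Z^Y}$-bound is produced second, using that $\CD^\CF$ is itself an intersection. You should also be a little more careful in part (3): ``the inverse property of curry/uncurry is immediate'' covers the pointwise bijection, but the bijection on \emph{frag-coarse maps} additionally requires that frag-close $h,h'\colon\crse{Z\times X\to Y}$ yield frag-close $\lambda h,\lambda h'$. The paper isolates this as a separate uniqueness statement (part (3) of Theorem~\ref{thm:p2:exponential}) and its verification, while short, is not a triviality.
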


\begin{rmk}
 In categorical terms, Theorem~\ref{thm:intro:exponential} means that the category \Cat{FragCrs} of frag\=/coarse spaces is Cartesian closed and that the category of coarse spaces can be enriched over \Cat{FragCrs}.
\end{rmk}

Theorem~\ref{thm:intro:exponential} allows us to take a different point of view on coarse actions. For a given coarse space $\crse Y$, the frag\=/coarse space $\crse{Y^{Y}}$ is a \emph{frag\=/coarse monoid}. One may then describe coarse actions $\crse{G\curvearrowright Y}$ as homomorphisms of frag\=/coarse monoids $\crse{G\to Y^{Y}}$ (Proposition~\ref{prop:p2:actions as monoid homomorphisms}).

We may also carefully define a frag\=/coarse space of self coarse equivalences $\crse{\ctraut{ Y}}$ which is not just a frag\=/coarse monoid, but also a frag\=/coarse group (this is somewhat more subtle than just taking the subspace of $\crse{Y^{Y}}$ of functions that have a coarse inverse, see Definition~\ref{def:p2:frag space of ceq}). 
We may again identify coarse actions $\crse{G\curvearrowright Y}$ with coarse homomorphisms $\crse{G\to \ctraut{Y}}$. With some work, we may then use the coarse homomorphism induced by the coarse action by left translation of a coarse group $\crse{G}$ on itself to prove the following:

\begin{thm}[Theorem~\ref{thm:p2:coarse groups as subgroups}]
 Every coarse group is isomorphic to a coarse subgroup of a frag\=/coarsified set\=/group. 
\end{thm}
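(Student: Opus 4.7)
The approach is to implement Cayley's theorem in the coarse category. Fix a coarse group $\crse{G}=(G,\CE)$ together with an adapted representative $\ast$ of the coarse multiplication, so that $e\ast g=g\ast e=g$ and $g\ast g^{-1}=g^{-1}\ast g=e$ (Lemma~\ref{lem:p1:adapted representatives exist}). The coarse action by left translation $\crse{G}\curvearrowright\crse{G}$ has representative $\lambda(h,g)=h\ast g$. Using the identification of coarse actions with coarse homomorphisms to $\crse{\ctraut{G}}$ (Proposition~\ref{prop:p2:actions as monoid homomorphisms} combined with the enrichment in frag-coarse spaces of Theorem~\ref{thm:p2:exponential}), $\lambda$ yields a coarse homomorphism $\Lambda\colon\crse{G}\to\crse{\ctraut{G}}$ which at the level of representatives sends $g$ to the left multiplication $L_g(x)=g\ast x$. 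The theorem will follow once we can (i) promote the target from the frag-coarse monoid $\crse{\ctraut{G}}$ to a genuine frag-coarsified set-group, and (ii) show that $\Lambda$ is a coarse embedding, so that its coarse image is a coarse subgroup of that set-group isomorphic to $\crse{G}$.

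\textbf{Step 1: $\Lambda$ is a coarse embedding.} Unwinding the definition of the frag-coarse structure on $\crse{\ctraut{G}}$, closeness of $L_{g_1}$ and $L_{g_2}$ translates into uniform $\CE$-closeness of the pairs $\{L_{g_1}(x),L_{g_2}(x)\}$ as $x$ ranges over $G$. Specializing to $x=e$ and invoking the adapted unit axiom yields $g_1\asymp g_2$ in $\crse{G}$. Hence $\Lambda$ is proper, and being a coarse homomorphism it is a coarse embedding by Corollary~\ref{prop:p1:proper.hom.coarse.embedding}.

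\textbf{Step 2: refining the target to a set-group.} Elements of $\crse{\ctraut{G}}$ are controlled self-maps with coarse inverses, but these inverses exist only up to bounded error, so $\ctraut{G}$ is only a monoid. I would replace it by $\bijctraut{G}$, the space of \emph{bijective} self coarse equivalences of $\crse{G}$, which is genuinely a set-group under composition and set-theoretic inversion, equipped with the frag-coarse structure inherited from $\crse{\ctraut{G}}$. The lifting step is to realize each $L_g$ by a bijection $\widetilde{L}_g$ close to $L_g$, in such a way that the family $\{\widetilde{L}_g\}_{g\in G}$ remains equi-controlled and defines a coarse homomorphism into $\crse{\bijctraut{G}}$. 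Bijectification of an individual $L_g$ proceeds Schr\"oder--Bernstein-style using the coarse inverse $L_{g^{-1}}$; uniformity in $g$ relies on the adapted representatives and the slack built into the frag-coarse (as opposed to coarse) setting.

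\textbf{Conclusion and main obstacle.} Once this lifting is established, $\Lambda$ factors through a coarse embedding $\crse{G}\hookrightarrow\crse{\bijctraut{G}}$, and its coarse image is automatically a coarse subgroup (Proposition~\ref{prop:p1:closed.asymp.classes.are.subgroups}) canonically isomorphic to $\crse{G}$. The crux of the argument is Step 2: the uniform bijectification and the verification that the resulting assignment $g\mapsto\widetilde{L}_g$ respects the frag-coarse group structure. This is precisely where the passage from a coarse structure to a frag-coarse structure is essential --- the perturbations needed to convert controlled maps into bijections behave badly against the axioms of an ordinary coarse structure but are absorbed comfortably by a frag-coarse one, which is exactly the reason the theorem is stated in the frag-coarsified rather than coarsified category.
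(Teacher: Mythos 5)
Your plan is the right one and matches the paper at the top level: implement a coarse Cayley theorem by sending $g$ to left multiplication, land in $\crse{\bijctraut{G}}$, and use faithfulness for properness. Step~1 is correct as stated (it is essentially Example~\ref{exmp:p2:left multiplication is faithful} plus Proposition~\ref{prop:p1:proper.hom.coarse.embedding}).

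The gap is in Step~2, and it is not a detail that the frag\=/coarse setting smooths over; it is where all the work lies, and your sketch would fail as written. Two concrete problems. First, bijectifying $L_g$ ``Schr\"oder--Bernstein-style'' on $G$ itself does not work in general: to produce a bijection of $G$ close to a given controlled self-map (and to do so for all $g$ with uniform control) one needs slack that $G$ may not have --- cardinality obstructions already appear when $G$ is finite, and equi\=/controlledness of the correcting perturbations is not automatic. The paper's Lemma~\ref{lem:p2:upgrade to bijective} deals with this by \emph{replacing the space}: one passes to an isomorphic coarse space $X'\coloneqq X\times X\times Z$ with coarse structure $\CE\otimes\maxcrs\otimes\maxcrs$, where the two extra bounded-but-large factors give unlimited room to correct injectivity and surjectivity while remaining equi\=/close to the original maps. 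You never mention changing the space; without it the bijectification step cannot be carried out.

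Second, and more fundamentally, making each $L_g$ individually bijective is not enough. An element of $\bijctraut{G}$ is a pair $(f,f^{-1})$ with $f^{-1}$ the \emph{set-theoretic} inverse, and for $g\mapsto(\widetilde L_g,\widetilde L_g^{-1})$ to be a controlled map into $(\bijctraut{G},\ctraut{\CE})$ you need $\widetilde L_{g^{-1}}=\widetilde L_g^{-1}$ on the nose (otherwise the second coordinate $\widetilde L_g^{-1}$ is not related to your constructed family at all, and you have no reason to believe it is controlled or equi\=/controlled). Your remark about ``uniformity in $g$'' glosses over this, but nothing in a naive perturbation of $L_g$ forces $\widetilde L_{g^{-1}}$ to invert $\widetilde L_g$. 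The paper resolves it by first reducing, via Lemma~\ref{lem:p2:isomorphic to diamondsuit}, to a coarse group whose inversion is a genuine involution fixing only $e$; this yields a partition $G=G^+\sqcup G^-\sqcup\{e\}$ swapped by $\inversefn$, and one then \emph{defines} ${}_g\alpha'$ for $g\in G^-$ to be the set-theoretic inverse of ${}_{g^{-1}}\alpha'$, checking afterwards (using the coarse group axioms) that this redefinition stays close to the original action. Without this normalization and the consistent choice of ``positive'' elements, the Cayley map does not land in the set-group $\bijctraut{G}$ in a controlled way, and the argument does not close.

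So: right target, right first step, but the heart of the proof --- enlarging the space so bijectification is even possible, and normalizing $\crse G$ so that set-theoretic inverses line up --- is absent.
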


\section{Acknowledgments}

It is a pleasure to thank Uri Bader for suggesting this research topic, and for many stimulating conversations. We wish to thank Nicolaus Heuer for his helpful insights. We are also grateful to Samuel Evington, Jarek K\k{e}dra, Christian Rosendal and Katrin Tent for their comments and suggestions. 

We also wish to thank the referee for their valuable comments, and for answering several questions we asked in an earlier draft.
The current layout of Chapter~\ref{ch:p2:coarse structures on set groups} was suggested by him/her.

Much of the work was carried out while the authors were at the Weizmann Institute of Science, which we thank for the gracious hospitality. 
Both authors were partially supported by ISF grant 704/08. The second author was also Funded by the Deutsche Forschungsgemeinschaft (DFG, German Research Foundation) under Germany's Excellence Strategy EXC 2044 –390685587, Mathematics Münster: Dynamics–Geometry–Structure.

\part{Basic Theory}

\chapter{Introduction to the Coarse Category}\label{ch:p1:intro to coarse category}
In this chapter we introduce the category of coarse spaces, and define basic notions. We also give examples of coarse structures that are frequently useful. 
Most of the material in this chapter is standard, and appears in several sources, including \cite{roe_lectures_2003}.  For this reason we will not prove everything in detail.  The informed reader can skim this chapter.


We follow two main approaches to the theory of coarse spaces: via families of controlled coverings and coarse structures.
These different approaches prove more advantageous in different situations.
It should certainly be noted that there are other possible formalisms, most notably the language of \emph{balleans} or \emph{ball structures}.
This was introduced by Protasov and Banakh \cite{protasov2003ball} and further developed by Protasov and Zarichnyi in \cite{protasovgeneral}.
Led by Protasov, the Ukrainian school produced an impressive body of work dealing with general ball structures on sets and groups.
We also recommend the paper \cite{dikranjan_categorical_2017} for a useful comparison of these languages.

\section{Some Notation for Subsets and Relations}
We begin by setting some notation. Given a set $X$, we denote families\footnote{%
In this text the meaning of ``family'' is---on purpose---a little ambiguous.
In most cases, it simply means ``subset of the power set''.
More rarely, it denotes an \emph{indexed family}, \emph{i.e.}~a possibly non-injective function from some index set to power set.
It will be easy to tell which is the case, as indexed families keep the index in the notation: $(U_i)_{i\in I}$.}

of subsets of $X$ with lowercase fraktur letters ($\fka,\fkb,\dots$) \nomenclature[:COV]{$\fka, \fkb$}{families of subsets of $X$ (a.k.a. partial coverings)}%
and families of subsets of $X\times X$ with uppercase calligraphic letters ($\CE,\CF,\dots)$. Subsets of $X\times X$ are called \emph{relations}\index{relation} on $X$. Given a relation $E\subset X\times X$ and $x,y\in X$, we will write $x\torel{E} y$\nomenclature{$x\torel{E} y$}{$x$ is related to $y$ by $E$ ($(x,y)\in E$)} if $(x,y)\in E$. We also write $x\rel{E} y$ if $x\torel{E}y$ and $y\torel{E} x$.

A \emph{partial covering}\index{covering!partial} is any family $\fka$ of subsets of $X$.  It is a \emph{covering} \index{covering} if $X=\bigcup\braces{A\mid A\in\fka}$.
Let $\fka$ and $\fka'$ be two partial coverings of $X$, we say $\fka$ is a \emph{refinement}\index{covering!refinement} of $\fka'$ if for every $A\in\fka$ there is an $A'\in\fka'$ such that $A\subseteq A'$.
We call $\pts{X}\coloneqq\{\{x\}\mid x\in X\}$ the \emph{minimal covering}\index{covering!minimal} of $X$ (it is a refinement of every other covering).

For any subset $A\subseteq X$ and partial covering $\fkb$ of $X$, the \emph{star}\index{covering!star} of $A$ with respect to $\fkb$ is the set $\st(A,\fkb)\coloneqq \bigcup\braces{B\in\fkb\mid B\cap A\neq \emptyset}$.  \nomenclature[:COV]{$\st(A,\fkb)\coloneqq \bigcup\braces{B\in\fkb\mid B\cap A\neq \emptyset}$}{star of $A$ with respect to $\fkb$}%
Note that $A\subseteq \st(A,\fkb)$ if and only if $\fkb$ covers $A$---if $\fkb$ is a covering this is always the case. 
If $\fka$ is another partial covering of $X$, the \emph{star of $\fka$} with respect to $\fkb$ is the family $\st(\fka,\fkb)\coloneqq\braces{\st(A,\fkb)\mid A\in\fka}$.  \nomenclature[:COV]{$\st(\fka,\fkb)\coloneqq\braces{\st(A,\fkb)\mid A\in\fka}$}{star of $\fka$ with respect to $\fkb$} 
This is a special instance of a convention we will use often: if $F\colon \CP(X)\to\CP(Y)$ is a function sending subsets of $X$ to subsets of $Y$ and $\fka$ is a partial covering of $X$, we denote by $F(\fka)$ the family of images $\fka\coloneqq\braces{F(A)\mid A\in\fka}$.

Now we will describe some operations on relations.
Fix a set $X$ be set and denote by $\pi_1\colon X\times X\to X$ and $\pi_2\colon X\times X\to X$ the projections to the first and second coordinate.  
Given a relation $E\subseteq X\times X$ we let $\op{E}\coloneqq\braces{(y,x)\mid (x,y)\in E}\subseteq X\times X$ denote its symmetric.  \nomenclature[:R]{$\op{E}\coloneqq\braces{(y,x)\mid (x,y) \in E}$}{symmetric relation}
For any subset $Z\subseteq X$ we define $E(Z)\subseteq X$ as \nomenclature[:R]{$E(Z)$}{section of a relation}
\[
 E(Z)\coloneqq \braces{x\in X\mid\exists z\in Z\text{ s.t. }x\torel{E}z}=\pi_1\paren{\pi_2^{-1}(Z)\cap E}.
\]
Given points $x\in X$ and $y\in X$ we define \emph{sections}\index{section (of relation)} 
\[
{}_xE\coloneqq \braces{z\in X\mid x\torel{E}z}\subseteq X
 \qquad\text{ and }\qquad
 E_y\coloneqq \braces{z\in X\mid z\torel{E}y}\subseteq X.
\]
That is, $E_y= E(\{y\})$ and ${}_xE= \op{E}(\{x\})$. Later we will also abuse notation and write $E(y)$.
Given $E_1,E_2\subseteq X\times X$ we denote their \emph{composition}\index{composition (of relations)} by:\footnote{%
We follow Roe's convention on the order of composition of relations \cite{roe_lectures_2003}. This is coherent with the pair groupoid structure on $X\times X$ and it is so that $E_1\cmp E_2(Z)=E_1(E_2(Z))$.
One can see functions as relations by identify a function $f\colon X\to X$ with its graphs ${\rm graph}\subseteq X\times X$. However, our choice of the order of composition is not coherent with the usual composition of functions. In particular, our conventions are so that $f(x)={}_x({\rm graph}(f))=(\op{{\rm graph}(f)})(\{x\})$.
}
\[
 E_1\cmp E_2\coloneqq\braces{(x_1,x_2)\mid \exists x'\in X\text{ s.t }x_1\torel{E_1}x' \torel{E_2}x_2}\subseteq X\times X.
\]

\

We denote the diagonal by $\Delta_{X} \subseteq X\times X$. \nomenclature[:R]{$\Delta_{X},\ \Delta_{G}$}{diagonal} Given $A\subseteq X$, we let $\Delta_A\coloneqq \Delta_{X}\cap (A\times A)$.
In the sequel it will be convenient to move back and forth between relations and partial coverings. Note that the following procedure works particularly well in dealing with relations that contain the diagonal. A relation $E\subseteq X\times X$ naturally defines a partial covering $E(\pts{X})=\braces{E_x\mid x\in X}$.
Given a partial covering $\fka$ of $X$, we define the ``blocky diagonal'' by
\[ 
 \diag(\fka)\coloneqq \bigcup\braces{A\times A\mid A\in\fka} \subseteq X \times X. 
\]
\nomenclature[:COV]{$\diag(\fka)$}{blocky diagonal}
Note that $(\diag(\fka))(\pts X)=\st(\pts X,\fka)$, hence $\fka$ is a refinement of $(\diag(\fka))(\pts X)$. Vice versa, if $E\subseteq X\times X$ contains the diagonal $\Delta_{X}$, then $E\subseteq \diag(E(\pts X))$.

The composition and the star operations are related to one another as follows:

\begin{lem}
\label{lem:p1:composition vs star}
 Fix a set $X$. 
 \begin{enumerate}[(i)]
  \item Given $E,F\subseteq X\times X$ with $\Delta_{X}\subseteq E$, then $E\cmp F(\pts{X})$ is a refinement of $\st(F(\pts X),E(\pts X))$.
  \item Given partial coverings $\fka,\fkb$, then $\diag(\st(\fka,\fkb))=\diag(\fkb)\cmp\diag(\fka)\cmp\diag(\fkb)$.
 \end{enumerate}
\end{lem}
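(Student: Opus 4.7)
The plan is to unwind the definitions in both parts and observe that each claim reduces to tracking the existence of witnesses in the relevant intersections. Both arguments are essentially bookkeeping; (i) uses the diagonal hypothesis on $E$ to produce a nonempty intersection, while (ii) is a clean double inclusion.

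For part (i), I would fix a set in the family $E \cmp F(\pts{X})$, which has the form $(E\cmp F)_x = \{y\mid \exists z,\ y\torel{E}z\torel{F}x\}$ for some $x\in X$, and show it is contained in some element of $\st(F(\pts X),E(\pts X))$. My candidate is $\st(F_x,E(\pts X))$. Picking $y\in (E\cmp F)_x$ and a witness $z$ with $y\torel{E}z\torel{F}x$, we have $z\in F_x$, so $y\in E_z$. The key small step is that the assumption $\Delta_X\subseteq E$ forces $z\torel{E}z$, hence $z\in E_z\cap F_x$, so $E_z$ is one of the members of $E(\pts X)$ contributing to $\st(F_x,E(\pts X))$. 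Therefore $y\in E_z\subseteq \st(F_x,E(\pts X))$, establishing the refinement.

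For part (ii), I would argue by double inclusion after unfolding: $\diag(\fkb)\cmp\diag(\fka)\cmp\diag(\fkb)$ consists of pairs $(w,y)$ for which there exist $B_1,B_2\in\fkb$, $A\in\fka$ and $x\in B_1\cap A$, $z\in A\cap B_2$ with $w\in B_1$, $y\in B_2$. Because $x\in B_1\cap A\neq\emptyset$ and $z\in A\cap B_2\neq\emptyset$, both $B_1$ and $B_2$ sit inside $\st(A,\fkb)$, so $(w,y)\in \st(A,\fkb)\times \st(A,\fkb)\subseteq \diag(\st(\fka,\fkb))$. Conversely, if $(w,y)\in \diag(\st(\fka,\fkb))$ there is $A\in\fka$ with $w,y\in\st(A,\fkb)$; then pick $B_1,B_2\in\fkb$ with $w\in B_1$, $y\in B_2$ and $B_i\cap A\neq\emptyset$, extract intermediate elements $x\in B_1\cap A$ and $z\in B_2\cap A$, and read off $w\torel{\diag(\fkb)}x\torel{\diag(\fka)}z\torel{\diag(\fkb)}y$.

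I do not anticipate any serious obstacle: the whole argument is essentially a translation between ``an element lies in the star of $A$'' and ``there is a common point with $A$''. The only subtlety worth flagging is the role of $\Delta_X\subseteq E$ in (i); without it, the set $E_z$ might miss $F_x$ entirely even when $z\in F_x$, and the refinement would fail. I would therefore highlight this use of the hypothesis explicitly in the write-up.
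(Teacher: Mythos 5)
Your proof is correct and takes essentially the same approach as the paper's: in (i) you reproduce the observation that $(E\cmp F)_x = \bigcup\{E_z \mid z \in F_x\}$ and use $\Delta_X\subseteq E$ to guarantee $z\in E_z\cap F_x$, and in (ii) you unfold the triple composition to the same configuration of witnesses in $B_1$, $A$, $B_2$ that the paper reads off as a biconditional. The only cosmetic difference is that you present (ii) as two inclusions rather than a single if-and-only-if chain.
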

\begin{proof}
 \emph{(i):} Note that
 \[
  (E\cmp F)_x=\braces{y\mid \exists z \text{ s.t. }y\torel{E}z\torel{F}x}=\bigcup\braces{E_z\mid z\in F_x}.
 \]
 Since $\Delta_X\subseteq E$, the point $z$ belongs to $E_z$ hence $(E\cmp F)_x\subseteq \st(F_x,E(\pts X))$.
 
 \emph{(ii):} Note that $x\torel{\diag(\fkb)\cmp\diag(\fka)\cmp\diag(\fkb)}y$ if and only if there exists $A\in \fka$ and $B_1,B_2\in \fkb$ with
 \[
  x\torel{\diag(B_1)}z\torel{\diag(A)}w\torel{\diag(B_2)}y.
 \]
 That is, $x,z\in B_1$, $z,w\in A$ and $w,y\in B_2$, which happens if and only if $x,y\in \st(A,\fkb)$.
\end{proof}

Note that for every function $f\colon X\to Y$, and partial coverings $\fka,\fkb$ of $X$ the image $f\paren{\st(\fka,\fkb)}$ is a refinement of $\st\paren{f(\fka),f(\fkb)}$. 
Similarly, if we consider the product function $f\times f\colon X\times X\to Y\times Y$ then for any two relations $E_1,E_2$ of $X\times X$ we have 
\[
f\times f(E_1\cmp E_2)\subseteq \paren{f\times f(E_1)}\cmp \paren{f\times f(E_2)}.
\]

\

For later reference, we state the following:

\begin{de}
\label{def:p1:ideal}
 If $X$ is any set, an \emph{ideal}\index{ideal (of subsets)} on $X$ is a family $\fkI$ of subsets of $X$ such that 
 \begin{enumerate}[(i)]
  \item $\emptyset\in \fkI$;
  \item if $A\in \fkI$ and $B\subseteq A$, then $B\in\fkI$;
  \item if $A,B\in \fkI$ and $A\cup B\in\fkI$.
 \end{enumerate}
 (Equivalently, this is an ideal in the commutative ring $\mathcal{P}(X)=\ZZ/2\ZZ^X$.)
\end{de}

\section{Coarse Structures}
We follow \cite{roe_lectures_2003} to define coarse structures and coarse spaces. 
In the next section we will also explain how to describe coarse structures in terms of controlled partial covering.

\begin{definition}[See \cite{roe_lectures_2003}]\label{def:p1:coarse.structure}
A \emph{coarse structure} \index{coarse structure} on a set $X$ is an ideal $\CE$ on $X\times X$ (\emph{i.e.}\ a non\=/empty collection of relations on $X$ closed under taking subsets and finite unions) such that
\begin{enumerate}[(i)]
 \item $\Delta_{X}\in\CE$ (contains the diagonal);
 \item if $E\in\CE$ then $\op{E}\in\CE$ (closed under symmetry); 
 \item if $E_1,E_2\in\CE$ then $E_1\cmp E_2\in\CE$ (closed under composition).
\end{enumerate}
The relations $E\in\CE$ are called \emph{controlled sets} \index{controlled!set/entourage} (or \emph{entourages}\index{entourage}) for the coarse structure $\CE$. A set equipped with a coarse structure is called a \emph{coarse space}. \index{coarse!space}
\end{definition}

\begin{example}
 The two extreme examples of coarse structures on a set $X$ are the \emph{minimal coarse structure} \index{coarse structure!minimal} $\mincrs=\braces{\Delta_Z\mid Z\subseteq X}$ \nomenclature[:CE1]{$\mincrs$}{minimal coarse structure} and the \emph{maximal coarse structure}\index{coarse structure!maximal} $\maxcrs=\CP(X\times X)$ \nomenclature[:CE1]{$\maxcrs$}{maximal coarse structure}  (the maximal coarse structure is the only coarse structure containing $X\times X$).
 A slightly less trivial example is $\varcrs{fin}\coloneqq\braces{E\mid E\smallsetminus \Delta_{X}\text{ is finite}}$. \nomenclature[:CE1]{$\varcrs{fin}$}{finite off\=/diagonal coarse structure} This is the \emph{finite off\=/diagonal} coarse structure (called \emph{discrete} coarse structure in \cite{roe_lectures_2003}). \index{coarse structure!finite off-diagonal}
\end{example}

\begin{example}
 If $(X,d)$ is a metric space, the \emph{metric coarse structure}\nomenclature[:CE1]{$\CE_d$}{metric coarse structure on a metric space $(X,d)$} $\CE_d$ is the family of all relations that are within bounded distance of the diagonal:
 \[
  \CE_d\coloneqq\braces{E\subseteq X\times X \mid \exists r\in\RR_+ \text{ s.t. }d(x,x')\leq r  \text{ for all } (x,x')\in E}
 \]
 We say that $(X,\CE_d)$ is a \emph{metric coarse space}\index{metric coarse space}. Metric coarse spaces are the prototypical examples of coarse spaces.
\end{example}

\begin{de}\label{def:p1:connected coarse structure}
 A coarse structure $\CE$ on $X$ is \emph{connected} \index{coarse structure!connected} if every pair $(x,x') \in X \times X$ is contained in some controlled set. If $\CE$ is connected, we say that the coarse space $(X,\CE)$ is \emph{coarsely connected}. \index{coarsely!connected}
 Every coarse space $(X,\CE)$ can be partitioned into a disjoint union of \emph{coarsely connected components}. \index{coarsely!connected!component}
\end{de}

\begin{rmk}
 Note that for every metric space $(X,d)$ the metric coarse structure $\CE_d$ is connected. 
 The ``finite off\=/diagonal coarse structure'' described above is the minimal connected coarse structure on $X$.
\end{rmk}

In view of the above, it can be convenient to consider metrics $d$ that take values in the extended positive reals $[0,+\infty]$. To avoid confusion, we will say that such a $d$ is an \emph{extended metric}.\index{extended metric} We may then define the coarse structure $\CE_d$ associated with any extended metric just as we did for normal metrics. In this case, the coarsely connected components are the subsets of points at finite distance from one another. 

\begin{rmk}
 One may also define \emph{hypermetrics} by considering positive valued functions with hyperreal values $d\colon X\times X\to {}^*\RR_{\geq 0}$ that satisfy the triangle inequality. Such a hypermetric defines a coarse structure $\CE_d$ just as a normal metrics does. Two points $x,y\in X$ belong to the same coarsely connected component if and only if their distance is a finite hyperreal.
 
 This point of view may be useful when dealing with ultralimits of metric spaces. For example, the asymptotic cone ${\rm Cone}(X,e,\lambda)$ is the coarsely connected component of the ultralimit $\omega\mhyphen\lim_i (X,\lambda_i d)$ containing the point $e=\omega\mhyphen\lim_i e_i$ \cite[Chapter 10]{drutu_geometric_2018}.
\end{rmk}

\begin{de}
 Let $(X,\CE)$ be a coarse space.
A subset $A\subseteq X$ is \emph{bounded} if $A\times A\in\CE$. If we want to specify that a set is bounded \index{bounded!subset} with respect to a specific coarse structure $\CE$, we say that it is \emph{$\CE$\=/bounded}.
\end{de}

Note that the whole space $X$ is $\CE$\=/bounded if and only if $\CE$ is the maximal coarse structure.

\begin{example}\label{exmp:p1:compact coarse structures}
 Let $X$ be a topological space.\footnote{%
 We are not requiring $X$ to be Hausdorff. For us a set is compact if every open cover has a finite subcover (in literature these are also called quasi\=/compact). A set is relatively compact if it is contained in a compact set.
 } 
 Then the family of sets 
 \[
  \varcrs{cpt}\coloneqq\braces{E\subseteq X\times X\mid \exists K\subseteq X\text{ compact, } E\subseteq (K\times K)\cup \Delta_{X}} 
 \nomenclature[:CE1]{$ \varcrs{cpt}$}{compact coarse structure}
 \]
 is the \emph{compact coarse structure}\index{coarse structure!compact}. If $X$ is equipped with the discrete topology then $\varcrs{cpt}$ coincides with the finite off\=/diagonal coarse structure $\varcrs{fin}$ (hence why the latter is also called discrete coarse structure).
 
 Another natural coarse structure on $X$ is the \emph{compact fiber coarse structure}: \index{coarse structure!compact fiber}
 \[
  \varcrs[fib]{cpt}\coloneqq\braces{E\subseteq X\times X\mid {}_xE\text{ and }E_y \text{ are relatively compact } \forall x,y\in X}
 \nomenclature[:CE1]{$ \varcrs[fib]{cpt}$}{compact fiber coarse structure}
 \]
 Here is a sketch of a proof that $\varcrs[fib]{cpt}$ is closed under composition: since $(E\cmp F)_y= E(F_y)=\pi_1(E\cap \pi_2^{-1}(F_y))$, it is enough to show that $E\cap \pi_2^{-1}(F_y)$ is relatively compact. 

 By assumption, $F_y$ is contained in a compact $K$. Enlarging $E$ if necessary, we may assume that every fiber $E_z$ is compact and we have that $E\cap \pi_2^{-1}(K)$ is compact.\footnote{%
 This can be shown as follows. Let $C\coloneqq E\cap \pi_2^{-1}(K)$. By assumption, $C_z\coloneqq\pi_2^{-1}(z)\cap C=E_z$ is compact for every $z\in K$.
 Let $U_i$ be an open cover of $C$, we may assume that $U_i=A_i\times B_i$, as the products of open sets form a basis for the topology of $X\times K$. For each $z\in K$, there exists a finite set of indices $i_1,\ldots ,i_n$ such that $C_z\subseteq U_1\cup\cdots\cup U_n$. It follows that the finite intersection $V_z\coloneqq B_{i_1}\cap\cdots\cap B_{i_n}$ is an open neighborhood of $z$ such that $\pi_2^{-1}(V_z)$ is contained in the finite union $U_{i_1}\cup\ldots \cup U_{i_n}$. The family $\braces{V_z\mid z\in K}$ is an open cover of $K$ and hence admits a finite subcover. We obtain a finite subcover of $C$ by considering the relevant sets $U_i$.
 }
 
 In general, the coarse structures $\varcrs[]{cpt}$ and $\varcrs[fib]{cpt}$ can be seen as extreme examples among the family of coarse structures on $X$ whose bounded sets are precisely the relatively compact subsets of $X$.
 If $(X,d)$ is a proper metric space (closed balls are compact), then the controlled entourages of the metric coarse structure $\CE_d$ have compact fibers. It follows that $\varcrs[]{cpt}\subseteq\CE_d\subseteq\varcrs[fib]{cpt}$. If $X$ has infinite diameter one can also check that the containments are strict. We leave it as an exercise to the reader to check these assertions.
\end{example}

\begin{convention}
 Example~\ref{exmp:p1:compact coarse structures} illustrates a notational convention that we will often use to denote special coarse structures: we will denote by $\CE^{mod}_{prop}$\nomenclature[:CE0]{$\CE^{mod}_{prop}$}{ coarse structure whose controlled/bounded sets are described by the property ``$prop$'', possibly with the modifier ``$mod$''} the coarse structure whose controlled/bounded sets are described by the property ``$prop$'', possibly with the modifier ``$mod$''.
\end{convention}

\begin{convention}
\label{conv:p1:x_torel_CE}
 Given a coarse structure $\CE$ on $X$, we write 
 \[
  x\torel{\CE} y \qquad \forall x,y\text{ satisfying ``some property $P$''}
 \]
 to mean that there exists a fixed controlled set $E\in \CE$ such that $x\torel{E} y$ for every $x,y$ satisfying $P$. Note that the order of the quantifiers is important: writing
 \[
  x\torel{\CE} y \qquad \forall x,y\in X
 \]
 implies that $X\times X\in \CE$, hence $(X,\CE)$ is bounded. This is much stronger than saying that for every $x,y\in X$ there is an $E\in\CE$ with $x\torel{E}y$ (\emph{i.e.}\ $(X,\CE)$ is coarsely connected).
 
 This convention will be very handy in the sequel to write statements such as
 \[
  (x\ast y)\ast z \rel{\CE} x \ast (y\ast z) \qquad \forall x,y,z\in X,
 \]
 where $\ast$ is some fixed function from $X\times X$ to $X$.
\end{convention}

For any family $(\CE_i)_{i\in I}$ of coarse structures on $X$, the  intersection $\bigcap_{i\in\ I}\CE_i$ is a coarse structure on $X$.  
For any family of relations $(E_j)_{j\in J}$ we can thus define the coarse structure $\angles{E_j\mid j\in J}$ \emph{generated} by the relations $E_j$ as the minimal coarse structure containing $E_j$ for every $j\in J$. \index{coarse structure! generated}

\begin{lem}\label{lem:p1: generated coarse structure}
 A relation $F$ is contained in $\angles{E_i\mid i\in I}$ if and only if it is contained in a finite composition $F_1\cmp\cdots\cmp F_n$ where each $F_j$ is equal to $E_i\cup \Delta_{X}$ or $\op{E_i}\cup\Delta_{X}$ for some $i\in I$.
\end{lem}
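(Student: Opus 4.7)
The plan is to prove both inclusions. The $(\supseteq)$ direction is essentially bookkeeping: each $E_i$ lies in $\angles{E_i\mid i\in I}$ by definition, $\Delta_X$ lies in every coarse structure (axiom (i)), and $\op{E_i}\in\angles{E_i\mid i\in I}$ by axiom (ii). By the ideal property (closure under finite unions and subsets) together with closure under composition (axiom (iii)), any relation contained in a finite composition of building blocks of the form $E_i\cup\Delta_X$ or $\op{E_i}\cup\Delta_X$ lies in $\angles{E_i\mid i\in I}$.

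For the $(\subseteq)$ direction I would let $\CE'$ denote the collection of relations contained in some finite composition $F_1\cmp\cdots\cmp F_n$ of building blocks of the allowed form, and then verify that $\CE'$ is itself a coarse structure containing every $E_i$; by minimality of $\angles{E_i\mid i\in I}$ this gives $\angles{E_i\mid i\in I}\subseteq\CE'$. Most axioms are immediate: $\CE'$ is clearly closed under subsets; $\Delta_X\in \CE'$ since $\Delta_X\subseteq E_i\cup\Delta_X$ for any $i$; closure under symmetry follows from $\op{(F_1\cmp\cdots\cmp F_n)}=\op{F_n}\cmp\cdots\cmp\op{F_1}$ together with the fact that the symmetric of a building block is a building block; closure under composition is by concatenation; and each $E_i\subseteq E_i\cup\Delta_X$ lies in $\CE'$.

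The one step needing a small observation, which I expect to be the main (if modest) obstacle, is closure of $\CE'$ under finite unions. The key remark is that every building block $F_j$ contains $\Delta_X$, so any finite composition of building blocks contains $\Delta_X$ as well. Hence, given $F\subseteq F_1\cmp\cdots\cmp F_n$ and $F'\subseteq F'_1\cmp\cdots\cmp F'_m$, the concatenated composition
\[
G \;\coloneqq\; F_1\cmp\cdots\cmp F_n\cmp F'_1\cmp\cdots\cmp F'_m
\]
contains both $F_1\cmp\cdots\cmp F_n$ (since $F'_1\cmp\cdots\cmp F'_m\supseteq\Delta_X$) and $F'_1\cmp\cdots\cmp F'_m$ (since $F_1\cmp\cdots\cmp F_n\supseteq\Delta_X$). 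Therefore $F\cup F'\subseteq G$, which exhibits $F\cup F'$ as an element of $\CE'$ and completes the verification.
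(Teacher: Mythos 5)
Your proof is correct, and it follows the same overall template as the paper's (define the collection of relations contained in compositions of building blocks, verify it is a coarse structure, apply minimality). The one step handled differently is closure under finite unions, and your argument there is a genuine simplification. The paper first works with a slightly larger collection — compositions of $F_j$ that are arbitrary finite unions of $E_i$, $\op{E_i}$, and $\Delta_X$ — so that closure under union can be read off from distributivity, $(E_1\cup E_2)\cmp(F_1\cup F_2) = (E_1\cmp F_1)\cup(E_1\cmp F_2)\cup(E_2\cmp F_1)\cup(E_2\cmp F_2)$, and then adds a reduction step showing that every such $F_j$ is itself contained in a composition of the two-term building blocks $E_i\cup\Delta_X$, $\op{E_i}\cup\Delta_X$ via $E\cup F\subseteq (E\cup\Delta_X)\cmp(F\cup\Delta_X)$. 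You instead work directly with the collection in the lemma statement and use the single observation that if $A,B\supseteq\Delta_X$ then $A\cmp B\supseteq A\cup B$: since all building blocks contain the diagonal, concatenating two finite compositions already gives something containing the union of the two. This avoids the detour through arbitrary finite unions and is arguably the cleaner route; both arguments ultimately rest on the same fact (composing with something containing $\Delta_X$ is monotone), just deployed at different points. One very minor point worth keeping in mind: your statement $\Delta_X\subseteq E_i\cup\Delta_X$ implicitly requires the index set $I$ to be nonempty; if you want the degenerate case $I=\emptyset$ to be covered you should interpret the empty composition as $\Delta_X$ or note that $\angles{\,}=\mincrs$ separately.
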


\begin{proof}[Sketch of proof]
 Let $\CF$ be the collection of relations contained in some $E\subseteq F_1\cmp \cdots \cmp F_n$ where the $F_j$ are finite unions of $E_i$, $\op{E_i}$ and $\Delta_{X}$. We claim that $\CF=\angles{E_i\mid i\in I}$. Every relation in $\CF$ must belong to $\angles{E_i\mid i\in I}$, it hence suffices to show that $\CF$ is a coarse structure. It is obvious that $\CF$ contains $\Delta_{X}$ and is closed under taking compositions and subsets. Composition and union are distributive operations, \emph{i.e.}\ for any $E_1,E_2,F_1,F_2\subseteq X\times X$ we have
 \[
  (E_1\cup E_2)\cmp(F_1\cup F_2)
  =(E_1\cmp F_1)\cup(E_1\cmp F_2)\cup(E_2\cmp F_1)\cup(E_2\cmp F_2).
 \]
 It follows that $\CF$ is also closed under finite unions.
 
 To finish the proof it suffices to show that any finite union of $E_i$, $\op{E_i}$ and $\Delta_{X}$ is contained in finite compositions of $E_i\cup\Delta_{X}$ and $\op{E_i}\cup\Delta_{X}$. This follows immediately from observing that
 \[
  E\cup F\subseteq (E\cup\Delta_{X})\cmp(F\cup\Delta_{X}).
 \]
 for any pair of relations $E,F$ on $X$.
\end{proof}

\begin{exmp}
\label{exmp:p1:disconnected union}
 Given a family of coarse spaces $(X_i,\CE_i)$ indexed over a set $I$, we can define their \emph{disconnected union}\index{disconnected union} 
 as the coarse space obtained by giving the disjoint union $X\coloneqq \bigsqcup_{i\in I} X_i$ the coarse structure $\varcrs{\sqcup_i}\coloneqq \angles{\CE_i\mid i\in I}$. Concretely, we see that $E\in \varcrs{\sqcup_i}$ if and only if there exist finitely many indices $i_1,\ldots, i_n$  and controlled sets $E_i\in \CE_i$ such that $E\subseteq \Delta_{X}\cup \bigcup_{i=1}^nE_i$. As the name suggests, each $X_i$ is coarsely disconnected from any other $X_j$.
\end{exmp}

 Given a connected coarse structure $\CE$ on $X$, it is not hard to show that there exists a metric $d$ on $X$ such that $\CE=\CE_d$ if and only if $\CE$ is generated by a countable family of controlled sets $E_n\in\CE$ (see \cite[Theorem 2.55]{roe_lectures_2003}, \cite[Theorem 9.1]{protasov2003ball} or Lemma~\ref{lem:p2:metric coarsifications} for a full argument).
 Similarly, one may check that a (possibly disconnected) coarse structure can be induced by an extended metric on $X$ if and only if it is countably generated. 
 Note that  if $X$ is an uncountable set then the finite off\=/diagonal coarse structure $\varcrs{fin}$ is not countably generated. Therefore $\varcrs{fin}$ cannot be realized as a metric coarse structure for any choice of metric (or extended metric) on $X$. 
 
 If a coarse structure $\CE$ is generated by a finite number of relations $E_1,\ldots,E_n$, then taking the union $E_1\cup\cdots \cup E_n$ shows that $\CE$ is actually generated by a single relation. We then say that $\CE$ is \emph{monogenic}\index{coarse structure!monogenic}. Monogenic coarse structures are particularly easy to control. Note that if $\CE=\angles{\bar E}$, we may as well assume that $\bar E=\op{\bar E}$ and that $\Delta_X\subseteq \bar E$. In this case we see that a relation $E$ is in $\CE$ if and only if there is some $n\in\NN$ so that $E$ is a subset of the $n$\=/fold composition $\bar E^{\cmp n}\coloneqq\bar E\cmp\cdots\cmp \bar E$.

\begin{de}\label{def:p1:crs_geod}
 A coarse space $(X,\CE)$ is \emph{coarsely geodesic}\index{coarsely!geodesic} if it is coarsely connected and the coarse structure $\CE$ is monogenic. We then say that $\CE$ is a \emph{geodesic coarse structure}\index{coarse structure! geodesic}.
\end{de}

 If $(X,\CE)$ is a coarse geodesic space it is easy to describe a metric so that $\CE=\CE_d$. In fact, if $\bar E$ is a generating relation with $\bar E=\op{\bar E}$ and $\Delta_X\subseteq \bar E$, we may then let $d(x,x')\coloneqq \min\braces{n\mid x\torel{\bar E^{\cmp n}} x'}$ for every $x\neq x'$.
 
\begin{exmp}
 Let $(X,d)$ be a metric space. Recall that the length of a curve $\gamma\colon [0,1]\to X$ is defined as 
 \[
  \norm{\gamma}\coloneqq \sup\Bigbraces{\sum_{i=1}^n d(\gamma(t_{i-1}),\gamma(t_i))\Bigmid n\in\NN,\ 0=t_0\leq\cdots\leq t_n=1}.
 \]
 The metric space is called \emph{geodesic} if for every pair of points $x,x'\in X$ there exists a curve between them whose length realizes their distance $\norm{\gamma}=d(x,x')$ (such a curve is a geodesic between $x$ and $x'$).
 In this case it is easy to see that $(X,\CE_d)$ is coarsely geodesic because $\CE_d$ is generated \emph{e.g.}\ by the $1$\=/neighborhood of the diagonal $\bar E\coloneqq\braces{(x,x')\mid d(x,x')\leq 1}\subseteq X\times X$.
 
 One can also show that a metric coarse structure $\CE_d$ is geodesic if and only if there is some fixed $r>0$ such that for every $R>0$ there is an $n=n(R)$ large enough so that for every pair of points $x,x'\in X$ with $d(x,x')\leq R$ there is a sequence $x=x_0,\ldots, x_n=x'$ with $d(x_{i-1},x_i)\leq r$ for every $i=1,\ldots ,n$. 
\end{exmp}

\begin{rmk}
 It is not hard to show (see \cite[Proposition 2.57]{roe_lectures_2003} or Appendix~\ref{sec:appendix:quasifications}) that $(X,\CE)$ is coarsely geodesic if and only if it is coarsely equivalent (Definition~\ref{def:p1:coarse_equivalence}) to $(Y,\CE_d)$, where $(Y,d)$ is some geodesic metric space.
\end{rmk}

\section{Coarse Structures via Partial Coverings}
\label{sec:p1:coarse structures via coverings}

The ability to pass from relations to partial coverings and back enables us to describe coarse structures in terms of systems of \emph{controlled coverings} of $X$. This alternative description is essentially due to Dydak--Hoffland \cite{dydak_alternative_2008}.

\begin{de}\label{def:p1:controlled.covering}
 Given a coarse structure $\CE$ on a set $X$, a \emph{controlled}\footnote{%
 One could refer to controlled coverings as ``coverings by uniformly bounded sets''. We prefer the term `controlled' because the adjective `uniformly' is somewhat overused.
 }
 \emph{(partial) covering} \index{covering!controlled} of $X$ is a (partial) covering $\fka$ such that $\diag(\fka)\in \CE$. We will denote by $\fkC(\CE)$ \nomenclature[:COV]{$\fkC(\CE)$}{family of controlled partial coverings} the family of controlled partial coverings associated with $\CE$. If we want to specify that a (partial) covering is controlled with respect to a certain coarse structure $\CE$, we say that it is \emph{$\CE$\=/controlled}.
\end{de}

\begin{example}
 If $(X,d)$ is a metric space and $\CE_d$ is the associated bounded coarse structure, then $\fkC(\CE_d)$ is the family of partial coverings by sets with uniformly bounded diameter. That is, $\fka\in\fkC(\CE_d)$ if and only if there is a $r\geq 0$ such that $\diam(A)\leq r$ for every $A\in\fka$.
\end{example}

Recall that for every relation $E$ we denote by $E(\pts X)$ the partial covering $\braces{E_x\mid x\in X}$. 
For every relation $E\in\CE$ and $x\in X$ the relation $E_x\times E_x$ is contained in $E\cmp \op{E}$. 
It follows that if $E$ is $\CE$\=/controlled then $E(\pts X)\in\fkC(\CE)$. 
The converse is not true in general (Figure~\ref{fig:p1:controlled coverings and sections}), but it does hold under the assumption that $E$ contains the diagonal. In fact, we already remarked that $E\subseteq \diag(E(\pts{X}))$ whenever $\Delta_{X}\subseteq E$.  
 
\begin{figure}
 \centering
 \includegraphics{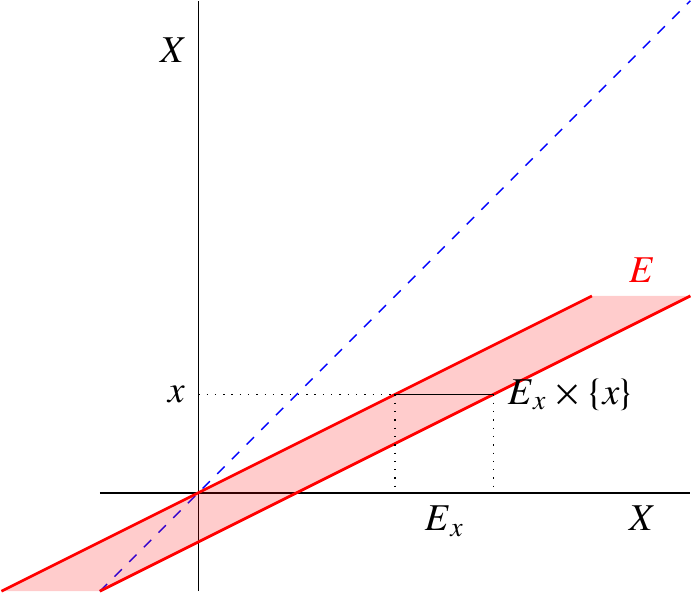}
 \caption{On $\crse X=(\RR,\CE_{\abs{\mhyphen}})$, the set $E=\braces{(x_1,x_2)\mid x_1-2\leq 2x_2\leq x_1}$ is not controlled. However, its sections $E(\pts{X})$ are the controlled covering $\braces{[t,t+1]\mid t\in \RR}$.}
 \label{fig:p1:controlled coverings and sections}
\end{figure}

Note that the family of controlled partial coverings $\fkC(\CE)$  associated with a coarse structure $\CE$ satisfies:
\begin{itemize}
 \item [(C0)] if $\fka\in\fkC(\CE)$ then $\fka\cup\pts{X}\in\fkC(\CE)$;
 \item [(C1)] if $\fka\in\fkC(\CE)$ and $\fka'$ is a refinement of $\fka$, then $\fka'\in\fkC(\CE)$;
 \item [(C2)] given $\fka,\fkb\in\fkC(\CE)$, then $\st(\fka,\fkb)\in\fkC(\CE)$.
\end{itemize}
Conditions (C1) and (C2) follow from the fact that $\CE$ is closed under taking subsets and compositions respectively. Since $\fka\cup\fkb$ is a refinement of $\st(\fka\cup\pts{X},\fkb\cup\pts{X})$, we deduce that $\fkC(\CE)$ is also closed under finite unions (this can also be deduced from the fact that $\CE$ is closed under finite unions).
As it turns out, a coarse structure $\CE$ is completely determined by $\fkC(\CE)$. Moreover, conditions (C0)--(C2) determine the families of partial coverings that are $\fkC(\CE)$ for some coarse structure $\CE$. 
We collect these observations in the following.

\begin{prop} 
\label{prop:p1:crse structures vs ctrl coverings}
 For any set $X$, there is a one\=/to\=/one correspondence
 \[
  \bigbraces{\text{coarse structures on $X$}}\longleftrightarrow
  \bigbraces{\text{families of partial coverings of $X$ satisfying {\upshape(C0)--(C2)}}}.
 \]
\end{prop}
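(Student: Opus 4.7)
The plan is to construct an explicit inverse to the forward map $\CE\mapsto\fkC(\CE)$. Given a (nonempty) family $\fkF$ of partial coverings of $X$ satisfying (C0)--(C2), I set
\[
\CE(\fkF)\coloneqq\braces{E\subseteq X\times X\mid E\subseteq\diag(\fka)\text{ for some }\fka\in\fkF}.
\]
I would then verify three things in turn: (i) $\CE(\fkF)$ is a coarse structure; (ii) $\CE(\fkC(\CE))=\CE$ for every coarse structure $\CE$; and (iii) $\fkC(\CE(\fkF))=\fkF$ for every such $\fkF$.

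For (i), I first note that $\pts X\in\fkF$: picking any $\fka\in\fkF$, (C0) gives $\fka\cup\pts X\in\fkF$, and since $\pts X$ is a refinement of $\fka\cup\pts X$, (C1) yields $\pts X\in\fkF$. Hence $\Delta_X=\diag(\pts X)\in\CE(\fkF)$. Closure under subsets and symmetry (via $\op{\diag(\fka)}=\diag(\fka)$) are immediate. For closure under composition I apply Lemma~\ref{lem:p1:composition vs star}(ii): given $\fka,\fkb\in\fkF$, the enlargements $\fka'\coloneqq\fka\cup\pts X$ and $\fkb'\coloneqq\fkb\cup\pts X$ lie in $\fkF$ by (C0) and contain $\Delta_X$ in their blocky diagonals, so
\[
\diag(\fka)\cmp\diag(\fkb)\subseteq\diag(\fka')\cmp\diag(\fkb')\subseteq\diag(\fkb')\cmp\diag(\fka')\cmp\diag(\fkb')=\diag(\st(\fka',\fkb')),
\]
which lies in $\CE(\fkF)$ by (C2). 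Closure under finite unions follows from $\diag(\fka_1)\cup\diag(\fka_2)=\diag(\fka_1\cup\fka_2)$ together with the observation already made in the text that $\fka_1\cup\fka_2$ refines $\st(\fka_1\cup\pts X,\fka_2\cup\pts X)\in\fkF$.

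For (ii), the inclusion $\CE(\fkC(\CE))\subseteq\CE$ is immediate from the definition of $\fkC(\CE)$. For the reverse, given $E\in\CE$ I set $E'\coloneqq E\cup\op E\cup\Delta_X\in\CE$ and consider $\fka\coloneqq E'(\pts X)=\braces{E'_x\mid x\in X}$. Since $E'=\op{E'}$, one has $\diag(\fka)=\bigcup_{x\in X} E'_x\times E'_x\subseteq E'\cmp\op{E'}=E'\cmp E'\in\CE$, so $\fka\in\fkC(\CE)$; moreover, for $(x,y)\in E$ both $x$ and $y$ lie in $E'_x$ (using $\Delta_X,\op E\subseteq E'$), so $E\subseteq\diag(\fka)$ and thus $E\in\CE(\fkC(\CE))$.

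The inclusion $\fkF\subseteq\fkC(\CE(\fkF))$ in (iii) is immediate. The reverse inclusion is the only genuinely subtle step, and I expect it to be the main obstacle: if $\fka\in\fkC(\CE(\fkF))$, then $\diag(\fka)\subseteq\diag(\fkb)$ for some $\fkb\in\fkF$, but this does \emph{not} directly imply that $\fka$ refines $\fkb$ (a single $A\in\fka$ need not be contained in any one element of $\fkb$, cf.\ the standard triangle example). The remedy is to fix any nonempty $A\in\fka$ and pick $x\in A$; then $A\times A\subseteq\diag(\fkb)$ forces every $y\in A$ to share some $B_y\in\fkb$ with $x$, whence $A\subseteq\st(\{x\},\fkb)$. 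This shows that $\fka$ refines $\st(\pts X,\fkb)$, which lies in $\fkF$ by (C2) applied to $\pts X,\fkb\in\fkF$, and (C1) then yields $\fka\in\fkF$. This use of (C2) together with the minimal covering $\pts X$ is the crux of the argument.
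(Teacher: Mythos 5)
Your proof is correct and follows essentially the same route as the paper: construct $\CE(\fkF)$ as the family of relations dominated by blocky diagonals, invoke Lemma~\ref{lem:p1:composition vs star}(ii) to get closure under composition, and check the two composites are identities. Your treatment of the refinement subtlety in step (iii) — passing through $\st(\pts X,\fkb)$ rather than directly reading off a refinement from $\diag(\fka)\subseteq\diag(\fkb)$ — is exactly what the paper compresses into the remark that $(\diag(\fka))(\pts X)=\st(\pts X,\fka)$, so you have simply made the elided step explicit.
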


\begin{proof}
Given a family of partial coverings $\fkC$, let $\CE(\fkC)$ be the family of relations that are contained in $\diag(\fka)$ for some $\fka\in \fkC$
\[
 \CE(\fkC)\coloneqq\braces{E\subseteq X\times X\mid \exists \fka\in\fkC,\ E\subseteq \diag(\fka)}.
\]
Such a family of relations is always closed under subsets and symmetry. It contains the diagonal $\Delta_X$ if and only $\fkC$ contains some covering of $X$ (as opposed to a partial covering). It follows from Lemma~\ref{lem:p1:composition vs star}(ii) that if $\fkC$ satisfies (C0)--(C2) then $\CE(\fkC)$ is a coarse structure. Since $(\diag(\fka))(\pts X)=\st(\pts X,\fka)$, conditions (C0)--(C2) also imply that $\fkC(\CE(\fkC))=\fkC$.
 
Now let $\CE$ be any coarse structure. It remains to show that $\CE(\fkC(\CE))=\CE$. It is clear that $\CE\subseteq \CE(\fkC(\CE))$. For the other containment it suffices to observe that if $\Delta_X\subseteq E\in \CE$ then $E\subseteq(\diag(E(\pts{X})))$.
\end{proof}

As a consequence of Proposition~\ref{prop:p1:crse structures vs ctrl coverings} we deduce that every statement  about coarse structures and coarse spaces can be translated into a statement concerning families of controlled partial covers (and vice versa). For instance, given any family of partial coverings $(\fka_i)_{i\in I}$ one may define a generated family of controlled partial coverings by considering the smallest $\fkC$ that contains every $\fka_i$ and satisfies (C0)--(C2). Of course, we would then have $\fkC=\fkC(\angles{\diag(\fka_a)\mid i\in I})$.

Throughout this monograph, it is a recommended exercise to try to rephrase statements from one language to the other. It is convenient to master both languages, as they have different strengths. In our experience, coarse structures seem to lead to briefer proofs while controlled coverings are pictorially more intuitive. 

\begin{exmp}
 Given a family $A_i$ of subsets of $X$, its intersection graph is the graph with one vertex for each $A_i$ and connecting two vertices if the intersection of the associated subsets is not empty. We also say that the radius of a graph $\CG$ is the smallest integer $n$ so that that there is a vertex that can be connected with any other vertex of $\CG$ via a path crossing at most $n$ edges.

 A coarse space $(X,\CE)$ is monogenic if and only if there exists a controlled covering $\fka$ such that for every other $\fkb\in\fkC(\CE)$ there exists an $n\in\NN$ so that $\fkb$ is a refinement of the $n$\=/th iteration of the star operation $\st(\fka,\st(\cdots,\st(\fka,\fka)))$. Concretely, this is the case if and only if every element $B\in\fkb$ can be covered with sets  $A_i\in\fka$ whose intersection graph has radius at most $n$ (Figure~\ref{fig:p1:monogenic via coverings}).
\end{exmp}

\begin{figure}
 \centering

 \includegraphics{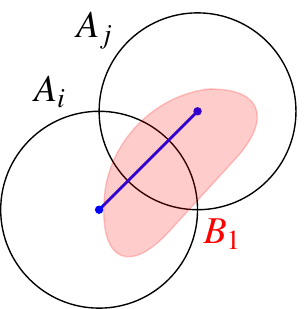}
\qquad 
 \includegraphics{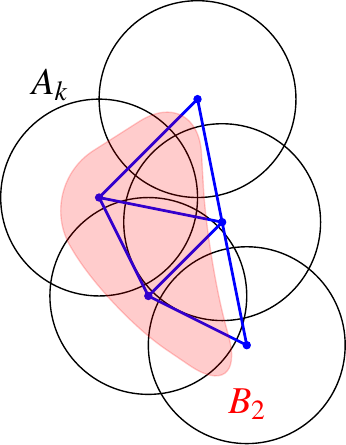}
\qquad 
 \includegraphics{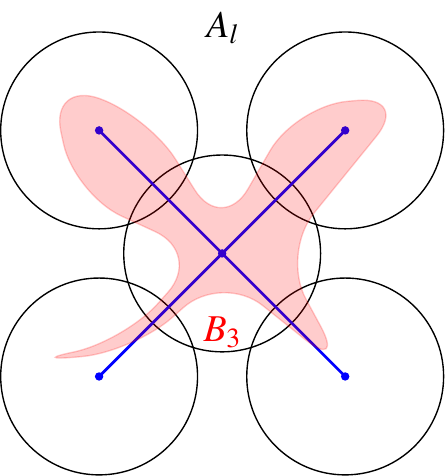}
 \caption{The partial covering $\fkb=\braces{B_1,B_2,B_3}$ is a refinement of $\st(\fka,\fka)$ because each $B_i$ is covered by sets $A_i$ with intersection graph of radius $1$.}
 \label{fig:p1:monogenic via coverings}
\end{figure}

\begin{rmk}
In addition to using controlled sets and partial coverings, one can define coarse structures in terms of families of pseudo\=/metrics. Namely, to a family $d_i\colon X\times X\to [0,\infty]$ with $i$ ranging is some index set $I$ one can associate the generated coarse structure $\angles{\CE_{d_i}\mid i\in I}$. Conversely, given any relation $E\in \CE$ one can define a pseudo metric $d_E$ as the largest pseudo\=/metric such that $d_E(x,y)\leq 1$ whenever $(x,y)\in E$. We may then associate with a coarse structure $\CE$ the family of $d_E$ as $E$ ranges in $\CE$.
We will not make use of this point of view. 
\end{rmk}

\section{Controlled Maps}

In this section we describe how functions $f\colon X\to Y$ interact with coarse structures $\CE$ and $\CF$ on $X$ and $Y$. 
We will use the notation $f\colon(X,\CE)\to(Y,\CF)$ or $f\colon X\to(Y,\CF)$ if we want to specify that some property of the function $f\colon X\to Y$ depends on $\CE$ and/or $\CF$.

\begin{de}\label{def:p1:controlled_map}
 A function $f\colon (X,\CE)\to (Y,\CF)$ is a \emph{controlled map} \index{controlled!map}
 \footnote{%
    In \cite{roe_lectures_2003} a map as in Definition~\ref{def:p1:controlled_map} is called ``bornologous''. 
    We find this nomenclature somewhat misleading, as it suggests that its defining property is that it preserves boundedness of subsets. 
    }
 if $f\times f\colon X\times X\to Y\times Y$ sends controlled sets to controlled sets, \emph{i.e.}\ $(f\times f) (E)\in \CF$ for every $E\in\CE$. 
 Equivalently, $f$ is a controlled map if it preserves controlled partial coverings: i.e.,  $f(\fka)\in\fkC(\CF)$ for every $\fka\in\fkC(\CE)$.
 
\end{de}

\begin{convention}
 For the sake of clarity, we will use the word `function' to denote any (possibly non controlled) function $f\colon X\to Y$. We will reserve the word `map' for controlled maps. 
\end{convention}

\begin{exmp}\label{exmp:p1:metric controlled map}
 Let $(X,d_{X})$ and $(Y,d_Y)$ be metric spaces. Then a function $f\colon(X,\CE_{d_{X}})\to (Y,\CE_{d_Y})$ is controlled if and only if there is a \emph{control function}, \index{control function} \emph{i.e.}\ an increasing function $\rho_+\colon [0,\infty)\to[0,\infty)$ such that
 \[
  d_Y(f(x),f(x'))\leq \rho_+(d_{X}(x,x'))
 \]
 for every $x,x'\in X$. 
 Note that $f$ is a Lipschitz function if and only if it is controlled by a linear control function. We define $f$ to be \emph{quasi-Lipschitz}\index{quasi-!Lipschitz} if it is controlled by an affine function $\rho_+(t)\coloneqq Lt+A$ for some $L,A\geq 0$. Quasi\=/Lipschitz functions are the building blocks for the theory of quasi-isometries (Appendix~\ref{sec:appendix:quasifications}), which is a mainstream tool in geometric group theory. Of course, quasi-Lipschitz functions are controlled.
\end{exmp}

\begin{de}\label{def:p1:close.functions}
 Two functions $f,g\colon X\to (Y,\CF)$ are \emph{close} \index{close function}  (denoted $f\closefn g$) \nomenclature[:REL]{$f\closefn g$}{close functions} if $f\times g\colon X\times X\to Y\times Y$ sends the diagonal $\Delta_{X}$ to a controlled set $F\in\CF$ (\emph{i.e.}\ $f(x)\torel{\CF}g(x)$ for every $x\in X$). 
 Equivalently, $f$ and $g$ are close if the family $(f\cup g)(\pts{X})\coloneqq\braces{\{f(x),g(x)\}\mid x\in X}$ is a controlled partial covering of $Y$.
\end{de}

Closeness is an equivalence relation on the set of functions from $X$ to $Y$. 
If $f_1\colon (X,\CE)\to (Y,\CF)$ is controlled and $f_2\colon X\to Y$ is close to $f_1$, then also $f_2$ is controlled.
If $f_1,f_2\colon X\to (Y,\CF)$ are close and $g\colon Z\to X$ is \emph{any} function, then $f_1\circ g$ and $f_2\circ g$ are close.
If $g'\colon (Y,\CF)\to (Z',\CD')$ is controlled, then also $g'\circ f_1$ and $g'\circ f_2$ are close.

\begin{de}\label{def:p1:coarse_equivalence}
 Let $f\colon (X,\CE)\to (Y,\CF)$ and $g\colon (Y,\CF)\to (X,\CE)$ be controlled maps. We say that $g$ is a \emph{right coarse inverse} of $f$ if the composition $f\circ g$ is close to $\id_{Y}$; and $g$ is a \emph{left coarse inverse} of $f$ if $g\circ f$ is close to $\id_X$. The function $g$ is a \emph{coarse inverse} \index{coarse!inverse} of $f$ if it is both a left and right coarse inverse (in which case $f$ is also a coarse inverse of $g$).
 A \emph{coarse equivalence}\index{coarse!equivalence} is a controlled map with a coarse inverse. We say that are $(X,\CE)$ and $(Y,\CF)$ \emph{coarsely equivalent}\index{coarsely!equivalent} if there is a coarse equivalence between them. 
\end{de}

\begin{de}\label{def:p1:proper.map}
 A function between coarse spaces $f\colon  (X,\CE) \to (Y,\CF)$ is \emph{proper}\index{proper!function} if the preimage of every bounded subset of $Y$ is bounded in $X$. It is \emph{controlledly proper} if $(f\times f)^{-1}(F)\in\CE$ for every $F\in\CF$. 
 Equivalently, $f$ is controlledly proper \index{controlledly proper} if the preimage of every controlled partial covering of $Y$ is a controlled partial covering of $X$. 
 A controlled, controlledly proper map is a \emph{coarse embedding} \index{coarse!embedding}.
\end{de}

If $Z$ is a subset of a coarse space $(X,\CE)$, it inherits a \emph{subspace coarse structure} \index{coarse structure!subspace} $\CE|_Z\coloneqq \braces{E\cap Z\times Z\mid E\in\CE}$. \nomenclature[:CE1]{$\CE{\vert}_Z$}{subspace coarse structure} It is not hard to check that $f\colon(X,\CE)\to(Y,\CF)$ is a coarse embedding if and only if $f\colon(X,\CE)\to(\im(f),\CF|_{\im(f)})$ is a coarse equivalence.
If $f \colon (X, \CE) \to (Y, \CF)$ is a controlled function and $f \closefn g$, then if $f$ is proper (resp. controlledly proper) then so is $g$ (resp. controlledly proper).

\

We say that a subset $Z\subseteq X$ is \emph{coarsely dense}\index{coarsely!dense} if there is a controlled set $E\in\CE$ such that $E(Z)=X$. In terms of controlled coverings, $Z$ is coarsely dense if there exists an $\fka\in\fkC(\CE)$ such that $\st(Z,\fka)=X$ or, equivalently, $\pts{X}$ is a refinement of $\st\paren{\pts{Z},\fka}$.

\begin{de}
 A function $f\colon X\to (Y,\CF)$ is \emph{coarsely surjective}\index{coarsely!surjective} if $f(X)$ is coarsely dense in $Y$.
\end{de}

\begin{exmp}
For metric coarse structures, it is helpful to rephrase the above properties in terms of distances.
 Let $(X,d_{X})$ and $(Y,d_Y)$ be metric spaces. Two functions $f_1,f_2\colon(X,\CE_{d_{X}})\to (Y,\CE_{d_Y})$ are close if there exists some $R\geq 0$ such that $d_Y(f_1(x),f_2(x))\leq R$ for every $x\in X$. A function $f\colon(X,\CE_{d_{X}})\to (Y,\CE_{d_Y})$ is proper if the preimage of a set with finite diameter has finite diameter. The function $f$ is coarsely surjective if there is an $R\geq 0$ so that its image is an $R$\=/dense net of $Y$, \emph{i.e.}\ the balls $B(f(x);R)$ cover $Y$.
 
 Also note that $f$ is a controlledly proper if for every $r\geq 0$ there is a $R\geq 0$ such that $\diam( f^{-1}(A) )\leq R$ for every subset $A\subseteq Y$ of diameter at most $r$.
 Equivalently, this is the case if and only if there exists an increasing and unbounded control function $\rho_-\colon [0,\infty)\to[0,\infty)$ such that 
 \[
  \rho_-(d_{X}(x,x'))\leq  d_Y(f(x),f(x')).
 \] 
 Together with Example~\ref{exmp:p1:metric controlled map}, this shows that $f$ is a coarse embedding if and only if 
 \[
  \rho_-(d_{X}(x,x'))\leq  d_Y(f(x),f(x'))\leq \rho_+(d_{X}(x,x')) 
 \]
 for some appropriately chosen control functions $\rho_-$ and $\rho_+$. This is the standard definition of coarse embedding for metric spaces known by geometric group theorists (sometimes also referred to as ``uniform embedding'' \cite[Section 7.E]{niblo_geometric_1993}). Similarly, we also see that $(X,\CE_{d_{X}})$ and $(Y,\CE_{d_Y})$ are coarsely equivalent as coarse spaces if and only if $(X,d_{X})$ and $(Y,d_Y)$ are coarsely equivalent as metric spaces \cite[Section 1.4]{nowak2012large}. 
Two metrics $d,d'$ on the same set $X$ are \emph{coarsely equivalent}\index{coarsely!equivalent!metric} if the identity function $\id_X\colon(X,d)\to(X,d')$ is a coarse equivalence.
\end{exmp}

The following will not come as a surprise to a geometric group theorist:

\begin{lem}\label{lem:p1:coarse.eq.iff.surjective.coarse.emb}
 A controlled map $f\colon (X,\CE)\to (Y,\CF)$ is a coarse equivalence if and only if it is a coarsely surjective coarse embedding.
\end{lem}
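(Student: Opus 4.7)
The plan is a standard back-and-forth argument. For the forward direction, assume $f$ has a coarse inverse $g\colon (Y,\CF)\to(X,\CE)$. Coarse surjectivity of $f$ is immediate: since $f\circ g\closefn \id_Y$, there is some $F_0\in\CF$ with $f(g(y))\torel{F_0}y$ for every $y\in Y$, so $F_0(f(X))=Y$. For controlled properness, take any $F\in\CF$; since $g$ is controlled we have $(g\times g)(F)\in\CE$, and since $g\circ f\closefn \id_X$ there is some $E_0\in\CE$ with $x\rel{E_0}g(f(x))$ for all $x\in X$. Then whenever $(x_1,x_2)\in(f\times f)^{-1}(F)$ we have $x_1\torel{E_0}g(f(x_1))\torel{(g\times g)(F)}g(f(x_2))\torel{E_0}x_2$, so $(f\times f)^{-1}(F)\subseteq E_0\cmp(g\times g)(F)\cmp E_0\in\CE$, as required.

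For the backward direction, I will construct a coarse inverse explicitly. Using coarse surjectivity, fix some $F_0\in\CF$ with $\Delta_Y\subseteq F_0=\op{F_0}$ and $F_0(f(X))=Y$, and for each $y\in Y$ choose $g(y)\in X$ with $f(g(y))\torel{F_0}y$ (take $g(y)$ to be any preimage when $y\in f(X)$). By construction $f\circ g\closefn \id_Y$.

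The key step is checking that $g$ is controlled, and this is where controlled properness is essential. For $F\in\CF$ and $(y_1,y_2)\in F$, we have
\[
f(g(y_1))\torel{F_0}y_1\torel{F}y_2\torel{F_0}f(g(y_2)),
\]
so $(g(y_1),g(y_2))\in (f\times f)^{-1}(F_0\cmp F\cmp F_0)$. Since $f$ is controlledly proper, this preimage lies in $\CE$, which shows $(g\times g)(F)\in\CE$ and hence $g$ is controlled. The same trick handles the remaining closeness $g\circ f\closefn\id_X$: for every $x\in X$ we have $f(g(f(x)))\torel{F_0}f(x)$, so $(g(f(x)),x)\in(f\times f)^{-1}(F_0)\in\CE$.

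The main conceptual point, and the only place the argument is not purely bookkeeping, is the use of controlled properness to pull back closeness relations in $Y$ to controlled relations in $X$; everything else is a direct chase through the definitions. Coarse surjectivity plus the axiom of choice produce the section $g$, but without controlled properness one cannot verify that this set-theoretic section is a controlled map.
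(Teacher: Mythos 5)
Your proof is correct and follows essentially the same plan as the paper's: the forward direction is a direct definitional chase, and the converse constructs a set-theoretic section from coarse surjectivity and then uses controlled properness to verify that the section is a controlled coarse inverse. The only cosmetic difference is that the paper phrases the converse in the language of controlled coverings (choosing $g(y)$ so that $f(g(y))$ and $y$ share a set of a covering $\fkb_0$), while you work directly with entourages — these are the two interchangeable languages the paper uses throughout.
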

\begin{proof}
It is easy to check that if $f$ has a right coarse inverse then it is coarsely surjective. Moreover, if $f$ has a left coarse inverse, $g$, then $f$ must be controlledly proper. This is because if $x\in f^{-1}(y)$, $x'\in f^{-1}(y')$ and $y\torel{F}y'$, then
\[
 x\torel{\paren{\id_X\times(g\circ f)}(\Delta_X)}g(y)
 \torel{g\times g(F)} g(y')
 \torel{\paren{(g\circ f)\times\id_X}(\Delta_X)}x'.
\]
This proves one implication.

For the other one, let $\fkb_0$ be a covering of $Y$ such that $\st(f(X),\fkb_0)=Y$. For every $y\in Y$ define $g(y)$ by choosing a point $x\in X$ such that both $f(x)$ and $y$ belong to the same $B\in\fkb_0$. 
 The function $g$ is a controlled map because for every controlled covering $\fkb$ of $Y$, the image $g(\fkb)$ is a refinement of $f^{-1}(\st(\fkb,\fkb_0))$. 
 The map $g$ is a coarse inverse of $f$ since $\paren{\id_Y\cup (f\circ g)}(\pts{Y})$ is a refinement of $\fkb_0$ and $\paren{\id_{X}\cup(g\circ f)}(\pts{X})$ is a refinement of $f^{-1}(\fkb_0)$.
\end{proof}

\begin{rmk}
 Of course, coarse surjectivity does not imply that there exists a (controlled) left coarse inverse in general. Consider for instance the controlled surjection $\id_\RR\colon(\RR,\mincrs)\to(\RR,\CE_d)$, where $\CE_d$ is the coarse structure induced by the Euclidean metric.
 Similarly, coarse embeddings need not admit right  coarse inverses. In fact, one can show that $f\colon X\to Y$ has a right coarse inverse if and only if $f$ is controlledly proper and $Y$ admits a ``coarse retraction'' to $f(X)$, \emph{i.e.}\ there is a controlled map $p\colon (Y,\CF)\to (f(X),\CF|_{f(X)})$ that is a right coarse inverse for the inclusion $f(X)\hookrightarrow Y$.
 Note that $(\RR,\CE_d)$ does not coarsely retract to $\{2^n\mid n\in\NN\}\subset\RR$.
\end{rmk}

\section{The Category of Coarse Spaces}
\label{sec:p1:category of coarse spaces}
Next we define the category of coarse spaces and introduce some important notation and conventions that we will use throughout the text. Before doing so, we need one last definition:

\begin{de}
A \emph{coarse map}\index{coarse!map} between two coarse spaces $(X,\CE)$ and $(Y,\CF)$ is an equivalence class of controlled maps $(X,\CE)\to(Y,\CF)$, where $f,g\colon X\to Y$ are equivalent if they are close (denoted $f\closefn g$).
\end{de}

\begin{rmk}
 Here our conventions diverge from Roe's. In \cite{roe_lectures_2003} a coarse map is defined as a controlled map (not an equivalence class) that is also proper.  
\end{rmk}

The composition of coarse maps is a well\=/defined coarse map because the closeness relation is well behaved under composition with controlled maps. We may hence give the following:

\begin{de}
  The \emph{category of coarse spaces}\index{category of!coarse spaces} is the category \Cat{Coarse}\nomenclature[:CAT]{\Cat{Coarse}}{category of coarse spaces} whose objects are coarse spaces $\crse{X}=(X,\CE)$ and whose morphisms are coarse maps:
  \[
  \cmor(\crse{X},\crse{Y})
  \coloneqq\bigbraces{\crse{f}\colon\crse{X}\to\crse{Y}\mid \crse{f}\text{ coarse map}}
  =\bigbraces{f\colon (X,\CE)\to (Y,\CF)\mid f\text{ controlled map}\,}/\closefn.
  \]
\end{de}

Of course, the identity morphism of $\crse{X}$ is the equivalence class of the identity function $\cid_{\crse X}\coloneqq [\id_{X}]$.\nomenclature[:CN]{$\cid_{\crse X}$}{identity coarse map} 
Given two coarse spaces $\crse X=(X,\CE)$, $\crse Y=(Y,\CF)$ and coarse maps $\crse f =[f]\colon(X,\CE)\to(Y,\CF)$ and $\crse g=[g]\colon(Y,\CF)\to(X,\CE)$, it follows from the definitions that $g$ is a left coarse inverse of $f$ if and only if $ \crse{ g \circ f} =\cid_{\crse X} $. That is, the morphism $\crse g$ is a left inverse of $\crse f$ in the category \Cat{Coarse}.
In particular, $\crse X$ and $\crse Y$ are isomorphic as objects in \Cat{Coarse} if and only if they are coarsely equivalent.

\begin{convention}\label{conv:p1:bold for coarse}
 We use bold symbols when we want to stress that we are working in the \Cat{Coarse} category. So that $\crse{X}$ will be a coarse space  and $\crse{f \colon X\to Y}$ a coarse map. 
 We will keep writing $(X,\CE)$ when we want to highlight that a coarse space is a specific set $X$ equipped with the coarse structure $\CE$: this can be helpful \emph{e.g.}\ when describing functions on $X$ or in the arguments involving more than one coarse structure. 
 
 We will use the convention that non-bold symbols denote a choice of representatives for bold symbols. For instance, if $\crse {f\colon X\to Y} $ is a coarse map then $f\colon X\to Y$ will denote a representative for $\crse f$ (\emph{i.e.}\ $f$ is controlled and $\crse f=[f]$ is the equivalence class of all functions that are close to $f$). There are a few instances where this notation is ambiguous---for example when a set is given more than one coarse structure---in these cases we will simply use the ``equivalence class'' notation $[\mhyphen]$. If confusion may arise, we will further decorate the notation with the relevant coarse structure. For instance, $[f]_\CF$ denotes the equivalence class of functions $f\colon X\to (Y,\CF)$ up to $\CF$\=/closeness.
\end{convention}

\begin{rmk}
    The notions of coarse left/right inverse, coarse embedding, coarse surjection and properness are all invariant under taking close functions. In particular, they define properties for coarse maps. 
    In general, we use the adjective `coarse' to denote properties that make sense in \Cat{Coarse}. For instance, a `coarse' property must certainly be invariant under coarse equivalence.
\end{rmk}

\

Every set $I$ can be seen as a coarse space equipped with the minimal coarse structure $\mincrs$. Every function $f\colon (I,\mincrs)\to (X,\CE)$ is controlled. Further, two functions $f,g\colon (X,\CE)=(I,\mincrs)$ are close if and only if $f=g$. This shows that \Cat{Coarse} contains \Cat{Set} in a trivial manner. 

\begin{de}
 A set equipped with the minimal coarse structure is said to be a \emph{trivially coarse space}. \index{trivially coarse!space}
\end{de}

\begin{convention}
 Throughout the monograph, we reserve the adjective `trivial' for the minimal coarse structure $\mincrs$. The maximal coarse structure $\maxcrs$ on a set consisting of more than one point will never be referred to as trivial. Instead, it might be called \emph{bounded}. 
\end{convention}

For later reference, we note that every coarse object $\crse X$ is uniquely determined by the coarse maps from trivially coarse spaces into $\crse{X}$. Namely, any coarse map $\crse f\colon \crse X\to \crse Y$ induces a map 
\[
 \crse{f}_I\colon\cmor((I,\mincrs),\crse{X})\to\cmor((I,\mincrs),\crse{Y})
\]
by composition and we have:

\begin{lem}\label{lem:p1:trivially coarse spaces characterise maps}
 Let $\crse f,\crse g\colon \crse X\to \crse Y$ be coarse maps, then $\crse f=\crse g$ if and only if $\crse f_I=\crse g_I$ for every trivially coarse set $I$.
 Furthermore, $\crse f\colon \crse X\to \crse Y$ is a coarse embedding (resp. coarsely surjective) if and only if $\crse f_I$ is injective (resp. surjective) for every trivially coarse set $I$. 
\end{lem}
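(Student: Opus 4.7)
The plan is to exploit the fact that any function from a trivially coarse space $(I,\mincrs)$ to a coarse space $\crse X=(X,\CE)$ is automatically controlled (its graph has image in $\Delta_X\in\CE$), so that $\cmor((I,\mincrs),\crse X)$ is simply the set of functions $I\to X$ modulo closeness \emph{in the codomain}. Thus the $\crse f_I$ can be compared by choosing well-designed test sets $I$ and testing against specific functions $h\colon I\to X$. The three assertions will each be reduced to a single clever choice of $I$.

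For the first assertion, one direction is immediate: if $\crse f=\crse g$ then any representatives $f\closefn g$ in $\CF$, so $f\circ h\closefn g\circ h$ for every $h\colon I\to X$, hence $\crse f_I=\crse g_I$. For the converse, I would take $I\coloneqq X$ (as a plain set) and test with $\id_X$: then $\crse f_X([\id_X])=[f]$ and $\crse g_X([\id_X])=[g]$ as elements of $\cmor((X,\mincrs),\crse Y)$, and equality of these classes is exactly $f\closefn g$ in $\CF$, i.e.~$\crse f=\crse g$.

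For the coarse-embedding assertion, unwinding definitions, injectivity of $\crse f_I$ on $\cmor((I,\mincrs),\crse X)$ for every $I$ says: whenever $h_1,h_2\colon I\to X$ satisfy $f\circ h_1\closefn f\circ h_2$ in $\CF$, then $h_1\closefn h_2$ in $\CE$. If $\crse f$ is a coarse embedding, pick $F\in\CF$ witnessing $f\circ h_1\closefn f\circ h_2$; controlled properness gives $(f\times f)^{-1}(F)\in\CE$, which witnesses $h_1\closefn h_2$. Conversely, to recover controlled properness, for any $F\in\CF$ the test space I would choose is $I\coloneqq(f\times f)^{-1}(F)$ with $h_1\coloneqq\pi_1$, $h_2\coloneqq\pi_2$; by construction $f\circ h_1$ and $f\circ h_2$ are close via $F$, so by the hypothesis $(h_1\times h_2)(\Delta_I)=(f\times f)^{-1}(F)$ lies in $\CE$. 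This is the only slightly non-obvious step---the rest is unwinding.

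Finally, for coarse surjectivity, surjectivity of $\crse f_I$ says: every $h\colon I\to Y$ is of the form $f\circ k$ up to $\closefn$. If $\crse f$ is coarsely surjective with witness $F\in\CF$ satisfying $F(f(X))=Y$, then for each $i\in I$ pick $k(i)\in X$ with $f(k(i))\torel{F}h(i)$; this $k$ witnesses surjectivity. Conversely, taking $I\coloneqq Y$ and $h\coloneqq\id_Y$, any $k\colon Y\to X$ with $f\circ k\closefn\id_Y$ produces $F\in\CF$ such that every $y\in Y$ lies in $F(f(X))$, giving coarse surjectivity. I expect no serious obstacle: the whole proof is a matter of making the right choice of test space $I$ in each of the three cases.
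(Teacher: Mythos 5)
Your proof is correct and follows the same overall strategy as the paper's---test against trivially coarse spaces---but the specific test spaces you choose for the harder direction of the coarse-embedding and coarse-surjectivity claims differ from those in the paper, and yours yield cleaner, direct arguments where the paper argues by contradiction. For controlled properness the paper sets $I\coloneqq\CE$ and, assuming some $(f\times f)^{-1}(F)$ is not controlled, picks a pair of points in $(f\times f)^{-1}(F)\smallsetminus E$ for each $E\in\CE$ to build two close-after-$f$ but non-close functions; you instead take $I\coloneqq(f\times f)^{-1}(F)$ with the two coordinate projections $h_1,h_2$, observe $f\circ h_1\closefn f\circ h_2$ via $F$, and conclude directly that $(h_1\times h_2)(\Delta_I)=(f\times f)^{-1}(F)\in\CE$. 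For coarse surjectivity the paper sets $I\coloneqq\fkC(\CF)$ and builds a function $h$ witnessing failure of density; you test with $I\coloneqq Y$ and $h\coloneqq\id_Y$ and read off the density constant from the closeness $f\circ k\closefn\id_Y$. The content is the same, but your choices make the implication positive rather than contrapositive, which is marginally tighter and avoids the bookkeeping of choosing counterexample points over large index sets.
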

\begin{proof}
Since the definition of morphisms in the coarse category is well\=/posed, it is automatic that $\crse f_I=\crse g_I$ whenever $\crse f=\crse g$. 
 The converse implication is also simple. Let $\crse{X}=(X,\CE)$, $\crse Y=(Y,\CF)$ and fix representatives $f,g\colon X\to Y$ for $\crse f$, $\crse g$. Further let $I=X$, and consider the identity function $\id_X\colon (I,\mincrs)\to(X,\CE)$. Then $\crse f_I([\id_{X}]_\CE)=[f]_\CF$ and $\crse g_I([\id_{X}]_\CE)=[g]_\CF$, so that the assumption $\crse f_I([\id_{X}]_\CE)=\crse g_I([\id_{X}]_\CE)$ implies that $[f]_\CF=[g]_\CF$ as controlled maps $(X,\mincrs)\to (Y,\CF)$. Since closeness does not depend on the choice of coarse structure on the domain, it follows that $\crse f=\crse g$.
 
 The `furthermore' statement is only marginally more complicated. It is immediate to check that if $f$ is a coarse embedding then $\crse{f}_I$ is injective. 
 For the converse implication, assume for contradiction that $f$ is not controlledly proper and let $I=\CE$. By assumption, there is a controlled set $F\in\CF$ such that $U\coloneqq(f\times f)^{-1}(F)$ is not controlled in $X$. For every $E\in I =\CE$ there is a pair $(x_1,x_2)\in U\smallsetminus E$ and we let $g_1(E)\coloneqq x_1$ and $g_2(E)\coloneqq x_2$. We thus obtained two functions $g_1,g_2\colon I\to X$ such that $[f\circ g_1]=[f\circ g_2]$ but $[g_1]\neq [g_2]$.
 
 The argument for surjectivity is similar. It is again obvious that when $\crse f$ is coarsely surjective then $\crse{f}_I$ is surjective for every $I$. Vice versa, assume that $Y$ is not contained in $\st(f(X),\fkc)$ for any $\fkc\in \fkC(\CF)$. Let $I=\fkC(\CF)$ and choose for every $\fkc\in\fkC(\CF)$ a $y_\fkc\in Y\smallsetminus\st(f(X),\fkc)$. Then the function $h\colon I \to Y$ sending $\fkc$ to $y_\fkc$ is not close to $f\circ g$ for any choice of $g\colon I\to X$ and hence $[h]\notin \im(\crse{f}_I)$.
\end{proof}

\begin{rmk}
 The fact that composition with $\crse{f}$ induces an injection (resp. surjection) $\cmor(\variable,\crse X)\to\cmor(\variable,\crse Y)$ whenever $\crse f$ is a coarse embedding (resp. coarsely surjective) is also true when considering non\=/trivially coarse spaces. 
\end{rmk}

\begin{rmk}
 We conclude this section by remarking that defining morphisms in \Cat{Coarse} as equivalence classes of \emph{functions} creates a certain degree of imbalance. 
 In fact, properties that can be quantified on the codomain of a function have a clear `coarse' analogue (\emph{e.g.}\ coarse surjectivity is a well\=/defined notion), while properties that need to be quantified on the domain (\emph{e.g.}\ being a well\=/defined function) do not have immediate coarse analogues.
 This can be a nuisance. For example, when constructing a coarse\=/inverse for a coarsely surjective coarse embedding (Lemma~\ref{lem:p1:coarse.eq.iff.surjective.coarse.emb}) there is an obvious `coarsely well\=/defined' candidate but it is then necessary to make some extra unnatural choices to obtain an actual function.
 
 One way to resolve this conundrum would be as follows. A function $f\colon X\to Y$ is identified with a subset of $X\times Y$ whose sections over every point $x\in X$ consist of exactly one point.
 It is this extra requirement of uniqueness that introduces asymmetry in the definitions. One could have a more `symmetric' notion of mapping by removing it altogether and replacing functions with binary relations $E\subseteq X\times Y$. A ``coarsely well\=/defined function'' could then be defined as a relation $E\subseteq X\times Y$ so that the sections ${}_xE$ are a controlled partial covering of $Y$ and $\pi_X(E)$ is coarsely dense in $X$ (\emph{i.e.}\ the ${}_xE$ is non-empty for a coarsely dense set of points).
\end{rmk}

\section{Binary Products}\label{sec:p1:binary products}
Given two coarse spaces $(X,\CE)$ and $(Y,\CF)$ we define the \emph{product coarse structure} \index{coarse structure!product} on $X\times Y$ as
\[
 \CE\otimes\CF\coloneqq \bigbraces{D\subseteq (X\times Y)\times(X\times Y)\mid\pi_{13}(D)\in \CE,\ \pi_{24}(D)\in\CF},
\]
\nomenclature[:CE1]{$\CE \otimes \CF$}{product coarse structure}
where $\pi_{13}\colon(X\times Y)\times(X\times Y)\to X \times X$ and $\pi_{24}\colon(X\times Y)\times(X\times Y)\to Y\times Y$ are the projections to the first \& third, and second \& fourth coordinates respectively.  
Given $E\in\CE$ and $F\in\CF$ we let 
\[
 E\otimes F\coloneqq\pi_{13}^{-1}(E)\cap\pi_{24}^{-1}(F)=\braces{(x_1,y_1,x_2,y_2)\mid (x_1,x_2)\in\CE,\ (y_1,y_2)\in\CF}.
\]
\nomenclature[:R]{$E \otimes F$}{product relation}
With this notation, $D\in\CE\otimes\CF$ if and only if it is contained in $E\otimes F$ for some $E\in\CE$ and $F\in\CF$. In other words, $\CE\otimes\CF$ is the coarse structure generated by taking the closure under subsets of the family $\{E\otimes F\mid E\in\CE,\ F\in\CF\}$.
Using the explicit description of the coarse structure generated by a family of relations (Lemma~\ref{lem:p1: generated coarse structure}), one can also show that $\angles{E_i\mid i\in I}\otimes\angles{E_j\mid j\in J}=\angles{E_i\otimes F_j\mid i\in I,\ j\in J}$.

Given any $E\subseteq X\times X$ and $F\subseteq Y\times Y$, we have:
\begin{equation}\label{eq:p1:otimes as composition}
 E\otimes F=\paren{E\otimes \Delta_Y}\cmp\paren{\Delta_{X}\otimes F}
 =\paren{\Delta_{X}\otimes F}\cmp\paren{E\otimes \Delta_Y}
\end{equation}
The above equation follows directly from the definitions, but it turns out to be very important. We will apply it frequently, since it allows us to decompose products of relations into simpler products.

\begin{rmk}
\label{rmk:p1:control of maps from product generated}
 As an immediate application, \eqref{eq:p1:otimes as composition} implies that when $\CE=\angles{E_i\mid i\in I}$ and $\CF=\angles{F_j\mid j\in J}$ the product $\CE\otimes\CF$ is generated by  $\braces{E_i\otimes \Delta_Y\mid i\in I}\cup\braces{\Delta_{X}\otimes F_j\mid j\in J}$.
 As a consequence, in order to verify that a given function $f\colon (X\times Y,\CE\otimes\CF)\to (Z,\CD)$ is controlled it is enough to show that $f\times f(E_i\otimes \Delta_Y)$ and $f\times f(\Delta_X\otimes F_j)$ belong to $\CD$ for every $i\in I$, $j\in J$.
\end{rmk}

\begin{exmp}
 If $(X,\CE_{d_X})$ and $(Y,\CE_{d_Y})$ are metric coarse spaces then their product $(X,\CE_{d_X})\times(Y,\CE_{d_Y})$ is also a metric coarse space. For instance, the product coarse structure $\CE_{d_X}\otimes \CE_{d_Y}$ is equal to the metric coarse structure $\CE_{d_X+d_Y}$, where $d_X+d_Y$ is the $\ell_1$ product metric:
 \[
  (d_X+d_Y)\bigparen{(x,y),(x',y')}\coloneqq d_X(x,x')+d_Y(y,y').
 \]
 In this case, it is easy to see what Equation~\eqref{eq:p1:otimes as composition} means. It is essentially the observation that 
 \[
  (d_X+d_Y)\bigparen{(x,y),(x',y')}
  = (d_X+d_Y)\bigparen{(x,y),(x',y)}+(d_X+d_Y)\bigparen{(x',y),(x',y')}.
 \]
\end{exmp}

\

To express the product coarse structure in terms of controlled partial coverings, note that
\[
 \fkC(\CE\otimes\CF)=\bigbraces{\fkd\text{ partial covering of }X\times Y\bigmid \pi_1(\fkd)\in\fkC(\CE),\ \pi_2(\fkd)\in\fkC(\CF)}.
\]
Equivalently, $\fkC(\CE\otimes\CF)$ is the system of partial coverings obtained by taking the closure under refinements of $\fkC(\CE)\times\fkC(\CF)$,
where $\fka\times\fkb=\braces{A\times B\mid A\in\fka,\ B\in \fkb}$.

\

It is immediate to check that $\crse X\times\crse  Y\coloneqq (X\times Y,\,\CE\otimes\CF)$ is a product of $\crse X=(X,\CE)$ and $\crse Y=(Y,\CF)$ in the category \Cat{Coarse}. 
Namely, the projections $\crse{\pi_{X}\colon X\times Y\to X}$ and $\crse{\pi_{Y}\colon X\times Y\to Y}$ are coarse maps, and for every pair of coarse maps $\crse{f_{1}\colon Z\to X}$, $\crse{f_{2}\colon Z\to Y}$ their product is a coarse map $\crse{f_{1}\times f_{2}\colon Z\to X\times Y}$.

\begin{rmk}
 It would not be possible to construct products using Roe's original definition of coarse maps: his extra requirement that coarse maps be proper makes it impossible to have projections.
\end{rmk}

\section{Equi Controlled Maps}\label{sec:p1:equi controlled.maps}
In this section we introduce the notion of equi controlled maps.
This will prove to be essential in our study of coarse groups. Observe that if $I$ is some index set and $f_i\colon X\to Y$ for $i\in I$ is a family of functions, then they can be seen as sections of the function $f\colon X\times I \to Y$ defined by $f(x,i)\coloneqq f_i(x)$.

\begin{de}\label{def:p1:equi controlled}
 Let $(I,\CC)$ be a coarse space. A family of controlled maps $f_i\colon (X,\CE)\to (Y,\CF)$ with $i\in I$ is \emph{$(I,\CC)$\=/equi controlled} if $f\colon (X,\CE)\times (I,\CC) \to (Y,\CF)$ is controlled.
 We say that the $f_i$ are \emph{equi controlled} if they are $(I,\mincrs)$\=/equi controlled.\footnote{%
 Families of $(I,\mincrs)$\=/equi controlled functions are called ``uniformly bornologous'' in \cite{brodskiy2008coarse}.
 } 
 \index{equi!controlled (map)}
\end{de}

The following statement follows easily from the definition, but we wish to spell it out explicitly:

\begin{lem}\label{lem:p1:equivalent.definitions.equi controlled}
 Let $f_i\colon (X,\CE)\to (Y,\CF)$ be a family of maps where $i\in I$ is some index set. The following are equivalent:
 \begin{enumerate}[(i)]
  \item the $f_i$ are equi controlled;
  \item for every controlled set $E$ of $X$ the union $\bigcup_{i\in I}\paren{f_i\times f_i}(E)$ is controlled in $Y$;
  \item for every controlled partial covering $\fka$ of $X$ the union $\bigcup_{i\in I}f_i(\fka)$ is a controlled partial covering of $Y$.
 \end{enumerate}
\end{lem}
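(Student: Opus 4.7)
The plan is to prove (i) $\Leftrightarrow$ (ii) by unpacking what controllability of $f\colon (X,\CE)\times(I,\mincrs)\to(Y,\CF)$ means on the generators of the product coarse structure, and then to prove (ii) $\Leftrightarrow$ (iii) by translating between relations and controlled partial coverings via Proposition~\ref{prop:p1:crse structures vs ctrl coverings}.

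For (i) $\Leftrightarrow$ (ii), I would appeal to Remark~\ref{rmk:p1:control of maps from product generated}. Since $\mincrs=\angles{D\mid D\subseteq\Delta_I}$ on $I$, the product coarse structure $\CE\otimes\mincrs$ is generated by the relations $E\otimes\Delta_I$ for $E\in\CE$ (together with $\Delta_X\otimes F$ for $F\subseteq\Delta_I$, which need no check since $(f\times f)(\Delta_X\otimes F)\subseteq\Delta_Y\in\CF$). Directly from the definition of $f(x,i)=f_i(x)$ one computes
\[
 (f\times f)(E\otimes\Delta_I)=\bigbraces{(f_i(x),f_i(x'))\mid (x,x')\in E,\ i\in I}=\bigcup_{i\in I}(f_i\times f_i)(E),
\]
so $f$ is controlled on every generator iff this union lies in $\CF$ for every $E\in\CE$. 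This is exactly (ii).

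For (ii) $\Leftrightarrow$ (iii), the key observation is the identity $(f_i\times f_i)(\diag(\fka))=\diag(f_i(\fka))$, valid for any partial covering $\fka$ and any function $f_i$, which yields
\[
 \bigcup_{i\in I}(f_i\times f_i)(\diag(\fka))=\diag\Bigparen{\bigcup_{i\in I}f_i(\fka)}.
\]
Assuming (ii), given $\fka\in\fkC(\CE)$ we have $\diag(\fka)\in\CE$, so the display above is in $\CF$, showing $\bigcup_i f_i(\fka)\in\fkC(\CF)$, which gives (iii). Conversely, assuming (iii), given $E\in\CE$ set $E'\coloneqq E\cup\Delta_X\in\CE$ so that $E\subseteq E'\subseteq\diag(E'(\pts{X}))$. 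The partial covering $\fka\coloneqq E'(\pts{X})$ is $\CE$\=/controlled, so by (iii) the union $\bigcup_i f_i(\fka)$ is $\CF$\=/controlled, which by the displayed identity means $\bigcup_i(f_i\times f_i)(\diag(\fka))\in\CF$, and (ii) follows by taking subsets.

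The argument is essentially bookkeeping: no step should present a real obstacle, but I would take care at the very first reduction to the generators of $\CE\otimes\mincrs$, since this is where Remark~\ref{rmk:p1:control of maps from product generated} is doing the substantive work of converting ``controlled on all of $\CE\otimes\mincrs$'' into the single condition involving only the $E\otimes\Delta_I$.
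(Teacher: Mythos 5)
Your proof is correct. The (i) $\Leftrightarrow$ (ii) step is essentially what the paper does: unwind controllability of $f\colon(X,\CE)\times(I,\mincrs)\to(Y,\CF)$ in terms of the generating relations $E\otimes\Delta_I$ of the product coarse structure and observe that $(f\times f)(E\otimes\Delta_I)=\bigcup_i(f_i\times f_i)(E)$. Where you diverge is the second equivalence: the paper never links (ii) and (iii) directly, but instead proves (i) $\Leftrightarrow$ (iii) in parallel fashion, again unwinding the definition of equi controlledness via $f(\fka\times\pts{I})=\bigcup_i f_i(\fka)$, so both (ii) and (iii) are derived independently from (i). You instead bridge (ii) and (iii) head-on, using the identity $(f_i\times f_i)(\diag(\fka))=\diag(f_i(\fka))$ and its union version, together with the standard translation between relations and controlled partial coverings (adding $\Delta_X$ to a relation so that its sections give a controlled cover). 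Both arguments are equally short; yours has the small advantage of making the relation/covering dictionary fully explicit and not re-invoking the definition of equi controlledness a second time, while the paper's is slightly more symmetric since it treats (ii) and (iii) as two faces of the same unwinding. Either is fine; your direct (ii) $\Leftrightarrow$ (iii) is the sort of translation the authors encourage readers to practice.
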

\begin{proof}
 $(i)\Leftrightarrow(ii)$: the function $f\colon (X,\CE)\times(I,\mincrs) \to (Y,\CF)$ is controlled if and only if $\paren{f\times f}(E\otimes \Delta_I)\in\CF$ for every $E\in \CE$. On the other hand, we have:
 \[
  \paren{f\times f}(E\otimes \Delta_I)
  =\bigcup_{i\in I}\paren{f\times f}(E\otimes\{i\})
  =\bigcup_{i\in I}\paren{f_i\times f_i}(E).
 \]
 $(i)\Leftrightarrow(iii)$: again, $f\colon (X,\CE)\times(I,\mincrs) \to (Y,\CF)$ is controlled if and only if $f(\fka\times \pts{I})$ is a controlled partial covering of $Y$ for every $\fka\in \fkC(\CE)$. The conclusion follows because
 \(
  f(\fka\times\pts{I})
  =\bigcup_{i\in I}f(\fka\times\{i\})
  =\bigcup_{i\in I}f_i(\fka).
 \)
\end{proof}

\begin{rmk}
 Using Convention~\ref{conv:p1:x_torel_CE}, the functions $f_i\colon(X,\CE)\to(Y,\CF)$ are equi controlled if and only if for every $E\in\CE$ we have
 \[
  f_i(x)\rel{\CF}f_i(x')\qquad \forall x\torel{E}x',\ \forall i\in I.
 \]
\end{rmk}

Vice versa, any function $f\colon X\times Y\to Z$ can be seen as a family of functions in two distinct ways: ${}_xf\coloneqq f(x,\variable)\colon Y\to Z$ with $x$ ranging in $X$, and $f_y\coloneqq f(\variable,y)\colon X\to Z$ with $y$ ranging in $Y$. The following lemma is very useful and deceptively simple to prove:

\begin{lem}\label{lem:p1:equi controlled.sections.iff.controlled}
 Let $(X,\CE),(Y,\CF)$ and $(Z,\CD)$ be coarse spaces. A function $f\colon (X,\CE)\times (Y,\CF)\to (Z,\CD)$ is controlled if and only if both families ${}_xf$ and $f_y$ are equi controlled.
\end{lem}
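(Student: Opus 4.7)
The plan is to reduce the statement to the explicit generating set for the product coarse structure $\CE\otimes\CF$ provided by Remark~\ref{rmk:p1:control of maps from product generated}. Recall that to verify a function on $(X\times Y,\CE\otimes\CF)$ is controlled it suffices to check its behavior on relations of the form $E\otimes\Delta_Y$ and $\Delta_X\otimes F$, with $E\in\CE$ and $F\in\CF$. The key observation, which will be used throughout, is the simple set\=/theoretic identities
\[
 (f\times f)(E\otimes\Delta_Y)=\bigcup_{y\in Y}(f_y\times f_y)(E),
 \qquad
 (f\times f)(\Delta_X\otimes F)=\bigcup_{x\in X}({}_xf\times {}_xf)(F),
\]
which are immediate from unpacking the definitions.

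First I would prove the forward direction. Suppose $f$ is controlled. To see that the family ${}_xf$ is equi controlled I apply Lemma~\ref{lem:p1:equivalent.definitions.equi controlled}(ii): fix $F\in\CF$ and note that $\Delta_X\otimes F\in\CE\otimes\CF$, hence its image $(f\times f)(\Delta_X\otimes F)\in\CD$. By the second identity above, this image equals $\bigcup_{x\in X}({}_xf\times{}_xf)(F)$, which is exactly the condition that ${}_xf$ is equi controlled. The argument for $f_y$ is symmetric, using $E\otimes\Delta_Y$ in place of $\Delta_X\otimes F$.

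For the backward direction, assume both families are equi controlled. By Remark~\ref{rmk:p1:control of maps from product generated} and the fact that $\CE\otimes\CF$ is generated by $\{E\otimes \Delta_Y\mid E\in\CE\}\cup\{\Delta_X\otimes F\mid F\in\CF\}$, it suffices to show that $(f\times f)(E\otimes\Delta_Y)\in\CD$ and $(f\times f)(\Delta_X\otimes F)\in\CD$ for all such $E$ and $F$. Using the identities above, the first set equals $\bigcup_y(f_y\times f_y)(E)$, which lies in $\CD$ because $f_y$ is equi controlled, and the second equals $\bigcup_x({}_xf\times{}_xf)(F)$, which lies in $\CD$ because ${}_xf$ is equi controlled.

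There is no real obstacle here; the only subtle point is remembering that we do not have to check controlledness on all of $\CE\otimes\CF$ but only on the two classes of generating relations, which is what makes the decomposition $E\otimes F=(E\otimes\Delta_Y)\cmp(\Delta_X\otimes F)$ from equation~\eqref{eq:p1:otimes as composition} useful in practice. Once that reduction is in place, the proof is essentially a pair of short calculations.
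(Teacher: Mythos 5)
Your proposal is correct and follows essentially the same route as the paper: both directions ultimately rest on the decomposition $E\otimes F=(E\otimes\Delta_Y)\cmp(\Delta_X\otimes F)$ from equation~\eqref{eq:p1:otimes as composition} (which is also what Remark~\ref{rmk:p1:control of maps from product generated} packages), and your set\=/theoretic identities for $(f\times f)(E\otimes\Delta_Y)$ and $(f\times f)(\Delta_X\otimes F)$ simply make explicit the step the paper dispatches with ``a fortiori controlled with respect to $\CE\otimes\mincrs$ and $\mincrs\otimes\CF$.''
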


Note that the above statement asks that ${}_xf$ are $(X,\mincrs)$\=/equi controlled (as opposed to $(X,\CE)$\=/equi controlled, in which case the statement would be trivial).

\begin{proof}
One implication is obvious: if $f$ is controlled with respect to $\CE\otimes\CF$ then it is a fortiori controlled with respect to $\paren{\CE\otimes\mincrs}$ and $\paren{\mincrs\otimes\CF}$.

For the other implication, \eqref{eq:p1:otimes as composition} states that we can write $E\otimes F$ as $\paren{E\otimes\Delta_Y}\cmp\paren{\Delta_{X}\otimes F}$.
We therefore see that 
\[
 \paren{f\times f}(E\otimes F)\subseteq \paren{f\times f}\paren{E\otimes\Delta_Y}\cmp \paren{f\times f}\paren{\Delta_{X}\otimes F}
\]
is controlled. This concludes the proof because the relations $E\otimes F$ generate $\CE\otimes\CF$.
\end{proof}

\chapter{Properties of the Category of Coarse Spaces}\label{ch:p1:properties of coarse}

In this chapter we review a few more properties and constructions regarding coarse maps and spaces. Some highlights include notions of coarse images, subspaces and intersections. These concept and the related language will be important in the sequel.

\section{Pull-back and Push-forward}\label{sec:p1:pullback_pushforward}
Coarse structures are well behaved under pull-back\footnote{%
This section describes pull\=/backs and push\=/forwards of structures. We are \emph{not} describing pull\=/backs and push\=/outs of commutative diagrams.
} but less so under push-forward: 

\begin{de}\label{def:p1:pull-back}
 Let $f\colon X\to Y$ be any function and $\CF$ a coarse structure on $Y$. The \emph{pull-back} \index{coarse structure!pull-back} of $\CF$ under $f$ is the coarse structure 
 \[
  f^*(\CF)\coloneqq\braces{E\in X\times X\mid f\times f(E)\in\CF}. \nomenclature[:CE1]{$f^*(\CF)$}{pull back of coarse structure}
 \]
\end{de}

It is easy to verify that the pull back of a coarse structure is indeed a coarse structure.
It follows from the definition that $f\colon (X,f^*(\CF))\to (Y,\CF)$ is a controlled map. In fact, $f\colon (X,\CE)\to (Y,\CF)$ is controlled if and only if $\CE\subseteq f^*(\CF)$. Note also that if $f_1,f_2\colon X\to (Y,\CF)$ are close functions, then $f_1^*(\CF)=f_2^*(\CF)$. 
It follows that if we are given a coarse map $\crse f\colon (X,\CE)\to (Y,\CF)$ then the pull\=/back $f^\ast(\CF)$ contains $\CE$ and does not depend on the choice of representative $f$.

It is also clear that $f\colon (X,f^*(\CF))\to (Y,\CF)$ is a coarse embedding. Lemma~\ref{lem:p1:coarse.eq.iff.surjective.coarse.emb} implies the following:

\begin{cor}\label{cor:p1:pullback.gives.isomorphism}
 If $f\colon X\to (Y,\CF)$ is a coarsely surjective function, then $f$ is a coarse equivalence between $(X,f^*(\CF))$ and $(Y,\CF)$.
\end{cor}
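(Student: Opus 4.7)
The plan is to apply Lemma~\ref{lem:p1:coarse.eq.iff.surjective.coarse.emb}, which characterises coarse equivalences as precisely those controlled maps that are both coarsely surjective and coarse embeddings. Since coarse surjectivity of $f\colon (X,f^*(\CF))\to(Y,\CF)$ is immediate (the notion depends only on the codomain $(Y,\CF)$, and it is given by hypothesis), the only real content is to verify that the map $f$, now regarded with the pull\=/back coarse structure on its domain, is indeed a coarse embedding.

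First I would unpack what coarse embedding means in this setting (Definition~\ref{def:p1:proper.map}): I need $f$ to be controlled and controlledly proper. Controlledness is essentially tautological and was already observed in the paragraph immediately preceding the corollary: since $f\times f$ sends every $E\in f^*(\CF)$ into $\CF$ by definition of the pull\=/back, the map $f\colon(X,f^*(\CF))\to(Y,\CF)$ is controlled. For controlled properness I must show that $(f\times f)^{-1}(F)\in f^*(\CF)$ for every $F\in\CF$. But by the very definition of $f^*(\CF)$, a subset $E\subseteq X\times X$ lies in $f^*(\CF)$ as soon as $(f\times f)(E)\in\CF$, and clearly $(f\times f)\bigparen{(f\times f)^{-1}(F)}\subseteq F\in \CF$. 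Hence $(f\times f)^{-1}(F)\in f^*(\CF)$, proving controlled properness.

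Putting these pieces together, $f\colon(X,f^*(\CF))\to(Y,\CF)$ is a controlled, controlledly proper, coarsely surjective map, i.e.\ a coarsely surjective coarse embedding. Applying Lemma~\ref{lem:p1:coarse.eq.iff.surjective.coarse.emb} then yields that it is a coarse equivalence, which is exactly the statement of the corollary.

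There is no substantial obstacle here: the corollary is a direct consequence of the definition of the pull\=/back coarse structure together with Lemma~\ref{lem:p1:coarse.eq.iff.surjective.coarse.emb}. The only thing to be slightly careful about is to remember that ``coarse embedding'' includes the controlled properness condition, which must be checked even though it looks tautological; once one writes out the definition of $f^*(\CF)$ it really is immediate.
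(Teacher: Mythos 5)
Your proof is correct and follows essentially the same route as the paper: the text immediately preceding the corollary already observes that $f\colon(X,f^*(\CF))\to(Y,\CF)$ is a coarse embedding, and the corollary is obtained by combining that with Lemma~\ref{lem:p1:coarse.eq.iff.surjective.coarse.emb}. You have simply spelled out the controlled-properness check that the paper declares ``clear.''
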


The pull-back is well\=/behaved under composition: $(f_2\circ f_1)^*(\CF)=f_1^*(f_2^*(\CF))$. We implicitly used the notion of pull\=/back when defining products: the product coarse structure on $(X_1,\CE_1)\times (X_2,\CE_2)$ is the intersection of the pullbacks of the projections: $\CE_1\otimes\CE_2 = \pi_1^*(\CE_1)\cap\pi_2^*(\CE_2)$. In turn, this implies that the pull\=/back is also well behaved with respect to taking products: $(f_1\times f_2)^*(\CF_1\otimes\CF_2)=(f_1^*(\CF_1))\otimes(f_2^*(\CF_2))$.

\

Push-forwards are somewhat less well-behaved. Given a function $f\colon (X,\CE)\to Y$, the natural definition for the \emph{push-forward} of $\CE$ under $f$ is $f_*(\CE)\coloneqq\angles{f\times f(E)\mid E\in \CE}$. Note that it is important to take the coarse structure \emph{generated} by $\braces{f\times f(E)\mid E\in \CE}$, as this set need not contain the diagonal nor be closed under composition. In this monograph we will not explicitly use push\=/forwards: they only appear implicitly. 

\section{Controlled Thickenings and Asymptoticity}
\label{sec:p1:thickenings and indexed coverings}
Coarse containments and asymptoticity are useful concepts in the theory of coarse spaces which are key for defining coarse subspaces. The notation introduced in the following definitions will be used fairly often in the sequel.
Let $\crse X=(X,\CE)$ be a coarse space. 
 
\begin{de}\label{def:p1:controlled thickening}
 A \emph{controlled thickening} \index{controlled!thickening} of a subset $A\subseteq X$ is a `star neighborhood' $\st(A,\fka)$ where $\fka\in\fkC(\CE)$ is a controlled partial covering. A set $B\subseteq X$ is \emph{coarsely contained} \index{coarse!containment}  in $A$ (denoted $B\csub A$) \nomenclature[:REL]{$A\csub B$}{coarse containment of subsets} if it is contained in a controlled thickening of $A$. Equivalently, $B\csub A$ if $B\subseteq E(A)$ for some $E\in\CE$. \end{de}

Let $E,F \in \CE$, and $A \subset X$. Since $E(F(A))=(E\cmp F)(A)$, it follows that coarse containment is a preorder on subsets of $X$. This preorder induces an equivalence relation:

\begin{de}\label{def:p1:asymptotic}
 Two subsets $A,B\subseteq X$ are \emph{asymptotic} \index{asymptotic (subsets)} (denoted $A\ceq B$) \nomenclature[:REL]{$A\ceq B$}{asymptotic subsets} if $A\csub B$ and $B\csub A$. We denote the equivalence class of $A$ by $[A]$.
\end{de}

If there is risk of confusion with regard to the coarse structure being used, we will include it in the notation and write $B\csub_\CE A$, $B\ceq_\CE A$ and $[A]_\CE$.

\begin{exmp}
 If we identify points $x\in X$ with singletons $\{x\}\subseteq X$, then $\ceq$ defines an equivalence relation on $X$. It follows from the definition, that $x\ceq x'$ if and only if they belong to the same coarsely connected component of $(X,\CE)$ (Definition~\ref{def:p1:connected coarse structure}).
\end{exmp}

\begin{example}
 If $(X,d)$ is a metric space and $\CE_d$ is the induced coarse structure, then $A\ceq B$ if and only if they are at finite Hausdorff distance. In particular, the notion of `being asymptotic' is coherent with standard terminology (\emph{e.g.}\ infinite geodesic rays in metric spaces are asymptotic subsets if and only if they are asymptotic as paths \cite{bridson_metric_2013,drutu_geometric_2018}).  
\end{example}

The following is lemma is a simple but important observation.

\begin{lem}\label{lem:p1:controlled maps and asymptotic subsets}
 Let $\crse X=(X,\CE)$, $\crse Y=(Y,\CF)$ be coarse spaces. 
 \begin{enumerate}
  \item If two functions $f,f'\colon X\to Y$ are close then $f(A)\ceq f'(A)$ for every subset $A\subseteq X$. 
 \item Given subsets $A,A'\subseteq X$ and a function $f\colon X\to Y$, if $A\csub A'$ and $f$ is controlled then $f(A) \csub f(A')$. In particular, controlled maps preserve asymptoticity.
 \end{enumerate}
\end{lem}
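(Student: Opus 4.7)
The plan is to unwind both statements directly from the relevant definitions of closeness and coarse containment. The two parts are essentially formal: once the quantifiers are pinned down, each asserted containment is witnessed by an explicit $F\in\CF$ obtained from the given data.

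For part (1), the hypothesis $f\closefn f'$ provides (by Definition~\ref{def:p1:close.functions}) a single $F\in\CF$ such that $f(x)\torel{F}f'(x)$ for every $x\in X$. In particular, for every $a\in A$ we have $f'(a)\in F(f(a))\subseteq F(f(A))$, so $f'(A)\subseteq F(f(A))$ and hence $f'(A)\csub f(A)$. Replacing $F$ by $\op{F}\in\CF$ gives the reverse containment $f(A)\subseteq \op{F}(f'(A))$, so $f(A)\csub f'(A)$ and the two sides are asymptotic.

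For part (2), $A\csub A'$ gives $E\in\CE$ with $A\subseteq E(A')$; applying $f$ yields $f(A)\subseteq f(E(A'))$. Because $f$ is controlled, $F\coloneqq (f\times f)(E)\in\CF$, and for any $x\in E(A')$ there exists $a'\in A'$ with $x\torel{E}a'$, whence $f(x)\torel{F}f(a')$. Thus $f(E(A'))\subseteq F(f(A'))$, so $f(A)\csub f(A')$. The ``in particular'' clause is immediate: if $A\ceq A'$ then both $A\csub A'$ and $A'\csub A$, and applying the first conclusion in both directions gives $f(A)\ceq f(A')$.

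No step should present a genuine obstacle; the only care needed is the bookkeeping involved in moving from the definition of closeness/coarse containment (which is phrased ``for every $x$'' with a \emph{single} witness $F$ or $E$) to the containment of images. I would write the proof in a few lines, citing Definitions~\ref{def:p1:close.functions} and~\ref{def:p1:controlled thickening} for transparency.
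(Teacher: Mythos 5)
Your argument is correct and amounts to the same proof as the paper's, just phrased in the language of entourages and sections $E(Z)$ rather than the paper's language of controlled coverings and stars (the paper sets $\fkc\in\fkC(\CE)$ with $A\subseteq\st(A',\fkc)$, then uses that $f(\fkc)$ is controlled and $f(\st(A',\fkc))\subseteq\st(f(A'),f(\fkc))$). Since the text explicitly treats those two formalisms as interchangeable, this is merely a change of dialect, not of argument.

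One small bookkeeping slip in part (1): from $f(a)\torel{F}f'(a)$, the definition $E(Z)=\{x\mid\exists z\in Z,\ x\torel{E}z\}$ gives $f(a)\in F(\{f'(a)\})$, so the immediate conclusion is $f(A)\subseteq F(f'(A))$, i.e.\ $f(A)\csub f'(A)$, while $\op{F}$ gives $f'(A)\subseteq\op{F}(f(A))$. You have the two containments with the roles of $F$ and $\op{F}$ interchanged. Since $\CF$ is closed under $\op{(\cdot)}$ this does not affect the validity of the proof, but it is worth keeping the convention straight when writing it up. Part (2) is correctly stated and matches the paper's covering-based version line for line.
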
 
\begin{proof}
 By definition, if $f$ and $f'$ are close then the partial covering $(f\cup f')(\pts{X})=\braces{\braces{f(x),f'(x)}\mid x\in X}$ is controlled. Noting that $f'(A)\subseteq \st(f(A),(f\cup f')(\pts{X}))$ shows that $f'(A)\csub f(A)$. We similarly see that $f(A)\csub f'(A)$, which proves the claim. 
 
 For the second statement, let $\fkc$ be a controlled covering of $(X,\CE)$ so that $ A \subseteq \st (B, \fkc)$. Then, $f(A) \subseteq f\paren{\st (B, \fkc)}\subseteq\st(f(B),f(\fkc))$. The partial covering $f(\fkc)$ is controlled on $(Y,\CF)$ because $f$ is a controlled map. Therefore $f(A)\csub f(B)$.
\end{proof}

\section{Coarse Subspaces, Restrictions, Images and Quotients}
\label{sec:p1:subspaces and quotients}
When $\crse X=(X,\CE)$ is a coarse space, it is unnatural to work with points and subsets of $X$, since in the category \Cat{Coarse} points and subsets are specified only up to closeness. Instead, we will use the following:

\begin{de}\label{def:p1:coarse subspace}
 A \emph{coarse subspace}\index{coarse!subspace} of a coarse space $\crse X=(X,\CE)$ is an equivalence class $[Y]$ of subsets of $X$ up to $\CE$\=/asymptoticity. Coarse subspaces are denoted with a bold inclusion symbol $\crse {Y\subseteq X}$.\nomenclature[:CN]{$\crse {Y\subseteq X}$}{coarse subspace}
 Given any $x\in X$, we say that $\crse x\coloneqq [x]$ is a \emph{coarse point}\index{coarse!point} in $\crse X$, denoted $\crse{x\in X}$.
\end{de}

\begin{figure}
 \centering

 \includegraphics{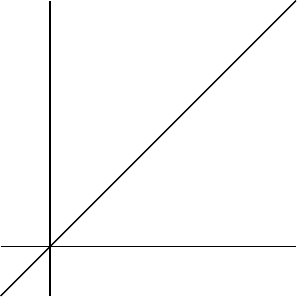}
\qquad 
 \includegraphics{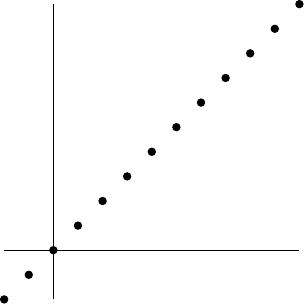}
\qquad 
 \includegraphics{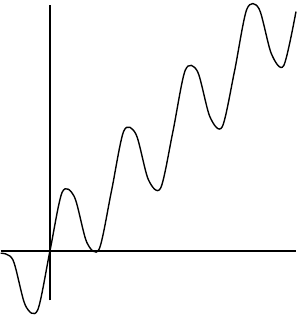}
\qquad 
 \includegraphics{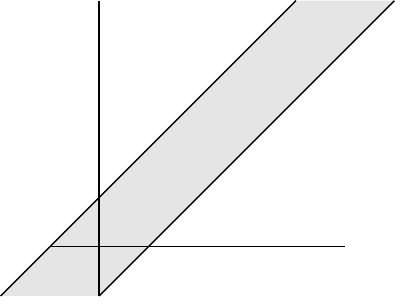}
 \caption{Asymptotic subsets of $\RR^2$ represent the same coarse subspace of $\crse X=(\RR,\CE_{\norm{\mhyphen}})$}
 \label{fig:p1:subspaces of R}
\end{figure}

\begin{rmk}
\label{rmk:p1:coarse points are connected components}
 A coarsely connected space $\crse X$ only has one coarse point $\crse x$, because any two points $x,x'\in X$ are close. More generally, any bounded subset of $X$ is a representative for $\crse{x\in X}$.
 For a general coarse space $\crse X$, the coarse points in $\crse X$ are in correspondence with its coarsely connected components. 
\end{rmk}

Up to coarse equivalence, a coarse subspace uniquely defines a coarse space. Namely, if $Y\subseteq X$ and $Y'\subseteq X$ are two asymptotic subsets in $\crse X$, then there is a natural coarse equivalence between $\crse Y =(Y,\CE|_Y)$ and $\crse Y' =(Y',\CE|_{Y'})$. By this we mean that there exists a unique coarse equivalence $\crse{i\colon Y\to Y'}$ such that for every coarse function $\crse{ f\colon X\to Z}$ the restriction $[f|_{Y}]$ is equal to the composition $[f|_{Y'}]\circ \crse i$.
It is simple to verify this claim directly or, more conceptually, it can be deduced by showing that the definition of coarse subspace is compatible with the categorical notion of subobject, see Appendix~\ref{sec:appendix:subobjects}.

This justifies our choice of notation: if $\crse {Y\subseteq X}$ is a coarse subspace then $\crse Y$ is indeed a coarse space (well\=/defined up to canonical coarse equivalence) and it will be treated as such. 
Since $\crse Y$ is only defined up to coarse equivalence, it is a slight abuse of notation to write $\crse Y=(Y,\CE|_Y)$: it would be more correct to specify that $(Y,\CE|_Y)$ is a choice of representative for $\crse Y$. However, this nuance is inessential and in the sequel we will often write $\crse Y=(Y,\CE|_Y)$ to signify that we fixed a representative $Y\subseteq X$. Importantly, every `coarse' property and construction defined for coarse spaces restricts to a well\=/defined notion for coarse subspaces (meaning that it does not depend on the choice of representative).

\

Given a coarse subspace $\crse{Y\subseteq X}$ and a coarse map $\crse{f\colon X\to Z}$, the restriction $f|_Y\colon Y\to Z$ defines a coarse map $\crse{f|_{Y}\colon Y\to Z}$. Because of the naturality of the coarse equivalence between $(Y,\CE|_Y)$ and any other representative of $\crse {Y\subseteq X}$, we can use the notation $\crse{f|_{Y}}$ without worrying about the choice of representative of $\crse Y$ used define $f|_Y$. We state this as a definition:

\begin{de}
 The coarse map $\crse{f|_{Y}\colon Y\to Z}$ is the \emph{restriction}\index{coarse!restriction (of a map)}\nomenclature[:CN]{$\crse{f{\vert}_{Y}}$}{restriction to a coarse subspace} of $\crse{ f\colon X\to Z}$ to the coarse subspace $\crse Y$.
\end{de}

It follows from Lemma~\ref{lem:p1:controlled maps and asymptotic subsets} that close functions have asymptotic images. We can therefore use coarse subspaces to define the coarse image of a coarse map.

\begin{de}\label{def:p1:coarse image}
Let $\crse f\colon\crse X\to\crse Z$ be a coarse map. The \emph{coarse image of $\crse{f}$}\index{coarse!image}\nomenclature[:CN]{$\cim(\crse f),\ \crse{f(X)}$}{coarse image}
 is the coarse subspace $\cim(\crse f)\crse{\subseteq Z}$ determined by the asymptotic equivalence class of $\im(f)$, \emph{i.e.}\  
\(
 \cim(\crse f)\coloneqq[\im(f)].
\)
The \emph{coarse image $\crse f(\crse Y)$ of a coarse subspace} $\crse Y\subseteq \crse X$ is the coarse subspace
\[
 \crse{f(Y)}\coloneqq [f(Y)]=\cim(\crse{f|_{Y}}).
\]
\end{de}

With an abuse of notation, we may identify $\crse X$ with a coarse subspace of itself. Namely, for a coarse subspace $\crse {Y\subseteq X}$ we write $\crse{ Y= X}$ to signify that the whole space $X$ belongs to the equivalence class $[Y]$ (\emph{i.e.}\ $Y$ is coarsely dense in $\crse X$). This is convenient, because when given a coarse map $\crse{f\colon X\to Z}$ we can write $\cim(\crse f)\crse{=f(X)}$ and say that $\crse f$ is coarsely surjective if and only if $\crse{f(X)=Z}$.
Similarly,  $\crse{f\colon X\to Z}$ is a coarse embedding if and only if the map  $\crse{f\colon X\to f(X)}$ is a coarse equivalence of $\crse X$ with its coarse image.
 
Given two coarse functions $\crse f_1\colon \crse X_1\to \crse X_2$ and $\crse f_2\colon \crse X_2\to \crse X_3$, the coarse image of a coarse subspace $\crse{Y\subseteq X}_1$ under the composition is $\crse f_2\circ \crse f_1(\crse Z)=\crse f_2(\crse f_1(\crse Z))$.

\begin{rmk}
 The coarse image of a coarse map $\crse{f\colon X\to Z}$ does not depend on the coarse structure of $\crse X$. In particular, every function $f\colon X\to Z$ has a well defined coarse image---even if $f$ is not controlled. On the contrary, the coarse image of a coarse subspace of $\crse X$ does depend on the coarse structure of $\crse X$ and the definition of $\crse{f(Y)}$ only makes sense for controlled functions.
\end{rmk}

\

The situation for coarse quotients is simpler, albeit a little counterintuitive.

 \begin{de}\label{def:p1:coarse quotient space}
 A \emph{coarse quotient}\index{coarse!quotient (space)}\index{quotient!coarse space} of a coarse space $\crse X=(X,\CE)$ is a coarse space $(X,\CE')$ where $\CE\subseteq \CE'$. If $\CR$ is any family of relations on $X$, the \emph{coarse quotient of $\crse X$ by $\CR$} is defined as $\crse X/\CR\coloneqq(X,\angles{\CE\cup\CR})$.\nomenclature[:CN]{$\crse X/\CR$}{coarse quotient by relations} 
 \end{de}
 
 According to this convention, the quotient $(X,\CE')$ with $\CE\subseteq \CE'$ can be denoted by $\crse X/\CE'$. In this piece of notation we are not using a bold ``$/$'' symbol because we are specifying the family of relations pointwise. 
 The definition of quotient satisfies the universal property: a coarse map $\crse {f\colon X\to Y}=(Y,\CF)$ factors through $\crse X/\CR$ if and only if $f\times f(R)\in\CF$ for every $R\in\CR$. One can easily verify that this is compatible with the definition of quotient object (Appendix~\ref{ch:appendix:categorical aspects}).
 If $\crse{ q\colon X\to Y}$ is coarsely surjective, then it factors through a coarse equivalence $\crse{X}/q^*(\CF)\to\crse Y$, where $q^*(\CF)$ is the pull\=/back coarse structure. For this reason we may abuse the nomenclature and say that a coarse space $\crse Y$ with coarse surjection $\crse{q\colon X\to Y}$ is a coarse quotient of $\crse X$. 
 
 Coarse quotients are somewhat easier than coarse subspaces because $\crse X/\CR$ is a well defined coarse space. This is opposed to subspaces, which only determine coarse spaces up to coarse equivalence. It may appear peculiar that the base set $X$ does not change when passing to coarse quotients, however, this is a necessity. The informal reason for this is that taking coarse quotients is a less disruptive operation than quotienting sets: it may retain information on ``how close'' are the points that are being identified.  
 More precisely, quotient sets are only defined when quotienting by an equivalence relation $\sim$. The transitivity assumption on $\sim$ holds exactly, no matter how many times it is applied. In contrast, when taking a coarse quotient $\crse X/\{R\}$ for a non transitive relation $R$ we ``remember'' how many times we need to apply $R$ to witness that two points are equivalent in the coarse quotient.
 
 \begin{exmp}
  If $\sim$ is an equivalence relation on a set $X$ and $x_1,x_2,\ldots$ is a sequence of points then $x_i\sim x_{i+1}$ for every $i\in\NN$ if and only if $x_1\sim  x_i$  for every $i\in\NN$. In particular, if we take the quotient set $X/{\sim}$ then the sequence $([x_i])_{i\in \NN}$ is equal to the constant sequence $([x_1])_{i\in\NN}$.
  
  On the other hand, let $R$ be a symmetric, reflexive but non transitive relation on $X$. If $\crse X=(X,\mincrs)$, a sequence of points $(x_i)_{i\in\NN}$ will be close to the constant sequence $(x_1)_{i\in\NN}$ in the coarse quotient $\crse X/\{R\}$ if and only if there is some $n$ such that $x_1\rel{R^{\cmp n}} x_{i}$ for every $i\in\NN$.
  However, if we are given a sequence of points $(x_i)_{i\in \NN}$ such that $x_i\rel{R} x_{i+1}$ for all $i\in \NN$, we can only deduce that $x_1\rel{R^{\cmp i}} x_{i+1}$. Hence $(x_i)_{i\in\NN}$ need not be equivalent to a constant sequence in $\crse X/\{R\}$.
  
  For instance, consider on $\RR$ the relation $R\coloneqq\{(x,y)\mid \abs{x-y}\leq 1\}$. Then $\angles{R}=\CE_{\abs{\mhyphen}}$, hence $(R,\mincrs)/\{R\}=(\RR,\CE_{\abs{\mhyphen}})$. In particular, any unbounded sequence of points $(x_i)_{i\in \NN}$ is not close to a constant sequence, even if each single point $x_i$ is ``equivalent'' to any other. 
 \end{exmp}

 \begin{rmk}
  If $\sim$ is an equivalence relation on $X$ and $\crse X=(X,\mincrs)$ is trivially coarse, then $\crse X/\{\sim\}$ can be naturally identified with $(X/{\sim},\mincrs)$. More generally, if $\crse X=(X,\CE)$ then one can verify that $\crse X/\{\sim\}$ is coarsely equivalent to $(X/{\sim},\pi_*(\CE))$, where $\pi\colon X\to X/{\sim}$ is the quotient map and $\pi_\ast(\CE)$ is the push\=/forward coarse structure.
 \end{rmk}

 \begin{rmk}
  Given a subset $Y\subseteq X$, the relation $Y\times Y$ is an equivalence relation and the quotient of $X$ by $Y\times Y$ is just the quotient collapsing $Y$ to a single point. More interestingly, if $\crse X$ is a coarse space and $Y'\ceq Y$ are asymptotic subsets, then $\crse X/\{Y\times Y\}=\crse X/\{Y'\times Y'\}$. One may thus ``quotient out'' coarse subspaces of $\crse X$ and obtain a well defined coarse quotient.
 \end{rmk}

\section{Containments and Intersections of Coarse Subspaces}\label{sec:p1:containements and intersections}
Coarse spaces are defined as asymptotic equivalence classes of subsets. Asymptoticity is the equivalence relation induced from the preorder given by coarse containments (Definition \ref{def:p1:controlled thickening}). In particular, coarse containment of sets descends to a partial order on the set of coarse subspaces of a coarse space $\crse X$. Namely, we make the following definition:

\begin{de}\label{def:p1:containment}
 Given coarse subspaces $\crse Y$ and $\crse Z$ of $\crse X$, we say that $\crse{Y}$ is \emph{coarsely contained}\index{coarse!containment}\nomenclature[:CN]{$\crse {Y\subseteq Z}$}{coarse containment of coarse subspaces} in $\crse Z$ if $Y\csub Z$. We denote coarse containments by $\crse{ Y\subseteq_{X} Z}$ (we will soon drop the subscript $\crse X$ from the notation).
\end{de}

As announced, $\crse{\subseteq_{X}}$ is a partial order: if $\crse{ Y\subseteq_{X} Z}$ and $\crse{ Z\subseteq_{X} Y}$ then $\crse Y=\crse Z$ are the same coarse subspace of $\crse{X}$. The reason why we can drop the subscript $\crse X$ from the notation is the following observation:

\begin{lem}\label{lem:p1:containments and subspaces}
 Let $\crse{ Y\subseteq X}$ be a coarse subspace. There is a natural correspondence:
 \[
  \braces{\crse{Z\subseteq Y}}
  \longleftrightarrow
  \braces{\crse{Z\subseteq X}\mid\crse{Z\subseteq_{X}Y}}.
 \]
\end{lem}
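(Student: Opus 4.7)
The plan is to fix a representative $Y\subseteq X$ for $\crse Y$, so that $\crse Y$ is represented by $(Y,\CE|_Y)$, and then exhibit the bijection directly. The key observation that drives everything is the following claim:
\begin{quote}
 for any pair of subsets $Z_1,Z_2\subseteq Y$, one has $Z_1\ceq_{\CE|_Y}Z_2$ if and only if $Z_1\ceq_\CE Z_2$.
\end{quote}
Since $\CE|_Y\subseteq\CE$, one implication is immediate. For the other, if $Z_1\subseteq E(Z_2)$ for some $E\in\CE$, then $E'\coloneqq E\cap(Y\times Y)\in\CE|_Y$ satisfies $Z_1\subseteq E'(Z_2)$, because for any $z_1\in Z_1\subseteq Y$ witnessed by some $z_1\torel{E}z_2\in Z_2\subseteq Y$ we have $(z_1,z_2)\in E\cap(Y\times Y)=E'$.

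Using this, I would define the forward map $\Phi$ sending a coarse subspace $\crse{Z\subseteq Y}$ with representative $Z\subseteq Y$ to the coarse subspace $[Z]_\CE$ of $\crse X$. The claim above shows that $\Phi$ is well-defined and injective, and since $Z\subseteq Y$ the image lies in $\{\crse{Z\subseteq X}\mid\crse{Z\subseteq_{X}Y}\}$. The only remaining task is surjectivity: given any $\crse{W\subseteq X}$ with $\crse{W\subseteq_X Y}$, I would produce a representative contained in $Y$. Choose a representative $W\subseteq X$ and a symmetric $E\in\CE$ (replacing $E$ by $E\cup\op E$ if necessary) such that $W\subseteq E(Y)$, and set $W'\coloneqq Y\cap E(W)\subseteq Y$. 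Then $W'\csub_\CE W$ trivially, and for the reverse containment each $w\in W$ has some $y\in Y$ with $w\torel{E}y$; by symmetry $y\in E(W)$, so $y\in W'$ and $W\subseteq E(W')$. Thus $W\ceq_\CE W'$, and $\Phi([W']_{\CE|_Y})=[W]_\CE$.

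The main (minor) obstacle will be verifying that the correspondence is \emph{natural}, i.e.~independent of the choice of representative $Y$. For this I would note that if $Y'\ceq Y$ is another representative then the right-hand side of the correspondence is literally the same set of coarse subspaces of $\crse X$, and the canonical coarse equivalence $\crse Y\cong\crse{Y'}$ provided at the beginning of Section~\ref{sec:p1:subspaces and quotients} intertwines the two bijections (because restricting a subset of $Y$ to a coarsely dense subset of $Y\cap Y'$ preserves its $\CE$-asymptotic class, by the key claim applied twice). Beyond that, everything reduces to routine verification of the asymptoticity manipulations above.
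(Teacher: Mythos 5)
Your proof is correct, and it is essentially the same as the ``hands-on'' alternative proof the authors spell out in Example~\ref{exmp:p1:correspondence of subspaces concretely}: fix a representative $Y\subseteq X$, observe that a $\CE$-controlled thickening of a subset of $Y$ can be replaced by a $\CE|_Y$-controlled thickening (your key claim), and for surjectivity push a representative $W$ of a subspace coarsely contained in $\crse Y$ into $Y$ by intersecting a thickening of $W$ with $Y$. The paper's ``official'' proof of the lemma is slightly more abstract — it works with the coarse inclusion $\crse{\iota\colon Y\to X}$ and, in the surjectivity direction, simply switches the representative of $\crse Y$ rather than moving $Z$ — but the two routes are two phrasings of the same idea, and your version is the one the authors themselves recommend as the more geometrically transparent.
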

\begin{proof}
 Let $\crse{\iota\colon Y\to X}$ be the coarse map defined by the inclusion of $Y$ in $X$ (equivalently, $\crse \iota$ is the restriction $\crse{(\cid_{X})|_{Y}}$). 
 Then $\crse{Y=\iota(Y)}$ and for coarse subspace $\crse{Z\subseteq Y}$ the coarse image $\crse{\iota (Z)}$ is a coarse subspace of $\crse X$ coarsely contained in $\crse Y$.
 
 Vice versa, given a coarse subspace of $\crse X$ with $\crse{Z\subseteq_{X}Y}$, we may as well assume that $Z\subseteq Y$. Since the coarse space $\crse Y$ is well defined up to natural coarse equivalence, we see that the resulting coarse subspace $\crse{Z\subseteq Y}$ does not depend on the choice of representative $Y\subseteq X$.
\end{proof}

\begin{exmp}
\label{exmp:p1:correspondence of subspaces concretely}
 The geometric meaning is illuminated by a more hands-on proof of Lemma~\ref{lem:p1:containments and subspaces}.
 Realize the coarse subspace $\crse{Y}\subseteq \crse X$ as a coarse space $\crse Y=(Y,\CE|_Y)$ by choosing a representative $Y\subseteq X$ with $[Y]=\crse Y$. It is clear that every coarse subspace of $\crse Y$ uniquely determines a coarse subspace of $\crse X$.
 
 For the other direction, let $\crse{ Z\subseteq X}$ be a a coarse subspace coarsely contained in $\crse Y$, and choose a representative $Z\subseteq X$ for it. A priori, there is no reason to expect that $Z\subseteq Y$, because we have already fixed a representative for $\crse Y$. In the above proof of Lemma~\ref{lem:p1:containments and subspaces}, we switched to a different representative $Y'$ for $\crse Y$ and used the natural coarse equivalence $(Y',\CE|_{Y'})\to(Y,\CE|_Y)$ to construct the appropriate coarse subspace of $\crse Y$. An alternative approach is as follows. 
 By assumption, there exists a controlled covering $\fka$ so that $Z\subseteq \st(Y,\fka)$. It follows that $Z$ is in fact contained in the $\fka$\=/neighborhood of $Z'\coloneqq Y\cap \st(Z,\fka)$. Hence $Z\ceq_\CE Z'$ are asymptotic subsets of $X$, and we may hence use $Z'$ as a representative for $\crse Z$. Now $Z'$ is a subset of $Y$, so $[Z']_{\CE|_Y}$ is a genuine subspace of $\crse Y$.
\end{exmp}

\begin{figure}
 \centering
 \includegraphics{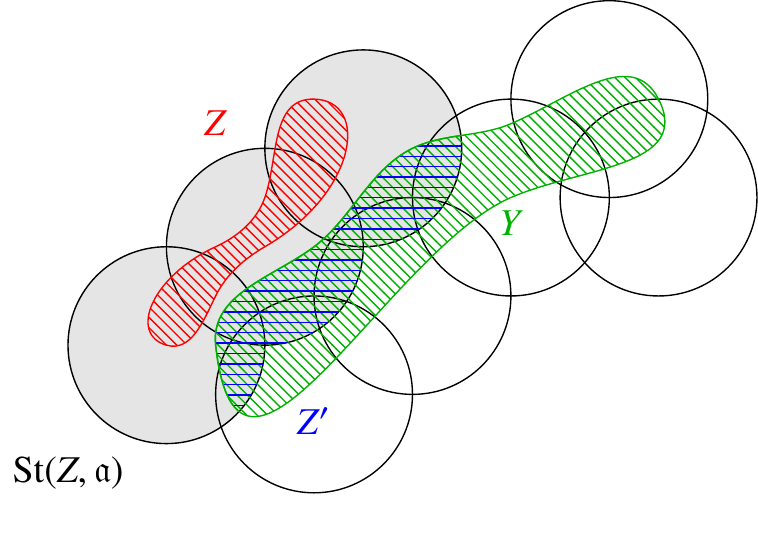}
 \caption{To realize $[Z]$ as a coarse subspace of $Y$, we take a large enough $\fka\in\fkC(\CE)$ so that the intersection $Z'=\st(Z,\fka)\cap Y$ is asymptotic to $Z$. 
 }
 \label{fig:p1:correspondence of subspaces concretely}
\end{figure}

\

The partial ordering on coarse subspaces can be used to define intersections as meets. However, we wish to warn the reader that this is a somewhat delicate matter. We use the following:

\begin{de}\label{def:p1:intersection}
 Let $\crse Y_1,\ldots, \crse Y_n$ be coarse subspaces of $\crse X$. If it exists, their \emph{coarse intersection}\index{coarse!intersection} is a largest common coarse subspace (denoted $\crse Y_1\crse\cap \cdots\crse\cap \crse Y_n$).\nomenclature[:CN]{$\crse Y_1\crse\cap \cdots\crse\cap \crse Y_n$}{coarse intersection} In other words, $\crse Y_1\crse\cap \cdots\crse\cap \crse Y_n$ is a coarse subspace of $\crse X$ such that
 \[
  (\crse Y_1\crse\cap \cdots\crse\cap \crse Y_n)\subseteq \crse Y_i\qquad \text{for each $i=1,\ldots,n$}
 \]
 and so that if $\crse{Z \subseteq Y}_i$ for every $i=1,\ldots,n$ then $\crse{Z\subseteq Y}_1\crse\cap \cdots\crse\cap \crse Y_n$.
\end{de}

\begin{rmk}
 The coarse intersection may be undefined. However, when it exists it is unique. 
\end{rmk}
 
 \begin{lemma}\label{lem:p1:intersection}
  Let $Y_i$ be representatives of $\crse Y_i$ for $i=1,\ldots ,n$. A subset $Z\subseteq X$ is a representative for the intersection $\crse Y_1\crse\cap\ldots\crse\cap\crse Y_n$ if and only if it satisfies the following:
  \begin{itemize}
   \item there is a controlled covering $\fka$ so that $Z\subseteq \st(Y_i,\fka)$ for every $i=1,\ldots,n$;
   \item for every controlled covering $\fka$ there is a controlled covering $\fkb$ such that 
   \[
    \st(Y_1,\fka)\cap\cdots\cap\st(Y_n,\fka)\subseteq \st(Z,\fkb).
   \]
  \end{itemize}
 \end{lemma}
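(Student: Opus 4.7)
The plan is to unpack the two bulleted conditions and match them directly to the defining properties of the coarse intersection from Definition~\ref{def:p1:intersection}: that $[Z]$ is coarsely contained in each $\crse Y_i$ and that it is maximal among coarse subspaces with that property.

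First I would reformulate condition~1. By definition of controlled thickening, the inclusion $Z\subseteq \st(Y_i,\fka)$ is precisely the statement that $Z\csub Y_i$. A priori the definition of coarse containment would supply $n$ distinct controlled coverings $\fka_1,\dots,\fka_n$ with $Z\subseteq \st(Y_i,\fka_i)$; but since $\fkC(\CE)$ is closed under finite unions (property (C1) after Definition~\ref{def:p1:controlled.covering}), replacing them by $\fka\coloneqq \fka_1\cup\cdots\cup\fka_n$ yields a single $\fka$ that works for every $i$. Conversely condition~1 trivially gives $Z\csub Y_i$ for each $i$. Thus condition~1 is equivalent to $[Z]\crse{\subseteq} \crse Y_i$ for all $i$.

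Next I would interpret condition~2. Suppose $W\subseteq X$ represents a common coarse sub-space of the $\crse Y_i$, so $W\csub Y_i$ for every $i$. The same union trick produces a single $\fka$ with $W\subseteq \st(Y_i,\fka)$ for every $i$, whence $W\subseteq\bigcap_{i}\st(Y_i,\fka)$. Applying condition~2 to this $\fka$ gives $\fkb$ with $W\subseteq \st(Z,\fkb)$, i.e.\ $W\csub Z$, i.e.\ $[W]\crse\subseteq [Z]$. In the other direction, for any controlled covering $\fka$ the set $W\coloneqq \bigcap_{i}\st(Y_i,\fka)$ is itself a subset coarsely contained in each $Y_i$ (it sits inside each $\st(Y_i,\fka)$). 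Hence the assertion ``every $W\csub Y_i$ for all $i$ satisfies $W\csub Z$'' is equivalent to condition~2.

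Combining the two reformulations: $Z$ satisfies both bullets if and only if $[Z]$ is a common coarse sub-space of the $\crse Y_i$ that dominates every other such coarse sub-space in the partial order $\crse\subseteq$. By Definition~\ref{def:p1:intersection} this is exactly the condition that $[Z]$ represents $\crse Y_1\crse\cap\cdots\crse\cap\crse Y_n$, which concludes the proof. The only genuine care needed is the bookkeeping step of passing from a family of coverings $\fka_1,\dots,\fka_n$ to a common covering $\fka$; everything else is a direct translation between the language of controlled thickenings and the language of coarse containment.
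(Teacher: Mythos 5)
Your proof is correct and follows essentially the same route as the paper's: reduce condition~1 to $Z\csub Y_i$ for all $i$ via the finite-union trick on coverings, then show condition~2 is equivalent to maximality by observing that $\bigcap_i\st(Y_i,\fka)$ is itself a common coarse subspace (used in one direction) and that any common $W\csub Y_i$ sits inside some $\bigcap_i\st(Y_i,\fka)$ (used in the other). No gaps.
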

\begin{proof}
 Let $\crse Z\coloneqq [Z]$. The first condition is equivalent to saying that $\crse Z\subseteq \crse Y_i$ for every $i=1,\ldots, n$. Now, fix a controlled covering $\fka$. Then the coarse subspace $[\st(Y_1,\fka)\cap\cdots\cap\st(Y_n,\fka)]$ is coarsely contained in $\crse Y_i$ for each $i=1,\ldots ,n$. If $\crse Z$ is indeed equal to the coarse intersection of the $\crse Y_i$'s, it follows that $\st(Y_1,\fka)\cap\cdots\cap\st(Y_n,\fka)\csub Z$, which is precisely the meaning of the second condition.
 
 Vice versa, assume that the second condition holds. Let $Z'\subseteq X$ be a set so that $Z'\csub Y_i$ for each $i=1,\ldots,n$. In other words, $Z'\subseteq\st(Y_i,\fka_i)$ for some controlled covering $\fka_i$. The finite union $\fka \coloneqq \bigcup_{i=1}^n\fka_i$ is again a controlled covering, and $Z'\subseteq \st(Y_1,\fka)\cap\cdots\cap\st(Y_n,\fka)$. Since the latter is contained in $\st(Z,\fkb)$, we see that $Z'\csub Z$. Hence $\crse Z$ is the coarse intersection. 
\end{proof}

Notice a coarse intersection may be much larger than a set-wise intersection of two \emph{representatives} of coarse subspaces. For example, in $(\RR,\CE_{\abs{\mhyphen}})$ we have $[2\ZZ]=[2\ZZ+1]=[\RR]$, so that the coarse intersection $[2\ZZ]\crse{\cap} [2\ZZ+1]=[\RR]$, even though $2\ZZ$ and $2\ZZ+1$ are disjoint subsets.

The next example shows the coarse intersection may indeed not be defined in general.

\begin{exmp}\label{exmp:p1:no intersection}
 Equip $\RR^2$ with the Euclidean metric and consider the coarse space $(\RR^2,\CE_d)$. Let $A\subset \RR^2$ be the $x$-axis. Choose an increasing sequence $b^{(1)}_k\in\RR$ with $\abs{b^{(1)}_k-b^{(1)}_{k+1}}\to\infty$. Choose a second sequence $b^{(2)}_k$, so that $b^{(1)}_k< b^{(2)}_k <b^{(1)}_{k+1}$ and both $\abs{b^{(1)}_k-b^{(2)}_{k}}$ and $\abs{b^{(2)}_k-b^{(1)}_{k+1}}$ go to infinity. Recursively, keep choosing sequences $b^{(n)}_k$ so that 
 \begin{itemize}
  \item $b^{(n)}_k< b^{(n+1)}_k <b^{(n)}_{k+1}$, 
  \item both $\abs{b^{(n)}_k-b^{(n+1)}_{k}}$ and $\abs{b^{(n+1)}_k-b^{(n)}_{k+1}}$ go to infinity. 
 \end{itemize}

 Now let 
 \(
  B_n\coloneqq \braces{(b_k,n)\mid k\in \NN}
 \)
and $B\coloneqq \bigcup_{n\in\NN}B_n$. Then the coarse intersection $\crse{ A \cap B}$ does not exist. In fact, for any fixed $N\in\NN$, let $Z_N$ be the intersection of the (closed) $N$\=/neighborhoods of $A$ and $B$. Then $Z_N$ contains $B_n$ for every $n\leq N$, and it is contained in a neighborhood of $\bigcup_{n\leq 2N}B_n$.
However, the sets $B_n$ are defined so that $B_{2N+1}$ is not contained in any bounded\=/radius thickening of $\bigcup_{n\leq 2N}B_n$. Hence $Z_{2N+1}\npreceq Z_N$. It follows from Lemma~\ref{lem:p1:intersection} that it is not possible to find a largest coarse subspace of $\crse X$ that is coarsely contained in both $A$ and $B$.
\end{exmp}

\begin{figure}
 \centering
 \includegraphics[scale=0.8]{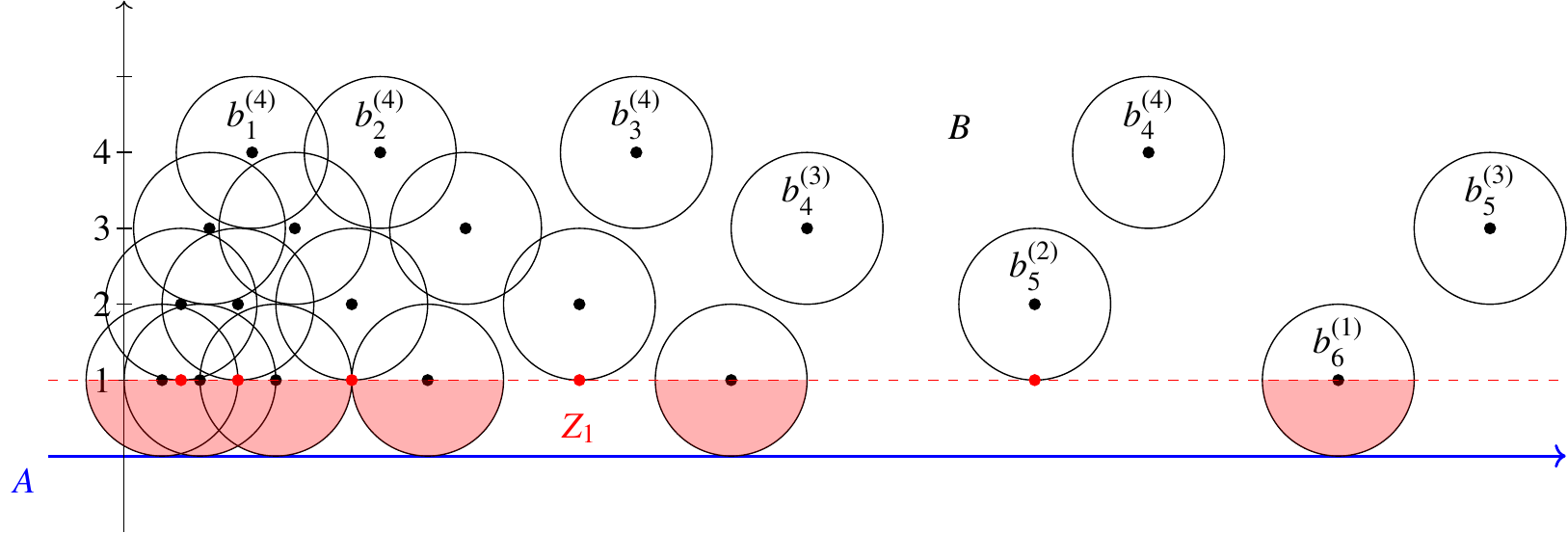}
 \caption{The coarse intersection of $[A]$ and $[B]$ in $(\RR^2,\CE_{\norm{\mhyphen}})$ is not defined because the intersections of the bounded neighborhoods of $A$ and $B$ do not stabilize. 
 }
 \label{fig:p1:no intersection}
\end{figure}

\

In a similar spirit, it is also possible to define the coarse preimage of a coarse map:

\begin{de}\label{def:p1:coarse preimage}
 Let $\crse{f\colon X\to Y}$ be a coarse map and $\crse{ Z\subseteq Y}$ a coarse subspace. If it exists, the \emph{coarse preimage of $\crse Z$}\index{coarse!preimage} is a largest coarse subspace of $\crse X$ whose coarse image is coarsely contained in $\crse Z$ (denoted $\crse{f^{-1}}(\crse Z)$).\nomenclature[:CN]{$\crse{f^{-1}}(\crse Z)$}{coarse preimage} In other words, $\crse{f^{-1}}(\crse Z)$ is a coarse subspace of $\crse X$ such that
 \(
  \crse f( \crse{f^{-1}}(\crse Z) )\crse\subseteq \crse Z
 \)
 and so that if $\crse f(\crse{W}) \crse\subseteq \crse Z$ then $\crse {W}\crse\subseteq \crse{f^{-1}}(\crse Z)$.
\end{de}

\begin{rmk}
 Just as for coarse intersection, the coarse preimage may not exist and if it exists it is unique. 
 Note that the coarse intersection $(\crse Y_1\crse\cap \crse Y_2 )\crse\subseteq \crse Y_1$ can be seen as the coarse preimage of $\crse Y_2$ under the coarse embedding $\crse Y_1\crse\hookrightarrow \crse X$.
\end{rmk}

The following result follows easily from the definitions. Its proof is left as an exercise.
\begin{lem}\label{lem:p1:preimage under coarse equivalence}
 If $\crse{f\colon X\to Y}$ is a coarse equivalence and $\crse{g \colon Y\to X }$ is a coarse inverse, then for every coarse subspace $\crse{Z\subseteq Y}$ the coarse preimage $\crse{f^{-1}(Z)}$ exists and is equal to $\crse{g(Z)}$. 
\end{lem}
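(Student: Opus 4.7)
The plan is to verify directly that $\crse{g(Z)}$ satisfies the two defining properties of a coarse preimage (Definition~\ref{def:p1:coarse preimage}): that $\crse{f(g(Z))\subseteq Z}$ and that any coarse subspace $\crse{W\subseteq X}$ with $\crse{f(W)\subseteq Z}$ is coarsely contained in $\crse{g(Z)}$. Both properties will follow mechanically from the fact that $f \circ g \closefn \id_Y$ and $g \circ f \closefn \id_X$, together with Lemma~\ref{lem:p1:controlled maps and asymptotic subsets}, which tells us that close functions have asymptotic images and that controlled maps preserve coarse containments.

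First I would fix representatives $f,g$ of $\crse f,\crse g$ and a representative $Z \subseteq Y$ for $\crse{Z}$. For the first property, apply $f$ to the set $g(Z)$. Since $f \circ g \closefn \id_Y$, Lemma~\ref{lem:p1:controlled maps and asymptotic subsets}(1) yields $f(g(Z)) \ceq \id_Y(Z) = Z$, so $\crse{f(g(Z))} = \crse{Z}$, which in particular gives $\crse{f(g(Z)) \subseteq Z}$.

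Next I would establish maximality. Let $\crse{W\subseteq X}$ with representative $W$ satisfy $\crse{f(W) \subseteq Z}$, i.e.\ $f(W) \csub Z$. Applying the controlled map $g$ and using Lemma~\ref{lem:p1:controlled maps and asymptotic subsets}(2), we obtain $g(f(W)) \csub g(Z)$. On the other hand, since $g \circ f \closefn \id_X$, another application of Lemma~\ref{lem:p1:controlled maps and asymptotic subsets}(1) gives $g(f(W)) \ceq W$. Combining these, $W \csub g(Z)$, i.e.\ $\crse{W \subseteq g(Z)}$, as required. Together with the first property, this verifies that $\crse{g(Z)}$ satisfies the universal property of Definition~\ref{def:p1:coarse preimage}, and by uniqueness $\crse{f^{-1}(Z)} = \crse{g(Z)}$.

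There is no real obstacle here: the statement is essentially a formal consequence of the interplay between closeness, asymptoticity, and the behavior of controlled maps under both, all of which has already been set up in Section~\ref{sec:p1:thickenings and indexed coverings}. The only mild care required is to remember that the coarse image and coarse containment are well-defined on equivalence classes (so that the choice of representatives $f,g,Z,W$ is immaterial), which is exactly what Lemma~\ref{lem:p1:controlled maps and asymptotic subsets} guarantees.
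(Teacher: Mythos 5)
Your proof is correct and is exactly the argument the paper intends: the paper states the lemma "follows easily from the definitions" and leaves the proof as an exercise, and your two-step verification via Lemma~\ref{lem:p1:controlled maps and asymptotic subsets} is the natural, direct route. No gaps.
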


\begin{rmk}
 One could also define a \emph{coarse union} of subspaces in much the same manner as intersections. These behave better than coarse intersections in that finite coarse unions are always well\=/defined. What one would expect to be true of finite unions holds also in the coarse category. However, in the sequel we will never have occasion to use coarse unions.
\end{rmk}

\chapter{Coarse Groups}\label{ch:p1:coarse.groups}

Having finished the relevant preliminaries on the category of coarse spaces, we are finally ready to introduce the main object of study of this monograph: coarse groups. These are defined as group objects in \Cat{Coarse} (see below).

\begin{convention}
 In the sequel we will work with both groups (in the usual sense) and coarse groups. To avoid confusion, we will always specify the category we are working on by calling the former \emph{set\=/groups}\index{set\=/groups}.
\end{convention}

As far as we are aware, this work is the first to define and study coarse groups as group objects. However, many examples of coarse groups are obtained by equipping set\=/groups with appropriate coarse structures (we call these \emph{coarsified set\=/groups}, see Chapter~\ref{ch:p2:coarse structures on set groups}). Various authors have investigated how set\=/groups interact with coarse structures: as coarsified set\=/groups or via coarse actions (Definition~\ref{def:p1:coarse action}). In that context, a number of elementary results discussed here have already been discovered and we recover them as a special instances of more general statements. Among relevant literature, we should certainly include \cite{brodskiy2008coarse,brodskiy_svarc-milnor_2007,dikranjan2019coarse,dikranjan_categorical_2017, dikranjan2020categories,hernandez2011balleans, nicas2012coarse, protasov2019free, protasov2020coarse, protasov2018bornological, protasov2019sequential,rosendal2017coarse}.

\section{Preliminary: Group Objects in a Category}
\label{sec:p1:group object}
We start by setting some notation and recalling the definition of group object.
Let \Cat{C} be a category with finite products and let $T$ be a terminal object (\emph{i.e.}\ an object such that for every object $A$ in \Cat{C} there is a unique morphism $A\to T$):
$T$ can be defined as the product of the empty family of objects.

A pair of morphisms ${f}_1\colon A\to B_1$ and ${f}_2\colon A\to B_2$ uniquely determines a morphism to the product 
\[
 ({f}_1,{f}_2)\colon A\to B_1\times B_2.
\]
Given morphisms ${f}_1\colon {A}_1\to {B}_1$ and ${f}_2\colon {A}_2\to {B}_2$ we denote their product by 
 \[
  {f}_1\times{f}_2\coloneqq (f_1\circ\pi_1,f_2\circ\pi_2)\colon {A}_1\times {A}_2\to{B}_1\times{B}_2,
 \]
where $\pi_1$ and $\pi_2$ are the projections of $A_1\times A_2$.
By composition, a morphism of the terminal object $ x\colon T\to  A$ induces a ``constant mapping'' of any other coarse space $ B\to T\xrightarrow{ x} A$. We abuse notation, and denote this composition by $ x\colon  B\to A$.

\begin{de}\label{def:p1:group.object}
 A \emph{group object}\index{group object} in the category \Cat{C} is the datum of an object $G$ and morphisms $\ast \colon  G\times  G\to  G$, ${e\colon T\to G}$ and ${\inversefn \colon G\to G}$ such that the following \emph{Group Diagrams}\index{Group Diagrams} commute:
 
\noindent
\begin{minipage}{.36\textwidth}
  \[
  \begin{tikzcd}[row sep=4 em]
    {G\times G\times G} \arrow[r, "\id_G \times \, \ast"] \arrow[d,swap, "{\ast\, \times \id_G}"] 
    & {G\times G}      \arrow[d, "{\ast}"] 
    \\
    {G\times G} \arrow[r, " \ast"]
    &  G  
  \end{tikzcd} 
  \]
    \vspace{-1.5 em}
  \captionof*{figure}{\hspace{1 em}(Associativity)}
  \end{minipage}%
    \begin{minipage}{.32\textwidth}
    \[
    \begin{tikzcd}[row sep=4 em]
        G \arrow[dr, "\id_G"]\arrow[r, "{(\id_G\,,\,e )}"]\arrow[d,swap, "{(e\,,\,\id_G)}"]  & {G\times G}   \arrow[d, " \ast"] \\
        {G\times G} \arrow[r, "\ast"]&  G  
    \end{tikzcd} 
    \]
    \vspace{-1.5 em}
    \captionof*{figure}{\hspace{1.5 em}(Identity)}
    \end{minipage}%
    \begin{minipage}{.32\textwidth}
    \[
    \begin{tikzcd}[row sep=4 em]
        {G}  \arrow[dr, "e"]\arrow[r, "{(\id_{ G}\,,\,\inversefn)}"] \arrow[d,swap, "{(\inversefn\,,\,\id_G)}"] & {G\times G}   \arrow[d, "\ast"] \\
        {G\times G} \arrow[r, " \ast"]&  G.  
    \end{tikzcd} 
    \]
    \vspace{-1.5 em}
    \captionof*{figure}{\hspace{2 em}(Inverse)}
    \end{minipage}
    
\vspace{1 em}
\end{de}

\begin{exmp}
 In the category \Cat{Set}, a terminal object is a singleton $\tobj$ and it is immediate to verify that commutativity of the Group Diagrams is equivalent to the axioms of associativity, identity and inversion. That is, group objects in \Cat{Set} are precisely the set\=/groups (\emph{i.e.}\ groups in the usual sense).
 
 Similarly, topological groups, Lie groups and algebraic groups are group objects in the category of topological spaces, differentiable manifolds and algebraic varieties respectively.
\end{exmp}

Observe that if \Cat{C} is a category so that the morphisms between objects $\mor{C}(A,B)$\nomenclature[:MOR]{$\mor{C}(A,B)$}{morphisms from $A$ to $B$ in the category \Cat{C}} 
are sets, then the set of morphisms with a group object is a set\=/group. Namely, if $G$ is a group object and $A$ is any other object in \Cat{C}, we can define an operation $\odot$\nomenclature[:REL]{$f\odot g$}{product of morphisms}  on set of coarse maps $\mor{C}( A, G)$ as the composition
\[
 f \odot  g \coloneqq A \xrightarrow{( f, g)}  G \times  G \xrightarrow{\ast}  G. 
\]
This is the analogue of the pointwise multiplication of functions. A simple diagram chase proves the following:

\begin{lemma}\label{lem:p1:mor.is.group}
Let $G$ be a group object in a category \Cat{C}. The set $\mor{C}( A, G)$ equipped with the binary operation $\odot$ is a set\=/group, where $ e\in\mor{C}( A, G)$ is the unit and the inverse of an element $ f\in\mor{C}( A, G)$ is the composition $\inversefn\circ  f$.
\end{lemma}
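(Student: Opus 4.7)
The plan is to verify each of the three group axioms (associativity, identity, inverse) for the operation $\odot$ on the set $\mor{C}(A, G)$ by translating each of the Group Diagrams in Definition~\ref{def:p1:group.object} into an equality of morphisms $A \to G$. Throughout, the key tool is the universal property of the product: a morphism into $B_1 \times \cdots \times B_n$ is uniquely determined by its compositions with the projections, and consequently $(\phi_1 \times \phi_2) \circ (\psi_1, \psi_2) = (\phi_1 \circ \psi_1,\, \phi_2 \circ \psi_2)$.

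First I would record a handful of preparatory identities that unify the subsequent arguments. Given morphisms $f, g, h \colon A \to G$, the universal property yields
\[
 (\ast \times \id_G) \circ (f, g, h) = (f \odot g,\, h)
 \qquad \text{and} \qquad
 (\id_G \times \ast) \circ (f, g, h) = (f,\, g \odot h).
\]
Likewise, interpreting the ``constant morphism'' $e \colon A \to G$ as the composite $A \to T \xrightarrow{e} G$ and using that any two morphisms into the terminal object $T$ coincide, one checks $(\id_G, e) \circ f = (f, e)$ and $(e, \id_G) \circ f = (e, f)$, together with the analogous identities $(\id_G, \inversefn) \circ f = (f, \inversefn \circ f)$ and $(\inversefn, \id_G) \circ f = (\inversefn \circ f, f)$.

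Armed with these, each axiom is an immediate consequence of the corresponding Group Diagram. For associativity,
\[
 (f \odot g) \odot h = \ast \circ (\ast \times \id_G) \circ (f, g, h) = \ast \circ (\id_G \times \ast) \circ (f, g, h) = f \odot (g \odot h),
\]
the middle equality being the Associativity Diagram. For the identity axiom, $f \odot e = \ast \circ (\id_G, e) \circ f = \id_G \circ f = f$ by the Identity Diagram, and symmetrically $e \odot f = f$. For inversion, $f \odot (\inversefn \circ f) = \ast \circ (\id_G, \inversefn) \circ f = e \circ f = e$ by the Inverse Diagram (the last step again invoking uniqueness of morphisms $A \to T$), and symmetrically from the left.

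There is no substantive obstacle: the entire argument is a routine diagram chase, which is why the paper describes it as ``simple''. The only mildly subtle point, really a matter of bookkeeping, is the consistent interpretation of the constant morphism $e \colon A \to G$ as factoring through $T$, together with the repeated appeal to uniqueness of morphisms into $T$ in order to identify expressions such as $e \circ f$ with the constant $e$.
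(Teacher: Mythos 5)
Your proof is correct and is exactly the ``simple diagram chase'' the paper alludes to without writing out: you verify each group axiom by composing the appropriate Group Diagram with $(f,g,h)$ or $f$, using the universal property of the product and the uniqueness of morphisms into the terminal object to identify $e \circ f$ with the constant morphism $e$. The paper does not supply more detail than this, so there is no divergence in approach to report.
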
 

Notice that for every object $A$ the definition of the set\=/group operation on $\mor{C}(A,G)$ only depends on $G$ and $\ast$. Since in set\=/groups units and inverses are uniquely determined, we obtain as a consequence that the inversion and identity morphisms of a group object are uniquely determined by the multiplication morphism:

\begin{cor}\label{cor:p1:uniqueness.of.unit&inverse} 
Let $G$ be a group object in a category \Cat{C}.
Let $G$ be an object and $\ast\colon G\times G\to G$ a morphism. If there exist morphisms $e\colon T\to G$ and $\inversefn\colon G\to G$ equipped with which $(G,\ast)$ is a group object, then $e$ and $\inversefn$ are unique.
\end{cor}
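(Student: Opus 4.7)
The plan is to leverage Lemma~\ref{lem:p1:mor.is.group}, noting that the binary operation $\odot$ on $\mor{C}(A,G)$ depends only on $\ast$ and not on the choice of $e$ or $\inversefn$. So if we have two group object structures $(G,\ast,e,\inversefn)$ and $(G,\ast,e',\inversefn')$ sharing the same multiplication, they induce the \emph{same} binary operation $\odot$ on each set $\mor{C}(A,G)$, and each of them endows $\mor{C}(A,G)$ with a set\=/group structure.

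First I would establish uniqueness of the unit. Taking $A=T$, Lemma~\ref{lem:p1:mor.is.group} tells us that $e$ and $e'$ are both identity elements of $(\mor{C}(T,G),\odot)$. Since identity elements in a set\=/group (or even in any unital magma) are unique, we conclude $e=e'$ as morphisms $T\to G$.

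Next I would deduce uniqueness of the inverse, now that we know the unit is the same. Taking $A=G$, Lemma~\ref{lem:p1:mor.is.group} says that for every $f\in \mor{C}(G,G)$, both $\inversefn\circ f$ and $\inversefn'\circ f$ are inverses of $f$ in the set\=/group $(\mor{C}(G,G),\odot)$ (with respect to the common unit $e=e'$). Since inverses in a set\=/group are unique, $\inversefn\circ f=\inversefn'\circ f$ for all such $f$. Specializing to $f=\id_G$ yields $\inversefn=\inversefn'$.

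The only subtlety, and what I would flag as the main point to get right, is the observation that the operation $\odot=\ast\circ(\variable,\variable)$ is defined purely in terms of $\ast$, so we truly are comparing two set\=/group structures on a \emph{single} underlying magma $(\mor{C}(A,G),\odot)$; uniqueness then follows from the elementary algebraic fact that a binary operation admits at most one unit and at most one two\=/sided inverse for each element. No further diagram chases are required.
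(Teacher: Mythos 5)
Your proof is correct and takes essentially the same route as the paper: both appeal to Lemma~\ref{lem:p1:mor.is.group}, note that the induced operation $\odot$ on $\mor{C}(A,G)$ depends only on $\ast$, and read off uniqueness of $e$ from $\mor{C}(T,G)$ and of $\inversefn$ from $\mor{C}(G,G)$ at $f=\id_G$. The ``subtlety'' you flag is exactly the remark the paper makes immediately before stating the corollary.
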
 
\begin{proof}[Proof of the corollary] 
Suppose $ T \xrightarrow{ e}  G$ and $ T \xrightarrow{{e'}}  G$ are two unit morphisms for $ G$. By Lemma \ref{lem:p1:mor.is.group} $\mor{C} ( T, G)$ is a set\=/group and $ e, {e'}$ are both units in $\mor{C}( T,  G)$. Therefore $ e= {e'}$ by uniqueness of the unit in a set\=/group.

Now suppose $\inversefn \colon  G \to  G$ is an inversion morphism.
For every $ f\in\mor{C}( G, G)$, the composition $\inversefn\circ  f$ is the (unique) inverse element of $f$ in the set\=/group $\mor{C}( G, G)$. In particular, $\inversefn$ is the unique inverse element of $\id_G$ in $\mor{C}( G, G)$.
\end{proof}

\section{Coarse Groups: Definition, Notation and Examples}
\label{sec:p1:coarse groups_defnotexmp}

We have shown that the category of coarse spaces has binary products. Any bounded coarse space $(X,\maxcrs)$ is a terminal object in \Cat{Coarse}. For convenience, in the following we will use the singleton $\tobj$ as a preferred choice of terminal object. Since \Cat{Coarse} has products and terminal objects, we may study its group objects.

\begin{de}\label{def:p1:coarse.group}
 A \emph{coarse group}\index{coarse group} is a group object in the category \Cat{Coarse} (see Theorem~\ref{thm:p1:concrete coarse groups} for a concrete description).
\end{de}

Following our convention to use bold characters for coarse spaces and maps, a coarse group is a quadruple $(\crse {G,\cop},\cunit,\cinversefn)$. In view of Corollary~\ref{cor:p1:uniqueness.of.unit&inverse}, we can lighten the notation and denote coarse groups as $(\crse{G},\cop)$. More often than not, we will further simplify the notation by omitting $\cop$ and simply write $\crse G$. 
On the contrary, if we need to make distinctions we will further decorate the symbols and write \emph{e.g.}\ $\crse{\cop_{ G},\ \cunit_{G},\ \cinversefn_{G}}$. 
We call $\cop$\nomenclature[:CO]{$\cop$, $\cop_{\crse G}$}{coarse group operation/multiplication} the \emph{coarse group operation/multiplication}, $\cunit$\nomenclature[:CO]{$\cunit$, $\cunit_{\crse G}$}{unit in a coarse group} the \emph{coarse unit}\index{coarse!unit} and $\cinversefn$\nomenclature[:CO]{$\cinversefn$, $\cinversefn_{\crse G}$}{inversion map in a coarse group} the \emph{inversion coarse map}\index{coarse!inversion map}. 

We will stay true to Convention~\ref{conv:p1:bold for coarse}, so that $G$ will be the set underlying the coarse space $\crse G$ and $\ast,e,\inversefn$ will be representatives for the coarse maps. For any pair of points $x,y\in G$ we will write $x\ast y$ instead of $\ast(x,y)$. There will also be some instances where the choice of representatives is important (\emph{e.g.}\ in Section~\ref{sec:p1:making sets into groups}). In this case we may make them explicit using the notation $(G,\CE,[\ast],[\unit],[\inversefn])$.

\begin{rmk} 
\label{rmk:p1:coarse unit as a coarse point}
 Since we are using the singleton $\tobj$ as terminal object, a representative $e$ for the unit in $\crse G$ is just a choice of a point of $G$. According to Definition~\ref{def:p1:coarse subspace}, we may also denote the coarse unit by $\cunit\crse{\in G}$.
 Notice that the coarse unit of a coarse group is simply a choice of coarsely connected component of $\crse G$ (Remark~\ref{rmk:p1:coarse points are connected components}), however it is often useful to think of it as a coarse point.
\end{rmk}

\begin{rmk}
\label{rmk:p1:connected components are a group}
 Further elaborating on Remark~\ref{rmk:p1:coarse unit as a coarse point}, the set coarse points of $\crse G$ is the set of its coarsely connected components. Identifying coarse points with coarse maps $\tobj\to\crse{G}$ we see that the set of coarsely connected components of $\crse G$ is $\cmor(\tobj,\crse G)$, which is a group by Lemma~\ref{lem:p1:mor.is.group}. This will imply that every coarse group can be decomposed into a set\=/group and a coarsely connected coarse group (see Corollary~\ref{cor:p1:ses quotient by id comp}). 
 
 It is also interesting to apply Lemma~\ref{lem:p1:mor.is.group} to trivially coarse spaces $(I,\mincrs)$.
 If $I$ is a finite set then the set\=/group $\cmor\paren{(I,\mincrs),\crse{G}}$ can be naturally identified with the direct power $\cmor\paren{\tobj,\crse{G}}$. If $I$ is infinite the situation is a little more delicate.
\end{rmk}

If ${f}_1$ and ${f}_2$ are representatives for the coarse maps $\crse f_1$ and $\crse f_2$ then the product ${f}_1\times{f}_2$ is a representative for $\crse f_1\times \crse f_2$. The composition with the diagonal embedding $({f}_1\times {f}_2)\circ \Delta$ is a representative for $(\crse{f}_1,\crse{f}_2)$.
It is helpful to spell out explicitly what it means that the Group Diagrams of Definition~\ref{def:p1:group.object} commute. Once we choose representatives for the coarse maps, a diagram commutes in \Cat{Coarse} if the relevant functions commute up to closeness. Using the notation of Convention~\ref{conv:p1:x_torel_CE}, the following is immediate:

\begin{lem}\label{lem:p1:hearts.diagrams on sets}
 Let $(G,\CE)$ be a coarse space and $\ast\colon G\times G\to G$, $\unit\colon\tobj\to G$ and $\inversefn\colon G\to G$ be fixed functions (possibly not controlled). The relevant Group Diagrams commute up to closeness if and only if
 \[\def\arraystretch{1.5}
 \begin{array}{ccc}
  {\rm (Associativity)}\qquad &
  g_1 \ast (g_2\ast g_3) \rel{\CE} (g_1\ast g_2)\ast g_3 
  & \qquad \forall g_1,g_2,g_3\in G
   \\
   {\rm (Identity)}\qquad   &
   e\ast g \rel{\CE} g\rel{\CE} g\ast e 
   & \qquad \forall g\in G
  \\
  {\rm (Inverse)}\qquad &
  g\ast g^{-1}\rel{\CE} e\rel{\CE} g^{-1}\ast g 
  & \qquad \forall g\in G.
 \end{array}
 \]
\end{lem}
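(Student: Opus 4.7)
The plan is to unpack, diagram by diagram, what it means for the Group Diagrams to commute in \Cat{Coarse}. Recall that morphisms in \Cat{Coarse} are equivalence classes of controlled functions modulo closeness (Section~\ref{sec:p1:category of coarse spaces}), so a diagram commutes in \Cat{Coarse} if and only if the two compositions around it, viewed as honest set\=/functions $G^n\to G$, are close. By Definition~\ref{def:p1:close.functions} together with Convention~\ref{conv:p1:x_torel_CE}, two functions $f_1,f_2\colon G^n\to G$ are close precisely when there exists a single $E\in\CE$ with $(f_1(\bar g),f_2(\bar g))\in E$ for every $\bar g\in G^n$, which is exactly what the notation $f_1(\bar g)\rel{\CE}f_2(\bar g)$ for all $\bar g\in G^n$ means. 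Thus the lemma will follow by reading each Group Diagram as a statement about closeness of two explicit functions.

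For the Associativity Diagram, I would fix representatives and note that the product coarse map $\crse{\ast\times\cid_{G}}$ is represented by $\ast\times\id_G\colon (g_1,g_2,g_3)\mapsto(g_1\ast g_2,g_3)$, and similarly $\crse{\cid_G\times\ast}$ is represented by $(g_1,g_2,g_3)\mapsto(g_1,g_2\ast g_3)$. The two compositions $\ast\circ(\ast\times\id_G)$ and $\ast\circ(\id_G\times\ast)$ from $G^3$ to $G$ send $(g_1,g_2,g_3)$ to $(g_1\ast g_2)\ast g_3$ and $g_1\ast(g_2\ast g_3)$ respectively, so by the preceding paragraph the diagram commutes in \Cat{Coarse} iff these two functions are close, iff $(g_1\ast g_2)\ast g_3\rel{\CE}g_1\ast(g_2\ast g_3)$ for all triples, which is the stated Associativity condition.

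For the Identity Diagram, recall (Section~\ref{sec:p1:group object}) that the composition $G\to\tobj\xrightarrow{e}G$ is the constant function with value $e\in G$, which is tautologically controlled. The morphism $(\cid_G,\cunit)\colon\crse{G\to G\times G}$ is represented by the function $g\mapsto(g,e)$, and similarly $(\cunit,\cid_G)$ by $g\mapsto(e,g)$. The diagonal of the Identity Diagram is $\cid_G$, represented by $\id_G$. So the diagram commutes in \Cat{Coarse} iff the two functions $g\mapsto g\ast e$ and $g\mapsto e\ast g$ are both close to $\id_G$, iff $g\ast e\rel{\CE}g\rel{\CE}e\ast g$ for all $g\in G$, which is exactly the Identity condition. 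The Inverse Diagram is handled identically: its two paths are $g\mapsto g\ast g^{-1}$ and $g\mapsto g^{-1}\ast g$, and both must be close to the constant function $g\mapsto e$, which translates directly into $g\ast g^{-1}\rel{\CE}e\rel{\CE}g^{-1}\ast g$.

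There is no substantive obstacle here; the lemma is essentially a bookkeeping exercise that records what the abstract categorical definition of group object reduces to in the concrete category \Cat{Coarse}. The only minor subtleties are (i) recognising that the constant function induced by $\unit\colon\tobj\to G$ is trivially controlled regardless of whether the representatives $\ast,\inversefn$ are, so there is no hidden controlledness hypothesis being smuggled in, and (ii) keeping the quantifier order in Convention~\ref{conv:p1:x_torel_CE} straight, so that the single controlled set $E\in\CE$ certifying closeness appears uniformly in $\bar g$ (which is exactly what closeness of functions delivers).
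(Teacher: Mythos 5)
Your proof is correct, and since the paper itself states this lemma as ``immediate'' with no written proof, your careful unpacking is exactly the bookkeeping the authors are implicitly asking the reader to do: identify each composition around a Group Diagram as a set-function $G^n\to G$, apply the definition of closeness, and use Convention~\ref{conv:p1:x_torel_CE} to phrase the single-uniform-$E$ requirement. You are also right to flag that $\unit$ is automatically controlled so no hidden hypothesis sneaks in; the only phrasing to be careful about is saying the diagrams commute ``in \Cat{Coarse}'' when $\ast,\inversefn$ may fail to be controlled and hence may not define morphisms at all --- the lemma's ``commute up to closeness'' is precisely the set-theoretic reformulation you then correctly use, so the argument itself is sound.
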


If $G$ is a set\=/group with group operations $\ast,e,\inversefn$, the Group Diagrams commute. Therefore, if $\CE$ is a coarse structure on $G$, then $(G,\CE,[\ast],[e],[\inversefn])$ is a coarse group if and only if $\ast$ and $\inversefn$ are controlled with respect to $\CE$. When this is the case, we say that $\crse{G}$ is a \emph{coarsified set\=/group}\index{coarsified set\=/group}\index{set\=/group!coarsified}\index{coarsification}. Coarsified set\=/groups are the simplest examples of coarse groups, we study them in Chapter~\ref{ch:p2:coarse structures on set groups}.

\begin{exmp}
If $G$ is a set\=/group, then $G$ endowed with the trivial coarse structure $\mincrs$ is a coarse group.
We refer to it as \emph{trivially coarse group}\index{trivially coarse!group}\index{coarse group!trivially coarse}. Vice versa, if $(G,\mincrs)$ is a coarse group then $G$ must be a set\=/group. By themselves, these coarse groups are not particularly interesting examples of coarse groups. However, this does show that there is a correspondence between set\=/groups and coarse groups whose coarsely connected components are singletons (in fact, this is a natural containment of categories $\Cat{SetGroup}\hookrightarrow\Cat{CrsGroup}$). Trivially coarse groups will also give interesting examples of coarse actions (Section~\ref{ch:p1:coarse actions}).
 
 Conversely, a bounded coarse space $G$ endowed with any choice of operations $(G,\maxcrs)$ is a coarse group, which we refer to as \emph{bounded coarse group}\index{coarse group!bounded}. Bounded coarse groups are the coarse analog of the trivial group $\{e\}$. In fact, $\{e\}$ is the only bounded trivially coarse group, and a coarse group is bounded if and only if it is isomorphic to it (as a coarse group, see Section~\ref{sec:p1:coarse homomorphisms_1}).
\end{exmp}

Trivial and bounded coarsifications\index{coarsification!trivial}\index{coarsification!bounded} of set\=/groups are unsurprising examples of coarse groups. Much more interesting examples are given by unbounded connected coarsifications.
The most natural procedure for producing such examples is by considering metric coarsifications of set\=/groups. The following lemma is a simple exercise (a more general statement is proved in the next section, see also Appendix~\ref{sec:appendix:metric groups}).

\begin{lem}\label{lem:p1:biinvariant metrics give coarse groups}
 If $G$ is a set\=/group and $d$ is a bi\=/invariant metric on $G$, then $(G,\CE_d)$ is a coarse group.
\end{lem}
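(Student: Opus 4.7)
Since $G$ is a set-group with the usual operations, the associativity, identity, and inversion equations hold on the nose, so in particular the three Group Diagrams of Definition~\ref{def:p1:group.object} commute up to closeness as in Lemma~\ref{lem:p1:hearts.diagrams on sets}. Thus it only remains to show that $\ast\colon G\times G\to G$ and $\inversefn\colon G\to G$ are controlled with respect to $\CE_d$ and $\CE_d\otimes\CE_d$ respectively; once this is done, $(G,\CE_d,[\ast],[e],[\inversefn])$ is a coarse group essentially by definition. The plan is therefore to exploit the bi-invariance of $d$ to give explicit control functions for both operations.

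For the inversion, the plan is to show that it is an isometry. Given $g_1,g_2\in G$, applying left invariance by $g_1$ and then right invariance by $g_2$ yields
\[
 d(g_1^{-1},g_2^{-1})=d(g_1 g_1^{-1},g_1 g_2^{-1})=d(e,g_1 g_2^{-1})=d(g_2,g_1)=d(g_1,g_2).
\]
Hence $\inversefn\colon (G,d)\to (G,d)$ is 1-Lipschitz, so in particular controlled with respect to $\CE_d$.

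For the multiplication, recall from Section~\ref{sec:p1:binary products} that the product coarse structure $\CE_d\otimes\CE_d$ is precisely the metric coarse structure $\CE_{d+d}$ associated with the $\ell_1$ product metric. It therefore suffices to check that $\ast$ is controlled as a map $(G\times G,d+d)\to(G,d)$. Using the triangle inequality together with right and left invariance we obtain, for every $g_1,g_2,h_1,h_2\in G$,
\[
 d(g_1h_1,g_2h_2)\leq d(g_1h_1,g_2h_1)+d(g_2h_1,g_2h_2)=d(g_1,g_2)+d(h_1,h_2),
\]
so $\ast$ is 1-Lipschitz with respect to the product metric, and a fortiori controlled. (Alternatively, one could invoke Lemma~\ref{lem:p1:equi controlled.sections.iff.controlled}: bi-invariance says that the left and right multiplications $g\mapsto h\ast g$ and $g\mapsto g\ast h$ are isometries as $h$ varies, so both families are equi controlled, whence $\ast$ is controlled.)

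There is no real obstacle here: the content of the lemma is essentially that bi-invariance makes the group operations Lipschitz, and the only thing to be slightly careful about is identifying the product coarse structure on $G\times G$ with the metric coarse structure of $d+d$ before applying the triangle inequality.
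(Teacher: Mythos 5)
Your proof is correct and is exactly the ``simple exercise'' the paper alludes to: the paper does not spell out a proof but points to the next section and the appendix, where the underlying mechanism (equi bi-invariance $\Leftrightarrow$ controlled multiplication) is established in greater generality. Your direct verification---inversion is an isometry, multiplication is $1$-Lipschitz for the $\ell_1$ product metric, using the observation that $\CE_d\otimes\CE_d=\CE_{d+d}$---is the intended argument; your parenthetical alternative via Lemma~\ref{lem:p1:equi controlled.sections.iff.controlled} is precisely the viewpoint the paper develops in the following section, and once Proposition~\ref{prop:p1:coarse.group.iff.heartsuit.and.equi controlled} is available one could even omit the check on $\inversefn$, but at the point where this lemma is stated your explicit verification is the right move.
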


\begin{exmp}\label{exmp:p1:abelian coarse groups_metric}
 If $G$ is an abelian coarse group, every left invariant metric $d$ is bi\=/invariant and hence $(G,\CE_d)$ is a coarse group. It is then easy to obtain the first examples of unbounded connected coarse groups. For instance, $(\ZZ^n,\CE_{\norm{\mhyphen}})$ and $(\RR^n,\CE_{\norm{\mhyphen}})$, where $\norm{\mhyphen}$ is the Euclidean norm. If $(V,\norm{\mhyphen})$ is any normed vector space, then $(V,\CE_{\norm{\mhyphen}})$ is a coarse group. Another interesting example is given by the $p$\=/adic numbers equipped with the $p$\=/adic absolute value $(\QQ_p,\CE_{\abs{\mhyphen}_p})$.
\end{exmp}

 Refining Example~\ref{exmp:p1:abelian coarse groups_metric}, we can also define bi\=/invariant metrics on non\=/abelian groups. 
 Recall that to define a left invariant metric on a set\=/group $G$ it is enough to specify a \emph{length function} $\abs{\variable}\colon G\to \RR_{\geq 0}$ such that:
 \begin{itemize}
  \item $\abs{g}=0$ if and only if $g=e$;
  \item $\abs{g^{-1}}=\abs{g}$ for all $g\in G$;
  \item $\abs{hg}\leq \abs{g}+\abs{h}$ for all $g,h\in G$.
 \end{itemize}
 To any length function is associated a left invariant metric defined by $d(g,h)\coloneqq\abs{g^{-1}h}$.  
 Note that any choice of generating set $S$ for $G$ defines a \emph{word length}\index{length!word}
 \[
  \abs{g}_S\coloneqq\min\braces{n\mid g=s_1^\pm\cdots s_n^\pm,\ s_i\in S}
 \]
and hence a left invariant \emph{word distance}\index{word distance} $d_S$ on $G$. One case of special interest is when the group $G$ is finitely generated, as the word metrics associated with any finite generating set are all coarsely equivalent.
 
 To obtain bi\=/invariant metrics it is enough to notice that the left invariant metric induced by a length function $\abs{\variable}$ is bi\=/invariant if and only if  $\abs{\variable}$ is invariant under conjugation. 
 One way to obtain conjugation invariant length functions is to consider the word length associated with some (infinite) conjugation\=/invariant generating set.
 We will further explore these \emph{bi\=/invariant word metrics}\index{bi-invariant word metric} in Chapter~\ref{ch:p2:canc metrics}, for now we will only focus on finitely normally generated groups.
 
 A set $S\subseteq G$ \emph{normally generates} the set\=/group $G$\index{normally generates} if the normal closure $\overline{S}\coloneqq\{gsg^{-1}\mid s\in S,\ g\in G\}$ is a generating set for $G$. We say that $G$ is \emph{finitely normally generated} if it is normally generated by some finite set $S\subseteq G$. It is an exercise to show that if $G$ is a finitely normally generated set\=/group then different choices of finite normally generating sets $S\subseteq G$ yield coarsely equivalent bi\=/invariant word metrics $\abs{\variable}_{\overline{S}}$\nomenclature[:MET]{$\abs{\variable}_{\overline{S}}$}{conjugation invariant length associated with the normally generating set $S$}\nomenclature[:MET]{$d_{\overline{S}}$}{bi\=/invariant word metric associated with the normally generating set $S$} (see Corollary~\ref{cor:p1:canonical crse_str is E_grp_fin} for a more conceptual proof). In other words, this means that such a $G$ has a canonical metric coarsification:

\begin{de}\label{def:p1:canonical coarse structure}
 If $G$ is a finitely normally generated set\=/group its \emph{canonical coarsification}\index{coarsification!canonical} is given by the metric coarse structure $\varcrs{bw}$\nomenclature[:CE1]{$\varcrs{bw}$}{canonical coarse structure associated with bi\=/invariant word metrics on finitely normally generated set\=/groups} defined by the bi\=/invariant word metric associated with any finite normally generating set. 
\end{de}
 
\begin{example}\label{exmp:p1:cancellation metric}
 Let $F_2=\angles{a,b}$ be the free group on two generators. The bi\=/invariant word length $\abs{\variable}_{\overline{S}}$ associated with the generating set $S=\{a,b\}$ admits a particularly nice description. Namely, the length of a reduced word $w\in F_2$ is equal to the minimal number of letters that is necessary to remove (anywhere in the word) so that all the remaining letters cancel out. For instance, $\abs{a^nba^{-n}}_{\overline{S}}=1$ because everything cancels out when we remove the letter $b$. For this reason, we also call $\abs{\variable}_{\overline{S}}$ the \emph{cancellation length}\index{length!cancellation} (denoted $\abs{\variable}_\times$)\nomenclature[:MET]{$\abs{\variable}_\times$}{cancellation length} and the induced metric is the  \emph{cancellation metric $d_\times$}\index{cancellation metric}\nomenclature[:MET]{$d_\times$}{cancellation metric} on $F_2$.  In particular, $\varcrs{bw}=\CE_{d_\times}$ on $F_2$.  
 
 One interesting feature of the coarse group $(F_2,\CE_{d_\times})$ is that it is not \emph{coarsely abelian}. That is, the controlled maps $(g,h)\mapsto g\ast h$ and $(g,h)\mapsto h\ast g$ are not close as functions $(F_2\times F_2,\CE_{d_\times}\otimes\CE_{d_\times})\to(F_2,\CE_{d_\times})$. 
 To prove this it is enough to that show that $d([a^n, b^n], e)=2n$.  
 By removing every occurrence of $a$ and $\bar a$ from the commutator $[a^n, b^n]$, we get the word $b^n \bar b^n$ which reduces to the empty word.  So $d([a^n, b^n], e)\leq 2n$.  To see $d([a^n, b^n], e)\geq 2n$ one needs to show that this path is optimal, and it is not hard to check it by hand (see also Section~\ref{sec:p2:cancellation metric on free}). 
\end{example}

\begin{rmk}
 In some sense, every bi\=/invariant word metric associated with a generating set $S$ of a set\=/group $G$ can be seen as a cancellation metric. In fact, for every word $w$ in the alphabet $S$ we can say that its cancellation length is equal to the number of letters that is necessary to cancel in order to get a word that represents the trivial element $e\in G$. Then it is easy to show that the bi\=/invariant word length $\abs{g}_{\overline S}$ equals the smallest cancellation length of a word $w$ representing $g\in G$ \cite[Proposition 2.A]{BGKM}. 
\end{rmk}

We give more detailed examples of groups with bi\=/invariant metrics and cancellation metrics in Chapters~\ref{ch:p2:coarse structures on set groups} and \ref{ch:p2:a cgroup not group}. These are a useful source of examples and will play a role in various parts of this monograph. Examples of coarse groups that do not arise as coarsifications of set\=/groups will be given in the sequel, after some more theory is be developed.

\section{Equi Invariant Coarse Structures and Automatic Control}
\label{sec:p1:equi invariant and automatic control}

Given a coarse space $\crse G$ and functions $\ast$, $\inversefn$, it is important to know whether the functions are controlled. In principle this can be rather awkward to check. However, rephrasing the problem in terms of equi control can often help.
Specifically, given a multiplication function $\ast\colon G\times G\to G$ we can see it as a family of \emph{left multiplication} functions ${}_g\ast=\ast(g,\variable)\colon G\to G$ and as a family of \emph{right multiplication} functions $\ast_g=\ast(\variable,g)\colon G\to G$. Then Lemma~\ref{lem:p1:equi controlled.sections.iff.controlled} implies that $\ast\colon (G,\CE)\times (G,\CE)\to (G,\CE)$ is controlled if and only if both the families of left multiplications ${}_g\ast$ and right multiplications $\ast_g$ are equi controlled. 

In other words, $\ast$ is controlled if and only if for every $E\in\CE$ we have
\begin{equation}\label{eq:p1:controlled multiplication}
 \substack{
  \displaystyle g\ast h \rel{\CE} g\ast h' \qquad \forall h\torel{E}h',\ \forall g\in G \hphantom{.}\\
  \displaystyle \vphantom{\int} h\ast g \rel{\CE} h'\ast g \qquad \forall h\torel{E}h',\ \forall g\in G.
  }
\end{equation}

\begin{exmp}
 If $\CE=\CE_d$ is a metric coarse structure, then the condition \eqref{eq:p1:controlled multiplication} is equivalent to saying that for every $r\geq 0$ there exists $R\geq 0$ so that
 \begin{align*}
  & d(g\ast h, g\ast h')\leq R  \qquad \forall h,h'\text{ with }d(h,h')\leq r,\ \forall g\in G \\
  & d(h\ast g, h'\ast g)\leq R  \qquad \forall h,h'\text{ with }d(h,h')\leq r,\ \forall g\in G.
\end{align*}
 It is then clear that if $G$ is a set\=/group and $d$ is a bi\=/invariant metric on it then the multiplication is controlled.
\end{exmp}

It is useful to rephrase the above conditions in terms of controlled entourages. Given two relations $E,F$ on $G$, we let\nomenclature[:R]{$E\ast F$}{multiplication of relations}
\begin{equation}\label{eq:p1:product of relations}
 E\ast F\coloneqq\bigbraces{\paren{e_1\ast f_1,e_2\ast f_2}\bigmid (e_1,e_2)\in E,\ (f_1,f_2)\in F)}
 =(\ast\times\ast)(E\otimes F).
\end{equation}
It follows by the definition that $\ast\colon (G,\CE)\times (G,\CE)\to (G,\CE)$ is controlled if and only if $E\ast F\in\CE$ for every $E,F\in\CE$. In particular, the left multiplications ${}_g\ast\colon(G,\CE)\to(G,\CE)$ are equi controlled if and only if $\Delta_{G}\ast E\in\CE$ for every $E\in\CE$.

Note also that $\Delta_{G}\ast E$ is the relation obtained by taking the union of the ``translations of $E$ under $G$'':
\[
 \Delta_{G}\ast E=\bigcup_{g\in G} (g,g)\ast E=\bigcup_{g\in G}\braces{\paren{g\ast e_1 ,g\ast e_2}\bigmid (e_1,e_2)\in E}
\]
(See Figure~\ref{fig:p1:equi invariant}). This suggests the following nomenclature:
\begin{de}\label{def:p1:equi.bi.invariant coarse structure}
 Given a set $G$ and a multiplication function $\ast\colon G\times G\to G$, a coarse structure $\CE$ on $G$ is \emph{equi left invariant}\index{equi!left invariant} if $\Delta_{G}\ast E\in\CE$ for every $E\in \CE$. Equivalently, $\CE$ is equi left invariant if and only if the left multiplications ${}_g\ast\colon(G,\CE)\to(G,\CE)$ are equi controlled. \emph{Equi right invariance}\index{equi!right invariant} is defined analogously, and we say that $\CE$ is \emph{equi bi\=/invariant}\index{equi!bi\=/invariant} if it is both equi left invariant and equi right invariant.
\end{de}

\begin{figure}
 \centering
 \includegraphics[scale=0.8]{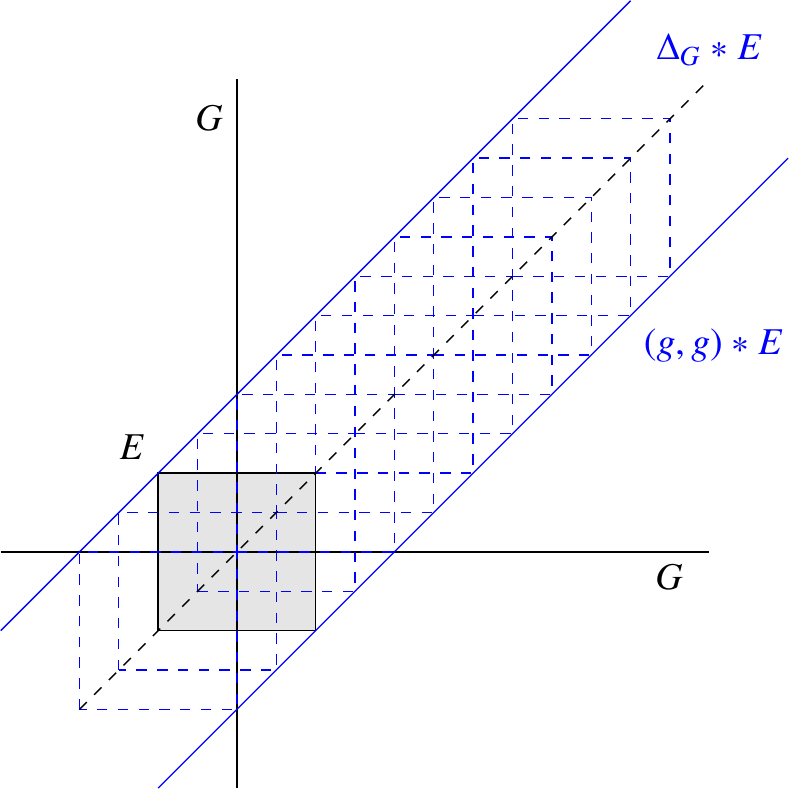}
 \caption{The relation $\Delta_G\ast E$ is the union of the (left) translates of $E$.}
 \label{fig:p1:equi invariant}
\end{figure}

Using this terminology, $\ast\colon (G,\CE)\times(G,\CE)\to (G,\CE)$ is controlled if and only if $\CE$ is equi bi\=/invariant.

\begin{rmk}
 For any relation $E$ on $G$, if $y\torel{E} y'$ then
 \[
  x\ast y\torel{\Delta_G\ast E} x\ast y' \quad \text{ and }\quad y\ast x\torel{E\ast \Delta_G} y'\ast x
 \]
 for every $x\in G$. This simple observation will be used often in the sequel.
\end{rmk}

\begin{rmk}
 In terms of partial coverings, the left multiplications ${}_g\ast$ are equi controlled with respect to $\CE$ if and only if the covering 
 \[
  \pts{G}\ast\fka = \bigcup \braces{g\ast\fka\mid g\in G} = \bigcup \braces{g\ast A \mid g\in G,\ A\in\fka}
 \]
 is controlled whenever $\fka\in\fkC(\CE)$. That is, $\CE$ is equi left invariant if and only if the coverings $g\ast\fka\in\fkC(\CE)$ are `uniformly' controlled as $g\in G$ varies. 
\end{rmk}

It is a pleasant feature of the coarse category that whenever the multiplication function is controlled then the inversion is automatically controlled as well. Namely, the following is true:

\begin{prop}\label{prop:p1:coarse.group.iff.heartsuit.and.equi controlled}
 Let $\crse G=(G,\CE)$ be a coarse space and $\ast,\unit,\inversefn$ be functions so that Group Diagrams commute up to closeness. If $\ast$ is controlled then $\inversefn$ is also controlled and hence $(\crse G,\cop,\cunit,\cinversefn)$ is a coarse group. 
\end{prop}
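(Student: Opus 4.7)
The goal is to show that once $\ast$ is controlled, the inversion function $\inversefn$ satisfies $(\inversefn\times\inversefn)(E)\in\CE$ for every $E\in\CE$, i.e.\ that $g^{-1}\rel{\CE}(g')^{-1}$ uniformly in $g\torel{E}g'$. My plan is to exploit the fact (established just before the statement) that controlledness of $\ast$ is equivalent to $\CE$ being equi bi\=/invariant, together with the coarse group axioms of Lemma~\ref{lem:p1:hearts.diagrams on sets}.

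The key observation is the following: if $g\torel{E}g'$, then by equi left invariance ($\Delta_G\ast E\in\CE$) we have $g^{-1}\ast g\rel{\CE} g^{-1}\ast g'$ uniformly in $g\torel{E}g'$. Combined with the inverse axiom $g^{-1}\ast g\rel{\CE} e$, this tells us that the quantity $g^{-1}\ast g'$ is uniformly $\CE$\=/close to $e$. In other words, the set
\[
 E'\coloneqq\bigbraces{(e,g^{-1}\ast g')\bigmid g\torel{E}g'}
\]
is contained in a controlled entourage.

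Once we have this, I would chain together the following relations, where every $\rel{\CE}$ is witnessed by a fixed entourage that does not depend on the choice of $g\torel{E}g'$:
\[
 (g')^{-1}\rel{\CE}e\ast(g')^{-1}\rel{\CE}(g^{-1}\ast g')\ast(g')^{-1}\rel{\CE}g^{-1}\ast(g'\ast(g')^{-1})\rel{\CE}g^{-1}\ast e\rel{\CE}g^{-1}.
\]
The first and fifth are the unit axiom (Lemma~\ref{lem:p1:hearts.diagrams on sets}); the second comes from the previous paragraph combined with equi right invariance ($E'\ast\Delta_G\in\CE$); the third is the associativity axiom; the fourth uses the inverse axiom $g'\ast(g')^{-1}\rel{\CE}e$ together with equi left invariance ($\Delta_G\ast E''\in\CE$ for the entourage $E''$ witnessing that inverse axiom).

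The main point, and what makes the argument work, is the interplay between the coarse axioms and the equi bi\=/invariance of $\CE$: whenever we need to ``multiply a coarse relation on one side by an arbitrary element'', equi invariance guarantees that the resulting entourage is still in $\CE$. There is no serious obstacle here; the only mild care needed is to verify that each step genuinely produces a single entourage valid for all $g\torel{E}g'$ simultaneously, rather than a $g$\=/dependent bound. Since $\CE$ is closed under finite composition, concatenating the five steps yields a single entourage containing $(\inversefn\times\inversefn)(E)$, finishing the proof.
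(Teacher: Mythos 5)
Your proof is correct and is essentially the same argument as the paper's. The paper chains $x^{-1}\rel{\CE} e\ast x^{-1}\rel{\CE}(y^{-1}\ast y)\ast x^{-1}\rel{\CE}(y^{-1}\ast x)\ast x^{-1}\rel{\CE}y^{-1}\ast(x\ast x^{-1})\rel{\CE}y^{-1}\ast e\rel{\CE}y^{-1}$ for $y\torel{E}x$, which is exactly your chain through the pivot $(g^{-1}\ast g')\ast(g')^{-1}$, using the same interplay of equi bi-invariance with the Identity, Inverse and Associativity axioms.
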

\begin{proof}
 Fix a controlled entourage $E\in \CE$. Since $_{g}\ast$ are equi controlled,
 \[
  y^{-1}\ast x \rel{\CE} y^{-1}\ast y \qquad  \forall y\torel{E}x.
 \]
 Since $\ast_{g}$ are equi controlled and the diagrams (Identity) and (Inverse) commute up to closeness, we also have that
 \[
   (y^{-1}\ast x)\ast x^{-1} \rel{\CE} (y^{-1}\ast y)\ast x^{-1}  
   \rel{\CE} e\ast x^{-1} \rel{\CE} x^{-1}
   \qquad  \forall y\torel{E}x.  
 \]
 On the other hand, we know from (Associativity) that
 \[
  (y^{-1}\ast x)\ast x^{-1} \rel{\CE}y^{-1}\ast (x\ast x^{-1})  \qquad  \forall y,x\in G.  
 \]
 Using again  (Identity), (Inverse) and equi control of $_{g}\ast$, we have
 \[
 y^{-1}\ast (x\ast x^{-1})  
 \rel{\CE} (y^{-1}\ast e) \rel{\CE} y^{-1}
 \qquad  \forall y\torel{E}x. 
 \]
 Putting everything together we see that
 \[
  x^{-1}\rel{\CE} y^{-1} \qquad  \forall y\torel{E}x,
 \]
 hence $\inversefn$ is controlled.
\end{proof}

\begin{cor}
\label{cor:p1:coarse.group.from.group.iff.equi controlled}
 If $G$ is a set\=/group and $\CE$ is a coarse structure on $G$, then $\crse G$ is a coarse group if and only if $\CE$ is equi bi\=/invariant.
\end{cor}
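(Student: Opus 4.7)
The proof should be essentially immediate, since it amounts to assembling two results that are already in hand: Lemma~\ref{lem:p1:equi controlled.sections.iff.controlled} (a two\=/variable function is controlled iff both its families of one\=/variable sections are equi controlled) and Proposition~\ref{prop:p1:coarse.group.iff.heartsuit.and.equi controlled} (when the Group Diagrams commute up to closeness, control of $\ast$ forces control of $\inversefn$). The key observation that reduces everything to applying these two results is the direct translation between the two formulations of equi invariance already spelled out in Definition~\ref{def:p1:equi.bi.invariant coarse structure}: $\CE$ is equi left (resp.~right) invariant precisely when the families $\{{}_g\ast\}_{g\in G}$ (resp.~$\{\ast_g\}_{g\in G}$) are equi controlled.

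For the forward direction, I would assume that $\crse G=(G,\CE)$ is a coarse group. Then by definition the coarse multiplication $\cop$ is a coarse map, which means that any representative $\ast\colon G\times G\to G$ is controlled as a function $(G,\CE)\times(G,\CE)\to(G,\CE)$. Applying Lemma~\ref{lem:p1:equi controlled.sections.iff.controlled}, both families $\{{}_g\ast\}_{g\in G}$ and $\{\ast_g\}_{g\in G}$ are equi controlled, which by the definition above is precisely the statement that $\CE$ is equi bi\=/invariant.

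For the converse direction, I would start by assuming that $\CE$ is equi bi\=/invariant. Unwinding definitions, this means the families of left and right multiplications are both equi controlled, and so Lemma~\ref{lem:p1:equi controlled.sections.iff.controlled} yields that $\ast\colon(G,\CE)\times(G,\CE)\to(G,\CE)$ is a controlled function. Since $G$ is a set\=/group the group axioms hold on the nose for $\ast$, $e$, $\inversefn$, so in particular the Group Diagrams commute up to closeness with respect to any coarse structure on $G$. Proposition~\ref{prop:p1:coarse.group.iff.heartsuit.and.equi controlled} then applies and delivers automatically that $\inversefn$ is controlled and that $(\crse G,\cop,\cunit,\cinversefn)$ is a coarse group.

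There is no hard step to worry about here; the only thing to be careful about is bookkeeping the equivalence between the pointwise formulation of equi invariance (in terms of relations $\Delta_G\ast E$ and $E\ast\Delta_G$) and the functional formulation (equi control of ${}_g\ast$ and $\ast_g$), but this equivalence is built into Definition~\ref{def:p1:equi.bi.invariant coarse structure}. The whole argument should fit in a few lines.
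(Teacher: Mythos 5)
Your proof is correct and is precisely the argument the paper intends (the paper leaves it implicit, presenting the statement as an immediate corollary of Proposition~\ref{prop:p1:coarse.group.iff.heartsuit.and.equi controlled} after noting, just before that proposition, that control of $\ast$ is equivalent to equi bi\=/invariance of $\CE$ via Lemma~\ref{lem:p1:equi controlled.sections.iff.controlled}). Your bookkeeping of the two formulations of equi invariance and the observation that a set\=/group's Group Diagrams commute on the nose, hence a fortiori up to closeness, are exactly the points that make the corollary follow.
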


\begin{rmk}
Proposition~\ref{prop:p1:coarse.group.iff.heartsuit.and.equi controlled} means that if we are given a coarse multiplication $\cop$ and a function $\inversefn$ that is a `candidate inversion' in \Cat{Coarse} then $\inversefn$ is automatically controlled and hence $\cinversefn$ is an actual coarse inversion map.
This is not true in other categories. For example, if $G$ is a set-group equipped with a topology that makes $\ast$ continuous, it is not necessarily the case that $\inversefn$ is continuous (and hence $G$ need not be a topological group).\footnote{%
Set\=/groups equipped with topologies so that the multiplication is continuous are called \emph{paratopological groups}. Paratopological groups need not be topological groups. One such example is the group of real numbers equipped with the Sorgenfrey topology. Other examples can be taken by considering groups of self\=/homeomorphisms equipped with the compact\=/open topology (one such example is given in \cite{dijkstra2005homeomorphism}).}
\end{rmk}

\section{Making Sets into Coarse Groups}\label{sec:p1:making sets into groups}
This is a somewhat technical section that will be useful later. 
It is helpful to reverse our point of view: we first fix some  \emph{set} $G$ (not necessarily a group), functions $\ast\colon G\times G\to G$ and $\inversefn \colon G\to G$ and an element $\unit\in G$. Our goal 
is to characterize the coarse structures $\CE$ on $G$ such that $(G,\CE,[\ast],[\unit],[\inversefn])$ is a coarse group. That is, to find coarse structures $\CE$ so that the multiplication and inversion are controlled functions and the Group Diagrams commute.

Let $\paren{\CE_i}_{i\in I}$ and $\paren{\CF_i}_{i\in I}$ be coarse structures on sets $X$ and $Y$, and $f\colon X\to Y$ be a function such that $f\colon(X,\CE_i)\to(Y,\CF_i)$ is controlled when for each $i\in I$, then $f$ is also controlled when $X$ and $Y$ are equipped with $\bigcap_{i\in I}\CE_i$ and $\bigcap_{i\in I}\CF_i$ respectively.
Moreover, notice that $(\bigcap_{i\in I}\CE_i)\otimes(\bigcap_{i\in I}\CE_i)= \bigcap_{i\in I}(\CE_i\otimes\CE_i)$ as coarse structures on $X\times X$. From these observations it follows that whenever $\paren{\CE_i}_{i\in I}$ are coarse structures on $G$ that make $(G,\ast,\unit,\inversefn)$ into a coarse group, then so does their intersection $\bigcap_{i\in I}\CE_i$ (commutativity of the Group Diagrams is clearly preserved under taking intersections). 
It is also clear that letting $\CE=\CE_{\rm max}$ does make $G$ into a coarse group. We can thus give the following:

\begin{de}
 Given a set $G$, functions $\ast\colon G\times G\to G$, $\inversefn \colon G\to G$ and an element $\unit\in G$ we let $\mingrp$ be the minimal coarse structure on $G$ such that $(G,\mingrp,[\ast],[\unit],[\inversefn])$ is a coarse group. Equivalently, $\mingrp$\nomenclature[:CE1]{$\mingrp$}{minimal group coarse structure}\index{coarse structure!minimal group} is the intersection of all the coarse structures that make $G$ into a coarse group.
\end{de}

\begin{exmp}
 Notice that $\mingrp=\mincrs$ if and only if $G$ is a set\=/group. 
\end{exmp}

By definition, a coarse structure $\CE$ that makes $(G,\ast,\unit,\inversefn)$ into a coarse group must contain $\mingrp$. Vice versa, if $\CE$ is a coarse
 structure containing $\mingrp$ then $\ast$, $\inversefn$ and $e$ make the Group Diagrams commute for $(G,\CE)$. It then follows that such an $\CE$ makes $G$ into a coarse group if and only if $\ast$ is controlled. In other words, we can rephrase this as an extension of Corollary~\ref{cor:p1:coarse.group.from.group.iff.equi controlled}:
 
 \begin{cor}
  A coarse structure $\CE$ makes $(G,\ast,\unit,\inversefn)$ into a coarse group if and only if it contains $\mingrp$ and it is equi bi\=/invariant.
 \end{cor}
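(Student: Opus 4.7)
The plan is to assemble this statement directly from the preceding results, and the argument should be quite short. The ``only if'' direction is immediate: if $\CE$ makes $(G,\ast,\unit,\inversefn)$ into a coarse group then by definition of $\mingrp$ as the intersection of all such coarse structures we have $\mingrp\subseteq\CE$; moreover $\ast$ being controlled with respect to $\CE\otimes\CE\to\CE$ is, by the characterization in \eqref{eq:p1:controlled multiplication} and Definition~\ref{def:p1:equi.bi.invariant coarse structure}, exactly the statement that $\CE$ is equi bi-invariant.

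For the ``if'' direction, I would start by assuming $\mingrp\subseteq\CE$ and $\CE$ equi bi-invariant. The first step is to observe that commutativity of the Group Diagrams up to closeness is preserved under enlarging the coarse structure: by Lemma~\ref{lem:p1:hearts.diagrams on sets} this commutativity amounts to finitely many pairs of functions being $\mingrp$-close, and since $\mingrp\subseteq\CE$ every such pair is a fortiori $\CE$-close. So the Group Diagrams commute up to closeness in $(G,\CE)$. The second step is to note that $\ast\colon (G,\CE)\times(G,\CE)\to(G,\CE)$ is controlled: indeed, by Lemma~\ref{lem:p1:equi controlled.sections.iff.controlled} this is equivalent to both families ${}_g\ast$ and $\ast_g$ being equi controlled, which is precisely equi bi-invariance of $\CE$. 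Finally, since the diagrams commute up to $\CE$-closeness and $\ast$ is $\CE$-controlled, Proposition~\ref{prop:p1:coarse.group.iff.heartsuit.and.equi controlled} automatically upgrades $\inversefn$ to a controlled function, so that $(G,\CE,[\ast],[\unit],[\inversefn])$ is a coarse group.

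There is really no obstacle here: the statement is a clean repackaging of the definition of $\mingrp$ together with Corollary~\ref{cor:p1:coarse.group.from.group.iff.equi controlled} (which handled the set-group case) and Proposition~\ref{prop:p1:coarse.group.iff.heartsuit.and.equi controlled} (which gave automatic control of inversion). The only point that one should be slightly careful about is not to confuse ``Group Diagrams commute up to closeness'' with ``$\ast$ and $\inversefn$ are controlled'': these are genuinely independent conditions, and the former is packaged into membership of $\mingrp$ while the latter is packaged into equi bi-invariance.
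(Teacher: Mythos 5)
Your proof is correct and follows essentially the same route as the paper: the ``only if'' direction falls out of the minimality of $\mingrp$ and the fact that control of $\ast$ is by definition equi bi-invariance, while the ``if'' direction observes that commutativity of the Group Diagrams persists under enlarging the coarse structure and then invokes Proposition~\ref{prop:p1:coarse.group.iff.heartsuit.and.equi controlled} to get control of $\inversefn$ for free. The paper compresses this into the paragraph immediately preceding the corollary, but the content and the appeals to Lemma~\ref{lem:p1:hearts.diagrams on sets}, Lemma~\ref{lem:p1:equi controlled.sections.iff.controlled}, and Proposition~\ref{prop:p1:coarse.group.iff.heartsuit.and.equi controlled} are identical.
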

 
It is possible to describe $\mingrp$ fairly explicitly as the coarse structure generated by some set of relations. 
Define relations on $G$ as follows:
\begin{align*}
 \assRel' &\coloneqq \bigbraces{
 \paren{g_1 \ast (g_2\ast g_3), (g_1\ast g_2)\ast g_3}\bigmid g_1,g_2,g_3\in G} \\
 \invRel' &\coloneqq 
 \bigbraces{\bigparen{g\ast g^{-1}, \unit}\bigmid g\in G}  \cup
 \bigbraces{\paren{g^{-1}\ast g, \unit}\bigmid g\in G}  \\
 \idRel' &\coloneqq 
 \bigbraces{\paren{g\ast\unit, g}\bigmid g\in G} \cup
 \bigbraces{\paren{\unit\ast g, g}\bigmid g\in G} 
\end{align*}
and let $\assRel\coloneqq\assRel'\cup\op{(\assRel')}$, $\invRel\coloneqq\invRel'\cup\op{(\invRel')}$ and $\idRel\coloneqq\idRel'\cup\op{(\idRel')}$\nomenclature[:R]{$ \idRel$}{associativity relation}\nomenclature[:R]{$ \invRel$}{inversion relation}\nomenclature[:R]{$ \idRel$}{identity relation}.
By definition, these are the relations under which the Group Diagrams commute up to closeness.

\begin{rmk}
 To show that $\assRel$ (resp. $\invRel$, $\idRel$) belongs to a coarse structure $\CE$, it is enough to show to $\assRel'$ (resp. $\invRel'$, $\idRel'$) is in $\CE$. 
 Also recall that to prove that a relation $E$ belongs to a coarse structure $\CE$ it is enough to show that there exist $E_{0},\ldots, E_{n}\in\CE$ such that for every $x\torel{E}y$ there are $z_1,\ldots, z_n\in G$ with
 \[
  x \torel{E_{0}} z_1
  \torel{E_{1}}\cdots 
  \torel{E_{{n-1}}}z_n
  \torel{E_{n}}y.
 \]
\end{rmk}

Recall that the product of two relations is defined as $E\ast F=\braces{(e_1\ast f_1,e_2\ast f_2)\mid (e_1,e_2)\in E,\ (f_1,f_2)\in F}$. We can now explicitly describe $\mingrp$:

\begin{lem}\label{lem:p1:generators of mingrp}
Given a set with multiplication, unit and inversion functions $(G,\ast,\unit,\inversefn)$, the minimal coarse structure making it into a coarse group is
 \[ \mingrp=\angles{{\,\Delta_{G}\ast \assRel\, ,\; (\Delta_{G}\ast \invRel)\ast\Delta
 _{G} \, ,\;
 \Delta_{G}\ast \idRel\, ,\; \idRel \, }}.
 \]
\end{lem}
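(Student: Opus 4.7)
The plan is to verify both inclusions $\mingrp\supseteq\CE_0$ and $\mingrp\subseteq\CE_0$, where $\CE_0$ denotes the coarse structure on the right hand side of the claimed identity. The first is immediate: any coarse structure $\CE$ making $(G,\ast,\unit,\inversefn)$ into a coarse group must contain $\assRel$, $\invRel$, and $\idRel$ by Lemma~\ref{lem:p1:hearts.diagrams on sets}, and must be equi bi\=/invariant by Corollary~\ref{cor:p1:coarse.group.from.group.iff.equi controlled}, so the four generators of $\CE_0$ all lie in $\CE$. Intersecting over all such $\CE$ gives $\CE_0\subseteq\mingrp$.

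The reverse inclusion reduces to showing that $(G,\CE_0,[\ast],[\unit],[\inversefn])$ is itself a coarse group, for then minimality of $\mingrp$ forces $\mingrp\subseteq\CE_0$. By Proposition~\ref{prop:p1:coarse.group.iff.heartsuit.and.equi controlled} this splits into (a) showing the Group Diagrams commute up to closeness in $\CE_0$, and (b) showing that $\CE_0$ is equi bi\=/invariant. For (a), the relation $\idRel$ is already a generator, and $\assRel$, $\invRel$ can be reduced to generators via short paths that insert a multiplication by $\unit$. Explicitly, for $(a,b)\in\assRel$ one has the three\=/step path
\[
 a\torel{\idRel}\unit\ast a\torel{\Delta_G\ast\assRel}\unit\ast b\torel{\idRel}b,
\]
giving $\assRel\subseteq\idRel\cmp(\Delta_G\ast\assRel)\cmp\idRel\in\CE_0$; an analogous path through $(\unit\ast a)\ast\unit$ yields $\invRel\subseteq\idRel^{\cmp 2}\cmp\bigparen{(\Delta_G\ast\invRel)\ast\Delta_G}\cmp\idRel^{\cmp 2}\in\CE_0$.

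For (b), the cleanest approach is to introduce the auxiliary families
\[
 \CL\coloneqq\{E\subseteq G\times G\mid \Delta_G\ast E\in\CE_0\},\qquad \CR\coloneqq\{E\subseteq G\times G\mid E\ast\Delta_G\in\CE_0\}.
\]
Because $\Delta_G\ast(\mhyphen)$ commutes with taking opposites and distributes over unions and compositions (for instance $\Delta_G\ast(E\cmp F)\subseteq(\Delta_G\ast E)\cmp(\Delta_G\ast F)$, and $\Delta_G\ast\Delta_G\subseteq\Delta_G$), both $\CL$ and $\CR$ are themselves coarse structures on $G$. It therefore suffices to check that each of the four generators of $\CE_0$ lies in $\CL\cap\CR$, which is an essentially mechanical verification: one uses the already\=/established membership $\assRel\in\CE_0$ to reassociate the outer $\Delta_G$ factor past any existing parentheses and absorb it into the pre\=/existing $\Delta_G$ factor, yielding bounds such as $\Delta_G\ast(\Delta_G\ast\assRel)\subseteq\assRel\cmp(\Delta_G\ast\assRel)\cmp\assRel$. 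The main obstacle I anticipate is precisely the bookkeeping here, especially for the generator $(\Delta_G\ast\invRel)\ast\Delta_G$, whose left and right translates each require several iterated reassociations before the outer $\Delta_G$ can be absorbed; the pattern is uniform but the intermediate expressions grow unwieldy.
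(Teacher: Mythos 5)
Your strategy matches the paper's almost exactly: you show $\CE_0\subseteq\mingrp$ using equi bi\=/invariance and Lemma~\ref{lem:p1:hearts.diagrams on sets}, then show $(G,\CE_0)$ is a coarse group and invoke minimality, reducing the latter via Proposition~\ref{prop:p1:coarse.group.iff.heartsuit.and.equi controlled}. Your three\=/step path giving $\assRel\in\CE_0$ is verbatim the paper's, and the $\invRel$ bound you sketch is correct. The one divergence is cosmetic but pleasant: your $\CL,\CR$ device justifies restricting the equi bi\=/invariance check to the four generators by observing that $\CL$ and $\CR$ are themselves coarse structures, whereas the paper cites Remark~\ref{rmk:p1:control of maps from product generated} about testing control on generating relations of a product; both reductions are valid and amount to the same thing. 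One factual correction to your outline: you single out $(\Delta_G\ast\invRel)\ast\Delta_G$ as the unwieldy case, but in fact the genuinely delicate case is $(\Delta_G\ast\assRel)\ast\Delta_G$, whose verification requires first establishing $\Delta_G\ast(\Delta_G\ast\assRel)\in\CE_0$ (equivalently $\Delta_G\ast\assRel\in\CL$) as an intermediate ingredient; the translates of the $\invRel$ and $\idRel$ generators are strictly simpler because some inner factors are $\unit$. Since you explicitly defer this bookkeeping, the proof is not yet closed---you should at minimum write out the chain for $(\Delta_G\ast\assRel)\ast\Delta_G\in\CE_0$, which is the one spot where the dependency on a previously\=/established compound membership is essential rather than decorative.
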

\begin{proof}
 Let $\CE\coloneqq \angles{{\,\Delta_{G}\ast \assRel\, ,\; (\Delta_{G}\ast \invRel)\ast\Delta
 _{G}\, ,\;
 \Delta_{G}\ast \idRel\, ,\; \idRel \, }}$. It is clear that $\CE\subseteq \mingrp$ because $\assRel$, $\invRel$ and $\idRel$ are contained in $\mingrp$, and the relations obtained by multiplying them by $\Delta_{G}$  must be in $\mingrp$ because it is equi bi\=/invariant.
  
 It remains to show the other containment. That is, we need to show that $\CE$ makes $G$ into a coarse group. Note that $\assRel\in \CE$ because
  \[
   g_1\ast(g_2\ast g_3)
   \rel{\idRel} {\unit\ast (g_1 \ast(g_2\ast g_3))}
   \rel{\Delta_{G}\ast \assRel} {\unit\ast ((g_1\ast g_2)\ast g_3)}
   \rel{\idRel} (g_1\ast g_2)\ast g_3.
  \]
 An analogous argument shows that $\invRel\in\CE$ as well. 
 
 By Proposition~\ref{prop:p1:coarse.group.iff.heartsuit.and.equi controlled}, all it remains to prove is that $\ast\colon (G,\CE)\times (G,\CE)\to (G,\CE)$ is controlled. In order to do so, we need to show that $\Delta_{G}\ast F$ and $F\ast \Delta_{G}$ belong to $\CE$ whenever $F$ is one of $\Delta_{G}\ast \assRel$, $(\Delta_{G}\ast \invRel)\ast\Delta_G$, $\Delta_{G}\ast \idRel$ or $\idRel$ (see Remark~\ref{rmk:p1:control of maps from product generated}). We will prove in detail that $\Delta_{G}\ast (\Delta_{G}\ast \assRel)\in\CE$ and $(\Delta_{G}\ast \assRel )\ast \Delta_{G}\in\CE$; the other cases are similar (and simpler). The former is easy to prove, because
 \begin{align*}
  h_1\ast \bigparen{h_2\ast (g_1 \ast (g_2\ast g_3))}
  &\rel{\assRel} (h_1\ast h_2)\ast (g_1\ast (g_2\ast g_3)) \\
  &\rel{\Delta_{G}\ast \assRel}
  (h_1\ast h_2)\ast ((g_1\ast g_2)\ast g_3) \\
  &\rel{\assRel} h_1\ast \bigparen{h_2\ast ((g_1\ast g_2)\ast g_3)}
 \end{align*}
 for every $h_1,h_2,g_1,g_2,g_3\in G$. The proof of the latter is slightly more involved: 
 \begin{align*}
  \bigparen{h_2\ast  (g_1\ast (g_2\ast g_3))}\ast h_1
  &\rel{\assRel} 
  h_2\ast \bigparen{(g_1\ast (g_2\ast g_3))\ast h_1} \\
  &\rel{\Delta_{G}\ast \assRel}
  h_2\ast \bigparen{g_1\ast ((g_2\ast g_3)\ast h_1)} \\
  &\rel{\Delta_{G}\ast(\Delta_{G}\ast \assRel)}
  h_2\ast \bigparen{g_1\ast (g_2\ast (g_3\ast h_1))} \\
  &\rel{\Delta_{G}\ast \assRel}
  h_2\ast ((g_1\ast g_2)\ast (g_3\ast h_1)) \\
  &\rel{\Delta_{G}\ast \assRel}
  h_2\ast \bigparen{((g_1\ast g_2)\ast g_3)\ast h_1} \\
  &\rel{\assRel}
  \bigparen{h_2\ast ((g_1\ast g_2)\ast g_3)}\ast h_1.
 \end{align*}
 Note that on the third line it was essential that we already knew $\Delta_{G}\ast(\Delta_{G}\ast \assRel)\in\CE$.
 Analogous computations for $\invRel$ and $\idRel$ shows that $\ast$ is controlled and thus concludes the proof.
\end{proof}

 Having studied $\mingrp$, the next step is to study the minimal coarse structure that contains a given set of relations.  
 More precisely, let $\CR\subset\CP(G\times G)$ be any set of relations. We define $\varcrs[grp]{\CR}$ to be the minimal coarse structure that contains $\CR$ and under which $(G,\varcrs[grp]{\CR},[\ast],[\unit],[\inversefn])$ is a coarse group:
 \[
  \varcrs[grp]{\CR}=\bigcap\Bigbraces{\CE\bigmid \CR\subseteq \CE,\ (G,\CE,[\ast],[\unit],[\inversefn])\text{ is a coarse group}}
  \nomenclature[:CE1]{$\varcrs[grp]{\CR}$ }{minimal group coarse structure containing $\CR$}
 \]
 (notice that $\varcrs[grp]{\CR}$ depends on the choice of functions $\ast,\unit,\inversefn$). 
 It turns out that $\varcrs[grp]{\CR}$ is easy to describe in terms of $\CR$ and $\mingrp$. Let $E\ast \CR\coloneqq\braces{E\ast R\mid R\in \CR}$ and $\CR\ast E\coloneqq\braces{R\ast E\mid R\in\CR}$. We can prove the following:

\begin{lem}\label{lem:p1:generated biinvariant coarse structure}
 Let $\CR$ be any set of relations on the set $G$. Then 
 \[
  \varcrs[grp]{\CR}=\angles{(\Delta_{G}\ast \CR)\ast \Delta_{G}\, ,\; \mingrp}=\angles{\Delta_{G}\ast (\CR\ast\Delta_{G})\,,\; \mingrp}.
 \]
\end{lem}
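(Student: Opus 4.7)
The plan is to show the first equality by two containments, and then observe that the second equality follows almost immediately from associativity.

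For the containment $\varcrs[grp]{\CR}\supseteq \angles{(\Delta_{G}\ast \CR)\ast \Delta_{G}\, ,\; \mingrp}$, I would note that any coarse structure making $(G,\ast,\unit,\inversefn)$ into a coarse group must (i) contain $\mingrp$ by the definition of $\mingrp$, and (ii) be equi bi\=/invariant by Corollary~\ref{cor:p1:coarse.group.from.group.iff.equi controlled}. Applied to $\varcrs[grp]{\CR}$, which additionally contains $\CR$, this forces $(\Delta_G\ast R)\ast\Delta_G\in\varcrs[grp]{\CR}$ for every $R\in\CR$, giving the containment.

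For the reverse containment, let $\CE\coloneqq\angles{(\Delta_{G}\ast \CR)\ast \Delta_{G}\, ,\; \mingrp}$. I would show that $(G,\CE,[\ast],[\unit],[\inversefn])$ is a coarse group containing $\CR$, so that $\CE\supseteq\varcrs[grp]{\CR}$ by minimality. Since $\CE\supseteq\mingrp$, the Group Diagrams automatically commute up to closeness in $(G,\CE)$, so by Proposition~\ref{prop:p1:coarse.group.iff.heartsuit.and.equi controlled} it is enough to show that $\ast$ is $\CE$\=/controlled, i.e.\ that $\CE$ is equi bi\=/invariant. By the generating set of $\CE$ (together with Remark~\ref{rmk:p1:control of maps from product generated}), this reduces to checking equi bi\=/invariance on the generators $(\Delta_G\ast R)\ast \Delta_G$ (the corresponding property for elements of $\mingrp$ is built\=/in). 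Here I would use $\assRel\in\mingrp$: for any $(r_1,r_2)\in R$ and $g,h,k\in G$,
\[
 h\ast\paren{(g\ast r_i)\ast k} \rel{\assRel} \paren{(h\ast g)\ast r_i}\ast k
\]
and the pair $(((h\ast g)\ast r_1)\ast k,\,((h\ast g)\ast r_2)\ast k)$ lies in $(\Delta_G\ast R)\ast\Delta_G$. Chaining these relations shows $\Delta_G\ast \paren{(\Delta_G\ast R)\ast\Delta_G}\in\CE$. A similar two\=/step chain using $\assRel$ twice on the outer factor shows $\paren{(\Delta_G\ast R)\ast\Delta_G}\ast\Delta_G\in\CE$.

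Next I would check that $\CR\subseteq \CE$. This again uses $\mingrp$: for $(r_1,r_2)\in R$,
\[
 r_1\rel{\idRel} e\ast r_1 \rel{\idRel}(e\ast r_1)\ast e
 \rel{(\Delta_G\ast R)\ast \Delta_G} (e\ast r_2)\ast e
 \rel{\idRel}e\ast r_2\rel{\idRel} r_2,
\]
so $R$ is contained in a finite composition of elements of $\CE$. Together with the previous step, this completes the proof that $\CE=\varcrs[grp]{\CR}$.

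Finally, for the second equality, associativity gives $(g\ast r_i)\ast k\rel{\assRel} g\ast(r_i\ast k)$, so each generator $((g\ast r_1)\ast k,\,(g\ast r_2)\ast k)\in(\Delta_G\ast R)\ast\Delta_G$ sits inside $\assRel\cmp\paren{\Delta_G\ast(R\ast \Delta_G)}\cmp\assRel$, and vice versa. Since $\assRel\in\mingrp$, this shows the two generating families yield the same coarse structure when combined with $\mingrp$. The main obstacle in the argument is purely bookkeeping: one must carefully insert $\assRel$, $\idRel$\=/steps at the right places so that every intermediate product lies in $\CE$; there is no deeper conceptual difficulty beyond the equi\=/bi\=/invariance check.
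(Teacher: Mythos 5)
Your proof is correct and follows the same core strategy as the paper: the easy containment $\CE\coloneqq\angles{(\Delta_G\ast\CR)\ast\Delta_G,\mingrp}\subseteq\varcrs[grp]{\CR}$ is immediate, and the reverse containment follows from showing that $(G,\CE)$ is a coarse group containing $\CR$, then invoking minimality and Proposition~\ref{prop:p1:coarse.group.iff.heartsuit.and.equi controlled}. The organizational difference is worth noting. The paper establishes the second equality $\angles{(\Delta_{G}\ast \CR)\ast \Delta_{G},\mingrp}=\angles{\Delta_{G}\ast (\CR\ast\Delta_{G}),\mingrp}$ \emph{first} (via the $\assRel$-conjugation $(\Delta_G\ast E)\ast\Delta_G\subseteq\assRel\cmp(\Delta_G\ast(E\ast\Delta_G))\cmp\assRel$ and its symmetric variant), and then exploits it: the left-multiplication check is done using the form $\Delta_G\ast(\CR\ast\Delta_G)$ and the right-multiplication check using $(\Delta_G\ast\CR)\ast\Delta_G$, so that each reduces to a single $\assRel$ step. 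You fix the form $(\Delta_G\ast\CR)\ast\Delta_G$ throughout and prove the second equality at the end; this also works, but your displayed step $h\ast\paren{(g\ast r_i)\ast k}\rel{\assRel}\paren{(h\ast g)\ast r_i}\ast k$ is really a composite of two steps, one $\assRel$ and one $\assRel\ast\Delta_G$ (the latter lies in $\mingrp$ since $\mingrp$ is equi bi-invariant), not a single $\assRel$ relation; and conversely the right-multiplication chain $((g\ast r_i)\ast k)\ast h\rel{\assRel}(g\ast r_i)\ast(k\ast h)$ needs only one $\assRel$ step, not two. These are bookkeeping slips of the type you yourself anticipated, and do not affect the argument. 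A point in your favour: you explicitly verify $\CR\subseteq\CE$ via $\idRel$, a check the paper leaves implicit but which is needed before invoking minimality.
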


\begin{proof}
 To begin with, it is easy to see that $\angles{(\Delta_{G}\ast \CR)\ast \Delta_{G}, \mingrp}=\angles{\Delta_{G}\ast (\CR\ast\Delta_{G}), \mingrp}$, because $\assRel\in\mingrp$ and for every relation $E\subseteq G\times G$ we have containments $(\Delta_{G}\ast E)\ast \Delta_{G}\subseteq {\assRel} \circ \bigparen{ \Delta_{G}\ast (E\ast \Delta_{G})}\circ \assRel$ and 
 $\Delta_{G}\ast(E\ast \Delta_{G})\subseteq \assRel\circ \bigparen{(\Delta_{G}\ast E)\ast \Delta_{G}}\circ {\assRel}$. 
 We proceed as in the proof of Lemma~\ref{lem:p1:generators of mingrp}: let $\CE\coloneqq \angles{\Delta_{G}\ast (\CR\ast \Delta_{G}), \mingrp}$. It then follows that $\varcrs[grp]{\CR}\subseteq \CE$ and---using Proposition~\ref{prop:p1:coarse.group.iff.heartsuit.and.equi controlled}---it is enough to show that $\ast\colon(G,\CE)\times(G,\CE)\to(G,\CE)$ is controlled to conclude the proof of the lemma.
 
 We first show that the left multiplication ${}_g\ast$ is equi controlled, \emph{i.e.}\ that $\ast\colon(G,\mincrs)\times (G,\CE)\to(G,\CE)$ is controlled. We already know that $\Delta_{G}\ast E\in\mingrp\subseteq \varcrs[grp]{\CR}$ for every $E\in\mingrp$, it is hence enough to show that $\Delta_{G}\ast \paren{\Delta_{G}\ast (R\ast \Delta_{G})} \in\CE$ for every $R\in\CR$. This is readily done: if $h\torel{R}h'$ then 
 \begin{align*}
  g_1\ast(g_2\ast(h\ast g_3))
  &\rel{\assRel} (g_1\ast g_2)\ast(h\ast g_3) \\
  &\torel{\Delta_G\ast (R\ast \Delta_G)} (g_1\ast g_2)\ast(h'\ast g_3)\\
  &\rel{\assRel} g_1\ast(g_2\ast(h'\ast g_3))
 \end{align*}  

 To prove that the right multiplication $\ast_g$ is equi controlled it is enough to write $\CE\coloneqq \angles{(\Delta_{G}\ast \CR)\ast \Delta_{G}, \mingrp}$ and use the same argument. By Lemma~\ref{lem:p1:equi controlled.sections.iff.controlled} it follows that $\ast\colon(G,\CE)\times(G,\CE)\to(G,\CE)$ is controlled.
\end{proof}

\begin{rmk}\label{rmk:p1:generated leftinvariant coarse structure}
 Given a set of relations $\CR$ on $G$ we can also ask what is the smallest equi left invariant coarse structure $\varcrs[left]{\CR}$ containing $\CR$. If we already know that $\mingrp\subseteq\varcrs[left]{\CR}$ (\emph{e.g.}\ because $\mingrp\subseteq\CR$), then it is easy to check that 
 \[
  \varcrs[left]{\CR}=\angles{\Delta_{G}\ast\CR\,,\,\mingrp}.
 \]
 The key point is that $\Delta_{G}\ast(\Delta_{G} \ast E)\subseteq\assRel\circ \bigparen{(\Delta_{G}\ast\Delta_{G}) \ast E}\circ {\assRel}
 \subseteq\assRel\circ \bigparen{\Delta_{G} \ast E}\circ {\assRel}$ for every relation $E\subseteq G\times G$. See Proposition~\ref{prop:p1:quotient action generated by R}. \index{coarse structure!minimal equi left invariant} \nomenclature[:CE1]{$\varcrs[left]{\CR}$}{minimal equi left invariant coarse structure generated by $\CR$}
\end{rmk}

\begin{exmp}\label{exmp:p1:finite sets group coarse structure}
 Recall that the coarse structure of finite off\=/diagonal $\varcrs{fin}$ is the minimal coarsely connected coarse structure. If $G$ is a set\=/group, Lemma~\ref{lem:p1:generated biinvariant coarse structure} implies that $\varcrs[grp]{fin}=\angles{\Delta_{G}\ast E\ast\Delta_{G}\mid E\subseteq G\times G\text{ finite}}$ (the order of multiplication does not matter, as $G$ is a set\=/group). 
 By definition, $\varcrs[grp]{fin}$ is the minimal coarsely connected equi bi\=/invariant coarse structure on $G$. That is, $(G,\varcrs[grp]{fin})$ is the minimal coarsely connected group coarsification of $G$.

 In Section~\ref{sec:p1:determined locally}  we provide a simple proof that if $G$ is finitely normally generated then $\varcrs[grp]{fin}$ coincides with the canonical coarse structure $\varcrs{bw}$ (Definition~\ref{def:p1:canonical coarse structure}). 
 In particular, this proves that the canonical coarse structure $\varcrs{bw}$ is indeed independent of the choice of finite normally generating set. 
\end{exmp}

For Lemma~\ref{lem:p1:generators of mingrp} it is truly necessary to add $\idRel$ in the set of generators of $\mingrp$ (as opposed to keeping only $\Delta_{G}\ast\idRel$). This can be easily seen by considering a function $\ast$ that is not surjective. For the sake of concreteness, here is an example:

\begin{exmp}\label{exmp:p1:non.surjective multiplication}
 For example, let $(G,\ast)$ be a set\=/group. Consider the set $G''=G\times\{\pm 1\}$ and extend the functions $\ast$, $\inversefn$ as $(x,\pm 1)\ast(y,\pm 1)\coloneqq (x\ast y,+1)$ and $(x,\pm 1)^{-1}\coloneqq (x^{-1},+1)$. That is, $G''$ is obtained by ``doubling'' all the elements in $G$ and extending the multiplication and inverse functions in such a way that their image is always contained in $G\times\{+1\}$. It is not hard to show that $\mingrp$ is $\mincrs\otimes\CP(\{\pm1\})$. On the other hand, any relation of the form $E\ast F$ is contained in $(G\times\{+1\})\times (G\times\{+1\})$, this shows that $\mingrp$ cannot be generated using only ``product relations''. 
\end{exmp}

Example~\ref{exmp:p1:non.surjective multiplication} is admittedly artificial, but it does serve to point at some technical difficulties that can arise when studying general coarse groups. These may compound to make proofs very technical and complicated. For this reason it can be useful to assume some `decency' condition on the operation functions, under which they behave more like group operations.

\begin{de}\label{def:p1:well.adapted}
Let $G$ be a set.  A choice of functions $\ast\colon G\times G\to G$, $\inversefn \colon G\to G$ and $\unit\in G$ are \emph{adapted}\index{adapted operations} if $g\ast e=e\ast g=g$ and $g\ast g^{-1}=g^{-1}\ast g=e$ for every $g\in G$.
\end{de}

The assumption that functions are adapted can simplify technical statements. For example, Lemmas~\ref{lem:p1:generators of mingrp} and \ref{lem:p1:generated biinvariant coarse structure} can be rephrased as:

\begin{cor}\label{cor:p1:assrel for well_adapted}
 Let $G$ be a set. If $\ast,\unit$ and $ \inversefn$ are adapted operations on $G$ then $\mingrp=\angles{\Delta_{G}\ast\assRel}$. If $\CR$ is any family of relations on $G$ then 
 \[
 \varcrs[grp]{\CR}=\angles{\Delta_{G}\ast(\CR\ast \Delta_{G})\;,\;\Delta_{G}\ast\assRel}
 =\angles{(\Delta_{G}\ast\CR)\ast \Delta_{G}\;,\;\Delta_{G}\ast\assRel}
 \quad\text{ and }\quad
 \varcrs[left]{\CR}=\angles{\Delta_{G}\ast\CR\;,\;\Delta_{G}\ast\assRel}.
 \]
\end{cor}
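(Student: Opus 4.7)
The plan is to derive this corollary directly from Lemmas~\ref{lem:p1:generators of mingrp} and \ref{lem:p1:generated biinvariant coarse structure} together with Remark~\ref{rmk:p1:generated leftinvariant coarse structure}, by showing that the adaptedness hypothesis makes most of the generating relations in those statements trivially redundant. The key observation is that when $\ast$, $\unit$, $\inversefn$ are adapted, every pair appearing in $\idRel'$ and $\invRel'$ collapses to the diagonal: since $g\ast e=e\ast g=g$, we have $\idRel'\subseteq\Delta_{G}$, and since $g\ast g^{-1}=g^{-1}\ast g=e$, we have $\invRel'=\{(e,e)\}\subseteq\Delta_{G}$. Taking symmetric unions preserves this, so both $\idRel\subseteq\Delta_{G}$ and $\invRel\subseteq\Delta_{G}$.

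The next step is to propagate this back through the product-of-relations operation. Since $\Delta_{G}\ast\Delta_{G}=\{(h\ast g,h\ast g)\mid h,g\in G\}\subseteq\Delta_{G}$, it follows that $\Delta_{G}\ast\idRel\subseteq\Delta_{G}$ and $(\Delta_{G}\ast\invRel)\ast\Delta_{G}\subseteq\Delta_{G}$. Hence all four of the relations $\idRel$, $\Delta_{G}\ast\idRel$, and $(\Delta_{G}\ast\invRel)\ast\Delta_{G}$ belong to $\mincrs$, and therefore lie in \emph{every} coarse structure on $G$. Plugging this into Lemma~\ref{lem:p1:generators of mingrp} gives
\[
\mingrp=\angles{\Delta_{G}\ast\assRel\,,\;(\Delta_{G}\ast\invRel)\ast\Delta_{G}\,,\;\Delta_{G}\ast\idRel\,,\;\idRel}=\angles{\Delta_{G}\ast\assRel},
\]
which is the first claim.

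The remaining identities are immediate formal consequences. Substituting the simplified description of $\mingrp$ into the two expressions for $\varcrs[grp]{\CR}$ provided by Lemma~\ref{lem:p1:generated biinvariant coarse structure} yields
\[
\varcrs[grp]{\CR}=\angles{(\Delta_{G}\ast\CR)\ast\Delta_{G}\,,\;\Delta_{G}\ast\assRel}=\angles{\Delta_{G}\ast(\CR\ast\Delta_{G})\,,\;\Delta_{G}\ast\assRel},
\]
and substituting into the expression for $\varcrs[left]{\CR}$ from Remark~\ref{rmk:p1:generated leftinvariant coarse structure} gives $\varcrs[left]{\CR}=\angles{\Delta_{G}\ast\CR\,,\;\Delta_{G}\ast\assRel}$.

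There is essentially no hard step: once one verifies that adaptedness forces $\idRel,\invRel\subseteq\Delta_{G}$, everything else is a direct appeal to the earlier lemmas. The only mildly technical point to check carefully is that the product-of-relations operation $\ast$ really does send subsets of $\Delta_{G}$ into $\Delta_{G}$, which is a one-line computation from the definition \eqref{eq:p1:product of relations}.
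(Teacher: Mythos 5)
Your proof is correct and matches the (unstated) argument the paper has in mind---the corollary is presented as an immediate rephrasing of Lemmas~\ref{lem:p1:generators of mingrp} and \ref{lem:p1:generated biinvariant coarse structure}, and you verify the key point: adaptedness forces $\idRel=\Delta_G$, $\invRel=\{(e,e)\}$, and hence $\idRel$, $\Delta_{G}\ast\idRel$, and $(\Delta_{G}\ast\invRel)\ast\Delta_{G}$ all into $\mincrs$, so those generators can be dropped. One small caveat worth flagging: Remark~\ref{rmk:p1:generated leftinvariant coarse structure} carries the hypothesis $\mingrp\subseteq\varcrs[left]{\CR}$, so in the last identity $\varcrs[left]{\CR}$ should be read as the smallest equi left invariant coarse structure containing \emph{both} $\CR$ and $\mingrp$ (the implicit convention the paper uses later, cf.\ $\varcrs[left]{\CE,\CR}$ in Lemma~\ref{lem:p1:quotient coarse structure as composition}), under which your substitution is valid.
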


It is easy to see that we can always assume that the operations are adapted if $G$ is a coarse group:

\begin{lem}\label{lem:p1:adapted representatives exist}
Every coarse group admits adapted representatives for the coarse operations.
\end{lem}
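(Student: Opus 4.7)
Let $(\crse G,\cop,\cunit,\cinversefn)$ be a coarse group and fix arbitrary representatives $\ast_0$, $e_0$, $\inversefn_0$. The strategy is to exploit the near-commutativity of the Group Diagrams (Lemma~\ref{lem:p1:hearts.diagrams on sets}) to modify the representatives on the ``diagonal'' pairs $(g,e)$, $(e,g)$, $(g,g^{-1})$ and $(g^{-1},g)$ so that the identity and inverse laws hold on the nose. The only subtlety is the bookkeeping needed to guarantee that the modifications done to enforce the inverse law do not collide with those done to enforce the identity law. Set $e:=e_0$ throughout.

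First I would construct a new inversion $\inversefn$ with the property that $g\neq e\ \Rightarrow\ g^{-1}\neq e$. Put $e^{-1}:=e$ and, for $g\neq e$, put $g^{-1}:=g^{-1_0}$ when $g^{-1_0}\neq e$ and $g^{-1}:=g$ when $g^{-1_0}=e$. The crucial observation is that if $g\neq e$ and $g^{-1_0}=e$, then combining the inverse and identity axioms gives $g\rel{\CE}g\ast_0 e=g\ast_0 g^{-1_0}\rel{\CE}e$ \emph{uniformly} in such $g$. Hence $\inversefn$ differs from $\inversefn_0$ only on a set of pairs lying in a single controlled entourage, so $\inversefn\closefn\inversefn_0$ and $\inversefn$ still represents $\cinversefn$.

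Next I would define the new multiplication $\ast$ by the case-by-case prescription
\[
g\ast h:=\begin{cases}
h & \text{if } g=e,\\
g & \text{if } h=e\text{ and } g\neq e,\\
e & \text{if } g,h\neq e\text{ and either } h=g^{-1}\text{ or } g=h^{-1},\\
g\ast_0 h & \text{otherwise.}
\end{cases}
\]
The cases are mutually consistent precisely because we arranged $g^{-1}\neq e$ for $g\neq e$, so the first two cases cannot conflict with the third. Each case replaces $g\ast_0 h$ by a value that is uniformly $\CE$-close to the original: the first two by the identity axiom, the third by the estimate $g\ast_0 g^{-1}\rel{\CE}g\ast_0 g^{-1_0}\rel{\CE}e$ (using equi-control of the left multiplications, the uniform closeness of $\inversefn$ to $\inversefn_0$ just established, and the inverse axiom). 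Therefore $\ast\closefn\ast_0$ and $\ast$ still represents $\cop$.

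Finally, the adapted identities are immediate from the construction: $e\ast g=g$ and $g\ast e=g$ come from the first two cases, while $g\ast g^{-1}=e$ and $g^{-1}\ast g=e$ for $g\neq e$ come from the third (applied with $(g,g^{-1})$ and with $(g^{-1},g)$ respectively), and the $g=e$ case reduces to $e\ast e^{-1}=e\ast e=e$. The principal obstacle is the one already identified and resolved above: making sure the prescription is unambiguous, which forces the preliminary adjustment of $\inversefn_0$ on the ``problematic'' set $\{g\neq e:g^{-1_0}=e\}$; everything else is routine.
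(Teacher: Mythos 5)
Your proof is correct and follows essentially the same two-step construction as the paper: first adjust $\inversefn_0$ so that $g^{-1}=e$ only when $g=e$, then redefine $\ast_0$ case by case on the pairs involving $e$ and inverse pairs. Your explicit verification of why the modified representatives remain close to the original ones is a welcome extra detail not spelled out in the paper.
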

\begin{proof}
 Let $(G,\CE,[\ast],[e],[\inversefn])$ be a coarse group. In order to produce adapted representatives we will first modify the inversion function to ensure that $g^{-1} \neq e$ whenever $g \neq e$. Define a new inversion function $(\mhyphen)^{(-1)'}$ by letting
 \[
  g^{(-1)'}\coloneqq
  \left\{\begin{array}{ll}
          e & \text{if }g = e \\
          g^{-1} & \text{if }g\neq e \text{ and }g^{-1}\neq e \\
          g & \text{if }g\neq e \text{ and }g^{-1}= e.
         \end{array}
 \right.
 \]
 This function is close to $\inversefn$, and it is therefore another representative for $[\inversefn]$. Replacing $\inversefn$ with $(\mhyphen)^{(-1)'}$, we may thus assume that $g^{-1}=e$ if and only if $g=e$.
 Thus the following definition for the function $\ast'$ is well-posed:
 \[
  g\ast' h\coloneqq
  \left\{\begin{array}{ll}
          g & \text{if }h = e \\
          h & \text{if }g = e \\
          e & \text{if }g= h^{-1} \text{ or }h=g^{-1} \\
          g\ast h & \text{otherwise.}
         \end{array}
 \right.
 \]
 Once again, $\ast'$ is close to $\ast$. We may thus replace $\ast$ with $\ast'$, and the representatives $\ast,e,\inversefn$ are adapted   by construction.
\end{proof}

 For cardinality reasons, it is not always possible to assume that $\inversefn$ is an involution:
 
 \begin{example}\label{exmp:p1:no cancellative reps}
  Let $G\coloneqq A_0\sqcup A_1\sqcup A_3$ where the $A_i$ are three non\=/empty sets of differing cardinalities.  Take the coarse structure $\angles{A_0\times A_0,A_1\times A_1, A_2\times A_2}$, so that $G$ is coarsely equivalent to the set $\{0,1,2\}$ with the trivial coarse structure. Pick three points $a_0\in A_0$, $a_1\in A_1$, $a_2\in A_2$ and define
  \[
  g\ast h\coloneqq a_{i+j}
  \]
  where $g\in A_i$, $h\in A_j$ and the addition is taken mod(3). Similarly define 
  \[
   g^{-1}\coloneqq a_{-i}
  \]
  for $g\in A_i$, and let $e\coloneqq a_0$ (\emph{i.e.}\ we equip $G$ with coarse group operations making it isomorphic to $(\ZZ/3\ZZ,\mincrs)$). Since $\abs{A_1}\neq \abs{A_2}$ the inversion $\inversefn$ cannot be bijective, and hence it is not an involution. This example also shows that the multiplication functions ${}_g\ast$ and $\ast_g$ cannot be assumed to be bijections. 
 \end{example} 

\section{Coarse Groups are Determined Locally}
\label{sec:p1:determined locally}
The main result of this section is the coarse analogue of the observation that the topology of a topological group is determined by the set of open neighborhoods of the identity.  Interestingly, we will later see that this fact lies at the heart of the fundamental observation of geometric group theory (the Milnor--Schwarz  Lemma).

As in the previous section, let $(G,\ast,\unit,\inversefn)$ be a set with operations and $\mingrp$ be the minimal coarse structure making it into a coarse group. For any coarse structure $\CE$ on $G$ let $\fkU_e(\CE)$ be the set of \emph{bounded neighborhoods of the identity $e\in G$} \index{bounded!neighborhoods of the identity}\nomenclature[:BOR]{$ \fkU_e(\CE)$}{bounded neighborhoods of the identity}
\begin{equation*}
 \fkU_e(\CE)\coloneqq\braces{U \mid U\subseteq G\text{ bounded},\ e\in U}
\end{equation*}
For every $U\in\fkU_e(\CE)$ we have a relation $U\times U\in \CE$. We let $\CU_e(\CE)$ be the set of such relations:
\[
 \CU_e(\CE)\coloneqq\braces{U\times U\mid U\in\fkU(\CE)}.
\]

\begin{rmk}
 The word ``neighborhood'' is used quite loosely. If $B\subseteq G$ is any bounded set contained in the same coarsely connected component of $e$, then $U=B\cup\{e\}$ is a bounded neighborhood of identity. In particular, if $(G,\CE)$ is coarsely connected then \emph{every} bounded set is `morally' a bounded neighborhood of $e$. 
 Note also that the assumption $e\in U$ is made out of convenience, but it is somewhat unnatural within the coarse category.
\end{rmk}

Recall that a coarse structure $\CE$ on $G$ is equi left invariant if $\Delta_{G}\ast E\in\CE$ for every $E\in\CE$ (Definition~\ref{def:p1:equi.bi.invariant coarse structure}). The following result shows that equi left invariant coarse structures are determined by their bounded sets:

\begin{prop}\label{prop:p1:neighbourhoods.of.identity.generate}
 Given $(G,\ast,\unit,\inversefn)$ as above and an equi left invariant coarse structure $\CE$ such that $\mingrp\subseteq \CE$, then
 \[
  \CE=\angles{\Delta_{G}\ast\CU_e(\CE)\, ,\; \mingrp}.
 \]
In particular, $\CE$ is completely determined by its set of bounded neighborhoods of the identity.
\end{prop}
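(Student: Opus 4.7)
The containment $\angles{\Delta_G\ast\CU_e(\CE),\mingrp}\subseteq\CE$ is immediate: by hypothesis $\mingrp\subseteq\CE$, and for every $U\in\fkU_e(\CE)$ the relation $U\times U$ lies in $\CE$, so equi left invariance gives $\Delta_G\ast(U\times U)\in\CE$. The content of the proposition is the reverse containment, which says that every controlled entourage $E\in\CE$ can, up to composition with relations in $\mingrp$, be ``translated to the identity'' and captured by a single bounded neighborhood of $e$.

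The key construction is the following. Given $E\in\CE$, define
\[
U_E\coloneqq\bigbraces{x^{-1}\ast y\bigmid (x,y)\in E}\cup\{e\}.
\]
I want to show (a) $U_E\in\fkU_e(\CE)$ and (b) $E\subseteq F_1\cmp\bigparen{\Delta_G\ast(U_E\times U_E)}\cmp F_2$ for suitable $F_1,F_2\in\mingrp$. Both steps rest on the same idea: for any $(x,y)\in E$, the generators of $\mingrp$ from Lemma~\ref{lem:p1:generators of mingrp} let us rewrite $y$ as $x\ast(x^{-1}\ast y)$ up to $\assRel$, $\invRel\ast\Delta_G$ and $\idRel$.

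For (a), fix $(x,y)\in E$ and note that $(x^{-1}\ast x,\, x^{-1}\ast y)$ lies in $\Delta_G\ast E$, which belongs to $\CE$ by equi left invariance. Since $x^{-1}\ast x\rel{\invRel} e$ and $\invRel\in\mingrp\subseteq\CE$, we conclude that $(e, x^{-1}\ast y)\in\invRel\cmp(\Delta_G\ast E)\in\CE$. Setting $F\coloneqq\invRel\cmp(\Delta_G\ast E)\cup\Delta_G$, we obtain $F\in\CE$ and $U_E\subseteq F(\{e\})$, so $U_E\times U_E\subseteq\op{F}\cmp F\in\CE$, proving $U_E$ is bounded and hence $U_E\times U_E\in\CU_e(\CE)$.

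For (b), fix $(x,y)\in E$ and set $u\coloneqq x^{-1}\ast y\in U_E$. Chain together:
\[
x\rel{\idRel}x\ast e\torel{\Delta_G\ast(U_E\times U_E)}x\ast u\rel{\assRel}(x\ast x^{-1})\ast y\rel{\invRel\ast\Delta_G}e\ast y\rel{\idRel}y,
\]
where the second step uses $(e,u)\in U_E\times U_E$. Each relation other than $\Delta_G\ast(U_E\times U_E)$ belongs to $\mingrp$ by Lemma~\ref{lem:p1:generators of mingrp}, so $E$ is contained in a composition of entourages in $\angles{\Delta_G\ast\CU_e(\CE),\mingrp}$, as required. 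The ``in particular'' follows since $\CU_e(\CE)$ depends only on the family $\fkU_e(\CE)$ of bounded neighborhoods of $e$. The main subtlety to be careful about is step (a), where one must genuinely exploit equi left invariance to see that the seemingly much larger set $U_E$ (collecting all ``left differences'' $x^{-1}\ast y$ arising from $E$) is in fact $\CE$\=/bounded; everything else is a routine diagram chase.
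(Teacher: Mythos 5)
Your proof is correct and follows essentially the same route as the paper: form $U_E=\{x^{-1}\ast y\mid (x,y)\in E\}\cup\{e\}$, use equi left invariance together with $\invRel\in\mingrp$ to show $U_E$ is $\CE$-bounded, and then rewrite each pair in $E$ through $\Delta_G\ast(U_E\times U_E)$ and relations in $\mingrp$. The only cosmetic difference is that you spell out the $\invRel\ast\Delta_G$ step explicitly (which is indeed in $\mingrp$ by the equi bi-invariance of $\mingrp$), whereas the paper's displayed chain is slightly more elliptical.
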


\begin{proof}
 Let $\CE'\coloneqq\angles{\Delta_{G}\ast\CU_e(\CE)\, ,\; \mingrp}$, it is clear that $\CE'\subseteq \CE$. We thus have to prove that $\CE\subseteq \CE'$.
 Let $E\in\CE$ be fixed. Note that if $g_1\torel{E} g_2$ then 
 \[
  (g_1^{-1}\ast g_2)
  \torel{\Delta_{G}\ast E} (g_1^{-1}\ast g_1) 
  \rel{\idRel} e.
 \]
 It follows that the set $U\coloneqq\braces{g_1^{-1}\ast g_2\mid (g_1,g_2)\in E}\cup\{e\}$ is a $\CE$\=/bounded neighborhood of $e\in G$ and hence $U\times U\in\CU_e(\CE)$. We can now write
 \[
  g_2 \rel{\idRel\cmp\assRel} g_1\ast(g_1^{-1}\ast g_2)
  \rel{\Delta_{G}\ast (U\times U)} g_1\ast e
  \rel{\idRel} g_1,
 \]
 and deduce that $E\in\CE'$.
\end{proof}

Obviously, an analogous statement holds if $\CE$ is equi right invariant. We thus have the following.

\begin{cor}\label{cor:p1:neighbourhoods.of.identity.generate}
 If $(G,\CE,[\ast],[\unit],[\inversefn])$ is a coarse group, then
 \[
  \CE=\angles{\Delta_{G}\ast\CU_e(\CE)\, ,\; \mingrp}=\angles{\CU_e(\CE)\ast \Delta_{G}\, ,\; \mingrp}.
 \]
\end{cor}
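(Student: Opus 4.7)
The plan is to deduce this directly from Proposition~\ref{prop:p1:neighbourhoods.of.identity.generate}, applied twice: once to the equi left invariant side, and once to the equi right invariant side.

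First I would observe that since $(G,\CE,[\ast],[\unit],[\inversefn])$ is a coarse group, Corollary~\ref{cor:p1:coarse.group.from.group.iff.equi controlled} (and the discussion immediately preceding Definition~\ref{def:p1:equi.bi.invariant coarse structure}) tells us that $\CE$ is equi bi\=/invariant, and in particular it is both equi left invariant and equi right invariant. Moreover, by definition of $\mingrp$ as the minimal coarse structure making $(G,\ast,\unit,\inversefn)$ into a coarse group, we have the containment $\mingrp\subseteq\CE$. This verifies the hypotheses of Proposition~\ref{prop:p1:neighbourhoods.of.identity.generate} for $\CE$ viewed as an equi left invariant coarse structure, and gives the first equality
\[
 \CE=\angles{\Delta_{G}\ast\CU_e(\CE)\,,\;\mingrp}.
\]

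For the second equality, I would simply invoke the right\=/handed analogue of Proposition~\ref{prop:p1:neighbourhoods.of.identity.generate}, which is already noted in the paragraph immediately preceding the corollary. Concretely, if one repeats the proof of that proposition with left multiplications replaced by right multiplications, the same argument shows that any equi right invariant coarse structure $\CF$ containing $\mingrp$ satisfies $\CF=\angles{\CU_e(\CF)\ast\Delta_{G}\,,\;\mingrp}$. The only point where one needs to be careful is to use the analogous computation: for $(g_1,g_2)\in E$ the set $U'\coloneqq\braces{g_2\ast g_1^{-1}\mid (g_1,g_2)\in E}\cup\{e\}$ lies in $\fkU_e(\CF)$ because $g_2\ast g_1^{-1}\torel{E\ast\Delta_G}g_1\ast g_1^{-1}\rel{\invRel}e$, and then one recovers $g_2$ from $g_1$ via $g_2\rel{\invRel\cmp\assRel}(g_2\ast g_1^{-1})\ast g_1\torel{(U'\times U')\ast\Delta_G}e\ast g_1\rel{\idRel}g_1$. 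Applying this with $\CF=\CE$ yields the second equality.

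There is no real obstacle here: the corollary is essentially a bookkeeping combination of the left invariant statement with its right invariant twin, both of which rest on the same one\=/line computation showing that any controlled relation can be expressed as a translate of a bounded neighborhood of the identity. The only thing worth emphasising is that the presence of $\mingrp$ on the right\=/hand side is what allows us to freely insert associators and unit relations in these short chains, which is why the statement holds even without assuming that $\ast,\unit,\inversefn$ are adapted.
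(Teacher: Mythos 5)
Your proposal is correct and follows essentially the same route the paper takes: it invokes Proposition~\ref{prop:p1:neighbourhoods.of.identity.generate} for the equi left invariant case, and the right\=/handed mirror of that proposition (which, as you note, the paper simply asserts holds analogously) for the second equality. The only cosmetic issue is the reliance on Corollary~\ref{cor:p1:coarse.group.from.group.iff.equi controlled}, which is stated only for set\=/groups; the general fact that a coarse group has equi bi\=/invariant coarse structure is the discussion preceding Definition~\ref{def:p1:equi.bi.invariant coarse structure}, which you also cite, so nothing is actually missing.
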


If $\crse G=(G,\CE)$ be a coarsified set\=/group, $\mingrp$ is trivial and therefore we see that $\CE=\angles{\Delta_{G}\ast\CU_e(\CE)}=\angles{\CU_e(\CE)\ast \Delta_{G}}$. In this case the proof of Proposition~\ref{prop:p1:neighbourhoods.of.identity.generate} gives a stronger result:

\begin{cor}\label{cor:p1:neighbourhoods.of.identity.generate_setgroup}
 Let $\crse G=(G,\CE)$ be a coarsified set\=/group. Then
 \[
  \CE=\braces{E\mid \exists U\in\fkU_e(\CE),\ E\subseteq\Delta_{G}\ast \paren{U\times U}}
  =\braces{E\mid \exists U\in\fkU_e(\CE),\ E\subseteq \paren{U\times U}\ast \Delta_{G}}
 \]
\end{cor}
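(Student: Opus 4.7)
The plan is to follow the argument of Proposition~\ref{prop:p1:neighbourhoods.of.identity.generate}, but exploit the fact that for a coarsified set\=/group we may choose the genuine group operations as representatives of $\cop$, $\cunit$, $\cinversefn$. Under this choice, the relations $\idRel$, $\invRel$ and $\assRel$ are all contained in $\Delta_G$, so the ``noise'' coming from $\mingrp$ in the proof of the proposition collapses and the generating relations from $\Delta_G\ast \CU_e(\CE)$ (or $\CU_e(\CE)\ast\Delta_G$) directly realize every controlled entourage, with no need to take finite unions, compositions, symmetries, or subsets repeatedly.

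Concretely, fix $E\in\CE$ and set
\[
 U\coloneqq \bigbraces{g_1^{-1}\ast g_2 \bigmid (g_1,g_2)\in E}\cup\{e\}.
\]
I first show $U\in\fkU_e(\CE)$. For every $(g_1,g_2)\in E$ we have, using that $g_1^{-1}\ast g_1 = e$ exactly, the relation $(e,g_1^{-1}\ast g_2)=(g_1^{-1}\ast g_1,g_1^{-1}\ast g_2)\in \Delta_G\ast E$. Hence every element of $U$ is $(\Delta_G\ast E\cup\Delta_G)$\=/related to $e$, and so
\[
 U\times U\;\subseteq\; \op{\paren{\Delta_G\ast E\cup\Delta_G}}\cmp\paren{\Delta_G\ast E\cup\Delta_G},
\]
which lies in $\CE$ because $\CE$ is equi left invariant and contains $E$. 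Thus $U$ is a $\CE$\=/bounded neighborhood of $e$.

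Next I show $E\subseteq \Delta_G\ast (U\times U)$. For $(g_1,g_2)\in E$ we have $(e,g_1^{-1}\ast g_2)\in U\times U$, and now \emph{using the exact identities} $g_1\ast e=g_1$ and $g_1\ast(g_1^{-1}\ast g_2)=g_2$ that hold in the set\=/group $G$, we conclude
\[
 (g_1,g_2)=(g_1\ast e,\; g_1\ast(g_1^{-1}\ast g_2))\in\Delta_G\ast (U\times U).
\]
Conversely, for any $U\in\fkU_e(\CE)$ the relation $U\times U$ lies in $\CE$, and by equi left invariance $\Delta_G\ast(U\times U)\in\CE$, whence every $E\subseteq\Delta_G\ast(U\times U)$ is in $\CE$. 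This proves the first equality, and the second is obtained by the symmetric argument using the set $U'\coloneqq\{g_2\ast g_1^{-1}\mid (g_1,g_2)\in E\}\cup\{e\}$ together with equi right invariance.

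The only subtle point is ensuring that no ``coarse'' fudging is needed at any step, which is exactly where the set\=/group hypothesis is used: it allows the identities $g^{-1}\ast g=e$ and $g\ast(g^{-1}\ast g')=g'$ to be applied on the nose rather than up to $\mingrp$, which is what makes the containment $E\subseteq\Delta_G\ast(U\times U)$ hold without needing to enlarge by compositions with relations from $\mingrp$. I do not foresee a serious obstacle beyond keeping track of this; the argument is essentially a stripped\=/down version of the proof of Proposition~\ref{prop:p1:neighbourhoods.of.identity.generate}.
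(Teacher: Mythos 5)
Your proof is correct and follows the paper's intended approach: the paper simply remarks that the proof of Proposition~\ref{prop:p1:neighbourhoods.of.identity.generate} yields the stronger statement when $G$ is a set\=/group, since the relations $\idRel,\assRel,\invRel$ collapse into $\Delta_G$ and the chain $g_2 = g_1\ast(g_1^{-1}\ast g_2) \torel{\Delta_G\ast(U\times U)} g_1\ast e = g_1$ becomes an exact containment $E\subseteq\Delta_G\ast(U\times U)$. You have written out precisely this specialization, including the verification that $U$ is bounded, so nothing is missing.
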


\begin{cor}\label{cor:p1:neighbourhoods.of.identity.generate_setgroup_part cover}
 Let $\crse G=(G,\CE)$ be a coarsified set\=/group. Then a partial covering $\fka$ is controlled if and only if it is a refinement of $\pts{G}\ast U=\braces{gU\mid g\in G}$ for some $U\in \fkU_e(\CE)$.
\end{cor}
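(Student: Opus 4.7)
The plan is to deduce this directly from Corollary~\ref{cor:p1:neighbourhoods.of.identity.generate_setgroup} by translating between relations and partial coverings via the $\diag$ construction. The key observation, which I would record first, is that by the definition of the product of relations,
\[
 \Delta_{G}\ast (U\times U)\;=\;\bigcup_{g\in G}(gU)\times(gU)\;=\;\diag(\pts{G}\ast U),
\]
so the previous corollary translates immediately into a statement about the partial coverings $\pts{G}\ast U$.

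For the ``if'' direction, suppose $\fka$ refines $\pts{G}\ast U$ for some $U\in\fkU_e(\CE)$. Then every $A\in\fka$ is contained in some $gU$, so $A\times A\subseteq (gU)\times(gU)$, whence $\diag(\fka)\subseteq \diag(\pts{G}\ast U) = \Delta_{G}\ast (U\times U)$. By Corollary~\ref{cor:p1:neighbourhoods.of.identity.generate_setgroup} this relation lies in $\CE$, so $\fka\in\fkC(\CE)$.

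For the ``only if'' direction, assume $\fka$ is $\CE$\=/controlled, so $\diag(\fka)\in\CE$. Corollary~\ref{cor:p1:neighbourhoods.of.identity.generate_setgroup} yields $U\in\fkU_e(\CE)$ with $\diag(\fka)\subseteq \Delta_{G}\ast(U\times U)$. Here one cannot immediately use the same $U$ as the witness in the target covering, because for $A\in\fka$ the element $g$ such that a given pair in $A$ lies in $gU$ may depend on the pair. To fix this, I would pick any $x_0\in A$; then for every $y\in A$ the pair $(x_0,y)\in \Delta_{G}\ast(U\times U)$ means $x_0=gu_0$ and $y=gu_1$ for some $g\in G$, $u_0,u_1\in U$, so $y = x_0 u_0^{-1}u_1 \in x_0\cdot(U^{-1}U)$. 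Thus $A\subseteq x_0 U'$ for $U'\coloneqq U^{-1}U\cup\{e\}$, and $\fka$ refines $\pts{G}\ast U'$.

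The only thing left is to check that $U'\in\fkU_e(\CE)$, i.e.\ that $U'$ is bounded; this is the ``main'' (but mild) obstacle, as it is the only place one uses that $\crse G$ is a coarse group. Since $U$ is bounded and $\cinversefn$ is controlled, $U^{-1}=\inversefn(U)$ is bounded; similarly, since $U^{-1}\times U$ is bounded in the product coarse structure $\CE\otimes\CE$ and $\cop$ is controlled, $U^{-1}U=\ast(U^{-1}\times U)$ is bounded (equivalently, apply Corollary~\ref{cor:p1:coarse.group.from.group.iff.equi controlled} to get $(U^{-1}\times U^{-1})\ast(U\times U)\in\CE$). Adding the point $e$ preserves boundedness, completing the proof.
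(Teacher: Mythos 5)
Your proof is correct and takes essentially the same approach as the paper: both directions are deduced from Corollary~\ref{cor:p1:neighbourhoods.of.identity.generate_setgroup}, and your argument for the ``only if'' direction (fix a basepoint $x_0\in A$ and show $A\subseteq x_0(U^{-1}U)$) matches the paper's argument verbatim, with $U=V^{-1}V$. Your extra checks — the identification $\Delta_G\ast(U\times U)=\diag(\pts{G}\ast U)$ and the verification that $U^{-1}U$ is bounded — are just making explicit steps the paper leaves implicit (the latter is already recorded as properties (U2)--(U3) in Lemma~\ref{lem:p1:properties of neighborhoods of id}).
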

\begin{proof}
 One implication is clear, because $\pts{G}\ast U$ is a controlled covering. For the other direction, if $\fka$ is controlled then there is an $V\in\fkU_e(\CE)$ so that $\diag(\fka)\subseteq \Delta_G\ast (V\times V)$. As a consequence, for every $A\in\fka$ and $a_1,a_2\in A$ there are $v_1,v_2\in V$ so that $a_2^{-1}a_1=v_2^{-1}v_1$. It follows that $A\subseteq a_2V^{-1}V$ and we may thus let $U=V^{-1}V$.
\end{proof}

\begin{rmk} 
It is interesting to compare Corollary~\ref{cor:p1:neighbourhoods.of.identity.generate} with Lemma~\ref{lem:p1:generated biinvariant coarse structure}. Specifically, it is a curious fact that if we already know that $\CU_e(\CE)$ is the family of relations obtained from the bounded neighborhoods of $e$ in a coarse group $(G,\CE)$ then the coarse structure $\varcrs[grp]{\CU_e(\CE)}$ is generated by $\mingrp$ and $\Delta_{G}\ast\CU_e(\CE)$ (as opposed to having to take $(\Delta_{G}\ast\CU_e(\CE))\ast\Delta_{G} $). As we shall soon see, this is related with the fact that the family of neighborhoods $\fkU_e(\CE)$ is closed under conjugation. 
\end{rmk}

Proposition~\ref{prop:p1:neighbourhoods.of.identity.generate} and its corollaries imply that any equi left invariant coarse structure $\CE$ containing $\mingrp$ is completely determined by its bounded \emph{sets} (as opposed to entourages). This fact makes it much easier to check whether two equi left invariant coarse structures on $G$ are equal. 

\begin{example}\label{exmp:p1: word metric coincides with finleft}
 Let $G$ be a finitely generated set\=/group with finite generating set $S$, and let $d_S$ be the associated word metric (Example~\ref{exmp:p1:abelian coarse groups_metric}). Bounded sets in $G$ are sets of finite diameter and, since $S$ is finite, this means that the bounded sets are precisely the finite subsets of $G$. 
It then follows from Corollary~\ref{cor:p1:neighbourhoods.of.identity.generate_setgroup} that $\CE_{d_S}$ does not depend on the choice of the finite generating set, $S$. 
 Moreover, let $\varcrs[left]{fin}$ be the minimal left invariant coarse structure so that all the finite sets are $\varcrs[left]{fin}$\=/bounded (see Remark~\ref{rmk:p1:generated leftinvariant coarse structure}). Since all the $\CE_{d_S}$\=/bounded sets are finite, it follows from Corollary~\ref{cor:p1:neighbourhoods.of.identity.generate_setgroup} that
 $\CE_d\subseteq\varcrs[left]{fin}$ and since $\varcrs[left]{fin}$ is minimal, the inclusion is an equality.  
\end{example}

In view of the above, it is important to understand the families $\fkU_e(\CE)$ of bounded neighborhoods of the identity, where $\CE$ ranges among the equi left invariant (resp. equi bi\=/invariant) coarse structures on a given set $G$ with operations $\ast,\ e,\ \inversefn$. 
The requirement that the point $e$ belongs to every set in $\fkU_e(\CE)$ complicates matters a little. In what follows, it will be convenient to assume that the operations $\ast,\ e,\ \inversefn$ are adapted.

\begin{lem}\label{lem:p1:properties of neighborhoods of id}
 Let $\ast,e,\inversefn$ be adapted operations on a set $G$ and let $\CE$ be an equi left invariant coarse structure such that $\mingrp\subseteq\CE$. Then the family $\fkU_e(\CE)$ satisfies
\begin{itemize}
 \item [(U0)] $e\in U$ for every $U\in \fkU_e(\CE)$;
 \item [(U1)] if $U\in \fkU_e(\CE)$ and $e\in V\subset U$ then $ V\in \fkU_e(\CE)$;
 \item [(U2)] if $U\in \fkU_e(\CE)$ then $U^{-1}\in \fkU_e(\CE)$;
 \item [(U3)] if $U_1,U_2\in \fkU_e(\CE)$ then $U_1\ast U_2\in \fkU_e(\CE)$.
\end{itemize}
If $\CE$ is also equi bi\=/invariant (\emph{i.e.}\ $\crse G=(G,\CE)$ is a coarse group), then $\fkU_e(\CE)$ also satisfies
\begin{itemize}
 \item [(U4)] if $U\in \fkU_e(\CE)$ then $\bigcup\braces{g\ast(U\ast g^{-1})\mid g\in G}\in \fkU_e(\CE)$. 
\end{itemize}
\end{lem}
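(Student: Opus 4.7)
Properties (U0) and (U1) are essentially definitional: (U0) is built into the definition of $\fkU_e(\CE)$, and for (U1) the relation $V\times V$ is a subset of $U\times U\in \CE$, hence belongs to $\CE$, so $V$ is bounded and contains $e$ by hypothesis. For the remaining properties my plan is to exploit a simple but very useful observation: if $B\subseteq G$ contains $e$, then $B$ is bounded (i.e.\ $B\times B\in\CE$) if and only if $B\times\{e\}\in\CE$. One direction is obvious, and the other follows from $B\times B\subseteq (B\times\{e\})\cmp \op{(B\times\{e\})}$. So for each of (U2), (U3), (U4) it suffices to exhibit a single controlled relation $E\in\CE$ such that every element of the candidate set is $E$\=/related to $e$.

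For (U2) and (U3), I will use only equi left invariance together with $\mingrp\subseteq\CE$. For (U2), given $u\in U$, the adapted axioms give $u^{-1}\ast u=e$ and $u^{-1}\ast e=u^{-1}$; since $u\torel{U\times\{e\}}e$, applying $\Delta_{G}\ast(U\times\{e\})\in\CE$ yields $e \torel{\Delta_{G}\ast(U\times\{e\})} u^{-1}$, so $U^{-1}\times\{e\}\in\CE$. For (U3), starting from $u_1\ast u_2$ with $u_i\in U_i$, I first pass to $u_1\ast e = u_1$ via $\Delta_{G}\ast(U_2\times\{e\})$, and then from $u_1$ to $e$ via $U_1\times\{e\}$; both relations lie in $\CE$, and their composition relates $u_1\ast u_2$ to $e$. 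Noting also that $e=e\ast e\in U_1\ast U_2$ by adaptedness concludes (U3).

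The main step is (U4), which is the one that genuinely uses equi bi\=/invariance and where I expect the technical bookkeeping of the parenthesizations to be the chief obstacle. Let $W\coloneqq \bigcup_{g\in G} g\ast(U\ast g^{-1})$. Adaptedness gives $e=g\ast(e\ast g^{-1})\in W$. For an arbitrary element $w=g\ast(u\ast g^{-1})\in W$ with $u\in U$, the plan is a two\=/step reduction to $e$: first use equi right invariance to push $u$ down to $e$, obtaining
\[
 u\ast g^{-1} \torel{(U\times\{e\})\ast\Delta_{G}} e\ast g^{-1}=g^{-1},
\]
and then use equi left invariance to pass to the conjugate level,
\[
 g\ast(u\ast g^{-1}) \torel{\Delta_{G}\ast((U\times\{e\})\ast\Delta_{G})} g\ast g^{-1}=e.
\]
The composed relation lies in $\CE$ because $\crse G$ is a coarse group, so $W\times\{e\}\in\CE$ and $W$ is a bounded neighborhood of $e$. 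This finishes (U4). The only real subtlety is ensuring at each step that the rebracketed expressions (e.g.\ $g\ast(u\ast g^{-1})$ versus $(g\ast u)\ast g^{-1}$) can be handled without loss because $\Delta_G\ast\assRel\in\mingrp\subseteq\CE$; since we have already absorbed the adapted identities $e\ast h=h=h\ast e$ and $g\ast g^{-1}=e$ exactly, no further associativity corrections are needed in the chains above.
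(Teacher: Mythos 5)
Your proof is correct and follows essentially the same strategy as the paper's: establish boundedness by exhibiting a single relation in $\CE$ linking every element of the candidate set to $e$, using equi left invariance for (U2)--(U3) and equi bi\=/invariance for (U4). The only cosmetic differences are that you work with $U\times\{e\}$ rather than $U\times U$, you prove (U3) by composing two relations instead of invoking (U2) with the relation $\Delta_G\ast(U_1^{-1}\times U_2)$, and for (U4) you argue in the relation formalism rather than the paper's star\=/neighborhood/covering formalism --- these are just the two equivalent dialects the paper itself encourages translating between.
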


Notice that (U4) is equivalent to (U4'): if $U\in \fkU_e(\CE)$ then $\bigcup\braces{(g\ast U)\ast g^{-1}\mid g\in G}\in \fkU_e(\CE)$. 

\begin{proof}
 Properties (U0) and (U1) are clear. 
 For (U2), fix $U\in\fkU_e(\CE)$. By definition, $e\in U$ and $\Delta_G\ast(U\times U)\in \CE$. Since the operations are adapted, it then follows that
 \[
  e\rel{\invRel} (g^{-1}\ast g)
  \rel{\Delta_G\ast(U\times U)} (g^{-1}\ast e) = g^{-1} \qquad \forall g\in U.
 \]
 This shows that $U^{-1}$ is $\CE$\=/bounded. Since $e\in U$ and the operation are adapted, $e=e^{-1}$ is in $U^{-1}$, hence $U^{-1}\in\fkU_e(\CE)$.
 
 For (U3), fix $U_1,U_2\in\fkU_e(\CE)$. By (U2), $U_1^{-1}$ is also in $U_2\in\fkU_e(\CE)$. Hence $\Delta_G\ast(U_1^{-1}\times U_2)\in \CE$. Since the operations are adapted, we have:
 \[
  e = (g_1\ast g_1^{-1})\torel{\Delta_G\ast(U_1^{-1}\times U_2)} (g_1\ast g_2) \qquad \forall g_1\in U_1,\ g_2\in U_2.
 \]
 This shows that $U_1\ast U_2$ is bounded, hence it is in $\fkU_e(\CE)$ because $e\ast e=e$.
 
 Assume now that $\CE$ is equi bi\=/invariant. To prove (U4), fix  $U\in\fkU_e(\CE)$ and notice that for every $g\in G$ the identity $e$ belongs to $g\ast(U\ast g^{-1})$ because the operations are adapted. Recall that $\pts{G}\ast U$ denotes the covering $\braces{g\ast U\mid g\in G}$. Since left and right multiplication are equi controlled, the covering $\pts{G}\ast(U\ast\pts{G})$ is controlled. It is then enough to notice that $\bigcup\braces{g\ast(U\ast g^{-1})\mid g\in G}$ is contained in the star\=/neighborhood $\st(\{e\}\mid \pts{G}\ast(U\ast\pts{G}))$.
\end{proof}

Recall that every normally finitely generated set\=/group admits a canonical coarse structure $\varcrs{bw}$ (see Definition~\ref{def:p1:canonical coarse structure}). Let $\varcrs[grp]{fin}$ denote the minimal equi bi\=/invariant coarse structure containing $\varcrs{fin}$. Using the above lemma it is simple to prove the following:

\begin{cor}[Example~\ref{exmp:p1:finite sets group coarse structure}]\label{cor:p1:canonical crse_str is E_grp_fin}
Let $G$ be a finitely normally generated set\=/group. Then
 \[
  \varcrs{bw}=\varcrs[grp]{fin}.
 \]
 In particular, $\varcrs{bw}$ does not depend on the choice of finite generating set $S$.
\end{cor}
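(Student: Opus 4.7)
The plan is to show the two inclusions $\varcrs[grp]{fin}\subseteq\varcrs{bw}$ and $\varcrs{bw}\subseteq\varcrs[grp]{fin}$ separately, using minimality for one direction and the ``determined locally'' Proposition~\ref{prop:p1:neighbourhoods.of.identity.generate} for the other. Throughout, since $G$ is a set\=/group we have $\mingrp=\mincrs$, so it is automatically contained in both coarse structures, and the group operations are adapted on the nose, so Lemma~\ref{lem:p1:properties of neighborhoods of id} applies.

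First, I would check that $\varcrs[grp]{fin}\subseteq\varcrs{bw}$. Since $d_{\overline{S}}$ is a bi\=/invariant metric, Lemma~\ref{lem:p1:biinvariant metrics give coarse groups} (equivalently, Corollary~\ref{cor:p1:coarse.group.from.group.iff.equi controlled}) says $\varcrs{bw}=\CE_{d_{\overline{S}}}$ is equi bi\=/invariant. Because $S$ normally generates $G$, every element of $G$ has finite $d_{\overline{S}}$\=/length, hence every finite subset of $G\times G$ has finite $d_{\overline{S}}$\=/diameter, which shows $\varcrs{fin}\subseteq\varcrs{bw}$. By the minimality defining $\varcrs[grp]{fin}$, this gives $\varcrs[grp]{fin}\subseteq\varcrs{bw}$.

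For the reverse inclusion, I invoke Proposition~\ref{prop:p1:neighbourhoods.of.identity.generate}: both coarse structures are equi (bi\=/)invariant and contain $\mingrp$, so it is enough to check that every $\varcrs{bw}$\=/bounded neighborhood of $e$ is also $\varcrs[grp]{fin}$\=/bounded. A $\varcrs{bw}$\=/bounded set is a subset of some ball $B_n\coloneqq\{g\in G\mid \abs{g}_{\overline S}\leq n\}$, so it suffices to show that each $B_n$ lies in $\fkU_e(\varcrs[grp]{fin})$. Starting from the finite symmetric set $V\coloneqq S\cup S^{-1}\cup\{e\}$, which is $\varcrs{fin}$\=/bounded and hence in $\fkU_e(\varcrs[grp]{fin})$, property (U4) of Lemma~\ref{lem:p1:properties of neighborhoods of id} gives
\[
\overline{V}=\bigcup_{g\in G}gVg^{-1}=\overline{S}\cup\overline{S}^{-1}\cup\{e\}\in\fkU_e(\varcrs[grp]{fin}),
\]
and then property (U3) applied $n$ times produces $\overline{V}^{\,n}=B_n\in\fkU_e(\varcrs[grp]{fin})$.

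Combining both inclusions yields $\varcrs{bw}=\varcrs[grp]{fin}$, and since the right-hand side manifestly does not depend on the choice of finite normally generating set $S$, neither does $\varcrs{bw}$. I do not expect any serious obstacles: the only subtle point is making sure Proposition~\ref{prop:p1:neighbourhoods.of.identity.generate} genuinely applies (which requires $\mingrp\subseteq\varcrs[grp]{fin}$, automatic here) and that (U4) is available (which needs equi \emph{bi}\=/invariance, again satisfied for both structures). The whole argument illustrates very cleanly how ``being determined locally'' reduces an identity of coarse structures to a comparison of bounded sets that can be verified by purely algebraic closure properties.
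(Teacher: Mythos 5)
Your proof is correct and follows essentially the same route as the paper: one inclusion comes from the minimality defining $\varcrs[grp]{fin}$ together with the fact that $\CE_{d_{\overline{S}}}$ is an equi bi\=/invariant coarse structure in which finite sets are bounded, and the other comes from Proposition~\ref{prop:p1:neighbourhoods.of.identity.generate} once the closure properties (U3)--(U4) of Lemma~\ref{lem:p1:properties of neighborhoods of id} show that the balls $B_n = \overline{V}^{\,n}$ are $\varcrs[grp]{fin}$\=/bounded. You simply spell out both inclusions explicitly where the paper compresses the minimality direction into a final phrase; the content is identical.
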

\begin{proof}
 Let $S$ be a finite normally generating set and let $d_{\overline{S}}$ be the bi\=/invariant word\=/metric associated with $S$, so that $\CE_{d_{\overline{S}}}$ is equal to the canonical coarse structure $\varcrs{bw}$. We may assume that $e\in S$, then the $\CE_{d_{\overline{S}}}$\=/bounded neighborhoods of $e$ are contained in sets of the form $\overline{S}^\pm\ast\cdots\ast\overline{S}^\pm$, where $\overline{S}^\pm$ is the union of all the conjugates of $S$ and $S^{-1}$.
 
 Since $S$ is finite, it is a $\varcrs{fin}$\=/bounded neighborhood of $e$. It then follows from Lemma~\ref{lem:p1:properties of neighborhoods of id} that every $\CE_{d_{\overline{S}}}$\=/bounded neighborhood of $e$ is $\varcrs[grp]{fin}$\=/bounded. Then Proposition~\ref{prop:p1:neighbourhoods.of.identity.generate} implies  $\CE_{d_{\overline S}}\subseteq\varcrs[grp]{fin}$, and by minimality, these two coarse structures must be equal. 
\end{proof}

For set\=/groups, conditions (U0)--(U4) completely determine the families of subsets that can arise as bounded neighborhoods of the identity for some coarsification of $G$. Namely, the following is true:

\begin{prop}\label{prop:p1:families of identity neighbourhoods_setgroups}
 Let $G$ be a set\=/group and let $\fku$ be a partial cover satisfying (U0)--(U3). Then there exists a unique equi left invariant coarse structure $\varcrs[left]{\fku}$ such that 
 \(
  \fku=\fkU_e\bigparen{\varcrs[left]{\fku}}.
 \)
 If $\fku$ also satisfies (U4) then $\varcrs[left]{\fku}$ is equi bi\=/invariant and hence $(G,\varcrs[left]{\fku})$ is a coarse group (we may thus write $\varcrs[left]{\fku}=\varcrs[grp]{\fku}$).
\end{prop}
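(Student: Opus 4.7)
The plan is to explicitly construct $\varcrs[left]{\fku}$ and verify its properties directly. Uniqueness is immediate from Proposition~\ref{prop:p1:neighbourhoods.of.identity.generate}: since $G$ is a set\=/group we have $\mingrp=\mincrs$, so any equi left invariant $\CE$ is determined by $\fkU_e(\CE)$. For existence, I define
\[
 \varcrs[left]{\fku}\coloneqq\bigbraces{E\subseteq G\times G\mid \exists U\in\fku,\ E\subseteq\Delta_G\ast(U\times U)}.
\]
Closure under subsets is built in, and equi left invariance follows because $\Delta_G\ast\Delta_G=\Delta_G$ in a set\=/group, so $\Delta_G\ast\bigparen{\Delta_G\ast(U\times U)}=\Delta_G\ast(U\times U)$. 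The diagonal belongs to $\varcrs[left]{\fku}$ because (U0) gives $e\in U$ and hence $(g,g)=(g\ast e,g\ast e)\in\Delta_G\ast(U\times U)$. Symmetry of $\Delta_G\ast(U\times U)$ in the two coordinates yields closure under $\op{(\mhyphen)}$. Closure under finite unions reduces to the observation that $U_1\cup U_2\subseteq U_1\ast U_2\in\fku$ by (U3), hence $U_1\cup U_2\in\fku$ by (U1).

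The only delicate axiom is closure under composition. Given $E_i\subseteq\Delta_G\ast(U_i\times U_i)$ and $(x,y)\in E_1\cmp E_2$, write $x=g_1\ast u_1$, $z=g_1\ast u_1'=g_2\ast u_2$ and $y=g_2\ast u_2'$, so that
\[
 x=g_2\ast u_2\ast (u_1')^{-1}\ast u_1,\qquad y=g_2\ast u_2'.
\]
Setting $V\coloneqq U_2\ast U_1^{-1}\ast U_1$, which lies in $\fku$ by (U2) and (U3), and observing that $u_2'=u_2'\ast e\ast e\in V$ since $e\in U_1$, we conclude $(x,y)\in\Delta_G\ast(V\times V)$.

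Next I would verify $\fkU_e(\varcrs[left]{\fku})=\fku$. Any $U\in\fku$ contains $e$ and satisfies $U\times U\subseteq\Delta_G\ast(U\times U)$, so $U\in\fkU_e(\varcrs[left]{\fku})$. Conversely, if $U\in\fkU_e(\varcrs[left]{\fku})$, then $U\times U\subseteq\Delta_G\ast(V\times V)$ for some $V\in\fku$. Pairing $e\in U$ with arbitrary $u\in U$ gives $(e,u)=(g\ast v_1,g\ast v_2)$, whence $u=v_1^{-1}\ast v_2\in V^{-1}\ast V$. Thus $U\subseteq V^{-1}\ast V\in\fku$, and (U1) with $e\in U$ forces $U\in\fku$.

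Finally, assume (U4). To upgrade to equi bi\=/invariance I need $E\ast\Delta_G\in\varcrs[left]{\fku}$ for every $E\in\varcrs[left]{\fku}$. Writing $E\subseteq\Delta_G\ast(U\times U)$ and using the rewrite $g\ast u\ast h=(g\ast h)\ast(h^{-1}\ast u\ast h)$, one obtains $E\ast\Delta_G\subseteq\Delta_G\ast(V\times V)$ with $V\coloneqq\bigcup_{h\in G}h^{-1}\ast(U\ast h)$, which is precisely the set produced by the variant (U4$'$) of (U4) noted after the statement of Lemma~\ref{lem:p1:properties of neighborhoods of id}. Hence $V\in\fku$ and $\varcrs[left]{\fku}$ is equi bi\=/invariant, so $(G,\varcrs[left]{\fku})$ is a coarse group. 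The hardest part will be the composition step: one must keep careful track of where each axiom (U0)--(U3) is invoked when reassembling the auxiliary set $V$ and checking that it both lies in $\fku$ and is large enough to capture the composed relation.
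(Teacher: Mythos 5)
Your proof is correct, and it is logically the same argument as the paper's, only expressed in the dual language of entourages rather than controlled coverings. The paper defines $\fkC_\fku$ as the family of refinements of coverings $\pts{G}\ast U$ and then invokes Proposition~\ref{prop:p1:crse structures vs ctrl coverings} to pass to a coarse structure; the crucial check is that the star $\st(U_A,\pts{G}\ast U_B)$ lands inside $U_A\ast U_B^{-1}\ast U_B$, which is an element of $\fku$ by (U2)+(U3). Your version bypasses the covering formalism entirely by declaring $\varcrs[left]{\fku}=\{E\mid E\subseteq\Delta_G\ast(U\times U),\ U\in\fku\}$ and verifying Roe's axioms directly; the crucial check is the closure under composition, where exactly the same (U2)+(U3) bookkeeping produces the auxiliary set $V=U_2\ast U_1^{-1}\ast U_1$. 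What the paper's route buys is that (C0)--(C2) are mildly lighter to state (the star operation already bundles the symmetry and diagonal checks), which is why the authors remark the covering language is ``slightly easier'' here; what your route buys is self-containedness, since you never need the coverings-vs-structures correspondence. Your handling of the (U4) upgrade via the conjugation rewrite $g\ast u\ast h=(g\ast h)\ast(h^{-1}\ast u\ast h)$ matches the paper's final paragraph essentially verbatim. All the steps are sound; in particular the composition and the identification $\fkU_e(\varcrs[left]{\fku})=\fku$ both check out.
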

\begin{proof}
The uniqueness part of the statement follows immediately from Proposition~\ref{prop:p1:neighbourhoods.of.identity.generate}, it thus remains to show the existence. This is slightly easier to prove in terms of controlled coverings. 
Let 
\[
 \fkC_\fku\coloneqq\braces{\fka\mid \exists  U\in\fku,\ \fka \text{ is a refinement of }\pts{G}\ast U}.
\]
We claim that if $A\subseteq G$ is so that $e\in A$ and $\{A\}\in \fkC_\fku$, then $A\in\fku$. By definition of $\fkC_\fku$, if $\{A\}\in \fkC_\fku$ then $A$ must be contained in $g\ast U$ for some $U\in\fku$. Since $e\in A$ and $e\in U$, the couple $\{e,g^{-1}\}$ is contained in $U$ and it is hence in $\fku$ by (U1). By (U3), $\{e,g\}$ is also in $\fku$. By (U2), the product $\{e,g\}\ast U=U\cup g\ast U$ is in $\fku$ as well. Since $A\subseteq g\ast U$, we see that $A$ is in $\fku$ by (U1).

All it remains to show is that $\fkC_\fku$ is the family of controlled partial coverings of some equi left invariant coarse structure $\CE$ on $G$. In fact, it would then follow from the paragraph above that $\fku$ is the set of $\CE$\=/bounded neighborhoods of the identity and we can hence let $\varcrs[left]{\fku}=\CE$.
By Proposition~\ref{prop:p1:crse structures vs ctrl coverings}, $\fkC_\fku=\fkC(\CE)$ for some coarse structure $\CE$ if and only if it satisfies
\begin{itemize}
 \item [(C0)] if $\fka\in\fkC_\fku$ then $\fka\cup\pts{X}\in\fkC_\fku$;
 \item [(C1)] if $\fka\in\fkC_\fku$ and $\fka'$ is a refinement of $\fka$, then $\fka'\in\fkC_\fku$;
 \item [(C2)] given $\fka,\fkb\in\fkC_\fku$, then $\st(\fka,\fkb)\in\fkC_\fku$.
\end{itemize}
Since the first two conditions are trivially satisfied, it is enough to check (C2).

Fix $\fka,\fkb\in\fkC_\fku$. Enlarging them if necessary, we may assume that they are coverings of the form $\pts{G}\ast U_A$, $\pts{G}\ast U_B$ for some $U_A,U_B\in\fku$. We need to show that $\st(\pts{G}\ast U_A,\pts{G}\ast U_B)$ is in $\fkC_\fku$. For every fixed $g\in G$,
\begin{align*}
 \st(g\ast U_A,\pts{G}\ast U_B) = g\ast \st(U_A,\pts{G}\ast U_B).
\end{align*}
It is hence enough to verify that $\st(U_A,\pts{G}\ast U_B)\in\fku$. Note that
\begin{align*}
 \st(U_A,\pts{G}\ast U_B)= \bigcup_{h\in U_A}\braces{g\ast U_B\mid h\in g\ast U_B}.
\end{align*}
If $h\in g\ast U_B$, then $g^{-1}\ast h\in U_B$ and therefore $g^{-1}\in U_B\ast U_A^{-1}$. It follows that $\st(U_A,\pts{G}\ast U_B)$ is contained in $U_A\ast U_B^{-1}\ast U_B$, which is in $\fku$ by (U2) and (U3).

We now know that $\fkC_\fku=\fkC(\CE)$ for some coarse structure $\CE$, and $\CE$ is equi left invariant because any $\fka\in\fkC(\CE)$ is a refinement of some $\pts{G}\ast U$, and therefore $\pts{G}\ast\fka$ is a refinement of $\pts{G}\ast(\pts{G}\ast U)=(\pts{G}\ast\pts{G})\ast U=\pts{G}\ast U$.

Let us now show that if $\fku$ also satisfies (U4) then the coarse structure $\CE$ above is equi bi\=/invariant. It is enough to verify that for every $U\in\fku$ the covering $(\pts{G}\ast U)\ast\pts{G}$ is in $\fkC_\fku$. This is readily done. Since
\[
 g_1 \ast U\ast g_2 = (g_1g_2) \ast g_2^{-1} \ast U\ast g_2
 \subseteq (g_1g_2) \ast \Bigparen{\bigcup\braces{g^{-1} \ast U\ast g\mid g\in G}},
\]
letting $\overline{U} \coloneqq\bigcup\braces{g^{-1} \ast U\ast g\mid g\in G}$ we see that $(\pts{G}\ast U)\ast\pts{G}$ is a refinement of $\pts{G}\ast\overline{U}$. The latter belongs to $\fkC_\fku$ by (U4).
\end{proof}

\begin{rmk}
Instead of the ``family of bounded neighborhoods of the identity'',  the bornology $\fkB$ of bounded sets can be used (see Definition~\ref{def:appendix:bornology}).
 However, this formalism only works well for connected coarse structures, because otherwise it is not necessarily true that for every $B_1,B_2\in\fkB$ the union $B_1\cup B_2$ is still in $\fkB$.
 In turn, it is awkward to correctly formulate analogs of (U0)--(U4) such that Proposition~\ref{prop:p1:families of identity neighbourhoods_setgroups} remains true.
 In fact, the unit $e$ is used in a key way, and if there exist $B\in\fkB$ with $e\notin B$ and $\{e\}\cup B\notin \fkB$ then the proof does not follow through.
\end{rmk}

There are instances where the easiest way define a coarsification of set\=/group $G$ is by specifying its bounded neighborhoods of the identity. 
Namely, if we are given a family $\fkU$ of subsets of $G$ satisfying (U0)--(U4), we can immediately apply Proposition~\ref{prop:p1:families of identity neighbourhoods_setgroups} to obtain a well\=/defined equi bi\=/invariant coarse structure $\varcrs[grp]{\fkU}$ on $G$.
Also note that if $G$ is abelian then condition (U4) is always satisfied. This makes it very easy to define coarsifications of abelian set\=/groups.

\begin{exmp}\label{exmp:p1:topological coarse structure abelian}
 Let $G$ be an abelian topological group. Then family $\fkK_e$ of relatively compact subsets of $G$ containing the identity element satisfies the conditions (U0)--(U4) of Section~\ref{sec:p1:determined locally} (the product of two compact sets is compact because the continuous map $\ast\colon G\times G\to G$ sends compact sets to compact sets).
 It follows from Proposition~\ref{prop:p1:families of identity neighbourhoods_setgroups} there exists a unique equi bi\=/invariant coarse structure $\varcrs[grp]{\fkK_e}$ whose set of bounded neighborhoods of the identity $\fkU_e(\varcrs[left]{cpt})$ is $\fkK_e$. We shall denote the coarse structure $\varcrs[grp]{\fkK_e}$ by $\varcrs[grp]{cpt}$.
 Explicitly, we have
\begin{equation}\label{eq:p1:controlled in E_grp_cpt}
 \varcrs[grp]{cpt} = \braces{E\subseteq G\times G\mid \exists K\subseteq G\text{ compact s.t. }\forall(g_1,g_2)\in E,\ g_2^{-1}g_1\in K}
\end{equation}
 The $\varcrs[grp]{cpt}$\=/controlled partial coverings are refinements of coverings of the form $\pts{G}\ast K$ with $K\in\fkK_e$.
 
 As a special case, if $G$ is a discrete abelian topological groups then the compact subsets are finite and thus $\varcrs[grp]{cpt}$ is equal to $\varcrs[grp]{fin}$.
\end{exmp}

\begin{rmk}
 If $G$ is not abelian, one may still define a coarse structure $\varcrs[grp]{cpt}$ as we did for $\varcrs[grp]{fin}$.
 Namely, by taking the minimal equi bi\=/invariant coarse structure so that compact sets are bounded.
 However, this coarse structure may have many non relatively compact bounded sets and it does not have a simple description like \eqref{eq:p1:controlled in E_grp_cpt}. This makes it fairly complicated to understand in general.
\end{rmk}

\section{Summary: A Concrete Description of Coarse Groups}
\label{ssec:p1:recap on crse grps}

Putting together the criteria we proved this far, we may explicitly characterise coarse groups as follows.

\begin{thm}\label{thm:p1:concrete coarse groups}
 Let $(G,\CE)$ be a coarse space, $\ast\colon G\times G\to G$ a binary law, $\inversefn\colon G\to G$ a function and $e\in G$ an element,
 then $(G,\CE,\cop,\cinversefn,\cunit)$ is a coarse group if and only if the following conditions are satisfied:
 \[\def\arraystretch{1.5}
 \begin{array}{lcc}
  &
  g_1 \ast (g_2\ast g_3) \rel{\CE} (g_1\ast g_2)\ast g_3
  & \qquad \forall g_1,g_2,g_3\in G
   \\
  &
   e\ast g \rel{\CE} g\rel{\CE} g\ast e
   & \qquad \forall g\in G
  \\
  &
  g\ast g^{-1}\rel{\CE} e\rel{\CE} g^{-1}\ast g
  & \qquad \forall g\in G.
  \\
 \text{and for every }E\in \CE & &
 \\
  &
  x\ast y\torel{\CE} x\ast y'
  &\qquad \forall g_1,\in G,\forall g_2\torel{E}g_2'
  \\
  &
  x\ast y\torel{\CE} x'\ast y
  &\qquad \forall g_1\torel{E}g_1',\ \forall g_2\in G.
 \end{array}
 \]
\end{thm}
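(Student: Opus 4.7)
The plan is to assemble this characterization directly from the machinery already developed in the excerpt, since each of the five listed conditions corresponds precisely to one ingredient needed for $(G,\CE,\cop,\cinversefn,\cunit)$ to be a group object in \Cat{Coarse}.

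First I would observe that the first three displayed conditions are verbatim the output of Lemma~\ref{lem:p1:hearts.diagrams on sets}: they are equivalent, once representatives $\ast,e,\inversefn$ are fixed, to the commutativity up to closeness of the (Associativity), (Identity), and (Inverse) diagrams in \Cat{Coarse}. Next, I would note that the last two conditions say precisely that the family of left multiplications ${}_g\ast\colon (G,\CE)\to (G,\CE)$ and the family of right multiplications $\ast_g\colon (G,\CE)\to(G,\CE)$ are each equi controlled (in the sense of Definition~\ref{def:p1:equi controlled}, using Convention~\ref{conv:p1:x_torel_CE}). By Lemma~\ref{lem:p1:equi controlled.sections.iff.controlled}, this is in turn equivalent to $\ast\colon (G,\CE)\times (G,\CE)\to (G,\CE)$ being a controlled map.

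For the forward direction, suppose $(G,\CE,\cop,\cinversefn,\cunit)$ is a coarse group. Then the Group Diagrams of Definition~\ref{def:p1:group.object} commute in \Cat{Coarse}, which, by Lemma~\ref{lem:p1:hearts.diagrams on sets}, yields the three algebraic conditions. Moreover $\ast$ is a controlled function by the very definition of coarse group, so Lemma~\ref{lem:p1:equi controlled.sections.iff.controlled} delivers the two equi control conditions.

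For the converse direction, assume all five conditions. Lemma~\ref{lem:p1:hearts.diagrams on sets} gives commutativity up to closeness of the Group Diagrams for the chosen representatives, and Lemma~\ref{lem:p1:equi controlled.sections.iff.controlled} ensures $\ast$ is controlled. Then Proposition~\ref{prop:p1:coarse.group.iff.heartsuit.and.equi controlled} applies: the control of $\ast$ together with the coarse commutativity of the diagrams automatically forces $\inversefn$ to be controlled as well. Thus $\cop$ and $\cinversefn$ are well\=/defined coarse maps making the Group Diagrams commute in \Cat{Coarse}, so $(G,\CE,\cop,\cinversefn,\cunit)$ is a coarse group. There is no real obstacle here—the content of the theorem is simply that the pieces have already been proven separately; the only thing to check is that no further hypothesis on $\CE$, $\ast$, $e$, or $\inversefn$ beyond the listed five is hidden in the definition of a group object in \Cat{Coarse}, and this is clear from the preceding sections.
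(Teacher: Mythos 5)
Your proof is correct and is exactly the assembly the paper intends: the section is explicitly titled a ``Summary,'' and the paper gives no proof beyond the remark ``putting together the criteria we proved this far.'' Your decomposition into Lemma~\ref{lem:p1:hearts.diagrams on sets} (the three algebraic conditions), Lemma~\ref{lem:p1:equi controlled.sections.iff.controlled} applied to the left/right multiplication families (the two equi-control conditions), and Proposition~\ref{prop:p1:coarse.group.iff.heartsuit.and.equi controlled} for the automatic control of $\inversefn$ in the converse direction is precisely the argument the statement is meant to summarize.
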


In concrete cases---especially when $(G,\ast)$ is a set\=/group---it is often convenient to verify that the bounded neighbourhoods of the identity satisfy properties (U0)--(U4), see Lemma~\ref{lem:p1:properties of neighborhoods of id} and Proposition~\ref{prop:p1:families of identity neighbourhoods_setgroups}.

\chapter{Coarse Homomorphisms, Subgroups and Quotients} 
\label{ch:p1:coarse.subgroups.and.quotients}

\section{Definitions and Examples of Coarse Homomorphisms}\label{sec:p1:coarse homomorphisms_1}
Let $\crse G=(G,\CE)$ and $\crse H=(H,\CF)$ be coarse groups with coarse multiplication $\cop_{\crse G}$ and $\cop_{\crse H}$ respectively.  A coarse homomorphism from $\crse G$ to $\crse H$ is a coarse map that is compatible with the coarse multiplication:

\begin{de}\label{def:p1:coarse homomorphism}
 A \emph{coarse homomorphism}\index{coarse!homomorphism} is a coarse map $\crse f\colon \crse G\to\crse H$ such that the following diagram commutes:  
\[ \begin{tikzcd} 
    \crse{G}\times \crse{G} \arrow[r, "\crse f\times \crse f"] \arrow[d, "\cop_{\crse G}"] & \crse{H}\times \crse{H}      \arrow[d, "\cop_{\crse H}"]\\
    \crse G \arrow[r, "\crse f"]& \crse H . 
\end{tikzcd} 
\]
We denote the set of coarse \emph{homomorphisms} by $\chom(\crse{G},\crse{H})$\nomenclature[:MOR]{$\chom(\crse{G},\crse{H})$}{set of coarse homomorphisms of $\crse G$ into $\crse H$ } 
(this should not be confused with the set $\cmor(\crse{G},\crse{H})$ of all the coarse \emph{maps} between the coarse spaces $\crse G$ and $\crse H$).
\end{de}

After fixing representatives, a controlled function $f\colon G\to H$ gives rise to a coarse homomorphism $\crse f \colon \crse{G}\to \crse H$ if and only if $f(x\ast_{G}y)$ and $f(x)\ast_H f(y)$ are uniformly close (\emph{i.e.}\ there is an $F\in\CF$ such that $f(x\ast_{G}y)\rel{F} f(x)\ast_H f(y)$ for every $x,y\in G$).

\begin{example}
 If $f\colon G\to H$ is a function sending $G$ into a bounded neighborhood of $e_H$, then $\crse f$ is a coarse homomorphism. Notice that $\crse f$ is nothing but the trivial coarse homomorphism $\crse f= \cunit_{\crse H}$.
 
 A second trivial source of examples is obtained by considering set\=/homomorphisms between coarsified set\=/groups. Namely, if $G$ and $H$ are set\=/groups equipped with equi bi\=/invariant coarse structures $\CE$ and $\CF$, then a homomorphism of set\=/groups $f\colon G\to H$ defines a coarse homomorphism if and only if $f\colon (G,\CE)\to (H,\CF)$ is a controlled function.
 
 More generally, if $\crse G$ and $\crse H$ are coarsified set groups and $f'\colon G \to H$ is a function that is close to a controlled homomorphism of set\=/groups $f\colon G \to H$, then $\crse f'$ trivially defines a coarse homomorphism because $\crse{f'=f}$.
\end{example}

\begin{rmk}\label{rmk:p1:checking set_homs are controlled}
 Given coarsified set groups $(G,\CE)$ and $(H,\CF)$, it is simple to verify whether a homomorphism of set\=/groups $f\colon G\to H$ is controlled. By Corollary~\ref{cor:p1:neighbourhoods.of.identity.generate_setgroup}, the coarse structures $\CE$ and $\CF$ are equal to $\angles{\Delta_{G}\ast \CU_e(\CE)}$ and $\angles{\Delta_{G}\ast \CU_e(\CF)}$ respectively. Therefore, $f$ is controlled if and only if for every  $\CE$\=/bounded neighborhood $U$ of $e_{G}$ there is a $\CF$\=/bounded neighborhood $V$ of $e_H$ so that 
 \[
  f\times f\bigparen{\Delta_G\ast(U\times U)}\subseteq \Delta_H\ast(V\times V).
 \]
 Since $f$ is a homomorphism of set\=/groups, $f\times f(\Delta_{G}\ast (U\times U))$ is contained in $\Delta_H\ast (f(U)\times f (U))$. It follows that $f$ is controlled if and only if it sends bounded neighborhoods of $e_{G}$ into bounded neighborhoods of $e_H$. We will later see that similar observations apply to more general settings (Proposition~\ref{prop:p1:bornologous map is coarse hom})
\end{rmk}

\begin{rmk}
 If $(G,\mincrs)$ and $(H,\mincrs)$ are trivially coarse groups, then $\crse{f\colon G\to H}$ is a coarse homomorphism if and only if $f$ is a homomorphism of set\=/groups.
\end{rmk}

More interesting example of coarse homomorphisms are given by $\RR$\=/quasimorphisms of set\=/groups.

\begin{de}\label{def:p1:quasimorphism}
 Let $G$ be a set\=/group. A function $\phi\colon G\to\RR$ is a \emph{$\RR$\=/quasimorphism}\index{quasimorphism!$\RR$} of $G$ if there is a constant $D$ such that $\abs{\phi(xy)-\phi(x)-\phi(y)}\leq D$ for every $x,y\in G$. The \emph{defect}\index{defect} of $\phi$ is 
 \[
    D(\phi)  \coloneqq \sup_{g,h\in G}\abs{\phi(gh)-\phi(g)-\phi(h)}.
 \]
\end{de}

Of course, any function that is close to a homomorphism of set\=/groups $G\to \RR$ is an $\RR$\=/quasimorphism. On the other hand, there are many known constructions of $\RR$\=/quasimorphisms that are not close to any set\=/group homomorphism.\footnote{%
For any finitely generated set\=/group $G$, the space of $\RR$\=/quasimorphisms that are not close to set\=/group homomorphism can be naturally identified with the kernel of the comparison map $H^2_b(G;\RR)\to H^2(G,\RR)$ from bounded to ordinary group cohomology. See \emph{e.g.}\ \cite{calegari2009scl,frigerio2017bounded} for more on this beautiful subject.
}
The easiest examples to describe are some $\RR$\=/quasimorphism of the free group:

\begin{exmp}\label{exmp:p1:brooks_quasimorphisms}  
 Let $F_2=\angles{a,b}$ be the non\=/abelian free group. Given any reduced word $w$ in $\{a,b,a^{-1},b^{-1}\}$, we consider the counting function $C_w\colon G\to \NN$ assigning to an element $g\in G$ the number of times that the word $w$ appears in the reduced word representing $g$. The associated \emph{Brooks quasimorphism} is defined as $\phi_w(g)\coloneqq C_w(g)-C_{w^{-1}}(g)$. It is elementary to verify that $\phi_w$ is indeed a $\RR$\=/quasimorphism and we shall soon see that for ``most'' choices of $w$ the resulting $\phi_w$ is not close to any homomorphism of set\=/groups \cite{brooks1981some,mitsumatsu1984bounded}. For instance, let $w=ab$. If $\phi_{ab}$ was within distance $C$ from a homomorphism of set\=/groups $\bar \phi\colon F_2\to \RR$ then we would see that 
 \[
  C\geq \abs{\phi_{ab}((ab)^n)-\bar\phi((ab)^n)}=\abs{n(1-\bar\phi(ab))}
 \]
 for every $n\in\NN$. From this we deduce that $\bar\phi(ab)=1$. However, the same argument would also show that $\bar\phi(a)=\bar\phi(b)=0$, hence $\bar \phi$ cannot be a homomorphism.
  
 A simple variation on the above construction is obtained replacing $C_w$ by the counting function $c_w$ that associates with $g\in F_2$ the maximal number of non\=/overlapping copies of $w$ in the reduced word representing $g$. A rather different construction goes as follows. Given a bounded function $\alpha\colon \ZZ\to \RR$ so that $\alpha(-k)=-\alpha(k)$, we define a function $\phi_\alpha\colon F_2\to\RR$ by
 \[
  \phi_\alpha(s_1^{k_1}s_2^{k_2}\cdots s_n^{k_n})\coloneqq\sum_{i=1}^n\alpha(k_i),
 \]
 where each $s_i$ is one of $a,b,a^{-1},b^{-1}$, the word $s_1^{k_1}s_2^{k_2}\cdots s_n^{k_n}$ is reduced and $s_i\neq s_{i+1}$ for every $i=1,\ldots,n-1$. The function $\phi_\alpha$ is a $\RR$\=/quasimorphism \cite{rolli2009quasi}.
\end{exmp}

We already remarked that $(\RR,\varcrs{\abs{\mhyphen}})$ is a coarse group, where $\varcrs{\abs{\mhyphen}}$ is the metric coarse structure associated with the Euclidean norm. Notice that if $\crse G=(G,\CE)$ is a coarsified set\=/group and $\phi\colon(G,\CE)\to (\RR,\varcrs{\abs{\mhyphen}})$ is a controlled map, then $\phi$ is a coarse homomorphism if and only if it is an $\RR$\=/quasimorphism. In other words, the set of coarse homomorphisms $\crse G\to(\RR,\varcrs{\abs{\mhyphen}})$ is naturally identified with the set of equivalence classes of controlled $\RR$\=/quasimorphisms
\[
 \chom\bigparen{\crse G,(\RR,\varcrs{\abs{\mhyphen}}) }\longleftrightarrow \braces{\phi\colon G\to \RR\mid \text{ controlled $\RR$\=/quasimorphism}}/_\closefn.
\]

It is immediate to observe that Brooks quasimorphisms $(F_2,\varcrs{bw})\to(\RR,\varcrs{\abs{\mhyphen}})$ are controlled (this is in fact true for every $\RR$\=/quasimorphism of normally finitely generated groups, Corollary~\ref{cor:p1:quasimorphisms are controlled}). It then follows from Example~\ref{exmp:p1:brooks_quasimorphisms} that $[\phi_{ab}]\colon (F_2,\varcrs{bw})\to(\RR,\varcrs{\abs{\mhyphen}})$ is a coarse homomorphism that does not admit any homomorphisms of set\=/groups as a representative.

Building on similar ideas, one can construct various examples of coarse homomorphisms of coarsified set\=/groups.  
Below is an interesting construction of a coarse homomorphism $\crse f \colon (F_2,\varcrs{bw})\to (F_2,\varcrs{bw})$ given by Fujiwara--Kapovich \cite[Section 9.2]{FK}.

\begin{exmp}    
Let $F_2=\angles{a,b}$. Following \cite{FK}, choose two ``appropriate"  words, say $u = a^2 b a^2$ and $v = b^2 a b^2$ and let $H\cong F_2$ be the subgroup generated by $u,v$. Define $f_{u,v}\colon F_2 \to H$ letting 
\( f_{u,v} (w) =  t_1 \cdots t_n  \), 
where $t_i \in \{u, u^{-1}, v, v^{-1} \}$ are substrings of the reduced word $w$ sorted in order of appearance. For example, with the choices of $u,v$ above we see:
\[
f_{u,v}( a^2b a^2b a^2b^2ab^2a^2) = uuv \quad \text{ and }\quad f_{u,v}(ba^{-2}b^{-1}a^{-2}b^2ab^{-2}a^{-1}b^{-2})=u^{-1}v^{-1}. 
\]

Notice that our choice of $u$ and $v$ freely generate $H$, so that $H\cong F_2=\angles{u,v}$. We can hence see $f_{u,v}$ as a function $f_{u,v} \colon (F_2,\varcrs{bw})\to (F_2,\varcrs{bw})$ and it is simple to verify that $f_{u,v}$ is a coarse homomorphism (see also Proposition~\ref{prop:p1:bornologous map is coarse hom}).  We further claim that $f_{u,v}$ is not close to a set\=/group homomorphism. Notice that $f_{u,v}\colon F_2\to H$ acts as the identity on the cyclic subgroups $\langle u \rangle$ and $\langle v \rangle$.  So the image of $f_{u,v}$ is unbounded in $(H,\varcrs{bw})$.  However, $f_{u,v}$ sends the subgroups generated by $\langle a \rangle$ and $\langle b \rangle$ to the identity.  Thus any homomorphism close to $f_{u,v}$ must send $\langle a \rangle$ and $\langle b \rangle$ to bounded subgroups of $(H,\varcrs{bw})$, but this is possible only for the trivial homomorphism. 
Alternatively, let $\phi\colon F_2 \to \ZZ $ be the set\=/homomorphism such that $\phi(u) = 1 \in \ZZ$ and $\phi(v) =0$. The astute reader will notice the composition $f_{u,v} \circ \phi$ is the Brooks quasimorphism $\phi_u$. If $f_{u,v}$ was close to a homomorphism, so would be the composition $f_{u,v}\circ\phi$.
\end{exmp}

In Chapter~\ref{ch:p2:coarse autos} we will provide many more examples of coarse homomorphisms between coarsified set groups, \emph{e.g.}\ using quasimorphisms in the sense of Hartnick--Schweitzer \cite{HS}. 
Conjugation gives examples of coarse homomorphisms of a somewhat different flavor:

\begin{exmp}\label{exmp:p1:conjugation automorphisms}
 Let $\crse G=(G,\CE)$ be a coarse group and $g\in G$ a fixed element. Then it is immediate to check that the coarse conjugation ${}_gc\colon G\to G$ given by $h\mapsto (g\ast h)\ast g^{-1}$ defines a coarse homomorphism. However, this coarse homomorphism is often trivial. 
 
 In fact, if $g$ lies in the same coarsely connected component of $e$ (\emph{i.e.}\ $e\torel{E}g$ for some $E\in\CE$) then $h\torel{\Delta_G\ast E} h\ast g$ for every $h\in G$ and hence the right multiplication function $\ast_g$ is close to $\id_{G}$. Analogously, also left multiplication ${}_g\ast$ is close to $\id_{G}$ and hence ${}_gc$ defines the trivial coarse automorphism $[{}_gc]=\cid_{\crse G}$. In particular, this shows that if $\crse G$ is coarsely connected then all the conjugation automorphisms are coarsely trivial.
 
 More in general, recall that the set of coarsely connected components of $\crse G$ is naturally identified with $\cmor(\tobj,\crse G)$, which is a set\=/group by Lemma~\ref{lem:p1:mor.is.group}. It is then easy to see that the set of conjugation automorphisms of $\crse G$ can be identified with the group of inner automorphisms of $\cmor(\tobj,\crse G)$.
\end{exmp}

\

For later reference, we show that coarse homomorphisms from a coarse group $\crse G$ to a Banach space always have a preferred representative (this fact is very well\=/known for $\RR$\=/quasimorphisms).
An $\RR$\=/quasimorphism $\phi\colon G\to \RR$ is \emph{homogeneous} if $\phi(g^k)=k\phi(g)$ for every $k\in\ZZ$ and $g\in G$. It is an important observation that every $\RR$\=/quasimorphism is close to a homogeneous one. The same argument (see \emph{e.g.}\ \cite[Section 2.2]{calegari2009scl}) proves the following:

\begin{prop}\label{prop:p1:homogeneization.of.qmorph}
 Let $\crse G $ be a coarse group, $(V,\norm{\mhyphen})$ a Banach space and $\CE_{\norm{\mhyphen}}$ the associated metric coarse structure on $V$. Let $\crse {f\colon G}\to (V,\CE_{\norm{\mhyphen}})$ be a coarse homomorphisms. Then $\crse f$ has a representative $\bar f$ such that: 
 \begin{itemize}
  \item[$(\spadesuit)$] $\bar f(g\ast g)=2\bar f(g)$ for every $g\in G$.
 \end{itemize}

 Furthermore, if $\crse G$ is a coarsified set group, we may further assume that $\bar f(g^k)=k f(g)$ for every $g\in G$ and $k\in \ZZ$.
\end{prop}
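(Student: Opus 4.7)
The plan is to adapt the classical homogenization trick for $\RR$\=/quasimorphisms, exploiting completeness of $V$ to take limits of Cauchy sequences generated by iterated doubling. Fix a representative $f \colon G \to V$ for $\crse f$. Since $\crse f$ is a coarse homomorphism and $V$ carries the metric coarse structure associated with the norm, one may pick $D \geq 0$ with $\norm{f(g \ast h) - f(g) - f(h)} \leq D$ for all $g, h \in G$. The proof proceeds by defining recursively $g^{[0]} \coloneqq g$ and $g^{[n+1]} \coloneqq g^{[n]} \ast g^{[n]}$, then setting
\[
\bar f(g) \coloneqq \lim_{n\to\infty} \frac{f(g^{[n]})}{2^n}.
\]
The bound $\norm{f(g^{[n+1]}) - 2 f(g^{[n]})} \leq D$ gives $\norm{f(g^{[n+1]})/2^{n+1} - f(g^{[n]})/2^n} \leq D/2^{n+1}$, so the sequence is Cauchy in $V$ and hence convergent. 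Telescoping the geometric sum yields $\norm{\bar f(g) - f(g)} \leq D$, so $\bar f \closefn f$; in particular $[\bar f] = \crse f$ and $\bar f$ is automatically controlled (being close to the controlled map $f$).

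To verify $(\spadesuit)$, set $h \coloneqq g \ast g$; an easy induction on $n$ yields the on\=/the\=/nose identity $h^{[n]} = g^{[n+1]}$ in $G$ (no appeal to associativity is needed, only the specific recursive definition). Consequently
\[
\bar f(g \ast g) = \lim_n \frac{f(h^{[n]})}{2^n} = \lim_n \frac{f(g^{[n+1]})}{2^n} = 2 \lim_n \frac{f(g^{[n+1]})}{2^{n+1}} = 2\, \bar f(g).
\]

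For the ``furthermore'' part, assume $\crse G$ is a coarsified set group and take the set group multiplication $\cdot$ as a representative of $\cop$, so that the defect condition reads $\norm{f(g_1 g_2) - f(g_1) - f(g_2)} \leq D$. The plan is now to define $\bar f(g) \coloneqq \lim_n f(g^n)/n$. An inductive estimate gives $\norm{f(g^{nm}) - m\, f(g^n)} \leq (m-1) D$, hence $\norm{f(g^{nm})/(nm) - f(g^n)/n} \leq D/n$ and so $\norm{f(g^n)/n - f(g^m)/m} \leq D/n + D/m$; this proves the sequence is Cauchy in $V$. The same estimate then yields $\bar f(g^k) = k\,\bar f(g)$ for $k \in \NN$, and the bound $\norm{f(g^n) + f(g^{-n}) - f(e)} \leq D$ combined with boundedness of $f(e)$ extends homogeneity to all of $\ZZ$.

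The main obstacle lies in the doubling step: in a general coarse group $\ast$ is only coarsely associative, so one must be careful that the sequence $g^{[n]}$ is defined by a \emph{fixed} bracketing and that the identity $h^{[n]} = g^{[n+1]}$ holds strictly (not merely up to error), since otherwise the doubling error could accumulate and destroy convergence. The upgrade from doubling to arbitrary integer powers in the furthermore statement genuinely requires on\=/the\=/nose associativity, which is why that stronger homogeneity property is restricted to coarsified set groups.
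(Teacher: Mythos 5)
Your proof is correct and follows essentially the same strategy as the paper's: the doubling sequence $g^{[n]}$ plus completeness of $V$ gives a Cauchy limit satisfying $(\spadesuit)$ without any appeal to associativity, and the ``furthermore'' follows from the classical homogenization estimate $\norm{f(g^{nm}) - m f(g^n)} \leq (m-1)D$. The only cosmetic differences are that the paper first symmetrizes $f$ (replacing it with $f'(g) = \tfrac12(f(g) - f(g^{-1}))$) so that negative exponents come for free, and it keeps the doubling-based $\bar f$ throughout and shows it already satisfies $\bar f(g^k)=k\bar f(g)$ via the identity $(g^k)^{2^n} = (g^{2^n})^k$, whereas you pass to the $n$-limit directly; both are standard and equivalent in the set-group case.
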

\begin{proof}[Sketch of Proof]
 Fix a representative $f$ for $\crse f$.
 Replacing $f$ with $f'(g)\coloneqq \frac{1}{2}(f(g)-f(g^{-1}))$, we can assume that $f(g^{-1})=-f(g)$ for every $g\in G$. Let $D\coloneqq \sup_{g,h\in G} \norm{f(g\ast h)-f(g)-f(h)}$, this supremum is finite because $f$ is a coarse homomorphism. Given $g\in G$, let $g_0\coloneqq g$ and define by induction $g_{k+1}\coloneqq g_k\ast g_k\in G$. An induction argument shows that 
 \[
  \norm{\frac{f(g_{k+1})}{2^{k+1}}-\frac{f(g_k)}{2^{k}}}\leq \frac{D}{2^{k+1}}.
 \]
 Since $(V,\norm{\mhyphen})$ is complete, the Cauchy sequence ${D}/{2^{k+1}}$ has a limit. We may thus define
 \[
  \bar f(g)\coloneqq \lim_{k\to \infty}\frac{f(g_k)}{2^{k}}.
 \]

 Since $\norm{\bar f(g)-f(g)}\leq D$, we have that $\bar f$ is a representative for $\crse f$. Condition $(\spadesuit)$ holds trivially for $\bar f$.

 For the `furthermore' part of the statement, for every $k\in\NN$ write $(g^k)^{2^n}$ as the $k$\=/fold product $g^{2^n}\ast\cdots\ast g^{2^n}$ and deduce by induction on $k$ that
 \[
 \norm{f(g^{k{2^n}})- k f(g^{2^n})}\leq (k-1)D.
 \]
 It then follows from the definition that $\bar f(g^k)=k\bar f(g)$ for every $k\in \NN$. The same holds for $k\in \ZZ$ because we assumed that $f(g^{-1})=-f(g)$ for every $g\in G$.
\end{proof}

\begin{rmk}  
 Note that the argument for the `furthermore' part Proposition~\ref{prop:p1:homogeneization.of.qmorph} fails dramatically if $G$ is not a coarsified set\=/group, because the element $g^k$ is not well defined: since $\ast$ is not associative it is necessary to specify the order of multiplication of the various copies of $g$. Once an order is fixed, we cannot write $(g^k)^{2^n}=g^{2^n}\ast\cdots \ast g^{2^n}$.
 In fact, we do not know whether every coarse homomorphism $\crse f\colon \crse G\to (\RR,\varcrs{\abs{\mhyphen}})$ has a representative $\bar f$ such that the $k$\=/fold products satisfy
  \[
   \norm{\bar f\bigparen{g\ast(g\ast\cdots (g\ast g))} - \bar f\bigparen{((g\ast g)\ast\cdots g)\ast g}}\leq R
  \]
  for every $g\in G$ and $k\in \NN$ and some constant $R\geq 0$ independent of $g,k$.
 \end{rmk}

\section{Properties of Coarse Homomorphisms}
\label{sec:p1:coarse homomorphisms_2}

In Chapter~\ref{sec:p1:coarse homomorphisms_1}, we defined coarse homomorphisms as controlled maps that preserve the coarse multiplication. From a semantic point of view, it would have been more precise to also require that coarse homomorphisms preserve the coarse unit and inversion. The next lemma shows that these two approaches are equivalent:

\begin{lem}\label{lem:p1:coarse homomorphisms preserve unit and inverse}

 If  $\crse{f}\colon \crse G\to \crse H$ is a coarse homomorphism then the following diagrams commute:
 \[
  \begin{tikzcd}
    \tobj \arrow[r, "\cunit_{\crse G}"] \arrow[dr,swap, "{\cunit_{\crse H}}"] 
     & \crse G     \arrow[d, "{\crse f}"] 
    \\
    & \crse H  
  \end{tikzcd}\qquad\qquad
  \begin{tikzcd}
    \crse G \arrow[r, "\cinversefn_{\crse G}"] \arrow[d,swap, "{\crse f}"] 
     & \crse G     \arrow[d, "{\crse f}"] 
    \\
    \crse H \arrow[r, "\cinversefn_{\crse H}"]  & \crse H  
  \end{tikzcd}
 \]

 In other words, $\crse f$ sends $\cunit_{\crse G}$ to $\cunit_{\crse H}$ and respects the inversion coarse maps.
\end{lem}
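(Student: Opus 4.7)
The plan is to bypass any argument with representatives and instead exploit the categorical machinery already developed in Lemma~\ref{lem:p1:mor.is.group}. For any coarse space $\crse X$, both $\cmor(\crse X,\crse G)$ and $\cmor(\crse X,\crse H)$ carry a canonical set\=/group structure with operation $\odot$ induced by $\cop_{\crse G}$ and $\cop_{\crse H}$; the unit of this set\=/group is the constant coarse map $\cunit$, and the inverse of $\crse g$ is the composition $\cinversefn\circ\crse g$.

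First I would observe that post\=/composition with $\crse f$ defines a map
\[
\crse f_*\colon \cmor(\crse X,\crse G)\to \cmor(\crse X,\crse H),\qquad \crse g\mapsto \crse f\circ\crse g,
\]
and that the single hypothesis that $\crse f$ is a coarse homomorphism makes $\crse f_*$ into a \emph{set\=/group homomorphism}. This is a one\=/line diagram chase: for any $\crse a,\crse b\in\cmor(\crse X,\crse G)$,
\[
\crse f_*(\crse a\odot\crse b)=\crse f\circ\cop_{\crse G}\circ(\crse a,\crse b)=\cop_{\crse H}\circ(\crse f\times\crse f)\circ(\crse a,\crse b)=\crse f_*(\crse a)\odot\crse f_*(\crse b).
\]

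Second, I would invoke the classical fact that any multiplication\=/preserving map of set\=/groups automatically preserves the unit and inverse. Applying this to $\crse f_*$ with $\crse X=\tobj$, the unit of $\cmor(\tobj,\crse G)$ is $\cunit_{\crse G}$ and the unit of $\cmor(\tobj,\crse H)$ is $\cunit_{\crse H}$, so $\crse f\circ\cunit_{\crse G}=\cunit_{\crse H}$, which is precisely the first diagram. Applying it instead to $\crse X=\crse G$ and to the element $\cid_{\crse G}\in\cmor(\crse G,\crse G)$, whose $\odot$\=/inverse is $\cinversefn_{\crse G}\circ\cid_{\crse G}=\cinversefn_{\crse G}$, yields
\[
\crse f\circ\cinversefn_{\crse G}=\crse f_*(\cinversefn_{\crse G})=\bigparen{\crse f_*(\cid_{\crse G})}^{-1}=\crse f^{-1}=\cinversefn_{\crse H}\circ\crse f,
\]
where the last equality again uses Lemma~\ref{lem:p1:mor.is.group} to identify the $\odot$\=/inverse of $\crse f\in\cmor(\crse G,\crse H)$. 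This is the second diagram.

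There is no real obstacle here: the whole content is packaged inside Lemma~\ref{lem:p1:mor.is.group}, and the only verification to make is that $\crse f_*$ respects $\odot$, which is immediate from the defining diagram of a coarse homomorphism. As an alternative that does not appeal to the categorical lemma, one could fix adapted representatives (Lemma~\ref{lem:p1:adapted representatives exist}): substituting $x=y=e_G$ in the relation $f(x\ast_Gy)\rel{\CF} f(x)\ast_H f(y)$ gives $f(e_G)\rel{\CF} f(e_G)\ast_H f(e_G)$, and cancelling $f(e_G)$ on the left by equi controlled left multiplication in $\crse H$ yields $f(e_G)\rel{\CF}e_H$; an analogous cancellation applied to $f(g\ast_G g^{-1})\rel{\CF}f(g)\ast_H f(g^{-1})$ gives $f(g^{-1})\rel{\CF}f(g)^{-1}$ uniformly in $g$. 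Either route works, but the categorical route is cleaner and keeps all the bookkeeping inside the correspondence with set\=/groups on morphism sets.
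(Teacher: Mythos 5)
Your proposal is correct and follows essentially the same route as the paper: the paper's own proof invokes Lemma~\ref{lem:p1:mor.is.group}, observes that post\=/composition with $\crse f$ is a set\=/group homomorphism on morphism sets, and concludes by applying this to $\tobj$ for the unit and to $\cid_{\crse G}\in\cmor(\crse G,\crse G)$ for the inversion, precisely as you do. The alternative by\=/hand argument you sketch also matches the paper's offhand remark that the statement can be checked directly.
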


\begin{proof}
 It is not hard to check the statement by hand. Alternatively, recall that for every coarse space $\crse X$ we can define an operation $\odot$ on the set of coarse maps $\cmor(\crse{X},\crse G)$ by letting
 \[
  \crse h_1 \odot  \crse h_2 \coloneqq \crse X \xrightarrow{( \crse h_1, \crse h_2)} \crse{ G \times  G} \xrightarrow{\ \cop\ }  \crse G. 
 \]
 and $\cmor(\crse{X},\crse G)$ equipped with this operation is a set\=/group (see Lemma~\ref{lem:p1:mor.is.group}). If $\crse{f}\colon \crse G\to \crse H$ is a coarse homomorphism, the composition with $\crse{f}$ induces a set\=/group homomorphism $\cmor(\crse{X},\crse G)\to\cmor(\crse{X},\crse H)$. 
 
 By Lemma~\ref{lem:p1:mor.is.group}, we know that $\cunit_{\crse G}$ is the identity element of $\cmor(\tobj,\crse G)$. Since homomorphisms of set\=/groups send the unit to the unit, we see that the composition $\crse f\circ \cunit_{\crse G}$ is equal to $\cunit_{\crse H}$.
 Again by Lemma~\ref{lem:p1:mor.is.group}, we also know that the inverse element of $\crse h\in\cmor\paren{\crse X,\crse G}$ is the composition $\cinversefn_{\crse G}\circ \crse h$. In particular, $\cinversefn_{\crse G}$ is the inverse element of $\cid_{\crse G}\in\cmor\paren{\crse G,\crse G}$. Since homomorphisms of set\=/groups send inverses to inverses, the composition $\crse f\circ \cinversefn_{\crse G}$ is the inverse element of $\crse f=\crse f\circ \cid_{\crse G}\in \cmor(\crse {G,H})$, which we know to be $\cinversefn_{\crse H}\circ \crse f$.
\end{proof}

It is immediate to verify that compositions of coarse homomorphisms are coarse homomorphisms. 

\begin{de}
 An \emph{isomorphism of coarse groups}\index{isomorphic!coarse groups} is a coarse homomorphism $\crse{f\colon G\to H}$ so that there is a coarse homomorphism $\crse f^{-1}\colon \crse{H\to G}$ with $\crse f^{-1}\circ \crse f=\cid_{\crse G}$ and $\crse{f\circ f}^{-1}=\cid_{\crse H}$. Two coarse groups are \emph{isomorphic} (denoted $\crse{G\cong H}$) if there is an isomorphism between them. We will generally use `isomorphism' and `coarse isomorphism' as synonyms. An \emph{automorphism} of $\crse G$ is a coarse isomorphism of $\crse G$ with itself. Note that the composition is a set\=/group operation on the set of coarse automorphisms $\cAut(\crse G) $.
\end{de}

Since composition of coarse homomorphisms are coarse homomorphisms, it follows isomorphism of coarse groups is an equivalence relation. It is also immediate to verify that if a coarse homomorphism $\crse{f}\colon \crse G\to \crse H$ is a coarse equivalence, the coarse inverse $\crse f^{-1}\colon\crse { H}\to\crse G$ is also a coarse homomorphism. In other words, we see that a coarse isomorphism is a coarse homomorphism that is a coarse equivalence (\emph{i.e.}\ an isomorphism in \Cat{Coarse}).

\begin{rmk}
 All the observations we made thus far are not specific to coarse groups: they hold for group objects in any category. On the contrary, the next results only hold in \Cat{Coarse}.
\end{rmk}

\begin{exmp}\label{exmp:p1:isomorphisms of abelian topological groups}
 The inclusion $\ZZ\hookrightarrow \RR$ defines an isomorphism of coarse groups $(\ZZ,\CE_{\abs{\mhyphen}})\to(\RR,\CE_{\abs{\mhyphen}})$. More generally, if $G$ is a set\=/group with a bi\=/invariant metric $d$ and $H \leq G$ is a coarsely dense subgroup then $(H,\CE_{d|_H})\hookrightarrow(G,\CE_d)$ is an isomorphism of coarse groups.
 
 Similarly, recall that every abelian topological group $G$ has an equi bi\=/invariant coarse structure $\varcrs[grp]{cpt}$ so that the $\varcrs[grp]{cpt}$\=/bounded subsets are the relatively compact subsets of $G$ (Example~\ref{exmp:p1:topological coarse structure abelian}). If $H\leq G$ is a subgroup so that the quotient $G/H$ is compact then $H$ is coarsely dense in $(G,\varcrs[grp]{cpt})$. If we equip $H$ with the subset topology, then the coarse group $(H,\varcrs[grp]{cpt})$ is isomorphic to $(G,\varcrs[grp]{cpt})$. In particular, if $H$ is discrete in $G$ we see that $(G,\varcrs[grp]{cpt})$ is isomorphic to $(H,\varcrs[grp]{fin})$. As a special case we notice that the coarsification of the group of rational number $(\QQ,\varcrs[grp]{fin})$ is isomorphic to the topological coarsification of the group of adeles $(\AA_\QQ,\varcrs[grp]{cpt})$. 
\end{exmp}

Recall the notion of pull\=/back coarse structure (Definition~\ref{def:p1:pull-back}). The following observation is useful:

\begin{lem}\label{lem:p1:pull-back.under.hom.is.crsegroup}
 Let $(G,\ast_{G})$, $(H,\ast_H)$ be sets with multiplications and $\CF$ an equi left invariant (resp. equi right invariant) coarse structure on $H$. Given a function $f\colon G\to H$ where the composition $f\circ (\variable\ast_{G}\variable)$ is $\CF$\=/close to $\ast_H\circ (f\times f)$, then the pull\=/back $f^*(\CF)$ is equi left invariant (resp. equi right invariant).
\end{lem}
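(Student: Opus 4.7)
I will address the equi left invariant case; the equi right invariant case is symmetric. The goal is to show that if $E \in f^*(\CF)$, then $\Delta_G \ast_G E \in f^*(\CF)$, where $f^*(\CF) = \{E' \subseteq G \times G \mid (f\times f)(E') \in \CF\}$.

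The hypothesis of $\CF$-closeness of $f \circ (\variable \ast_G \variable)$ and $\ast_H \circ (f \times f)$ gives a single controlled entourage $F_0 \in \CF$ such that
\[
f(x \ast_G y) \torel{F_0} f(x) \ast_H f(y) \qquad \forall x,y \in G.
\]
Now fix $E \in f^*(\CF)$, so that $(f\times f)(E) \in \CF$. A typical element of the relation $\Delta_G \ast_G E$ has the form $(g \ast_G e_1,\, g \ast_G e_2)$ with $g \in G$ and $(e_1,e_2) \in E$. Using the closeness relation twice, once on each coordinate, and the fact that $\CF$ is equi left invariant on $H$, I obtain the chain
\[
f(g \ast_G e_1) \torel{F_0} f(g) \ast_H f(e_1) \torel{\Delta_H \ast_H (f\times f)(E)} f(g) \ast_H f(e_2) \torel{\op{F_0}} f(g \ast_G e_2).
\]
Hence $(f\times f)\bigparen{\Delta_G \ast_G E} \subseteq F_0 \cmp \bigparen{\Delta_H \ast_H (f\times f)(E)} \cmp \op{F_0}$. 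Since $\CF$ is a coarse structure that contains $F_0$, $\op{F_0}$ and $\Delta_H \ast_H (f\times f)(E)$ (the last by equi left invariance of $\CF$), and is closed under composition and subsets, the right hand side belongs to $\CF$. Therefore $\Delta_G \ast_G E \in f^*(\CF)$, which is exactly equi left invariance of the pull\=/back.

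The equi right invariant case follows by the same argument applied on the right, using $E \ast_G \Delta_G$ in place of $\Delta_G \ast_G E$ and $(f\times f)(E) \ast_H \Delta_H$ in place of $\Delta_H \ast_H (f\times f)(E)$. There is no real obstacle here: the only subtle point is to remember that closeness gives a single controlled entourage $F_0$ working uniformly in $(x,y)$, which is precisely what allows the two ``translation'' steps above to be absorbed into a single composition of elements of $\CF$.
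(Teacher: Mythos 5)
Your proof is correct and follows essentially the same argument as the paper's: fix the controlled entourage $F_0$ witnessing closeness, write the targeted relation as a three-fold composition $F_0 \cmp \bigparen{\Delta_H \ast_H (f\times f)(E)} \cmp \op{F_0}$, and invoke equi left invariance of $\CF$ together with closure under composition and subsets. The only cosmetic difference is that you work directly with $E\in f^*(\CF)$ and explicitly flip $F_0$ for the last step, whereas the paper works with $(f\times f)^{-1}(F)$ for $F\in\CF$ and uses the shorthand $\rel{\bar F}$; these are equivalent after noting that $f^*(\CF)$ is closed under subsets.
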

\begin{proof}
 Let $\bar F\in \CF$ be so that $f(x\ast_G y)\rel{\bar F} f(x)\ast_H f(y)$ for all $x,y\in G$. For every fixed $F\in\CF$, we see that if $f(y_1)\torel{F} f(y_2)$ then
 \[
  f(x\ast_G y_1)
  \rel{\bar F} f(x)\ast_H f(y_1)
  \torel{\Delta_H\ast F} f(x)\ast_H f(y_2)
  \rel{\bar F} f(x \ast_G y_2).
 \]
 This shows that $\Delta_G\ast \bigparen{(f\times f)^{-1}(F)}$ belongs to $f^*(F)$, and hence  $f^*(F)$ is equi left invariant. Equi right invariance is analogous.
\end{proof}

\begin{cor}
 Let $\crse f \colon(G,\CE)\to (H,\CF)$ be a coarse homomorphism of coarse groups. Then $f^*(\CF)$ is equi bi\=/invariant and contains $\CE$. In particular, $(G,f^*(\CF))$ is again a coarse group.
\end{cor}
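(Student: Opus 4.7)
The corollary follows essentially by combining the preceding Lemma~\ref{lem:p1:pull-back.under.hom.is.crsegroup} with the earlier observation that pull-back coarse structures under controlled maps contain the domain coarse structure. The plan is as follows.

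First, I would verify that $f^*(\CF)$ is equi bi-invariant. Since $\crse f$ is a coarse homomorphism between coarse groups, after fixing a representative $f$ we have $f(x\ast_G y)\rel{\CF} f(x)\ast_H f(y)$ for all $x,y\in G$. Because $\crse H=(H,\CF)$ is itself a coarse group, the coarse structure $\CF$ is simultaneously equi left invariant and equi right invariant (Corollary~\ref{cor:p1:coarse.group.from.group.iff.equi controlled} in the coarsified setting, but more generally because $\ast_H$ is controlled on $\crse H\times\crse H$). Applying Lemma~\ref{lem:p1:pull-back.under.hom.is.crsegroup} once with the equi left invariant hypothesis and once with the equi right invariant hypothesis then gives that $f^*(\CF)$ is equi left invariant \emph{and} equi right invariant, \emph{i.e.}, equi bi-invariant.

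Next, the containment $\CE\subseteq f^*(\CF)$ is immediate from the definition of pull-back, because $f\colon(G,\CE)\to(H,\CF)$ is controlled: indeed this was observed right after Definition~\ref{def:p1:pull-back}.

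Finally, to conclude that $(G,f^*(\CF))$ is a coarse group with the same operations $\ast_G,e_G,\inversefn_G$, I would invoke Proposition~\ref{prop:p1:coarse.group.iff.heartsuit.and.equi controlled} (or equivalently Theorem~\ref{thm:p1:concrete coarse groups}). The Group Diagrams commute up to $\CE$-closeness since $(G,\CE)$ is a coarse group, and since $\CE\subseteq f^*(\CF)$ they a fortiori commute up to $f^*(\CF)$-closeness. Combined with the equi bi-invariance of $f^*(\CF)$ established above, this gives that $\ast_G$ is controlled on $(G,f^*(\CF))\times(G,f^*(\CF))$, and hence by Proposition~\ref{prop:p1:coarse.group.iff.heartsuit.and.equi controlled} the inversion is automatically controlled too, yielding a coarse group structure. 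There is no real obstacle here; the only thing to be careful about is ensuring the two hypotheses of Lemma~\ref{lem:p1:pull-back.under.hom.is.crsegroup} are both invoked cleanly to produce bi-invariance rather than merely one-sided invariance.
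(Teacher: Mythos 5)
Your proof is correct and follows essentially the same route as the paper: invoke Lemma~\ref{lem:p1:pull-back.under.hom.is.crsegroup} for equi bi-invariance, note $\CE\subseteq f^*(\CF)$ from controlledness, and conclude with Proposition~\ref{prop:p1:coarse.group.iff.heartsuit.and.equi controlled}. The only difference is that you make explicit the need to apply Lemma~\ref{lem:p1:pull-back.under.hom.is.crsegroup} once for the left-invariant case and once for the right-invariant case—the paper leaves this implicit—which is a useful clarification rather than a departure.
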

\begin{proof}
 The coarse structure $f^*(\CF)$ is equi bi\=/invariant by Lemma~\ref{lem:p1:pull-back.under.hom.is.crsegroup}. Since $f$ is controlled, $\CE\subseteq f^*(\CF)$. In particular, the Group Diagrams for $\ast_{G},\unit_{G},\inversefn_{G}$ commute up to closeness when $G$ is equipped with the coarse structure $f^*(\CF)$. 
 It then follows from Proposition~\ref{prop:p1:coarse.group.iff.heartsuit.and.equi controlled} that $(G,f^*(\CF))$ is a coarse group.
\end{proof}

\begin{cor}\label{cor:p1:homomorphisms are grp_fin controlled}
 Let $G$ and $H$ be set\=/groups, $f\colon G\to H$ be a homomorphism, and $\CF$ a connected equi bi\=/invariant coarse structure on $H$.  Then $f\colon(G,\varcrs[grp]{fin})\to(H,\CF)$ is controlled and hence $\crse f$ is a coarse homomorphism.
\end{cor}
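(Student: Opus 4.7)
The strategy is to use the pull\=/back coarse structure $f^*(\CF)$ on $G$ and show that it contains $\varcrs[grp]{fin}$. Since $f$ is a set\=/group homomorphism, $f\circ(\variable\ast_G\variable) = \ast_H\circ(f\times f)$ as actual functions, so in particular these are $\CF$\=/close for any $\CF$. Hence Lemma~\ref{lem:p1:pull-back.under.hom.is.crsegroup} applies and shows that $f^*(\CF)$ is equi bi\=/invariant on $G$.

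Next I would verify that $\varcrs{fin}\subseteq f^*(\CF)$. Any $E\in\varcrs{fin}$ differs from $\Delta_{G}$ by a finite set of pairs $(g_1^{(i)},g_2^{(i)})$ with $i=1,\dots,n$. Connectedness of $\CF$ gives, for each such pair, some $F_i\in\CF$ with $(f(g_1^{(i)}),f(g_2^{(i)}))\in F_i$. Taking $F\coloneqq \Delta_H\cup F_1\cup\cdots\cup F_n\in\CF$ we see that $(f\times f)(E)\subseteq F$, so $E\in f^*(\CF)$. Therefore $f^*(\CF)$ is an equi bi\=/invariant coarse structure on $G$ containing $\varcrs{fin}$.

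By the very definition of $\varcrs[grp]{fin}$ as the minimal equi bi\=/invariant coarse structure containing $\varcrs{fin}$ (Example~\ref{exmp:p1:finite sets group coarse structure}), we conclude $\varcrs[grp]{fin}\subseteq f^*(\CF)$. Unpacking what this means, $(f\times f)(E)\in\CF$ for every $E\in\varcrs[grp]{fin}$, i.e.\ $f\colon(G,\varcrs[grp]{fin})\to(H,\CF)$ is controlled. Finally, since $f$ is literally a set\=/group homomorphism, $f(x\ast_G y)=f(x)\ast_H f(y)$ on the nose, so the diagram of Definition~\ref{def:p1:coarse homomorphism} commutes in \Cat{Coarse} and $\crse f$ is a coarse homomorphism.

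I do not anticipate any real obstacle: the content is essentially the minimality of $\varcrs[grp]{fin}$ combined with the observation that connectedness of $\CF$ together with the finiteness of off\=/diagonal pairs in relations of $\varcrs{fin}$ suffices to pull them back into $\CF$.
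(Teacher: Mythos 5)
Your proof is correct and follows essentially the same route as the paper's: pull back $\CF$ along $f$, invoke Lemma~\ref{lem:p1:pull-back.under.hom.is.crsegroup} (using that a set\=/group homomorphism satisfies the required closeness on the nose) to get equi bi\=/invariance of $f^*(\CF)$, use connectedness of $\CF$ to see that $\varcrs{fin}\subseteq f^*(\CF)$, and conclude by minimality of $\varcrs[grp]{fin}$. The paper's version is terser — it phrases the middle step as ``every finite subset of $G$ is $f^*(\CF)$\=/bounded'' rather than tracing through finite\=/off\=/diagonal relations — but the content is identical.
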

\begin{proof}
 The pull\=/back $f^*(\CF)$ is equi bi\=/invariant and every finite subset of $G$ is $f^*(\CF)$\=/bounded because $\CF$ is connected. By minimality, it follows that $\varcrs[grp]{fin}\subseteq f^*(\CF)$.
\end{proof}

The same argument also proves:

\begin{cor}\label{cor:p1:quasimorphisms are controlled}
 If $G$ is a finitely normally generated set\=/group and $\phi$ is an $\RR$\=/quasimorphism then $\phi\colon(G,\varcrs{bw})\to(\RR,\CE_{\abs{\mhyphen}})$ is controlled and hence $\crse \phi$ is a coarse homomorphism.
\end{cor}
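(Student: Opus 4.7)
The plan is to mimic the argument sketched for Corollary~\ref{cor:p1:homomorphisms are grp_fin controlled}, replacing ``homomorphism'' with ``$\RR$\=/quasimorphism'' and ``$\varcrs[grp]{fin}$'' with $\varcrs{bw}$. First I would invoke Corollary~\ref{cor:p1:canonical crse_str is E_grp_fin}, which identifies $\varcrs{bw}$ with $\varcrs[grp]{fin}$, \emph{i.e.}\ the minimal connected equi bi\=/invariant coarse structure on $G$. This reduces the problem to showing $\varcrs[grp]{fin}\subseteq \phi^\ast(\CE_{\abs{\mhyphen}})$.

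Next I would check that Lemma~\ref{lem:p1:pull-back.under.hom.is.crsegroup} applies to $\phi$. Indeed, by Definition~\ref{def:p1:quasimorphism} we have $\abs{\phi(xy)-\phi(x)-\phi(y)}\leq D(\phi)$ for every $x,y\in G$, which is exactly the statement that $\phi\circ(\variable \ast_G\variable)$ is $\CE_{\abs{\mhyphen}}$\=/close to $+\circ(\phi\times\phi)$ as functions $G\times G\to \RR$. Since $\CE_{\abs{\mhyphen}}$ is both equi left and equi right invariant for the additive structure on $\RR$, Lemma~\ref{lem:p1:pull-back.under.hom.is.crsegroup} (applied twice) yields that $\phi^\ast(\CE_{\abs{\mhyphen}})$ is equi bi\=/invariant on $G$.

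To conclude, I note that $\CE_{\abs{\mhyphen}}$ is connected, so every finite subset of $\RR$ is $\CE_{\abs{\mhyphen}}$\=/bounded; consequently, every finite subset of $G$ has finite image and is therefore $\phi^\ast(\CE_{\abs{\mhyphen}})$\=/bounded, meaning $\varcrs{fin}\subseteq \phi^\ast(\CE_{\abs{\mhyphen}})$. By the minimality characterization of $\varcrs[grp]{fin}$ among connected equi bi\=/invariant coarse structures containing $\varcrs{fin}$, we obtain $\varcrs{bw}=\varcrs[grp]{fin}\subseteq \phi^\ast(\CE_{\abs{\mhyphen}})$. This is exactly the statement that $\phi\colon(G,\varcrs{bw})\to(\RR,\CE_{\abs{\mhyphen}})$ is controlled. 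Since the defect condition already guarantees that $\phi(x\ast_G y)$ and $\phi(x)+\phi(y)$ are uniformly close, the induced coarse map $\crse\phi$ is automatically a coarse homomorphism.

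There is no real obstacle here: the whole argument is a direct transcription of the proof of Corollary~\ref{cor:p1:homomorphisms are grp_fin controlled}, with the only subtlety being to recognize that the definition of $\RR$\=/quasimorphism is precisely the ``closeness of $\phi\circ\ast$ and $+\circ(\phi\times\phi)$'' hypothesis needed to invoke Lemma~\ref{lem:p1:pull-back.under.hom.is.crsegroup}.
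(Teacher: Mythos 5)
Your proof is correct and follows exactly the route the paper intends—indeed the paper's own text for this corollary is just ``The same argument also proves:'' referring to the proof of Corollary~\ref{cor:p1:homomorphisms are grp_fin controlled}, and you have faithfully replicated that argument, supplying the observation that the defect bound is precisely the closeness hypothesis needed for Lemma~\ref{lem:p1:pull-back.under.hom.is.crsegroup} and invoking Corollary~\ref{cor:p1:canonical crse_str is E_grp_fin} to identify $\varcrs{bw}$ with $\varcrs[grp]{fin}$. Nothing to add.
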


\begin{example}
 Clearly, the converse of Lemma~\ref{lem:p1:pull-back.under.hom.is.crsegroup} does not hold. That is, the fact that $(G,f^*(\CF))$ is a coarse group does not imply that $\crse f \colon (G,f^*(\CF))\to (H,\CF)$ is a coarse homomorphism. To see this it is enough to choose two coarse groups $(G,\CE)$, $(H,\CF)$ and a coarse embedding $f\colon (G,\CE)\hookrightarrow (H,\CF)$. Then $f^*(\CF)=\CE$ and hence $(G,f^*(\CF))$ is a coarse group, but $f$ might not be a coarse homomorphism. Concretely, equip $\RR$ and $\RR^2=\CCC$ with the Euclidean metric and consider the embedding given by the logarithmic spiral 
 \[
 t\mapsto \log(1+\abs t)\exp\Bigparen{\frac{it}{\log(1+\abs{t})} }
 \]
 (or any other coarse embedding embedding that is not close to a linear embedding).
\end{example}

\begin{example}\label{exmp:p1:pull back coarse structures}
 Given two coarse groups $\crse G,\crse H$, Lemma~\ref{lem:p1:pull-back.under.hom.is.crsegroup} can be used to construct an intermediate coarse group $\crse{G_{H}}$ characterized by the property that every coarse homomorphism $\crse G\to\crse H$ factors through $\crse{G_{H}}$. Namely, if $\crse G=(G,\CE)$ and $\crse H=(H,\CF)$ we let
 \[
  \CE_{\crse G\to\crse H}\coloneqq\bigcap\Bigbraces{f^*(\CF)\bigmid \crse f\colon(G,\CE)\to(H,\CF)\text{ coarse homomorphism }}.
 \]
 Since intersections of equi bi\=/invariant coarse structures are equi bi\=/invariant, it follows from Lemma~\ref{lem:p1:pull-back.under.hom.is.crsegroup} that $\CE_{\crse G\to\crse H}$ is equi bi\=/invariant and hence $\crse{G_{H}}\coloneqq(G,\CE_{\crse G\to\crse H})$ is a coarse group. 
 By construction, $\CE\subseteq \CE_{\crse G\to\crse H}$ and therefore $\cid_{\crse G}\colon\crse G\to\crse{G_{H}}$ is a coarse homomorphism. It is also clear that $\crse f \colon\crse G\to\crse H$ is a coarse homomorphism if and only if $\crse f \colon\crse{G_{H}}\to\crse H$ is a coarse homomorphism. This shows that coarse homomorphisms factor through $\crse{G_{H}}$:
 \[
 \begin{tikzcd}
  \crse G \arrow[r,"{\crse f}"] \arrow{d}[swap]{\cid_{G}} & \crse H. \\
  \crse{G_{H}} \arrow[ur,swap,"{\crse f}"] &
 \end{tikzcd}
 \]
 
 The coarse group $\crse{G_{H}}$ can be used to quantify the lack or abundance of coarse homomorphisms from $\crse{G}$ to $\crse H$. For example, if $\crse G=(G,\mincrs)$ and $\crse H=(H,\mincrs)$ are trivially coarse groups, then $\crse{G_{H}}$ is bounded if and only if the only set\=/group homomorphism $G\to H$ is the trivial one.
 If $G$ is a finitely generated set\=/group and $H$ is the direct sum of all the finite symmetric groups $\bigoplus_{n\in\NN} S_n$ then $G$ is residually finite if and only if $\CE_{(G,\mincrs)\to(H,\mincrs)}=\mincrs$.
\end{example}

Since we know that the coarse structure of a coarse group is determined by the set of bounded neighborhoods of the identity, we can use Lemma~\ref{lem:p1:pull-back.under.hom.is.crsegroup} to prove the following.

\begin{prop}\label{prop:p1:proper.hom.coarse.embedding}
 Let $\crse{f \colon G\to H}$ be a coarse homomorphism. The following are equivalent:
 \begin{enumerate}[(i)]
  \item $\crse f$ is a coarse embedding;
  \item $\crse f$ is proper (Definition~\ref{def:p1:proper.map});
  \item $f^{-1}(U)$ is bounded for every $U\subseteq H$ bounded neighborhood of $\unit_H$.
 \end{enumerate}
\end{prop}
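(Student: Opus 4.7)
The plan is to prove the cycle $(i)\Rightarrow(ii)\Rightarrow(iii)\Rightarrow(i)$. The first two implications are essentially immediate, and all the real content lies in $(iii)\Rightarrow(i)$, where the key idea is to exploit the fact that coarse group structures are determined by the bounded neighbourhoods of the identity (Proposition~\ref{prop:p1:neighbourhoods.of.identity.generate} and its corollaries).

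For $(i)\Rightarrow(ii)$: a coarse embedding is by definition controlledly proper, and controlledly proper maps are proper. Explicitly, if $B\subseteq H$ is bounded then $B\times B\in\CF$, so $(f\times f)^{-1}(B\times B)=f^{-1}(B)\times f^{-1}(B)$ belongs to $\CE$, i.e. $f^{-1}(B)$ is bounded. The implication $(ii)\Rightarrow(iii)$ is trivial, since bounded neighbourhoods of $\unit_H$ are in particular bounded subsets of $H$.

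For $(iii)\Rightarrow(i)$, the strategy is to show that $f$ is controlledly proper by comparing the pull-back coarse structure $f^*(\CF)$ with $\CE$. Since $\crse f$ is a coarse homomorphism, Lemma~\ref{lem:p1:pull-back.under.hom.is.crsegroup} tells us that $f^*(\CF)$ is equi bi-invariant and contains $\CE$, so $(G,f^*(\CF))$ is a coarse group. By Corollary~\ref{cor:p1:neighbourhoods.of.identity.generate} applied to both $\CE$ and $f^*(\CF)$, in order to prove the reverse containment $f^*(\CF)\subseteq\CE$ it is enough to verify that every $f^*(\CF)$-bounded neighbourhood of $\unit_G$ is already $\CE$-bounded. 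So fix $V\subseteq G$ with $\unit_G\in V$ and $V\times V\in f^*(\CF)$; by definition of pull-back this means that $f(V)\times f(V)\in\CF$, i.e. $f(V)$ is $\CF$-bounded. Because $\crse f$ is a coarse homomorphism we have $f(\unit_G)\rel{\CF}\unit_H$ (Lemma~\ref{lem:p1:coarse homomorphisms preserve unit and inverse}), so $U\coloneqq f(V)\cup\{\unit_H\}$ is a $\CF$-bounded neighbourhood of $\unit_H$. Then by hypothesis $(iii)$, $f^{-1}(U)$ is $\CE$-bounded, and since $V\subseteq f^{-1}(f(V))\subseteq f^{-1}(U)$ we conclude that $V$ is $\CE$-bounded as well.

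The one step that might be subtle is ensuring we can apply Corollary~\ref{cor:p1:neighbourhoods.of.identity.generate} to compare the two equi bi-invariant coarse structures $\CE\subseteq f^*(\CF)$: one has to check that both are equi bi-invariant (already done via Lemma~\ref{lem:p1:pull-back.under.hom.is.crsegroup}) and that both contain $\mingrp_G$ (automatic, as both make $G$ into a coarse group). No other obstacle arises, and the argument is really just a transparent application of the ``determined locally'' principle to coarse homomorphisms.
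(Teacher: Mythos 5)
Your proof is correct and follows essentially the same route as the paper: you reduce $(iii)\Rightarrow(i)$ to the comparison of $\CE$ with $f^*(\CF)$, invoke Lemma~\ref{lem:p1:pull-back.under.hom.is.crsegroup} to know $(G,f^*(\CF))$ is a coarse group, and then conclude $\CE=f^*(\CF)$ by the ``determined locally'' principle applied to bounded neighbourhoods of $\unit_G$, exactly as in the paper's argument (which cites Proposition~\ref{prop:p1:neighbourhoods.of.identity.generate} where you cite its Corollary~\ref{cor:p1:neighbourhoods.of.identity.generate}).
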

\begin{proof}

 Let $\crse{G}=(G,\CE)$ and $\crse{H}=(H,\CF)$. The only non\=/trivial implication is $(iii)\Rightarrow(i)$. By Lemma~\ref{lem:p1:pull-back.under.hom.is.crsegroup}, $(G,f^*(\CF))$ is a coarse group. By Proposition~\ref{prop:p1:neighbourhoods.of.identity.generate} we have
 \begin{align*}
  \CE&=\angles{\bigbraces{\Delta_{G}\ast (U\times U)\mid U\text{ is a $\CE$\=/bounded neighborhood of }\unit_{G}}\cup\mingrp} \\
  f^*(\CF)&=\angles{\bigbraces{\Delta_{G}\ast (U\times U)\mid U\text{ is a $f^*(\CF)$\=/bounded neighborhood of }\unit_{G}}\cup\mingrp}.
 \end{align*}

 Note that every $\CE$\=/bounded neighborhood of $e_G$ is $f^*(\CF)$ controlled. Vice versa, let $V$ be a $f^*(\CF)$ controlled neighborhood of $e_G$. Since $\crse f(\cunit_{\crse G})=\cunit_{\crse H}$, the union $f(V)\cup \braces{e_H}$ is a $\CF$\=/controlled neighborhood of $e_H$. Condition $(iii)$ then implies that $V$ is also $\CE$\=/bounded. It follows that $\CE=f^*(\CF)$ and hence $f$ is a coarse embedding. 
\end{proof}

The following is another useful observation that interacts nicely with Lemma~\ref{lem:p1:pull-back.under.hom.is.crsegroup}.

\begin{lem}\label{lem:p1:functions are chom}
 Given a set with mutiplication, unit and inversion functions $(G,\ast_{G},\unit_{G},\inversefn_{G})$, a coarse group $\crse H =(H,\CF,[\ast_H],[\unit_H],[\inversefn_H])$ and a function $f\colon G\to H$, the following are equivalent:
 \begin{enumerate}[(i)]
  \item The composition $f\circ (\variable\ast_{G}\variable)$ is close to $\ast_H\circ (f\times f)$; the composition $f\circ \inversefn_{G}$ is close to $\inversefn_H\circ f$; and $f(e_{G})$ is close to $e_H$.
  \item The function $f\colon (G,\mingrp)\to(H,\CF)$ is controlled and $\crse f$ is a coarse homomorphism.  
 \end{enumerate}
\end{lem}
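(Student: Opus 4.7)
The implication (ii)$\Rightarrow$(i) is essentially free: the closeness of $f(x\ast_G y)$ with $f(x)\ast_H f(y)$ is precisely the definition of coarse homomorphism, while the closeness of $f(e_G)$ with $e_H$ and of $f\circ\inversefn_G$ with $\inversefn_H\circ f$ is exactly the content of Lemma~\ref{lem:p1:coarse homomorphisms preserve unit and inverse} applied to $\crse f$. So the only substantive direction is (i)$\Rightarrow$(ii).

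For (i)$\Rightarrow$(ii), the plan is to invoke Lemma~\ref{lem:p1:generators of mingrp}, which tells us that $\mingrp$ is generated by the four relations $\Delta_G\ast \assRel$, $(\Delta_G\ast \invRel)\ast \Delta_G$, $\Delta_G\ast \idRel$ and $\idRel$. Since $(f\times f)$ respects unions, symmetries and compositions (in the sense that $(f\times f)(E_1\cmp E_2)\subseteq(f\times f)(E_1)\cmp(f\times f)(E_2)$), it suffices to verify that each of these four generating relations is mapped into $\CF$ by $f\times f$. Once this is done, $f\colon(G,\mingrp)\to(H,\CF)$ is controlled, and the commutativity of the Group Diagrams for $\crse f$ (\emph{i.e.}\ $\crse f$ being a coarse homomorphism) is immediate because the corresponding diagrams in $(H,\CF)$ already commute up to closeness and $f$ intertwines the operations up to closeness by hypothesis~(i).

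To carry out the verification, I would fix at the outset a finite list of entourages in $\CF$ that encode all the ``error terms'' at play: entourages $F_\ast, F_\inversefn, F_e\in\CF$ witnessing the three closeness hypotheses in (i), and entourages $\assRel_H, \invRel_H, \idRel_H\in\CF$ witnessing the associativity, inverse and identity Group Diagrams in $\crse H$ (these exist by Lemma~\ref{lem:p1:hearts.diagrams on sets}). The equi bi\=/invariance of $\CF$ (Corollary~\ref{cor:p1:coarse.group.from.group.iff.equi controlled}) then guarantees that any fixed expression built from these entourages by composition and by left/right multiplication with $\Delta_H$ is again a fixed element of $\CF$. For each generator, I would write a chain of closeness relations for a generic pair $(x,y)$: for example, if $(x,y)\in\Delta_G\ast\assRel$, so that $x=g\ast_G(g_1\ast_G(g_2\ast_G g_3))$ and $y=g\ast_G((g_1\ast_G g_2)\ast_G g_3)$, one applies $F_\ast$ repeatedly to rewrite both $f(x)$ and $f(y)$ as two possibly different bracketings of the four terms $f(g), f(g_1), f(g_2), f(g_3)$ in $H$, then uses iterated $\assRel_H$ (multiplied on the left by $\Delta_H$ where needed) to reassociate one bracketing into the other. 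The composite entourage controlling this chain does not depend on $g,g_1,g_2,g_3$, so it yields a single entourage in $\CF$ that contains $(f\times f)(\Delta_G\ast\assRel)$. The three remaining generators are handled by completely analogous manipulations, using $\invRel_H$ and $\idRel_H$ (together with $F_\inversefn$ and $F_e$) in place of $\assRel_H$.

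The only real obstacle is book\=/keeping in the case of $(\Delta_G\ast\invRel)\ast\Delta_G$, which is the most decorated generator: one has to expand $f$ of a doubly\=/flanked expression, apply the inversion closeness from (i), re\=/associate in $H$, absorb an inverse using $\invRel_H$, and then contract back. The computation has the same flavor as the more involved step in the proof of Lemma~\ref{lem:p1:generators of mingrp}, and with the entourages fixed beforehand it amounts to a (long but) straightforward chain of rewrites whose total ``error'' is a composition of fixed elements of $\CF$ obtained from the list above by left/right multiplications with $\Delta_H$.
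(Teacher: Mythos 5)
Your plan matches the paper's proof essentially step for step: the (ii)$\Rightarrow$(i) direction via Lemma~\ref{lem:p1:coarse homomorphisms preserve unit and inverse}, and the (i)$\Rightarrow$(ii) direction by invoking the generating set of $\mingrp$ from Lemma~\ref{lem:p1:generators of mingrp}, fixing witnessing entourages in $\CF$, and running a chain of closeness relations to push each generator through $f\times f$ into $\CF$ (the paper spells out the chain for $\assRel$ and remarks the decorated generators are analogous but notationally heavier, exactly as you anticipate). The only quibble is cosmetic: equi bi\-/invariance of $\CF$ follows directly from $\crse H$ being a coarse group (Definition~\ref{def:p1:equi.bi.invariant coarse structure}), without needing Corollary~\ref{cor:p1:coarse.group.from.group.iff.equi controlled}, which assumes the underlying object is a set\-/group.
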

\begin{proof}
 It is clear that $(ii)$ implies $(i)$: the first closeness condition is included in the definition of coarse homomorphism and the other two are a consequence of Lemma~\ref{lem:p1:coarse homomorphisms preserve unit and inverse}.
 For the other implication, we know by Lemma~\ref{lem:p1:generators of mingrp} that 
 \( 
 \mingrp=\angles{{\,\Delta_{G}\ast \assRel\, ,\; (\Delta_{G}\ast \invRel)\ast\Delta
 _{G} \, ,\;
 \Delta_{G}\ast \idRel\, ,\; \idRel \, }}
 \)
 where $\assRel,\invRel,\idRel$ are the relations defined by
 \[\def\arraystretch{1.5}
 \begin{array}{cc}
  g_1 \ast (g_2\ast g_3) \rel{\assRel} (g_1\ast g_2)\ast g_3 
  & \qquad \forall g_1,g_2,g_3\in G
   \\
   e\ast g \rel{\idRel} g\rel{\idRel} g\ast e 
   & \qquad \forall g\in G
  \\
  g\ast g^{-1}\rel{\invRel} e\rel{\invRel} g^{-1}\ast g 
  & \qquad \forall g\in G.
 \end{array}
 \]
 It is thus enough to check that $f\times f$ sends those generating relations into $\CF$. For convenience, we will only show that $f\times f(\assRel)\in\CF$: checking $\Delta_{G}\ast_{G} \assRel$ and the other generators of $\mingrp$ is just as easy but notationally awkward. 

Let $F_{\rm as}$ be the analog of $\assRel$ for $H$.
By assumption, there is a controlled set $F\in\CF$ such that $f(g_1\ast_{G} g_2)\rel{F} f(g_1)\ast_H f(g_2)$ for every $g_1,g_2\in G$. We then see that
\begin{align*}
  f(g_1\ast_{G}(g_2\ast_{G} g_3)) &\rel{F} f(g_1)\ast_H f(g_2\ast_{G} g_3) \\
    &\rel{\Delta_H\ast_H F} f(g_1)\ast_H (f(g_2)\ast_H f(g_3))  \\
    &\rel{F_{\rm as}} (f(g_1)\ast_H f(g_2) )\ast_H f(g_3)\\
    &\rel{ ({F}\ast_H\Delta_H) \cmp {F}} f((g_1\ast_{G} g_2)\ast_{G} g_3).
\end{align*}
This shows that $f\times f(\assRel)\in\CF$.
\end{proof}

\begin{cor}\label{cor:p1:functions are chom}
 Let $G$ and $\crse H=(H,\CF)$  be a set and a coarse group as in Lemma~\ref{lem:p1:functions are chom}, and let $f\colon G\to H$ be a function satisfying item (i) of the same lemma. Then the pull\=/back $f^\ast(\CF)$ is equi bi\=/invariant on $(G,\ast_{G})$.
\end{cor}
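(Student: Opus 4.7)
The plan is to observe that this corollary is essentially an immediate consequence of Lemma~\ref{lem:p1:pull-back.under.hom.is.crsegroup}, applied twice. The hypothesis from item (i) of Lemma~\ref{lem:p1:functions are chom} includes, in particular, the condition that $f\circ(\variable\ast_G\variable)$ is $\CF$-close to $\ast_H\circ(f\times f)$, which is precisely the hypothesis needed to invoke Lemma~\ref{lem:p1:pull-back.under.hom.is.crsegroup}. Meanwhile, since $\crse H=(H,\CF)$ is a coarse group, the multiplication $\ast_H$ is a controlled function on $(H,\CF)\times(H,\CF)$, which (by the discussion in Section~\ref{sec:p1:equi invariant and automatic control}, together with Lemma~\ref{lem:p1:equi controlled.sections.iff.controlled}) is equivalent to saying that $\CF$ is equi bi-invariant on $(H,\ast_H)$, i.e.\ it is both equi left invariant and equi right invariant.

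First I would apply Lemma~\ref{lem:p1:pull-back.under.hom.is.crsegroup} with $\CF$ viewed as an equi left invariant coarse structure on $(H,\ast_H)$: this yields that $f^\ast(\CF)$ is equi left invariant on $(G,\ast_G)$. Then I would apply the same lemma again, this time with $\CF$ viewed as an equi right invariant coarse structure on $(H,\ast_H)$, to obtain that $f^\ast(\CF)$ is also equi right invariant on $(G,\ast_G)$. Combining these two facts gives equi bi-invariance, which is the desired conclusion.

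There is no real obstacle here: the hypothesis item (i) was set up precisely so that the conclusion of Lemma~\ref{lem:p1:pull-back.under.hom.is.crsegroup} applies. The only very mild subtlety is that the closeness condition in the hypothesis is phrased in terms of the coarse structure $\CF$ on $H$, while equi invariance is a statement about the pull-back $f^\ast(\CF)$ on $G$; but this is exactly the content of the lemma being invoked, whose proof proceeds by transporting the equi invariance of $\CF$ back through $f$ by using the closeness witness $\bar F\in\CF$ with $f(x\ast_G y)\rel{\bar F}f(x)\ast_H f(y)$.
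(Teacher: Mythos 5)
Your proposal is correct and takes essentially the same route as the paper: the work is delegated to Lemma~\ref{lem:p1:pull-back.under.hom.is.crsegroup}, applied once with $\CF$ regarded as equi left invariant and once with it regarded as equi right invariant. The paper's own one-line proof routes through Lemma~\ref{lem:p1:functions are chom} (to package the hypotheses as ``$\crse f\colon(G,\mingrp)\to(H,\CF)$ is a coarse homomorphism'') before invoking Lemma~\ref{lem:p1:pull-back.under.hom.is.crsegroup}; as you note, that detour is dispensable, since item~(i) already supplies the required $\ast$-closeness hypothesis verbatim, so your version is marginally more direct but not materially different.
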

\begin{proof}
 By Lemma~\ref{lem:p1:functions are chom} $\crse f \colon(G,\mingrp)\to(H,\CF)$ is a coarse homomorphism and we can hence apply Lemma~\ref{lem:p1:pull-back.under.hom.is.crsegroup}.
\end{proof}

We can combine the results described thus far to prove a criterion that makes it easier to check whether a function defines a coarse homomorphism:

\begin{prop}\label{prop:p1:bornologous map is coarse hom}
 Let $\crse G=(G,\CE,[\ast_{G}],[\unit_{G}],[\inversefn_{G}])$ and $\crse H= (H,\CF,[\ast_H],[\unit_H],[\inversefn_H])$ be coarse groups. If a function $f\colon G\to H$ satisfies
 \begin{enumerate}[(i)]
  \item the composition $f\circ (\variable\ast_{G}\variable)$ is close to $\ast_H\circ (f\times f)$,
  \item $f(U)\ceq_\CF e_H$ for every $\CE$\=/bounded neighborhood $U$ of $\unit_G\in G$,
 \end{enumerate}
 then $f$ is controlled and $\crse f \colon \crse G\to \crse H$ is a coarse homomorphism.
\end{prop}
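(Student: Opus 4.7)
The strategy is to first verify the hypotheses of Lemma~\ref{lem:p1:functions are chom}, which will give that $f\colon(G,\mingrp)\to(H,\CF)$ is controlled and that $\crse f$ is a coarse homomorphism, and then to upgrade $\mingrp$\=/controlledness to $\CE$\=/controlledness via Proposition~\ref{prop:p1:neighbourhoods.of.identity.generate}. Hypothesis (i) already supplies the multiplication part of Lemma~\ref{lem:p1:functions are chom}(i), so the preparatory work reduces to deriving the two remaining closeness conditions: $f(e_G)\closefn e_H$ and $f\circ\inversefn_G\closefn\inversefn_H\circ f$.

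The first is immediate from hypothesis (ii) applied to the $\CE$\=/bounded neighborhood $U=\{e_G\}$. For the inversion closeness, the key observation is that the set
\[
B\coloneqq\braces{g\ast_G g^{-1}\mid g\in G}\cup\{e_G\}
\]
is an $\CE$\=/bounded neighborhood of $e_G$, since $g\ast_G g^{-1}\rel{\invRel} e_G$ uniformly in $g$ and $\invRel\in\mingrp\subseteq\CE$. Hypothesis (ii) then gives $f(g\ast_G g^{-1})\rel{\CF}e_H$ uniformly in $g$, and combining with hypothesis (i) yields $f(g)\ast_H f(g^{-1})\rel{\CF}e_H$ uniformly. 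Equi left invariance of $\CF$ (which holds since $\crse H$ is a coarse group) now lets me left\=/multiply both sides by $f(g)^{-1}$ uniformly in $g$: if $R\in\CF$ is the entourage witnessing the preceding uniform relation, then $\Delta_H\ast R\in\CF$, and evaluating at the pairs $\paren{f(g)^{-1},f(g)^{-1}}\in\Delta_H$ gives
\[
f(g)^{-1}\ast_H\paren{f(g)\ast_H f(g^{-1})}\rel{\CF}f(g)^{-1}\ast_H e_H
\]
uniformly in $g$. Reducing both sides modulo the coarse associativity, inverse, and identity relations (all encoded in $\mingrp\subseteq\CF$) simplifies this to $f(g^{-1})\rel{\CF}f(g)^{-1}$ uniformly in $g$, as required.

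Having applied Lemma~\ref{lem:p1:functions are chom} to secure $\mingrp$\=/controlledness together with the coarse homomorphism diagram, the last step is the upgrade to $\CE$. Lemma~\ref{lem:p1:pull-back.under.hom.is.crsegroup} applied to hypothesis (i) shows that the pull\=/back $f^*(\CF)$ is equi bi\=/invariant on $(G,\ast_G)$, and the previous step gives $\mingrp\subseteq f^*(\CF)$. By Proposition~\ref{prop:p1:neighbourhoods.of.identity.generate}, it then suffices to verify that every $\CE$\=/bounded neighborhood $U$ of $e_G$ is $f^*(\CF)$\=/bounded, which amounts to $f(U)$ being $\CF$\=/bounded; this is exactly the content of hypothesis (ii).

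The main obstacle is the inversion closeness step, where one must carefully interpret ``uniform left multiplication by $f(g)^{-1}$'' as a composition of relations inside $\CF$ and orchestrate the equi invariance of $\CF$ with the coarse associativity, inverse, and identity axioms of $\crse H$. Everything else follows routinely from the apparatus already developed.
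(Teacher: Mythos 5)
Your proof is correct and follows essentially the same route as the paper's: derive the inversion closeness by recognizing that $\{g\ast_G g^{-1}\mid g\in G\}$ lies in an $\CE$-bounded neighborhood of $e_G$, apply hypothesis~(ii) and then left-multiply by $f(g)^{-1}$ using equi left invariance of $\CF$, feed the result into Lemma~\ref{lem:p1:functions are chom}, and finish by comparing $\CE$ with the pull-back $f^*(\CF)$ via Proposition~\ref{prop:p1:neighbourhoods.of.identity.generate}. The only cosmetic difference is that you spell out the unit closeness $f(e_G)\closefn e_H$ and the boundedness of $B$ explicitly, and you invoke Lemma~\ref{lem:p1:pull-back.under.hom.is.crsegroup} directly rather than through Corollary~\ref{cor:p1:functions are chom}.
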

\begin{proof}
 We claim that the composition $f\circ \inversefn_{G}$ is close to $\inversefn_H\circ f$. Since $\crse G$ is a coarse group, there is a bounded neighborhood $U$ of $e_{G}$ so that $g\ast_{G} g^{-1}\in U$ for every $g\in G$. By the hypotheses on $f$, there are controlled sets $F_1,F_2\in \CF$ such that
 \[
  e_H\torel{F_1}f(g_1\ast g_1^{-1}) \quad\text{and}\quad f(g_2\ast_{G} g_3)\torel{F_2} f(g_2)\ast_H f(g_3)
 \]
 for every $g_1,g_2,g_3\in G$. Hence for every $g\in G$
 \[
  e_H\torel{F_1}f(g\ast g^{-1})\torel{F_2} f(g)\ast_H f(g^{-1}).
 \]
 Since $H$ is a coarse group, the product relation $(\Delta_H)\ast_H(F_1\cmp F_2)$ is in $\CF$. Therefore we have
 \[
  f(g)^{-1}
  \rel{\CF} f(g)^{-1}\ast_H \bigparen{f(g)\ast_H f(g^{-1}) }
  \rel{\CF} f(g^{-1})
  \qquad \forall g\in G,
 \]
 which proves our claim (here we are using Convention~\ref{conv:p1:x_torel_CE}).

 We now see that $f$ satisfies condition $(i)$ of Lemma~\ref{lem:p1:functions are chom}. By Corollary~\ref{cor:p1:functions are chom}, the pull\=/back $f^*(\CF)$ is an equi bi\=/invariant coarse structure on $G$. By hypothesis, we know that $\CE$\=/bounded neighborhoods of $e_{G}$ are also $f^*(\CF)$\=/bounded. It then follows from Proposition~\ref{prop:p1:neighbourhoods.of.identity.generate} that $\CE\subseteq f^*(\CF)$ or, in other words, that $f\colon (G,\CE)\to (H,\CF)$ is controlled.
\end{proof}

\section{Coarse Subgroups}
\label{sec:p1:coarse subgroups}

Recall that in Section~\ref{sec:p1:subspaces and quotients} we defined coarse subspaces of $\crse X=(X,\CE)$ as equivalence classes of subsets of $X$, where two subsets are equivalent if they are $\CE$\=/asymptotic. We also defined notions of coarse containment and image of coarse subsets under coarse maps. We define coarse subgroups of a coarse group $\crse G$ as those coarse subspaces that are coarsely closed under the coarse group operations:

\begin{de}\label{def:p1:coarse subgroup}
 A \emph{coarse subgroup}\index{coarse!subgroup} of a coarse group $\crse G$ is a coarse subspace $\crse H \crse{\subseteq G}$ such that $\crse{( H\cop_{G} H) \subseteq H}$ and $\crse{H^{-1}\subseteq H}$. We denote coarse subgroups by $\crse{H \leq G}$.\nomenclature[:COS]{$\crse{H \leq G}$}{coarse subgroup } 
\end{de}

Fixing representatives, a subset $H\subseteq G$ determines a coarse subgroup of $(G,\CE)$ if and only if $H\ast_{G} H\preccurlyeq_\CE H$ and $H^{-1}\preccurlyeq_\CE H$.

In Section~\ref{sec:p1:subspaces and quotients} we explained that---up to coarse equivalence---a coarse subspace of $\crse{Y\subseteq X}$ uniquely determines a coarse space $\crse Y$. The following proposition shows that the same is true for coarse subgroups. 
Namely, a coarse subgroup $\crse{H\leq G}$ determines a coarse group $\crse H$ uniquely defined up to a natural isomorphism.

\begin{prop}\label{prop:p1:closed.asymp.classes.are.subgroups}
 Let $\crse{H\leq G}$ be a coarse subgroup of $\crse G=(G,\CE)$ and $H\subseteq G$ a representative for the coarse subspace $\crse{H\subseteq G}$. The coarse space $\crse H=(H,\CE|_H)$ admits a unique choice of coarse group operations such that the embedding $\crse{H\hookrightarrow G}$ is a coarse homomorphism.
 If $H'\subseteq G$ is a different choice of representative for $\crse{H\subseteq G}$ and we let $\crse{H'}=(H',\CE|_{H'})$, then the canonical coarse equivalence $\crse{i\colon H\to H'}$ is an isomorphism of coarse groups.
\end{prop}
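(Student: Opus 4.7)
My plan is to fix a representative $H \subseteq G$ of $\crse{H \subseteq G}$ and use the coarse subgroup hypotheses $H \ast_G H \csub H$ and $H^{-1} \csub H$ to manufacture selection functions for the operations. Explicitly, by hypothesis there exist entourages $E_\ast, E_{\rm inv} \in \CE$ such that for every $(h_1, h_2) \in H\times H$ some $k \in H$ satisfies $k \torel{E_\ast} h_1 \ast_G h_2$, and for every $h \in H$ some $k \in H$ satisfies $k \torel{E_{\rm inv}} h^{-1}$. The axiom of choice will then produce functions $\ast_H \colon H \times H \to H$ and $\inversefn_H \colon H \to H$ that by construction are $\CE$-close to the restrictions of $\ast_G$ and $\inversefn_G$ (viewed as maps into $G$). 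For the unit I will pick any $h_0 \in H$ and set $e_H \coloneqq h_0 \ast_H \inversefn_H(h_0)$; this element of $H$ is $\CE$-close to $h_0 \ast_G h_0^{-1} \rel{\CE} e_G$.

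To verify that $(H, \CE|_H, [\ast_H], [e_H], [\inversefn_H])$ is a coarse group, I will first observe that $\ast_H$ is controlled as a map $(H \times H, \CE|_H \otimes \CE|_H) \to (H, \CE|_H)$: it is $\CE$-close to the controlled map $\ast_G \circ (\iota \times \iota)$ and its values lie in $H$, so intersecting the witnessing entourage with $H \times H$ produces the needed entourage in $\CE|_H$. Each Group Diagram for $\crse H$ will then reduce to the corresponding diagram for $\crse G$ by inserting $\CE$-closeness steps that swap $H$-operations for $G$-operations, e.g.\
\[ h_1 \ast_H (h_2 \ast_H h_3) \rel{\CE} h_1 \ast_G (h_2 \ast_G h_3) \rel{\CE} (h_1 \ast_G h_2) \ast_G h_3 \rel{\CE} (h_1 \ast_H h_2) \ast_H h_3, \]
where the outer steps use equi-control of the multiplication in $\crse G$; the identity and inverse diagrams are handled analogously. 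All relevant entourages have endpoints in $H$, so they restrict to $\CE|_H$. Proposition~\ref{prop:p1:coarse.group.iff.heartsuit.and.equi controlled} will then upgrade this to a coarse group structure, and $\crse \iota$ is a coarse homomorphism directly from $\iota(h_1 \ast_H h_2) \torel{E_\ast} \iota(h_1) \ast_G \iota(h_2)$.

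For uniqueness, I will argue as follows: if $\cop'_H$ is a second coarse multiplication making $\crse \iota$ a coarse homomorphism, then both $\crse \iota \circ \cop_H$ and $\crse \iota \circ \cop'_H$ equal $\cop_G \circ (\crse \iota \times \crse \iota)$; any witnessing $\CE$-closeness entourage has its endpoints in $H$ and therefore lies in $\CE|_H$, forcing $\cop_H = \cop'_H$ in $\Cat{Coarse}$. Uniqueness of $\cunit_H$ and $\cinversefn_H$ then follows from Corollary~\ref{cor:p1:uniqueness.of.unit&inverse}. For a change of representatives $H$ and $H'$, the canonical coarse equivalence $\crse{i \colon H \to H'}$ is determined by $\crse{\iota' \circ i = \iota}$, and a representative $i$ is obtained by sending each $h \in H$ to some $i(h) \in H'$ that is $\CE$-close to $h$. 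A short chain analogous to the one above, with all endpoints in $H'$, will show $i(h_1 \ast_H h_2) \rel{\CE} i(h_1) \ast_{H'} i(h_2)$, so $\crse i$ is a coarse homomorphism; being a coarse equivalence, it is an isomorphism of coarse groups.

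The main technical obstacle throughout is purely one of bookkeeping: every closeness estimate is most naturally established using entourages in $\CE$ on $G$, but the definitions of coarse group and coarse homomorphism for $\crse H$ demand closeness in the subspace structure $\CE|_H$. At each step one must therefore check that the endpoints of the relevant entourages lie in $H$ (or $H'$, in the change-of-representative argument), so that the entourages restrict appropriately. This is straightforward once one is alert to it, but it is easy to miss in the uniqueness argument, where the crucial point is that $\CE$-closeness of two maps whose images happen to lie in $H$ automatically descends to $\CE|_H$-closeness.
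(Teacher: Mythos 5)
Your proposal is correct and follows essentially the same route as the paper: construct $\ast_H$, $\inversefn_H$, $e_H$ by nearest-point selection into $H$, check controlledness and the Group Diagrams by transporting everything through $\crse G$, and deduce uniqueness from the fact that the coarse embedding $\crse\iota$ is monic. The only cosmetic differences are that the paper phrases the selection step via a controlled covering $\fka_{\rm mult}$ rather than a single entourage, and handles the change-of-representative via a commutative diagram and the uniqueness clause rather than by directly chaining closeness estimates — both of which amount to the same computation.
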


\begin{proof}
Fix the representative $H$ and let $\crse H$ be the coarse space $(H,\CE|_H)$. Since the inclusion $\crse\iota\colon \crse{H}\to \crse G$ is a coarse embedding, it is clear that if there exists a coarse multiplication $\cop_{\crse H}\colon \crse H\times \crse H\to \crse H$ such that the following diagram commutes 
 \begin{equation}\label{eq:p1:subgroup diagram}
  \begin{tikzcd} 
    \crse{H}\times \crse H \arrow[r, "\cop_{H}"] 
    \arrow[d,swap, "\crse{\iota}\times \crse \iota"] & \crse{H}\arrow[d, "\crse{\iota}"]\\
    \crse{G}\times \crse G\arrow[r,"\cop_{\crse G}"]& \crse G 
    \end{tikzcd} 
 \end{equation}
 then $\cop_{\crse H}$ is unique. 

 To construct $\cop_{\crse H}$, let $\fka_{\rm mult}$ be a controlled covering of $\crse G$ such that $H\ast_{G} H\subseteq \st(H,\fka_{\rm mult})$. For every pair $(h_1,h_2)\in H\times H$ choose an $\bar h\in H$ such that $h_1\ast_{G} h_2\in \st(\bar h,\fka_{\rm mult})$ and let $h_1\ast_H h_2\coloneqq \bar h$. It is easy to check by hand that $\ast_H\colon (H,\CE|_H)\times(H,\CE|_H)\to (H,\CE|_H)$ is a controlled map. Alternatively, we can also note that the composition $\iota\circ \ast_H\colon H\times H\to G$ is close to $\ast_{G}\circ (\iota\times\iota)$ to deduce that $\iota\circ \ast_H$ is controlled and hence conclude that $\ast_H$ is controlled as well (because $\iota$ is a coarse embedding). We may thus let $\cop_{\crse H}=[\ast_H]$ and note that Diagram \eqref{eq:p1:subgroup diagram} commutes by construction.
 
 Diagram~\eqref{eq:p1:subgroup diagram} allows us to deduce associativity of $\cop_{\crse H}$ from the associativity of $\cop_{\crse G}$. 
 Namely, we have the following equality of coarse maps $\crse{H}\times\crse H\times \crse H\to \crse G$:  
 \begin{align*}
  \crse\iota\circ \cop_{\crse H}\circ \paren{\cid_{\crse H}\times \cop_{\crse H} }
  &= \cop_{\crse G}\circ\paren{\cid_{\crse G}\times \cop_{\crse G}}\circ\paren{\crse \iota\times\crse \iota\times\crse \iota}  \\
  &=\cop_{\crse G}\circ\paren{\cop_{\crse G}\times \cid_{\crse G}}\circ\paren{\crse \iota\times\crse \iota\times\crse \iota} \\
  &=\crse \iota\circ \cop_{\crse H}\circ \paren{\cop_{\crse H}\times\cid_{\crse H} }.
 \end{align*}
 
 Alternatively, one can also show explicitly that the controlled maps $\iota\circ \ast_H\circ \paren{\id_H\times \ast_H}$ and $\iota\circ \ast_H\circ \paren{\ast_H\times\id_H}$ are close by making a judicious use of relations or star\=/neighborhoods. Since $\crse{H}=(H,\CE|_H)$ is equipped with the subspace coarse structure, it follows immediately that $\ast_H\circ \paren{\id_H\times \ast_H}$ and $\ast_H\circ \paren{\ast_H\times\id_H}$ are close functions from $H\times H\times H$ to $(H,\CE|_H)$.

 Similarly, let $\inversefn_G$ be a representative for the inversion coarse map of $\crse G$. If $\fka_{\rm inv}$ is a controlled covering of $G$  such that $(H)^{-1}_{G}\subseteq \st(H,\fka_{\rm inv})$, we can define $(h)^{-1}_H$ by choosing a $\bar h\in H$ such that $(h)^{-1}_{G}\in\st(\bar h,\fka_{\rm inv})$. Again, $\inversefn_H$ is controlled and $\iota\circ\inversefn_H$ is close to $\inversefn_{G}|_H$. We can further let $e_H\in H$ be any point in the same coarsely connected component of $e_G\in G$, for example $e_H\coloneqq  h\ast_H (h)_H^{-1}$ for an arbitrarily chosen $h\in H$. An argument analogous to the above shows that the coarse functions $\cop_{\crse H}$, $\cunit_{\crse H}$ and $\cinversefn_{\crse H}$ also satisfy the (Identity) and (Inverse) Group Diagrams, and thus make $\crse{H}$ into a coarse group. The inclusion $\crse \iota\colon\crse H\to\crse G$ is a coarse homomorphism by construction.

 Let now $H'\asymp H$ be another representative for the coarse subspace, and let $\crse{H'}$ be the coarse group obtained by equipping $(H',\CE_{H'})$ with the unique coarse operations so that the inclusion $\crse{\iota'\colon H'\hookrightarrow G}$ is a coarse homomorphism.
 Since $H$ and $H'$ are asymptotic, we know that there is a canonical coarse equivalence $\crse{i\colon H\to H'}$ such that 
 \[
  \begin{tikzcd} 
    \crse{H} \arrow[r, "{\crse \iota}"] \arrow[d,swap, "\crse{i}"] & \crse{G}.\\
    \crse{H'}\arrow[ur,swap,"{\crse{\iota'}}"]&  
  \end{tikzcd}   
 \]
 One may easily check by hand that $\crse i$ commutes with the coarse operations. Alternatively, we may observe that the following diagram commutes
 \[
  \begin{tikzcd} 
    \crse{H}\times \crse H \arrow[dr,swap, "{\crse{\iota\times\iota}}"] \arrow[r, "\crse{ i\times i}"] 
    &\crse{H' \times H'} \arrow[r, "\cop_{\crse{H'}}"] \arrow[d, "\crse{\iota' \times \iota'}"] 
    & \crse{H'}\arrow[d,swap, "\crse{\iota'}"]\arrow[r, "\crse{i^{-1}}"]
    & \crse{H}\arrow[dl, "\crse{\iota}"]
    \\
    &\crse{G}\times \crse G\arrow[r,"\cop_{\crse G}"]
    & \crse G
    &
   \end{tikzcd}   
 \]
 and hence conclude that the composition $\crse{i^{-1}}\circ\cop_{\crse{H'}}\circ (\crse{ i\times i})$ equals $\cop_{\crse H}$ because this is the unique coarse map that is compatible with $\cop_{\crse G}$.
\end{proof}

Similarly to coarse subspaces, we use the convention that $\crse H$ denotes both a coarse subgroup $\crse{H\leq G}$ (\emph{i.e.}\ an equivalence class of subsets of $G$) and a coarse group $\crse H$. We may also write $\crse H=(H,\CE|_H)$ to mean that $H\subseteq G$ is a choice of representative for the coarse subgroup $\crse H$. Thanks to Proposition~\ref{prop:p1:closed.asymp.classes.are.subgroups}, this ambiguity will not cause any issue. 

 Proposition~\ref{prop:p1:closed.asymp.classes.are.subgroups} of $\crse G$ also implies that coarse subgroups can be alternatively described as the coarse images of coarse homomorphisms into $\crse G$:

\begin{cor}\label{cor:p1:coarse subgroup iff coarse image}
 The set of coarse subgroups of $\crse G$ coincides with set of coarse images of coarse homomorphisms into $\crse G$.
\end{cor}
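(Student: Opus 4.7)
The equivalence splits into two inclusions. For the forward inclusion, I would simply invoke Proposition~\ref{prop:p1:closed.asymp.classes.are.subgroups}: given a coarse subgroup $\crse{H\leq G}$ with representative $H\subseteq G$, the proposition supplies coarse group operations on $\crse H=(H,\CE|_H)$ making the inclusion $\crse{\iota\colon H\hookrightarrow G}$ a coarse homomorphism. By construction $\cim(\crse\iota)=[H]=\crse H$, so every coarse subgroup arises as the coarse image of a coarse homomorphism.

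For the reverse inclusion, let $\crse{f\colon K\to G}$ be a coarse homomorphism and set $\crse H \coloneqq \cim(\crse f)\crse{\subseteq G}$. I need to verify the two closure conditions of Definition~\ref{def:p1:coarse subgroup}, namely $\crse{H\cop_G H \subseteq H}$ and $\crse{H^{-1}\subseteq H}$. The defining property of a coarse homomorphism is that
\[
 \cop_{\crse G}\circ(\crse f\times\crse f)=\crse f\circ\cop_{\crse K}
\]
as coarse maps $\crse{K\times K\to G}$. Taking coarse images of both sides and using that coarse images behave functorially under composition (and that coarse images of products of coarse maps are products of coarse images, which follows from the explicit description of the product coarse structure in Section~\ref{sec:p1:binary products}) gives
\[
 \cop_{\crse G}\bigparen{\cim(\crse f)\times\cim(\crse f)} = \crse f\bigparen{\cop_{\crse K}(\crse{K\times K})} \crse\subseteq \crse f(\crse K)=\cim(\crse f),
\]
which is exactly $\crse{H\cop_G H \subseteq H}$. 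For the inversion condition, I would use Lemma~\ref{lem:p1:coarse homomorphisms preserve unit and inverse}, which gives $\cinversefn_{\crse G}\circ \crse f=\crse f\circ \cinversefn_{\crse K}$. Taking coarse images again yields $\cinversefn_{\crse G}(\cim(\crse f))\crse\subseteq \cim(\crse f)$, i.e.\ $\crse{H^{-1}\subseteq H}$. Thus $\crse H$ is a coarse subgroup.

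\textbf{Expected obstacle.} Both directions are essentially formal diagram\=/chasing consequences of the preceding results, so there is no serious analytic or combinatorial obstacle. The only point that deserves a moment of care is the claim that $(\crse f\times\crse f)(\crse K\times\crse K)=\cim(\crse f)\times\cim(\crse f)$ at the level of coarse subspaces; this requires unwinding Definition~\ref{def:p1:coarse image} and the product coarse structure, but follows from the elementary fact that for subsets $A\subseteq X$, $B\subseteq Y$ one has $(f\times g)(A\times B)=f(A)\times g(B)$, combined with the observation that asymptoticity is compatible with Cartesian products in a product coarse space.
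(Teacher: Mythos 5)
Your proof is correct and follows essentially the same route as the paper: the forward inclusion is exactly Proposition~\ref{prop:p1:closed.asymp.classes.are.subgroups} applied to the inclusion $\crse{H\hookrightarrow G}$, and the reverse inclusion is obtained by taking coarse images in the commutative diagram defining a coarse homomorphism, just as the paper does with the chain $\crse{f(H)\cop_{G} f(H)=\cop_{G}\circ(f\times f)(H\times H)=f(H\cop_{H} H)=f(H)}$. The small point you flag about compatibility of coarse images with Cartesian products is indeed the only place requiring a moment's care, and your handling of it is sound.
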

\begin{proof}
By Proposition~\ref{prop:p1:closed.asymp.classes.are.subgroups} a coarse subgroup $\crse{H\leq G}$ is the coarse image of a coarse homomorphism (the inclusion $\crse{ H\hookrightarrow G}$). 
 Vice versa, if $ \crse{f\colon H \to  G}$ is a homomorphism of coarse groups then, by definition, $\crse{f}\circ \cop_{\crse H}$ coincides with $\cop_{\crse G}\circ(\crse f\times \crse f)$ as coarse maps from $\crse{H\times H}$ to $\crse G$. 
 It follows that 
 \[
  \crse{f(H)\cop_{G} f(H)=\cop_{G}\circ(f\times f)(H\times H)=f(H\cop_{H} H)=f(H)}.
 \]
 That is, the coarse image $\cim(\crse{f})$ is coarsely closed under multiplication. The coarse containment $\crse{f(H)^{-1}\subseteq f(H)}$ is analogous.
\end{proof}

 \begin{exmp}
  Consider $\ZZ^2$ with the Euclidean metric $\norm{\mhyphen}_2$. Fix some parameter $\theta\in \RR$ and let 
  \[
   L_\theta\coloneqq\braces{(n,m)\in\ZZ^2\mid \abs{\theta n-m}\leq 1}.
  \]
  It is immediate to observe that $[L_\theta]$ is a coarse subgroup of $(\ZZ^2,\CE_{\norm{\mhyphen}_2})$ and that $L_\theta$ and $L_{\theta'}$ are asymptotic if and only if $\theta=\theta'$.  In particular, we see that for every irrational value $\theta\in \RR\smallsetminus \QQ$ the coarse subgroup $L_\theta$ is not close to any set\=/subgroup of $\ZZ^2$.
 \end{exmp}

The following observation is simple, but useful for producing examples of coarse subgroups:

\begin{lem}\label{lem:p1:subgroup iff in m.HA}
 Let $\crse G$ be a coarse group and $H\subset G$ some subset. If there exists a bounded neighborhood of the identity $e\in A\subseteq G$ such that both $H\ast H$ and $H^{-1}$ are contained in $H\ast A$, then $\crse H$ is a coarse subgroup. If $G$ is a set\=/group, the converse is also true.
\end{lem}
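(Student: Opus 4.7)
For the forward direction, my plan is to show that the hypothesis $H\ast H,\,H^{-1}\subseteq H\ast A$ forces $H\ast A$ to be a controlled thickening of $H$, which immediately gives both coarse closure properties needed for $[H]$ to be a coarse subgroup. The key point is that ``multiplying by a bounded neighborhood of the identity moves each element only a bounded amount''. More precisely, I will first note that since $A$ is a bounded neighborhood of $e$, the relation $\{e\}\times A$ lies in $A\times A\in \CE$. Applying equi left invariance (\emph{i.e.}\ $\Delta_G\ast(A\times A)\in\CE$) together with the (Identity) axiom $h\ast e \rel{\CE} h$, I obtain
\[
h \rel{\CE} h\ast e \rel{\Delta_G\ast (A\times A)} h\ast a \qquad \forall h\in H,\ a\in A,
\]
so that the relation $\{(h,h\ast a)\mid h\in H,\ a\in A\}$ lies in a single controlled entourage. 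Consequently $H\ast A\csub_\CE H$. The hypothesized containments then yield $H\ast H\csub_\CE H$ and $H^{-1}\csub_\CE H$, which is exactly Definition~\ref{def:p1:coarse subgroup}.

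\textbf{Converse (set-group case).} Assume $G$ is a set-group and $\crse{H\leq G}$ is a coarse subgroup of $\crse G=(G,\CE)$. Then $H\ast H\csub_\CE H$ and $H^{-1}\csub_\CE H$, so there exists a controlled partial covering $\fka\in\fkC(\CE)$ with $H\ast H \subseteq \st(H,\fka)$ and $H^{-1}\subseteq \st(H,\fka)$. Because $\crse G$ is a coarsified set-group, Corollary~\ref{cor:p1:neighbourhoods.of.identity.generate_setgroup_part cover} applies: enlarging $\fka$ if necessary, we may assume $\fka$ is a refinement of $\pts{G}\ast U=\{gU\mid g\in G\}$ for some $U\in\fkU_e(\CE)$. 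The plan is then to unravel the star-neighborhood explicitly: if $gU\cap H\neq\emptyset$, pick $h\in H\cap gU$ so that $g\in hU^{-1}$, whence $gU\subseteq hU^{-1}U$. This yields $\st(H,\pts{G}\ast U)\subseteq H\ast(U^{-1}U)$.

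Setting $A\coloneqq U^{-1}U\cup\{e\}$, property (U3) of Lemma~\ref{lem:p1:properties of neighborhoods of id} (together with $U^{-1}\in\fkU_e(\CE)$ from (U2)) ensures that $A$ is a bounded neighborhood of $e$, and by construction $H\ast H\subseteq H\ast A$ and $H^{-1}\subseteq H\ast A$, completing the converse.

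\textbf{Main obstacle.} The forward direction is almost automatic given the machinery of equi bi-invariance already developed. The only real care point is the converse: one must use that $G$ is a set-group to invoke Corollary~\ref{cor:p1:neighbourhoods.of.identity.generate_setgroup_part cover}, which converts an abstract controlled covering into one of the form $\pts{G}\ast U$. In a general coarse group this conversion is not available (one would at best get coverings of the form $\pts{G}\ast U$ up to ``associativity relations'' in $\mingrp$), which is precisely why the converse statement is restricted to the case of set-groups.
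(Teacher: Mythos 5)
Your proof is correct and takes essentially the same route as the paper's. For the forward direction the paper argues slightly more abstractly (``since $\ast$ is controlled, $H\ast A\ceq H\ast e\ceq H$''), whereas you spell out the controlled relation $\Delta_G\ast(A\times A)$ explicitly, but the content is identical; the converse is done exactly as in the paper, via Corollary~\ref{cor:p1:neighbourhoods.of.identity.generate_setgroup_part cover} and the observation that $\st(H,\pts{G}\ast U)\subseteq H\ast(U^{-1}U)$.
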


\begin{proof}
 Since $\ast$ is controlled $H\ast A\ceq H\ast e$. By the (Identity) Group Diagram we see that
 \(
  H\ast e \ceq H
 \). It follows that $H\ast H\csub H$ and $H^{-1}\csub H$, and therefore $\crse H$ is a coarse subgroup.
 
It follows easily from Corollary~\ref{cor:p1:neighbourhoods.of.identity.generate_setgroup_part cover} that for set\=/groups the converse is also true: any bounded neighborhood of $H$ must be contained in a set of the form $\st(H,\pts{G}\ast U)$ with $U\in \fkU_e(\CE)$, and it simple to verify that
 \(
  \st(H,\pts{G}\ast U) = H\ast(U^{-1}U).
 \)
\end{proof}

Recall that a subset $A$ of a set\=/group $G$ is a \emph{$k$-approximate subgroup} for some $k\in\NN$ if $A = A^{-1}$ and $A A \subseteq K A$, where $K\subseteq G$ has cardinality at most $k$. That is, $A$ is symmetric and the product of $A$ with itself is covered by $k$ translates of $A$.

\begin{cor}
 If $\crse G$ is a coarsified set\=/group, then every approximate subgroup $H\subseteq G$ determines a coarse subgroup $\crse{H\leq G}$.
\end{cor}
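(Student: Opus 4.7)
The plan is to invoke Lemma~\ref{lem:p1:subgroup iff in m.HA}, which reduces the task to producing a bounded neighborhood $A\subseteq G$ of the identity such that $HH\subseteq HA$ and $H^{-1}\subseteq HA$. Since $H = H^{-1}$ by the definition of approximate subgroup, the second containment is automatic as long as $e\in A$, and can be ignored.

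For the multiplication condition, the approximate subgroup hypothesis gives a finite set $K\subseteq G$ with $HH\subseteq KH$. The mild obstacle here is that the finite set sits on the \emph{left}, whereas Lemma~\ref{lem:p1:subgroup iff in m.HA} requires it on the right. I would resolve this by inverting: using $H=H^{-1}$, so that $(HH)^{-1}=HH$, taking inverses of both sides of $HH\subseteq KH$ yields
\[
 HH\;=\;(HH)^{-1}\;\subseteq\;(KH)^{-1}\;=\;H^{-1}K^{-1}\;=\;HK^{-1}.
\]

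It then suffices to set $A\coloneqq K^{-1}\cup\{e\}$. This is a finite subset of $G$ containing $e$; since $\crse G$ is a coarsified set\=/group (in which we are implicitly assuming that finite subsets are bounded, as per the discussion preceding the corollary), $A$ is a bounded neighborhood of the identity. Both containments $H^{-1}\subseteq HA$ and $HH\subseteq HA$ are then satisfied, and Lemma~\ref{lem:p1:subgroup iff in m.HA} yields $\crse{H\leq G}$. The whole argument is essentially a one\=/line set\=/theoretic manipulation; there is no substantial obstacle once one notices that the ``$K$ on the left'' occurring in the definition of an approximate subgroup can be shifted to ``$K^{-1}$ on the right'' by exploiting the symmetry $H=H^{-1}$.
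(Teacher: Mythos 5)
Your proof is correct and follows exactly the route the paper intends: the corollary is stated immediately after Lemma~\ref{lem:p1:subgroup iff in m.HA} with no written argument, clearly meaning that lemma to be applied directly, and the only detail worth spelling out is precisely the $K\mapsto K^{-1}$ left-to-right shift via $H=H^{-1}$ that you carry out. Your parenthetical caveat is also well taken: as literally stated the corollary omits the hypothesis that finite subsets of $G$ be $\CE$-bounded (the version of this observation in the Introduction does include it), and without that hypothesis the claim fails --- e.g.\ $H=\{-1,0,1\}$ is an approximate subgroup of the trivially coarse group $(\ZZ,\mincrs)$ but not a coarse subgroup of it.
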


\begin{remark}
 We refer to the recent monograph \cite{cordes2020foundations} for interesting examples of approximate subgroups of various set\=/groups. We should remark that their study of ``geometric approximate group theory'' is only tangentially related with our notion of coarse groups. 
 This is because they are mostly interested in left-invariant metrics on approximate sub\=/groups. Using such metrics, the multiplication function need not be controlled.  However, an infinite approximate subgroup $H$ of a set\=/groups $G$ equipped with \emph{bi-invariant} metrics $d$ is indeed a coarse subgroup of $(G,\CE_d)$.
\end{remark}

We conclude this section by proving that, when it exists, the coarse intersection of coarse subgroups is a coarse subgroup.

\begin{lem}\label{lem:p1:intersection of subgroups}
 If $\crse H_1,\ldots,\crse H_n$ are coarse subgroups of $\crse G$ and the coarse intersection $\crse Z\coloneqq \crse H_1 \crse\cap \cdots \crse\cap \crse H_n$ (Definition~\ref{def:p1:intersection}) exists, then $\crse Z$ is coarse subgroup of $\crse G$ and of each $\crse H_i$ for $i=1,\ldots,n $.
\end{lem}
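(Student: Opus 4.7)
The plan is to verify directly that the two defining conditions of a coarse subgroup hold for $\crse Z \coloneqq \crse H_1 \crse\cap \cdots \crse\cap \crse H_n$, using the universal property of coarse intersections (Definition~\ref{def:p1:intersection}).

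First I would fix representatives $H_i \subseteq G$ for each $\crse H_i$ and $Z \subseteq G$ for $\crse Z$. Since $\crse Z \crse\subseteq \crse H_i$ for each $i$, Lemma~\ref{lem:p1:controlled maps and asymptotic subsets} applied to the controlled map $\ast_G \colon G\times G \to G$ yields
\[
\crse{Z \cop_{G} Z \subseteq H_i \cop_{G} H_i} \crse\subseteq \crse H_i \qquad (i=1,\ldots,n),
\]
where the second containment holds because each $\crse H_i$ is a coarse subgroup. Thus any representative of $\crse{Z\cop_G Z}$ is coarsely contained in every $\crse H_i$, and the defining maximality property of the coarse intersection forces $\crse{Z\cop_{G} Z \subseteq Z}$. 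The same argument, applied to the controlled map $\inversefn_{G}$, gives $\crse{Z^{-1} \subseteq Z}$. This establishes that $\crse Z$ is a coarse subgroup of $\crse G$.

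Next I would show that $\crse Z$ is a coarse subgroup of each $\crse H_i$. By Lemma~\ref{lem:p1:containments and subspaces} the coarse containment $\crse{Z \subseteq H_i}$ determines a canonical coarse subspace of $\crse H_i$, which I will still denote $\crse Z$ (explicitly, one may replace $Z$ by $\st(Z,\fka)\cap H_i$ for a suitable controlled covering $\fka$ as in Example~\ref{exmp:p1:correspondence of subspaces concretely}). Since the inclusion $\crse \iota_i \colon \crse{H_i \hookrightarrow G}$ is a coarse homomorphism (Proposition~\ref{prop:p1:closed.asymp.classes.are.subgroups}), the coarse operations on $\crse H_i$ agree, through $\crse \iota_i$, with those of $\crse G$. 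Hence the conditions $\crse{Z\cop_{H_i} Z \subseteq Z}$ and $\crse{Z^{-1} \subseteq Z}$ are equivalent (via $\crse \iota_i$) to the corresponding conditions in $\crse G$, which we have already verified.

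The only mildly delicate point is the bookkeeping of representatives in the second paragraph: a priori a representative of $\crse{Z\cop_{G} Z}$ is a subset of $G$ rather than of $H_i$, so one must invoke the correspondence of Lemma~\ref{lem:p1:containments and subspaces} to transfer everything into $\crse H_i$. Once this is done the argument is purely formal and no further coarse-geometric input is needed.
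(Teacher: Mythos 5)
Your proof is correct and follows essentially the same route as the paper: apply Lemma~\ref{lem:p1:controlled maps and asymptotic subsets} to the controlled maps $\ast$ and $\inversefn$ to see that $Z\ast Z$ and $Z^{-1}$ are coarsely contained in every $H_i$, invoke the universal property of the coarse intersection to conclude $Z\ast Z \csub Z$ and $Z^{-1}\csub Z$, and then pass to $\crse H_i$ via the canonical correspondence of Lemma~\ref{lem:p1:containments and subspaces}. The only cosmetic difference is that you spell out the bookkeeping of representatives in the last step, which the paper leaves implicit.
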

\begin{proof}
 Let $Z$ and $H_i$ be representatives of $\crse Z$ and $\crse H_i$ respectively. By hypothesis we have $Z\csub H_i$ for all $i=1,\ldots, n$. Since $Z\times Z\csub H_i\times H_i$ and $\ast $ is controlled, it follows from Lemma~\ref{lem:p1:controlled maps and asymptotic subsets} that $Z\ast Z\csub H_i\ast H_i$ and the latter is contained in a controlled thickening of $ H_i$ because it is a coarse subgroup. As this is true for every $i=1,\ldots,n$, it follows by the definition of coarse intersection that $Z\ast Z\csub Z$.
 
 Similar arguments show that $Z^{-1}$ is coarsely contained in $Z$ as well. This proves that $\crse Z$ is indeed a coarse subgroup of $\crse G$. As explained in Section~\ref{sec:p1:containements and intersections}, once $\crse H_i$ is realized as a coarse space (and hence a coarse group), the coarse subspace $\crse Z\subseteq \crse G$ uniquely determines a coarse subspace of $\crse H_i$. It is immediate that this is a coarse subgroup of $\crse H_i$.
\end{proof}

We already remarked that the coarse intersection of two coarse subspaces may not exist (Example~\ref{exmp:p1:no intersection}). However, one may wonder whether the intersection of coarse subgroups always exists. The following example shows that this is not the case.

\begin{exmp}\label{exmp:p1:coarse subgroups no intersection}
 Let $\crse G\coloneqq (\ell_2(\NN),\CE_{\norm{\mhyphen}_2})$, where $\norm{\mhyphen}_2$ is the standard $\ell_2$\=/norm and let $v_0,v_1,\ldots$ be the standard orthonormal basis. Let $w_n\coloneqq v_{2n}+\frac{ 1}{n}v_{2n+1}$ and define the vector subspaces 
 \[
 H\coloneqq\overline{\angles{w_n\mid n\in\NN}_\RR}
 \quad \text{ and } \quad 
 K\coloneqq\overline{\angles{v_{2n}\mid n\in\NN}_\RR}.
 \]
 Then $\crse H$ and $\crse K$ are coarse subgroups of $\crse G$ and we claim that their intersection is not well defined.
 
 In fact, for any fixed $C>0$ the intersection of $H$ and the $C$\=/neighborhood of $K$ in $\ell_2(\NN)$ is the set
 \[
  I_C\coloneqq \left\{\sum_{n\in\NN}a_nw_n\ \middle|\ a_n\in\RR,\ \sum_{n\in\NN}a_n^2<\infty,\ \sum_{n\in\NN}\frac{a_n^2}{n^2} \leq C^2\right\}\subset \ell_2(\NN).
 \]
 To prove the claim it is enough to note that $I_{C+1}$ is not contained in a bounded neighborhood of $I_C$: for each $n\geq 1$ the former contains the vector $n(C+1)w_n$, and its nearest\=/point projection in $I_C$ is $nCw_n$ which is at distance $n\norm{w_n}$ from it.
\end{exmp}

\section{Coarse Quotients}\label{sec:p1:coarse quotients}

 Recall from Section~\ref{sec:p1:subspaces and quotients} that quotients in the category of coarse spaces are obtained by enlarging the coarse structures. We take the same approach for the definition of coarse quotient groups:
 
 \begin{de}\label{def:p1:coarse quotient group}
  A \emph{coarse quotient}\index{quotient!coarse group}\index{coarse group!quotient}  of a coarse group $\crse G=(G,\CE,[\ast]_{\CE},[\unit]_{\CE},[\inversefn]_{\CE})$ is a coarse group $\overline{\crse G}=(G,\CE',[\ast]_{\CE'},[\unit]_{\CE'},[\inversefn])_{\CE'}$ obtained by equipping the same set $G$ with a coarser equi bi\=/invariant coarse structure $\CE'\supseteq \CE$. 
 \end{de} 
 
 \begin{rmk}
 In the above definition we used the notation $[\ast]_\CE$ to emphasize that we are using the same set\=/functions to define the operations in the coarse quotient group, we have just enlarged the coarse structure. We decorated $[\mhyphen]$ with the coarse structure to draw attention to the fact that the closeness equivalence relation on functions depends on the coarse structure.
 Hereafter we will suppress the coarse structure decoration from the group operations.  
 \end{rmk}

 \begin{exmp}
  For every set\=/group $G$, the coarse group $(G,\varcrs[grp]{fin})$ is a coarse quotient of the trivially coarse group $(G,\mincrs)$. If $G$ is equipped with a bi\=/invariant metric $d$, then $(G,\CE_d)$ is a coarse quotient of both $(G,\varcrs[grp]{fin})$ and $(G,\mincrs)$.
 \end{exmp}

 \begin{exmp}
  Let $\crse G$ and $\crse H$ be coarse groups. Then the coarse group $\crse{G_{H}}$ from Example~\ref{exmp:p1:pull back coarse structures} is a coarse quotient of $\crse G$.
 \end{exmp}

If $\crse Q=(Q,\CF)$ is a coarse group and $\crse f \colon \crse G\to\crse Q$ is a coarsely surjective coarse homomorphism, then $\overline{\crse G}\coloneqq (G,f^*(\CF))$ is a coarse group by Lemma~\ref{lem:p1:pull-back.under.hom.is.crsegroup} and hence it is a coarse quotient of $\crse G$. Further, $\crse f$ descends to a natural coarse homomorphism $\overline{\crse f}=[f]\colon \overline{\crse{G}}\to\crse Q$ which is an isomorphism because $\crse f$ is coarsely surjective (Corollary~\ref{cor:p1:pullback.gives.isomorphism}). Thus whenever we are given a coarsely surjective coarse homomorphism $\crse G \to \crse Q$ we are entitled to say that $\crse Q$ is a coarse quotient of $\crse G$.

\begin{rmk}
 In some sense, this approach to the definition of coarse quotient groups is analogous to the standard way quotients of set\=/groups are introduced. It is common to define the quotients of a set\=/group $G$ by fixing a normal subgroup $N\unlhd G$ and noting that the set of cosets $G/N$ admits a group operation. In a second stage one remarks that if $f\colon G\to Q$ is a surjective homomorphism then, by the First Isomorphism Theorem, $Q$ is naturally isomorphic to the quotient $G/\ker(f)$. For this reason one is entitled to call ``quotient'' any surjective image of a set\=/group homomorphism.
 
 Similarly, Definition~\ref{def:p1:coarse quotient group} provides us with a concrete coarse group (set with a coarse structure and coarse operations), just as $G/N$ provided us with a concrete set\=/group. We then observe that every coarsely surjective image of a coarse homomorphism is naturally identified with a coarse quotient. In this sense, Lemma~\ref{lem:p1:pull-back.under.hom.is.crsegroup} can be seen as a coarse analogue of the First Isomorphism Theorem. 
 
 In Section~\ref{sec:p1:cosets of subgroups} we will give a description of coarse quotient groups that is more reminiscent of coset spaces, while in Section~\ref{sec:p1:isomorphism theorems} we will prove some isomorphism theorems which show that coarse quotients by coarsely normal coarse subgroups behave as expected.
\end{rmk}

 The following result is almost a tautology, but it is good to write it explicitly:

 \begin{prop}\label{prop:p1:quotient.subgroup}
  Let $\crse G=(G,\CE)$ be a coarse group and $\CR$ a family of relations on the set $G$. The minimal equi bi\=/invariant coarse structure $\varcrs[grp]{\CE, \CR}$ such that $\CE\cup\CR\subseteq\varcrs[grp]{\CE, \CR}$ is equal to
  \begin{equation}\label{eq:p1:minimal.crse.structure}
   \varcrs[grp]{\CE, \CR}=\angles{(\Delta_{G}\ast\CR)\ast\Delta_{G} ,\CE}=\angles{\Delta_{G}\ast(\CR\ast\Delta_{G} ),\CE}.   
  \end{equation}
  The coarse quotient group $\crse G/\aangles{\CR}\coloneqq (G,\varcrs[grp]{\CE, \CR})$ has the property that a coarse homomorphism $\crse{f}\colon \crse G\to \crse H=(H,\CF)$ descends to $\crse G/\aangles{\CR}$ if and only if $f\times f(R)\in\CF$ for every $R\in\CR$.
 \end{prop}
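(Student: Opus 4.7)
The statement is really two things: an explicit formula for $\varcrs[grp]{\CE,\CR}$ and a universal property. The strategy for the formula is to adapt the proof of Lemma~\ref{lem:p1:generated biinvariant coarse structure} almost verbatim, replacing $\mingrp$ by $\CE$. The only substantial input is that $\CE$ already satisfies everything $\mingrp$ needs to: since $(G,\CE)$ is a coarse group, we have $\mingrp\subseteq\CE$ (so $\assRel,\idRel,\invRel\in\CE$) and $\CE$ is equi bi\=/invariant.

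First I would verify the equality of the two right\=/hand sides in \eqref{eq:p1:minimal.crse.structure}. Since $\assRel\in\mingrp\subseteq\CE$, we have $(\Delta_G\ast R)\ast\Delta_G\subseteq \assRel\cmp\paren{\Delta_G\ast(R\ast\Delta_G)}\cmp\assRel$ and vice versa for any relation $R$; this was already observed at the start of the proof of Lemma~\ref{lem:p1:generated biinvariant coarse structure}, and it gives the desired equality.

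Next I would let $\CE'\coloneqq\angles{(\Delta_G\ast\CR)\ast\Delta_G,\,\CE}$ and argue the two containments. The easy direction is $\CE'\subseteq\varcrs[grp]{\CE,\CR}$: by definition $\varcrs[grp]{\CE,\CR}$ contains $\CE$ and $\CR$, is closed under the coarse\=/structure operations, and is equi bi\=/invariant, so in particular $\Delta_G\ast R\ast\Delta_G\in\varcrs[grp]{\CE,\CR}$ for every $R\in\CR$. For the reverse containment I need to show $\CE'$ is itself equi bi\=/invariant; combined with $\CE\cup\CR\subseteq\CE'$ this gives $\varcrs[grp]{\CE,\CR}\subseteq\CE'$ by minimality. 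To show $\ast\colon(G,\CE')\times(G,\CE')\to(G,\CE')$ is controlled, by Proposition~\ref{prop:p1:coarse.group.iff.heartsuit.and.equi controlled} (since the Group Diagrams commute up to $\CE\subseteq\CE'$\=/closeness) it suffices to produce equi control of the left and right multiplications. This follows from the same chain of computations as in Lemma~\ref{lem:p1:generated biinvariant coarse structure}: writing a generator $\Delta_G\ast((\Delta_G\ast R)\ast\Delta_G)$ and chasing it using $\assRel\in\CE$ and equi bi\=/invariance of $\CE$, one reduces it to a relation already in $\CE'$. The bookkeeping is identical.

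For the universal property, observe that a coarse homomorphism $\crse f\colon\crse G\to\crse H$ factors through $\cid_G\colon\crse G\to\crse G/\aangles{\CR}$ precisely when some (equivalently, any) representative $f$ defines a controlled map $(G,\varcrs[grp]{\CE,\CR})\to(H,\CF)$, i.e.\ when $f\times f$ sends a generating family of $\varcrs[grp]{\CE,\CR}$ into $\CF$. One direction is immediate: if $\crse f$ descends then $f\times f((\Delta_G\ast R)\ast\Delta_G)\in\CF$, hence $f\times f(R)\in\CF$ for every $R\in\CR$. For the converse, assume $f\times f(R)\in\CF$ for all $R\in\CR$. Using that $f$ is a coarse homomorphism, pick $F\in\CF$ so that $f(g_1\ast g_2)\rel{F}f(g_1)\ast_H f(g_2)$ uniformly; then for any $(r_1,r_2)\in R$ and $g,g'\in G$,
\[
 f((g\ast r_i)\ast g')\rel{F\cmp(F\ast\Delta_H)}(f(g)\ast_H f(r_i))\ast_H f(g'),
\]
which shows $f\times f((\Delta_G\ast R)\ast\Delta_G)\subseteq F'\cmp(\Delta_H\ast(f\times f(R))\ast\Delta_H)\cmp F'$ for some $F'\in\CF$. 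Since $\CF$ is equi bi\=/invariant, the right\=/hand side lies in $\CF$. Combined with the fact that $f\times f(\CE)\subseteq\CF$ (as $\crse f$ is already a coarse homomorphism on $\crse G$), we conclude $f\colon(G,\varcrs[grp]{\CE,\CR})\to(H,\CF)$ is controlled, as required.

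\textbf{Main obstacle.} There is no serious obstacle: the argument is a mechanical transcription of Lemma~\ref{lem:p1:generated biinvariant coarse structure} together with an application of equi bi\=/invariance of $\CF$ for the universal property. The only care needed is keeping track of the fact that $\mingrp\subseteq\CE$ and $\CE$ equi bi\=/invariant are exactly the two ingredients used everywhere $\mingrp$ appeared in the earlier proof.
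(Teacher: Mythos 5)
Your proof is correct, but your route differs from the paper's in both halves.

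For the formula \eqref{eq:p1:minimal.crse.structure}, the paper does not re-run the equi-control computations: it applies Lemma~\ref{lem:p1:generated biinvariant coarse structure} twice, once with empty $\CR$ to rewrite $\CE=\angles{(\Delta_G\ast\CE)\ast\Delta_G,\mingrp}$, and once with the family $\CR\cup\CE$ to get $\varcrs[grp]{\CE,\CR}=\angles{(\Delta_G\ast(\CR\cup\CE))\ast\Delta_G,\mingrp}$; a one-line algebraic rewrite of generators then finishes. Your plan instead reproves the equi bi\=/invariance of $\angles{(\Delta_G\ast\CR)\ast\Delta_G,\CE}$ by repeating the $\assRel$-chasing that underlies the earlier Lemma. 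Both are sound; the paper's version trades the bookkeeping for two black-box invocations.

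For the universal property, the paper's argument is conceptually different from yours. It avoids any explicit chase of $f\times f((\Delta_G\ast R)\ast\Delta_G)$: from $f\times f(R)\in\CF$ for all $R\in\CR$ it deduces $\CR\cup\CE\subseteq f^*(\CF)$, notes that $f^*(\CF)$ is equi bi\=/invariant by Lemma~\ref{lem:p1:pull-back.under.hom.is.crsegroup}, and concludes $\varcrs[grp]{\CE,\CR}\subseteq f^*(\CF)$ by minimality. Your direct estimate $f\times f((\Delta_G\ast R)\ast\Delta_G)\subseteq F'\cmp(\Delta_H\ast(f\times f(R))\ast\Delta_H)\cmp F'$ is correct and amounts to unwinding exactly why $f^*(\CF)$ is equi bi\=/invariant, so it recovers the same fact at a more granular level. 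The pull-back formulation is the cleaner packaging, but yours is a legitimate hands-on alternative and makes the role of equi bi\=/invariance of $\CF$ more visible.
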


 \begin{proof}
  By Lemma~\ref{lem:p1:generated biinvariant coarse structure} we know that 
  \[
   \CE=\angles{(\Delta_{G}\ast\CE)\ast\Delta_{G} \;,\; \mingrp} 
   \quad \text{ and }\quad 
   \varcrs[grp]{\CE, \CR}=\angles{(\Delta_{G}\ast(\CR\cup\CE))\ast\Delta_{G} \;,\; \mingrp}.
  \]
  It is then simple to observe that 
  \[
   \angles{(\Delta_{G}\ast\CR)\ast\Delta_{G} ,\CE}=\angles{(\Delta_{G}\ast\CR)\ast\Delta_{G} ,(\Delta_{G}\ast\CE)\ast\Delta_{G} ,\mingrp}=\angles{(\Delta_{G}\ast(\CR\cup\CE))\ast\Delta_{G} ,\mingrp}.
  \]
  The proof that $\varcrs[grp]{\CE, \CR}=\angles{\Delta_{G}\ast(\CR\ast\Delta_{G} ),\CE}$ is identical.
  
  For the second part of the statement, let $\crse{f} \colon \crse G\to \crse H$ be a coarse homomorphism. If $f\times f(R)\in\CF$ for every $R\in\CR$, then $\CR\cup\CE\subseteq f^*(\CF)$ and hence $\varcrs[grp]{\CE, \CR}\subseteq f^*(\CF)$ by Lemma~\ref{lem:p1:pull-back.under.hom.is.crsegroup} and minimality of $\varcrs[grp]{\CE, \CR}$. It follows that $f\colon (G,\varcrs[grp]{\CE, \CR})\to (H,\CF)$ is controlled and hence yields a coarse homomorphism. On the other hand, if $\crse f$ descends to a controlled map then it must send the relations $R\in\CR$ into $\CF$.
 \end{proof}

 \begin{de}\label{de:p1:quotient by a relation}
  Given a coarse group $\crse{G}=(G,\CE)$ and any set $\CR$ of relations on $G$, we say that $\crse G/\aangles{\CR}\coloneqq (G,\varcrs[grp]{\CE, \CR})$\nomenclature[:COS]{$\crse G/\aangles{\CR}\coloneqq (G,\varcrs[grp]{\CE, \CR})$}{quotient of coarse group by relations} is the \emph{quotient of $\crse G$ by $\CR$}.\index{quotient!by relations (coarse groups)} 
 \end{de}

The quotient \emph{coarse group} $\crse G/\aangles{\CR}$ is not to be confused with the quotient \emph{coarse space} $\crse G/\CR$ of Definition~\ref{def:p1:coarse quotient space}. Rather, the above definition has a strong analogy with presentations of set\=/groups. The coarse group $\crse{G}/\aangles{\CR}$ can be thought of as the coarse group obtained by adding the relations $\CR$ to ``a presentation'' of $\crse G$. 
 One case of interest is that of quotients of the form  $\crse G/\aangles{\braces{A\times A}}$ where $A\subseteq G$ is some arbitrary subset. Loosely speaking, this is the quotient of $\crse G$ by a sort of ``coarse normal closure of $A$''. Note however that there is an important difference between this notion of quotient and actual presentations of set\=/groups: namely, if $G$ is a set\=/group and $A\subset G$ is some subset, the set\=/group obtained by adding $A$ to a presentation of $G$ is the set\=/group quotient $G/\aangles{A}$ where the normal closure $\aangles{A}$ is the \emph{subgroup} consisting of \emph{all} finite products of conjugates of elements in $A$ and their inverses. In particular, $\aangles{A}$ itself becomes trivial in the quotient. This is not the case for coarse quotients. Namely, even though each finite product of elements in $A$ will be at bounded distance from $\unit$ in the coarse quotient $\crse G/\aangles{\{A\times A\}}$, this bound is not uniform and hence it may be that the set of all the finite products of elements of $A$ is \emph{not} bounded. For instance, consider $\crse G= (\RR,\CE_{\abs{\mhyphen}})$ and let $A=B(0;1)$ be the unit ball around $0$. Since $A$ generates $\RR$ as a set\=/group, the quotient $\RR/\aangles{A}$ is trivial. However, $A$ is bounded and hence the coarse quotient $\crse G/\aangles{\{A\times A\}}$ is still $\crse G$! Similar observations will also play a role when discussing quotient coarse actions and coarse coset spaces (Subsections~\ref{sec:p1:quotient coarse action}--\ref{sec:p1:cosets of subgroups}).
 
 The above example of a coarse quotient behaving very differently from a set\=/group quotient may be somewhat unsatisfying. In fact, we use that $A\ceq \{\unit\}$ to deduce that the coarse quotient $\crse G/\aangles{\{A\times A\}}$ is simply $\crse{G}$. This raises the question whether it should always be possible to find a (coarse) subgroup $\crse H \crse{\leq G}$ so that $\crse G/\aangles{\{A\times A\}} = \crse G/\aangles{\{H\times H\}}$. The next example shows that this is not the case (this is related to the fact that coarse homomorphisms need not have a coarse kernel, see Chapter~\ref{ch:p1:coarse kernels}).

 \begin{exmp}
  Consider the trivially coarse group $(\RR,\mincrs)$ and let $A\coloneqq B(0;1)\subseteq \RR$ be the unit ball of centered at the origin $0\in\RR$. It is not hard to check that $(\RR,\mincrs)/\aangles{A\times A}$ coincides with the metric coarse group $(\RR,\CE_{\abs{\mhyphen}})$. In fact, since $A$ has bounded diameter it is bounded in $(\RR,\CE_{\abs{\mhyphen}})$, we see by the properties of the coarse quotient that the identity function $(\RR,\mincrs)/\aangles{A\times A}\to (\RR,\CE_{\abs{\mhyphen}})$ is controlled. On the other hand, every $\CE_{\abs{\mhyphen}}$\=/bounded subset of $\RR$ can be covered by finitely many translates of $A$, and it is therefore bounded in $(\RR,\mincrs)/\aangles{A\times A}$. This shows that the identity function is controlled and proper, and hence defines an isomorphism by Proposition~\ref{prop:p1:proper.hom.coarse.embedding}.
  
  Notice however that we cannot realize $(\RR,\CE_{\abs{\mhyphen}})$ as a coarse quotient of $(\RR,\mincrs)$ by a (coarse) subgroup, because every non\=/trivial subgroup of the set\=/group $\RR$ is unbounded in $(\RR,\CE_{\abs{\mhyphen}})$.
 \end{exmp}
 
 The following example is slightly more elaborate, and shows that the above phenomenon is not special to trivially coarse groups.

\begin{exmp}\label{exmp:p1:Hilbert.space.monomorphism}
 Consider the Hilbert space $\ell^2(\NN)$ with the coarse structure $\CE_{\norm{\mhyphen}_2}$ induced by the natural $\ell^2$\=/distance. Let $v_1,v_2,\ldots$ be the natural orthonormal basis of $\ell^2(\NN)$ and let $\norm{\mhyphen}_2'$ be the (non\=/complete) norm on $\ell^2(\NN)$ obtained by rescaling the $\ell_2$\=/norm in such a way that $\norm{v_n}_2'=\frac{1}{n}$ for every $n\in\NN$. Let $\CE_{\norm{\mhyphen}_2'}$ be the coarse structure induced by $\norm{\mhyphen}_2'$, then both $(\ell^2(\NN),\CE_{\norm{\mhyphen}_2})$ and $(\ell^2(\NN),\CE_{\norm{\mhyphen}_2'})$ are abelian metric coarse groups. 
 
 The identity map $\id_{\norm{\mhyphen}_2\to \norm{\mhyphen}_2'}\colon(\ell^2(\NN),\CE_{\norm{\mhyphen}_2})\to(\ell^2(\NN),\CE_{\norm{\mhyphen}_2'})$ is a controlled homomorphism 
 and hence $[\id_{\norm{\mhyphen}_2\to \norm{\mhyphen}_2'}]$ is a coarse homomorphism. As before, we let $A\coloneqq B_{\norm{\mhyphen}_2'}(0;1) \subset \ell^2(\NN)$ be the unit ball for the norm $\norm{\mhyphen}_2'$ and we claim that $(\ell^2(\NN),\CE_{\norm{\mhyphen}_2})/\aangles{\{A\times A\}}$ coincides $(\ell_2(\NN),\CE_{\norm{\mhyphen}_2'})$. 
 To show this it is enough to observe that every $\norm{\mhyphen}_2'$\=/bounded set is contained in finitely many iterated star\=/neighborhoods of the origin by the controlled covering $\pts{\ell^2(\NN)}\ast A$.
 
 We also claim that there is no coarse subgroup $\crse H \crse{\leq }(\ell^2(\NN),\CE_{\norm{\mhyphen}_2})$ such that 
 \[
   (\ell^2(\NN),\CE_{\norm{\mhyphen}_2})/\aangles{\{A\times A\}} = (\ell^2(\NN),\CE_{\norm{\mhyphen}_2})/\aangles{\{H\times H\}}.
 \]
 This is once again due to the fact that every non\=/trivial coarse subgroup of $(\ell^2(\NN),\CE_{\norm{\mhyphen}_2})$ is unbounded also according to the norm $\norm{\mhyphen}_2'$. In fact, by Corollary~\ref{cor:p1:coarse subgroup iff coarse image} we know that coarse subgroups are coarse images of coarse homomorphisms. If $\crse{f\colon G}\to (\ell^2(\NN),\CE_{\norm{\mhyphen}_2})$ is a coarse homomorphism, we can apply Proposition~\ref{prop:p1:homogeneization.of.qmorph} to choose a representative so that $f(g\ast g)=2f(g)$ for every $g\in G$. Using such representatives, we see that if $w\in f(G)\smallsetminus \{0\}$ then so does the sequence $2^nw$ and hence $f(G)$ cannot be $\norm{\mhyphen}_2'$ bounded.
 
 \end{exmp}

\chapter{Coarse Actions}\label{ch:p1:coarse actions}
In this chapter we study coarse actions of coarse groups. One of the main points is to illustrate the connection between coarse group actions and geometric group theory. 

\section{Definition and Examples}\label{sec:p1:coarse actions def}
Let $\crse G=(G,\CE)$ be a coarse group and $\crse Y=(Y,\CF)$ any coarse space. 

\begin{de}\label{def:p1:coarse action}
 A \emph{(left) coarse action}\index{coarse action} is a coarse map $\crse{\alpha}\colon \crse G\times \crse Y\to \crse Y$ such that the following Action Diagrams\index{Action Diagrams} commute
\[ \begin{tikzcd}
    \crse{G}\times \crse G\times \crse Y \arrow[r, "\cid_{\crse G} \times \crse \alpha"] \arrow[d, "\cop\times \cid_{\crse Y}"] & \crse{G}\times \crse Y      \arrow[d, "\crse \alpha"]\\
    \crse{G}\times \crse Y \arrow[r, "\crse \alpha"]& \crse Y 
\end{tikzcd} 
\qquad\qquad 
\begin{tikzcd}
    \crse{Y}\arrow{r}{(\cunit,\cid_{\crse Y})} 
    \arrow[dr, swap, "\cid_{\crse Y}"] & \crse{G}\times \crse Y      
    \arrow[d, "\crse \alpha"]\\
    & \crse Y 
\end{tikzcd} 
\] 
 In analogy with the notation $\alpha\colon G\curvearrowright Y$ for set\=/group actions, we denote coarse actions by $\crse \alpha\colon \crse G\curvearrowright \crse Y$.
\end{de}

As a direct consequence of the definition, every coarse group $\crse G$ admits a few natural examples of coarse actions. First, given any coarse space $\crse Y$ we may define the \emph{trivial coarse action}\index{coarse action!trivial} of $\crse G$ on $\crse Y$ letting $\alpha(g,y)=y$ for every $g\in G$, $y\in Y$ (\emph{i.e.}\ $\crse\alpha$ is the coordinate projection to $\crse G\times \crse Y\to \crse Y$).

Second, the coarse multiplication $\cop\colon \crse G\times \crse G\to \crse G$ can be seen as a left coarse action of $\crse G$ on itself. If $\crse {H\leq G}$ is a coarse subgroup we may consider the restriction of $\cop_{\crse G}\colon\crse{G\times G\to G}$ to $\crse{H\times G}$ and observe that this defines a coarse action of $\crse H$ on $\crse G$. These are examples of \emph{coarse actions by left multiplication}\index{coarse action!by left multiplication} (later we shall also discuss actions by left multiplications on quotient coarse groups and coarse coset spaces, see Sections~\ref{sec:p1:quotients by left mult}--\ref{sec:p1:cosets of subgroups}). Finally, every coarse group also coarsely acts on itself by conjugation: we will discuss this more in detail in Section~\ref{sec:p1:action by conjugation}.

As for usual set\=/group actions, it is often convenient to write coarse actions as binary operations. That is, once a coarse action $\crse{\alpha \colon G\curvearrowright Y}$ is fixed we prefer to write $g\cdot y$ instead of $\alpha(g,y)$. Using this notation, commutativity of the Action Diagrams takes the familiar form:
\[
 g_1\cdot(g_2\cdot y)\rel{\CF} (g_1\ast g_2)\cdot y
 \quad\text{ and }\quad
 e\cdot y\rel{\CF} y \qquad\forall g_1,g_2\in G,\ \forall y\in Y.
\]

It is also convenient to extend the binary notation to relations on $G$ and $Y$: given $E\subseteq G\times G$ and $F\subseteq Y\times Y$ we let\nomenclature[:R]{$E\cdot F$}{action multiplication of relations}
\[
 E\cdot F\coloneqq\braces{(e_1\cdot f_1,e_2\cdot f_2)\mid (e_1,e_2)\in E,\ (f_1,f_2)\in F}=\alpha\times\alpha(E\otimes F)
\]
(this is convention is analogous to the definition of $E_1\ast E_2$).
Recall that a function $\alpha\colon (G,\CE)\times (Y,\CF)\to(Y,\CF)$ is controlled if and only if 
\[
 E\cdot \Delta_Y \in\CF \qquad\text{and}\qquad \Delta_G\cdot F\in \CF
\]
for every $E\in \CE$ and $F\in\CF$ (Lemma~\ref{lem:p1:equi controlled.sections.iff.controlled}). In many cases we consider, it is clear that $E\cdot \Delta_Y \in\CF$ for every $E\in\CE$. Therefore, the more interesting condition is that $\Delta_G\cdot F$ belongs to $\CF$.  Extending Definition~\ref{def:p1:equi.bi.invariant coarse structure}, we shall say that the coarse structure $\CF$ is \emph{equi left invariant}\index{coarse structure!equi left invariant}\index{equi!left invariant} (for the fixed function $\alpha$) if $\Delta_G\cdot F$ belongs to $\CF$ for every $F\in \CF$.

It is instructive to consider examples of coarse actions of coarsified set\=/groups.
Let $\alpha\colon G\curvearrowright Y$ be a set\=/group action. Then the Action Diagrams trivially commute for any choice of coarse structures on $G$ and $Y$. It follows that such an $\alpha$ defines a coarse action $\crse \alpha \colon (G,\CE)\curvearrowright (Y,\CF)$ if and only if it is controlled. 
In particular, $\alpha$ defines a coarse action of the trivially coarse group $(G,\mincrs)$ on $(Y,\CF)$ if and only if the coarse structure $\CF$ is equi left invariant under $\alpha$.

\begin{exmp}\label{exmp:p1:metric action by unif coarse eq}
 Let $\alpha\colon G\curvearrowright Y$ be a set\=/group action on a metric space $(Y,d)$. Then it is simple to verify $\crse \alpha\colon (G,\mincrs)\curvearrowright(Y,\CE_d)$ is a coarse action if and only if $\alpha$ is an action by \emph{uniform coarse equivalences.}  That is, there must be (increasing, unbounded) controlled functions $\rho_-,\rho_+\colon[0,\infty)\to[0,\infty)$ such that 
 \[
 \rho_-\paren{d(y_1,y_2)}\leq d\paren{g\cdot y_1,g\cdot y_2} \leq \rho_+(d(y_1,y_2))
 \]
 for every pair of points $y_1,y_2\in Y$ and $g\in G$ (the upper bound follows directly from checking that the functions $\alpha(g,\variable)$ are equi controlled. The lower bound is obtained by observing that $\alpha(g^{-1},\variable)$ is the inverse of $\alpha(g,\variable)$). Moreover, if $(Y,d)$ is a geodesic metric space this only happens if $\alpha$ is an action by uniform quasi\=/isometries, \emph{i.e.}\ there must be constants $L,A$ such that $\frac{1}{L}d\paren{y_1,y_2}-A\leq d\paren{g\cdot y_1,g\cdot y_2} \leq Ld(y_1,y_2)+A$ for every pair of points $y_1,y_2\in Y$.
 
 More generally, a \emph{quasi\=/action} of a set\=/group $G$ on a metric space $(Y,d)$ is a map $\alpha$ from $G$ to  the set of quasi\=/isometries of $Y$ such that the maps $\alpha(g,\variable)$ are uniform quasi isometries and 
 \[
  \alpha(gh,\variable)\closefn \alpha(g,\variable)\circ\alpha(h,\variable)
  \qquad \alpha(e,\variable)\closefn \id_Y
 \]
 are close functions for every $g,h\in G$ (\emph{i.e.}\ the Action Diagrams commutes up to closeness). As above, we see that every quasi\=/action is a coarse action of the trivially coarse group $(G,\mingrp)\curvearrowright(Y,\CE_d)$. 
\end{exmp}

\begin{rmk}
Special examples of coarse actions---especially quasi\=/actions---have been studied in many guises by various authors. Some examples include the works of Manning \cite{manning2006quasi} and Mosher--Sageev--Whyte \cite{mosher2003quasi, mosher2011quasi} on quasi-actions on trees; and the works of Eskin--Fisher--Whyte \cite{eskin2010quasi,eskin2012coarse,eskin2013coarse}.  An important motivation comes from the theory of quasi\=/isometric rigidity. In fact, quasi\=/actions are often implicit in the study of such rigidity statements.
\end{rmk}

 \begin{example}\label{exmp:p1:topological left multiplication}
We saw in Example~\ref{exmp:p1:topological coarse structure abelian} an abelian topological group $G$ has a topological coarsification $\varcrs[grp]{cpt}$\index{coarsification!topological} whose bounded subsets are the relatively compact ones. The assumption that $G$ is abelian is necessary to ensure that the family $\fkK_e$ of relatively compact subsets of $G$ containing the identity is closed under taking unions on conjugates (condition (U4) of Section~\ref{sec:p1:determined locally}). However, even if $G$ is an arbitrary topological group, $\fkK_e$ still satisfies the conditions (U0)--(U3). It follows from Proposition~\ref{prop:p1:families of identity neighbourhoods_setgroups} there exists a unique equi left invariant coarse structure $\varcrs[left]{cpt}$ whose bounded sets are the relatively compact subsets of $G$. The left multiplication thus defines a natural coarse action of the trivially coarse group $(G,\mincrs)\curvearrowright(G,\varcrs[left]{cpt})$. 
 Notice that we can also describe the coarse structure $\varcrs[left]{cpt}$ very explicitly. Namely,
 \[
  \varcrs[left]{cpt}
  \coloneqq\braces{E\subseteq G\times G\mid \exists K\subseteq G\text{ compact s.t. }\forall(g_1,g_2)\in E,\ g_2^{-1}g_1\in K}.
 \]
 The $\varcrs[left]{cpt}$\=/controlled partial coverings are refinements of coverings of the form $\pts{G}\ast K$ with $K\in\fkK_e$.
 
 We may also extend this construction to actions of topological groups on topological spaces. Namely, if $\alpha\colon G\curvearrowright Y$ is a continuous action, we can consider the coarse structure $\CF_{\rm cpt}^{\rm left}$ whose controlled partial covers are refinements of coverings of the form $\braces{g(K)\mid g\in G, K\text{ compact}}$. Equivalently,
 \[
  \CF_{\rm cpt}^{\rm left}
  \coloneqq\braces{F\subseteq Y\times Y\mid \exists K\subseteq Y\text{ compact s.t. }\forall(y_1,y_2)\in F,\ \exists g\in G\text{ with } y_1,y_2\in g(K)}
 \]
(one may check that $\CF_{\rm cpt}^{\rm left}$ is closed under composition with an argument analogous to that of Example~\ref{exmp:p1:compact coarse structures}).
 With this coarse structure, $\alpha$ becomes a coarse action of the trivially coarse group $\crse \alpha \colon (G,\mincrs)\curvearrowright (Y,\CF_{\rm cpt}^{\rm left})$.
\end{example}

Returning to general coarse groups, let $\crse G=(G,\CE)$.
If $\CF\supseteq\CE$ is a coarser coarse structure on $G$, then the coarse multiplication map $\ast\colon (G,\CE)\times (G,\CF) \to(G,\CF)$ still satisfies the Action Diagrams. 
Therefore $\ast\colon (G,\CE)\times (G,\CF) \to(G,\CF)$ defines a coarse action of $\crse G$ on $(G,\CF)$ if and only if $\CF$ is equi left invariant. 
We will later see that this elementary construction is the prototypical example of a coarse action.

\begin{rmk}
 Let $(G,\CE)$ be a coarse group and $(Y, \CF)$ a coarse space. 
 The set of functions $\alpha\colon G\times Y\to Y$ that define a coarse action $\crse \alpha \colon (G,\CE)\curvearrowright(Y,\CF)$ decreases if $\CE$ is replaced with some coarser coarse structure $\CE'\supset\CE$. The reason for this behavior can be appreciated with an extreme example: it is natural to expect that the bounded coarse group $(G,\maxcrs)$ only admits trivial coarse actions (\emph{i.e.}\ such that $\alpha(g,\variable)$ stays uniformly close to $\id_Y$ as $g$ ranges in $G$).  For a more sophisticated explanation see also Chapter~\ref{ch:p2:spaces of controlled maps}.
\end{rmk}

\begin{rmk}
Another notion that appears in the literature is that of \emph{near actions}\index{near action} of set\=/groups. This is another way to approach the study of mappings that are ``almost'' a group action up to some finite error. In general, a near action cannot be seen as a coarse action, as the space that is being acted upon need not have a compatible coarse structure. Vice versa, a coarse action need not be a near action if the bounded sets are allowed to be infinite. However, there are various instances of mappings that can be seen both as a near actions and coarse actions. Cornulier \cite{cornulier2019near} gives an extensive list of well-explained examples and references, which is an excellent resource for the reader interested in near actions. 
\end{rmk}

\section{Coarse Invariance and Equivariance}
Adapting standard nomenclature from actions of set\=/groups to actions of coarse groups, we may say that a $\crse G$\=/space is a coarse space $\crse Y$ equipped with a $\crse G$\=/action. Coarse equivariant maps and coarse invariant subspaces will then correspond to maps and inclusions of $\crse G$\=/spaces respectively. Explicitly, we use the following:

\begin{de}\label{def:p1:coarse equivariant}
 Let $\crse \alpha\colon \crse G\curvearrowright \crse Y$ and $\crse{\alpha'}\colon \crse G\curvearrowright \crse{Y'}$ be coarse actions. A coarse map $\crse{f}\colon \crse Y\to \crse{ Y'}$ is \emph{coarsely $\crse G$\=/equivariant}\index{coarsely!$\crse G$\=/equivariant (map)}\index{coarsely!equivariant (map)} (or simply \emph{coarsely equivariant} when $\crse G$ is clear from the context) if the following diagram commutes: 
\[ \begin{tikzcd}
    \crse{G}\times\crse{Y} \arrow[r, "{\crse \alpha}"] \arrow[d,swap, "{\cid_{\crse G}\times \crse f}"] & \crse{Y}      \arrow[d, "{\crse{f}}"]\\
    \crse{G}\times \crse {Y'} \arrow[r, "{\crse {\alpha'}}"]& \crse {Y'} . 
\end{tikzcd} 
\] 
That is, if $\crse{Y'}=(Y,\CF')$ then $\crse f$ is coarsely equivariant if
\[
 f(\alpha(g,y))\rel{\CF'}\alpha'(g,f(y))\qquad \forall g\in G, \forall y\in Y.
\]
Two coarse actions $\crse\alpha\colon \crse G\curvearrowright\crse Y$ and $\crse{\alpha'}\colon \crse G\curvearrowright\crse{Y'}$ are \emph{isomorphic}\index{isomorphic!coarse actions} if there exists a coarsely equivariant coarse equivalence $\crse f\colon \crse Y\to\crse{Y'}$ (the coarse inverse of $\crse f$ is automatically coarsely equivariant).
\end{de}

\begin{exmp}
 The additive group $\RR$ acts on $(\RR,\CE_d)$ by addition $(t,x)\mapsto t+x$ and coarsely acts on $(\ZZ,\CE_d)$ by $(t, n)\mapsto \lfloor t+n\rfloor$. The inclusion $\ZZ\hookrightarrow\RR$ is coarsely equivariant (but not set\=/theoretically equivariant), so these coarse actions are isomorphic. 
 The above formulae define coarse actions of both the trivially coarse group $(\RR,\mincrs)$ and the metric coarse group $(\RR,\CE_d)$. In either case, the inclusion gives an isomorphism of coarse actions. In fact, the notion of coarse equivariance only depends on the coarse structures of the target spaces. 
\end{exmp}

The above example is a special case of a general observation: if $\crse{H\leq G}$ is a coarse subgroup, we know that the restriction of $\cop_{\crse G}$ to $\crse{H\times G}$ defines a coarse action by left multiplication $\crse{H\curvearrowright G}$. Since the coarse multiplication $\cop_{\crse H}$ is compatible with $\cop_{\crse G}$, it follows that the inclusion $\crse{H\hookrightarrow G}$ is $\crse H$\=/equivariant. It is then clear that if $H$ is coarsely dense in $\crse G$ (\emph{i.e.}\ $\crse H=\crse G$ as a coarse subspace) then $\crse{H\curvearrowright H}$ and $\crse{H\curvearrowright G}$ are isomorphic $\crse H$\=/actions. For instance, this shows that $\Gamma$ is a cocompact lattice in an abelian topological group $G$ then $(\Gamma,\varcrs[grp]{fin})\curvearrowright (\Gamma,\varcrs[grp]{fin})$ is coarsely isomorphic to $(\Gamma,\varcrs[grp]{fin})\curvearrowright (G,\varcrs[grp]{cpt})$ (see Example~\ref{exmp:p1:topological coarse structure abelian}).

\begin{exmp}\label{exmp:p1:conjugate left invariant coarse structures}
 Let $\crse G=(G,\CE)$ be a coarse group and $\CF\supseteq\CE$ an equi left invariant coarse structure, so that $\cop \colon \crse G\curvearrowright(G,\CF)$ is a coarse action. For a fixed $g\in G$, let $\CF\ast (g,g)\coloneqq \angles{\braces{F\ast(g,g)\mid F\in \CF}\cup \CE}$ be the ``right translate'' of $\CF$. Here we added $\CE$ to the set of generators because we need $\CE\subseteq \CF\ast(g,g)$ and this is not necessarily the case \emph{e.g.}\ if $\ast_g$ is not surjective. 
 Since $\CE\subseteq \CF\ast(g,g)$, associativity of $\cop$ implies that $\Delta_{G}\ast(F\ast(g,g))$ is in  $\CF\ast(g,g)$ for every $F\in\CF$ and hence the coarse structure $\CF\ast(g,g)$ is equi left invariant. We thus obtain a new coarse action $\cop \colon \crse G\curvearrowright(G,\CF\ast(g,g))$.
 However, $\CF\ast(g,g)$ is defined so that the right multiplication $\ast_g\colon(G,\CF)\to(G,\CF\ast(g,g)$ is controlled and, by the coarse group axioms, $\ast_{g^{-1}}\colon(G,\CF\ast(g,g))\to(G,\CF)$ is controlled as well. These two controlled maps are coarsely equivariant and they are (coarse) inverses of one another. They hence determine an equivariant coarse equivalence, \emph{i.e.}\ they give an isomorphism between the two coarse actions $\crse G\curvearrowright(G,\CF)$ and $\crse G\curvearrowright (G,\CF\ast(g,g))$. 
\end{exmp}

 Expanding this example, we define the coarse structure $(g,g)\ast \CF$, however this will be trivially equal to $\CF$ itself because $\CF$ is (equi)left invariant. In particular, $\CF\ast(g^{-1},g^{-1})=(g,g)\ast\CF\ast(g^{-1},g^{-1})$ (we can drop the parentheses from the notation because $\ast$ is coarsely associative). So we can view every right translate of $\CF$ as a conjugation. This notion will be useful later on, we thus state it as a definition.
 
 \begin{de}\label{def:p1:conjugate left invariant coarse structures}
  Let $\crse G=(G,\CE)$ be a coarse group and $\CF$ an equi left invariant coarse structure containing $\CE$. The equi left invariant coarse structure
  \[
   (g,g)\ast\CF\ast(g^{-1},g^{-1})\coloneqq \angles{\braces{(g,g)\ast F\ast(g^{-1},g^{-1})\mid F\in \CF}\cup \CE} =\CF\ast (g^{-1},g^{-1})
  \]
  is \emph{conjugate}\index{coarse structure!conjugate} to $\CF$. We say that $\CF$ and $(g,g)\ast\CF\ast(g^{-1},g^{-1})$ are \emph{conjugate equi left invariant coarse structures} (this is an equivalence relation).
 \end{de}

 \begin{rmk}
 If the set $\{g,e\}$ is $\CF$\=/bounded then $\CF$ and $(g,g)\ast\CF\ast(g^{-1},g^{-1})$ coincide (this is analogous to the observation that conjugation ${}_gc$ by an element close to the unit is close to the identity function, see Example~\ref{exmp:p1:conjugation automorphisms}). 
In particular, if $\CF$ is a connected coarse structure then it is always preserved by conjugation. On the contrary, disconnected equi left invariant coarse structures need not be preserved under conjugation. For example, let $F_2=\angles{a,b}$ be the $2$\=/generated free group and let $\CF$ be the coarse structure determined by the extended metric where $d(w,wa^n)=\abs{n}$ and $d(w,w')=\infty$ if $w^{-1}w'\notin \angles{a}$ (the coarse space $(F_2,\CF)$ can be seen as a disconnected union of lines indexed over the set of cosets $F_2/\angles{a}$). The set $\{e,a\}$ is $\CF$\=/bounded and hence $\{b,ab\}$ is $(\CF\ast(b,b))$\=/bounded, but $\{b,ab\}$ is not $\CF$\=/bounded, so $\CF\neq\CF\ast(b,b)$.  
 \end{rmk}

We may now discuss coarsely invariant subspaces.

\begin{de}
 If $\crse \alpha\colon\crse G\curvearrowright\crse Y$ is a coarse action and $\crse{ Z\subseteq Y}$ is a coarse subspace, we can define $\crse G\cact \crse Z$ as the coarse image $\crse{\alpha (G\times Z)\subseteq Y}$. We say that $\crse Z$ is \emph{coarsely $\crse G$\=/invariant}\index{coarsely!$\crse G$\=/invariant} if $\crse{G \cact Z=Z}$. 
\end{de}

The same proof of Proposition~\ref{prop:p1:closed.asymp.classes.are.subgroups} shows that if $\crse{Z\subseteq Y}$ is coarsely $\crse G$\=/invariant then we can restrict $\crse \alpha$ to a coarse action $\crse{\alpha|_{Z}}\colon\crse G\curvearrowright \crse Z$ such that the inclusion $\crse {Z}\hookrightarrow \crse{Y}$ is coarsely equivariant. Further, the coarse action $\crse{\alpha|_{Z}}$ is uniquely defined up to coarsely equivariant coarse equivalences. This action is the \emph{restriction} of $\crse \alpha$ to $\crse Z$. Concretely, we can construct a controlled map $\alpha|_Z\colon G\times Z\to Z$ by choosing $\alpha|_Z(g,z)\in Z$ uniformly close to $\alpha(g,z)\in Y$.

\begin{exmp}
 If $\crse{H\leq G}$ is a coarse subgroup and $\crse{H\curvearrowright G}$ is the coarse action by left multiplication, then $\crse H$ is a $\crse{H}$\=/invariant subspace of $\crse G$. The multiplication map $\cop_{\crse H}\colon\crse{H\times H\to H}$ may also be seen as the restriction to $\crse H$ of the coarse action of $\crse H$ on $\crse G$.
\end{exmp}

\section{Coarse Action by Conjugation}
\label{sec:p1:action by conjugation}
A coarse action of special interest is given by conjugation.
Let $\crse G=(G,\CE)$ be a coarse group. We already remarked that conjugation by a fixed $g\in G$ defines a coarse automorphism $\crse{ {}_{g}c}\colon\crse G\to\crse G$ (Example~\ref{exmp:p1:conjugation automorphisms}). This can be improved to show that conjugation defines a left coarse action:

\begin{de}\label{def:p1:action by conjugation}
 The \emph{action by conjugation}\index{coarse action!by conjugation} of a coarse group $\crse G$ on itself is the left coarse action 
$\crse c \colon \crse G\curvearrowright \crse G$ where $c\colon G\times G\to G$ is defined by $c(g,x)\coloneqq g\ast x\ast g^{-1}$ (up to closeness, this definition is independent of the order of multiplication in $g\ast h\ast g^{-1}$).
\end{de} 
If $c$ is controlled, it follows easily from the coarse associativity of $\ast$ that $\crse c$ is indeed a left coarse action. To prove that $c$ is controlled one needs to show that $(c\times c)(E\otimes\Delta_{G})$ and $(c\times c)(\Delta_{G}\otimes E)$ are controlled whenever $E\in \CE$. To prove the former, note that if $x\torel{E} y$ then $x^{-1}\ast y\rel{E'}e$ for some $E'\in\CE$ and hence
\[
 g\ast x\ast g^{-1}
 \rel{\CE} (g\ast x)\ast e \ast g^{-1}
 \rel{\Delta_{G}\ast E'\ast \Delta_{G}} (g\ast x)\ast (x^{-1}\ast y)\ast g^{-1}
 \rel{\CE} g\ast y\ast g^{-1} 
 \qquad \forall g\in G, x\torel{E} y.
\]
Showing that $c\times c(\Delta_{G}\otimes E)$ is similar (and simpler). Alternatively, controlledness could be proved by diagram chasing by realizing $\crse c$ as an appropriate composition of diagonal embeddings, permutation of coordinates, taking inverses and left/right multiplications. Of course, the action by conjugation is well\=/behaved under coarse homomorphism: we leave the proof of the following as an exercise.

\begin{lem}
 If $\crse f\colon \crse G\to\crse H$ is a coarse homomorphism of coarse groups, then the composition 
 \[
  \crse G\times \crse H\xrightarrow{\crse f\times \cid_{\crse H}}\crse H\times \crse H \xrightarrow{\ \crse c\ } \crse H
 \]
 is a coarse action and $\crse f\colon \crse G\to \crse H$ is coarsely equivariant between $\crse c \colon \crse G\curvearrowright \crse G$ and $\crse c\circ (\crse f\times \cid_{\crse H})\colon\crse G\curvearrowright \crse H$.
\end{lem}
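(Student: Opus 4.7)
The plan is to prove both assertions by a straightforward diagram chase, using that $\crse f$ preserves the coarse unit and the coarse inverse (Lemma~\ref{lem:p1:coarse homomorphisms preserve unit and inverse}) together with the coarse associativity of $\cop_{\crse H}$. Controlledness of the composition is automatic: $\crse c$ is a coarse map by Definition~\ref{def:p1:action by conjugation}, $\crse f\times \cid_{\crse H}$ is a coarse map since coarse maps are closed under products, and composition of coarse maps is a coarse map.

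For the first part, I would fix representatives $\ast_{\crse H}, \inversefn_H, e_H$ for the coarse operations of $\crse H$ and a representative $f$ for $\crse f$, and set $g\cdot h\coloneqq f(g)\ast_H h\ast_H f(g)^{-1}$. I need to verify that the two Action Diagrams commute up to closeness. For the identity diagram, Lemma~\ref{lem:p1:coarse homomorphisms preserve unit and inverse} gives $f(e_G)\rel{\CF} e_H$ and then by equi controlledness of left/right multiplication and the identity Group Diagram for $\crse H$ one has
\[
e_G\cdot h = f(e_G)\ast_H h\ast_H f(e_G)^{-1}\rel{\CF} e_H\ast_H h\ast_H e_H^{-1}\rel{\CF} h \qquad \forall h\in H.
\]
For the associativity diagram, the coarse homomorphism property $f(g_1\ast_G g_2)\rel{\CF} f(g_1)\ast_H f(g_2)$, combined with equi bi-invariance of $\CF$ and coarse associativity, yields
\[
(g_1\ast_G g_2)\cdot h \rel{\CF} \bigl(f(g_1)\ast_H f(g_2)\bigr)\ast_H h\ast_H \bigl(f(g_1)\ast_H f(g_2)\bigr)^{-1}\rel{\CF} f(g_1)\ast_H\bigl(f(g_2)\ast_H h\ast_H f(g_2)^{-1}\bigr)\ast_H f(g_1)^{-1},
\]
which is $g_1\cdot(g_2\cdot h)$ up to $\CF$-closeness. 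Here I would invoke Convention~\ref{conv:p1:x_torel_CE} to absorb the uniform bounds into a single controlled relation, as required for the Action Diagrams to commute in \Cat{Coarse}.

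For the coarse equivariance statement, I need to check that the square
\[
\begin{tikzcd}
\crse G\times\crse G \arrow[r,"\crse c"] \arrow[d,"\cid_{\crse G}\times\crse f"'] & \crse G \arrow[d,"\crse f"] \\
\crse G\times\crse H \arrow[r,"\crse c\circ(\crse f\times\cid_{\crse H})"'] & \crse H
\end{tikzcd}
\]
commutes in \Cat{Coarse}. Chasing a representative element $(g_1,g_2)$ around gives
\[
f(c(g_1,g_2))=f(g_1\ast_G g_2\ast_G g_1^{-1})\rel{\CF} f(g_1)\ast_H f(g_2)\ast_H f(g_1^{-1})\rel{\CF} f(g_1)\ast_H f(g_2)\ast_H f(g_1)^{-1},
\]
where the first closeness is two applications of the coarse homomorphism property of $\crse f$ (together with equi bi-invariance of $\CF$) and the second uses $f(g_1^{-1})\rel{\CF} f(g_1)^{-1}$ from Lemma~\ref{lem:p1:coarse homomorphisms preserve unit and inverse}. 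The right-hand side is exactly $c(f(g_1),g_2)=(\crse c\circ(\crse f\times\cid_{\crse H}))(g_1,g_2)$, so the diagram commutes up to uniform $\CF$-bounded error.

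No step is a serious obstacle; the only care needed is to keep track of the fact that the relations witnessing each closeness are uniform in the variables (so that the $\rel{\CF}$ symbols can legitimately be combined), which is exactly what Convention~\ref{conv:p1:x_torel_CE} is designed to handle. All the ingredients (equi bi-invariance of $\CF$, coarse associativity, and the compatibility of $\crse f$ with unit and inversion) have already been established in the preceding chapters, so the proof reduces to this diagram chase.
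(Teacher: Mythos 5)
The paper leaves this lemma as an unstated exercise, so there is no author proof to compare against; your direct diagram chase is the expected solution and is correct. One small slip: at the end you write "the right-hand side is exactly $c(f(g_1),g_2)$", but chasing $(g_1,g_2)\in G\times G$ around the square via $\cid_{\crse G}\times\crse f$ and then $\crse c\circ(\crse f\times\cid_{\crse H})$ produces $c(f(g_1),f(g_2))$ — which is what your displayed formula $f(g_1)\ast_H f(g_2)\ast_H f(g_1)^{-1}$ actually is; it is just a notational typo. You might also flag explicitly that the associativity step uses the uniform closeness $(a\ast_H b)^{-1}\rel{\CF} b^{-1}\ast_H a^{-1}$, which follows from Lemma~\ref{lem:p1:mor.is.group} applied to $\cmor(\crse{H\times H},\crse H)$, but this is standard and does not affect the argument's validity.
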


 We already pointed out that if $\crse G$ is coarsely connected then $\crse{{}_{g} c} =\cid_{\crse{G}}$  for every fixed $g\in G$. However, this does not mean that the action by conjugation is trivial! In fact, the maps ${}_g c$ need not be \emph{uniformly} close to the identity. More precisely, it is easy to see that the action by conjugation is trivial if and only if the multiplication coarsely commutes:
 
 \begin{de}\label{def:p1:coarsely abelian}
  A coarse group $\crse G=(G,\CE)$ is \emph{coarsely abelian} if any of the following equivalent statements hold:
  \begin{itemize}
   \item the coarse action by conjugation $\crse c\colon \crse G\curvearrowright\crse G$ is trivial (\emph{i.e.}there exists $E\in \CE$ such that 
   \[
    g\ast h\ast  g^{-1}\rel{E} g
   \]
    for every $g,h\in G$);
   \item there exists $E\in \CE$ such that $g_1\ast g_2\rel{E} g_2\ast g_1$ for every $g_1,g_2\in G$. 
  \end{itemize}
 \end{de}

\begin{rmk}\label{rmk:p1:on coarse abelian groups}
 Recall that the cancellation metric on the free group (Example~\ref{exmp:p1:cancellation metric}) provides an example of a non coarsely abelian coarsely connected coarse group.  
 Obviously, a coarsified set\=/group $\crse G=(G,\CE)$ can be coarsely abelian even if $G$ is not abelian. For one, this is the case if $\CE=\varcrs{max}$. A marginally more interesting example is the case $G=H\times F$ where $H$ is abelian, $F$ is finite and the coarse structure $\CE$ is connected (\emph{e.g.}\ $\CE=\varcrs[grp]{fin}$). 
\end{rmk}

Every coarse group $\crse G=(G,\CE)$ has a \emph{coarse abelianization}\index{coarse!abelianization}, \emph{i.e.}\ a coarse quotient $\crse{ \overline{ G}}$ such that every coarse homomorphism from $\crse G$ to a coarsely abelian group factors though $\crse{ \overline{ G}}$. Using the notation of Definition~\ref{de:p1:quotient by a relation}, this is the coarse quotient $\crse G/\aangles{\CR_{\rm comm}}$ where $\CR_{\rm comm}\coloneqq\braces{(g_1\ast g_2,g_2\ast g_1)\mid g_1,g_2\in G}$. Obviously, if $G$ is a set\=/group and $G/[G,G]$ is its set\=/group abelianization, the quotient map $\pi\colon G\to G/[G,G]$ factors through the coarse abelianization
 \[
  (G,\mincrs)\xrightarrow{\cid} (G,\mincrs)/\aangles{\CR_{\rm comm}}\xrightarrow{\crse \pi} (G/[G,G],\mincrs).
 \]
 However, neither of the above maps is an isomorphism of coarse groups in general.
 
\

Having established the meaning of coarse conjugation, we can define the notions of coarse normality and centrality.

\begin{de}\label{def:p1:normal}
 A coarse subspace $\crse A$ of a coarse group $\crse G$ is \emph{coarsely invariant under conjugation}\index{coarsely!invariant under conjugation} if $\crse{c(G\times A)=A}$ (\emph{i.e.}\ it is coarsely invariant under the coarse action by conjugation). We may also say that any representative $A$ is coarsely invariant under conjugation.
 We say that $\crse A$ is in the \emph{coarse centralizer}\index{coarse!centralizer} of $\crse G$ if it is coarsely invariant under conjugation and the restriction $\crse{c|_{A}}\colon\crse G\curvearrowright\crse A$ is the trivial coarse action.
 A coarse subgroup $\crse{H \leq G}$ is \emph{coarsely normal}\index{coarsely!normal} if it is coarsely invariant under conjugation. We will denote it by $\crse{ H\trianglelefteq  G}$. \nomenclature[:COS]{$\crse{H\trianglelefteq G}$}{coarsely normal subgroup}
\end{de}

Explicitly, $A\subseteq G$ is coarsely invariant under conjugation if there is an $E\in\CE$ such that for every $g\in G$ and $a\in A$ there is an $a\in A$ with $g\ast a\rel{E} a'\ast g$. The set $A$ is in the coarse centralizer if $g\ast a\rel{E} a\ast g$ (\emph{i.e.}\ the points in $A$ uniformly coarsely commute with every $g$ in $G$).

\begin{exmp}
 If $\crse G$ is a coarse group and $\crse{G_{e}}$ is the coarsely connected component of the identity, then $\crse{G_{e}}$ is a coarsely normal coarse subgroup of $\crse G$.
\end{exmp}

\section{Coarse Orbits}
\label{sec:p1:coarse.orbits}
Let $\crse\alpha\colon \crse G\curvearrowright \crse Y$ be a coarse action.  Recall our convention ${}_g\alpha(\variable)\coloneqq \alpha(g,\variable)$ and $\alpha_y(\variable)\coloneqq \alpha(\variable,y)$.
Notice that if $y,y'\in Y$ are two close points, then $\alpha_y$ and $\alpha_{y'}$ are close functions. We hence have a well\=/defined coarse map $\crse{\alpha_{y}\colon G\to Y}$, where $\crse {y\in Y}$ denotes a coarse point. 

\begin{de}\label{def:p1:coarse orbits}
 The \emph{coarse orbit} (also called $\crse G$\=/orbit)\index{coarse!orbit} of a coarse point $\crse{ y\in Y}$ is the coarse subspace $\crse{ G\cact y = \alpha(G\times y)\subseteq  Y}$. The associated \emph{orbit map}\index{orbit map} is $\crse{\alpha_{y}}=\colon\crse G\to\crse Y$. \nomenclature[:COS]{$\crse G\cact \crse y $}{coarse orbit}
\end{de}

Equivalently, a coarse point can be seen as a coarse map $\crse y\colon \tobj\to\crse Y$, the orbit map $\crse{\alpha_{y}}$ is the composition
\[
 \crse G\times \tobj\xrightarrow{\cid_{\crse{G}}\times \crse y} \crse G\times\crse Y\xrightarrow{\ \crse \alpha\ }\crse{Y}
\]
and the coarse orbit $\crse{G\cact y}$ is the coarse image of $\crse{\alpha_{y}}$.

 Obviously, a coarse action $\crse{\alpha}\colon\crse G\curvearrowright \crse{Y}$ can have at most as many distinct coarse orbits as there are coarsely connected components in $\crse Y$. In particular, if $\crse{Y}$ is coarsely connected then the coarse orbit is unique up to coarse equivalence. 
 It is worth pointing out that some care is needed when translating between coarse geometric properties of coarse orbits and properties of $\crse \alpha$. The properties of the coarse space $\crse{ G\cdot y}$ are indeed independent from the choice of $y\in Y$, but this does not say whether they hold ``uniformly'' as $y$ varies.  
 For instance, the following is an example of a coarse action $\crse \alpha$ such that all the $\crse G$\=/orbits of $\crse \alpha$ are bounded, but $\crse \alpha$ is not the trivial coarse action (\emph{i.e.}\ the $\crse{G}$\=/orbits are not ``uniformly bounded''):

\begin{exmp}\label{exmp:p1: action by rotation}
 Consider the action by rotation $\alpha\colon\RR\curvearrowright \CCC$, $\alpha(t,z)\coloneqq e^{it}z$. Since it is a set\=/group action by isometries, $\alpha$ gives a coarse action $(\RR,\mincrs)\curvearrowright (\CCC,\CE_{\abs{\mhyphen}})$, where $d$ is the metric on $\CCC$. This coarse action is non\=/trivial (\emph{i.e.}\ it is not close to the projection $\RR\times\CCC\to\CCC$), but it has an unique coarse orbit and this coarse orbit is trivial. 
\end{exmp}

Note that $\crse G$\=/orbits are $\crse{ G}$\=/invariant coarse subspaces because $\crse {G\cact (G\cact y)=(G\cop  G)\cact  y = G\cact y}$. In particular, the coarse action $\crse \alpha$ restricts to a coarse $\crse{G}$\=/action on each $\crse G$\=/orbit of $\crse Y$. The fact that the restriction $\crse{\alpha|_{G\cdot y}}$ is well\=/defined implies that $\crse{G\cact y}$ is actually well defined up to \emph{coarsely $\crse G$\=/equivariant} coarse equivalence (Definition~\ref{def:p1:coarse equivariant}).

\begin{rmk}
 If $\crse{Z\subseteq Y}$ is $\crse G$\=/invariant and $\crse{ z\in Z}$, then the $\crse{G}$\=/orbit of $\crse z$ is coarsely contained in $\crse Z$. Example~\ref{exmp:p1: action by rotation} shows that the converse does not hold: since all the coarse orbits of the coarse action $\crse \alpha \colon(\RR,\mincrs)\curvearrowright (\CCC,\CE_{\abs{\mhyphen}})$ are trivial, every coarse subspace of $\CCC$ coarsely contains the coarse orbit of each of its points. On the other hand, the real axis in $\CCC$ determines a coarse subspace that is not $(\RR,\mincrs)$\=/invariant.
\end{rmk}

The pull\=/back of the coarse structure of $\crse Y$ under any orbit map defines a coarse structure on $G$. That is, given $\crse \alpha \colon (G,\CE)\curvearrowright (Y,\CF)$  and $y\in Y$ we consider the coarse structure $\alpha_y^*(\CF)\coloneqq\alpha(\variable,y)^*(\CF)$. Note that $\alpha_y^*(\CF)$ does not depend on the choice of representatives for $\crse \alpha$ and $\crse y$. Yet, it is notationally convenient to fix such choices.

It follows from the definition of pull\=/back, that $\alpha_y\colon (G,\alpha_y^*(\CF))\to (Y,\CF)$ is a coarse embedding and hence induces a coarse equivalence between $(G,\alpha_y^*(\CF))$ and its coarse image $\crse{ G\cact y}$.
Furthermore, we can show that $\crse G$ coarsely acts on $(G,\alpha_y^*(\CF))$ by left multiplication:

\begin{lem}\label{lem:p1:pullback orbit coarse actions}
 The group operation $\ast$ defines a left coarse action $\cop\colon (G,\CE)\times(G,\alpha_y^*(\CF))\to(G,\alpha_y^*(\CF))$.
 Moreover, the orbit map $\crse{\alpha_{y}}$ is coarsely equivariant from $\cop\colon \crse G\curvearrowright (G,\alpha_y^*(\CF))$ to $\crse \alpha\colon\crse G\curvearrowright \crse Y$.
\end{lem}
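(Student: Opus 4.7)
The plan is to verify the two claims directly from the definition of the pull\=/back coarse structure, reducing everything to the fact that $\alpha$ itself is controlled and satisfies the Action Diagrams up to $\CF$\=/closeness.

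\emph{Step 1: $\CE\subseteq \alpha_y^*(\CF)$.} For any $E\in\CE$, the relation $E\otimes\{(y,y)\}$ is contained in $E\otimes\Delta_Y\in \CE\otimes\CF$. Since $\alpha$ is controlled we have $\alpha\times\alpha(E\otimes\Delta_Y)=E\cdot\Delta_Y\in \CF$, and $(\alpha_y\times\alpha_y)(E)$ is a subset of this. So $E\in \alpha_y^*(\CF)$.

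\emph{Step 2: $\ast\colon(G,\CE)\times(G,\alpha_y^*(\CF))\to(G,\alpha_y^*(\CF))$ is controlled.} By Lemma~\ref{lem:p1:equi controlled.sections.iff.controlled} it suffices to show that the families $\{\ast_h\colon (G,\CE)\to(G,\alpha_y^*(\CF))\}_{h\in G}$ and $\{{}_g\ast\colon (G,\alpha_y^*(\CF))\to(G,\alpha_y^*(\CF))\}_{g\in G}$ are equi controlled. Fix a relation $E_{\rm as}\in\CF$ witnessing associativity of $\crse\alpha$, so that $(g\ast h)\cdot y\rel{E_{\rm as}} g\cdot(h\cdot y)$ for every $g,h\in G$.

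For the right multiplications: given $E\in\CE$, the union $\bigcup_{h\in G}(\ast_h\times\ast_h)(E)$ equals $E\ast\Delta_G$. Its image under $\alpha_y\times\alpha_y$ is contained in
\[
E_{\rm as}\cmp \bigparen{E\cdot\Delta_Y}\cmp \op{E_{\rm as}},
\]
and since $\alpha$ is controlled we have $E\cdot\Delta_Y\in\CF$, so the above composition is in $\CF$. Thus $E\ast\Delta_G\in \alpha_y^*(\CF)$.

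For the left multiplications: given $F\in\alpha_y^*(\CF)$, the union $\bigcup_{g\in G}({}_g\ast\times {}_g\ast)(F)$ equals $\Delta_G\ast F$. Letting $F'\coloneqq (\alpha_y\times\alpha_y)(F)\in\CF$, the image under $\alpha_y\times\alpha_y$ of $\Delta_G\ast F$ is contained in
\[
E_{\rm as}\cmp \bigparen{\Delta_G\cdot F'}\cmp \op{E_{\rm as}},
\]
which is in $\CF$ because $\alpha$ is controlled. Hence $\Delta_G\ast F\in\alpha_y^*(\CF)$.

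\emph{Step 3: Action Diagrams and equivariance.} The associativity and identity relations for $\ast$ lie in $\CE\subseteq\alpha_y^*(\CF)$ by Step~1, so the Action Diagrams for $\cop\colon\crse G\curvearrowright(G,\alpha_y^*(\CF))$ commute up to closeness. Finally, coarse equivariance of $\crse{\alpha_y}$ is the statement
\[
\alpha_y(g\ast h)\rel{\CF} \alpha\bigparen{g,\alpha_y(h)} \qquad \forall g,h\in G,
\]
i.e.\ $(g\ast h)\cdot y\rel{\CF} g\cdot(h\cdot y)$, which is precisely the associativity Action Diagram for $\crse\alpha$.

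The only slightly delicate step is Step~2, since one must remember that the equi controlled families in Lemma~\ref{lem:p1:equi controlled.sections.iff.controlled} are indexed by a \emph{trivially coarse} parameter space, so that the union over all $g\in G$ (resp.\ $h\in G$) must land in a single element of $\alpha_y^*(\CF)$; everything else is bookkeeping.
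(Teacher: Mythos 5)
Your proof is correct, but it takes a different route from the paper's. You unpack the controlledness of $\ast\colon(G,\CE)\times(G,\alpha_y^*(\CF))\to(G,\alpha_y^*(\CF))$ via Lemma~\ref{lem:p1:equi controlled.sections.iff.controlled}, checking each family of sections by hand and exhibiting explicit witnessing compositions $E_{\rm as}\cmp(E\cdot\Delta_Y)\cmp\op{E_{\rm as}}$ and $E_{\rm as}\cmp(\Delta_G\cdot F')\cmp\op{E_{\rm as}}$. The paper avoids all of this bookkeeping by exploiting the defining property of the pull\=/back coarse structure directly: a map into $(G,\alpha_y^*(\CF))$ is controlled if and only if its composite with $\alpha_y$ is controlled into $(Y,\CF)$. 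It then observes that $\alpha_y\circ\ast$ is close to $\alpha(\variable,\alpha(\variable,y))$, which is manifestly the composition of the two controlled maps $\id_G\times\alpha_y$ and $\alpha$, hence controlled — no composition of entourages to write down. The paper's approach is shorter and more functorial; your approach is more elementary and makes the quantitative dependence on the associativity defect $E_{\rm as}$ visible, which is arguably instructive. Both reach the same conclusion, and your closing observation — that one must check the \emph{union} over all group elements lands in a single controlled set, because the index set carries the trivial coarse structure — correctly identifies the only real point of care.
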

\begin{proof}
 We first need to show that $\ast\colon (G,\CE)\times (G,\alpha_y^*(\CF))\to(G,\alpha_y^*(\CF))$ is controlled. By definition, this is the case if and only if the composition $\alpha_y\circ \ast \colon (G,\CE)\times (G,\alpha_y^*(\CF))\to(Y,\CF)$ is controlled. On the other hand, $\alpha_y\circ \ast$ is close to $\alpha(\variable,\alpha(\variable,y))$ by definition of coarse action. The latter is controlled because it is the composition of controlled maps:
 \[
  (G,\CE)\times (G,\alpha_y^*(\CF))\xrightarrow{id_{G}\times \alpha_y}(G,\CE)\times(Y,\CF)\xrightarrow{\quad \alpha\quad} (Y,\CF).
 \]
 It follows that $\alpha_y\circ \ast$ is controlled as well. In particular, $\alpha_y^*(\CF)$ is equi left invariant. Moreover $\CE\subseteq\alpha_y^*(\CF)$ because $\alpha_y\colon (G,\CE)\to(Y,\CF)$ is controlled, therefore $\cop\colon\crse G\curvearrowright (G,\alpha_y^*(\CF))$ is a coarse action. 
 
 To show equivariance of the orbit map, it is enough to write ${\alpha_y}$ as the composition of ${\alpha}$ and $(\id_{G},y)\colon G\to G\times Y$ and note that equivariance follows immediately from the definition of coarse action: 
\[ 
\begin{tikzcd}[column sep = 5em]
    (G,\CE)\times (G,\alpha_y^*(\CF)) \arrow{r}{\cid_{\crse G}\times(\cid_{\crse G},\crse y)} \arrow[d,swap, "\cop"] &
    (G,\CE)\times (G,\alpha_y^*(\CF))\times{Y} \arrow{r}{\cid_{\crse G}\times \crse \alpha} \arrow[swap]{d}{\cop \times \cid_{\crse Y}} & 
    (G,\CE)\times (Y,\CF)      \arrow{d}{\crse \alpha}
    \\
    (G,\alpha_y^*(\CF)) \arrow{r}{(\cid_{\crse G},\crse y)} &
    (G,\alpha_y^*(\CF))\times{Y} \arrow{r}{\crse \alpha} & 
    (Y,\CF) 
\end{tikzcd} 
\] 
(we are also using that commutative diagrams remain coarsely commutative when considered with respect to larger coarse structures).
\end{proof}

The key point in the proof of Lemma~\ref{lem:p1:pullback orbit coarse actions} is the remark that $\ast\colon (G,\CE)\times (G,\alpha_y^*(\CF))\to(G,\alpha_y^*(\CF))$ is controlled. In the language of Section~\ref{sec:p1:equi controlled.maps}, this can be rephrased by saying that the left multiplications ${}_g\ast\colon (G,\alpha_y^*(\CF))\to (G,\alpha_y^*(\CF))$ are $(G,\CE)$\=/equi controlled, or equivalently that the right multiplications $\ast_g\colon (G,\CE)\to(G,\CE)$ are  $(G,\alpha_y^*(\CF))$\=/equi controlled. It is instructive to see what this means in a more concrete example:

\begin{exmp}
 Let $G$ be a set\=/group and $\alpha\colon G \curvearrowright Y$ a free action by isometries on a metric space $(Y,d)$. Equip $Y$ with the metric coarse structure $\CE_d$. Then $\alpha$ defines a coarse action of the trivially coarse group $(G,\mincrs)$. If we identify $G$ with any of its orbits $\alpha_y(G)\subseteq Y$, the metric $d$ induces a metric $d_y$ on $G$. The coarse structure $\alpha_y^*(\CE_d)$ coincides with the induced metric coarse structure (if the action was not free, $d_y$ would be a pseudo\=/metric on $G$, but $\alpha_y^*(\CE_d)$ would still coincide with the induced coarse structure). 
 
 In this example, the fact that the left multiplications ${}_g\ast\colon (G,\alpha_y^*(\CE_d))\to(G,\alpha_y^*(\CE_d))$ are $(G,\mincrs)$\=/equi controlled follows trivially from the observation that ${}_g\ast$ preserves the $d_y$.  
 On the other hand, the fact that the right multiplications ${\ast}_g\colon (G,\mincrs)\to(G,\mincrs)$ are $(G,\alpha_y^*(\CE_d))$\=/equi controlled follows from the observation that ${\ast}_g$ has displacement uniformly bounded in terms of $d_y(e,g)$. That is, for every $h\in G$
 \[
  d_y(h,h\ast g)=d(h\cdot y,h\ast g\cdot y)
  =d(y,g\cdot y)=d_y(e,g).
 \]
 It follows from the triangle inequality that if $(g_i,g_i')$ are pairs of uniformly close elements in $(G,\alpha_y^*(\CE_d))$ then $(h\ast g_i,h\ast g_i')$
 are also uniformly close, \emph{i.e.}\ the right multiplication is $(G,\alpha_y^*(\CE_d))$\=/controlled.
 
 The same argument also shows that in general $(G,\alpha_y^*(\CE_d))$ is \emph{not} a coarse group. In fact, if it was a coarse group then the left multiplications would have to be $(G,\alpha_y^*(\CE_d))$\=/equi controlled. But this is the case only if $d_y(g\ast h,h)$ is uniformly bounded in terms of $d_y(g,e)$, which is usually not the case. 
\end{exmp}

Note that to prove Lemma~\ref{lem:p1:pullback orbit coarse actions} we showed that if $\crse \alpha \colon(G,\CE)\curvearrowright(Y,\CF)$ is any coarse action, taking the pull\=/back allows us to define a coarsely equivariant coarse map between $\crse\alpha$ and a coarse action by left multiplication of $(G,\CE)$ on $G$ equipped with a coarser equi left invariant coarse structure.

Recall that we defined the conjugate of an equi left invariant coarse structure $\CF\supseteq \CE$ as 
\[
 (g,g)\ast\CF\ast(g^{-1},g^{-1})=\angles{\braces{(g,g)\ast F\ast(g^{-1},g^{-1})\mid F\in\CF}\cup\CE},
\]
where $g\in G$ is some fixed element (Definition~\ref{def:p1:conjugate left invariant coarse structures}). We also remarked that the right multiplications $\ast_g$ and $\ast_{g^{-1}}$ define a coarse equivalence between $(G,\CF)$ and $(G,(g,g)\ast\CF\ast(g^{-1},g^{-1}))$. In particular, $(g,g)\ast\CF\ast(g^{-1},g^{-1})$ can also be defined as the pull\=/back of $\CF$ under $\ast_g$.
Now, fix $y\in Y$, $g\in G$ and consider $\alpha_{g\cdot y}^*(\CF)$. Since $ \crse \alpha$ is a coarse action, we can rewrite $\alpha_{g\cdot y}^*(\CF)$ as
 \[
  \alpha_{g\cdot y}^*(\CF)
  =\alpha\paren{\variable,g\cdot y}^*(\CF)
  =\alpha\paren{(\variable)\ast g, y}^*(\CF)
  =\paren{\alpha_y\circ \ast_g}^*(\CF)
  =(g,g)\ast\alpha_y^*(\CF)\ast(g^{-1},g^{-1}).
 \] 
That is, taking the pull back under different points in the same $\crse G$\=/orbit defines conjugate equi left invariant coarse structures on $G$.  

This discussion is particularly interesting when $\crse{G\cact y}=\crse Y$ because it implies that $\crse \alpha$ is \emph{isomorphic} to an action by left multiplication of $\crse G$ on itself equipped with an appropriate coarse structure, and the latter is well defined up to conjugacy. More precisely, we make the following definition:

\begin{de}\label{def:p1:cobounded action}
 A coarse action $\crse \alpha \colon(G,\CE)\curvearrowright(Y,\CF)$ is \emph{cobounded}\index{coarse action!cobounded} if there exists (equivalently, for every) $y\in Y$ such that $\alpha_y(G)$ is coarsely dense in $(Y,\CF)$.
\end{de}

The above is the coarse analogue of transitivity for set-group actions. In analogy with the fact that transitive left actions of a set-group are isomorphic to actions by multiplication on the left cosets of the isotropy group, we can fully describe the family of cobounded coarse actions up to isomorphism:

\begin{prop}\label{prop:p1:cobounded coarse actions isom left mult}
 Let $\crse G=(G,\CE)$ be a coarse group. There is a natural correspondence between conjugacy classes of equi left invariant coarse structures on $G$ containing $\CE$ and isomorphism classes of cobounded left coarse actions:
 \[
  \braces{\CF\text{\upshape equi left invariant coarse structure}\mid \CE\subseteq\CF}/_{\text{\upshape conj.}}
  \longleftrightarrow\braces{\text{\upshape cobounded left coarse actions}}/_{\text{\upshape isom.}}
 \]
 (in particular, the class of cobounded coarse actions up to isomorphism is a set).
\end{prop}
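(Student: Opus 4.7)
The plan is to exhibit the two maps of the bijection explicitly and to verify they are well-defined on equivalence classes and mutually inverse, leveraging Lemma~\ref{lem:p1:pullback orbit coarse actions} and Example~\ref{exmp:p1:conjugate left invariant coarse structures}, which already contain most of the content.

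In the forward direction, send an equi left invariant $\CF \supseteq \CE$ to the coarse action $\cop \colon \crse G \curvearrowright (G, \CF)$ by left multiplication. This is a coarse action precisely because $\CF$ is equi left invariant, and it is cobounded since the orbit map at $e$ is close to $\id_G$. To check that the assignment descends to conjugacy classes, observe that Example~\ref{exmp:p1:conjugate left invariant coarse structures} already produces the required isomorphism between $\cop \colon \crse G \curvearrowright (G,\CF)$ and $\cop \colon \crse G \curvearrowright (G, (g,g)\ast\CF\ast(g^{-1},g^{-1}))$: the right multiplications $\ast_g$ and $\ast_{g^{-1}}$ are mutually coarse inverse, and coarse associativity of $\ast$ makes them coarsely $\crse G$-equivariant.

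In the reverse direction, send a cobounded coarse action $\crse \alpha \colon \crse G \curvearrowright (Y, \CF_Y)$ to the conjugacy class of $\alpha_y^*(\CF_Y)$ for any base point $y \in Y$. By Lemma~\ref{lem:p1:pullback orbit coarse actions} this coarse structure contains $\CE$ and is equi left invariant. It is manifestly independent of the chosen representative for $\crse \alpha$, since close functions have identical pull-backs. It is independent of the base point up to conjugation: for any $y' \in Y$, coboundedness produces $g \in G$ with $y'$ close to $\alpha(g, y)$, so $\alpha_{y'} \closefn \alpha_{g\cdot y}$, and the identity $\alpha_{g\cdot y}^*(\CF_Y) = (g,g)\ast\alpha_y^*(\CF_Y)\ast(g^{-1},g^{-1})$ derived in the paragraph preceding Definition~\ref{def:p1:cobounded action} completes the job. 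Finally, the assignment descends to isomorphism classes of coarse actions: if $\crse f \colon \crse Y \to \crse Y'$ is a coarsely $\crse G$-equivariant coarse equivalence then $f^*(\CF_{Y'}) = \CF_Y$ (as coarse equivalences are coarsely embedded and coarsely surjective), while equivariance gives $\alpha'_{f(y)} \closefn f \circ \alpha_y$, so $(\alpha'_{f(y)})^*(\CF_{Y'}) = \alpha_y^*(f^*(\CF_{Y'})) = \alpha_y^*(\CF_Y)$.

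To see the two maps are mutual inverses: starting from a class $[\CF]$, the orbit map at $e \in G$ of the left multiplication action satisfies $\alpha_e(g) = g \ast e \closefn g$, so $\alpha_e^*(\CF) = \CF$. Starting from a cobounded action $\crse \alpha$, Lemma~\ref{lem:p1:pullback orbit coarse actions} shows that the orbit map $\crse{\alpha_y} \colon (G, \alpha_y^*(\CF_Y)) \to \crse Y$ is coarsely $\crse G$-equivariant; it is a coarse embedding by construction (its pull-back coarse structure is exactly the source structure) and coarsely surjective by coboundedness, hence a $\crse G$-equivariant coarse equivalence by Lemma~\ref{lem:p1:coarse.eq.iff.surjective.coarse.emb}. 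The main obstacle is the careful bookkeeping with base points: one must pin down the identity $\alpha_{g\cdot y}^* = (g,g)\ast \alpha_y^* \ast (g^{-1}, g^{-1})$ cleanly and verify that it is robust under replacing $g \cdot y$ by any nearby point; once this is in place, the rest reduces to diagram chasing and invocations of the already-proved lemmas.
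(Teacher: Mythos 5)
Your proof is correct and follows essentially the same route as the paper's: both directions are built from Lemma~\ref{lem:p1:pullback orbit coarse actions} and Example~\ref{exmp:p1:conjugate left invariant coarse structures}, with well-definedness handled via the conjugation identity $\alpha_{g\cdot y}^*(\CF)=(g,g)\ast\alpha_y^*(\CF)\ast(g^{-1},g^{-1})$ and invariance under equivariant coarse equivalence via the pull-back computation $(\alpha'_{f(y)})^*(\CF')=\alpha_y^*(f^*(\CF'))=\alpha_y^*(\CF)$. Your write-up is slightly more explicit than the paper's in verifying that the two assignments are mutual inverses, but the argument is the same.
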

\begin{proof}
 If $\CF$ is equi left invariant and contains $\CE$ then the left multiplication defines a coarse action of $\crse G$ on $(G,\CF)$. 
This action is cobounded, as $G\ast e_{G}$ is coarsely dense in $(G,\CE)$. 
We already noted that $\cop \colon(G,\CE)\curvearrowright(G,\CF)$ and $\cop \colon(G,\CE)\curvearrowright(G,(g,g)\ast\CF\ast(g^{-1},g^{-1}))$ are isomorphic coarse actions (Example~\ref{exmp:p1:conjugate left invariant coarse structures}).

This shows that we have a well\=/defined map from the LHS to the RHS. The converse map is obtained via pull\=/back: if $\crse \alpha \colon(G,\CE)\curvearrowright(Y,\CF)$ is a cobounded coarse action and $y\in Y$ is any fixed point, then $\crse \alpha$ is isomorphic to $\cop \colon(G,\CE)\curvearrowright(G,\alpha_y^*(\CF) )$ by Lemma~\ref{lem:p1:pullback orbit coarse actions}.
 If $z\in Y$ is any other point, then $z$ is close to $g\cdot y$ for some $g\in G$ because $\crse \alpha$ is cobounded. It follows that $\alpha_z^*(\CF)=\alpha_{g\cdot y}^*(\CF)$, and the latter is conjugated to $\alpha_y^*(\CF)$. That is, up to conjugacy, $\alpha_y^*(\CF)$ does not depend on the choice of $y$. 
 Finally, if $\alpha\colon\crse (G,\CE)\curvearrowright (Y,\CF)$ and $\alpha'\colon\crse (G,\CE)\curvearrowright (Y',\CF')$ are isomorphic via an equivariant coarse equivalence $f\colon(Y,\CF)\to(Y',\CF')$, then
 \[
  (\alpha'_{f(y)})^*(\CF')=\alpha_y^*(f^*(\CF'))=\alpha_y^*(\CF).
 \]
 This shows that $\alpha_y^*(\CF)$ is also invariant under taking isomorphic coarse actions.
\end{proof}

\begin{cor}
 If $\crse G=(G,\CE)$ is coarsely connected then every isomorphism class of cobounded left coarse actions contains a unique representative of the form $\cop\colon(G,\CE)\curvearrowright(G,\CF)$ where $\CF\supseteq\CE$ is an equi left invariant coarse structure.
\end{cor}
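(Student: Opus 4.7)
The plan is to deduce this corollary directly from Proposition~\ref{prop:p1:cobounded coarse actions isom left mult}. That proposition already establishes a natural bijection
\[
\bigbraces{\CF \text{ equi left invariant on } G,\ \CE\subseteq\CF}\big/\text{conj.}
\longleftrightarrow
\bigbraces{\text{cobounded left coarse actions of }\crse G}\big/\text{isom.},
\]
and under this bijection the class $[\CF]$ maps to the isomorphism class of $\cop\colon(G,\CE)\curvearrowright(G,\CF)$. Therefore, to show that every isomorphism class of cobounded coarse actions contains a unique representative of this form, it suffices to prove that—under the coarse connectedness hypothesis—each conjugacy class on the left-hand side consists of a single coarse structure.

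For this reduction, I would invoke the remark immediately following Definition~\ref{def:p1:conjugate left invariant coarse structures}, which states that whenever $\{e,g\}$ is $\CF$\=/bounded the conjugate coarse structure satisfies $(g,g)\ast\CF\ast(g^{-1},g^{-1})=\CF$. Coarse connectedness of $\crse G=(G,\CE)$ means that every pair $(e,g)$ belongs to $\CE$, so $\{e,g\}$ is $\CE$\=/bounded. Since the coarse structures under consideration satisfy $\CF\supseteq\CE$, the set $\{e,g\}$ is also $\CF$\=/bounded for every $g\in G$, and the remark then yields $(g,g)\ast\CF\ast(g^{-1},g^{-1})=\CF$ for all $g\in G$. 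Hence the conjugacy class of any such $\CF$ is the singleton $\{\CF\}$.

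Putting these two observations together, the correspondence of Proposition~\ref{prop:p1:cobounded coarse actions isom left mult} upgrades to an honest bijection between equi left invariant coarse structures $\CF\supseteq\CE$ on $G$ (with no quotient by conjugacy) and isomorphism classes of cobounded left coarse actions, given by $\CF\mapsto [\cop\colon(G,\CE)\curvearrowright(G,\CF)]$. Existence of a representative $\cop\colon(G,\CE)\curvearrowright(G,\CF)$ in each isomorphism class follows from the surjectivity of this bijection, and uniqueness of the representative follows from its injectivity. There is no substantive obstacle here, since the structural heavy lifting has already been carried out in Proposition~\ref{prop:p1:cobounded coarse actions isom left mult}; the present corollary is essentially an immediate consequence of the fact that connected coarse structures are invariant under conjugation.
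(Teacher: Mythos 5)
Your proof is correct and follows exactly the route the paper intends: the corollary is stated right after Proposition~\ref{prop:p1:cobounded coarse actions isom left mult} precisely because, by the remark following Definition~\ref{def:p1:conjugate left invariant coarse structures}, connected coarse structures are fixed by conjugation, so the conjugacy classes in that proposition collapse to singletons when $\crse G$ is coarsely connected. The only minor wording slip is saying that coarse connectedness means ``every pair $(e,g)$ belongs to $\CE$''; more precisely, it means every pair $(e,g)$ is contained in \emph{some} controlled set of $\CE$, from which $\{e,g\}\times\{e,g\}\in\CE$ (and hence $\in\CF$) follows by symmetry and union with the diagonal.
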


\section{The Fundamental Observation of Geometric Group Theory}
\label{sec:p1:Milnor_svarc}

Recall that a function between metric spaces $f\colon(X,d_X)\to(Y,d_Y)$ is quasi\=/Lipschitz if there exist constants $L,A$ such that $d_Y(f(x),f(x'))\leq L d_X(x,x')+A$ for every $x,x'\in X$. A \emph{quasi\=/isometry}\index{quasi-!isometry} is a coarse equivalence between metric spaces $f\colon (X,d_X)\to(Y,d_Y)$ such that both $f$ and its coarse inverse $f^{-1}$ are quasi\=/Lipschitz. Further recall that a metric space is a \emph{length space}\index{length space} if it is path connected and the distance between any two points equals the infimum of the lengths of paths connecting them. 

The Milnor--Schwarz  Lemma states that if a set\=/group $G$ acts properly and cocompactly\footnote{%
An action $G\curvearrowright Y$ is cocompact if there exists a compact $K\subseteq Y$ such that $G\cdot K=Y$.
} on a proper length space $(Y,d)$ then $G$ is finitely generated and the orbit map is a quasi\=/isometry between $(Y,d)$ and $G$ equipped with the word metric associated with any finite generating set. Here ``properly'' means that set $\braces{g\in G\mid (g\cdot K)\cap K\neq\emptyset}$ is finite for every compact $K\subseteq Y$. Because of its importance, this result is also known as ``the fundamental observation of geometric group theory''. Although not hard, its proof usually takes several pages. In this section we show that in our settings it is essentially contained in Proposition~\ref{prop:p1:neighbourhoods.of.identity.generate} and Lemma~\ref{lem:p1:pullback orbit coarse actions} (whose proofs further simplify for trivially coarse groups).
Our approach is analogous to an argument of Brodskiy--Dydak--Mitra \cite{brodskiy_svarc-milnor_2007} which is also used in \cite{cornulier2016metric}.

Let $G$ be a set\=/group, so that $\crse G = (G,\mincrs)$ is a trivially coarse group, and let $\crse \alpha\colon(G,\mincrs)\curvearrowright(Y,\CF)$ be a cobounded coarse action. For any fixed $y\in Y$, Lemma~\ref{lem:p1:pullback orbit coarse actions} implies that the orbit map $g\mapsto g\cdot y$ is an equivariant coarse equivalence $(G,\alpha_y^*(\CF))\to(Y,\CF)$. We now give a temporary definition (see Definition~\ref{def:appendix:proper action} for a more conceptual version):

\begin{de*}[Temporary]
 The coarse action $\crse{\alpha \colon G\curvearrowright Y} =(Y,\CF)$ is \emph{proper}\index{proper!coarse action}\index{coarse action!proper} if the set $\braces{g\in G\mid (g\cdot B)\cap B\neq \emptyset}$ is finite for every bounded set $B\subseteq Y$. 
 We also say that a set\=/group action by isometries on a metric space $(Y,d)$ is \emph{metrically\=/proper}\index{proper!action (metrically)} if the induced coarse action $\crse \alpha \colon (G,\mincrs)\curvearrowright (Y,\CE_d)$ is proper.
\end{de*}

\begin{rmk}
 Explicitly, a set\=/group action by isometries $G\curvearrowright (Y,d)$ is metrically proper if for every $R\geq 0$  and $y\in Y$ there are at most finitely many $g\in G$ so that $d(y,g\cdot y)\leq R$.
\end{rmk}

\begin{lem}\label{lem:p1:naive proper iff bounded are finite}
 Let $\crse Y=(Y,\CF)$ be coarsely connected. A coarse action $\crse{\alpha\colon G\curvearrowright Y}$ is proper if and only if every $\alpha_y^*(\CF)$\=/bounded subset of $G$ is finite.  
\end{lem}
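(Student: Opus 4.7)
The plan is to unpack the definition of the pull-back coarse structure and then run the equivalence by a direct manipulation of bounded sets. By Definition~\ref{def:p1:pull-back}, a subset $S\subseteq G$ is $\alpha_y^*(\CF)$-bounded if and only if $(\alpha_y\times\alpha_y)(S\times S)\in\CF$, \emph{i.e.}\ if and only if the orbit $\alpha_y(S)=\{g\cdot y\mid g\in S\}$ is a $\CF$-bounded subset of $Y$. So the lemma reduces to showing that $\crse\alpha$ is proper if and only if for every $\CF$-bounded $B\subseteq Y$ the set $\{g\in G\mid g\cdot y\in B\}$ is finite.

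For the implication $(\Rightarrow)$, assume $\alpha_y(S)$ is $\CF$-bounded and put $B\coloneqq\{y\}\cup\alpha_y(S)$. Since $\crse Y$ is coarsely connected, adjoining a single point to a bounded set keeps it bounded (the missing entourage linking $y$ to a representative point of $\alpha_y(S)$ exists by coarse connectedness, and one closes up using $B\times B\in\CF$), so $B$ is bounded. For every $g\in S$ we have $y\in B$ and $g\cdot y\in\alpha_y(S)\subseteq B$, hence $g\cdot y\in (g\cdot B)\cap B$, so $S\subseteq\{g\mid (g\cdot B)\cap B\neq\emptyset\}$. By properness the latter is finite.

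For $(\Leftarrow)$, fix a bounded $B\subseteq Y$ and let $S\coloneqq\{g\in G\mid (g\cdot B)\cap B\neq\emptyset\}$; again using coarse connectedness we may enlarge $B$ so that $y\in B$. For each $g\in S$ pick $b_1,b_2\in B$ with $\alpha(g,b_1)=b_2$. The key step is to invoke that $\alpha\colon(G,\mincrs)\times(Y,\CF)\to(Y,\CF)$ is controlled: the relation $F\coloneqq(\alpha\times\alpha)\bigparen{\Delta_G\otimes(B\times B)}$ belongs to $\CF$, and specializing at $(g,b_1)$ and $(g,y)$ gives $(b_2,g\cdot y)\in F$. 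Therefore $g\cdot y\in F(B)$, and the thickening $F(B)$ is bounded because $F\in\CF$ and $B$ is bounded. Hence $\alpha_y(S)$ is bounded, so by hypothesis $S$ is finite.

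The only mild subtlety—and the point where the two hypotheses interact nontrivially—is in the reverse direction: one must use controlledness of $\alpha$ in the $G$-coordinate with respect to the \emph{trivial} coarse structure $\mincrs$, which is precisely what allows a single entourage $F\in\CF$ to simultaneously control $g\cdot y$ in terms of $g\cdot b_1$ for all $g\in G$. Everything else is bookkeeping with coarse connectedness, used to pass freely between bounded sets and bounded sets containing the basepoint $y$.
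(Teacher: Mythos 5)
Your proof is correct and takes essentially the same route as the paper's. Both directions match: the forward direction sets $B=\{y\}\cup\alpha_y(S)$ and uses the inclusion $S\subseteq\{g\mid (g\cdot B)\cap B\neq\emptyset\}$, and the reverse direction bounds $\alpha_y(S)$ by a controlled thickening of $B$; you phrase this via the entourage $F=(\alpha\times\alpha)(\Delta_G\otimes(B\times B))$, whereas the paper writes the same thickening as $\st(B,\pts{G}\cdot B)$ in controlled-covering language — a purely notational difference the paper explicitly treats as interchangeable.
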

\begin{proof}
 By definition, $A\subseteq G$ is $\alpha_y^*(\CF)$\=/bounded if and only if $A\cdot y$ is $\CF$\=/bounded. Letting $B\coloneqq \braces{y}\cup (A\cdot y)$ we see that if $\alpha$ is proper then $A$ must be finite. Vice versa, let $B$ be any $\CF$\=/bounded subset of $Y$. Enlarging $B$ if necessary, we may assume that $y\in B$. Then the set $\braces{g\in G\mid g\cdot B\cap B\neq \emptyset}$ is a subset of $\braces{g\in G\mid g\cdot y\in \st(B,\pts{G}\cdot B)}$, which is $\alpha_y^*(\CF)$\=/bounded.
\end{proof}

Notice also that if $(Y,\CF)$ is coarsely connected and $\crse{G\curvearrowright Y}$ is a coarse action then every finite subset of $G$ is of course $\alpha_y^*(\CF)$\=/bounded. Together with Lemma~\ref{lem:p1:naive proper iff bounded are finite}, this shows that $\crse{G\curvearrowright Y}$ is a coarse action if and only if the set of $\alpha_y^*(\CF)$\=/bounded subsets of $G$ coincides with the set of all the finite subsets of $G$. However, we know from Proposition~\ref{prop:p1:neighbourhoods.of.identity.generate} that an equi left invariant coarse structure is determined by the bounded sets of the coarse structure. In particular, $\alpha_y^*(\CF)=\varcrs[left]{fin}$ does not depend on the specific choice of proper coarse action. We will soon see that this is a key point of the Milnor--Schwarz  Lemma.

Other facets of the Milnor--Schwarz Lemma concern the finite generation of the set\=/group $G$ acting on $(Y,d)$ and showing that the orbit maps are quasi\=/isometries (as opposed to coarse equivalences). Both these points are tightly tied with the assumption that the metric space $(Y,d)$ is a length space. It is a simple observation that a coarse equivalence between length spaces is automatically a quasi\=/isometry (see also Corollary~\ref{cor:appendix:quasi geo quasi Lipschitz}).

Recall that a coarse space $(X,\CE)$ is coarsely geodesic (Definition~\ref{def:p1:crs_geod}) if it is coarsely connected and the coarse structure $\CE$ is generated by a single set $\CE=\angles{\bar E}$. That is, $E\subseteq X\times X$ is in $\CE$ if and only if it is a subset of the $n$\=/fold composition $\bar E\cmp\cdots\cmp \bar E$ for some $n\in\NN$. 
Notice that if $(Y,d)$ is a length metric space then $(Y,\CE_d)$ is coarsely geodesic because $\CE_d$ is generated \emph{e.g.}\ by the $1$\=/neighborhood of the diagonal $\Delta_{Y}\subseteq Y\times Y$. 

\begin{cor}[Milnor--Schwarz  Lemma]\label{cor:p1:Milnor-Svarc}
 If $\alpha\colon G\curvearrowright (Y,d)$ is a metrically\=/proper cocompact action by isometries of a set\=/group $G$ on a length space $Y$, then $G$ is finitely generated and the orbit map is a quasi\=/isometry when $G$ is equipped with a word metric.
\end{cor}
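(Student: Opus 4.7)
My plan is to assemble the corollary from the three ingredients that the excerpt has already set up: (a) the pullback description of cobounded actions (Lemma~\ref{lem:p1:pullback orbit coarse actions}), (b) the characterization of properness in terms of finite bounded sets (Lemma~\ref{lem:p1:naive proper iff bounded are finite}), and (c) the fact that equi left invariant coarse structures are determined by their bounded neighborhoods of the identity (Proposition~\ref{prop:p1:neighbourhoods.of.identity.generate}). First I would note that since $G$ acts on $(Y,d)$ by isometries, $\alpha$ extends to a coarse action $\crse \alpha\colon(G,\mincrs)\curvearrowright(Y,\CE_d)$, and cocompactness immediately gives that this coarse action is cobounded. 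Fix a basepoint $y\in Y$; by Lemma~\ref{lem:p1:pullback orbit coarse actions} the orbit map $\alpha_y$ is a coarsely equivariant coarse equivalence $(G,\alpha_y^*(\CE_d))\to(Y,\CE_d)$, so all the work moves to understanding the pullback coarse structure on $G$.

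Next I would exploit the two extra hypotheses. Metric properness combined with coarse connectivity of $(Y,\CE_d)$ and Lemma~\ref{lem:p1:naive proper iff bounded are finite} shows that the $\alpha_y^*(\CE_d)$\=/bounded subsets of $G$ are precisely the finite subsets. Separately, since $(Y,d)$ is a length space the $1$\=/neighborhood $\bar F$ of $\Delta_Y$ in $Y\times Y$ generates $\CE_d$, so $(Y,\CE_d)$ is coarsely geodesic; pulling back along $\alpha_y$ (a coarse equivalence, hence preserving monogenicity) shows that $(G,\alpha_y^*(\CE_d))$ is also coarsely geodesic, generated by some single symmetric relation $\bar E\supseteq\Delta_G$.

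Now I would extract a finite generating set for $G$. Because $\alpha_y^*(\CE_d)$ is equi left invariant, I can symmetrize and enlarge $\bar E$ to the form $\Delta_G\ast(S\times S)$ where $S\coloneqq \bar E_e\cup\{e\}$ is the section at the identity; by the previous paragraph $S$ is a finite subset of $G$, and coarse geodesicity says every element of $G$ is reached from $e$ by finitely many applications of $\bar E$, i.e.\ $S$ generates $G$ as a set-group. Writing $d_S$ for the associated word metric, finiteness of $S$ gives that the $\CE_{d_S}$\=/bounded sets are the finite sets, so both $\CE_{d_S}$ and $\alpha_y^*(\CE_d)$ are equi left invariant coarse structures on $G$ with the same bounded neighborhoods of the identity. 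By Proposition~\ref{prop:p1:neighbourhoods.of.identity.generate} they coincide (this is the point Example~\ref{exmp:p1: word metric coincides with finleft} is already making).

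Finally, chaining the coarse equivalences $(G,\CE_{d_S})=(G,\alpha_y^*(\CE_d))\to(Y,\CE_d)$ from Lemma~\ref{lem:p1:pullback orbit coarse actions}, the orbit map is a coarse equivalence between two coarsely geodesic metric coarse spaces. The only non-tautological remaining step is upgrading ``coarse equivalence'' to ``quasi-isometry'', which is the standard fact that a controlled map between geodesic (or even coarsely geodesic) metric spaces admits an affine control function; this is mentioned in the introduction and handled in Appendix~\ref{sec:appendix:quasifications}. I expect the main technical nuisance to be the bookkeeping in the third paragraph: making sure that the generating relation $\bar E$ can really be replaced by a ``left-translation of a finite set at $e$'' without inflating the coarse structure, but this is exactly what equi left invariance is designed to permit, and no genuine obstacle should arise.
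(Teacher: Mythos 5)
Your proof is correct and follows essentially the same route as the paper's: pass to the coarse action of the trivially coarse group, use coboundedness and the pullback coarse structure to identify the orbit map as a coarse equivalence $(G,\alpha_y^*(\CE_d))\to(Y,\CE_d)$, use metric properness to pin down the bounded sets of $\alpha_y^*(\CE_d)$ as the finite sets, exploit coarse geodesicity to extract a finite generating set from a generating entourage, and invoke the local determination of equi left invariant coarse structures (Proposition~\ref{prop:p1:neighbourhoods.of.identity.generate}) to conclude $\CE_{d_S}=\alpha_y^*(\CE_d)$. The only cosmetic difference is that you put the generating relation in the form $\Delta_G\ast(S\times S)$ where the paper uses $\Delta_G\ast(S\times\{e\})$ with $S=(\Delta_G\ast\bar E)_e$, but these yield the same left-invariant coarse structure after symmetrization, so there is no substantive divergence.
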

\begin{proof}
 Since the action is by isometries, it defines a coarse action $\crse \alpha \colon (G,\mincrs)\curvearrowright (Y,\CE_d)$. 
 Since $\alpha$ is metrically\=/proper, $ \crse \alpha$ is a proper coarse action and hence $\alpha_y^*(\CE_d)$ is the unique equi left invariant coarse structure on $G$ whose bounded sets are precisely the finite subsets of $G$. 
 Every compact set in $Y$ is $\CE_d$\=/bounded, so cocompactness of $\alpha$ implies that $\crse \alpha$ is a cobounded coarse action. It follows that the orbit map is a coarse equivalence $(G,\alpha_y^*(\CE_d))\to (Y,\CE_d)$. 
 
 Since $Y$ is a length space, $(Y,\CE_d)$ is coarsely geodesic and therefore so is $(G,\alpha_y^*(\CE_d))$. Choose a generator for the coarse structure  $\alpha_y^*(\CE_d)=\angles{\bar E}$. Since $G$ is a set\=/group and $\alpha_y^*(\CE_d)$ is equi left invariant, we can enlarge $\bar E$ to make it into a relation of the form $\bar E=\Delta_{G}\ast (S\times \{e\})$ for some $\alpha_y^*(\CE_d)$\=/bounded set $S$---for instance letting $S$ be the section $S\coloneqq(\Delta_{G}\ast \bar E)_{e}$. Since it is bounded, $S$ is finite. Notice that the $n$\=/fold composition, $\bar E\cmp\cdots \cmp\bar E$ is equal to $\Delta_G\ast(S^n\times\{e\})$. Since $(G,\alpha_y^*(\CE_d))$ is coarsely connected, for every $g\in G$ there exists $n\in\NN$ with $g\torel{\bar E\cmp\cdots \cmp\bar E}e$. This means that $g\in S^n$ and hence $S$ is a finite generating set for $G$.\footnote{%
Compare also with \cite[Theorems 9.2 and 9.7]{protasov2003ball}.}

 Finally, the (coarse) action by left multiplication of $G$ on itself equipped with the metric coarse structure $\CE_{d_S}$ induced by the word metric $d_S$ defined by the finite generating set $S$ is proper and cobounded. By uniqueness, it follows that $\CE_{d_S}=\alpha_y^*(\CE_d)$. That is, the orbit map is a coarse equivalence between $(G,\CE_{d_S})$ and $(Y,\CE_d)$. Since both metrics are length metrics, the orbit map is a quasi\=/isometry.
\end{proof}

\begin{rmk}
 The standard proof of the Milnor--Schwarz  Lemma is obtained by studying how the orbit $G\cdot y$ sits in the metric space $(Y,d)$. Using the metric assumptions on $Y$ and the existence of a proper cocompact action by isometries it is possible to deduce that $G$ itself must be finitely generated and bound the distortion of the orbit map. That is, the $G$\=/action is used to control the coarse geometry of $Y$ and the latter is then used to study the coarse geometry of $G$.
 
 The proof that we just presented displays a subtle change of paradigm. Since coarse geometry is well behaved under pull\=/backs, we can directly use the action to pull\=/back the coarse structure of $Y$ to $G$. The advantage of doing this is that we are then left to study a coarse structure \emph{on a set\=/group} and here we can use uniqueness results `for free'.
\end{rmk}

It is interesting to note that the Milnor--Schwarz  Lemma can be split into two separate statements.  
The first one is implicit in Proposition~\ref{prop:p1:neighbourhoods.of.identity.generate} and implies that a cobounded coarse action is uniquely determined by its `degree of properness'. That is, cobounded coarse actions of $G$ are uniquely determined by the family of subsets of $G$ that have bounded image under the orbit map (see Chapter~\ref{sec:appendix:proper coarse actions} for more on this).

The second part of the statement is more specialized. It is the observation that if the (unique) space admitting a metrically\=/proper cobounded $G$\=/action is coarsely geodesic then the group is finitely generated. These two statements can then be combined by observing that, for a finitely generated group, the action on the Cayley graph is proper and cobounded and is hence the model of every metrically\=/proper cobounded action.

\section{Quotient Coarse Actions}\label{sec:p1:quotient coarse action} 
Let $\crse{\alpha}\colon \crse G\curvearrowright \crse Y$ be a coarse action and $\crse q\colon \crse Y\to\crse Q$ a coarse quotient of coarse spaces (Section \ref{sec:p1:subspaces and quotients}).

\begin{de}
 If there exists a coarse action $\bar{\crse \alpha}\colon \crse G\curvearrowright \crse Q$ such that the diagram commutes:
\[ 
\begin{tikzcd}
    \crse{G}\times \crse Y \arrow[r, "{\crse \alpha}"] \arrow[d,swap, "{\cid_{\crse G}\times \crse q}"] & \crse{Y}      \arrow[d, "{\crse q}"]\\
    \crse{G}\times \crse Q \arrow[r, "\bar{\crse \alpha}"]& \crse Q . 
\end{tikzcd} 
\]
We say that $\bar{\crse \alpha}$ is a \emph{quotient coarse action}\index{quotient!coarse action}\index{coarse action!quotient} for $\crse{\alpha}$. That is, $\bar{\crse{\alpha}}$ is a coarse action such that the quotient map $\crse{q}\colon \crse Y\to \crse Q$ is coarsely equivariant.
\end{de}

\begin{rmk}
 With this language, Proposition~\ref{prop:p1:cobounded coarse actions isom left mult} implies that every cobounded coarse action of $\crse G$ is a coarse quotient of the action by left multiplication of $\crse G$ on itself.
\end{rmk}

Recall from Section~\ref{sec:p1:coarse actions def} that a coarse structure $\CF'$ on $Y$ is equi left invariant if $\Delta_G\cdot F\in \CF'$ for every $F\in \CF'$, where we use the notation
\[
 E\cdot F\coloneqq (\alpha\times\alpha) (E\otimes F).
\]

\begin{de}\label{def:p1:coarsely equivariant sets of relations}
 Let $\crse{\alpha\colon G\curvearrowright Y}=(Y,\CF)$ be a coarse action. A family $\CR$ of relations on $Y$ is \emph{coarsely equivariant}\index{coarsely!equivariant (relations)} (or, \emph{$\crse G$\=/equivariant}) if $\Delta_{G}\cdot R\in \angles{\CF,\CR}$ for every $R\in\CR$ (the definition does not depend on the choice of representative for $\crse \alpha$). 
\end{de}

Since $\crse \alpha$ is a coarse action, $\CF$ is equi left invariant. It follows that $\CR$ is coarsely equivariant if and only if the coarse structure $\angles{\CF,\CR}$ is equi left invariant.
Recall that $\crse Y/\CR$ denotes the quotient coarse space $(Y,\angles{\CF,\CR})$ and let $\crse q=[\id_Y]\colon \crse Y\to\crse Y/\CR$ be the quotient map. Definition~\ref{def:p1:coarsely equivariant sets of relations} is designed so that the following holds.

\begin{lem}
 A family of relations $\CR$ is coarsely equivariant if and only if $\crse \alpha$ descends to a quotient coarse action $\bar{\crse \alpha}\colon \crse G\times (\crse Y/\CR)\to \crse Y/\CR$.
\end{lem}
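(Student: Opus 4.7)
The plan is to prove both implications by tracking a single set-theoretic representative of $\crse\alpha$.

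For the forward direction, assume $\CR$ is coarsely equivariant, which by definition means $\angles{\CF,\CR}$ is equi left invariant under the action function $\alpha$. I would take $\bar\alpha\colon G\times Y\to Y$ to be the very same function $\alpha$, now considered as a map into $(Y,\angles{\CF,\CR})$. To see that $\bar\alpha$ is controlled I would invoke Lemma~\ref{lem:p1:equi controlled.sections.iff.controlled}: the right multiplications $\bar\alpha(\variable,y)=\alpha(\variable,y)$ are equi controlled because $\alpha\colon(G,\CE)\times(Y,\CF)\to(Y,\CF)$ is controlled and $\CF\subseteq\angles{\CF,\CR}$; the left multiplications $\bar\alpha(g,\variable)=\alpha(g,\variable)$ are equi controlled precisely because $\angles{\CF,\CR}$ is equi left invariant, which is our hypothesis. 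The Action Diagrams commute up to $\CF$\=/closeness because $\crse\alpha$ is a coarse action, hence they commute a fortiori up to $\angles{\CF,\CR}$\=/closeness. Thus $\bar{\crse\alpha}\coloneqq[\bar\alpha]$ is a coarse action on $\crse Y/\CR$, and the intertwining square with $\crse q=[\id_Y]$ commutes on the nose of representatives, so $\crse q$ is coarsely equivariant.

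For the converse, assume $\bar{\crse\alpha}\colon\crse G\times(\crse Y/\CR)\to\crse Y/\CR$ exists with some representative $\bar\alpha\colon G\times Y\to Y$. Coarse equivariance of $\crse q=[\id_Y]$ means that the two functions $q\circ\alpha=\alpha$ and $\bar\alpha\circ(\id_G\times q)=\bar\alpha$ are close when viewed into $(Y,\angles{\CF,\CR})$, so there is some $F_0\in\angles{\CF,\CR}$ with $\alpha(g,y)\torel{F_0}\bar\alpha(g,y)$ for all $(g,y)\in G\times Y$. Fix $R\in\CR\subseteq\angles{\CF,\CR}$. Since $\bar\alpha$ is a controlled action on $(Y,\angles{\CF,\CR})$, the relation $\Delta_G\cdot_{\bar\alpha}R$ lies in $\angles{\CF,\CR}$. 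For any $(y,y')\in R$ and any $g\in G$, the chain
\[
\alpha(g,y)\torel{\op{F_0}}\bar\alpha(g,y)\torel{\Delta_G\cdot_{\bar\alpha}R}\bar\alpha(g,y')\torel{F_0}\alpha(g,y')
\]
shows $\Delta_G\cdot R\subseteq\op{F_0}\cmp(\Delta_G\cdot_{\bar\alpha}R)\cmp F_0\in\angles{\CF,\CR}$. Hence $\CR$ is coarsely equivariant.

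There is no real obstacle beyond careful bookkeeping: both directions are essentially an unpacking of definitions once one keeps the two coarse structures $\CF$ and $\angles{\CF,\CR}$ separate and remembers that closeness is measured only in the codomain. The one point requiring vigilance is that in the forward direction $\bar\alpha$ is literally the same function as $\alpha$; the content lies entirely in the fact that equi left invariance of $\angles{\CF,\CR}$ is exactly what upgrades $\alpha$ from being controlled on $(Y,\CF)$ to being controlled on $(Y,\angles{\CF,\CR})$.
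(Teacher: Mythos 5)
Your proof is correct, and the forward direction is identical to the paper's. The only difference is in the converse: the paper exploits the fact that $\crse q=[\id_Y]_{\angles{\CF,\CR}}$ forces $\bar{\crse\alpha}=[\alpha]_{\angles{\CF,\CR}}$, so that $\alpha$ itself is a representative and its controlledness on $(Y,\angles{\CF,\CR})$ yields coarse equivariance immediately with no further work, whereas you keep a general representative $\bar\alpha$ and chase the inclusion $\Delta_G\cdot R\subseteq \op{F_0}\cmp(\Delta_G\cdot_{\bar\alpha}R)\cmp F_0$ through the closeness $F_0$ -- a slightly longer route to the same conclusion (and, incidentally, your $F_0$/$\op{F_0}$ labels are swapped relative to your own definition of $F_0$, though this is harmless since $\angles{\CF,\CR}$ is symmetric).
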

\begin{proof}
Assume that $\CR$ is $\crse G$\=/equivariant, and let $\crse G=(G,\CE)$. Thus $\angles{\CF,\CR}$ is equi left invariant. Since for every $E\in\CE$ the image $\alpha(E\otimes\Delta_Y)$ is in $\CF\subseteq\angles{\CF,\CR}$, it immediately follows from Lemma~\ref{lem:p1:equi controlled.sections.iff.controlled} that $\alpha\colon (G,\CE)\times(Y, \angles{\CF,\CR})\to (Y, \angles{\CF,\CR})$ is controlled.
Since $\CF\subseteq\angles{\CF,\CR}$ it is also clear that the Action Diagrams for $\alpha$ commute up to $\angles{\CF,\CR}$\=/closeness and hence $\alpha$ defines a quotient coarse action $\bar{\crse\alpha}\colon \crse G\curvearrowright (Y, \angles{\CF,\CR})$.
 
 Vice versa, if there exists a coarse map $\bar{ \crse \alpha}$ so that the diagrams commute, then we must have $\bar{ \crse \alpha}=[\alpha]_{\angles{\CF,\CR}}$ because $\crse q=[\id_Y]_{\angles{\CF,\CR}}$. In particular, we may pick $\alpha$ as a representative for $\bar{\crse\alpha}$, and $\alpha$ is only controlled if $\CR$ is coarsely equivariant. 
\end{proof}

We thus obtain an analog of Proposition~\ref{prop:p1:quotient.subgroup}:

\begin{prop}\label{prop:p1:quotient action generated by R}
 Given a coarse action $\crse{\alpha\colon G\curvearrowright Y}=(Y,\CF)$ and any set of relations $\CR$ on $Y$, the minimal equi left invariant coarse structure $\varFcrs[left]{\CF,\CR}$ containing $\CF$ and $\CR$ is equal to
 \[
  \varFcrs[left]{\CF,\CR}=\angles{\CF,\Delta_{G}\cdot \CR}.
 \]
 The coarse action $\crse \alpha$ descends to a quotient coarse action $\bar{\crse \alpha}\colon\crse G\curvearrowright (Y,\varFcrs[left]{\CF,\CR})$. Every quotient coarse action of $\crse\alpha$ that sends $\CR$ into controlled entourages must be a coarse quotient of $\bar{\crse\alpha}$.
\end{prop}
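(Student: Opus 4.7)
The plan is to establish the three claims in sequence: the explicit description of $\varFcrs[left]{\CF,\CR}$, the existence of the descended coarse action, and its universal property. For the description, one containment is immediate: any equi left invariant coarse structure that contains $\CR$ must contain $\Delta_G\cdot R$ for every $R\in\CR$, so $\angles{\CF,\Delta_G\cdot\CR}\subseteq\varFcrs[left]{\CF,\CR}$. The substantive direction is to show that $\angles{\CF,\Delta_G\cdot\CR}$ is itself equi left invariant. By Lemma~\ref{lem:p1: generated coarse structure}, every element of $\angles{\CF,\Delta_G\cdot\CR}$ sits inside a finite composition of relations of the form $F\cup\Delta_Y$ or $\op F\cup\Delta_Y$ with $F\in\CF\cup(\Delta_G\cdot\CR)$. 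The operation $\Delta_G\cdot(\variable)$ is monotone, commutes with symmetry and finite union, and satisfies $\Delta_G\cdot(E_1\cmp E_2)\subseteq(\Delta_G\cdot E_1)\cmp(\Delta_G\cdot E_2)$ (to see this last point, note that if $y_1\torel{E_1}y'\torel{E_2}y_2$ then for any fixed $g\in G$ one has $g\cdot y_1\torel{\Delta_G\cdot E_1}g\cdot y'\torel{\Delta_G\cdot E_2}g\cdot y_2$). Consequently, it suffices to verify that $\Delta_G\cdot F$ lies in $\angles{\CF,\Delta_G\cdot\CR}$ for each generator $F$.

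If $F\in\CF$, this holds because $\CF$ is already equi left invariant (a consequence of $\crse\alpha$ being a coarse action). The interesting case is $F=\Delta_G\cdot R$, where I would invoke the coarse associativity encoded in the Action Diagram: there is a fixed relation $\acRel\in\CF$ such that $g_1\cdot(g_2\cdot y)\rel{\acRel}(g_1\ast g_2)\cdot y$ for every $g_1,g_2\in G$ and $y\in Y$. Then, for every $y\torel R y'$ and $g_1,g_2\in G$, the chain $g_1\cdot(g_2\cdot y)\rel{\acRel}(g_1\ast g_2)\cdot y\torel{\Delta_G\cdot R}(g_1\ast g_2)\cdot y'\rel{\acRel}g_1\cdot(g_2\cdot y')$ witnesses the inclusion $\Delta_G\cdot(\Delta_G\cdot R)\subseteq\acRel\cmp(\Delta_G\cdot R)\cmp\acRel$, which manifestly lies in $\angles{\CF,\Delta_G\cdot\CR}$. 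This is the only nontrivial step of the argument, and it is the exact analog of the identity $\Delta_G\ast(\Delta_G\ast E)\subseteq\assRel\cmp(\Delta_G\ast E)\cmp\assRel$ used in Remark~\ref{rmk:p1:generated leftinvariant coarse structure}; the main obstacle is simply recognising that coarse associativity of the action plays the role of the relation $\assRel$ from the group setting.

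Once equi left invariance of $\angles{\CF,\Delta_G\cdot\CR}$ is established, the descent of $\crse\alpha$ is automatic: the diagonal inclusion $[\id_Y]\colon(Y,\CF)\to(Y,\varFcrs[left]{\CF,\CR})$ is controlled, the Action Diagrams continue to commute in the coarser structure, and Lemma~\ref{lem:p1:equi controlled.sections.iff.controlled} upgrades equi left invariance plus controlledness of the orbit maps $\alpha_y$ to full controlledness of $\alpha\colon(G,\CE)\times(Y,\varFcrs[left]{\CF,\CR})\to(Y,\varFcrs[left]{\CF,\CR})$, yielding $\bar{\crse\alpha}$. For the universal property, if $\bar{\crse\alpha}'\colon\crse G\curvearrowright(Y,\CF'')$ is any further quotient coarse action with $\CR\subseteq\CF''$, then $\CF''$ contains $\CF$ (as $[\id_Y]\colon\crse Y\to(Y,\CF'')$ is coarsely equivariant hence controlled) and is equi left invariant (as $\bar{\crse\alpha}'$ is a coarse action), so by minimality $\varFcrs[left]{\CF,\CR}\subseteq\CF''$ and $(Y,\CF'')$ is a coarse quotient of $(Y,\varFcrs[left]{\CF,\CR})$ via $[\id_Y]$.
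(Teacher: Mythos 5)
Your proposal follows the same structure as the paper's proof and the core computation is identical: the chain
\[
  g_1\cdot(g_2\cdot y)
  \rel{\CF} (g_1\ast g_2)\cdot y
  \torel{\Delta_G\cdot R} (g_1\ast g_2)\cdot y'
  \rel{\CF} g_1\cdot (g_2\cdot y'),
\]
which is exactly how the paper shows $\Delta_G\cdot\CR$ is coarsely equivariant. Your more explicit reduction to generators (using that $\Delta_G\cdot(\variable)$ is monotone and preserves symmetry, union, and composition) is a valid way to repackage the paper's appeal to the preceding lemma on coarse equivariance; the content is the same.

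There is, however, one small but genuine gap. To conclude $\varFcrs[left]{\CF,\CR}\subseteq\angles{\CF,\Delta_G\cdot\CR}$ by minimality, you need $\angles{\CF,\Delta_G\cdot\CR}$ to be an equi left invariant coarse structure that contains both $\CF$ and $\CR$. You verify equi left invariance and containment of $\CF$ is by construction, but you never show that $\CR\subseteq\angles{\CF,\Delta_G\cdot\CR}$. This is not automatic: $R$ and $\Delta_G\cdot R$ are distinct relations, and $\Delta_G\cdot R$ consists of pairs $(g\cdot y,g\cdot y')$ with $(y,y')\in R$ — for $R$ itself to be a subset you would need $e_G\cdot y=y$ on the nose, which only holds up to $\CF$. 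The paper supplies the missing observation via the unit axiom: $y\rel{\CF}e_G\cdot y\torel{\Delta_G\cdot R}e_G\cdot y'\rel{\CF}y'$ for every $y\torel{R}y'$, giving $R\subseteq\CF\cmp(\Delta_G\cdot R)\cmp\CF\subseteq\angles{\CF,\Delta_G\cdot\CR}$. Adding this one line closes your argument.
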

\begin{proof}
 It is clear that $\varFcrs[left]{\CF,\CR}$ must contain $\angles{\CF,\Delta_{G}\cdot \CR}$, so it is enough to verify that the latter is equi left invariant and contains $\CR$. Notice that the family $\Delta_{G}\cdot \CR = \bigbraces{\Delta_{G}\cdot R\mid R\in\CR}$ is coarsely equivariant because
 \[
  g_1\cdot(g_2\cdot y)
  \rel{\CF} (g_1\ast g_2)\cdot y
  \torel{\Delta_G\cdot R} (g_1\ast g_2)\cdot y'
  \rel{\CF} g_1\cdot (g_2\cdot y')
  \qquad \forall g_1,g_2\in G,\ y\torel{R}y'.
 \]
 We similarly see that $\angles{\CF,\Delta_{G}\cdot \CR}$ contains $\CR$ because
 \[
  y
  \rel{\CF} e_G\cdot y
  \torel{\Delta_G\cdot R} e_G\cdot y'
  \rel{\CF} y'
  \qquad \forall y\torel{R}y'.
 \]
 The second part of the statement is essentially a tautology.
\end{proof}

\begin{exmp}
 If $\alpha\colon G\curvearrowright Y$ is a set\=/group action and $R$ is an \emph{equivalence relation} on $Y$, then $\alpha$ factors through the quotient set $Y/R$ if and only if $R$ is $G$\=/equivariant.
Since $R$ is an equivalence relation, the coarse structure $\angles{R}$ coincides with the set of subsets of $R$. If we now see $\alpha$ as a trivially coarse action $\crse \alpha \colon(G,\mincrs)\curvearrowright(Y,\mincrs)$, then it is easy to observe that an equivalence relation $R$ is $G$\=/equivariant if and only if it is coarsely equivariant. When this happens, we see that $\varFcrs[left]{\mincrs}{}^{}_{,R}=\angles{R}$. We may also observe that the trivially coarse action on the quotient set $(G,\mincrs)\curvearrowright (Y/R,\mincrs)$ is isomorphic (as a coarse action) to the quotient coarse action of $\bar{\crse\alpha}\colon (G,\mincrs)\curvearrowright (Y,\angles{R})$.
\end{exmp}

\begin{exmp}\label{exmp:p1:metric action as quotient of trivial}
 Let $\alpha\colon G\curvearrowright(Y,d)$ be an isometric action on a length space and let $R_1\coloneqq\{(y_1,y_2)\mid d(y_1,y_2)\leq 1\}$. Then $R_1$ is not an equivalence relation, but it is still a $G$\=/equivariant relation on $Y$.
 As a consequence, we see that $\varFcrs[left]{\mincrs}{}^{}_{,R_1}=\angles{R_1}$ and hence $\alpha$ induces a quotient coarse action $\crse \alpha \colon (G,\mincrs)\curvearrowright(Y,\angles{R_1})$. Since $Y$ is a length metric space, $\angles{R_1}$ coincides with the metric coarse structure $\CE_d$, \emph{i.e.}\ the quotient coarse action is nothing but $\crse \alpha \colon (G,\mincrs)\curvearrowright(Y,\CE_d)$. If the action $\alpha$ is not by isometries, $R_1$ need not be $G$\=/invariant. On the other hand, it is easy to see that $R_1$ is coarsely equivariant if and only if the action is by uniform coarse equivalences. This was to be expected, because $\angles{R_1}=\CE_d$ and the latter condition determines those actions such that $\crse \alpha \colon (G,\mincrs)\curvearrowright(Y,\CE_d)$ is a coarse action (Example~\ref{exmp:p1:metric action by unif coarse eq}).
\end{exmp}

\begin{rmk}
 A similar argument shows that the notation introduced here is compatible with that of Example~\ref{exmp:p1:topological left multiplication}: the coarse structure $\varFcrs[left]{cpt}$  defined there is the smallest equi left invariant coarse structure on $Y$ so that compact subsets of $Y$ are bounded.
\end{rmk}

\section{Coarse Quotient Actions of the Action by Left Multiplication}
\label{sec:p1:quotients by left mult}
This is a technical section that will be beneficial in the sequel.
It is useful to study coarse quotients of the coarse action by left multiplication of a coarse group $\crse G =(G,\CE)$ on itself. By Proposition~\ref{prop:p1:quotient action generated by R}, all we need to do is to understand the coarse structures of the form $\varcrs[left]{\CE,\CR}=\angles{\CE,\Delta_G\ast \CR}$ (see also Remark~\ref{rmk:p1:generated leftinvariant coarse structure}).
It is much easier to describe such a coarse structure if the operation functions $\ast,\unit,\inversefn$ are adapted (we are using notations and definitions from Chapter~\ref{sec:p1:making sets into groups}). In fact, we can prove the following:
 
\begin{lem}\label{lem:p1:quotient coarse structure as composition}
 Let $(G,\CE)$ be a coarse group, $\CR$ a set of relations on $G$, and $\ast,\unit,\inversefn$ adapted representatives for the coarse operations.
 Then   $\varcrs[left]{\CE,\CR}=
  \angles{\Delta_{G}\ast(\Delta_{G}\ast \CR)}\cmp \CE$, where 
 \[
  \angles{\Delta_{G}\ast(\Delta_{G}\ast \CR)}\cmp \CE
  \;\coloneqq\;
  \braces{E_1\cmp E_2\mid E_1\in \angles{\Delta_{G}\ast(\Delta_{G}\ast \CR)},\ E_2\in \CE}.
 \]
\end{lem}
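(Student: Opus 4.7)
Write $\CD\coloneqq\angles{\Delta_G\ast(\Delta_G\ast\CR)}$ for brevity. The plan is to prove the two inclusions between $\varcrs[left]{\CE,\CR}$ and $\CD\cmp\CE$. The easy direction $\CD\cmp\CE\subseteq\varcrs[left]{\CE,\CR}$ is immediate: iterated equi left invariance of $\varcrs[left]{\CE,\CR}$ gives $\Delta_G\ast(\Delta_G\ast R)\in\varcrs[left]{\CE,\CR}$ for every $R\in\CR$, whence $\CD\subseteq\varcrs[left]{\CE,\CR}$, and closure under composition then finishes the argument.

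For the other direction I would verify that $\CD\cmp\CE$ is itself an equi left invariant coarse structure containing $\CE\cup\CR$; minimality of $\varcrs[left]{\CE,\CR}$ will then give $\varcrs[left]{\CE,\CR}\subseteq\CD\cmp\CE$. Containing $\CE$ is trivial ($E=\Delta_G\cmp E$); containing $\CR$ is where adaptedness first enters, since it gives $R\subseteq\Delta_G\ast(\Delta_G\ast R)$ \emph{exactly} by choosing both outer and inner factors equal to $e$. Closure under subsets is routine. The remaining three axioms --- symmetry, composition, and equi left invariance --- all reduce to the single commutation statement
\[
 (\star)\qquad \CE\cmp\CD\ \subseteq\ \CD\cmp\CE.
\]
Indeed, $\op{(D\cmp E)}=\op{E}\cmp\op{D}\in\CE\cmp\CD$; a product $(D_1\cmp E_1)\cmp(D_2\cmp E_2)$ requires commuting $E_1$ past $D_2$; and $\Delta_G\ast(D\cmp E)\subseteq(\Delta_G\ast D)\cmp(\Delta_G\ast E)$ reduces to $\Delta_G\ast\CD\subseteq\CD\cmp\CE$, which itself follows from $(\star)$ after sandwiching $\Delta_G\ast(\Delta_G\ast(\Delta_G\ast R))$ between associativity relations $\assRel\in\CE$.

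To prove $(\star)$, Lemma~\ref{lem:p1: generated coarse structure} lets me induct on the length of a generating composition for elements of $\CD$, reducing to the base case $E\cmp(\Delta_G\ast R)\subseteq(\Delta_G\ast(\Delta_G\ast R))\cmp\CE$ for $E\in\CE$, $R\in\CR$ (the case of $\op{R}$ being identical, and absorbed into $\CD$ since the latter is closed under opposites). Given $(x,y)\in E$ and $(y,z)=(g\ast r_1,\,g\ast r_2)$, the candidate intermediate point is $w\coloneqq x\ast(r_1^{-1}\ast r_2)$. Adaptedness yields $r_1^{-1}\ast r_1=e$ and $x\ast e=x$ \emph{on the nose}, so $x=x\ast(r_1^{-1}\ast r_1)$ is an actual set-theoretic equality and therefore
\[
 (x,w)=\bigparen{x\ast(r_1^{-1}\ast r_1),\,x\ast(r_1^{-1}\ast r_2)}\in\Delta_G\ast(\Delta_G\ast R)\subseteq\CD.
\]
For $(w,z)\in\CE$, associativity rewrites $w\rel{\assRel}(x\ast r_1^{-1})\ast r_2$, and the chain $x\ast r_1^{-1}\rel{\CE}y\ast r_1^{-1}=(g\ast r_1)\ast r_1^{-1}\rel{\CE}g$ (using $r_1\ast r_1^{-1}=e$, associativity, and equi bi-invariance of $\CE$) then gives $(x\ast r_1^{-1})\ast r_2\rel{\CE}g\ast r_2=z$.

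The main obstacle is precisely the exact identity $x=x\ast(r_1^{-1}\ast r_1)$ used above: without adapted representatives one would only have $x\rel{\CE}x\ast(r_1^{-1}\ast r_1)$, which forces an extra $\CE$-factor on the left of $\CD$ and breaks the ``bubble $\CE$ to the right'' pattern. This is why the lemma requires adapted operations, and why the more general Lemma~\ref{lem:p1:generated biinvariant coarse structure} (without the adaptedness hypothesis) must use the more cumbersome double product $(\Delta_G\ast\CR)\ast\Delta_G$ in its statement.
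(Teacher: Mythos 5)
Your proof is correct, and it takes a genuinely different organizational route from the paper's. Where the paper rewrites an arbitrary $n$-fold composition $E_1\cmp\paren{(\Delta_G\ast R_1)\cup\Delta_G}\cmp\cdots\cmp E_n\cmp\paren{(\Delta_G\ast R_n)\cup\Delta_G}$ by explicitly bubbling the $\CR$-factors to the left one step at a time (tracking the accumulated $\CE$-errors), you instead show that $\CD\cmp\CE$ is itself an equi left invariant coarse structure containing $\CE\cup\CR$ and conclude by minimality. This is a cleaner packaging: once the commutation $(\star)$ is in hand, all the axiom checks discharge uniformly, whereas the paper's iterated distributivity bookkeeping is hidden inside the proof that $\CD\cmp\CE$ is closed under composition. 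The substance is ultimately the same, though: the one nontrivial estimate in both arguments is the step taking $E\cmp(\Delta_G\ast R)$ into $(\Delta_G\ast(\Delta_G\ast R))\cmp\CE$ via the intermediate point $x\ast(r_1^{-1}\ast r_2)$, and your computation there is essentially identical to the paper's, including the observation that $x=x\ast(r_1^{-1}\ast r_1)$ must hold \emph{exactly}, which is where adaptedness enters.

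Two small gaps worth filling in. First, your claimed ``base case'' of $(\star)$ features $E\cmp(\Delta_G\ast R)$, but the generators of $\CD=\angles{\Delta_G\ast(\Delta_G\ast\CR)}$ are $\Delta_G\ast(\Delta_G\ast R)$, so what you actually need as a base case is $E\cmp(\Delta_G\ast(\Delta_G\ast R))\subseteq\CD\cmp\CE$. The reduction to the case you do prove is the containment $\Delta_G\ast(\Delta_G\ast R)\subseteq\assRel\cmp(\Delta_G\ast R)\cmp\assRel$ (Remark~\ref{rmk:p1:generated leftinvariant coarse structure}), which lets you absorb the outer $\assRel$ factors into $\CE$ on either side --- but this should be said explicitly, as the containment goes the ``wrong way'' for simply substituting one generator for the other. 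Second, ``closure under subsets is routine'' glosses over the fact that $\braces{D\cmp E\mid D\in\CD,\ E\in\CE}$ is not literally subset-closed; for finite unions the relevant observation is $(D_1\cmp E_1)\cup(D_2\cmp E_2)\subseteq(D_1\cup D_2)\cmp(E_1\cup E_2)$. These are genuinely minor --- the implicit subset closure is also present in the lemma's own statement --- but they belong in a complete write-up. Your closing remark on why adaptedness is forced (the exact equality $x=x\ast e$ is what keeps the $\CE$-factor from appearing on the left of $\CD$) is the right way to understand the hypothesis.
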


\begin{proof}
We only need to show that $\varcrs[left]{\CE,\CR}\subseteq \angles{\Delta_{G}\ast(\Delta_{G}\ast \CR)}\cmp \CE$, as the other containment is obvious.
By Lemma~\ref{lem:p1: generated coarse structure}, $\varcrs[left]{\CE,\CR}$ consists of the relations contained in finite compositions of relations in $\CE$, or of the form $(\Delta_{G}\ast R)\cup \Delta_G$ for some $R\in\CR$:
\begin{equation}\label{eq:p1:composition of relations for left-invariance}
 E_1\cmp\paren{(\Delta_{G}\ast R_1)\cup \Delta_G}\cmp E_2\cmp\paren{(\Delta_{G}\ast R_2)\cup \Delta_G}\cmp\cdots\cmp E_n\cmp\paren{(\Delta_{G}\ast R_n)\cup \Delta_G}.
\end{equation}
To prove the lemma we have to show that, up to replacing $\Delta_G\ast \CR$ with $\Delta_G\ast(\Delta_G\ast \CR)$, we can rearrange the composition in such a way that the contributions from $\CR$ come first and are then followed by composition with some relation in $\CE$.

By distributivity of compositions over unions, $E\cmp\paren{(\Delta_{G}\ast R)\cup \Delta_G}= E\cmp(\Delta_{G}\ast R) \cup E$. If a pair $(x,y)$ belongs to $E\cmp (\Delta_{G}\ast R)$ then there are $(z,w)\in R$ and $g\in G$ such that $y=g\ast w$ and $x\torel{E} g\ast z\torel{\Delta_{G}\ast R}g\ast w=y$.
 Since the operations are adapted, we have:
 \begin{align*}
  x=x\ast (z^{-1}\ast z)
  &\torel{\Delta_{G}\ast(\Delta_{G}\ast R)}x\ast(z^{-1}\ast w) \\
  &\torel{E\ast \Delta_{G}} (g\ast z)\ast(z^{-1}\ast w) \\
  &\torel{\assRel\cmp(\Delta_{G}\ast\assRel)} g\ast((z\ast z^{-1})\ast w)
  =g\ast w=y,
 \end{align*}
 thus showing that $E\cmp (\Delta_{G}\ast R)\subseteq (\Delta_{G}\ast(\Delta_{G}\ast R))\cmp E'$, where $E'=(E\ast \Delta_{G})\cmp \assRel\cmp(\Delta_{G}\ast\assRel)\in \CE$.

 We thus showed that $E_1\cmp\paren{(\Delta_{G}\ast R_1)\cup \Delta_G}$ is contained in some $\paren{\Delta_G\ast(\Delta_{G}\ast R)}\cmp E_1' \cup E_1$. By associativity and distributivity of composition, if we compose the latter with $E_2\cmp\paren{(\Delta_{G}\ast R_2)\cup \Delta_G}$ the resulting relation is contained in
 \begin{align*}
  \Bigparen{\paren{\Delta_G\ast(\Delta_{G}\ast R_1)}\cmp E_1''\cmp\paren{(\Delta_{G}\ast R_2)\cup \Delta_G}}
  \cup
  \Bigparen{E_1''\cmp\paren{(\Delta_{G}\ast R_2)\cup \Delta_G}}
 \end{align*}
 for some $E_1''\in\CE$. By distributivity again, this is contained in
 \begin{align*}
  \Bigparen{\paren{\Delta_G\ast(\Delta_{G}\ast R_1)}\cmp E_1''\cmp(\Delta_{G}\ast R_2)}
  \cup
  \Bigparen{\paren{\Delta_G\ast(\Delta_{G}\ast R_1)}\cmp E_1''}
  \cup
  \Bigparen{E_1''\cmp(\Delta_{G}\ast R_2)}
  \cup
  {E_1''}
 \end{align*}
 and we may then apply the above argument to show that $E_1''\cmp(\Delta_{G}\ast R_2)\subseteq \Delta_G\ast(\Delta_{G}\ast R_2) \cmp E_{2}'$ for some other $E_2'\in\CE$.
 Iterating this process, we deduce that the relation \eqref{eq:p1:composition of relations for left-invariance} is contained in a finite union of finite compositions of the form
 \[
  \paren{\Delta_G\ast(\Delta_{G}\ast R_{i_1})}\cmp
  \paren{\Delta_G\ast(\Delta_{G}\ast R_{i_2})}\cmp
  \paren{\Delta_G\ast(\Delta_{G}\ast R_{i_k})}\cmp E_k'
 \]
 and hence belongs to $\angles{\Delta_{G}\ast(\Delta_{G}\ast \CR)}\cmp \CE$ as claimed.
\end{proof}

Of course, if the representative $\ast$ is associative $\Delta_{G}\ast(\Delta_{G}\ast R)$ is simply equal to $\Delta_{G}\ast R$, hence Lemma~\ref{lem:p1:quotient coarse structure as composition} takes a more pleasing form:

\begin{cor}
 If $\crse G$ is a coarsified set\=/group then $\varcrs[left]{\CE,\CR} =\angles{\Delta_{G}\ast \CR}\cmp \CE$.
\end{cor}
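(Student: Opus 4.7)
The plan is to derive the corollary directly from Lemma~\ref{lem:p1:quotient coarse structure as composition}. Since $\crse G$ is a coarsified set\=/group, the actual set\=/group operations $\ast,\unit,\inversefn$ on $G$ satisfy the group axioms on the nose, so in particular they furnish adapted representatives in the sense of Definition~\ref{def:p1:well.adapted}. Hence the lemma applies and gives the identity $\varcrs[left]{\CE,\CR}=\angles{\Delta_{G}\ast(\Delta_{G}\ast \CR)}\cmp\CE$. The whole content of the corollary is then to show that, under strict associativity, the double product $\Delta_G\ast(\Delta_G\ast R)$ collapses to $\Delta_G\ast R$ for every relation $R$ on $G$.

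For the containment $\Delta_G\ast R\subseteq\Delta_G\ast(\Delta_G\ast R)$, I would use the existence of a genuine unit: given $(k\ast x,\,k\ast y)\in\Delta_G\ast R$ with $(x,y)\in R$, write $k\ast x = k\ast(e\ast x)$ and $k\ast y = k\ast(e\ast y)$, which exhibits the pair as an element of $\Delta_G\ast(\Delta_G\ast R)$. For the reverse containment $\Delta_G\ast(\Delta_G\ast R)\subseteq\Delta_G\ast R$, a typical element has the form $\bigl(g\ast(h\ast x),\,g\ast(h\ast y)\bigr)$ for some $g,h\in G$ and $(x,y)\in R$; by the strict associativity of $\ast$ this equals $\bigl((g\ast h)\ast x,\,(g\ast h)\ast y\bigr)$, which lies in $\Delta_G\ast R$ upon setting $k\coloneqq g\ast h$.

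Combining these two containments gives $\Delta_G\ast(\Delta_G\ast R)=\Delta_G\ast R$ for every $R\in\CR$, whence $\angles{\Delta_{G}\ast(\Delta_{G}\ast \CR)}=\angles{\Delta_{G}\ast \CR}$, and substituting into the conclusion of Lemma~\ref{lem:p1:quotient coarse structure as composition} yields the desired formula $\varcrs[left]{\CE,\CR}=\angles{\Delta_{G}\ast \CR}\cmp\CE$.

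I do not anticipate any real obstacle here: the corollary is essentially a bookkeeping simplification that exploits on\=/the\=/nose associativity (and the existence of a strict unit) to absorb an inner $\Delta_G\ast(\mhyphen)$, and every ingredient needed is explicitly available in the set\=/group setting. The only mild subtlety is remembering to justify that the set\=/group operations can legitimately be used as adapted representatives when invoking the lemma, which is immediate from the definition of a coarsified set\=/group.
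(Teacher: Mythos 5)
Your proof is correct and follows essentially the same route as the paper: invoke Lemma~\ref{lem:p1:quotient coarse structure as composition} (noting the genuine group operations are trivially adapted) and observe that $\Delta_G\ast(\Delta_G\ast R)=\Delta_G\ast R$ when $\ast$ is an honest group law. You are actually a touch more careful than the paper's one-line remark, which credits only associativity for this collapse; your observation that the forward inclusion needs the unit (or at least surjectivity of $\ast$) is well taken, though of course both hold in a set-group so no discrepancy arises.
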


\begin{rmk}\label{rmk:p1:metric on quotient}
 The meaning of Lemma~\ref{lem:p1:quotient coarse structure as composition} is best understood by examining the metric analog. If $(Y,d)$ is a metric space and $R$ is an equivalence relation we define a function on the quotient $\bar d\colon Y/R\times Y/R\to \RR$ by setting $\bar d([y],[y'])$ to be the infimum of the distances between points in $[y]$ and $[y']$. This $\bar d$ may fail to satisfy the triangle inequality. To obtain a distance function on $Y/R$ one may define $\bar d'([y],[y'])$ as 
 \[
 \bar d'([y],[y']) = \inf\Bigbraces{\sum_{i=1}^n \bar d([y_{i}],[y_{i-1}])\Bigmid n\in\NN,\ [y_i]\in Y/R,\ [y_0]=[y],\ [y_n]=[y']}
 \]
(and then check whether $\bar d'([y],[y'])=0$ if and only if $[y]=[y']$).  
This is analogous to writing $\angles{\CE,R}$ as arbitrary finite compositions of relations.
 
 On the other hand, if $R$ is preserved by an isometric group action $G\curvearrowright (Y,d)$ that is transitive on each equivalence class of $R$ (\emph{i.e.}\ $R$ is the orbit equivalence relation of an isometric action), then $\bar d$ does automatically satisfies the triangle inequality and it is hence a pseudo\=/metric. If we assume that the $R$\=/equivalence classes are closed in $Y$ then $\bar d$ is a metric.
 Since $\bar d$ is given by an explicit formula, it is much easier to study metric properties of the quotient $Y/R$. Case in point, the $\bar d$\=/ball of radius $r$ with center $[y]$ in $Y/R$ is the set of $[y']$ that intersect the $d$\=/neighborhood of radius $r$ of $[y]$ in $Y$. This is an analog of the characterization of $\varcrs[left]{\CE,\CR}=\angles{\CE,\CR}$ as $\angles{\Delta_{G}\ast \CR}\cmp \CE$: it says that a $\varcrs[left]{\CE,\CR}$\=/bounded neighborhood of a point $g\in G$ can be recognized by seeing whether it is an $\CE$\=/bounded neighborhood of a $\angles{\Delta_{G}\ast \CR}$\=/bounded neighborhood of $g$.
\end{rmk}

\section{Coarse Cosets Spaces: Subsets}\label{sec:p1:cosets of subsets}
A subclass of special interest among the quotient coarse actions of the left multiplication of $\crse G$ on itself consists of the coarse actions on `cosets spaces'. Let $\crse G=(G,\CE)$ be a coarse group and $\crse A=[A]$ some coarse subset of $\crse G$.

\begin{de}\label{def:p1: coset space}
 The \emph{(left) coarse cosets space $\crse{G/A}$}\index{coarse!coset space} is the coarse quotient 
 \[
   \crse{G / A}\coloneqq\crse{G}/\braces{\Delta_{G}\ast (A\times A)}=(G,\CE^{\rm left}_{\CE,A\times A})
 \]
 \nomenclature[:COS]{$\crse{G/ A}$}{coarse coset space}
 (this is well defined, as $\CE^{\rm left}_{\CE,A\times A}$ does not depend on the choice of representative for $\crse A$ nor $\cop$).
\end{de}

\begin{exmp}\label{exmp:p1:cosets are cosets}
 Let $G$ be a set\=/group and fix $A\subseteq G$. Then $E\in\angles{\Delta_{G}\ast (A\times A)}$ if and only if there exists some $n\in\NN$ such that for every $(x,y)\in E$ we have $x^{-1}\ast y\in (A^{-1}\ast A)^{\ast n}$. In particular, if $A=H$ is a set\=/subgroup then $\Delta_{G}\ast (H\times H)=\Delta_{G}\ast (\{e\}\times H)$  is an equivalence relation where $x\sim y$ if and only if $xH=yH$. Then $\angles{\Delta_{G}\ast (H\times H)}$ is the coarse structure consisting of all the sub\=/relations of $\sim$ and the coarse quotient $(G,\angles{\Delta_{G}\ast (H\times H)})$ is naturally isomorphic to $(G/H,\mincrs)$. That is, if $\crse G=(G,\mincrs)$ and $\crse H=[H]$, then $\crse{G/H}$ is (equivariantly) coarsely equivalent to the trivial coarsification of the usual coset space $G/H$. 
 This justifies our choice of nomenclature.
\end{exmp}

Notice that by Proposition~\ref{prop:p1:quotient action generated by R}, we know that for every coarse subset $\crse {A\subseteq G}$ the coarse group operation $\cop$ descends to a quotient coarse action $\bar{\cop}\colon \crse G\curvearrowright \crse G/\crse A$. This should be thought of as the action by left multiplication on the left cosets of $\crse A$.

\begin{rmk}
 Given a coarse action $\crse{G\curvearrowright Y}=(Y,\CF)$ and coarse point $\crse {y \in Y}$, it follows from Lemma~\ref{lem:p1:pullback orbit coarse actions} that the orbit map $\crse{\alpha_{y}}\colon\crse G\to\crse{G\cact y}$ is coarsely $\crse G$\=/equivariant.
 More precisely, by Proposition~\ref{prop:p1:cobounded coarse actions isom left mult} we see $\crse{\alpha_{y}}$ is an isomorphism between $\cop\colon\crse G\curvearrowright (G,\alpha_y^*(\CF))$ and  the restriction coarse action $\crse{\alpha|_{G\cact y}}\colon\crse G\curvearrowright\crse{G\cact y}$.
 By Proposition~\ref{prop:p1:quotient action generated by R}, $\CE^{\rm left}_{\CE,A\times A}=\angles{\CE, \Delta_{G}\ast (A\times A)}$ is the minimal coarse structure containing $\CE$ and $A\times A$ such that $\ast$ defines a $\crse G$\=/action. In particular, if the coarse image $\crse{\alpha_{y}}(\crse A)$ is bounded in $\crse Y$, it follows that $\CE^{\rm left}_{\CE,A\times A}\subseteq \alpha_y^*(\CF)$. In other words, $\crse{\alpha|_{G\cact y}}$ is a quotient coarse action of the coarse action on the coarse coset space $\crse G\curvearrowright\crse{G/A}$. This can be seen as a version of the Orbit--Stabilizer Theorem. Informally, $\crse{\alpha_{y}}(\crse A)$ is bounded in $\crse Y$ if and only if ``it is contained in the stabilizer of $\crse y$'' and this happens if and only if the coarse action of $\crse G$ on the coarse orbit $\crse{G\cact y}$ factors through the action on the left cosets of $\crse A$.
 
 There are a number of caveats to the above informal description. To begin with, $\crse A$ need not fix $\crse y$ unless we know a priori that $e_{G}\prec A$. This issue can be fixed by replacing $\crse{A}$ with $\crse a^{-1}\cop\crse A$ for some $\crse{a\in A}$ (for every $\crse{ g\in G}$ the coarse cosets spaces $\crse{G/A}$ and $\crse{G/(g\cop A})$ are naturally isomorphic). This fact points to the second caveat: we did not assume that $\crse A$ is a coarse group. 
 More importantly, it is generally not possible to uniquely define a ``coarse stabilizer'' of $\crse y$ because there need not be a ``maximal'' coarse subset $\crse A$ that stabilizes $\crse y$. For instance, let $d$ be a left invariant metric on a set\=/group $G$ as in Example~\ref{exmp:p1:metric action as quotient of trivial}, but this time assume that the coarse structure $\CE_d$ cannot be generated by a single relation (\emph{i.e.}\ $(G,\CE_d)$  is not coarsely geodesic). We then see that $\CE_d\neq \varcrs[left]{A\times A}$ for any $A\subseteq G$. 
 This issue is related to the fact that the coarse preimage of a set under a coarse map need not be well\=/defined. We will encounter similar difficulties when trying to define coarse kernels of coarse homomorphisms (Section~\ref{ch:p1:coarse kernels}). 
 On the positive side, one can find reasonable replacements for stabilizers by considering \emph{bornologies} and proper coarse actions (Section~\ref{sec:appendix:proper coarse actions}).
\end{rmk}

We will later need a technical lemma that gives us some extra control to coset coarse structures. Namely, when specialized to coarse coset spaces Lemma~\ref{lem:p1:quotient coarse structure as composition} can be further improved as follows:

\begin{lem}\label{lem:p1:quotient coarse structure as composition_cosets}
 Let $(G,\CE)$ be a coarse group, $\ast,\unit,\inversefn$ adapted representatives for the coarse operations and $A\subseteq G$ any subset.
 Then 
 \[
  \CE^{\rm left}_{\CE,A\times A}
  \;=\;
  \angles{\Delta_{G}\ast(\{e\}\times (A^{-1}\ast A))}\cmp \CE
   \;=\;
  \CE\cmp\angles{\Delta_{G}\ast((A^{-1}\ast A)\times \{e\})}. 
 \]
 In particular, $E\in\CE^{\rm left}_{\CE,A\times A}$ if and only if there exist $n\in\NN$ and $E'\in\CE$ such that 
 \[
 E\subseteq \bigparen{\Delta_{G}\ast(\{e\}\times (A^{-1}\ast A)^{\ast n})}\cmp E',
 \]
 where $(A^{-1}\ast A)^{\ast n}$ denotes the $n$\=/fold product $\bigparen{\bigparen{(A^{-1}\ast A)\ast (A^{-1}\ast A)}\ast\cdots }\ast(A^{-1}\ast A)$.
\end{lem}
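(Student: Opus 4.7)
By Lemma~\ref{lem:p1:quotient coarse structure as composition}, $\CE^{\rm left}_{\CE,A\times A} = \angles{\Delta_{G}\ast(\Delta_{G}\ast(A\times A))}\cmp\CE$. Since the operations are adapted, $(e,e)\in\Delta_{G}$ gives $\Delta_{G}\ast(A\times A)\subseteq\Delta_{G}\ast(\Delta_{G}\ast(A\times A))$, and since $\assRel\in\CE$ we also have $\Delta_{G}\ast(\Delta_{G}\ast(A\times A))\subseteq\assRel\cmp(\Delta_{G}\ast(A\times A))\cmp\assRel$. So it suffices to prove $\angles{R'}\cmp\CE = \angles{S'}\cmp\CE$, where $R'\coloneqq\Delta_{G}\ast(A\times A)$ and $S'\coloneqq\Delta_{G}\ast(\{e\}\times(A^{-1}\ast A))$. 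The second formula in the lemma then follows from the first by taking $\op{}$, since every coarse structure is closed under symmetry and $\op{S'}=\Delta_{G}\ast((A^{-1}\ast A)\times\{e\})$.

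\textbf{Swap trick.} First I would prove two $\CE$\=/level correspondences between $R'$ and $S'$. For $R'\subseteq S'\cmp\CE$: given $(g\ast a_1, g\ast a_2)\in R'$, set $g'\coloneqq g\ast a_1$ and $b\coloneqq a_1^{-1}\ast a_2\in A^{-1}\ast A$. Adapted cancellation $a_1\ast a_1^{-1}=e$ together with coarse associativity (each rebracketing paid for by an $\assRel\ast\Delta_G\in\CE$ or $\Delta_G\ast\assRel\in\CE$) yields $g'\ast b = (g\ast a_1)\ast(a_1^{-1}\ast a_2)\rel{\CE} g\ast a_2$, so $(g\ast a_1, g\ast a_2) = (g', g'\ast b)\cmp(g'\ast b, g\ast a_2)\in S'\cmp\CE$. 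A symmetric manipulation with $g''\coloneqq g\ast a_1^{-1}$ shows $S'\subseteq\CE\cmp R'\cmp\CE$, and in particular $S'\in\CE^{\rm left}_{\CE,A\times A}$, so $\angles{S'}\cmp\CE\subseteq\CE^{\rm left}_{\CE,A\times A}$.

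\textbf{Pushing $\CE$ past $S'$ and absorbing $\op{S'}$.} The key technical observations are: (a) for every $E\in\CE$, $E\cmp S'\subseteq S'\cmp\CE$, since given $(x,y)\in E$ and $(y,y\ast b)\in S'$ the pair $(x,x\ast b)$ lies in $S'$ while $(x\ast b, y\ast b)\in E\ast\Delta_G\in\CE$ by equi right invariance of $\CE$; and (b) $\op{S'}\subseteq S'\cmp\CE$, because for $(g\ast b, g)\in\op{S'}$ with $b=a_1^{-1}\ast a_2$, the coarse inverse axiom gives $b^{-1}\rel{\CE}a_2^{-1}\ast a_1\in A^{-1}\ast A$, and adapted cancellation then gives $(g\ast b)\ast b^{-1}\rel{\CE}g$. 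Iterating (a) and (b), any $R'$\=/composition $(R')^{\cmp n}\subseteq(S'\cmp\CE)^{\cmp n}$ can be rewritten as $(S')^{\cmp n}\cmp\CE\subseteq\angles{S'}\cmp\CE$ by moving every $\CE$\=/factor to the right, so $\angles{R'}\cmp\CE\subseteq\angles{S'}\cmp\CE$; the reverse inclusion is immediate.

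\textbf{``In particular'' statement and main obstacle.} For the explicit form, an $n$\=/fold composition $(g, g\ast b_1)\cmp(g\ast b_1, (g\ast b_1)\ast b_2)\cmp\cdots$ of pairs in $S'$ yields $(g, g\ast b_1\ast b_2\ast\cdots\ast b_n)$ in some bracketing with $b_i\in A^{-1}\ast A$; iterating $\assRel\in\CE$ collects the $b_i$ into a single left\=/associated product $b_1\ast\cdots\ast b_n\in(A^{-1}\ast A)^{\ast n}$ up to an overall $\CE$\=/factor, which gives $(S')^{\cmp n}\subseteq\bigparen{\Delta_G\ast(\{e\}\times(A^{-1}\ast A)^{\ast n})}\cmp\CE$. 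The main obstacle is precisely this associativity bookkeeping: since $\ast$ is only coarsely associative and every application of (a) introduces another $\CE$\=/factor, one must check these accumulate into a single controlled entourage rather than proliferating. This follows from $\CE$ being closed under composition and from the linear dependence of the number of rebracketings on $n$.
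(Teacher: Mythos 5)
Your argument is correct, and it reaches the paper's conclusion by a route that is the same in one place and genuinely different in another. For the inclusion $\CE^{\rm left}_{\CE,A\times A}\subseteq\angles{S'}\cmp\CE$, both you and the paper boil down to the same commutation lemma: an $\CE$\=/factor followed by $\Delta_G\ast(A\times A)$ can be absorbed as $S'\cmp\CE$. The paper proves $E\cmp(\Delta_G\ast(A\times A))\subseteq S'\cmp E''$ in one shot; you split it into the swap trick $R'\subseteq S'\cmp\CE$ plus observation (a), $E\cmp S'\subseteq S'\cmp\CE$, which together yield the same thing. For the reverse inclusion $\angles{S'}\cmp\CE\subseteq\CE^{\rm left}_{\CE,A\times A}$, however, your approach genuinely differs: you prove $S'\subseteq\CE\cmp R'\cmp\CE$ by a direct manipulation using $g''=g\ast a_1^{-1}$, whereas the paper invokes Lemma~\ref{lem:p1:properties of neighborhoods of id} to conclude that $A^{-1}\ast A$ is a $\CE^{\rm left}_{\CE,A\times A}$\=/bounded neighborhood of the identity after reducing to the case $e\in A$ via an asymptoticity argument. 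Your version is more elementary and self-contained; the paper's is more conceptual and reuses the $(U0)$--$(U4)$ machinery it has already set up.

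Two small remarks on precision. First, the step ``it suffices to prove $\angles{R'}\cmp\CE=\angles{S'}\cmp\CE$'' is slightly misstated: what one must identify with $\angles{S'}\cmp\CE$ is $\angles{\Delta_G\ast(\Delta_G\ast(A\times A))}\cmp\CE$, and deducing that this equals $\angles{R'}\cmp\CE$ before you have the commutation (a) in hand is not obviously justified (pushing the leading $\assRel$ past $R'$ needs the same machinery). The cleanest ordering is to prove (swap), (a), (b) first, and then absorb $\Delta_G\ast(\Delta_G\ast(A\times A))\subseteq\assRel\cmp R'\cmp\assRel$ directly into $S'\cmp\CE$; the content is all there, only the logical order needs tightening. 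Second, observation (b) is indeed needed (Lemma~\ref{lem:p1: generated coarse structure} lets compositions from $\angles{S'}$ involve $\op{S'}\cup\Delta_G$ factors), so it was right to include it; your justification there should make explicit that $(a_1^{-1})^{-1}\rel{\CE}a_1$ is only a coarse, not exact, involution, which you correctly hedge with a $\rel{\CE}$.
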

\begin{proof}
 To begin with, note that the equality $\angles{\ldots}\cmp\CE=\CE\cmp\angles{\ldots}$ follows immediately by taking symmetric relations $E\mapsto \op{E}$ and observing that symmetry exchanges the order of composition.
 
One containment is fairly simple to prove. If we assume that $e\in A$, then the set $A^{-1}\ast A$ is a $\CE^{\rm left}_{\CE,A\times A}$\=/bounded neighborhood of $e$ by Lemma~\ref{lem:p1:properties of neighborhoods of id}, hence the right hand side is contained in $\CE^{\rm left}_{\CE,A\times A}$. For an arbitrary set $A$, we can reduce to the case $e\in A$ by choosing an element $a\in A$ and considering $A'= a^{-1}\ast A$. In fact, the product $(A')^{-1}\ast A'$ is $\CE$\=/asymptotic to $A^{-1}\ast A$, so the latter is $\CE^{\rm left}_{\CE,A\times A}$\=/bounded if and only if the former is. Using left invariance and the fact that $(G,\CE)$ is a coarse group, we also see that $\CE^{\rm left}_{\CE,A\times A} =\CE^{\rm left}_{\CE,A'\times A'}$.
 
 For the converse containment, recall that by Lemma~\ref{lem:p1:quotient coarse structure as composition}, any relation in $ \CE^{\rm left}_{\CE,A\times A}$ is contained in a finite composition of the form
 \[
  \bigparen{\Delta_G\ast (\Delta_{G}\ast(A\times A))\;\cup\; \Delta_G}^{\cmp n}\cmp E'.
 \]
 Notice that $\Delta_G\ast (\Delta_{G}\ast(A\times A))\subseteq \assRel\cmp (\Delta_{G}\ast(A\times A))$, and that $\Delta_{G}\ast(\{e\}\times (A^{-1}\ast A))$ always contains the diagonal $\Delta_G$ because the operations are adapted. 
 To prove that $\CE^{\rm left}_{\CE,A\times A} = \angles{\Delta_{G}\ast(\{e\}\times (A^{-1}\ast A))}\cmp \CE$ it is thus enough to show that 
 \[
  E\cmp (\Delta_{G}\ast(A\times A))\subseteq (\Delta_{G}\ast(\{e\}\times (A^{-1}\ast A)))\cmp E'
 \]
 for some $E''\in\CE$.
 If $(x,y)\in E\cmp(\Delta_{G}\ast(A\times A))$, then there are $a_1,a_2\in A$ and $g\in G$ such that $y=g\ast a_2$ and $x\torel{E} g\ast a_1\torel{\Delta_{G}\ast (A\times A)}g\ast a_2=y$.
 Since the operations are adapted,
 \begin{align*}
  x=x\ast e
  &\torel{\Delta_{G}\ast(\{e\}\times(A^{-1}\ast A))}x\ast(a_1^{-1}\ast a_2) \\
  &\torel{E\ast \Delta_{G}} (g\ast a_1)\ast(a_1^{-1}\ast a_2) \\
  &\torel{\assRel\cmp(\Delta_{G}\ast\assRel)} g\ast((a_1\ast a_1^{-1})\ast a_2)
  =g\ast a_2=y
 \end{align*}
 as desired.

For the `in particular' part of the statement, notice that $(A^{-1}\ast A)^{\ast n}\subseteq(A^{-1}\ast A)^{\ast (n+1)}$ because $e\in A^{-1}\ast A$. It is thus enough to show that the $n$\=/fold composition $(\Delta_{G}\ast(\{e\}\times (A^{-1}\ast A)))^{\cmp n}$ is contained in $\Delta_{G}\ast(\{e\}\times (A^{-1}\ast A)^{\ast n})\circ E_n$ for some $E_n\in\CE$.
 If we are given
 \[
  x_0\torel{\Delta_{G}\ast (\{e\}\times(A^{-1}\ast A))} x_1\torel{\Delta_{G}\ast (\{e\}\times(A^{-1}\ast A)))} \cdots \torel{\Delta_{G}\ast (\{e\}\times(A^{-1}\ast A)))}x_n,
 \]
 then there are $b_i\in A^{-1}A$ such that $x_n=((x_0\ast b_1)\ast b_2)\cdots \ast b_n$. Since $((b_1\ast b_2)\cdots \ast b_n)\in (A^{-1}\ast A)^{\ast n}$, we have 
 \[
  x_0\torel{\Delta_{G}\ast(\{e\}\times (A^{-1}\ast A)^{\ast n})}
  x_0 \ast ((b_1\ast b_2)\cdots \ast b_n)
  \rel{\CE} x_n \qquad\forall x_0\in G,\ \forall b_1,\ldots,b_n\in A^{-1}\ast A
 \]
 which proves our claim.
\end{proof}

\begin{rmk}\label{rmk:p1:quotient as composition_cosets_covering}
 The meaning of Lemma~\ref{lem:p1:quotient coarse structure as composition_cosets} is perhaps more clear in terms of controlled coverings.
 A priori, a controlled covering for $\angles{\CE\;,\;\Delta_{G}\ast(A\times A)}$ is obtained by taking arbitrarily many iterated star neighborhoods of $\pts{G}\ast A$ and controlled coverings $\fka\in\fkC(\CE)$. Instead, Lemma~\ref{lem:p1:quotient coarse structure as composition_cosets} ensures us that every controlled covering is a refinement of a covering of the form 
 \[
  \st\Bigparen{\pts{G}\ast (A^{-1}\ast A)^{\ast n}\,,\,\fka}
 \]
 with $\fka\in\fkC(\CE)$. In particular, $\CE^{\rm left}_{\CE,A\times A}$\=/bounded sets are $\CE$\=/coarsely contained into left translates of $(A^{-1}\ast A)^{\ast n}$ for $n$ large enough.
\end{rmk}

\begin{exmp}
 Expanding on Example~\ref{exmp:p1:cosets are cosets} and Remark~\ref{rmk:p1:metric on quotient}, let $G$ be a set\=/group with a left invariant metric $d$ and let $H\leq G$ be a closed subgroup. Then $d$ defines a quotient metric $\bar d$ on the space of cosets $G/H$. 
 Consider now the metric coarse space $\crse G=(G,\CE_d)$ and the coarse coset space $\crse {G/ H}=(G,\CE^{\rm left}_{\CE,H\times H})$. 
 By Lemma~\ref{lem:p1:quotient coarse structure as composition_cosets} we deduce that the $\CE^{\rm left}_{\CE,H\times H}$\=/controlled coverings of $G$ are refinements of coverings by $R$\=/neighborhoods of the cosets $gH\subseteq G$ for some radius $R$ independent of $g\in G$. From this it follows that the set\=/quotient map $G\to G/H$ is an equivariant coarse equivalence from the coarse cosets space $\crse {G/ H}$ to $(G/H,\CE_{\bar d})$.
\end{exmp}

To conclude this section, it is important to know when $\bar{\cop}$ makes a coarse coset space $\crse{G/A}$ into a coarse group. More precisely, If we realize $\crse{G/A}$ as $(G,\CE^{\rm left}_{\CE,H\times H})$ then $\ast$ still denotes a binary operation on $\crse{G/A}$. In general, such $\ast$ will not be controlled with respect to $(G,\CE^{\rm left}_{\CE,H\times H})\otimes(G,\CE^{\rm left}_{\CE,H\times H})$. When it is controlled, we still denote the induced coarse map by $\bar\cop\colon \crse{G/A}\times\crse{G/A}\to\crse{G/A}$. In this case, it follows from Proposition~\ref{prop:p1:coarse.group.iff.heartsuit.and.equi controlled} that $(\crse{G/A},\bar \cop)$ is automatically a coarse group.
In other words, the coarse action $\bar\cop$ defines a coarse group if and only if it is possible to complete the following commutative diagram:
\[ 
\begin{tikzcd}
    \crse{G}\times \crse{G/A} \arrow[r, "{\bar\cop}"] \arrow[d,swap, "{\crse q\times\cid_{\crse{G/A}}}"] & \crse{G/A}.    \\
    \crse{G/A}\times \crse{G/A} \arrow[ur,swap,dashed, "\exists?\bar{\cop}"]&  
\end{tikzcd} 
\]

As one might expect, the next lemma shows that coarse normality is a sufficient condition.

\begin{lem}\label{lem:p1:cosets are groups}
 If $\crse {A\subseteq G}$ is coarsely invariant under conjugation (Definition~\ref{def:p1:normal}), then $\crse{G/A}$ is a coarse group. In the notation of Section~\ref{sec:p1:coarse quotients}, we have $\crse{G/A}=\crse{G}/\aangles{A\times A}=\crse{G}/\aangles{A\times\{e\}}$.
\end{lem}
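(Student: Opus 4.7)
The plan is to show that $\crse{G/A} = (G,\CE^{\rm left}_{\CE,A\times A})$ is a coarse group by establishing that its coarse structure is in fact equi \emph{bi}-invariant, not merely equi left invariant. By Proposition~\ref{prop:p1:coarse.group.iff.heartsuit.and.equi controlled}, together with the fact that the Group Diagrams commute up to closeness in any coarse structure containing $\CE$, the task reduces to showing that the multiplication $\ast$ is controlled with respect to $\CE^{\rm left}_{\CE,A\times A}$. Equi left invariance holds by construction, so the real content is equi right invariance, and this is precisely where coarse invariance under conjugation will be used.

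I would fix adapted representatives $\ast,e,\inversefn$ (Lemma~\ref{lem:p1:adapted representatives exist}). Since right multiplication is equi controlled on $(G,\CE)$ already, the work is to show $(\Delta_G\ast(A\times A))\ast\Delta_G\in\CE^{\rm left}_{\CE,A\times A}$. A typical pair in this set has the form $\bigparen{(g\ast a_1)\ast h,\;(g\ast a_2)\ast h}$ with $g,h\in G$ and $a_1,a_2\in A$. The key step is to slide $h$ past $a_i$: by coarse associativity and equi-controlledness of left/right multiplications, $(g\ast a_i)\ast h\rel{\CE} g\ast\bigparen{h\ast(h^{-1}\ast a_i\ast h)}$, and coarse invariance of $A$ under conjugation provides a single $\bar E\in\CE$ so that for each $h\in G$ and $a\in A$ there exists $a'\in A$ with $h^{-1}\ast a\ast h\rel{\bar E}a'$. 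Hence $(g\ast a_i)\ast h\rel{\CE}(g\ast h)\ast a_i'$ for some $a_i'\in A$, and the pair $\bigparen{(g\ast h)\ast a_1',(g\ast h)\ast a_2'}$ lies in $\Delta_G\ast(A\times A)$. This yields the containment
\[
(\Delta_G\ast(A\times A))\ast\Delta_G\;\subseteq\;\CE\cmp\bigparen{\Delta_G\ast(A\times A)}\cmp\CE,
\]
which is in $\CE^{\rm left}_{\CE,A\times A}$, proving equi right invariance.

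For the identifications, since $\CE^{\rm left}_{\CE,A\times A}$ is now an equi bi-invariant coarse structure containing $\CE\cup\{A\times A\}$, minimality of $\varcrs[grp]{\CE,A\times A}$ (Proposition~\ref{prop:p1:quotient.subgroup}) gives one inclusion, and the reverse holds because equi bi-invariance trivially implies equi left invariance; this gives $\crse{G/A}=\crse G/\aangles{A\times A}$. For the last equality, $A\times A\subseteq(A\times\{e\})\cmp\op{(A\times\{e\})}$ shows $\varcrs[grp]{\CE,A\times A}\subseteq \varcrs[grp]{\CE,A\times\{e\}}$; conversely, in $\varcrs[grp]{\CE,A\times\{e\}}$ the set $A$ is a bounded neighborhood of $e$, so Lemma~\ref{lem:p1:properties of neighborhoods of id} makes $A\times A$ a controlled entourage, and choosing a representative of $\crse A$ that is coarsely close to $e$ (which is automatic when $\crse A$ is a coarse subgroup, as $h\ast h^{-1}\asymp e$ lies coarsely in $H\ast H\csub H$) shows $A\times\{e\}$ belongs to $\varcrs[grp]{\CE,A\times A}$ as well. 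The main obstacle is the right-invariance computation—the rest of the argument is a matter of bookkeeping with minimality of generated coarse structures.
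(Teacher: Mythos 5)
Your argument for the main assertion---that $\crse{G/A}$ is a coarse group---is essentially the paper's: both of you reduce, via Proposition~\ref{prop:p1:coarse.group.iff.heartsuit.and.equi controlled} and equi left invariance, to showing $(\Delta_G\ast(A\times A))\ast\Delta_G\in\CE^{\rm left}_{\CE,A\times A}$, then use the uniform entourage witnessing coarse conjugation invariance to slide the right multiplier past the $a_i$'s, landing back in a translate of $\Delta_G\ast(A\times A)$ up to $\CE$-controlled error. (The paper routes both endpoints through a fixed $\bar a\in A$ instead of directly connecting $(g\ast h)\ast a_1'$ to $(g\ast h)\ast a_2'$; this is cosmetic.) Your double-minimality reasoning for $\crse{G/A}=\crse G/\aangles{A\times A}$ is likewise the same as the paper's.

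The gap is in the last equality $\crse G/\aangles{A\times A}=\crse G/\aangles{A\times\{e\}}$, and you flag it yourself: the reverse inclusion $\varcrs[grp]{\CE,A\times\{e\}}\subseteq\varcrs[grp]{\CE,A\times A}$ needs a representative of $\crse A$ coarsely close to $e$, and you justify this only when $\crse A$ is a coarse subgroup, which is not among the hypotheses (only coarse invariance under conjugation is). Without it the inclusion genuinely fails. For instance take $\crse G=(\RR,\mincrs)$ and $A=\{t\}$ with $t\neq 0$: since $\RR$ is abelian, $A$ is invariant under conjugation, and $A\times A=\{(t,t)\}\subseteq\Delta_{\RR}$ gives $\varcrs[grp]{\mincrs,A\times A}=\mincrs$, whereas $\varcrs[grp]{\mincrs,A\times\{0\}}$ strictly contains $\mincrs$ since $\{(t,0)\}$ is not a subset of the diagonal. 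Note also that your intermediate step ``Lemma~\ref{lem:p1:properties of neighborhoods of id} makes $A\times A$ a controlled entourage'' re-derives the forward inclusion; it is not progress toward the converse. The published proof is equally terse at this point (``taking compositions with symmetric relations''), and the asserted equality is really only used when $\cunit\crse{\in A}$---e.g.\ for coarse subgroups, as in Theorem~\ref{thm:p1:quotient by normal is a group}. Your instinct is sound: either add the hypothesis $\cunit\crse{\in A}$ (after which $A\times\{e\}\subseteq E\cmp(A\times A)$ for some $E\in\CE$ and you are done immediately) or state the last identity only for such $A$.
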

\begin{proof}

 Fix $A\subseteq G$ and, for convenience, let $R=(A\times A)$ so that $\crse{G/A}= (G,\CE^{\rm left}_{\CE,R})$.
 Since 
 \[
  \ast\colon(G\times G\;,\;\CE\otimes\CE^{\rm left}_{\CE,R})\to (G,\CE^{\rm left}_{\CE,R})
 \]
 is controlled, we only need to check that $\ast\times \ast$ sends $\CE^{\rm left}_{\CE,R}\otimes \Delta_G$ into $\CE^{\rm left}_{\CE,R}$. 
 Writing $\CE^{\rm left}_{\CE,R}=\angles{\CE,\Delta_{G}\ast  R}$, we see that 
 \[
  \CE^{\rm left}_{\CE,R}\otimes\Delta_{G}
  =\angles{\CE\otimes\Delta_{G}\;,\;(\Delta_{G}\ast  R)\otimes\Delta_{G}}
  =\angles{\CE\;,\;(\Delta_{G}\ast  R)\otimes\Delta_{G}}.
 \]
 In other words, we only need to verify that $(\Delta_{G}\ast  R)\ast\Delta_{G}\in \CE^{\rm left}_{\CE,R}$.
 
 Since $A\subseteq G$ is coarsely invariant under conjugation, there exists $E\in\CE$ such that for every $g\in G$ and $a\in A$, $g\ast a\rel{E} a'\ast g$ for some $a'\in A$. Fix an element $\bar a\in A$, then
 \[
  (g_1\ast a)\ast g_2
  \rel{\assRel} g_1 \ast(a\ast g_2)
  \rel{\Delta_{G}\ast E}g_1\ast(g_2\ast a')
  \rel{\assRel} (g_1\ast g_2)\ast a'
  \rel{\Delta_{G}\ast R}(g_1\ast g_2)\ast \bar a.
 \]
 It then follows that
 \[
  (g_1\ast a_1)\ast g_2\rel{\CE^{\rm left}_{\CE,R}} (g_1\ast g_2)\ast \bar a
  \rel{\CE^{\rm left}_{\CE,R}}(g_1\ast a_2)\ast g_2
  \qquad \forall g_1,g_2\in G,\ \forall a_1,a_2\in A,
 \]
 \emph{i.e.}\ $(\Delta_{G}\ast  R)\ast\Delta_{G}\in \CE^{\rm left}_{\CE,R}$. This shows that $\ast$ is controlled, and hence $(\crse{G/A},\bar{\cop})$ is a coarse group.
 
 To connect with the notation of Chapter~\ref{sec:p1:coarse quotients}, recall that $\crse{G}/\aangles{A\times A}=(G,\CE^{\rm grp}_{\CE,R})$. By definition, we always have an inclusion $\CE^{\rm left}_{\CE,R}\subseteq \CE^{\rm grp}_{\CE,R}$, and we just proved that when $A$ is coarsely invariant under conjugation the reverse inclusion also holds. Taking compositions with symmetric relations, we also observe that $\CE^{\rm grp}_{\CE,A\times A}=\CE^{\rm grp}_{\CE,A\times \{e\}}$, hence we may also take $\crse{G}/\aangles{A\times\{e\}}$.
\end{proof}

\section{Coarse Cosets Spaces: Subgroups}\label{sec:p1:cosets of subgroups}
All the results of Section~\ref{sec:p1:cosets of subsets} can be improved when considering cosets of coarse subgroups.
To start, Lemma~\ref{lem:p1:quotient coarse structure as composition_cosets} implies the following:

\begin{cor}\label{cor:p1:quotient coarse structure as composition_cosets}
 Let $\crse G= (G,\CE)$ be a coarse group, $\ast,\unit,\inversefn$ adapted representatives for the coarse operations and $\crse{ H\leq G}$ a coarse subgroup.
 Then 
 \(E\in  \CE^{\rm left}_{\CE,H\times H} \) if and only if it is a subset of either (equivalently, both)
  \[
  \bigparen{\Delta_{G}\ast(\{e\}\times H)}\cmp E'
  \quad \text{ and/or }\quad
  E'\cmp\bigparen{\Delta_{G}\ast(H\times \{e\})}.
 \]
 for some $E'\in\CE$.
 Equivalently, $\CE^{\rm left}_{\CE,H\times H}$\=/controlled covering are refinements of coverings of the form $\st\bigparen{\pts{G}\ast H\,,\,\fka}$ with $\fka\in\fkC(\CE)$.
\end{cor}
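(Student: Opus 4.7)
The strategy is to deduce the corollary directly from Lemma~\ref{lem:p1:quotient coarse structure as composition_cosets} by exploiting the defining coarse-closure properties of a coarse subgroup: $H^{-1}\ceq_\CE H$ and $H\ast H\csub_\CE H$. Apart from a symmetry argument to pass between the two equivalent forms, everything reduces to showing that all iterated products $(H^{-1}\ast H)^{\ast n}$ are in fact $\CE$\=/coarsely contained in $H$ itself, which collapses the ``for some $n$'' appearing in the lemma.

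I would begin by invoking Lemma~\ref{lem:p1:quotient coarse structure as composition_cosets} with $A=H$: every $E\in\CE^{\rm left}_{\CE,H\times H}$ is contained in a composition
\[
 E\;\subseteq\;\bigparen{\Delta_{G}\ast\bigparen{\{e\}\times (H^{-1}\ast H)^{\ast n}}}\cmp E_{0}
\]
for some $n\in\NN$ and $E_{0}\in\CE$. Now I claim that $(H^{-1}\ast H)^{\ast n}\csub_\CE H$ for every $n$. Indeed, $H^{-1}\ceq_\CE H$ and $\ast$ is controlled, so $H^{-1}\ast H\csub_\CE H\ast H$, which is $\csub_\CE H$ since $\crse H$ is a coarse subgroup. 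Inducting on $n$ and repeatedly using that $\ast$ is controlled together with $H\ast H\csub_\CE H$ gives the claim. Hence there exists $\bar E\in\CE$ such that every $h\in(H^{-1}\ast H)^{\ast n}$ satisfies $h\rel{\bar E}h^\sharp$ for some $h^\sharp\in H$, and this translates precisely into the containment
\[
 \Delta_{G}\ast\bigparen{\{e\}\times (H^{-1}\ast H)^{\ast n}}\;\subseteq\;\bigparen{\Delta_{G}\ast(\{e\}\times H)}\cmp\bigparen{\Delta_{G}\ast\bar E}.
\]
Absorbing $\Delta_{G}\ast\bar E$ into $E_{0}$ yields the first desired form $\bigparen{\Delta_{G}\ast(\{e\}\times H)}\cmp E'$. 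The reverse containment is immediate, because $\Delta_{G}\ast(\{e\}\times H)\in\CE^{\rm left}_{\CE,H\times H}$ by definition and $\CE\subseteq\CE^{\rm left}_{\CE,H\times H}$.

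The equivalence with the form $E'\cmp\bigparen{\Delta_{G}\ast(H\times \{e\})}$ follows by symmetry: $\CE^{\rm left}_{\CE,H\times H}$ is closed under $E\mapsto \op E$, and $\op{(\Delta_{G}\ast(\{e\}\times H))}=\Delta_{G}\ast(H^{-1}\times\{e\})$, which differs from $\Delta_{G}\ast(H\times\{e\})$ only by a relation in $\CE$ since $H^{-1}\ceq_\CE H$. Applying the first form to $\op E$ and then taking opposites gives the second. Finally, for the controlled\=/covering reformulation, I would translate via Proposition~\ref{prop:p1:crse structures vs ctrl coverings} and Lemma~\ref{lem:p1:composition vs star}(ii): $\Delta_{G}\ast(\{e\}\times H)=\diag(\pts{G}\ast H)$, and a composition $\diag(\pts G\ast H)\cmp\diag(\fka)$ is contained in $\diag\bigparen{\st(\pts{G}\ast H,\fka)}$, so any $\CE^{\rm left}_{\CE,H\times H}$\=/controlled partial covering is a refinement of $\st(\pts{G}\ast H,\fka)$ for some $\fka\in\fkC(\CE)$, while conversely such stars are manifestly $\CE^{\rm left}_{\CE,H\times H}$\=/controlled.

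The only genuine work is the inductive proof that $(H^{-1}\ast H)^{\ast n}\csub_\CE H$; the rest is bookkeeping that was already done in Lemma~\ref{lem:p1:quotient coarse structure as composition_cosets}. I expect the minor obstacle to be tracking the asymmetry between $H$ and $H^{-1}$ in the two candidate forms, which is handled cleanly by $H^{-1}\ceq_\CE H$.
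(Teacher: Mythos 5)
Your proof is correct and takes essentially the same route as the paper: invoke Lemma~\ref{lem:p1:quotient coarse structure as composition_cosets} with $A=H$, then use $H^{-1}\ceq_\CE H$ and $H\ast H\csub_\CE H$ to collapse $(H^{-1}\ast H)^{\ast n}$ into a controlled thickening of $H$, absorbing the surplus into the $\CE$\=/factor. One small slip: with adapted operations $\op{\bigparen{\Delta_{G}\ast(\{e\}\times H)}}=\Delta_{G}\ast(H\times\{e\})$ directly, not $\Delta_{G}\ast(H^{-1}\times\{e\})$; your appeal to $H^{-1}\ceq_\CE H$ at that step is unnecessary (though harmless), and the symmetry argument is in fact cleaner than you wrote.
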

\begin{proof}
 Since $H$ is a coarse subgroup, there exists some $E\in\CE$ so that $\{e\}\times(H^{-1}\ast H)^{\ast n}$ is contained in $(\{e\}\times H)\circ E$. Hence $\Delta_{G}\ast\paren{\{e\}\times(H^{-1}\ast H)^{\ast n}}\subseteq (\Delta_{G}\ast(\{e\}\times  H))\circ (\Delta_{G}\ast E)$ and $\Delta_{G}\ast E\in \CE$. 
\end{proof}

Secondly, there is a coarse analogue of the fact that the cosets of a set\=/subgroup partition the parent set\=/group:

\begin{lem}\label{lem:p1:cosets of subgroup partition}
 Let $\crse H$ be a coarse subgroup of $\crse G=(G,\CE)$. If there exists an $\CE$\=/bounded set $B\subseteq G$ such that both $B\cap (g_1\ast H)$ and $B\cap (g_2\ast H)$ are non\=/empty then $(g_1\ast H)\ceq_\CE (g_2\ast H)$.
\end{lem}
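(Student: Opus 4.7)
The plan is to show that $g_1\ast H$ and $g_2\ast H$ are each coarsely contained in the other, with the containment being witnessed by an entourage in $\CE$ that is uniform in the element of $H$ being translated. By the symmetry of the hypothesis under swapping $(g_1,h_1)\leftrightarrow(g_2,h_2)$, it is enough to establish one such containment.

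First, I would fix adapted representatives $\ast,\unit,\inversefn$ (Lemma~\ref{lem:p1:adapted representatives exist}) and pick witnesses $b_1\in B\cap(g_1\ast H)$, $b_2\in B\cap(g_2\ast H)$, writing $b_1=g_1\ast h_1$ and $b_2=g_2\ast h_2$ with $h_1,h_2\in H$. Since $B\times B\in\CE$, we have $b_1\rel{\CE} b_2$, and this single bounded ``jump'' will be used to cross from the coset $g_1\ast H$ to $g_2\ast H$. For an arbitrary $h\in H$, I would rewrite $g_1\ast h$ by inserting $h_1\ast h_1^{-1}$: using coarse associativity and the inverse/identity axioms (all with uniform $\CE$-bounds) one obtains
\[
 g_1\ast h\rel{\CE}(g_1\ast h_1)\ast(h_1^{-1}\ast h)=b_1\ast(h_1^{-1}\ast h).
\]

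Next, equi bi-invariance of $\CE$ lets me replace $b_1$ by $b_2$ on the left: since $(b_1,b_2)\in B\times B\in\CE$, right-translation yields $b_1\ast x\rel{\CE} b_2\ast x$ uniformly in $x$. Taking $x=h_1^{-1}\ast h$ and reassociating once more gives
\[
 b_1\ast(h_1^{-1}\ast h)\rel{\CE} b_2\ast(h_1^{-1}\ast h)\rel{\CE} g_2\ast\bigparen{h_2\ast(h_1^{-1}\ast h)}.
\]
Finally, since $\crse{H\leq G}$, the defining properties $H^{-1}\csub H$ and $H\ast H\csub H$ provide a single entourage $E_H\in\CE$ such that any product of at most three elements of $H\cup H^{-1}$ is $E_H$-close to an element of $H$; applying this a bounded number of times shows that $h_2\ast(h_1^{-1}\ast h)\rel{\CE} h''$ for some $h''\in H$, once again with a uniform bound. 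Putting the chain together yields $g_1\ast h\rel{\CE} g_2\ast h''$ for every $h\in H$, which is precisely $g_1\ast H\csub g_2\ast H$.

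The main obstacle is bookkeeping: one must check that every invocation of coarse associativity, the coarse inverse/identity axioms, equi bi-invariance, and the coarse-subgroup conditions contributes an entourage from $\CE$ that does \emph{not} depend on $h\in H$ (so that a single $E^\ast\in\CE$ works for all $h$ simultaneously and really gives $g_1\ast H\subseteq E^\ast(g_2\ast H)$). The adaptedness of the representatives removes identity-insertion errors on the nose, reducing the accounting to a fixed finite number of uses of the three $\CE$-bounds coming from (i) coarse associativity, (ii) the relation $B\times B\in\CE$, and (iii) the coarse-subgroup defect of $\crse H$.
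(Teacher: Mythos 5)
Your proof is correct and follows essentially the same route as the paper's: exploit the $B\times B\in\CE$ entourage to ``jump'' from one coset to the other, then absorb the excess into $\crse H$ using $H\ast H\csub H$ and $H^{-1}\csub H$. The only cosmetic difference is that the paper first left-multiplies everything by $g_1^{-1}$ (a coarse self-equivalence) to reduce to showing $H\ceq g\ast H$ with $g=g_1^{-1}\ast g_2$, and then observes that $g\ceq h_1\ast h_2^{-1}$ is itself coarsely in $H$, whereas you keep both $g_1$ and $g_2$ live and carry out the same manipulation as a single explicit chain of $\rel{\CE}$ steps — same content, slightly more bookkeeping.
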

\begin{proof}
 Note that $g_1^{-1}\ast\paren{g_1\ast H}\ceq (g_1^{-1} \ast g_2)\ast H \ceq H$ and $g_1^{-1}\ast\paren{g_2\ast H}\ceq (g_1^{-1} \ast g_1)\ast H = g\ast H$, where we let $g\coloneqq g_1^{-1}\ast g_2$.
 Since left multiplication by $g_1^{-1}$ is a coarse equivalence of $(G,\CE)$ with itself, it is therefore enough to show that $H\ceq (g\ast H)$.
 
 By assumption, there exist $h_1,h_2\in H$ such that $h_1\ceq (g\ast h_2)$. Hence $g\ceq (h_1\ast h_2^{-1})\in (H\ast H)\ceq H$. It follows that $g\ast H \csub (H\ast H^{-1}) \ceq H$, as desired (the converse coarse containment is analogous).
\end{proof}

\begin{cor}
 For every $\crse g_1,\crse g_2\crse{\in G}$, either $\crse g_1\crse{\ast H=g}_2\crse{\ast H}$ or $\crse g_1\crse{\ast H}$ and $\crse g_2\crse{\ast H}$ are coarsely disjoint (\emph{i.e.}\ they never intersect the same coarsely connected component of $\crse G$). 
\end{cor}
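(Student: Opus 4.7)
The plan is to observe that the dichotomy is exactly the contrapositive of Lemma~\ref{lem:p1:cosets of subgroup partition}, once ``coarsely disjoint'' is unpacked correctly. Recall that by Remark~\ref{rmk:p1:coarse points are connected components} the coarse points of $\crse G$ are precisely its coarsely connected components, and two points $x_1,x_2\in G$ lie in the same component if and only if the pair $\{x_1,x_2\}$ is $\CE$-bounded.

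So I would argue as follows. Suppose $\crse g_1\crse{\ast H}$ and $\crse g_2\crse{\ast H}$ are \emph{not} coarsely disjoint; I want to show that they coincide as coarse subspaces, i.e.\ $\crse g_1\crse{\ast H=g}_2\crse{\ast H}$. By the assumption, after fixing representatives $g_1\ast H$ and $g_2\ast H$, there exist points $x_1\in g_1\ast H$ and $x_2\in g_2\ast H$ lying in a common coarsely connected component of $\crse G$. Then $B\coloneqq\{x_1,x_2\}$ is an $\CE$-bounded subset of $G$ with $B\cap(g_1\ast H)\ni x_1$ and $B\cap(g_2\ast H)\ni x_2$ both non-empty. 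Lemma~\ref{lem:p1:cosets of subgroup partition} immediately yields $(g_1\ast H)\ceq_\CE(g_2\ast H)$, which is by definition the equality $\crse g_1\crse{\ast H}=\crse g_2\crse{\ast H}$ of coarse subspaces of $\crse G$.

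There is essentially no obstacle: the corollary is a direct translation of the lemma once one notices that ``meeting the same coarsely connected component'' is the same as having representatives in a common bounded set. The only small point to take care of is that the statement is about coarse elements $\crse g_1,\crse g_2\crse{\in G}$ and the coarse subspaces $\crse g_i\crse{\ast H}$, so I should note at the start that choosing representatives $g_i\in G$ for $\crse g_i$ changes $g_i\ast H$ only up to $\ceq_\CE$-equivalence, which is harmless for the conclusion.
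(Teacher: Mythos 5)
Your proposal is correct and follows exactly the route the paper intends (the paper gives no separate proof, treating the corollary as an immediate consequence of Lemma~\ref{lem:p1:cosets of subgroup partition}). The key observation — that two points lying in a common coarsely connected component form an $\CE$-bounded two-element set, so the lemma's hypothesis is met — is precisely the translation needed, and your remark about the independence of the choice of representatives $g_i$ is the right small check.
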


\begin{cor}\label{cor:p1:bounded sets in cosets}
 Let $\crse{H}$ be a coarse subgroup of $\crse G=(G,\CE)$. Then, for all subsets $A,B\subseteq G$ the following equivalences hold:
 \[
  \Bigparen{A\csub_{\CE^{\rm left}_{\CE,H\times H}} B} \Leftrightarrow \Bigparen{A\csub_\CE H\ast B} \Leftrightarrow \Bigparen{A\csub_\CE B\ast H};
 \]
 \[
  \Bigparen{B\text{ is $\CE^{\rm left}_{\CE,H\times H}$\=/bounded}} \Leftrightarrow \Bigparen{B\csub_\CE g\ast H \text{ for some } g\in G}  \Leftrightarrow \Bigparen{B\csub_\CE g\ast H \text{ for every } g\in B}.  
 \]
\end{cor}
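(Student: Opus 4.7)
The plan is to derive everything from the compositional description of $\CE^{\rm left}_{\CE, H\times H}$ in Corollary~\ref{cor:p1:quotient coarse structure as composition_cosets}: every controlled relation $D \in \CE^{\rm left}_{\CE, H\times H}$ factors as $D \subseteq E \cmp (\Delta_G \ast (H\times \{e\}))$ for some $E \in \CE$, with a mirror version $D \subseteq (\Delta_G \ast (\{e\} \times H)) \cmp E'$. Working with adapted representatives (Lemma~\ref{lem:p1:adapted representatives exist}) makes the underlying set-theoretic identities literal.

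I will first establish $A \csub_{\CE^{\rm left}_{\CE, H\times H}} B \Leftrightarrow A \csub_\CE B \ast H$. For $(\Rightarrow)$, if $A \subseteq D(B)$ with $D \in \CE^{\rm left}_{\CE, H\times H}$, the right-handed factorization gives $A \subseteq E((\Delta_G \ast (H \times \{e\}))(B))$, and a direct check using adapted operations shows $(\Delta_G \ast (H \times \{e\}))(B) = B \ast H$. For $(\Leftarrow)$, the relation $\Delta_G \ast (H \times \{e\})$ itself lies in $\CE^{\rm left}_{\CE, H\times H}$, so $B \ast H \csub_{\CE^{\rm left}_{\CE, H\times H}} B$, and the implication follows by composing. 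The parallel equivalence with $A \csub_\CE H \ast B$ is handled via the mirror factorization; the naive section of $\Delta_G \ast (\{e\} \times H)$ at $B$ is $\{x : x \ast h \in B \text{ for some } h \in H\}$, and I will use coarse associativity, the inverse axiom, and the coarse-subgroup symmetry $H^{-1} \ceq H$ to identify this set with the appropriate $\CE$-thickening needed to close the equivalence.

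For the bounded-set statement, the equivalence between $\CE^{\rm left}_{\CE, H\times H}$-boundedness of $B$ and $B \csub_\CE g \ast H$ for some $g \in G$ is the specialization of the containment statement with comparison set the singleton $\{g\}$, noting that $B$ is $\CE^{\rm left}_{\CE, H\times H}$-bounded iff $B \csub_{\CE^{\rm left}_{\CE, H\times H}} \{g\}$ for any fixed $g$. The upgrade to ``for every $g \in B$'' follows from Lemma~\ref{lem:p1:cosets of subgroup partition}: if $B \csub_\CE g_0 \ast H$ and $g \in B$, then $g$ is $\CE$-close to a point of $g_0\ast H$, so a single $\CE$-bounded set meets both $g \ast H$ and $g_0 \ast H$, forcing the cosets to be $\CE$-asymptotic and allowing $g_0$ to be replaced by $g$.

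The main delicate point will be the set-theoretic analysis underlying the $H \ast B$ equivalence: both mirror decompositions of $\CE^{\rm left}_{\CE, H\times H}$ most naturally produce right thickenings of $B$ by $H$ (the left-handed one yielding $B\ast H^{-1}$ after using coarse associativity and the inverse axiom), so accessing the left thickening $H\ast B$ requires exploiting the coarse-subgroup properties of $H$ to translate between left and right products, rather than following from either factorization of Corollary~\ref{cor:p1:quotient coarse structure as composition_cosets} in isolation.
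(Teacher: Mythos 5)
Your treatment of the right\-/handed part of the first chain and of the second chain is correct and matches the paper's argument. For the first chain you use the factorization $D\subseteq E\cmp(\Delta_G\ast(H\times\{e\}))$ from Corollary~\ref{cor:p1:quotient coarse structure as composition_cosets}, compute $(\Delta_G\ast(H\times\{e\}))(B)=B\ast H$ with adapted operations, and conclude $A\csub_{\CE^{\rm left}_{\CE,H\times H}}B\Leftrightarrow A\csub_\CE B\ast H$; this is exactly the paper's computation. For the second chain, specializing to $B'=\{g\}$ and upgrading from ``some $g$'' to ``every $g\in B$'' does not actually require Lemma~\ref{lem:p1:cosets of subgroup partition}: if $B$ is $\CE^{\rm left}_{\CE,H\times H}$\-/bounded then $B\times B$ is controlled and $B\subseteq(B\times B)(\{g\})$ for each $g\in B$, which is the paper's (shorter) route; your approach via Lemma~\ref{lem:p1:cosets of subgroup partition} is also valid.

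The ``$H\ast B$'' equivalence, which you rightly isolate as the delicate step, is a genuine gap in your proposal that coarse\-/subgroup symmetry alone cannot close. As you compute, the mirror factorization $(\Delta_G\ast(\{e\}\times H))\cmp E$ applied to $B$ yields a thickening of $\{x\mid x\ast H\cap B\neq\emptyset\}$, and using adapted operations, coarse associativity and $H^{-1}\ceq_\CE H$ this set is $\CE$\-/asymptotic to $B\ast H^{-1}\ceq_\CE B\ast H$ --- again a \emph{right} thickening of $B$. Passing from $B\ast H$ to $H\ast B$ amounts to requiring $H\ast B\ceq_\CE B\ast H$ for every $B$, which is precisely coarse normality of $\crse H$ and is not assumed in the statement. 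And indeed the $H\ast B$ version of the equivalence fails for non\-/normal coarse subgroups: take $\crse G=(F_2,\mincrs)$ with $F_2=\langle a,b\rangle$, $H=\langle a\rangle$, $B=\{b\}$ and $A=\{ab\}$; then $A\subseteq H\ast B$ so $A\csub_\CE H\ast B$, while $ab\notin bH$ so $A\not\csub_{\CE^{\rm left}_{\CE,H\times H}}B$. The paper's own phrase ``consider $E\cmp(\Delta_G\ast(\{e\}\times H))(B)$ instead'' runs into the same difficulty --- it also produces a $\CE$\-/thickening of $B\ast H$, not of $H\ast B$ --- so this middle term in the corollary's first chain should either be dropped or restricted to coarsely normal $\crse H$. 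You should prove only the $B\ast H$ equivalence and the second chain, which is all that the later isomorphism theorems in fact use.
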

\begin{proof}
 Since the equivalence class $[H\ast B]_\CE$ does not depend on the choice of representative for $[\ast]_\CE$, we can assume that the operations are adapted. If $A\csub_{\CE^{\rm left}_{\CE,H\times H}} B$ it follows Corollary~\ref{cor:p1:quotient coarse structure as composition_cosets} that
 \[
  A\subseteq E\cmp \Bigparen{\Delta_{G}\ast(H\times \{e\})}(B)= E(B\ast H)
 \]
 \emph{i.e.}\ $A\csub_\CE B\ast H$. The converse implication is immediate. To obtain  $A\csub_\CE H\ast B$ one should consider $E\cmp \Bigparen{\Delta_{G}\ast(\{e\}\times H)}(B)$ instead.
 
 The second chain of equivalences follows from the first, as $B$ is $\CE^{\rm left}_{\CE,H\times H}$\=/bounded if and only if $B\csub_{\CE^{\rm left}_{\CE,H\times H}} \{g\}$ for some $g\in G$ or, equivalently, every $g\in B$.
\end{proof}

Finally, in this context we can improve on Lemma~\ref{lem:p1:cosets are groups} to show that coarse normality is also a necessary condition for coset spaces of a coarse subgroups to be coarse groups:

\begin{thm}\label{thm:p1:quotient by normal is a group}
 Let $\crse{H \leq G}$ be a coarse subgroup. The cosets space $\crse{G/H}$ endowed with $\bar\cop$ is a coarse group if and only if $\crse H$ is coarsely normal.
\end{thm}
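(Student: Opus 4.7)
The \emph{if} direction is already Lemma~\ref{lem:p1:cosets are groups}, since for a coarse subgroup coarse normality agrees with coarse invariance under conjugation (Definition~\ref{def:p1:normal}). So the content is entirely in the converse, where the plan is to assume $\crse{G/H}$ is a coarse group and extract, from that alone, a uniform element $h' \in H$ witnessing $g \ast h \rel{\CE} h' \ast g$. The starting observation is that $(\crse{G/H}, \bar\cop)$ being a coarse group forces $\ast$ to be controlled as a function $(G,\CE^{\rm left}_{\CE,H\times H})\times(G,\CE^{\rm left}_{\CE,H\times H}) \to (G,\CE^{\rm left}_{\CE,H\times H})$; since equi\=/left invariance is automatic from the definition of $\CE^{\rm left}_{\CE,H\times H}$, Lemma~\ref{lem:p1:equi controlled.sections.iff.controlled} says that the new content is equi\=/right invariance: $F\ast\Delta_G \in \CE^{\rm left}_{\CE,H\times H}$ for every $F \in \CE^{\rm left}_{\CE,H\times H}$.

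After fixing adapted representatives via Lemma~\ref{lem:p1:adapted representatives exist} and picking a representative $H$ containing $e$, I would apply equi\=/right invariance to the generating relation $F = \Delta_G \ast (H\times H) \in \CE^{\rm left}_{\CE,H\times H}$. The product $F\ast\Delta_G$ contains all pairs $((g\ast h_1)\ast y,\, (g\ast h_2)\ast y)$ with $g,y\in G$ and $h_1,h_2\in H$; specializing $g=e$, $h_1=e$, $h_2=h$ yields $(y,\,h\ast y) \in F\ast\Delta_G$ for every $y\in G$ and $h\in H$. At this point Corollary~\ref{cor:p1:quotient coarse structure as composition_cosets} becomes the key tool: it factors every relation of $\CE^{\rm left}_{\CE,H\times H}$ as $\bigparen{\Delta_G\ast(\{e\}\times H)}\cmp E$ for some $E\in\CE$, so from the pair $(y,\, h\ast y)$ one extracts $h'\in H$ (depending on $y$ and $h$) with $h\ast y \rel{E} y\ast h'$, the entourage $E$ being uniform in $y$ and $h$.

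The last step is to rearrange this into the form of Definition~\ref{def:p1:normal}. Substituting $y = g^{-1}$ gives $h\ast g^{-1} \rel{E} g^{-1}\ast h'$; left\=/multiplying by $g$ and using coarse associativity ($\assRel\in\CE$) together with the adapted identity $g\ast g^{-1} = e$ yields $(g\ast h)\ast g^{-1} \rel{\CE} h'$, and right\=/multiplying by $g$ again with $g^{-1}\ast g = e$ produces $g\ast h \rel{\CE} h'\ast g$ with a uniform $\CE$\=/entourage. This is precisely coarse invariance of $H$ under conjugation, hence $\crse{H\trianglelefteq G}$. The chief obstacle is recovering a single controlled entourage in the original $\CE$ from the a priori larger structure $\CE^{\rm left}_{\CE,H\times H}$, and it is Corollary~\ref{cor:p1:quotient coarse structure as composition_cosets} that makes this step go through cleanly.
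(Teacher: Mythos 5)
Your proof is correct and follows essentially the same route as the paper: both extract the controlled relation $\{(y,\,h\ast y)\mid y\in G,\ h\in H\}$ from the hypothesis that $\bar\cop$ is controlled on the larger product coarse structure, then invoke the decomposition of $\CE^{\rm left}_{\CE,H\times H}$ (the paper via Lemma~\ref{lem:p1:quotient coarse structure as composition_cosets} followed by the coarse-subgroup observation $(H^{-1}\ast H)^{\ast n}\ceq_\CE H$, you via Corollary~\ref{cor:p1:quotient coarse structure as composition_cosets} directly, which already packages that reduction) to produce a uniform $h'\in H$ with $h\ast y\rel{\CE} y\ast h'$, and finally rearrange by the adapted identities into the conjugation-invariance form. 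The only divergence is cosmetic.
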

\begin{proof} 
 Lemma~\ref{lem:p1:cosets are groups} implies that $\crse{G/H}$ is a coarse group whenever $\crse{H\trianglelefteq G}$. Hence we only need to prove the converse implication.

 Assume that the coarse action $\bar\cop$ defines a coarse group structure on $\crse {G/H}$. Let $\crse G=(G,\CE)$, $\crse H=[H]$ and choose adapted representatives $\ast,\unit,\inversefn$ for the coarse operations.
 By hypothesis, the multiplication function $\ast\colon(G,\CE^{\rm left}_{\CE,H\times H})\times(G,\CE^{\rm left}_{\CE,H\times H})\to(G,\CE^{\rm left}_{\CE,H\times H})$ is controlled. In particular, the relation $(\{e\}\times H)\ast\Delta_{G}$ is in $\CE^{\rm left}_{\CE,H\times H}$. By Lemma~\ref{lem:p1:quotient coarse structure as composition_cosets}, there exist some fixed $n\in\NN$ and $E\in\CE$ such that
 \begin{equation}\label{eq:p1:in thm normal subgroup}
  (\{e\}\times H)\ast\Delta_{G}\subseteq \bigparen{\Delta_{G}\ast(\{e\}\times (H^{-1}\ast H)^{\ast n})}\cmp E.
 \end{equation}
 Since $\crse H$ is a coarse subgroup, $(H^{-1}\ast H)^{\ast n}\ceq_\CE H$. In particular, there exists an $E'\in\CE$ such that $(H^{-1}\ast H)^{\ast n}\subseteq E'(H)=\braces{x\mid \exists h\in H,\ x\torel{E'}h}$.

 We see that $(\{e\}\times H)\ast\Delta_G$ is contained in the composition $\bigparen{\Delta_{G}\ast(\{e\}\times E'(H))}\cmp E$. Since the operations are adapted, $(\{e\}\times H)\ast\Delta_G$ is the set of pairs $\braces{(g,h\ast g)\mid g\in G,\ h\in H}$. Therefore, the above containment is equivalent to saying that for every $g\in G$, $h\in H$ there is some $k\in G$ such that
 \[
  g\torel{\Delta_{G}\ast(\{e\}\times E'(H))} k \torel{E} h\ast g.
 \]
 Again, since the operations are adapted this means that $k\in g\ast E'(H)$. In other words, $k=g\ast x$ for some $x\in E'(H)$ and we know that there exists some $h'\in H$ such that $x\torel{E'}h'$. This shows that for every $h\in H$, $g\in G$ there is an $h'\in H$ such that
 \[
  h\ast g\torel{\op{E}} k\;=\;
  g\ast x \torel{\Delta_G\ast E'} g\ast h'.
 \]
 
 It is now simple to deduce that there is some fixed $\bar E\in \CE$ such that $g\ast H\ast g^{-1}$ is contained in the $\bar E$\=/thickening $\bar E(H)$ for every $g\in G$:
 \[
  g\ast(h\ast g^{-1})\torel{\CE} g\ast(g^{-1}\ast h')\torel{\CE} h'
  \qquad \forall h\in H,\ \forall g\in G.
 \]
 This shows that $\crse H$ is coarsely invariant under conjugation.
\end{proof}

\begin{rmk}
It is useful to make an analogy between Theorem~\ref{thm:p1:quotient by normal is a group} and the usual quotients of set\=/groups. If $H$ is an arbitrary subgroup of a set\=/group $G$, the only natural ``quotient'' construction is the set of cosets $G/H$ equipped with the action by multiplication $G\curvearrowright G/H$. When writing cosets as $gH$ with $g\in G$ we are implicitly describing the action, as $gH$ is defined as the image under $g$ of the special coset $H<G$. If $H$ is normal in $G$ then $H$ acts trivially on $G/H$ and hence the action by left multiplication $G\times G/H\to G/H$ descends to a mapping of the quotient $G/H\times G/H\to G/H$ which makes $G/H$ into a set\=/group.
\end{rmk}

\begin{exmp}
 In Theorem~\ref{thm:p1:quotient by normal is a group}, the assumption that $\crse H$ is a coarse subgroup is necessary. For instance, if $\crse G=(G,\mincrs)$ is a trivially coarse group and $A\subseteq G$ is a subset such that $A^n$ is a normal subgroup of $G$ for some fixed $n\in\NN$, then $\crse{G/A}$ is a coarse group isomorphic to the trivially coarse group $(G/A^n,\mincrs)$. However, $A$ need not be invariant under conjugation in $G$. For a concrete example, take $G=S_3$ to be the symmetric group on three elements and $A=\{(1,2),(1,2,3)\}$.
\end{exmp}

\chapter{Coarse Kernels}
\label{ch:p1:coarse kernels}

In this chapter we define the coarse kernel of a coarse homomorphism and show that the coarse analogs of the Isomorphism Theorems hold true. It is important to note that not all coarse homomorphisms have a well\=/defined coarse kernel.

\section{Coarse Preimages and Kernels}
Recall from Section~\ref{sec:p1:containements and intersections} that, given a coarse map $\crse{ f\colon  X\to Y}$ and a coarse subspace $\crse{ Z\subseteq Y}$, the coarse preimage $\crse{ f^{-1}( Z)\subseteq X}$ (Definition~\ref{def:p1:coarse preimage}) is a coarse subspace of $\crse X$ such that 
\begin{itemize}
 \item the coarse image of $\crse{f^{-1}( Z)}$ under $\crse f$ is coarsely contained in $\crse Z$,
 \item if $\crse{f(W)\subseteq Z}$ for some $\crse {W\subseteq X}$ then $\crse{ W\subseteq f^{-1}(Z)}$.
\end{itemize}
Further recall that the coarse preimage need not exist in general. However, when it does exist it is unique, and the next lemma shows that it is well\=/behaved with respect to coarse groups.

\begin{lem}\label{lem:p1:preimage of subgroups}
 Let $\crse f\colon\crse G\to \crse H$ be a coarse homomorphism of coarse groups and $\crse{ K\leq H}$ a coarse subgroup. If the preimage $\crse{f^{-1}}(\crse K)$ exists, then it is a coarse subgroup of $\crse G$. Moreover, if $\crse K$ is coarsely normal (Definition~\ref{def:p1:normal}) then so is $\crse{f^{-1}}(\crse K)$.
\end{lem}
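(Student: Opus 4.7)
My plan is to fix representatives $f\colon G\to H$ of $\crse f$, $K\subseteq H$ of $\crse K$, and $P\subseteq G$ of the coarse preimage $\crse{f^{-1}(K)}$, and then verify the two coarse subgroup closure conditions $\crse{P\ast P\subseteq P}$ and $\crse{P^{-1}\subseteq P}$ by pushing them forward through $\crse f$ and invoking the universal property of coarse preimage. The key observation is that the coarse image operation is functorial with respect to the coarse operations: since $\crse f$ is a coarse homomorphism (and hence respects inversion by Lemma~\ref{lem:p1:coarse homomorphisms preserve unit and inverse}), we have the equalities of coarse subspaces $\crse{f(P\ast P) = f(P)\cop_H f(P)}$ and $\crse{f(P^{-1}) = f(P)^{-1}}$.

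First I would establish closure under multiplication. By definition of coarse preimage, $\crse{f(P)\subseteq K}$, and since $\crse K$ is a coarse subgroup, $\crse{K\cop_H K\subseteq K}$. Combining, $\crse{f(P\ast P) = f(P)\cop_H f(P)\subseteq K\cop_H K\subseteq K}$, so by the universal property in Definition~\ref{def:p1:coarse preimage} we get $\crse{P\ast P\subseteq f^{-1}(K) = P}$. The inversion condition is analogous: $\crse{f(P^{-1}) = f(P)^{-1}\subseteq K^{-1}\subseteq K}$, hence $\crse{P^{-1}\subseteq f^{-1}(K) = P}$. This shows $\crse{f^{-1}(K)}$ is a coarse subgroup of $\crse G$.

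For coarse normality, I would use that a coarse homomorphism intertwines the coarse conjugation actions of $\crse G$ and $\crse H$ on themselves, i.e.\ $\crse f\circ \crse c_{\crse G} = \crse c_{\crse H}\circ(\crse f\times \crse f)$ as coarse maps $\crse{G\times G\to H}$. This follows directly from the fact that $\crse f$ respects both multiplication and inversion. Under the assumption $\crse{K\trianglelefteq H}$, we have $\crse{c_H(H\times K)\subseteq K}$, so
\[
\crse{f\bigl(c_{G}(G\times P)\bigr) = c_{H}\bigl(f(G)\times f(P)\bigr)\subseteq c_{H}(H\times K)\subseteq K}.
\]
Applying the universal property of the coarse preimage once more yields $\crse{c_{G}(G\times P)\subseteq f^{-1}(K) = P}$, which is exactly coarse invariance of $\crse{f^{-1}(K)}$ under conjugation, proving coarse normality.

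I do not anticipate a serious obstacle: every step is a formal consequence of the previous chapters. The mildly delicate point is keeping track of which closeness relations depend on which coarse structure (on $\crse G$ versus $\crse H$) when verifying that $\crse{f(P\ast P) = f(P)\cop_H f(P)}$ and $\crse{f(P^{-1}) = f(P)^{-1}}$ as coarse subspaces of $\crse H$; but these are immediate from the definition of coarse homomorphism and from Lemma~\ref{lem:p1:controlled maps and asymptotic subsets}, which ensures that the coarse operations are well defined on coarse subspaces.
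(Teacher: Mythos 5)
Your proof is correct and follows essentially the same route as the paper's: push the closure conditions through $\crse f$, use that $\crse f$ respects the coarse operations (and conjugation), and invoke the universal property of the coarse preimage. In fact the paper's proof does exactly this with explicit representatives and $\csub_\CF$, $\ceq_\CF$ notation, and then remarks afterwards that the argument can be phrased more implicitly via coarse containments — your write-up is precisely that implicit version.
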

\begin{proof}
 Let $\crse G=(G,\CE)$, $\crse H=(H,\CF)$ and fix some representatives $\crse K=[K]_\CF$, $\crse f=[f]_\CF$, $\crse {f^{-1}}(\crse K)=[Z]_\CE$. Since $\crse f$ is a coarse homomorphism, we see that $f(Z\ast_{G} Z)\ceq_\CF f(Z)\ast_H f(Z)$.  By hypothesis, $f(Z)\csub_\CF K$, hence $f(Z)\ast_H f(Z)\csub_\CF K\ast_H K\ceq_\CF K$. Since $[Z]_\CE=\crse {f^{-1}}(\crse K)$, it follows from the definition of coarse preimage that $Z\ast_{G} Z\csub_\CE Z$. The same argument also shows that $e_{G}$ and $Z^{-1}$ are coarsely contained in $Z$, hence $[Z]_\CE$ is a coarse subgroup.
 
 The proof of the statement on normality is very similar: $\crse{K\leq H}$ is normal if and only if $c(H,K)\ceq_\CF K$, where $c(h,k)= h\ast_H k\ast_H h^{-1}$ (the order of multiplication does not matter up to closeness). It is easy to check that $f(c(G,Z))\ceq_\CF c(f(G),f(Z))\csub_\CF c(H,K)$, hence $c(G,Z)\csub Z$ by definition of coarse preimage. 
\end{proof}

\begin{rmk}
 In the above proof we preferred to fix representatives for the sake of concreteness. However, since coarse images, subgroups, and preimages are all well\=/defined, we could have also argued more implicitly. For example, the chain of coarse containments
 \begin{equation}\label{eq:short containment}
  \crse f\Bigparen{\crse{f^{-1}}(\crse K)\cop_{\crse G}\crse{f^{-1}}(\crse K)}
  \subseteq \crse K\cop_{\crse H}\crse{K} = \crse K
 \end{equation}
 implies that $\crse {f^{-1}}(\crse K)$ is closed under multiplication. All the coarse containments appearing in \eqref{eq:short containment} can be proved from the definitions with some diagram chasing.
\end{rmk}

Of course, one object of special interest is the preimage of the trivial coarse subgroup $\crse{\cunit_{ H}\in H}$.

\begin{de}\label{def:p1:coarse kernel}
 If it exists, the coarse preimage of $\cunit_{\crse H}\crse{\in H}$ under $\crse f$ is the the \emph{coarse kernel}\index{coarse!kernel} of $\crse f$. We denote it by $\cker(\crse f)\crse{\trianglelefteq G}$.\nomenclature[:COS]{$\cker(\crse f)$}{coarse kernel} 
\end{de}

 Concretely, given a subset $K\subseteq G$, its equivalence class $\crse K$ is the coarse kernel of $\crse f$ if and only if for every bounded set $B\subseteq H$ that contains $\unit_H$ the preimage $f^{-1}(B)$ is contained in a controlled thickening of $K$: 
\[
 f^{-1}(B)\subseteq \st(K,\fka),\qquad \fka\in\fkC(\CE).
\]
It follows from Lemma~\ref{lem:p1:preimage of subgroups} that, when it exists, the coarse kernel of a coarse homomorphism is a coarsely normal coarse subgroup of $\crse G$.

\begin{exmp}
 Of course, if $\crse G=(G,\mincrs)$ and $\crse H=(H,\mincrs)$ are trivially coarse groups and $f \colon G\to H$ is a homomorphism then $\cker(\crse f)=\ker (f)$. 
 If $H$ is abelian, then the same remains true if we equip both $G$ and $H$ with the equi bi\=/invariant coarse structure $\varcrs[grp]{fin}$. 
 
 More precisely, let $f\colon G\to H$ be a homomorphism of set\=/groups, so that $\crse f\colon (G,\varcrs[grp]{fin})\to(H,\varcrs[grp]{fin})$ is a coarse homomorphism by Corollary~\ref{cor:p1:homomorphisms are grp_fin controlled}. If $H$ is abelian, its bounded subsets are precisely the finite sets. Given a finite set $B=\braces{h_1,\ldots, h_n}\subseteq H$, the preimage $f^{-1}(B)$ is the union of finitely many left cosets $g_1\ker(f),\ldots,g_n\ker(f)$, where $g_i\in G$ is a fixed arbitrary element in $f^{-1}(h_i)$. In other words, $f^{-1}(B)$ is the set $\braces{g_1,\cdots,g_n}\ast_{G} \ker(f)$, which is a controlled thickening of $\ker(f)$ (and is hence coarsely contained in $\ker(f)$).
 
This is no longer true if $H$ is not abelian. For instance, if $H=D_\infty$ is the infinite dihedral group then $\varcrs[grp]{fin}=\maxcrs$ is the bounded coarse structure (see also Section~\ref{sec:p2:connected coarsification}). It then follows that $\cker(\crse f)$ is equal to all of $G$, irrespective of what $\ker(f)$ is.
\end{exmp}

\begin{exmp}
Let $\crse G=(\RR^n,\CE_{\norm{\mhyphen}_2})$ and $\crse H=(\RR,\CE_{\norm{\mhyphen}_2})$. Then any linear function $f\colon \RR^n\to\RR$ is controlled and defines a coarse homomorphism. It is once again easy to check that $\cker(\crse f)=[\ker(f)]$.

Consider now $\ZZ^n<\RR^n$ for some $n\geq 2$. The inclusion gives an isomorphism of coarse groups $(\ZZ^n,\CE_{\norm{\mhyphen}_2})\cong (\RR^n,\CE_{\norm{\mhyphen}_2})$. In particular, this restricts to an isomorphism between $\cker([f|_{\ZZ^n}])\crse{\leq} (\ZZ^n,\CE_{\norm{\mhyphen}_2})$ and $\cker([f])=[\ker(f)]\crse{\leq}(\RR^n,\CE_{\norm{\mhyphen}_2})$. 
The latter is of course isomorphic to $(\RR^{n-1},\CE_{\norm{\mhyphen}_2})$ and it is therefore unbounded. As a consequence, if we choose an injective homomorphism $f|_{\ZZ^n}\colon\ZZ^n\to \RR^n$ then $\cker([f|_{\ZZ^n}])\neq [\ker(f|_{\ZZ^n})]=[\{e\}]$ (Figure~\ref{fig:p1:coarse kernel linear map}).

More generally, if $\crse G=(\ZZ^n,\CE_{\norm{\mhyphen}_2})$ and $\crse H=(H,\CE_d)$ is a group equipped with a bi\=/invariant proper metric (\emph{i.e.}\ such that closed balls are compact) and $f\colon \ZZ^n\to H$ is a homomorphism, then $\cker([f])= [\ker(f)]$ if and only if $f(\ZZ^n)$ is discrete in $H$. To see this, observe that if $f(\ZZ^n)$ is discrete then the preimage of every bounded subset of $H$ is contained in finitely many cosets of $\ker(f)$. On the contrary, if $f(\ZZ^n)$ is not discrete then the preimage of any open (bounded) neighborhood of $e_H$ contains infinitely many distinct cosets of $\ker(f)$ and it is hence not coarsely contained in $\ker(f)$.
\end{exmp}

\begin{figure}
 \centering
 \includegraphics[scale=0.8]{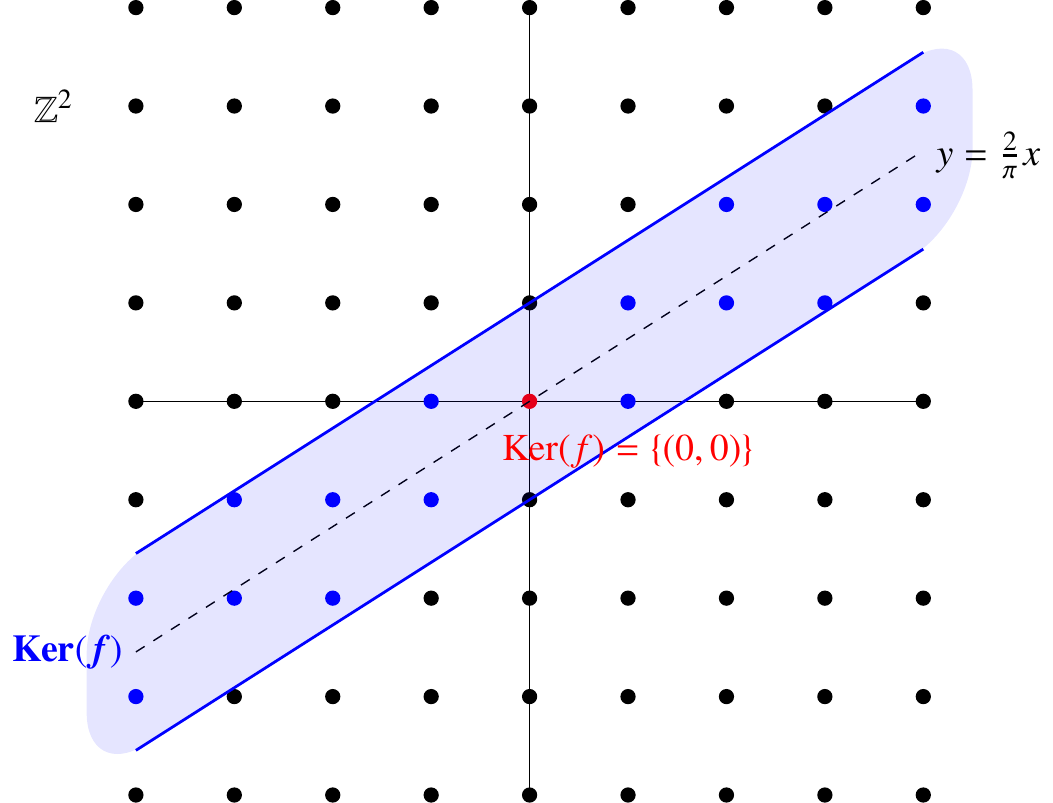}
 \caption{An injective linear mapping $\ZZ^2\to \RR$ given by $f(x,y)\coloneqq y- \frac{2}{\pi}x$, has trivial kernel, but its coarse kernel is coarsely isomorphic to $(\RR,\CE_{\abs{\mhyphen}})$.}
 \label{fig:p1:coarse kernel linear map}
\end{figure}

\

The following examples show that the coarse kernel may not exist.

\begin{lem}\label{lem:p1:no kernel disconnected}
 If $\crse G=(G,\mincrs)$ is a trivially coarse group and $\crse{f\colon G\to H}$ a coarse homomorphism, let $\crse {H_{e}\leq H}$ be the coarsely connected component of $\cim(\crse f)$ containing $\cunit_{\crse H}$. Then $\crse f$ has a coarse kernel if and only if $\crse {H_{e}} =\cunit_{\crse H}$ is bounded in $\crse H$. 
\end{lem}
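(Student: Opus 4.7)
The starting observation is that since $\crse G=(G,\mincrs)$ has the trivial coarse structure, coarse containment $A\csub_{\mincrs}B$ reduces to set\=/theoretic inclusion $A\subseteq B$, so coarse subspaces of $\crse G$ are honest subsets of $G$. In particular, a coarse kernel is (if it exists) represented by a subset $K\subseteq G$ characterised by the two conditions $f(K)\csub_\CF\{e_H\}$ and ``for every $W\subseteq G$ with $f(W)\csub_\CF\{e_H\}$, one has $W\subseteq K$''. I will also fix a representative $f$ of $\crse f$: by Lemma~\ref{lem:p1:coarse homomorphisms preserve unit and inverse}, $f(e_G)$ is $\CF$\=/close to $e_H$, so after choosing the representative $\im(f)$ of $\cim(\crse f)$, the coarsely connected component $\crse H_e$ admits the concrete representative
\[
 H_e \;=\; \bigbraces{h\in\im(f)\bigmid \{h,e_H\}\text{ is }\CF\text{\=/bounded}}.
\]

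For the ``if'' direction I would take $K\coloneqq f^{-1}(H_e)$ as the candidate coarse kernel. The first preimage condition is immediate, because $f(K)=H_e$ is $\CF$\=/bounded by hypothesis, hence coarsely contained in $\{e_H\}$. For the second condition, if $W\subseteq G$ satisfies $f(W)\csub_\CF\{e_H\}$, then by definition there exists $E\in\CF$ with $f(W)\subseteq E(\{e_H\})$; so every $h\in f(W)\subseteq\im(f)$ lies in the coarsely connected component of $e_H$, which inside $\im(f)$ is exactly $H_e$. Thus $f(W)\subseteq H_e$ and $W\subseteq f^{-1}(H_e)=K$, as required.

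For the ``only if'' direction I will reverse the argument. Assume $K$ represents $\cker(\crse f)$. Given any $h\in H_e$, pick $g_h\in f^{-1}(h)$; then $f(\{g_h\})=\{h\}$ is bounded and sits inside $E(\{e_H\})$ for some $E\in\CF$, so $f(\{g_h\})\csub_\CF\{e_H\}$ and the second preimage property forces $g_h\in K$. Taking the union over $h\in H_e$ yields $f^{-1}(H_e)\subseteq K$; applying $f$ and using $H_e\subseteq\im(f)$ gives $H_e\subseteq f(K)$. But the first preimage condition says $f(K)$ is contained in a single $\CF$\=/bounded neighbourhood of $e_H$, so $H_e$ itself must be $\CF$\=/bounded.

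The only genuine point to handle carefully will be the translation between the set\=/theoretic picture (``a single bounded set containing $f(W)\cup\{e_H\}$'') and the definition of $\csub_\CF$ in terms of entourages $E\in\CF$; once one notes that $E(\{e_H\})$ is always a bounded subset and that, conversely, any bounded set $B$ with $e_H\in B$ is of the form $E(\{e_H\})$ for $E\coloneqq B\times B\in\CF$, both directions of the equivalence become immediate and the above outline goes through verbatim.
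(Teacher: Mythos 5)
Your proof is correct and follows essentially the same approach as the paper's. Both arguments hinge on the observation that $\mincrs$\=/asymptoticity on $(G,\mincrs)$ is equality, so a coarse preimage of $\cunit_{\crse H}$ must be a literal set\=/theoretic maximum among subsets of $G$ with $\CF$\=/bounded image; the paper phrases the ``only if'' direction contrapositively (if $H_e$ is unbounded, every such subset can be strictly enlarged, so no maximum exists), whereas you run the implication forward (a kernel $K$ would have to contain all of $f^{-1}(H_e)$, forcing $H_e\subseteq f(K)$ to be bounded) — logically the same content in a mildly different packaging.
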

\begin{proof}
 First, notice that the coarse subgroup $\crse {H_{e}\leq H}$ has a coarse preimage $\crse{f^{-1}(H_{e})}$: we take the preimage under $f$ of the coarsely connected component $H_e\subseteq H$ containing $e$. This is a coarse preimage, as $H_e$ is not contained in any strictly larger controlled neighborhood of itself.
 As a consequence, we see that if $\crse H_{e}$ is bounded then $\crse{f^{-1}(H_{e})}$ is the coarse kernel of $\crse f$.
 
 For the converse implication, if $\crse{ H_{e}}$ is not bounded we see that for every bounded set $B\subset f(G)$ there is a strictly larger bounded set $B\subsetneq B'\subset f(G)$. In turn, this shows that for every subset of $G$ with bounded image there is a strictly larger (and hence non $\mincrs$\=/asymptotic) subset with bounded image. This shows that $\cunit_{\crse H}$ does not have a coarse preimage.
\end{proof}
 
 To give a concrete example, Lemma~\ref{lem:p1:no kernel disconnected} shows that the identity map $\id\colon (\ZZ,\mincrs)\to(\ZZ,\CE_{\abs{\mhyphen}})$ does not have a coarse kernel because $(\ZZ,\CE_{\abs{\mhyphen}})$ is not bounded.

\begin{exmp}\label{exmp:p1:Hilbert.space.monomorphism no ker}
 Recall Example~\ref{exmp:p1:Hilbert.space.monomorphism}, let $\crse{G}=(\ell_2(\NN),\CE_{\norm{\mhyphen}_2})$ and $\crse{H}=(\ell_2(\NN),\CE_{\norm{\mhyphen}_2'})$, where $\norm{\mhyphen}_2'$ is the non\=/complete norm obtained by rescaling the standard unitary base $v_1,v_2,\ldots $ of $\ell_2(\NN)$ so that $\norm{v_n}_2'=\frac 1n$.
 We claim that the identity map $\id_{\norm{\mhyphen}_2\to\norm{\mhyphen}_2'}\colon(\ell_2(\NN),\CE_{\norm{\mhyphen}_2})\to(\ell_2(\NN),\CE_{\norm{\mhyphen}_2'})$ does not have a coarse kernel. 
 In fact, consider the $\norm{\mhyphen}_2'$\=/balls around the origin 
 \[
 B(0;r)\coloneqq\Bigbraces{f\in\ell_2(\NN)\bigmid \sum_{n\in\NN}\frac{f(n)^2}{n^2} < r^2 }\subset\ell_2(\NN).
 \]
 All these balls are bounded neighborhoods of the identity in $ (\ell_2(\NN),\CE_{\norm{\mhyphen}_2'})$. However, they are not asymptotic to one another when seen as subsets of $(\ell_2(\NN),\CE_{\norm{\mhyphen}_2})$. 
 Since every $\norm{\mhyphen}_2'$\=/bounded neighborhood of $0$ is contained in such a ball for some $r$ large enough, this implies that $0\in \ell_2(\NN)$ does not have a well\=/defined coarse preimage under $\id_{\norm{\mhyphen}_2\to\norm{\mhyphen}_2'}$.
\end{exmp}

The following example is quite different in flavor:

\begin{exmp}\label{exmp:p1:quotient F_2/A}
 Let $\crse G\coloneqq (F_2,\varcrs{bw})$ be the coarse group obtained by equipping the non\=/abelian free group $F_2=\angles{a,b}$ with its canonical coarse structure. 
 Recall that $\varcrs{bw}$ coincides with the metric coarse structure defined by the cancellation distance $d_{\times}$ associated with $S=\{a^\pm,b^\pm\}$ (this is the bi\=/invariant metric where the length of a word is the minimal number of letters it is necessary to remove to reduce it to the trivial word, see Example~\ref{exmp:p1:cancellation metric}).
 We identify $F_2$ with the set of (reduced) words in the letters $a$ and $b$. Let $H=\braces{a^k\mid k\in \ZZ}$ be the subgroup generated by the letter $a$ 
 and consider the coarse quotient group $\crse Q\coloneqq \crse G/\aangles{\{H\times H\}}$ (this is the quotient in the sense of Definition~\ref{de:p1:quotient by a relation}, it is not a coarse cosets space). Namely, $\crse Q$ is $(F_2,\CD)$ where
 \[
  \CD=\varcrs[grp]{\mathnormal{d_{\times},H\times H}}=\angles{\CE_{d_\times}\;,\; \Delta_{G}\ast (H\times H)\ast \Delta_{G}},
 \]
 and the quotient map $\crse{q\colon G\to Q}$ is realized by the identify function $\crse q= [\id_{F_2}]$. We claim that $\crse q$ does not have a coarse kernel.
 
 In this specific case, one can show that the coarse structure $\CD$ coincides with the structure induced by a different cancellation metric $\bar{d}_{\times}^a$ on $F_2$, namely the cancellation metric associated with the infinite (non\=/freely) generating set $S=\braces{a^k\mid k\in\ZZ}\cup\braces{b^\pm}$.
 For every $r\in\NN$ consider the set 
 \[
 C_r\coloneqq
 \Bigbraces{a^{n_1}b^{m_1}a^{n_2}b^{m_2}\cdots a^{n_r}b^{m_r}\bigmid n_i,m_i\in \ZZ,\ \textstyle{\sum_{i=1}^r} m_i =0}\subseteq F_2.
 \]
 Note that $C_r$ is contained in the ball $B_{\bar{d}_{\times}^a}(e;r)$ because for every $w\in C_r$ it is possible to remove all the occurrences of the letter $a$ using at most $r$ cancellations and the remainder reduces to the trivial word because $\sum_{i=1}^r m_i=0$. 
 In particular, $C_r$ is a $\CD$\=/bounded neighborhood of the identity.
 However, considering words of the form $(a^nb^n)^ra^nb^{-rn}$ with $n$ arbitrarily large we see that $C_{r+1}$ is not $\CE_{d_\times}$\=/coarsely contained in $B_{\bar{d}_{\times}^a}(e;r)$.
 It follows that for every $r\in \NN$ the ball $B_{\bar{d}_{\times}^a}(e,r+1)$ is not $\CE_{d_\times}$\=/coarsely contained in $B_{\bar{d}_{\times}^a}(e;r)$, hence the coarse kernel does not exist.
 
 It is interesting to note that $\crse H$ is not coarsely normal in $\crse G$ (the sets $gHg^{-1}$ reach arbitrarily $d_\times$\=/far from $H$ as $g$ ranges in $F_2$). This immediately implies that $\crse H$ cannot be the coarse kernel of the quotient $F_2\to F_2/\aangles{\{H\times H\}}$, for coarse kernels are coarsely normal.
 Note also that $\crse Q$ is far from being the group quotient under the normal closure of $H$ in $F_2$: such a quotient would just be isomorphic to $\ZZ$, while it is simple to observe that $(F_2,\bar d_\times^a)$ is not coarsely abelian.
\end{exmp}

\section{The Isomorphism Theorems}\label{sec:p1:isomorphism theorems}
We will now show that the isomorphism theorems hold in the category of coarse groups.

As a preliminary remark, recall that---by definition---if $\crse f\colon \crse G\to \crse H$ is a coarse homomorphism then $\crse f$ is a coarsely $\crse G$\=/equivariant map, where $\crse G$ coarsely acts on itself by left multiplication and it coarsely acts on $\crse H$ multiplying on the left by its image under $\crse f$. In symbols, the coarse action on $\crse H$ is the composition 
\[
 \cop_{\crse H}\circ(\crse{f}\times \cid_{\crse H})\colon \crse {G\times H\to H}.
\]
Let $\crse{K\leq G}$ be a coarse subgroup such that $\crse{f}(\crse K)=\cunit_{\crse{H}}$. Then $\crse f$ descends to a $\crse G$\=/equivariant map of the coarse coset space $\bar{\crse f}\colon\crse{G/K}\to\crse H$.
Our study of coarse coset spaces allows us to easily prove the following:

\begin{thm}[Coarse First Isomorphism Theorem]\label{thm:p1:first iso theorem} 
 Let $\crse f\colon\crse G\to\crse H$ be a coarse homomorphism, $\crse{K\leq G}$ a coarse subgroup such that $\crse f(\crse K)=\cunit_{\crse H}$ and let $\crse{G/K}$ be the coarse coset space. The following are equivalent:
 \begin{enumerate}[(1)]
  \item the coarse map $\bar{\crse f}\colon\crse{G/K}\to\crse H$ is proper;
  \item $\crse K=\cker(\crse f)$;
  \item $\crse K$ is normal in $\crse G$ and $\bar{\crse f}\colon\crse{G/K}\to\cim(\crse f)$ is an isomorphism of coarse groups.
 \end{enumerate}
\end{thm}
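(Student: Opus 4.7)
The plan is to prove the implications (3)$\Rightarrow$(1)$\Rightarrow$(2)$\Rightarrow$(3). The first implication is essentially automatic: if $\bar{\crse f}\colon \crse{G/K}\to\cim(\crse f)$ is an isomorphism of coarse groups, then composing with the coarse embedding $\cim(\crse f)\crse{\hookrightarrow H}$ shows $\bar{\crse f}\colon\crse{G/K}\to \crse H$ is a coarse embedding, hence proper by Proposition~\ref{prop:p1:proper.hom.coarse.embedding}.

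For (1)$\Rightarrow$(2), fix representatives $\crse G=(G,\CE)$, $\crse H=(H,\CF)$, with $\crse{G/K}=(G,\CE_{\CE,K\times K}^{\rm left})$, and use $f$ as a representative for both $\crse f$ and $\bar{\crse f}$. The aim is to verify that $\crse K$ satisfies the universal property of the coarse preimage of $\cunit_{\crse H}$. The coarse image condition $\crse f(\crse K)=\cunit_{\crse H}$ is given. For the ``largest subspace'' property, let $W\subseteq G$ be any subset with $f(W)\csub_{\CF}\{e_H\}$. Then $f(W)$ lies in a $\CF$-bounded neighborhood $B$ of $e_H$, which we may enlarge so as to also contain $f(e_G)$ (which is $\CF$-close to $e_H$ by Lemma~\ref{lem:p1:coarse homomorphisms preserve unit and inverse}). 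By properness of $\bar{\crse f}$, $f^{-1}(B)\supseteq W$ is $\CE_{\CE,K\times K}^{\rm left}$-bounded, and since $e_G\in f^{-1}(B)$, Corollary~\ref{cor:p1:bounded sets in cosets} gives $f^{-1}(B)\csub_\CE e_G\ast K\ceq_\CE K$, hence $W\csub_\CE K$. Thus $\crse K=\cker(\crse f)$.

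For (2)$\Rightarrow$(3), Lemma~\ref{lem:p1:preimage of subgroups} shows that $\crse K$ is coarsely normal, so Theorem~\ref{thm:p1:quotient by normal is a group} ensures $\crse{G/K}$ carries a coarse group structure making the quotient map $\crse q\colon \crse G\to\crse{G/K}$ (represented by $\id_G$) a coarsely surjective coarse homomorphism. Since $\crse f=\bar{\crse f}\circ \crse q$ and $\crse q$ is a coarsely surjective coarse homomorphism while $\crse f$ is a coarse homomorphism, a quick diagram chase (or directly checking that $\bar f$ commutes with the $\bar\ast$ operation up to bounded error in $\CF$) shows that $\bar{\crse f}$ is a coarse homomorphism. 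By Lemma~\ref{lem:p1:coarse.eq.iff.surjective.coarse.emb} it suffices to show that $\bar{\crse f}\colon\crse{G/K}\to\cim(\crse f)$ is a coarsely surjective coarse embedding. Coarse surjectivity onto $\cim(\crse f)$ is automatic. By Proposition~\ref{prop:p1:proper.hom.coarse.embedding}, being a coarse embedding is equivalent to being proper.

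To prove properness, let $B\subseteq H$ be $\CF$-bounded; if $f^{-1}(B)$ is empty the claim is trivial, otherwise pick $g_0\in f^{-1}(B)$ and consider $B'\coloneqq f(g_0^{-1})\ast B$, which is $\CF$-bounded and contains $f(g_0^{-1})\ast f(g_0)\rel{\CF}e_H$, so $B'$ is $\csub_\CF\{e_H\}$. Using that $\crse f$ is a coarse homomorphism, $g_0^{-1}\ast f^{-1}(B)$ maps (up to uniform $\CF$-error) into $B'$, hence by the defining property of $\cker(\crse f)=\crse K$ we obtain $g_0^{-1}\ast f^{-1}(B)\csub_\CE K$, i.e.\ $f^{-1}(B)\csub_\CE g_0\ast K$. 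By Corollary~\ref{cor:p1:bounded sets in cosets} this is $\CE_{\CE,K\times K}^{\rm left}$-bounded, as required. The main obstacle throughout is the careful bookkeeping of the difference between ``bounded'' in $\crse H$ and ``coarsely contained in $\{e_H\}$'' (which requires proximity to $e_H$), and the corresponding uniform translation arguments in the non-abelian setting; everything else reduces to invoking the earlier tools on coset coarse structures and the proper-versus-coarse-embedding equivalence for coarse homomorphisms.
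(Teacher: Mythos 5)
Your proof is correct, and it organizes the argument as a single cycle (3)$\Rightarrow$(1)$\Rightarrow$(2)$\Rightarrow$(3) rather than the paper's decomposition into (1)$\Leftrightarrow$(2), (3)$\Rightarrow$(1), and (1)+(2)$\Rightarrow$(3). The substance is the same for (3)$\Rightarrow$(1) and (1)$\Rightarrow$(2). Where you genuinely diverge is in establishing properness from $\crse K=\cker(\crse f)$: the paper passes through the language of coarse preimages under coarse equivalences, invoking Lemma~\ref{lem:p1:preimage under coarse equivalence} to conclude that $\crse f^{-1}(\crse B)$ \emph{exists} and equals $\crse g\cop\crse K$, whereas you avoid that machinery entirely and argue at the level of sets, translating $B$ by $f(g_0^{-1})$ and $f^{-1}(B)$ by $g_0^{-1}\ast(\variable)$, then applying the universal property of the coarse kernel directly. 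Your route is shorter and more hands-on; the paper's route is slightly more conceptual and clarifies as a byproduct that $\crse f^{-1}(\crse B)$ is a well-defined coarse subspace, a fact your argument also yields but doesn't make explicit. Both rest on the same two pillars: Corollary~\ref{cor:p1:bounded sets in cosets} characterizing $\CE^{\rm left}_{\CE,K\times K}$-boundedness, and Proposition~\ref{prop:p1:proper.hom.coarse.embedding} converting properness into coarse embedding for coarse homomorphisms. One small presentational point: in (2)$\Rightarrow$(3) you assert without comment that $\bar{\crse f}$ is controlled on $\crse{G/K}$; this uses $\crse f(\crse K)=\cunit_{\crse H}$ and equi-left-invariance of $\CF$ to show $f\times f$ sends $\Delta_G\ast(K\times K)$ into $\CF$, a verification worth spelling out since the domain coarse structure has been enlarged.
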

\begin{proof}
 Let $\crse G=(G,\CE)$, $\crse H=(H,\CF)$ and let $\ast_{G},\unit_{G},\inversefn$ be adapted representatives for the operations of $\crse G$. We can further assume that $f(e_{G})=e_H$. We realize $\crse{G/K}$ as $(G,\CD)$ where $\CD=\varcrs[left]{\mathnormal{\CE,K\times K}}=\angles{\CE,\Delta_{G}\ast(K\times K)}$. By Corollary~\ref{cor:p1:bounded sets in cosets}, a subset $B\subseteq G$ is $\CD$\=/bounded if and only if $B\csub_\CE g\ast K$ for every $g\in B$.
 
 $(1)\Rightarrow(2)$: 
 If the map $f\colon (G,\CD)\to(H,\CE)$ is proper, the preimage $f^{-1}(B)$ of every bounded neighborhood of the identity $B\subseteq H$ is itself a $\CD$\=/bounded bounded neighborhood of $e_{G}$. It follows that $f^{-1}(B)$ is $\CE$\=/coarsely contained in $K$, which shows $\crse K$ is the coarse kernel.

 $(2)\Rightarrow(1)$:
 Assume that $\crse K$ is the coarse kernel. 
 If $B\subseteq H$ is bounded and $f^{-1}(B)\neq \emptyset$, pick some $g\in f^{-1}(B)$. Since the left multiplication map $\crse{{}_{g^{-1}}\ast_{G}}$ is a coarse equivalence,  Lemma~\ref{lem:p1:preimage under coarse equivalence} implies that for every coarse subspace $\crse{ Z\subseteq G}$ the coarse preimage $(\crse{{}_{g^{-1}}\ast_{G}})^{\crse {-1}}(\crse Z)$ exists. The same is true for $\crse{ {}_{f(g)}\ast_{H}}$.
 Let $\crse B=[B]$. We see that $(\crse{ {}_{f(g)}\ast_{H}})^{\crse{-1}}(\crse B)=\cunit_{\crse H}$ and hence $(\crse{ {}_{f(g)}\ast_{H}} \circ \crse{f})^{\crse{-1}}(\crse B)=\crse K$ because $\crse K$ is the coarse kernel of $\crse f$. 
 Since $\crse f$ is a coarse homomorphism, $\crse{f}=\crse {{}_{f(g)}\ast_{H}} \circ \crse f\circ\crse {{}_{g^{-1}}\ast_{G}}$. Hence we deduce that $\crse{f^{-1}}(\crse B)$ also exists and it is equal to $(\crse{ {}_{g^{-1}}\ast_{G}})^{\crse{-1}}(\crse K)=\crse{ {}_{g}\ast_{G}}(\crse K)$. By definition, this implies that $f^{-1}(B)$ is $\CE$\=/coarsely contained in $g\ast K$ and it is hence $\CD$\=/bounded.

 $(3)\Rightarrow (1)$: This is clear.
 
 $(1)+(2)\Rightarrow (3)$: We already know that $\crse K=\cker(\crse f)$ is a coarsely normal. It follows from Theorem~\ref{thm:p1:quotient by normal is a group} that $\crse{G/K}$ is indeed a coarse group, hence it makes sense to ask whether $\bar{\crse f}$ is an isomorphism. Since we realize $\crse{G/K}$ as $(G,\CD)$ and the maps $\crse f$ and $\bar{\crse f}$ (resp. $\cop_{\crse G}$ and $\bar{\cop}_{\crse G}$) coincide setwise, it follows that $\bar{\crse f}\colon\crse{G/K}\to\crse H$ is a coarse homomorphism. Since $\bar{\crse f}$  is proper, it is a coarse embedding (Proposition~\ref{prop:p1:proper.hom.coarse.embedding}). Since $\bar{\crse f}$ is coarsely surjective on its image, it follows from Lemma~\ref{lem:p1:coarse.eq.iff.surjective.coarse.emb} that $\bar{\crse f}\colon\crse{G/K}\to\cim(\crse f)$ is a coarse equivalence and hence an isomorphism of coarse groups.
\end{proof}

Given two coarse subspaces $\crse{A}$ and $\crse B$ of a coarse group $\crse G$, their product\index{product of coarse subspaces} $\crse{A\cop B}$\nomenclature[:COS]{$\crse{A\cop B}$}{product of coarse subspaces} is the coarse image under $\cop$  of the coarse subspace $\crse A\times \crse B\crse{\subseteq G}\times \crse G$ (equivalently, it is the coarse subspace $[A\ast B]$, where $A\ast B=\braces{a\ast b\mid a\in A,\ b\in B}$). Notice that if $\crse {C\subseteq G}$ is a third coarse subspace then since $\cop$ is controlled $\crse{(A\cop B)\cop C=A\cop (B\cop C)}$.

\begin{thm}[Coarse Second Isomorphism Theorem]\label{thm:p1:second iso theorem}
 Let $\crse H,\crse N$ be coarse subgroups of $\crse G$. If $\crse N$ is coarsely invariant under the coarse action of $\crse H$ by conjugation, then the coarse subspace $\crse{ ( H\cop  N)\subseteq G}$ is a coarse subgroup and $\crse{N \trianglelefteq (H\ast N)}$. Moreover, if the coarse intersection $\crse{H\cap N}$ exists, then
 \begin{enumerate}[(1)]
  \item $\crse{H\cap N}$ is a coarsely normal coarse subgroup of $\crse H$;
  \item there is a canonical isomorphism $(\crse H\cop\crse N)\crse{/N}\to \crse{H/}(\crse{H\cap N})$.
 \end{enumerate}
\end{thm}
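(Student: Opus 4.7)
The plan is to break the proof into four steps, mirroring the structure of the classical argument but replacing set-theoretic containments with coarse containments and using the tools developed so far.

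First, I would verify that $\crse{H \cop N}$ is a coarse subgroup. Fixing representatives $H, N \subseteq G$ and adapted operations, the conjugation invariance hypothesis supplies a controlled $E \in \CE$ such that for every $h \in H$, $n \in N$ there exists $n' \in N$ with $n \ast h \rel{E} h \ast n'$. Hence $(h_1\ast n_1)\ast (h_2\ast n_2) \rel{\CE} h_1 \ast h_2 \ast n_1'\ast n_2 \in H\ast H\ast N\ast N$, which is coarsely contained in $H\ast N$ since $\crse H$ and $\crse N$ are coarse subgroups. A symmetric argument applied to $(h \ast n)^{-1} \ceq n^{-1}\ast h^{-1}$ shows the class is closed under inverses. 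To see that $\crse{N \trianglelefteq H\cop N}$, I would combine the hypothesis (invariance under $\crse H$-conjugation) with the fact that $\crse N$ is automatically invariant under its own conjugation action (this follows from $\crse N$ being a coarse subgroup), so $(h\ast n)\ast N\ast (h\ast n)^{-1} \csub h\ast N\ast h^{-1}\csub N$.

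For part (1), Lemma~\ref{lem:p1:intersection of subgroups} already gives that $\crse{H\cap N}$ is a coarse subgroup of both $\crse H$ and $\crse N$ whenever it exists. To upgrade to normality in $\crse H$, I will use the characterization of the coarse intersection from Lemma~\ref{lem:p1:intersection}: if $Z$ is a representative of $\crse{H\cap N}$, then I must show that $c(H,Z) = \{h\ast z\ast h^{-1} : h\in H,\, z\in Z\}$ is coarsely contained in $Z$. The set $c(H,Z)$ satisfies $c(H,Z)\csub c(H,H)\csub H$ because $\crse H$ is a coarse subgroup (so $h\ast z\ast h^{-1}\in H\ast H\ast H^{-1}\csub H$), and $c(H,Z)\csub c(H,N)\csub N$ by the $\crse H$-conjugation invariance of $\crse N$. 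Being coarsely contained in a controlled thickening of both $H$ and $N$, the characterization of the coarse intersection forces $c(H,Z)\csub_\CE Z$.

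For part (2), having established that $\crse N$ is coarsely normal in $\crse H\cop\crse N$, Theorem~\ref{thm:p1:quotient by normal is a group} guarantees that $(\crse H\cop\crse N)/\crse N$ is a coarse group with a coarsely surjective quotient homomorphism $\crse q\colon \crse H\cop\crse N \to (\crse H\cop\crse N)/\crse N$. The composition $\crse{\varphi}\coloneqq \crse q\circ \crse\iota$ of $\crse q$ with the inclusion $\crse\iota\colon \crse H\hookrightarrow \crse H\cop\crse N$ is a coarse homomorphism. It is coarsely surjective because every $h\ast n \in H\ast N$ is at uniform distance (in the coset coarse structure) from $h\in H$, since $N$ is bounded in $\crse{(H\cop N)/N}$. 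Finally, I will identify $\cker(\crse\varphi) = \crse{H\cap N}$: an element $h\in H$ has $\crse\varphi(h) \ceq \cunit$ precisely when $h$ lies in a $\crse{(H\cop N)/N}$-bounded neighborhood of $\unit$, which by Corollary~\ref{cor:p1:bounded sets in cosets} means $h \csub_\CE N$, and combined with $h\in H$ this is exactly the defining property of the coarse intersection. The First Isomorphism Theorem (Theorem~\ref{thm:p1:first iso theorem}) then yields the canonical isomorphism.

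I expect the main subtlety to be step (1): verifying that $c(H,Z)$ is coarsely contained in a representative $Z$ of the coarse intersection requires a careful use of the second condition in Lemma~\ref{lem:p1:intersection}, since one only controls $c(H,Z)$ by controlled thickenings of $H$ and $N$ separately rather than by their set-theoretic intersection. The identification of the coarse kernel in step (2) is also delicate, as it combines the cosets description of Section~\ref{sec:p1:cosets of subgroups} with the definition of the coarse intersection, and one must be attentive to the fact that ``being at bounded distance from $N$ in $\crse G$'' is what characterizes coarse boundedness in the coset coarse structure.
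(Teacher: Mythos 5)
Your proof is correct. The arguments that $\crse{H\cop N}$ is a coarse subgroup, that $\crse{N\trianglelefteq H\cop N}$, and for part~(1) essentially match the paper's: both you and the paper observe that the conjugate $c(H,Z)$ of a representative of $\crse{H\cap N}$ is coarsely contained in $\crse H$ and in $\crse N$ separately and then invoke the universal property of the coarse intersection. One small point worth recording in a final write-up: normality of $\crse{H\cap N}$ in $\crse H$ is a priori about conjugation in the coarse group $\crse H$ itself, and you should note (as the paper does) that this agrees with the ambient conjugation $c$ of $\crse G$ restricted to $H$, by the compatibility guaranteed in Proposition~\ref{prop:p1:closed.asymp.classes.are.subgroups}.

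For part~(2) you take the mirror-image route. You build $\crse\varphi\coloneqq\crse q\circ\crse\iota\colon\crse H\to(\crse{H\cop N})/\crse N$ as a composition of the inclusion with the quotient map, identify $\cker(\crse\varphi)=\crse{H\cap N}$ via Corollary~\ref{cor:p1:bounded sets in cosets} together with the defining property of the coarse intersection, and conclude with the First Isomorphism Theorem. The paper instead defines $\psi\colon H\ast N\to H$ by choosing for each $g\in H\ast N$ a decomposition $g=h(g)\ast n(g)$, proves via a separate Claim that $\psi\colon\crse{H\cop N}\to\crse{H/(H\cap N)}$ is controlled and independent of these choices, and identifies $\cker(\crse\psi)=\crse N$. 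Your direction buys you a map that is a composition of two canonical coarse homomorphisms---no choices and no controlledness Claim to prove---at the cost of a slightly more explicit kernel computation; the paper front-loads the well-definedness work and the kernel identification is then quicker. Both approaches hinge on the same mechanism: an element of $H$ that is uniformly $\CE$-close to $N$ is forced, by the universal property of $\crse{H\cap N}$, to be uniformly $\CE$-close to a chosen representative $K$ of it. If anything your route is the tidier of the two, since every ingredient is already a canonical coarse map.
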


\noindent\textit{Proof.}
 Let $\crse G=(G,\CE)$ and fix adapted representatives for the coarse operations. By definition, $\crse N$ is coarsely invariant under conjugation by $\crse H$ if and only if $c(H,N)\csub N$, where $c\colon G\times G\to G$ denotes the conjugation $c(h,g)=(h\ast g)\ast h^{-1}$. 
 Since $[H]=[H^{-1}]$, we also have $c(H^{-1},N)\csub N$ and hence there exists an $E\in \CE$ such that for every $n\in N$ and $h\in H$ there exist $n',n''\in N$ with 
 \[
 (h\ast n_1)\ast h^{-1}\torel{E}n' \qquad\text{and}\qquad (h^{-1}\ast n_1)\ast h \torel{E}n''. 
 \]
 In particular, if $\crse N$ is normalized by $\crse H$ then $\crse{ N\cop H}=\crse{H\cop N}$. In fact $N\ast H\csub H\ast N$ because
 \begin{equation}\label{eq:p1:normalizing n}
   n\ast h \rel{\CE} h \ast \bigparen{(h^{-1} \ast n) \ast h}\torel{\Delta_{G}\ast E} h\ast n'
   \qquad \forall n\in N,\ \forall h\in H.
 \end{equation}
 The coarse containment $H\ast N\csub N\ast H$ is analogous.
 
 Since $\cop$ is associative on coarse subsets, it follows that 
 \[
  \crse{(H\cop N)\cop (H\cop N)= H\cop (N\cop H)\cop N = H\cop (H\cop N)\cop N =H\cop N }. 
 \]
 The coarse containment $(H\ast N)^{-1}\csub (H\ast N)$ follows from the fact that $(h\ast n)^{-1}\rel{\CE} n^{-1}\ast h^{-1}$ for all $h\in H,\ n\in N$. This proves that $\crse{H\cop  N}$ is a coarse subgroup. Both $\crse H$ and $\crse N$ are coarse subgroups of $\crse{H\cop N}$ and it is immediate to verify that $\crse N$ is also coarsely normal.
 For the second part of the statement, assume that $\crse{H\cap N}$ exists. We already know (Lemma~\ref{lem:p1:intersection of subgroups}) that $\crse{H\cap N}$ is a coarse subgroup of $\crse H$ (and of $\crse N$). 
 Normality follows easily from the compatibility of the multiplication in $\crse H$ and $\crse G$. Namely, if $\crse{c^{H}}$ and $\crse{c^{G}}$  denote the conjugation in $\crse H$ and $\crse G$ respectively, when embedding $\crse H$ in $\crse G$ we have 
 \[
  \crse{c^{H}(H,H\cap N)}
  \crse =\crse{c^{G}(H,H\cap N)}.
\]
 The right hand side is coarsely contained in $\crse N$ by assumption and it  is obviously coarsely contained in $\crse H$ as well. It is hence coarsely contained in $\crse{H\cap N}$, as desired.
 
 It remains to prove that $(\crse H\cop_{\crse G}\crse N)\crse{/N}\cong \crse{H/}(\crse{H\cap N})$. Suppose $\crse{H\cap N}=[K]$ for some $K\subseteq H$. For every $g\in H\ast N$ choose $h(g)\in H$ and $n(g)\in N$ with $g=h(g)\ast n(g)$ and define a function $\psi\colon H\ast N\to H$ sending $g$ to $h(g)$. Equip $H$ with the quotient coarse structure $\CD=\varcrs[left]{\mathnormal{\CE|_H, K\times K}} =\angles{\CE|_H,\Delta_H\ast (K\times K)}$. We claim that $\psi\colon (H\ast N,\CE|_{H\ast N})\to (H,\CD)$ is controlled and that the coarse map $\crse\psi$ does not depend on the choices made. 
 
 \begin{claim*}
  For every $E\in\CE$ there exists a $D\in\CD$ such that for every pair of elements $g_1$, $g_2$ such that $g_i=h_i\ast n_i$ with $h_i\in H$ and $n_i\in N$ for $i=1,2$, if $g_1\rel{E} g_2$ then $h_1\rel{D}h_2$.
 \end{claim*}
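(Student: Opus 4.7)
The plan is to show that $h_1^{-1}\ast h_2$ is $\CE$\=/close to an element of $K$, and then to convert this into $\CD$\=/closeness of $h_1$ and $h_2$ by multiplying on the left by $h_1$. I would proceed in three steps, keeping careful track of which coarse entourages depend only on $E$ and on the coarse group structure (as opposed to depending on the particular $g_1,g_2,h_i,n_i$).

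First, using the identities $g_i=h_i\ast n_i$ and the adapted cancellation $n_i\ast n_i^{-1}=\unit$, repeated (coarse) associativity together with controlledness of multiplication will produce a fixed $E_0\in\CE$ such that
\[
h_1^{-1}\ast h_2 \rel{E_0} n_1 \ast (g_1^{-1}\ast g_2) \ast n_2^{-1}
\]
for every decomposition as in the statement. This step is essentially formal and only uses that $\crse G$ is a coarse group, so $E_0$ is independent of the particular elements involved.

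Next, the hypothesis $g_1\rel{E}g_2$ yields $g_1^{-1}\ast g_2\rel{E_1}\unit$ for some fixed $E_1\in\CE$, so by equi bi\=/invariance of $\CE$ the right hand side above is $\CE$\=/close to $n_1\ast n_2^{-1}\in N\ast N^{-1}$ uniformly in $n_1,n_2$. Since $\crse N$ is a coarse subgroup, $N\ast N^{-1}\csub_\CE N$, and likewise $h_1^{-1}\ast h_2\in H\ast H\csub_\CE H$. Combining these, I obtain a fixed $E_3\in\CE$ with $h_1^{-1}\ast h_2\in E_3(H)\cap E_3(N)$.

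For the final step I would invoke the hypothesis that the coarse intersection $\crse{H\cap N}=[K]$ exists. By Lemma~\ref{lem:p1:intersection} there is $E_4\in\CE$ with $E_3(H)\cap E_3(N)\subseteq E_4(K)$, so $h_1^{-1}\ast h_2\rel{E_4}k$ for some $k\in K$. Multiplying on the left by $h_1$ and using controlledness of left multiplication in $\crse G$, one gets $h_2\rel{E_5}h_1\ast k$ in $\CE|_H$ for some fixed $E_5\in\CE$. Choosing the representative $K$ so that $\unit\in K$ (which is possible since $\crse{H\cap N}$ is a coarse subgroup containing $\cunit$), the pair $(h_1,h_1\ast k)=(h_1\ast\unit,h_1\ast k)$ lies in $\Delta_H\ast(K\times K)$, and composing this with $E_5$ yields some $D\in\CD$ with $h_1\rel{D}h_2$, as required. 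The main obstacle—or rather the key conceptual step—is precisely this invocation of Lemma~\ref{lem:p1:intersection}: it is exactly what lets one pass from the information that $h_1^{-1}\ast h_2$ is simultaneously coarsely close to $H$ and to $N$ to the sharper statement that it is coarsely close to $K$. The other steps are essentially bookkeeping with the coarse group axioms and are routine once one has fixed adapted representatives at the start.
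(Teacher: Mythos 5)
Your proposal is correct and follows essentially the same route as the paper's proof: both reduce to showing that $h_1^{-1}\ast h_2$ (the paper works with $h_2^{-1}\ast h_1$, a mirror image) is uniformly close to $n_1\ast n_2^{-1}$, use the coarse subgroup properties to place this element simultaneously in controlled thickenings of $H$ and $N$, invoke the characterization of the coarse intersection (Lemma~\ref{lem:p1:intersection}) to conclude it lies in a controlled thickening of $K$, and then multiply back by $h_1$ to get $\CD$\=/closeness. The only cosmetic difference is that you isolate $g_1^{-1}\ast g_2\rel{\CE}\unit$ as an intermediate step, while the paper substitutes $g_2$ for $g_1$ directly inside the expression $h_2^{-1}\ast g_1\ast n_1^{-1}$; also note the small slip $h_1^{-1}\ast h_2\in H\ast H$ should read $h_1^{-1}\ast h_2\in H^{-1}\ast H$, which is harmless since $[H^{-1}]=[H]$.
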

 \begin{proof}[Proof of Claim]
 Notice that
  \[
  h_2^{-1}\ast h_1\rel{\CE}
  h_2^{-1}\ast (h_1\ast n_1)\ast n_1^{-1}
  \rel{\Delta_{G}\ast E\ast\Delta_{G}}
  h_2^{-1}\ast (h_2\ast n_2)\ast n_1^{-1}
  \rel{\CE} n_2\ast n_1^{-1}
 \]
for all $ h_1\ast n_1\rel{E}h_2\ast n_2$.
 Therefore, there exists an $E_1\in \CE$ depending only on $E$ so that $h_2^{-1}\ast h_1\rel{E_1} n_2\ast n_1^{-1}$ whenever $g_i$, $h_i$, $n_i$ are as in the statement of the claim. 
 
 Let $E_2\in \CE$ be a symmetric relation large enough so that $H^{-1}\ast H\subseteq E_2(H)$ and $N\ast N^{-1}\subseteq E_2(N)$. Then we see that there exist $h'\in\ H$ and $n'\in N$ with $h'\rel{E_2} h_2^{-1}\ast h_1$ and $n'\rel{E_2}n_2\ast n_1^{-1}$. In particular, $h'\in H\cap \paren{ E_2\circ E_1 \circ E_2(N)}$. However, it follows from the definition of coarse intersection that there exists some $E_3\in\CE$ depending only on $E$ such that $H\cap \paren{E_2\circ E_1 \circ E_2(N)}\subseteq E_3(K)$. At this point we are done, because
 \[
  h_1
  \rel{\CE}
  h_2\ast(h_2^{-1}\ast h_1)
  \rel{\Delta_{G}\ast E_2}
  h_2\ast h'
  \rel{\Delta_{G}\ast ( E_3(K)\times\{e\})}
  h_2\ast e
  \rel{\CE}
  h_2
 \]
 showing that $h_1$ and $h_2$ are uniformly $\CD$\=/close.
 \end{proof}

 Applying the claim to the diagonal $E=\Delta_{G}$ we see that $\crse\psi$ is independent of the choices made. Applying the claim to arbitrary $E\in\CE$ shows that $\psi$ is controlled. By \eqref{eq:p1:normalizing n}, the product $(h_1\ast n_1)\ast (h_2\ast n_2)$ is uniformly $\CE$\=/close to $(h_1\ast h_2)\ast n_3$ for some $n_3\in N$. It follows that $\crse\psi$ is a coarse homomorphism.
 
 Now we only need to show that $\crse{N\trianglelefteq H\ast N}$ is the coarse kernel of $\crse\psi$. Explicitly, we need to show that for every $\CD$\=/bounded neighborhood of the identity $e\in B\subseteq H$ the preimage $\psi^{-1}(B)$ is $\CE$\=/coarsely contained in $N$. By the definition of $\psi$, the preimage $\psi^{-1}(B)$ is contained in $B\ast N$.
 On the other hand, it follows from Corollary~\ref{cor:p1:bounded sets in cosets} that the $\CD$\=/bounded set $B$ is $\CE$\=/coarsely contained in $K$.  
 In turn, since $K$ is $\CE$\=/coarsely contained in $N$, we see that $B\ast N\csub_\CE K\ast N\csub_\CE N$ and hence $\psi^{-1}(B)\csub_\CE N$, as desired.
\qed

\begin{thm}[Coarse Third Isomorphism Theorem]\label{thm:p1:third iso theorem}
 Let $\crse{N\trianglelefteq G }$ be a coarsely normal coarse subgroup. There exists a bijective correspondence:
 \[
  \braces{\text{coarse subgroups of }\crse{G/N}}\longleftrightarrow\braces{\crse K\mid \crse{N \leq K\leq G}}.
 \]
 This correspondence preserves normality. Furthermore, for every $\crse{K\trianglelefteq G}$ coarsely containing $\crse N$ there is a canonical isomorphism 
 \[
  \crse{(G/N)/(K/N)\cong G/K}.
 \]
\end{thm}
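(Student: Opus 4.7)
The plan is to construct the correspondence explicitly using the coarse product with $\crse N$, then reduce the required properties to the description of $\CD\coloneqq\CE^{\rm left}_{\CE,N\times N}$ from Corollary~\ref{cor:p1:quotient coarse structure as composition_cosets} and the arguments in the proof of Theorem~\ref{thm:p1:second iso theorem}. Fix adapted representatives and realize $\crse{G/N}=(G,\CD)$. By Theorem~\ref{thm:p1:quotient by normal is a group} this is a coarse group, and the quotient coarse homomorphism $\crse q\colon\crse G\to\crse{G/N}$ is represented by $\id_G$. I would define the forward map by sending $\crse K$ with $\crse{N\leq K\leq G}$ to the coarse image $\crse q(\crse K)=[K]_\CD$, and the backward map by sending $\crse{\overline K\leq G/N}$ to $[\overline K\ast N]_\CE$, where $\overline K$ is a representative containing $e_G$.

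Both maps are well\=/defined. The forward map produces a coarse subgroup of $\crse{G/N}$ by Corollary~\ref{cor:p1:coarse subgroup iff coarse image}. For the backward map, Corollary~\ref{cor:p1:quotient coarse structure as composition_cosets} implies that $\overline K\ceq_\CD \overline K'$ yields $\overline K\ast N\ceq_\CE \overline K'\ast N$, so the output depends only on $\crse{\overline K}$. Assuming $\overline K\ast\overline K\csub_\CD \overline K$ and $\overline K^{-1}\csub_\CD \overline K$, a short computation using coarse normality of $\crse N$ to commute $N$ past $\overline K$ (in the spirit of identity~\eqref{eq:p1:normalizing n}) shows that $[\overline K\ast N]_\CE$ is a coarse subgroup of $\crse G$, and it contains $\crse N$ because $e_G\in\overline K$. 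To see that the two maps are mutually inverse, observe that $\crse{N\leq K}$ gives $\crse K\cop\crse N\crse\subseteq\crse K\cop\crse K=\crse K$ together with $\crse K\crse\subseteq\crse K\cop\crse N$, hence $[K\ast N]_\CE=\crse K$; conversely, $N$ is $\CD$\=/bounded by Corollary~\ref{cor:p1:bounded sets in cosets}, so $[\overline K\ast N]_\CD=\crse{\overline K}$.

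Normality transfers in both directions because $\crse q$ is a coarsely surjective coarse homomorphism and thus intertwines the coarse actions by conjugation on $\crse G$ and $\crse{G/N}$. If $\crse{K\trianglelefteq G}$ then its image is invariant under conjugation by $\crse{q(G)=G/N}$; conversely, invariance of $\crse{\overline K}$ under conjugation by $\crse{G/N}$ propagates to $\CD$\=/invariance of $\crse{\overline K\cop N}$ under conjugation by $\crse G$, and I would upgrade this to $\CE$\=/invariance using coarse normality of $\crse N$ as in Theorem~\ref{thm:p1:second iso theorem}.

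For the final isomorphism, $\crse{(G/N)/(K/N)}$ is $(G,\CE')$ with
\[
\CE'=\angles{\CD,\,\Delta_G\ast(K\times K)}=\angles{\CE,\,\Delta_G\ast(N\times N),\,\Delta_G\ast(K\times K)}.
\]
Since $\crse N\crse\leq \crse K$, there exists $E_0\in\CE$ with $N\subseteq E_0(K)$, so $N\times N\subseteq E_0\cmp(K\times K)\cmp\op{E_0}$ and hence $\Delta_G\ast(N\times N)$ already lies in $\angles{\CE,\,\Delta_G\ast(K\times K)}=\CE^{\rm left}_{\CE,K\times K}$. Thus $\CE'$ equals the coarse structure of $\crse{G/K}$, and since the coarse group operations are represented by the same underlying functions the identity function represents the canonical isomorphism. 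The main subtlety throughout is the constant bookkeeping between $\CE$\=/ and $\CD$\=/asymptoticity; this is elementary but needs careful use of Corollaries~\ref{cor:p1:quotient coarse structure as composition_cosets} and~\ref{cor:p1:bounded sets in cosets}.
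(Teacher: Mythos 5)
Your proposal is correct and follows essentially the same strategy as the paper: define the forward map via the coarse image under $\crse q$, use Corollary~\ref{cor:p1:bounded sets in cosets} to translate $\CD$-containments into $\CE$-containments involving $N\ast(\cdot)$, invoke coarse normality of $\crse N$ to commute $N$ past $K$, and observe that $\varcrs[left]{\mathnormal{\CD,K\times K}}=\varcrs[left]{\mathnormal{\CE,K\times K}}$ for the final isomorphism. The only cosmetic difference is that you build an explicit two-sided inverse $\crse{\overline K}\mapsto[\overline K\ast N]_\CE$ whereas the paper argues injectivity and surjectivity of the forward map separately, and for the last identity the paper first fixes a representative with $N\subseteq K$ on the nose, which makes the redundancy $N\times N\subseteq K\times K$ immediate rather than requiring the composition argument with $E_0$; both routes are fine.
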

\begin{proof}
 The projection $\crse G\to \crse{G/N}$ is a coarse homomorphism, hence the coarse image of every coarse subgroup of $\crse G$ is a coarse subgroup of $\crse {G/N}$. We need to show that this map is a bijection when restricted to coarse subgroups of $\crse G$ that coarsely contain $\crse N$.
 
 Let $\crse G=(G,\CE)$, fix adapted representatives for the coarse operations and let $\crse{G/N}=(G,\CD)$ with $\CD=\varcrs[left]{\mathnormal{\CE,N\times N}}\angles{\CE,\Delta_{G}\ast(N\times N)}$. By Corollary~\ref{cor:p1:bounded sets in cosets} we know that $A\csub_\CD B$ if and only if $A\csub_\CE N\ast B$.  
 
 Let us prove injectivity. 
 Let $[K_1]_\CE$ and $[K_2]_\CE$ be coarse subgroups of $\crse G$ coarsely containing $\crse N$. Since the quotient map is the identity, they are sent to $[K_1]_\CD$ and $[K_2]_\CD$. If $K_1\csub_\CD K_2$, it means that $K_1 \csub_\CE N\ast K_2$. Since $N\csub_\CE K_2$, the latter is coarsely contained in $K_2\ast K_2\csub_\CE K_2$. Therefore $[K_1]_\CE\crse{\subseteq} [K_2]_\CE$. Analogously, if $K_2 \csub_\CD K_1$ we deduce that $[K_1]_\CE\crse{\subseteq} [K_2]_\CE$, therefore $[K_1]_\CD\crse{=} [K_2]_\CD$ if and only if $[K_1]_\CE\crse{=} [K_2]_\CE$.
 
 For the surjectivity, let $[K]_\CD$ be a coarse subgroup of $\crse{G/N}$. Note that $N\ast K\ceq_\CD K$, so that we can use $N\ast K$ instead of $K$ as a representative of $[K]_\CD$. This way we ensure that $N\csub_\CE N\ast K$. It remains to see that $[N\ast K]_\CE$ is a coarse subgroup of $\crse G$. By assumption, $(N\ast K)\ast (N\ast K)\csub_\CD K$. Therefore we have
 \[
  (N\ast K)\ast (N\ast K)
  \csub_\CE N\ast K,
 \]
 showing that $[N\ast K]_\CE$ is coarsely closed under multiplication in $(G,\CE)$. Closure under inversion is similar: since $N$ and $K$ are coarse subgroups, $(N\ast K)^{-1}\ceq_\CE K^{-1}\ast N^{-1}\ceq_\CD K\ast N$. Since $\crse N$ is coarsely normal, $K\ast N\ceq_\CE N\ast K$ and we conclude as before that $(N\ast K)^{-1}\csub_\CE N\ast K$. An analogous argument shows that if $[K]_\CD$ is normal in $\crse{G/N}$ then $[N\ast K]_\CE$ is normal in $\crse G$.
 
 In the coarse setting, the last statement is almost a tautology. Fix a representative of $\crse K=[K]_\CE$. We can also assume that $N\subseteq K$. We can realize the quotients as $\crse{G/N}=(G,\CD)$ and $\crse{K/N}=(K,\CD|_{K})$.  Then $\crse{(G/N)/(K/N)}= (G,\varcrs[left]{\mathnormal{\CD,K\times K}})$. Since $N\subseteq K$, we see that 
  \[
   \varcrs[left]{\mathnormal{\CD,K\times K}}
   =\angles{\CD,\Delta_{G}\ast (K\times K)}=\angles{\CE,\Delta_{G}\ast(K\times K)}
   =\varcrs[left]{\mathnormal{\CE,K\times K}},
  \]
  and the latter is the coarse structure that makes $G$ into $\crse{G/K}$.
\end{proof}

\section{Short Exact Sequences of Coarse Groups}
We say that a \emph{short exact sequence}\index{short exact sequence} is a sequence of coarse homomorphisms of coarse groups
 \[
 \begin{tikzcd}
  \tobj\arrow[r] & 
  \crse{K} \arrow[r, "\crse{f}"] &
  \crse{G} \arrow[r, "\crse{h}"] &
  \crse{H} \arrow[r] &
  \tobj
 \end{tikzcd}
\]  
such that $\crse f$ is a coarse embedding, $\crse h$ is coarsely surjective and it has a coarse kernel $\cker(\crse h)$, the coarse kernel $\cker(\crse h)$ coincides with the coarse image $\crse{f(K)}$ (one may similarly define long exact sequences). 

Short exact sequences can be seen as a way to decompose a coarse groups into simpler components, and the Coarse Isomorphism Theorems guarantee that these decompositions are well\=/behaved. As an example, we shall now see that every coarse group can be decomposed into a coarsely connected coarse group and a trivially coarse group.

Recall that the set of coarsely connected components of $\crse G$ can be naturally identified with the set of coarse maps $\cmor(\terobj,\crse G)$ from the bounded coarse space to $\crse G$. Further recall that the latter is naturally a set\=/group when equipped with the operation 
\[
 \crse f_1\odot\crse f_2\colon \tobj\xrightarrow{(\crse f_1,\crse f_2)} \crse{G\times G}\xrightarrow{\crse \ast} \crse G
\]
(see Section~\ref{sec:p1:group object}).

Given a coarse group $\crse G$, let $ G_{e}\subseteq G$ be the coarsely connected component containing the identity. We already remarked that $\crse{G_{e}}$ is a coarsely normal coarse subgroup of $\crse G$ (Subsection~\ref{sec:p1:action by conjugation}). The following elementary observation can be proved in multiple ways. This gives us a good excuse to review some of the techniques that we developed so far.

\begin{lem}\label{lem:p1:quotient by identity component}
 The coarse quotient $\crse{G/G_{e}}$ is isomorphic to the trivially coarse group of coarsely connected components $(\cmor(\terobj,\crse G),\mincrs)$.
\end{lem}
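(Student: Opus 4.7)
The strategy is to build an explicit coarse homomorphism from $\crse G$ onto the trivially coarse group of coarsely connected components, verify that its coarse kernel is exactly $\crse{G_e}$, and then invoke the Coarse First Isomorphism Theorem.

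First I would set up notation. Write $\pi_0(\crse G) \coloneqq \cmor(\tobj,\crse G)$, equipped with the set-group operation $\odot$ from Lemma~\ref{lem:p1:mor.is.group}. By Remark~\ref{rmk:p1:connected components are a group}, $\pi_0(\crse G)$ is in natural bijection with the set of coarsely connected components of $\crse G$; under this bijection the product $\crse f_1\odot \crse f_2$ corresponds to the component containing $x_1\ast x_2$, where $x_i$ is any element of the component associated with $\crse f_i$. Let $\pi\colon G\to \pi_0(\crse G)$ be the function sending $g\in G$ to the component containing it.

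Next I would check that $\pi\colon(G,\CE)\to(\pi_0(\crse G),\mincrs)$ is a coarse homomorphism. Controlledness is immediate: if $(x,y)\in E\in\CE$ then $x$ and $y$ lie in the same coarsely connected component, so $\pi\times\pi(E)\subseteq \Delta_{\pi_0(\crse G)}\in\mincrs$. The homomorphism identity $\pi(g_1\ast g_2)=\pi(g_1)\odot \pi(g_2)$ holds on the nose by the description of $\odot$ recalled above; since the target carries $\mincrs$, this equality (not merely closeness) is exactly what is required. The map $\pi$ is clearly surjective, hence coarsely surjective.

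The heart of the argument is computing $\cker(\crse\pi)$. In the trivially coarse group $(\pi_0(\crse G),\mincrs)$, a subset is bounded iff it is a singleton, so the only bounded neighborhoods of the identity are subsets of $\{\pi(e)\}$. Their preimage under $\pi$ is precisely $G_e$, the coarsely connected component of $e$. Hence the coarse preimage of $\cunit$ exists and equals $[G_e]$, so $\cker(\crse\pi)=\crse{G_e}$; in particular $\crse\pi(\crse{G_e})=\cunit$.

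Finally, I would apply Theorem~\ref{thm:p1:first iso theorem}: since $\crse{G_e}=\cker(\crse\pi)$ and $\crse\pi$ is coarsely surjective, the induced map $\bar{\crse\pi}\colon \crse{G/G_e}\to (\pi_0(\crse G),\mincrs)$ is an isomorphism of coarse groups. I do not expect any serious obstacle; the only point where care is needed is making sure that the group operation on $\pi_0(\crse G)$ used in Lemma~\ref{lem:p1:mor.is.group} really does coincide with the ``component containing $g_1\ast g_2$'' operation, so that $\pi$ becomes a strict (hence coarse) homomorphism into the trivially coarse target.
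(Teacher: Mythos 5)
Your proof is correct, and it coincides with one of the routes the paper itself explicitly lays out: the paper checks controlledness of $q$ by the same direct argument (controlled sets map into the diagonal) and then, as its second listed option, observes that the only bounded neighborhood of the identity in $(\cmor(\tobj,\crse G),\mincrs)$ is the singleton $\{e\}$, so $\crse q$ is proper with coarse kernel $\crse{G_e}$, and the Coarse First Isomorphism Theorem gives the isomorphism. No gaps.
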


\noindent\emph{Proof.}
 Let 
 \(
 q \colon G\to \cmor(\terobj,\crse G) 
 \)
be the function sending the point $g\in G$ to its coarsely connected component $\crse{g\in G}$, which we identify with the coarse map $\crse g = [{\rm pt} \mapsto g]$ from $\tobj$ to $\crse G$. Note that $q(g_1\ast g_2)= q(g_1)\odot q(g_2)$. 

For brevity, denote $G'\coloneqq\cmor(\terobj,\crse G)$. We can deduce that $q\colon \crse{G/G_{e}}\to (G',\mincrs)$ is controlled using any of the following arguments:
\begin{itemize}
 \item Directly: note that if $A\subseteq G$ is bounded in $\crse{G/G_{e}}$ then $q(g)=q(g')$ for every $g,g' \in A$. It follows that for any controlled partial covering $\fka$ of $\crse G$ the image $q(\fka)$ consists of singletons and it is hence controlled.
 
 \item Since $\crse{G_{e}}$ is coarsely normal, $\crse{G/G_{e}}$ is a coarse group. Using the criterion given in Proposition~\ref{prop:p1:bornologous map is coarse hom} it is hence enough to observe that $q$ is constant on $\crse{G_{e}}$.
 
 \item The left multiplication induces a coarse action of $\crse G$ on $G'$ and $q$ is a representative for the orbit map. The pull\=/back coarse structure $q^*(\mincrs)$ is equi left invariant and $\crse{G_{e}}$ is $q^*(\mincrs)$\=/bounded. By definition of the quotient coarse action, it follows that the coarse structure of $\crse{G/G_{e}}$ is contained in $q^*(\mincrs)$, hence $q$ is controlled.
\end{itemize}

We thus have a surjective coarse homomorphism $\crse {q\colon G/G_{e}}\to (G',\mincrs)$. To prove that it is an isomorphism we may:

\begin{itemize}
 \item Define a function $h\colon G'\to G$ by arbitrarily choosing a representative for each coarsely connected component. The function $h\colon (G',\mincrs)\to \crse{G/G_{e}}$ is automatically controlled and it is easy to verify that it is a coarse inverse for $\crse q$.
 
 \item Notice the only bounded neighborhood of the identity in $G'$ is the singleton $e_{G'}$. It then follows that $\crse {q\colon G/G_{e}}\to (G',\mincrs)$ is proper and hence it is an isomorphism by the coarse Isomorphism Theorem (Theorem~\ref{thm:p1:first iso theorem}). Also note that $\crse{G_{e}}$ is the coarse kernel of $\crse q$.
 
 \item Observe the $q^*(\mincrs)$\=/bounded neighborhoods of $e_G$ are precisely the subsets of the coarsely connected component $G_{e}\subseteq G$. Since equi left invariant coarse structures on $G$ are determined by their bounded sets, $q^*(\mincrs)$ coincides with the coarse structure of the quotient $\crse{G/G_{e}}$. It follows that $\crse q$ is a coarsely surjective coarse embedding, hence a coarse equivalence and therefore an isomorphism. \qed
\end{itemize}

Note that in the proof of Lemma~\ref{lem:p1:quotient by identity component} we also showed that $\crse {G_{e}}$ is the coarse kernel of $\crse q$. This implies the following.

\begin{cor}\label{cor:p1:ses quotient by id comp}
 Every coarse group $\crse G$ fits in a short exact sequence
 \[
 \begin{tikzcd}[row sep=small,
  ar symbol/.style = {draw=none,"\textstyle#1" description,sloped},
  isomorphic/.style = {ar symbol={\cong}},
  ]
  \tobj\arrow[r] & 
  \crse{G_{e}} \arrow[r] &
  \crse{G} \arrow[r, "\crse{q}"] &
  (\cmor(\terobj,\crse G),\mincrs) \arrow[r] \ar[d,isomorphic]&
  \tobj. \\
  &&& \crse{G/G_{e}} &
 \end{tikzcd}
\]
\end{cor}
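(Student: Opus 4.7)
The plan is to extract the short exact sequence almost directly from Lemma~\ref{lem:p1:quotient by identity component} and the remarks accompanying its proof. By the definition of short exact sequence stated just before the corollary, we must verify four things for the diagram with $\crse{f}$ the inclusion $\crse{G_e}\hookrightarrow\crse G$ and $\crse{h}=\crse q$: (i) $\crse f$ is a coarse embedding; (ii) $\crse q$ is coarsely surjective; (iii) $\crse q$ has a coarse kernel; (iv) $\cker(\crse q)=\crse{f(G_e)}$.

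First I would handle (i): the inclusion $\crse{G_e}\hookrightarrow\crse G$ is a coarse embedding essentially by construction, since $\crse{G_e}$ is given the subspace coarse structure (this is how coarse subgroups were made into coarse spaces in Proposition~\ref{prop:p1:closed.asymp.classes.are.subgroups}). Item (ii) is immediate: the function $q\colon G\to\cmor(\tobj,\crse G)$ defined in the proof of Lemma~\ref{lem:p1:quotient by identity component} is set\=/theoretically surjective, so $\crse q$ is a fortiori coarsely surjective. It is also a coarse homomorphism by Lemma~\ref{lem:p1:quotient by identity component}.

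For (iii) and (iv), I would invoke directly the third bullet of the proof of Lemma~\ref{lem:p1:quotient by identity component}: there it was observed that the $q^*(\mincrs)$\=/bounded neighborhoods of $e_{G}$ are precisely the subsets of $G_e$. By the definition of coarse kernel (Definition~\ref{def:p1:coarse kernel}), this says exactly that $\cker(\crse q)$ exists and equals $[G_e]$, which is $\crse{f(G_e)}$. Alternatively, one may invoke the second bullet: $\crse q$ is proper because the only bounded neighborhood of the identity in $(\cmor(\tobj,\crse G),\mincrs)$ is $\{e\}$, and then the First Isomorphism Theorem (Theorem~\ref{thm:p1:first iso theorem}, $(1)\Rightarrow(2)$) applied to the coarse subgroup $\crse{G_e}$ gives $\cker(\crse q)=\crse{G_e}$.

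No step here is an obstacle; the work was already done in the preceding lemma and its proof. The corollary is merely a restatement of those facts in the language of short exact sequences, plus the commutativity of the triangle
\[
\begin{tikzcd}[row sep=small, ar symbol/.style = {draw=none,"\textstyle#1" description,sloped}, isomorphic/.style = {ar symbol={\cong}}]
\crse G \arrow[r,"\crse q"] \arrow[dr,swap,"\crse q"] & (\cmor(\tobj,\crse G),\mincrs) \arrow[d,isomorphic] \\
 & \crse{G/G_e},
\end{tikzcd}
\]
which is tautological once the isomorphism from Lemma~\ref{lem:p1:quotient by identity component} is named.
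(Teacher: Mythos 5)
Your proposal is correct and follows the same route as the paper: the paper simply notes that the proof of Lemma~\ref{lem:p1:quotient by identity component} already showed that $\crse{G_e}$ is the coarse kernel of $\crse q$, and then states the corollary as an immediate consequence. Your more explicit unpacking of the four conditions for a short exact sequence and the pointer to the specific bullets in the lemma's proof are a faithful elaboration of the same argument.
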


With some more care, we can understand when coarse cosets groups fit into short exact sequences. Namely, we can prove:

\begin{prop}\label{prop:p1:kernel of cosets spaces}
 Let $\crse{A\subseteq G}$ be a coarse subspace so that the coarse cosets space $\crse{G/A}$ is a coarse group. The quotient homomorphism $\crse {q\colon G\to G/A}$ has a coarse kernel if and only if there is an $n\in \NN$ such that $\crse{(A^{-1} \ast A)}^{\ast n}$ is a normal coarse subgroup of $\crse G$. When this is the case, $\cker(\crse q)\crse{= (A^{-1} \ast A)}^{\ast n}$.
\end{prop}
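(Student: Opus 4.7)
The plan is to exploit Lemma~\ref{lem:p1:quotient coarse structure as composition_cosets} to describe the $\CE^{\rm left}_{\CE,A\times A}$\=/bounded neighborhoods of the identity explicitly as a cofinal chain, and then translate ``coarse kernel exists'' into a stabilization property of this chain. First I would fix adapted representatives $\ast,\unit,\inversefn$ for the coarse operations on $\crse G=(G,\CE)$ (so that $\unit\in A^{-1}\ast A$), and realize $\crse{G/A}$ as $(G,\CE^{\rm left}_{\CE,A\times A})$ so that $q$ is represented by the identity function. Set $K_n\coloneqq(A^{-1}\ast A)^{\ast n}$. The ``in particular'' clause of Lemma~\ref{lem:p1:quotient coarse structure as composition_cosets} says precisely that a subset $B\subseteq G$ is $\CE^{\rm left}_{\CE,A\times A}$\=/bounded if and only if $B\csub_\CE K_n$ for some $n$. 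Since $\unit\in K_n\subseteq K_{n+1}$, the sequence of coarse subspaces $[K_1]_\CE\subseteq [K_2]_\CE\subseteq\cdots$ is thus cofinal among the coarse subspaces of $\crse G$ whose coarse image under $\crse q$ is bounded.

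Next I would appeal directly to Definition~\ref{def:p1:coarse preimage}: the coarse preimage $\crse{q^{-1}}(\cunit_{\crse{G/A}})=\cker(\crse q)$ exists if and only if there is a largest element in this cofinal chain up to $\CE$\=/asymptoticity, i.e.\ there is an $n_0$ with $K_m\ceq_\CE K_{n_0}$ for every $m\geq n_0$; and when this is so, $\cker(\crse q)=[K_{n_0}]$. Now $[K_{n_0}]$ is symmetric ($(A^{-1}\ast A)^{-1}\ceq_\CE A^{-1}\ast A$, so $K_{n_0}^{-1}\ceq_\CE K_{n_0}$), so $[K_{n_0}]$ is a coarse subgroup of $\crse G$ iff $K_{n_0}\ast K_{n_0}\csub_\CE K_{n_0}$, i.e.\ iff $K_{2n_0}\csub_\CE K_{n_0}$. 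A straightforward induction then gives $K_{kn_0}\csub_\CE K_{n_0}$ for every $k$, and since every $K_m$ with $m\geq n_0$ is contained in some $K_{kn_0}$, this is exactly the stabilization condition. Hence
\[
\cker(\crse q)\text{ exists}\quad\Longleftrightarrow\quad \exists\,n\text{ with }[K_n]\text{ a coarse subgroup of }\crse G,
\]
and in that case $\cker(\crse q)=[K_n]$.

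Finally, I would upgrade ``coarse subgroup'' to ``normal coarse subgroup''. One implication is immediate: when $\cker(\crse q)$ exists, Lemma~\ref{lem:p1:preimage of subgroups} applied to the coarse homomorphism $\crse q$ and the (coarsely normal) trivial subgroup $\cunit_{\crse{G/A}}\crse{\trianglelefteq}\crse{G/A}$ shows that $[K_n]=\cker(\crse q)$ is coarsely normal in $\crse G$. For the converse, the hypothesis that $[K_n]$ is a \emph{normal} coarse subgroup obviously subsumes its being a coarse subgroup, which by the preceding paragraph is all that is needed to conclude $\cker(\crse q)=[K_n]$.

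The main obstacle I anticipate is the first step: identifying $(K_n)_{n\in\NN}$ as a genuine cofinal increasing chain of bounded neighborhoods of $\unit$ in $\crse{G/A}$. The increasing character is what forces us to pass to adapted representatives (so that $\unit\in A^{-1}\ast A$), and the cofinality is exactly the non\=/trivial content of Lemma~\ref{lem:p1:quotient coarse structure as composition_cosets}; without the latter one would have to deal with arbitrary finite compositions of relations of the form $\Delta_G\ast(A\times A)$ with entourages from $\CE$, which is considerably messier. Once this cofinal chain is secured, the remainder of the argument is a routine translation between chain stabilization, existence of coarse preimages, and the coarse subgroup property.
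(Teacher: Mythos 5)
Your overall strategy is sound and leads to a correct proof, but you take a genuinely different route from the paper, and there is one place where you invoke the lemma too hastily.

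The technical slip is in the claim that the ``in particular'' clause of Lemma~\ref{lem:p1:quotient coarse structure as composition_cosets} says a set $B$ is $\CE^{\rm left}_{\CE,A\times A}$\=/bounded iff $B\csub_\CE K_n$. What the lemma and Remark~\ref{rmk:p1:quotient as composition_cosets_covering} actually give is that $B\csub_\CE g\ast K_n$ for some $g\in G$ and $n$ --- a \emph{left translate} of $K_n$. Your statement is false in general: a singleton $\{g\}$ with $g$ far from $\unit$ in $\crse G$ is bounded in $\crse{G/A}$ but not coarsely contained in any $K_n$. The repair is exactly the step the paper makes explicit: when $B$ is a representative of a coarse subspace whose image is coarsely contained in $\cunit_{\crse{G/A}}$, one may take $\unit\in B$, and since $\unit\in K_n$ as well, the translating element $g$ lies in the coarsely connected component of $\unit$ and can be dropped. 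Relatedly, your cofinality claim should be phrased not as cofinality among coarse subspaces with bounded image, but among coarse subspaces $[W]$ with $\crse q(\crse W)\crse{\subseteq}\cunit_{\crse{G/A}}$; the two mismatches cancel precisely because of the translate\=/removal step, but as written neither intermediate assertion is quite right on its own.

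Apart from that, your argument is valid and differs from the paper's in a useful way. The paper's forward direction observes that if $[K_n]$ is a normal coarse subgroup then $\crse{G/A}$ and $\crse{G/K_n}$ are coarse quotients of one another, hence equal, and then appeals to the First Coarse Isomorphism Theorem to identify $[K_n]$ as the coarse kernel. You instead set up a cofinal increasing chain $[K_1]\subseteq[K_2]\subseteq\cdots$ of candidate kernels and reduce the whole proposition to a stabilization criterion: the kernel exists iff the chain stabilizes iff some $[K_n]$ is closed under multiplication. This is more elementary (no appeal to the isomorphism theorems), makes the mechanism for failure of the kernel transparent (compare Example~\ref{exmp:p1:quotient F_2/A}, where the chain fails to stabilize), and has the pleasant by\=/product of showing that ``normal coarse subgroup'' in the statement can be weakened to ``coarse subgroup'' --- normality of $[K_n]$ then comes for free from Lemma~\ref{lem:p1:preimage of subgroups}. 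The trade\=/off is that the paper's route via the First Isomorphism Theorem exposes the structural fact that $\crse{G/A}=\crse{G/K_n}$ as coarse quotients, which your argument does not surface directly.
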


\begin{proof}
 Assume first that $\crse{(A^{-1} \ast A)}^{\ast n}$ is a normal coarse subgroup of $\crse G$. As in the beginning of the proof of Lemma~\ref{lem:p1:quotient coarse structure as composition_cosets}, we see that $(A^{-1} \ast A)^{\ast n}$ is bounded in $\crse{G/A}$ and, similarly, that $A$ is bounded in $\crse{G/(A^{-1} \ast A)}^{\ast n}$. This implies that $\crse{G/A}$ and $\crse{G/(A^{-1} \ast A)}^{\ast n}$ are quotients of one another, and hence they are the same coarse quotient of $\crse G$. We may then apply the First Coarse Isomorphism Theorem to deduce that $\crse{(A^{-1} \ast A)}^{\ast n}$ is the coarse kernel of the coarse quotient $\crse{q\colon G\to G/A}$.
 
 For the converse implication, assume that $\crse{q}$ has a coarse kernel $\crse{K}\coloneqq \cker(\crse q)\crse{\trianglelefteq G}$. Let $\crse G=(G,\CE)$, fix adapted representatives for the coarse operations and, for convenience, let $B\coloneqq A^{-1} \ast A$. We may assume that $e\in K$.
 Since $K$ is bounded in $\crse{G/A}$, it follows from Lemma~\ref{lem:p1:quotient coarse structure as composition_cosets} that there is a $g\in G$ and a $n\in\NN$ large enough so that $K$ is $\CE$\=/coarsely contained in $g\ast B^{*n}$ (see Remark~\ref{rmk:p1:quotient as composition_cosets_covering}).
 Since both $K$ and $B^{*n}$ contain $e$, the element $g$ is in the coarsely connected component of $\crse G$ containing $e$. We deduce that $K\csub_\CE B^{*n}$.

 On the other hand, $B^{*n}$ is bounded in the coarse quotient $\crse{G/A}$ and therefore $B^{*n}\csub_\CE K$ by definition of the kernel. It follows that $\crse B^{*n} \crse{=K}$ is a normal coarse subgroup of $\crse G$.
\end{proof}

\begin{cor}
  The quotient homomorphism $\crse {q\colon G\to G/A}$ fits in a short exact sequence if and only if it is of the form
 \[
 \begin{tikzcd}
  \tobj\arrow[r] & 
  \crse{(A^{-1} \ast A)}^{\ast n} \arrow[r] &
  \crse{G} \arrow[r, "\crse{q}"] &
  \crse{G/A} \arrow[r]&
  \tobj
 \end{tikzcd}
\]
for some $n\in \NN$ such that $\crse{(A^{-1} \ast A)}^{\ast n}$ is a normal coarse subgroup of $\crse G$.
\end{cor}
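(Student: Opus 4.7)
The plan is to reduce the statement to a direct application of Proposition~\ref{prop:p1:kernel of cosets spaces}, using the definition of a short exact sequence unpacked in the preceding paragraphs. First I would observe that the coarse quotient map $\crse{q\colon G\to G/A}$ is always coarsely surjective, since it is represented by the identity function $\id_{G}$ and the coarse structure of $\crse{G/A}$ is defined on the same underlying set $G$. Thus the only two nontrivial requirements for $\crse q$ to fit in a short exact sequence
$\tobj\to \crse K\to \crse G \xrightarrow{\crse q} \crse{G/A}\to \tobj$
are (i) that $\crse q$ admits a coarse kernel, and (ii) that $\cker(\crse q)$ is realized as the coarse image of a coarse embedding from some coarse group $\crse K$.

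For the ``only if'' direction, the existence of such a short exact sequence forces $\crse q$ to have a coarse kernel (this is built into the definition of short exact sequence). Proposition~\ref{prop:p1:kernel of cosets spaces} then gives an $n\in\NN$ for which $\crse{(A^{-1}\ast A)}^{\ast n}$ is a coarsely normal coarse subgroup of $\crse G$ and coincides with $\cker(\crse q)$. In particular $\crse K$ must be coarsely isomorphic (via the coarse embedding given) to $\crse{(A^{-1}\ast A)}^{\ast n}$, and so the given sequence has the claimed form up to replacing the left\-/hand term by a canonically isomorphic coarse group.

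For the ``if'' direction, suppose that some $n\in\NN$ is such that $\crse{(A^{-1}\ast A)}^{\ast n}$ is a coarsely normal coarse subgroup of $\crse G$. By Proposition~\ref{prop:p1:kernel of cosets spaces}, $\crse q$ has a coarse kernel equal to $\crse{(A^{-1}\ast A)}^{\ast n}$. Realize this coarse subgroup as a coarse group in its own right via Proposition~\ref{prop:p1:closed.asymp.classes.are.subgroups}; the inclusion $\crse{(A^{-1}\ast A)}^{\ast n}\crse{\hookrightarrow G}$ is then a coarse homomorphism that is tautologically a coarse embedding, and whose coarse image coincides with $\cker(\crse q)$. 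Combined with the coarse surjectivity of $\crse q$, this exhibits the desired short exact sequence.

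I do not expect any genuine obstacle here: the heavy lifting was done in Proposition~\ref{prop:p1:kernel of cosets spaces}, and the only bookkeeping needed is to match the formal definition of a short exact sequence with the data provided by that proposition. The one subtlety worth flagging is that the ``left term'' of the sequence is only determined up to canonical isomorphism of coarse groups, so the phrase ``of the form'' in the statement should be read modulo replacing $\crse K$ by an isomorphic copy of $\crse{(A^{-1}\ast A)}^{\ast n}$.
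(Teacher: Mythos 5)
Your argument is correct and is exactly the intended derivation: the corollary is an immediate consequence of Proposition~\ref{prop:p1:kernel of cosets spaces} once one unpacks the definition of a short exact sequence (coarse surjectivity of $\crse q$ is automatic, and the left term is forced to be the coarse kernel, realized as a coarse group via Proposition~\ref{prop:p1:closed.asymp.classes.are.subgroups}). The paper leaves this unproved precisely because the assembly is routine, and your note about ``of the form'' being read up to canonical isomorphism of the left-hand coarse group is the right reading.
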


\section{A Criterion for the Existence of Coarse Kernels}

We now provide a criterion to determine whether a coarse homomorphism admits a coarse kernel. To do so, we start by adapting the notion of `defect' for $\RR$\=/quasimorphisms to the setting of coarse groups (Definition~\ref{def:p1:quasimorphism}). Recall that the defect of an $\RR$\=/quasimorphism $\phi\colon G\to \RR$ is defined as $D_\phi\coloneqq \sup_{x,y\in G}\abs{\phi(xy)-\phi(x)-\phi(y)}$. It is common to assume that $\phi(x^{-1})=-\phi(x)$ or, at the very least, that $\phi(e_{G})=0$. In this case, the defect has the obvious property that for every $x,y\in G$ 
\[
 \phi(xy)\in \phi(x)+\phi(y)+[-D_\phi,D_\phi]
 \quad\text{ and }\quad
 \phi(x^{-1})\in -\phi(x)+[-D_\phi,D_\phi].
\]
The above is the property which we wish to reproduce for coarse homomorphisms of coarse groups. 
In the coarse setting, it is more convenient to use a notion of `defect relation'.

Let $\crse G =(G,\CE)$ and $\crse H=(H,\CF)$ be coarse groups. The \emph{defect relation} of a function $f\colon G\to H$ is the relation 
\[
 F_f\coloneqq\bigbraces{\bigparen{f(x_1\ast_{G} x_2)\,,\, f(x_1)\ast_H f(x_2)}\mid x_1,x_2\in G}\cup
 \bigbraces{\bigparen{f(x^{-1}),f(x)^{-1}}\mid x\in G}
 \quad\subseteq\  H\times H. 
\]
By definition, $f(x_1\ast_{G} x_2)\torel{F_f}f(x_1)\ast_H f(x_2)$ and $f(x^{-1})\torel{F_f}f(x)^{-1}$ for every $x_1,x_2,x\in G$. 

\begin{rmk}
 If $f$ is controlled, then it is a coarse homomorphism if and only if $F_f\in\CF$. Note that for this characterization of coarse homomorphism it is not really necessary to add to $F_f$ the points $(f(x)^{-1},f(x)^{-1})$. However, doing so will simplify some algebraic manipulations later on.
\end{rmk}

We learned the idea behind the following observation from Heuer--Kielak \cite{kielak_quasi}. Some versions seem to go much further back in time: it is implicit in the work of Meyer \cite{meyer2000algebraic} and it also appears in constructions of quasi\=/crystals and approximate subgroups (see \cite{bjorklund2018approximate,bjorklund2021introduction}). 
Informally, it says that if a subset $B$ of the codomain of a coarse homomorphism $f$ has the property that a ``suitably large'' neighborhood of $B$ can be covered with boundedly many translates $B\ast f(a)$, then the preimage $f^{-1}(F_f(B))$ defines a coarse subgroup. The precise statement is as follows:

\begin{lem}\label{lem:p1:preimage of bounded is subgroup}
 Let $f\colon \crse G \to \crse H$ be a function between coarse groups, fix some $B\subseteq H$ and let $K\coloneqq f^{-1}\paren{F_f(B)}$. Assume that there exists a bounded neighborhood of the identity $e_{G}\ceq A \subseteq G$ such that for every point $y$ in 
 \[
 F_f\Bigparen{\bigparen{F_f(B)\ast_H F_f(B)}\cup ({F_f}(B)} \subseteq H
 \]
 there is an $a\in A$ with $y\ast_H f(a)\in B$. 
 Then $\crse K$ is a coarse subgroup of $\crse G$.
\end{lem}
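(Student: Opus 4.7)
The strategy is to apply Lemma~\ref{lem:p1:subgroup iff in m.HA}: to show $\crse K$ is a coarse subgroup it suffices to exhibit a bounded neighborhood of the identity $A'\subseteq G$ such that both $K\ast_G K$ and $K^{-1}$ are contained in $K\ast_G A'$. I will manufacture such an $A'$ from the given $A$ (for instance $A'\coloneqq A\cup A^{-1}$, still a bounded neighborhood by Lemma~\ref{lem:p1:properties of neighborhoods of id}). The engine of the argument is the defect relation $F_f$: its two defining clauses translate the group multiplication and inversion of $\crse G$ into the corresponding operations in $\crse H$ up to $F_f$-error, while the hypothesis supplies an element $f(a)$ (with $a\in A$) that ``corrects'' those errors back into $B$, and hence into $F_f(B)$.

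For product closure, fix $k_1,k_2\in K$, so that $f(k_1),f(k_2)\in F_f(B)$. Two applications of the defect relation give
\[
f(k_1\ast_G k_2)\torel{F_f} f(k_1)\ast_H f(k_2)\in F_f(B)\ast_H F_f(B),
\]
so $y\coloneqq f(k_1\ast_G k_2)\in F_f\paren{F_f(B)\ast_H F_f(B)}$. The hypothesis now supplies $a\in A$ with $y\ast_H f(a)\in B$, and one more use of the defect shows
\[
f\bigparen{(k_1\ast_G k_2)\ast_G a}\torel{F_f} y\ast_H f(a)\in B\subseteq F_f(B),
\]
hence $(k_1\ast_G k_2)\ast_G a\in K$ and $k_1\ast_G k_2\in K\ast_G A^{-1}$. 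The inverse case is parallel and exploits the second summand $F_f(B)$ appearing inside the hypothesis: starting from the inversion clause $f(k^{-1})\torel{F_f} f(k)^{-1}$, the plan is to locate $f(k^{-1})$ in $F_f\paren{F_f(B)}$, apply the hypothesis to obtain $a\in A$ with $f(k^{-1})\ast_H f(a)\in B$, and use the defect a final time to conclude $f(k^{-1}\ast_G a)\in F_f(B)$, i.e.\ $k^{-1}\ast_G a\in K$. Combining both inclusions with $A'=A\cup A^{-1}$ yields the hypothesis of Lemma~\ref{lem:p1:subgroup iff in m.HA}.

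The step I expect to require the most care is the localization $f(k^{-1})\in F_f\paren{F_f(B)}$ in the inverse case. The defect relation is not \emph{a priori} symmetric, so relating $f(k)^{-1}$ to $F_f(B)$ starting from the hypothesis $f(k)\in F_f(B)$ forces a careful bookkeeping of both clauses defining $F_f$ (or a passage to the symmetric closure $F_f\cup\op F_f$, which belongs to the same coarse structure and so does not alter $F_f(B)$ up to asymptoticity). Once this identification is in hand, every other step is a direct diagram chase through the definition of $F_f$ and the hypothesis on $A$, and the two inclusions assemble into the criterion of Lemma~\ref{lem:p1:subgroup iff in m.HA}.
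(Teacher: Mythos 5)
Your argument follows the same engine as the paper's proof: two uses of the defect relation place $f(k_1\ast_G k_2)$ inside $F_f\bigparen{F_f(B)\ast_H F_f(B)}$, the hypothesis produces an $a\in A$ with $f(k_1\ast_G k_2)\ast_H f(a)\in B$, and one more use of the defect lands $(k_1\ast_G k_2)\ast_G a$ in $K$. That is exactly the paper's mechanism.

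Two steps do not quite go through as written, however. First, you package the finish through Lemma~\ref{lem:p1:subgroup iff in m.HA}, which requires a \emph{literal} inclusion $K\ast_G K\subseteq K\ast_G A'$; but from $(k_1\ast_G k_2)\ast_G a\in K$ you cannot deduce $k_1\ast_G k_2\in K\ast_G a^{-1}$ on the nose, since $\ast_G$ is neither associative nor cancellative in a general coarse group. You only get that $k_1\ast_G k_2$ lies in a \emph{controlled thickening} of $K\ast_G a^{-1}$, with control uniform in $k_1,k_2$. The paper therefore works entirely with coarse containment: $K\ast_G K\csub K\ast_G A^{-1}$ and, since $A\ceq e_G$, also $K\ast_G A^{-1}\ceq K$, giving $K\ast_G K\csub K$ (and likewise $K^{-1}\csub K$), which verifies the definition of coarse subgroup directly without passing through Lemma~\ref{lem:p1:subgroup iff in m.HA}. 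Second, in the inverse case you correctly flag the delicate localization $f(k^{-1})\in F_f(F_f(B))$ --- which needs $f(k)^{-1}\in F_f(B)$ --- but the fix you propose does not apply. The lemma concerns an arbitrary \emph{function} $f$ between coarse groups, not a coarse homomorphism, so $F_f$ is not assumed to be a controlled relation; passing to the symmetric closure $F_f\cup\op{F_f}$ would genuinely change $F_f(B)$ and hence change $K$, and cannot be dismissed as innocuous "up to asymptoticity." (For what it is worth, the paper itself dispatches the inverse with "the same argument"; this is a spot where a fully spelled-out account is warranted.)
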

\begin{proof}
 We begin by showing that $K\ast_{G} K \csub K\ast_{G} A^{-1}$. Fix $x_1,x_2\in K$, then we have
 \[
  f(x_1\ast_{G} x_2)\in F_f\bigparen{f(x_1)\ast_H f(x_2)}\subseteq F_f\bigparen{F_f(B)\ast_H F_f(B)}
 \]
 and therefore there is an $a\in A$ such that $f(x_1\ast_{G} x_2)\ast_H f(a)\in B$. Again, we have
 \[
  f\bigparen{(x_1\ast_{G} x_2)\ast_{G} a}\in F_f\bigparen{f(x_1\ast_{G} x_2)\ast_H f(a)}\subseteq F_f(B)
 \]
 and therefore $(x_1\ast_{G} x_2)\ast_{G} a\in K$. Multiplying by $a^{-1}$, we see that $x_1\ast_{G} x_2$ belongs to a controlled thickening of $K\ast_{G} a^{-1}$ (the control on the thickening is uniform in $x_1,x_2$, as it is obtained by one application of (Associativity), (Inverse) and (Identity)). The same argument also shows that $K^{-1}\csub K\ast_{G} A^{-1}$. Since $A\ceq e_{G}$, this also shows that $K\ast_{G} K\csub K$ and $K^{-1}\csub K$.
\end{proof}

\begin{rmk}\label{rmk:p1:defect set}
 A few comments:
 \begin{enumerate}
  \item Recall that a subset $X\subseteq G$ of a set\=/group $G$ is  an approximate subgroup if $X=X^{-1}$ and there exists a finite $A\subseteq G$ with $X\ast X\subseteq A\ast X$. If $G$ is a set\=/group and $f\colon G\to H$ is a function so that Lemma~\ref{lem:p1:preimage of bounded is subgroup} applies for some finite set $A\subseteq G$, then the above proof shows that $K\coloneqq f^{-1}(F_f(B))$ satisfies $K\ast K\subseteq K\ast A^{-1}$. If one also knows that $K=K^{-1}$ then this implies that $K$ is an approximate subgroup of $G$ (a priori, Lemma~\ref{lem:p1:preimage of bounded is subgroup} only implies that $K^{-1}\subseteq K\ast A^{-1}$). This fact is used in \cite{kielak_quasi} to show that (homogeneous) $\RR$\=/quasimorphisms have an approximate subgroup as ``quasi\=/kernel''. See also \cite{bjorklund2021introduction}.
  
  \item If $H$ is a set\=/group, it is generally simpler to use a ``defect set'' instead of a defect relation. Namely, we define the \emph{defect set}\index{defect} of a function $f\colon G \to H$ as the smallest set $D_f\subseteq H$ such that 
\[
 F_f\subseteq \Delta_H\ast (D_f\times\{e_H\}).
\]
  Now, for every $B\subseteq H$ we have $F_f(B)\subseteq (\Delta_H\ast (D_f\times\{e_H\}))(B) =B D_f$. The proof of Lemma~\ref{lem:p1:preimage of bounded is subgroup} then implies that if there exists $A\ceq e_{G}$ such that $BD_fBD_f^2\subseteq B f(A)^{-1}$ then $[f^{-1}(BD_f)]$ is a coarse subgroup.
 \end{enumerate}
\end{rmk}

Further extending the idea of Lemma~\ref{lem:p1:preimage of bounded is subgroup}, we prove a criterion for the existence of coarse kernels. This is based on the existence of appropriate coarse exhaustions by bounded sets:

\begin{de}\label{def:p1:coarse exhaustion}
 A family of subsets $A_i$, $i\in I$ of a coarse space $\crse X$ \emph{coarsely exhausts $\crse X$}\index{coarsely!exhausting} if there is a controlled covering $\fkb\in\fkC(\CE)$ such that for every bounded subset $C\subseteq \crse X$ there exists $i\in I$ with $C\subseteq \st(A_i,\fkb)$.
\end{de}

\begin{prop}\label{prop:p1:criterion for existence of ker}
 Let $\crse {f \colon G \to H}$ be a coarse homomorphism. If there exists a family of bounded neighborhoods of the identity  $e_{G}\ceq A_i\subseteq G$, $i\in I$ whose image is coarsely exhausting $f(G)$, then $\crse f$ has a coarse kernel.
\end{prop}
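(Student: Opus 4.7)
The plan is to apply Lemma~\ref{lem:p1:preimage of bounded is subgroup} to a particular bounded neighborhood $B$ of $e_H$ that is ``universally'' derived from the exhaustion data, and then to use the exhaustion a second time to verify maximality of the resulting coarse subgroup.

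First, fix adapted representatives for the coarse operations of $\crse G$ and $\crse H$ (Lemma~\ref{lem:p1:adapted representatives exist}) and a representative $f$ for $\crse f$ with $f(e_G)=e_H$. Let $F_f\in\CF$ be the defect relation, which is controlled since $\crse f$ is a coarse homomorphism, and let $\fkb\in\fkC(\CF)$ be the controlled covering witnessing the exhaustion; set $D\coloneqq\diag(\fkb)\in\CF$, assumed symmetric and containing $\Delta_H$. After enlarging each $A_i$ to $A_i\cup A_i^{-1}$ (which preserves both the condition $A_i\ceq e_G$ and the exhaustion of $f(G)$), I may also assume each $A_i$ is symmetric. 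The key \emph{stabilization claim} is that there exists a bounded neighborhood $B\subseteq H$ of $e_H$, depending only on $D$ and $F_f$, such that $y\ast_H f(a'^{-1})\in B$ whenever $y\in H$ and $a'\in G$ satisfy $y\rel{D}f(a')$. This follows by composing the product relation $D\ast_H\Delta_H\in\CF$ with the defect correction $(f(a')\ast_H f(a'^{-1}),\,e_H)\in F_f$ (using $f(e_G)=e_H$ and that the operations are adapted): the result is a fixed entourage $E\in\CF$ with $y\ast_H f(a'^{-1})\rel{E}e_H$, and one takes $B\coloneqq E(\{e_H\})$.

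With this $B$ I invoke Lemma~\ref{lem:p1:preimage of bounded is subgroup}. The set $C\coloneqq F_f\bigparen{(F_f(B)\ast_H F_f(B))\cup F_f(B)}\subseteq H$ is bounded in $\crse H$, so by the exhaustion hypothesis applied to $C\cap f(G)$ there exists $i\in I$ with $C\cap f(G)\subseteq\st(f(A_i),\fkb)$. For every $y\in C\cap f(G)$ one finds $a'\in A_i$ with $y\rel{D}f(a')$, and the stabilization claim gives $y\ast_H f(a)\in B$ for $a\coloneqq a'^{-1}\in A_i$. The hypothesis of Lemma~\ref{lem:p1:preimage of bounded is subgroup} is thereby satisfied with $A\coloneqq A_i$, and I conclude that $\crse K\coloneqq[f^{-1}(F_f(B))]$ is a coarse subgroup of $\crse G$. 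To see that $\crse K=\cker(\crse f)$, note first that $f(K)\subseteq F_f(B)$ is bounded, so $\crse f(\crse K)=\cunit_{\crse H}$; for maximality, let $U\subseteq H$ be any bounded neighborhood of $e_H$ and $x\in f^{-1}(U)$. Applying the exhaustion to $U\cap f(G)$ yields $j\in I$ and $a'\in A_j$ with $f(x)\rel{D}f(a')$; the stabilization claim (applied with $y=f(x)$) gives $f(x)\ast_H f(a'^{-1})\in B$, hence $f(x\ast_G a'^{-1})\rel{F_f}f(x)\ast_H f(a'^{-1})\in B$ and so $x\ast_G a'^{-1}\in K$. Since $x\rel{\CE}(x\ast_G a'^{-1})\ast_G a'$ by the identity and associativity axioms, one has $x\csub_\CE K\ast_G A_j$, and because $A_j\ceq e_G$ the set $K\ast_G A_j$ is coarsely contained in $K$ via the controlled entourage $\Delta_G\ast_G(A_j\times A_j)$. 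Thus $f^{-1}(U)\csub K$ for every bounded neighborhood $U$ of $e_H$, as required.

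The hard part is the stabilization claim, specifically the uniformity of $B$ in the exhaustion data. What makes the argument work is that the \emph{same} $B$ serves two purposes: it certifies the hypothesis of Lemma~\ref{lem:p1:preimage of bounded is subgroup} (Step 3) and it absorbs the preimage of every bounded neighborhood of $e_H$ into the single coarse subgroup $\crse K$ (Step 4). Without this universality of $B$ — i.e., its independence of the index $i$ chosen by the exhaustion — one would only produce a family of coarse subgroups of $\crse G$ with uniformly bounded $f$-images, not a maximal absorbing one, and the coarse preimage of $\cunit_{\crse H}$ would fail to exist.
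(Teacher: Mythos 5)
Your proof is correct and follows essentially the same strategy as the paper's: both build the kernel as $f^{-1}(F_f(\text{bounded set}))$ where the bounded set is the result of "stabilizing" $y\ast_H f(a^{-1})$ near $e_H$ for $y$ $\fkb$-close to $f(a)$, and both use the exhaustion hypothesis to show every $f^{-1}(U)$ is coarsely absorbed into this set. The only cosmetic differences are that you work with adapted representatives (so the paper's $C=\{y\ast_H y^{-1}\}$ collapses to $\{e_H\}$), you invoke Lemma~\ref{lem:p1:preimage of bounded is subgroup} as a black box rather than repeating its internal step (a subgroup verification that is in any case redundant once maximality is shown, by Lemma~\ref{lem:p1:preimage of subgroups}), and the defect correction $(f(a')\ast_H f(a'^{-1}),e_H)$ lies in $\op{F_f}$ rather than $F_f$ as written, which is harmless.
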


\begin{proof}
 We will prove the claim by explicitly constructing a bounded subset of $H$ whose preimage under $f$ is a representative of the coarse kernel of $\crse f$.
 Let $C\coloneqq \braces{y\ast_H y^{-1}\mid y\in H}$, then $C\ceq e_H$ because $\crse H$ is a coarse group. Let $\fkb\in\fkC(\CF)$ be a controlled covering showing that $f(A_i)$ is coarsely exhausting $f(G)$. We claim that if we are given $x\in G$ and $y\in \st(f(x),\fkb)$, then $y\ast_H f(x^{-1})\in \st\bigparen{C,\fkb\ast_H f(x^{-1})}$. In fact, $y\in \st(f(x),\fkb)$ if and only if there is a $B\in\fkb$ with $y,f(x)\in B$. In particular, both $y\ast_H f(x^{-1})$ and $f(x)\ast_H f(x^{-1})$ are in $B\ast_H f(x^{-1})$. The claim follows because
 \[
  B\ast_H f(x^{-1})
  \subseteq \st\bigparen{f(x)\ast_H f(x^{-1})\,,\, \fkb\ast_Hf(x^{-1})}
  \subseteq \st\bigparen{C\,,\,\fkb\ast_H f(x^{-1})}.
 \]
 
 Let $A\coloneqq \bigcup_{i\in I}A_i$. The partial covering $\fkb\ast_H f(\pts{A}^{-1})=\braces{B\ast_H f(a^{-1})\mid B\in\fkb,a\in A}$ is controlled.
 We let 
 \[
  \overline{C}\coloneqq \st\bigparen{C\,,\,\fkb\ast_H f(\pts{A}^{-1})},
 \]
 and note that $ \overline{C}\ceq C\ceq e_H$. Since $\crse f$ is a coarse homomorphism, $F_f(\overline{C})$ is also bounded. Let $K\coloneqq f^{-1}(F_f(\overline{C}))$, we claim that for every bounded $D\subseteq H$ we have $f^{-1}(D)\csub K$. This shows that $\crse K$ is the coarse preimage of $\cunit_{\crse H}$ and hence it is the coarse kernel of $\crse f$.
 
 Fix such a $D$, we can assume that $D\subseteq f(G)$. By hypothesis there is an $i\in I$ such that $D\subseteq \st(f(A_i),\fkb)$. Fix any $x\in f^{-1}(D)$ and let $y\coloneqq f(x)$. By construction, there is an $a\in A_i$ such that $y\in \st(f(a),\fkb)$ and by the previous argument we deduce that 
 \[
  f(x)\ast_H f(a^{-1})  =
  y\ast_H f(a^{-1})\in \overline{C}. 
 \]
 We can now proceed as in the end of the proof of Lemma~\ref{lem:p1:preimage of bounded is subgroup} to see that $x\ast_{G} a^{-1}\in K$ and hence $x$ belongs to a controlled thickening of $K\ast_{G} A_i$. The choice of this last thickening does not depend on $x\in f^{-1}(D)$ and therefore $f^{-1}(D)\csub K\ast_{G} A_i\csub K$, as desired. 
\end{proof}

Recall that if $\crse G=(G,\CE)$ is a coarsified set\=/group then a function $\phi\colon (G,\CE)\to (\RR,\CE_{\abs{\mhyphen}})$ defines a coarse homomorphism if and only if it is a controlled $\RR$\=/quasimorphism. Proposition~\ref{prop:p1:criterion for existence of ker} implies the following.
 
\begin{cor}\label{cor:p1:quasimorphism have ker}
 Let $\crse G=(G,\CE)$ be a connected coarsified set\=/group and $\phi\colon G\to\RR$ an $\RR$\=/quasimorphism. If $\phi\colon (G,\CE)\to(\RR,\CE_d)$ is controlled then $\crse \phi$ has a coarse kernel.
\end{cor}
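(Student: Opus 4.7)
The plan is to apply Proposition~\ref{prop:p1:criterion for existence of ker}: I need to exhibit a family of bounded neighborhoods $A_n$ of $e_G$ whose $\phi$\=/images coarsely exhaust $\phi(G)\subseteq\RR$. The strategy is to take these $A_n$ as cyclic subsets generated by a single element, and exploit homogeneization (Proposition~\ref{prop:p1:homogeneization.of.qmorph}) to make their images into arithmetic progressions in $\RR$.

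First I would dispense with the trivial case: if $\phi(G)$ is a bounded subset of $\RR$, then the coarse preimage of $\cunit_{\crse H}$ is all of $\crse G$ and there is nothing to prove. Assuming henceforth that $\phi$ is unbounded, I would apply Proposition~\ref{prop:p1:homogeneization.of.qmorph} (using the `furthermore' clause, since $\crse G$ is a coarsified set\=/group and $\RR$ is a Banach space) to replace $\phi$ by a representative $\bar\phi\closefn\phi$ satisfying $\bar\phi(g^k)=k\bar\phi(g)$ for every $g\in G$ and $k\in\ZZ$. Since $\bar\phi$ is still unbounded, I can pick $g_0\in G$ with $c\coloneqq\bar\phi(g_0)\neq 0$. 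Replacing $\phi$ by $\bar\phi$ does not change $\crse\phi$, hence does not change the existence of a coarse kernel.

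Now for each $n\in\NN$ define
\[
 A_n \coloneqq \{g_0^k \mid -n\leq k\leq n\}\subseteq G.
\]
Each $A_n$ is finite and contains $e_G=g_0^0$; since $\CE$ is coarsely connected, finite subsets containing $e_G$ are automatically bounded and asymptotic to $\{e_G\}$, so $A_n\ceq e_G$ is a bounded neighborhood of the identity. By homogeneity, $\bar\phi(A_n)=\{kc\mid -n\leq k\leq n\}$, which is a $|c|$\=/net of the interval $[-n|c|,n|c|]\subseteq\RR$.

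It remains to verify that the family $(\bar\phi(A_n))_{n\in\NN}$ coarsely exhausts $\bar\phi(G)\subseteq(\RR,\CE_{\abs{\mhyphen}})$ in the sense of Definition~\ref{def:p1:coarse exhaustion}. Let $\fkb$ be the controlled covering of $\RR$ by closed intervals of length $2|c|$ (\emph{e.g.}\ $\fkb=\{[t-|c|,t+|c|]\mid t\in\RR\}$). Any bounded subset $C\subseteq\bar\phi(G)$ is contained in some $[-N,N]$; choosing $n$ with $n|c|\geq N$ we obtain
\[
 C\subseteq [-N,N]\subseteq[-n|c|,n|c|]\subseteq\st\bigparen{\bar\phi(A_n),\fkb},
\]
since $\bar\phi(A_n)$ is a $|c|$\=/net of $[-n|c|,n|c|]$. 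Hence $\bar\phi(A_n)$ coarsely exhausts $\bar\phi(G)$, and Proposition~\ref{prop:p1:criterion for existence of ker} yields the existence of a coarse kernel for $\crse\phi$. There is no serious obstacle here; the one subtle point is that the argument crucially relies on having representatives with \emph{exact} homogeneity, which is precisely what the `furthermore' clause of Proposition~\ref{prop:p1:homogeneization.of.qmorph} provides in the coarsified set\=/group setting.
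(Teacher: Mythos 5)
Your proof is correct and follows essentially the same route as the paper: homogeneize via the `furthermore' clause of Proposition~\ref{prop:p1:homogeneization.of.qmorph}, pick an element $g_0$ with non\=/zero image, set $A_n=\{g_0^k\mid -n\leq k\leq n\}$, observe these are bounded neighborhoods of $e_G$ by coarse connectedness, and note their images form a lattice coarsely exhausting $\phi(G)$, so Proposition~\ref{prop:p1:criterion for existence of ker} applies. Your version is a bit more explicit at the exhaustion step (checking the $|c|$\=/net condition against the controlled covering by intervals), but there is no substantive difference in the argument.
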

\begin{proof}
 Assume for convenience that $\phi$ is homogeneous, \emph{i.e.}\ $\phi(x^k)=k\phi(x)$ for every $k\in \ZZ$ (every $\RR$\=/quasimorphism is close to a homogeneous one by Proposition~\ref{prop:p1:homogeneization.of.qmorph}). If $\phi$ is trivial then $\cker(\crse \phi)=\crse G$ and there is nothing to prove.
 
 If $\phi$ is non\=/trivial, there exists an $a\in G$ such that $\phi(a)>0$. Letting $A_n\coloneqq \braces{a^k\mid -n\leq k\leq n}$, we see that $\bigcup_{n\in\NN}\phi(A_n)$ is a lattice in $\RR$ and hence the sequence of sets $\phi(A_n)$ is coarsely exhausting $\RR$. Since $(G,\CE)$ is coarsely connected and $A_n$ is finite for every $n\in\NN$, the sets $A_n$ are bounded neighborhoods of the identity. It follows from Proposition~\ref{prop:p1:criterion for existence of ker} that $\phi$ has a coarse kernel.
\end{proof}

It is convenient to record the following consequence of Proposition~\ref{prop:p1:criterion for existence of ker}.

\begin{cor}\label{cor:p1:criterion for existence of ker}
 Let $\crse G$ be a coarsely connected coarse group, $\crse H$ a coarsified set group and $\crse{f \colon G\to H}$ a coarse homomorphism. 
 If there exists a bounded neighborhood of identity $B\subseteq H$ so that for every bounded set $C\subseteq H$ there exist $g_1,\ldots,g_n\in G$ with \[C\subseteq \bigparen{Bf(g_1)}\cup\cdots \cup \bigparen{Bf(g_n)}\] then $\crse f^{-1}(BD_f)$ is the coarse kernel of $\crse f$ where $D_f\subseteq H$ is the defect set (Remark~\ref{rmk:p1:defect set}).
\end{cor}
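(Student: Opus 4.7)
The plan is to deduce the corollary directly from Proposition~\ref{prop:p1:criterion for existence of ker} by exhibiting a suitable family of bounded neighborhoods of $e_G$, and then to verify by hand that $\crse f^{-1}(BD_f)$ is a representative of the resulting coarse kernel.

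First I would set up the family entering the proposition. Since $\crse G$ is coarsely connected, every finite subset of $G$ containing $e_G$ is a bounded neighborhood of $e_G$ asymptotic to $\{e_G\}$. I let $A_F \coloneqq F \cup \{e_G\}$ for each finite $F \subseteq G$; these are the bounded neighborhoods of identity we will feed into Proposition~\ref{prop:p1:criterion for existence of ker}. To verify that their images coarsely exhaust $f(G)$, I use that $\crse H$ is a coarsified set group with $B$ a bounded neighborhood of $e_H$, so by Corollary~\ref{cor:p1:neighbourhoods.of.identity.generate_setgroup_part cover} the right translates $\fkb \coloneqq \{Bh \mid h \in H\}$ form a controlled partial covering of $H$. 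For an arbitrary bounded $C \subseteq H$, the hypothesis produces $g_1,\ldots,g_n \in G$ with $C \subseteq \bigcup_i Bf(g_i)$; taking $F \coloneqq \{g_1,\ldots,g_n\}$, each $Bf(g_i) \in \fkb$ contains $f(g_i) \in f(A_F)$ and hence lies in $\st(f(A_F), \fkb)$. Thus $C \subseteq \st(f(A_F), \fkb)$, the hypothesis of Proposition~\ref{prop:p1:criterion for existence of ker} is satisfied, and $\cker(\crse f)$ exists.

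Next I would identify $\cker(\crse f)$ as $[\crse f^{-1}(BD_f)]$. After choosing adapted representatives with $f(e_G) = e_H$ we have $e_H \in B$ and $e_H \in D_f$, so $BD_f$ is a bounded neighborhood of $e_H$ in $\crse H$; hence $f^{-1}(BD_f)$ has bounded image and $[\crse f^{-1}(BD_f)] \csub \cker(\crse f)$. For the reverse coarse containment, let $W \subseteq G$ satisfy $[f(W)] = \cunit_{\crse H}$, i.e.\ $f(W)$ is bounded. The hypothesis gives $g_1,\ldots,g_n$ with $f(W) \subseteq \bigcup_i Bf(g_i)$. For $w \in W$ pick $i$ with $f(w) = b\, f(g_i)$ for some $b \in B$, and apply the defining inclusion $F_f \subseteq \Delta_H \ast (D_f \times \{e_H\})$ twice (once to the pair $x_1 = w$, $x_2 = g_i^{-1}$ and once to $g_i^{-1}$ itself) to get
\[
 f(w \ast g_i^{-1}) \;=\; f(w)\, f(g_i^{-1})\, d_1 \;=\; f(w)\, f(g_i)^{-1}\, d_2\, d_1 \;=\; b\, d_2\, d_1 \;\in\; BD_f D_f,
\]
with $d_1, d_2 \in D_f$. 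Since $\{g_1^{-1},\ldots,g_n^{-1}\}$ is a finite—hence bounded—subset of the coarsely connected $\crse G$, a single $E \in \CE$ satisfies $(w, w \ast g_i^{-1}) \in \Delta_G \ast E$ for every $w$ and its chosen $i$. This shows $W \csub_\CE f^{-1}(BD_f D_f)$.

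The hard part, and the main obstacle, is closing the gap between $f^{-1}(BD_f D_f)$ and $f^{-1}(BD_f)$: the defect computation inflates the bounded target by one extra factor of $D_f$, and I expect these two preimages to represent the same coarse subspace only after using the hypothesis once more. The natural route is to iterate: apply the hypothesis to the bounded set $BD_f D_f$ in place of $f(W)$, run the same defect computation for points $u \in f^{-1}(BD_f D_f)$, and combine the resulting $\CE$-thickening with the uniqueness of the coarse kernel established in the first part of the proof. Because $\cker(\crse f)$ is uniquely determined up to asymptoticity and both $f^{-1}(BD_f)$ and $f^{-1}(BD_f D_f)$ are bounded-image preimages coarsely contained in it, once the iterated argument exhibits $f^{-1}(BD_f D_f) \csub_\CE f^{-1}(BD_f)$ the chain $[\crse f^{-1}(BD_f)] \csub \cker(\crse f) \csub [\crse f^{-1}(BD_f)]$ forces equality, completing the identification.
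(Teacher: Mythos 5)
Your structure is right, and your steps (1) through the $\csub$ direction are correct: the family $A_F$ for finite $F \ni e_G$ does coarsely exhaust $f(G)$ via $\fkb = \{Bh\}$, which feeds into Proposition~\ref{prop:p1:criterion for existence of ker}, and $f^{-1}(BD_f) \csub \cker(\crse f)$ is immediate once $e_H \in D_f$ is noted. Your defect computation in step (3) is also correct and is exactly what one obtains by chasing the relation $F_f \subseteq \Delta_H \ast (D_f\times\{e_H\})$. The paper gives no explicit argument for this corollary (it is recorded as a consequence of the proposition), so there is no ``official'' proof to compare against, and your plan is the natural one.

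The gap you flag in the last paragraph is, however, genuine, and the proposed repair does not close it. If you apply the hypothesis to the bounded set $BD_fD_f$ and run the same computation for $u \in f^{-1}(BD_fD_f)$ with $f(u) = b f(g_i)$, you obtain $f(u\ast g_i^{-1}) = bd_2d_1$ with $d_1, d_2 \in D_f$, so the conclusion is again $u \ast g_i^{-1} \in f^{-1}(BD_fD_f)$ --- the extra factor of $D_f$ is reproduced at every step, not shaved off. The same happens if one tries to use the proposition's explicit representative: in this setting $\overline{C} = BB^{-1}$, so the proposition's proof produces $K = f^{-1}(F_f(BB^{-1})) \subseteq f^{-1}(BB^{-1}D_f)$, and one is again stuck showing $f^{-1}(BB^{-1}D_f) \csub_\CE f^{-1}(BD_f)$. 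So ``uniqueness of the coarse kernel'' alone cannot force the chain $[\crse f^{-1}(BD_f)] \csub \cker(\crse f) \csub [\crse f^{-1}(BD_f)]$ closed, because you never produce a representative of $\cker(\crse f)$ with image inside $BD_f$ rather than $BD_fD_f$. What your argument \emph{does} establish cleanly is that $\cker(\crse f) = [f^{-1}(BD_fD_f)]$, and more generally $[f^{-1}(C)]$ for any bounded $C \supseteq BD_fD_f$. Concluding the corollary as literally stated requires the additional input $f^{-1}(BD_fD_f) \csub_\CE f^{-1}(BD_f)$, which neither your argument nor the ingredients available in this section supply; the statement should be read as identifying a representative of the form ``preimage of a bounded set built from $B$ and $D_f$'' rather than insisting on exactly one factor of $D_f$.
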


\begin{rmk}
 The above corollaries show that if a coarse group $\crse H$ is ``locally small'' then coarse homomorphisms with values in $\crse H$ generally have a coarse kernel. More precisely, if $\crse H$ has \emph{bounded geometry} (\cite[Definition~3.8]{roe_lectures_2003}, \cite[Definition~3.1]{winkel2021geometric}) and $\crse G$ is coarsely connected, then every coarse homomorphism $\crse {f\colon G\to H}$ has a coarse kernel.
\end{rmk}

\section{Some Comments and Questions}

In view of developing a structure theory for coarse groups, it is important to understand which coarse homomorphisms have a coarse kernel. Namely, we are interested in the following general problem:

\begin{problem}\label{prob:when does ker exist?}
 When does a coarse homomorphism have a coarse kernel?
\end{problem}

Proposition~\ref{prop:p1:kernel of cosets spaces} provides a complete answer to the above for quotient maps into coarse coset spaces, while
Lemma~\ref{lem:p1:no kernel disconnected} deals with the coarse homomorphisms $\crse {f\colon G\to H}$ where $\crse G=(G,\mincrs)$ is a trivially coarse group. Moreover, in its proof we show that the coarse preimage $\crse{f^{-1}(H_\cunit)\leq G}$ of the coarse connected component of the identity  of $\crse H$ always exists. In particular, $\crse{f\colon G\to H}$ has a coarse kernel if and only if $\crse {f|_{f^{-1}(H_\cunit)}\colon f^{-1}(H_\cunit)\to H_\cunit}$ does.
We thus find that Problem~\ref{prob:when does ker exist?} is particularly interesting for coarse homomorphisms of coarsely connected coarse groups.

\

Note that the same argument as in Corollary~\ref{cor:p1:quasimorphism have ker} shows that if $\crse{G}$ is a coarsely connected coarse group then every coarse homomorphism $\crse G\to (\RR,\CE_d)$ has a coarse kernel.
Examining the proof suggests that the main reason why this is the case is the fact that $(\RR,\CE_d)$ is ``one dimensional''. 
More generally, it seems plausible that every coarse homomorphism $\crse {f\colon G\to H }$ where $\crse{H}$ is a coarse group of finite asymptotic dimension has a coarse kernel (see \cite[Chapter 9]{roe_lectures_2003} for a definition of asymptotic dimension).

\begin{rmk}
It is also interesting to restrict the scope of Problem~\ref{prob:when does ker exist?} to specific classes of coarse groups. For example,  we will show in Section~\ref{sec:p2:Banach homomorphisms} that a coarse homomorphism between two Banach spaces has a coarse kernel if and only if the image of its homogenization is a closed vector subspace.
\end{rmk}

The above remark shows that Problem~\ref{prob:when does ker exist?} is non trivial also when restricted to controlled homomorphisms of set\=/groups. We pose the following:

\begin{problem}\label{prob:p1:when does ker exist - setgroups}
 Let $\crse{G,\ H}$ be coarsified set\=/groups let $f\colon G\to H$ be a homomorphism that is also controlled, so that $\crse f\colon \crse G\to \crse H$ is a coarse homomorphism. When does $\crse f$ have a coarse kernel?
\end{problem}

This problem should not be understood as an attempt to produce necessary and sufficient conditions for the existence of coarse kernels.
Rather, it highlights the fact that given a controlled homomorphisms between (coarsified) set-groups, it is a natural question to ask whether this specific homomorphism has a coarse kernel.
When this is the case, understanding such a kernel can help to illuminate the whole geometric picture.
One first result in this direction was very recently obtained by Kawasaki--Kimura--Maruyama--Matsushita--Mimura, who investigated certain fine information regarding comparisons of stable mixed commutator lengths and phrased it in terms of coarse kernels of inclusions \cite{kawasaki2023coarse}.

\part{Selected Topics}

\chapter{Coarse Structures on Set-Groups}\label{ch:p2:coarse structures on set groups} 

We begin the second part of this monograph with a closer look at coarsified set\=/groups. There are various reasons for doing so. For one, coarsified set\=/groups provide the simplest constructions of coarse groups and are hence an important source of examples which is necessary to understand and develop the theory of coarse groups.

A second reason is that it is a natural and interesting question to understand if a fixed set\=/group $G$ admits interesting coarsifications.
Namely, the existence of equi bi\=/invariant coarse structures on $G$ is tightly connected with the algebraic properties of $G$, and their study connects the theory of coarse groups to classical group\=/theory.
In the following sections we approach the study of coarsified set\=/groups from the latter perspective and we start investigating the question of ``which set\=/groups admit `interesting' coarsifications?".

We know that every set\=/group has at least two obvious coarsifications: 
Namely, the bounded and the trivial coarsifications $\varcrs{max}$, $\varcrs{min}$.
Further recall that every coarse group $\crse G$ fits into the short exact sequence
\begin{equation}\label{eq:p2:ses}
 \begin{tikzcd}
  \tobj\arrow[r] & 
  \crse{G_{e}} \arrow[r] &
  \crse{G} \arrow[r, "\crse{q}"] &
  \crse{G/G_{e}} \arrow[r] &
  \tobj
 \end{tikzcd}
\end{equation}
(see Corollary~\ref{cor:p1:ses quotient by id comp}) and is hence decomposed into a connected (Definition~\ref{def:p1:connected coarse structure}) coarse group and a trivially coarse one.
In view of this, we specialise to only looking for coarsely connected coarsifications.
The very least we can ask from such a coarsification is that it is not bounded. The general problem then becomes:

\begin{problem}\label{prob:p2:which groups are coarsifiable}
 Let $G$ be your favourite set\=/group. Does it admit unbounded, connected coarsifications?
\end{problem}

Not every set\=/group admits unbounded connected coarsifications.
As an obvious example, the only way to make a finite set\=/group into a coarsely connected coarse group is by giving it the bounded coarse structure.
For convenience, we make the following definition.

\begin{de}\label{def:p2:intrinsically bounded}
 A set\=/group $G$ is \emph{intrinsically bounded}\index{intrinsically bounded} if the bounded coarse structure $\varcrs{max}$ is the only connected equi bi\=/invariant coarse structure on $G$.
\end{de}

So, given a group $G$, Problem~\ref{prob:p2:which groups are coarsifiable} asks whether $G$ is intrinsically bounded.
In the following sections we will make a few general observation around intrinsic boundedness and its metric analogues before discussing a few examples.
To begin, we mention instances of Problem~\ref{prob:p2:which groups are coarsifiable} that have already been studied (using other languages),
and that it can be extremely challenging to understand whether a given group is intrinsically bounded.

\begin{rmk}
 Notice that intrinsic boundedness can be seen as rigidity phenomenon: it says that any attempt to `blur' the algebraic structure of the set\=/group collapses to produce something bounded or coarsely disconnected.
 More examples are also given in Chapters \ref{ch:p2:coarse structures on Z} and \ref{ch:p2:canc metrics}, where we study coarsifications of $\ZZ$ and bi\=/invariant word metrics on set\=/groups.
\end{rmk}

\section{Connected Coarsifications of Set-Groups}\label{sec:p2:connected coarsification}
By definition, $(G,\CE)$ is coarsely connected if and only if each pair $\{g_1,g_2\}$ is a $\CE$\=/bounded set. By composition, each finite subset of $G$ must be bounded. Recall that we defined $\varcrs[grp]{fin}$ as the minimal coarse structure for which all the finite sets are bounded (Example~\ref{exmp:p1:finite sets group coarse structure}). It then follows that $G$ is intrinsically bounded if and only if $\varcrs[grp]{fin}=\varcrs{max}$.
In turn, it is simple to prove the following algebraic characterization of intrinsic boundedness:

\begin{prop}\label{prop:p2:intrinsically bounded iff bounded generation}
 A set\=/group $G$ is intrinsically bounded if and only if there exist finitely many conjugacy classes $[g_1],\ldots,[g_n]\subseteq G$ and an $m\in \NN$ such that every element of $G$ can be written as the product of at most $m$ elements in $[g_1],\ldots,[g_n]$.
\end{prop}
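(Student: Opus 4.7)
The plan is to characterise the $\varcrs[grp]{fin}$-bounded subsets of $G$ purely in algebraic terms and then observe that intrinsic boundedness of $G$ is equivalent to $G$ itself being $\varcrs[grp]{fin}$-bounded.

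First I would record the easy reduction: a set-group $G$ is intrinsically bounded exactly when $\varcrs[grp]{fin}=\varcrs{max}$, and since $\varcrs[grp]{fin}$ is coarsely connected and any connected equi bi-invariant coarse structure contains $\varcrs[grp]{fin}$, this happens if and only if the whole set $G$ is $\varcrs[grp]{fin}$-bounded. By Proposition~\ref{prop:p1:families of identity neighbourhoods_setgroups} combined with Corollary~\ref{cor:p1:neighbourhoods.of.identity.generate_setgroup}, the family $\fkU_e(\varcrs[grp]{fin})$ of $\varcrs[grp]{fin}$-bounded neighborhoods of $e$ is the minimal family of subsets of $G$ that contains every finite subset (equivalently, $\{e\}$) and is closed under the operations (U0)--(U4) of Lemma~\ref{lem:p1:properties of neighborhoods of id}.

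Second, I would give an explicit description of this family. Starting from $\{e\}$ and applying (U4) produces, for each $g\in G$, the full conjugacy class $[g]\cup\{e\}$; applying (U1)--(U3) together with (U4) again shows that $\fkU_e(\varcrs[grp]{fin})$ consists exactly of the subsets of $G$ of the form $C_1 \cdots C_m$, where each $C_i$ is a union of finitely many conjugacy classes together with $\{e\}$, closed under inversion. (This is a straightforward induction on the closure process: each of the operations (U1)--(U4) preserves ``being contained in a product of finitely many finite unions of conjugacy classes,'' and this class is obviously contained in $\fkU_e(\varcrs[grp]{fin})$.) Absorbing all the $C_i$ into a single set $C \coloneqq C_1\cup\cdots\cup C_m$ and noting that $\overline C \coloneqq C \cup C^{-1} \cup\{e\}$ is still a finite union of conjugacy classes, I obtain the clean statement: a subset $U\subseteq G$ is $\varcrs[grp]{fin}$-bounded if and only if $U\subseteq \overline C^{\,m}$ for some $m\in\NN$ and some $\overline C$ that is a finite, symmetric union of conjugacy classes containing $e$.

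Third, I would combine the two reductions. If $G$ is intrinsically bounded, then $G$ itself is $\varcrs[grp]{fin}$-bounded, so $G=\overline C^{\,m}$ for some finite symmetric union of conjugacy classes $\overline C = \{e\}\cup [g_1]\cup\cdots\cup[g_n]$; hence every element of $G$ is a product of at most $m$ elements drawn from $[g_1],\ldots,[g_n]$ (inserting copies of $e$ when fewer factors are needed, and noting that inverse conjugacy classes $[g_i^{-1}]$ can be added to the list). Conversely, given such $[g_1],\ldots,[g_n]$ and $m$, the set $\overline C \coloneqq \{e\}\cup\bigcup_{i=1}^n([g_i]\cup[g_i^{-1}])$ lies in $\fkU_e(\varcrs[grp]{fin})$ by (U4) applied to the finite set $\{e,g_1,g_1^{-1},\ldots,g_n,g_n^{-1}\}$, and $\overline C^{\,m}$ lies in $\fkU_e(\varcrs[grp]{fin})$ by (U3); since $G=\overline C^{\,m}$, we conclude that $G$ is $\varcrs[grp]{fin}$-bounded, i.e.\ intrinsically bounded.

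The only mildly delicate step is the explicit identification of $\fkU_e(\varcrs[grp]{fin})$ in the second paragraph: one has to verify that the class of ``subsets of a finite product of finite unions of conjugacy classes'' is itself closed under (U0)--(U4), which is essentially a bookkeeping check (the nontrivial point being that (U4) applied to a finite union of conjugacy classes returns the same union, since conjugacy classes are already conjugation-invariant). Everything else is a direct unpacking of definitions.
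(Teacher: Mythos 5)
Your overall strategy---reduce intrinsic boundedness to $G$ being $\varcrs[grp]{fin}$-bounded, give an explicit algebraic description of the $\varcrs[grp]{fin}$-bounded neighbourhoods of $e$ via (U0)--(U4), and read off the statement---matches the paper's proof closely; the paper simply runs the second step in the other direction, verifying that the family defined by the algebraic condition satisfies (U0)--(U4) and then invoking minimality of $\varcrs[grp]{fin}$ via Proposition~\ref{prop:p1:families of identity neighbourhoods_setgroups}.

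There is, however, a small but genuine error in your description of the closure. You assert that $\fkU_e(\varcrs[grp]{fin})$ is the minimal family containing $\{e\}$ and closed under (U0)--(U4), and that ``starting from $\{e\}$ and applying (U4) produces, for each $g\in G$, the full conjugacy class $[g]\cup\{e\}$.'' Neither is true as written: (U4) applied to $\{e\}$ gives $\bigcup_{g} g e g^{-1}=\{e\}$, and each of (U0)--(U4) sends $\{e\}$ back to $\{e\}$, so the closure of $\{e\}$ alone is just $\{\emptyset,\{e\}\}$. The correct seed is the family of \emph{all} finite subsets of $G$ containing $e$---these are the $\varcrs{fin}$-bounded neighbourhoods of $e$, and $\varcrs[grp]{fin}$ is the minimal equi bi-invariant coarse structure containing $\varcrs{fin}$, not the minimal one containing $\{e\}$. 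Replacing $\{e\}$ by the two-element seeds $\{e,g\}$ (as the paper does in the sufficiency direction) makes (U4) produce $[g]\cup\{e\}$ as you wanted, and the rest of your closure argument then goes through; so the proof is correct once this seed is fixed, and the rest is the same as the paper's.
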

\begin{proof}
To prove sufficiency, assume that we are given conjugacy classes $[g_1],\ldots,[g_n]$ as in the statement. Since $\{e,g\}$ is $\varcrs{fin}$\=/bounded for every $g\in G$, it follows from condition (U4) of Lemma~\ref{lem:p1:properties of neighborhoods of id} that every conjugacy class of $G$ is $\varcrs[grp]{fin}$\=/bounded. We then see that $G$ is bounded because it is contained in a finite union of finite products of bounded sets.

 Vice versa, let $\fku$ be the family of all the sets $A\subseteq G$ such that
 \begin{itemize}
  \item $e\in A$;
  \item there exist finitely many conjugacy classes $[g_1],\ldots,[g_n]\subseteq G$ and an $m\in \NN$ such that every element of $A$ can be written as the product of at most $m$ elements in $[g_1],\ldots,[g_n]$
 \end{itemize}
 It is easy to see that $\fku$ satisfies conditions (U0)--(U4) of Section~\ref{sec:p1:determined locally}. By Proposition~\ref{prop:p1:families of identity neighbourhoods_setgroups}, $\fku$ is the family $\fkU_e(\CE)$ of bounded neighborhoods of the identity for some equi bi\=/invariant coarse structure $\CE$. Since $\{e,g\}\in\fku$ for every $g\in G$, the coarse structure $\CE$ is connected.

 On the other hand, it follows from the proof of the other implication that $\fku$ is contained in the family of $\varcrs[grp]{fin}$\=/bounded sets. By minimality of $\varcrs[grp]{fin}$, it follows that $\CE = \varcrs[grp]{fin}$ and hence $\fku$ is equal to the family of $\varcrs[grp]{fin}$\=/bounded sets. In particular, we see that $G$ is $\varcrs[grp]{fin}$\=/bounded if and only if there exist $[g_1],\ldots, [g_n]$ as in the statement.
\end{proof}

As a sample application of the above criterion, we immediately see that the infinite dihedral group
\[
 D_\infty=\ZZ\rtimes \ZZ/2\ZZ=\angles{r,\tau\mid \tau^2=1,\ \tau r=r^{-1}\tau}
\]
is intrinsically bounded. In fact, the elements of the form $\tau r^{2k}$ are all conjugate, and the same is true for those of the form $\tau r^{2k+1}$. It follows that $D_\infty$ is intrinsically bounded because every element in $D_\infty$ is the product of at most two elements of the form $\tau r^n$.

\begin{rmk}
Note that  $\ZZ$ is an index $2$ subgroup of $D_\infty$. This shows that intrinsic boundedness is a rather fine algebraic notion that is not invariant under taking finite index subgroups. It is interesting to compare this with geometric group theory: when working with the \emph{left}\=/invariant word metric of a finitely generated group, taking finite index subgroups or quotients with finite kernel yields quasi\=/isometric groups.
\end{rmk}

\begin{rmk}
 A set\=/group $G$ is \emph{boundedly simple} (also known as \emph{uniformly simple}) if there is some $m\in\NN$ such that any pair of non\=/trivial elements $h,g\in G$, $h$ can be written as a product of at most $m$ conjugates of $g$ or $g^{-1}$.
 This is a notion of model theoretic interest, and it immediately follows from \ref{prop:p2:intrinsically bounded iff bounded generation} that every boundedly simple group is intrinsically bounded.
\end{rmk}

One (almost tautological) tool to show that a group is \emph{not} coarsely bounded is the following:

\begin{lem}\label{lem:p2:criterion for unboundedness - crse hom}
 If there exists a coarse homomorphism $\crse f\colon (G,\mincrs)\to \crse H$ into a coarsely connected coarse group with unbounded image, then $G$ is not intrinsically bounded.
\end{lem}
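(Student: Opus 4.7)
The plan is to transport the coarse structure from $\crse H$ back to $G$ via pull-back, showing that the resulting coarse structure witnesses the failure of intrinsic boundedness. Concretely, pick a representative $f\colon G\to H$ for $\crse f$ and form the pull-back coarse structure $f^\ast(\CF)$ on $G$, where $\crse H = (H,\CF)$.

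First, I would verify that $(G, f^\ast(\CF))$ is a coarsification of $G$ in the sense of Definition~\ref{def:p2:intrinsically bounded}. Since $\crse f$ is a coarse homomorphism into a coarse group, Lemma~\ref{lem:p1:pull-back.under.hom.is.crsegroup} applies and gives that $f^\ast(\CF)$ is equi bi-invariant for the set-group operations on $G$. Hence $(G, f^\ast(\CF))$ is indeed a coarsified set-group.

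Next I would check coarse connectedness. Given any $g_1, g_2 \in G$, coarse connectedness of $\crse H$ means that $\{f(g_1), f(g_2)\}$ is $\CF$-bounded, i.e.\ $(f(g_1), f(g_2)) \in F$ for some $F \in \CF$. By the definition of pull-back, this is exactly saying $(g_1, g_2) \in f^\ast(\CF)$, so $f^\ast(\CF)$ is connected.

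Finally, I would show that $f^\ast(\CF)$ is unbounded. If on the contrary $G \times G \in f^\ast(\CF)$, then by the definition of the pull-back $(f\times f)(G\times G) = f(G)\times f(G)$ would be an element of $\CF$, making $f(G)$ bounded in $\crse H$ and contradicting the hypothesis that $\cim(\crse f)$ is unbounded. Therefore $(G, f^\ast(\CF))$ is an unbounded connected equi bi-invariant coarse structure, which by Definition~\ref{def:p2:intrinsically bounded} shows $G$ is not intrinsically bounded. There is no real obstacle here---the argument is essentially a definition-chase, and the only thing to keep track of is that the three relevant properties (equi bi-invariance, connectedness, unboundedness) are each immediate from one of the three ingredients (coarse homomorphism, coarse connectedness of the target, unbounded image).
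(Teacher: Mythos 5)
Your proof is correct and follows exactly the route the paper takes: pull back $\CF$ along a representative of $\crse f$, invoke Lemma~\ref{lem:p1:pull-back.under.hom.is.crsegroup} for equi bi-invariance, and observe connectedness and unboundedness directly from the hypotheses. The paper states this in one line; you have merely spelled out the last two checks.
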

\begin{proof}
 Taking the pull-back, $(G,f^{*}(\CF))$ is a connected unbounded coarsification of $G$ by Lemma~\ref{lem:p1:pull-back.under.hom.is.crsegroup}.
\end{proof}

\begin{rmk}
 One way of rephrasing the tautological nature of this criterion is by recalling that coarse quotients (Section~\ref{sec:p1:coarse quotients}) of coarse groups are obtained by enlarging the coarse structure.
 Since the coarse image $[f(G)]\crse{\leq H}$ is a coarse quotient of $(G,\mincrs)$, it is unbounded if and only if the induced coarsification of $G$ is unbounded.
\end{rmk}

Despite its simplicity, Lemma~\ref{lem:p2:criterion for unboundedness - crse hom} is one of the most powerful tools for showing that a group is not intrinsically bounded.
More specifically, the coarse homomorphisms are often $\RR$\=/quasimorphisms (Definition~\ref{def:p1:quasimorphism}). See Section~\ref{sec:p2:examples of (un)bounded} for some examples.

\section{Metric coarsifications and Bi-invariant Metrics}\label{sec:p2:biinvariant metrics}
We say that a coarse structure $\CE$ on a set $X$ is \emph{metrizable}\index{coarse structure!metrizable}\index{coarsification!metric} if it coincides with the metric coarse structure $\CE_d$ defined by a metric $d$ on $X$.
Recall that a coarse structure $\CE_d$ induced by a metric $d$ is always coarsely connected.
In particular, any unbounded metrizable coarsification coarsification of $G$ falls within our rather generous idea of ``interesting'' coarsification.

We already know that if $G$ is a set\=/group and $d$ is a bi\=/invariant metric on it, then $(G,\CE_d)$ is a coarse group, so (unbounded) bi\=/invariant metrics give rise to (unbounded) metrizable coarsifications.
In principle, it is not immediately clear whether the converse should also be true.
However, Roe proved that a connected coarse structure is metrizable if and only if it is countably generated\footnote{%
As a consequence, we see that the coarsification $(\RR,\varcrs[grp]{fin})$ cannot be induced by a bi\=/invariant metric of $\RR$.
} (\cite[Theorem 2.55]{roe_lectures_2003}, see also \cite[Theorem 9.1]{protasov2003ball} and \cite[Section 4]{rosendal2017coarse}).
With a little care, one can tweak this characterization and show that every metrizable coarsification of a set\=/group $G$ is indeed induced by a bi\=/invariant metric of $G$:

\begin{lem}\label{lem:p2:metric coarsifications}
 Let $(G,\CE)$ be a coarsely connected coarsified set\=/group. The following are equivalent:
 \begin{enumerate}[(i)]
  \item there exists a bi\=/invariant metric $d$ so that $\CE=\CE_d$;
  \item $\CE$ is metrizable;
  \item $\CE$ is countably generated.
 \end{enumerate}

\end{lem}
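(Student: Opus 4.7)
The implications (i) $\Rightarrow$ (ii) and (ii) $\Rightarrow$ (iii) are the easy directions. The first is a tautology, and for the second we note that any metric coarse structure $\CE_d$ is generated by the countable family of entourages $E_n = \{(x,y) \mid d(x,y) \leq n\}$, $n \in \NN$. The substance of the lemma is the implication (iii) $\Rightarrow$ (i), which requires producing a bi\=/invariant metric from countably many generators.

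Suppose $\CE$ is countably generated. Using Corollary~\ref{cor:p1:neighbourhoods.of.identity.generate_setgroup} we have $\CE = \angles{\Delta_G \ast \CU_e(\CE)}$, so up to replacing each generator by some $\Delta_G \ast (V \times V)$ with $V \in \fkU_e(\CE)$, we may assume $\CE$ is generated by $\{\Delta_G \ast (V_n \times V_n) \mid n \in \NN\}$ for a countable family of bounded neighborhoods of the identity $V_n$. By Lemma~\ref{lem:p1:properties of neighborhoods of id}, the family $\fkU_e(\CE)$ is closed under symmetrisation, finite products and taking unions of conjugates. This lets us inductively build a sequence $U_1 \subseteq U_2 \subseteq \cdots$ of bounded neighborhoods of $e$ that are symmetric ($U_n^{-1} = U_n$), conjugation\=/invariant ($gU_n g^{-1} \subseteq U_n$ for every $g \in G$), contain $V_1,\ldots,V_n$, and satisfy the crucial growth condition
\[
U_n \cdot U_n \subseteq U_{n+1}.
\]
Coarse connectedness of $\CE$ guarantees that $\bigcup_n U_n = G$ (every element is bounded).

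From $\{U_n\}$, I would construct a length function via a Birkhoff--Kakutani style procedure. First define a quasi\=/length $\ell(g) \coloneqq \min\{n \in \NN \mid g \in U_n\}$, with $\ell(e) = 0$. Symmetry of $U_n$ gives $\ell(g^{-1}) = \ell(g)$, conjugation invariance gives $\ell(hgh^{-1}) = \ell(g)$, and the product condition yields the quasi\=/triangle inequality $\ell(gh) \leq \ell(g)+\ell(h)+1$. From $\ell$ one obtains a genuine length function
\[
\abs{g} \coloneqq \inf\Bigbraces{\sum_{i=1}^k \ell(g_i) \Bigmid g = g_1\cdots g_k,\ k \in \NN}
\]
which is sub-additive by construction, still symmetric and conjugation\=/invariant (since both properties pass through the infimum), and satisfies $\abs{g} \leq \ell(g) \leq 2\abs{g}+1$ (the right inequality following from a standard telescoping argument using the growth condition on $U_n$). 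Setting $d(g,h) \coloneqq \abs{g^{-1}h}$ gives a bi\=/invariant extended pseudo-metric on $G$, which is finite-valued because $\bigcup_n U_n = G$, and for which one easily checks that the $d$\=/bounded sets coincide with the $\CE$\=/bounded sets. Since both $\CE_d$ and $\CE$ are equi bi\=/invariant, Proposition~\ref{prop:p1:neighbourhoods.of.identity.generate} forces $\CE_d = \CE$. If $d$ happens to fail $d(g,h) = 0 \Rightarrow g = h$, one can replace it by a genuine metric by passing to a coarsely equivalent one (e.g.\ adding a small left\=/invariant metric that distinguishes points, or by observing that the coarsely connected component of $e$ under $d$ is trivial exactly when $G$ itself is, reducing to a trivial case).

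The main obstacle is the careful bookkeeping in the Birkhoff--Kakutani construction: one must ensure simultaneously that (a) the $U_n$ can be chosen with all four closure properties (symmetric, conjugation\=/invariant, nested, $U_n \cdot U_n \subseteq U_{n+1}$) while still exhausting the original generating family, and (b) that the resulting length function induces the same coarse structure as the original $\CE$, rather than a strictly finer or coarser one. The conjugation invariance requirement is what promotes the construction beyond the classical left\=/invariant case and makes crucial use of property (U4) from Lemma~\ref{lem:p1:properties of neighborhoods of id}.
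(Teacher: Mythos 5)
Your proof is correct and takes essentially the same route as the paper: extract a countable family of bi\=/invariant, symmetric generators, turn it into a nonnegative integer\=/valued quasi\=/distance, and pass to the associated path\=/infimum to get an actual bi\=/invariant metric inducing the same coarse structure. The difference is largely one of dialect: the paper works directly with the relations, replacing each generator $E_n$ by $\Delta_G \ast (E_n \cup \op{E_n}) \ast \Delta_G$ and setting $d'(x,y)\coloneqq\inf\{n \mid (x,y)\in E_n\}$ before taking the path metric, while you translate to bounded neighborhoods of the identity, build a nested Birkhoff--Kakutani sequence $U_n$ with $U_nU_n\subseteq U_{n+1}$, and work with a quasi\=/length. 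Your extra bookkeeping (nesting and the squaring condition) is sound but not actually needed for this lemma: since $d'$ and your $\ell$ are integer\=/valued, the path\=/infimum is automatically $\geq 1$ on distinct points, so the final step of your argument — the workaround for $d(g,h)=0$ with $g\neq h$ — is moot and can simply be deleted; the constructed $d$ is already a genuine metric. The two places you gloss over, namely that the $U_n$ exhaust $G$ (which needs the chain argument: any $(e,g)$ lies in a finite composition of the $\Delta_G\ast(V_n\times V_n)$, so $g$ is a bounded\=/length product of elements of the $V_n^{-1}V_n$) and that $\CE_d$\=/bounded sets are $\CE$\=/bounded, are both routine and in line with what the paper leaves to the reader via ``it is easy to verify that $\CE=\CE_d$''.
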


\begin{proof}
$(i)\Rightarrow (ii)$ is obvious, for $(ii)\Rightarrow (iii)$ it is enough to remark that a metric coarse structure $\CE_d$ is always generated by the countable family $E_n\coloneqq\braces{(x_1,x_2)\mid d(x_1,x_2)\leq n}$.

 $(iii)\Rightarrow (i)$: let $\CE=\angles{E_n\mid n\in\NN}$. Replacing $E_n$ with $\Delta_{G}\ast (E_n\cup \op{E_n})\ast \Delta_{G}$, we may assume that each $E_n$ is symmetric and invariant under left and right translation. Let also $E_0\coloneqq \Delta_G$. We define a function $d'\colon G\times G\to G$ by
 \[
  d'(x,y)\coloneqq \inf\braces{n\mid (x,y)\in E_n}\cup\{+\infty\}.
 \]
 By construction, $d'$ is symmetric, $d'(x,y)= 0$ if and only if $x=y$, and $d'$ is invariant under both left and right multiplication. It need not be a metric, as it may fail to be transitive and can take infinite values. Both issues are solved by letting
 \[
  d(x,y)\coloneqq\inf_{k,x_0,\ldots,x_k} \sum_{i=1}^k d'(x_{i-1},x_i),
 \]
 where the infimum is taken over all the finite sequences $x_0,\ldots, x_k\in G$ such that $x_0=x$ and $x_k=y$. In fact, transitivity is built into the definition of $d$, and $d$ takes only finite values because we are assuming that $\CE$ is connected. It is easy to verify that $\CE=\CE_d$.
\end{proof}

Thus Problem~\ref{prob:p2:which groups are coarsifiable} can be specialised by the stronger requirement that the group admit an unbounded metrizable coarse structure, this would then take a more familiar form:

\begin{problem}\label{prob:p2:which groups have bi-invariant metrics}
 Let $G$ be your favourite set\=/group. Does it admit unbounded, bi\=/invariant metrics?
\end{problem}

This formulation is much more classical than Problem~\ref{prob:p2:which groups are coarsifiable} (references are provided in the next section, together with some examples).

Recall that if a set $S\subseteq G$ normally generates $G$, the bi\=/invariant word metric $d_{\overline{S}}$ induced by $S$ is the word metric associated with the generating set $\overline{S}=\{gsg^{-1}\mid s\in S,\ g\in G\}$.
This is a bi\=/invariant metric because $\overline{S}$ is invariant under conjugation.
Bi\=/invariant word metrics form an important class of metrics because of the following `maximality' property.
If $d$ is any bi\=/invariant metric on $G$ such that $M\coloneqq\sup\{d(s,e)\mid s\in S\}$ is finite, an iterated application of the triangle inequality implies that
\[
 d(g_1,g_2)\leq M d_{\overline{S}}(g_1,g_2)
\]
for all $g_1,g_2\in G$.

Recall also that if $G$ is a finitely normally generated group then, up to coarse equivalence, the bi\=/invariant word metric $d_{\overline{S}}$ does not depend on the choice of finite normally generating set $S$. The associated metric coarse structure is the \emph{canonical coarse structure} $\varcrs{bw}=\CE_{d_{\overline{S}}}$ of $G$ (and we then showed that $\varcrs{bw}=\varcrs[grp]{fin}$, see Corollary~\ref{cor:p1:canonical crse_str is E_grp_fin}).

One useful way of rephrasing Problem~\ref{prob:p2:which groups have bi-invariant metrics} (suggested to us by the referee) is as follows:

\begin{lem}\label{lem:p2:unbuonded bi-invariant metric iff normal subgroups}
 A set-group $G$ does not admit unbounded bi\=/invariant metrics if and only if the following two conditions are satisfied:

 \vspace{\topsep}\noindent
\begin{minipage}{0.08\textwidth}
 (BB-a)
\end{minipage}
\begin{minipage}{0.92\textwidth}
$G$ is not equal to the union of a strictly increasing sequence of normal subgroups;
\end{minipage}

\vspace{\topsep}\noindent
\begin{minipage}{0.08\textwidth}
 (BB-b)
\end{minipage}
\begin{minipage}{0.92\textwidth}
if $S\subseteq G$ normally generates $G$, then $d_{\overline{S}}$ is bounded.
\end{minipage}
\end{lem}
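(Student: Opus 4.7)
The proof will establish both implications of the biconditional separately. For the forward direction, assume that $G$ admits no unbounded bi\=/invariant metric. Condition (BB-b) is immediate, since for any normally generating set $S\subseteq G$ the bi\=/invariant word metric $d_{\overline{S}}$ is itself a bi\=/invariant metric on $G$, and so by assumption it must be bounded. For (BB-a), I would argue the contrapositive. Given a strictly increasing chain of normal subgroups $\{e\}=N_0\subsetneq N_1\subsetneq N_2\subsetneq\cdots$ with $G=\bigcup_n N_n$, define the length function
\[
 \ell(g)\coloneqq\min\{n\geq 0\mid g\in N_n\}.
\]
Using that each $N_n$ is a subgroup one checks $\ell(g^{-1})=\ell(g)$ and $\ell(gh)\leq\max(\ell(g),\ell(h))\leq \ell(g)+\ell(h)$; using that each $N_n$ is normal, $\ell(hgh^{-1})=\ell(g)$. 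Thus $d(g,h)\coloneqq\ell(g^{-1}h)$ is a bi\=/invariant metric, and it is unbounded because strict inclusion $N_n\subsetneq N_{n+1}$ supplies elements of length exactly $n+1$ for every $n$, contradicting the standing hypothesis.

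For the converse, assume (BB-a) and (BB-b) hold and let $d$ be an arbitrary bi\=/invariant metric on $G$. Set $B_n\coloneqq\{g\in G\mid d(g,e)\leq n\}$. Bi\=/invariance of $d$ implies each $B_n$ is symmetric and invariant under conjugation, so the subgroup $\widetilde N_n\coloneqq\langle B_n\rangle$ is a normal subgroup of $G$. The nested chain $\widetilde N_1\subseteq \widetilde N_2\subseteq\cdots$ has union $G$ because $d$ takes only finite values. If this chain did not stabilize, one could extract a cofinal strictly increasing subsequence whose union would still equal $G$, contradicting (BB-a); hence there is some $m$ with $\widetilde N_m=G$. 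In particular $B_m$ is a (conjugation\=/invariant) generating set for $G$, so $B_m=\overline{B_m}$ is a normally generating set, and by (BB-b) the bi\=/invariant word metric $d_{\overline{B_m}}$ is bounded. Applying the maximality property of bi\=/invariant word metrics recalled at the start of this section (with constant $M=m$, since every element of $B_m$ has $d$\=/length at most $m$) gives $d\leq m\cdot d_{\overline{B_m}}$, and so $d$ is bounded, as required.

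The main subtlety is the passage from the hypothesis of (BB-a) to the stabilization of the chain $\widetilde N_n$, which rests on the cofinal subsequence trick above; everything else amounts to combining the standard length\=/function construction with the maximality property of bi\=/invariant word metrics.
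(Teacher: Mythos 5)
Your proof is correct and follows essentially the same route as the paper: the same length-function/ultrametric construction from a strictly increasing chain of normal subgroups, the same use of the (conjugation-invariant) balls $B_k$ and the subgroups they generate, and the same appeal to the maximality of bi-invariant word metrics. The only difference is one of presentation — you prove the two implications directly, while the paper phrases both as contrapositives — and the small care you take to normalize the chain to start at $\{e\}$ is a reasonable (harmless) tidying of the same argument.
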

\begin{proof}
 We show the contrapositive.
 If (BB-b) fails, then $d_{\overline{S}}$ gives an unbounded bi\=/invariant metric, and there is nothing to prove.
 If (BB-a) fails, let $G_1\subsetneq G_2\subsetneq\cdots$ be a strictly increasing sequence of normal subgroups of $G$, so that $G=\bigcup_{n\in\NN}G_n$.
 We then obtain an unbounded bi\=/invariant metric letting $d(g_1,g_2)\coloneqq \min\braces{n\mid g_1^{-1}g_2\in G_n}$
 (this is in fact an ultrametric, as $d(g_1,g_3)\leq \max\{d(g_1,g_2),d(g_2,g_3)\}$).

 In the other direction, let $d$ be an unbounded bi-invariant metric on $G$, and let $B_k\coloneqq B_d(e,k)$ be the ball of radius $k$ around the identity (these are conjugation invariant).
 If there is a $k$ so that $G=\angles{B_k}$ then (BB-b) fails, because $kd_{B_k}\geq d$ is unbounded.
 Otherwise, we can choose a subsequence so that $G_n\coloneqq\angles{B_{k_n}}$ is a strictly increasing sequence of normal subgroups, hence (BB-a) fails.
\end{proof}

One easy consequence is that ``small'' set\=/groups are intrinsically bounded if and only if they do not admit unbounded bi\=/invariant metrics:

\begin{prop}\label{prop:p2:intrinsically bounded iff no biinv metric}
 A countably normally generated set\=/group $G$ is intrinsically bounded if and only if it admits no unbounded bi\=/invariant metrics.
\end{prop}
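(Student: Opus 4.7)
The easy implication is essentially a consequence of Lemma~\ref{lem:p1:biinvariant metrics give coarse groups}: if $d$ is an unbounded bi\=/invariant metric on $G$, then $(G,\CE_d)$ is a coarse group, the coarse structure $\CE_d$ is connected (since $d$ is finite\=/valued), and it is unbounded because $d$ is. Hence $G$ is not intrinsically bounded.

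For the non\=/trivial direction, I would argue by contrapositive and combine Proposition~\ref{prop:p2:intrinsically bounded iff bounded generation} with Lemma~\ref{lem:p2:unbuonded bi-invariant metric iff normal subgroups}. Assume that $G$ admits no unbounded bi\=/invariant metric; the lemma yields both conditions (BB-a) and (BB-b). Fix a countable normally generating set $T=\{t_1,t_2,\ldots\}\subseteq G$ and let $N_n\coloneqq \aangles{t_1,\ldots,t_n}$ denote the normal closure in $G$. Then $(N_n)_{n\in\NN}$ is an increasing chain of normal subgroups with $\bigcup_n N_n=G$. By (BB-a) this chain cannot be strictly increasing, so there exists some $N$ with $N_n=G$ for all $n\geq N$. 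In other words, the countably normally generated group $G$ is actually \emph{finitely} normally generated by the finite set $T_N=\{t_1,\ldots,t_N\}$.

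Now apply (BB-b) to this finite normally generating set: the bi\=/invariant word metric $d_{\overline{T_N}}$ is bounded on $G$. Unpacking the definition, this says there is an $m\in\NN$ such that every element of $G$ can be written as a product of at most $m$ elements of $\overline{T_N}\cup\overline{T_N}^{-1}$. Since $T_N$ is finite and $\overline{T_N}$ is by definition the union of the conjugacy classes of the $t_i$'s (and similarly for $\overline{T_N}^{-1}$), the set $\overline{T_N}\cup\overline{T_N}^{-1}$ is a finite union of conjugacy classes. Hence $G$ meets the criterion of Proposition~\ref{prop:p2:intrinsically bounded iff bounded generation} and is intrinsically bounded.

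The only delicate point is the correct invocation of the countable normal generation hypothesis, which is what forces the chain $(N_n)$ to stabilize under (BB-a); without it, (BB-a) and (BB-b) are not sufficient to conclude intrinsic boundedness, since one could a priori have an ``exhaustion by normally bounded pieces'' that never packages into a single finitely normally generated chunk. This is also consistent with the fact that Lemma~\ref{lem:p2:metric coarsifications} characterises metrizable coarsifications as the countably generated ones, so some countability assumption is genuinely needed to pass from coarse structures to metrics.
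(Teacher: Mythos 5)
Your proof is correct and follows essentially the same route as the paper: reduce via Lemma~\ref{lem:p2:unbuonded bi-invariant metric iff normal subgroups} to (BB-a) and (BB-b), use (BB-a) together with countable normal generation to deduce finite normal generation, and then conclude boundedness of the bi\=/invariant word metric from (BB-b). Your final step invokes Proposition~\ref{prop:p2:intrinsically bounded iff bounded generation} directly rather than the identification $\varcrs[grp]{fin}=\varcrs{bw}$ used in the paper, but these are interchangeable ways of packaging the same fact, so the argument is sound.
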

\begin{proof}
 One implication is clear, so by Lemma~\ref{lem:p2:unbuonded bi-invariant metric iff normal subgroups} it is enough to verify that if $G$ satisfies (BB-a) and (BB-b) then it is intrinsically bounded.
 Let $S=\{s_1,s_2,\ldots\}$ be a countable normally generating subset of $G$ and let $S_n\coloneqq\{s_1,\ldots,s_n\}$.
 We can write $G$ as the countable union of the normal subgroups $G_n\coloneqq\angles{\overline{S_n}}$.
 Therefore, (BB-a) shows that there exists $n$ so that $G=G_n$.

 The above paragraph implies that $G$ is actually finitely normally generated.
 We then know that $\varcrs[grp]{fin}$ coincides with the canonical coarse structure $\varcrs{bw}=\varcrs{d_{\overline{S_n}}}$, and the latter is bounded by (BB-b).
\end{proof}

On the other hand, Proposition~\ref{prop:p2:Bergman not bounded} below shows that there exist ``large'' set\=/groups that have no unbounded bi\=/invariant metrics but are not intrinsically bounded.

\begin{rmk}
 It is often the case that it is relatively simple to verify whether a well\=/behaved group $G$ satisfies (BB-a).
 For example, every normally finitely generated group satisfies it trivially.
 On the other hand, understanding property (BB-b) is often a very delicate matter.
\end{rmk}

To conclude this section, it is also interesting to observe that the coarsifications arising from bi\=/invariant word metrics are precisely the coarsely geodesic coarsifications of $G$.\index{coarsification!coarsely geodesic}
Namely, recall that a coarse structure is coarsely geodesic if it is connected and generated by a single relation (Definition~\ref{def:p1:crs_geod}). We can prove the following.

\begin{prop}\label{prop:p2:coarse geodesic coarsifications}
 Let $(G,\CE)$ be a coarsely connected coarsified set\=/group. Then $\CE$ is coarsely geodesic if and only if there exists a bi\=/invariant word metric $d_{\overline{S}}$ so that $\CE=\CE_d$.
\end{prop}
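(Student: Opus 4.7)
My plan is to prove the two implications separately, treating the forward direction as a routine check and focusing energy on the backward one.

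For the easy direction ($\Leftarrow$), if $\CE = \CE_{d_{\overline S}}$ for some normally generating set $S$, I would exhibit the single generating relation $\bar E \coloneqq \{(g_1,g_2)\mid d_{\overline S}(g_1,g_2)\leq 1\}$. The triangle inequality gives that the $n$-fold composition $\bar E^{\cmp n}$ contains every pair at $d_{\overline S}$-distance at most $n$, and every $\CE_{d_{\overline S}}$-entourage sits inside such a ball. Hence $\CE_{d_{\overline S}}=\angles{\bar E}$ is monogenic, and it is coarsely connected because $d_{\overline S}$ is finite-valued on $G$ (we are using that $\overline S$ generates $G$ as a set-group).

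For the harder direction ($\Rightarrow$), suppose $\CE$ is coarsely geodesic with $\CE=\angles{\bar E}$; after replacing $\bar E$ by $\bar E\cup\op{\bar E}\cup\Delta_G$ we may assume $\bar E$ is symmetric and reflexive. By Corollary~\ref{cor:p1:neighbourhoods.of.identity.generate_setgroup} there is a $\CE$-bounded neighborhood $U$ of $e$ with $\bar E\subseteq \Delta_G\ast(U\times U)$, so that every $(g_1,g_2)\in\bar E$ satisfies $g_1^{-1}g_2\in U^{-1}U$. I would set $T\coloneqq U^{-1}U$ and then $S\coloneqq T$, so that the symmetric conjugation-invariant enlargement $\overline S=\bigcup_{g\in G}gSg^{-1}$ remains a $\CE$-bounded neighborhood of $e$; this is the crux, and it is exactly what Lemma~\ref{lem:p1:properties of neighborhoods of id} guarantees via properties (U2), (U3), (U4). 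Coarse connectedness of $\CE$ via the single generator $\bar E$ tells me that every $g\in G$ arises as a telescoping product $g=(g_0^{-1}g_1)(g_1^{-1}g_2)\cdots(g_{n-1}^{-1}g_n)$ with each factor in $T$, so $S$ generates (hence normally generates) $G$ and the bi-invariant word metric $d_{\overline S}$ is well-defined.

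The proof then concludes by comparing single generating relations. On one hand, $(g_1,g_2)\in\bar E$ forces $g_1^{-1}g_2\in T\subseteq\overline S$, so $\bar E$ is contained in the radius-$1$ ball $E_1\coloneqq\{(g_1,g_2)\mid d_{\overline S}(g_1,g_2)\leq 1\}$ of $d_{\overline S}$; therefore $\CE=\angles{\bar E}\subseteq\angles{E_1}=\CE_{d_{\overline S}}$. On the other hand, $E_1=\Delta_G\ast(\{e\}\times(\overline S\cup\{e\}))$ lies inside $\Delta_G\ast((\overline S\cup\{e\})\times(\overline S\cup\{e\}))$, which is in $\CE$ because $\overline S\cup\{e\}$ is a $\CE$-bounded neighborhood of $e$; therefore $\CE_{d_{\overline S}}=\angles{E_1}\subseteq\CE$.

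The main obstacle I anticipate is ensuring that the transition from the abstract generator $\bar E$ to a genuinely \emph{bi-invariant} word metric is valid. This is where condition (U4) is essential: a priori $T=U^{-1}U$ is only a bounded neighborhood of $e$, and only after passing to the conjugation-invariant closure $\overline T$ do we obtain a conjugation-invariant set that still lies in the coarse structure. Once this step is established, the remaining identifications reduce to routine bookkeeping comparing the two monogenic coarse structures.
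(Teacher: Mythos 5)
Your proof is correct. The backbone is the same as the paper's -- both start from a monogenic generator, extract a conjugation-stable bounded generating set for $G$, and compare the two monogenic coarse structures -- but you organize the bookkeeping differently. The paper begins by replacing $E$ with its bi-translation-invariant enlargement $\Delta_G\ast(E\cup\op{E})\ast\Delta_G$ (still a generator of $\CE$ because $\CE$ is equi bi-invariant); the fiber $S\coloneqq{}_eE$ of the enlarged relation is then conjugation-invariant \emph{outright}, since $(e,gsg^{-1})=(g,g)\ast(e,s)\ast(g^{-1},g^{-1})\in\Delta_G\ast E\ast\Delta_G=E$, so $\overline S=S$ and $d_{\overline S}=d_S$. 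The paper then recognizes $d_S$ as the metric built in Lemma~\ref{lem:p2:metric coarsifications} from the generator $E$, which immediately gives $\CE=\CE_{d_S}$. You instead keep $\bar E$ merely symmetric and reflexive, use Corollary~\ref{cor:p1:neighbourhoods.of.identity.generate_setgroup} to extract a bounded $T=U^{-1}U$, and appeal to (U4) of Lemma~\ref{lem:p1:properties of neighborhoods of id} to get the conjugation closure $\overline T$ bounded; you then finish with a direct two-sided comparison of the generators $\bar E$ and $E_1$, rather than invoking the metrization lemma. Both routes rest on the same mechanism (equi bi-invariance is what makes conjugation closures bounded); the paper's normalization of $E$ has the small advantage that $\overline{S}=S$ exactly, while your route avoids any detour through Lemma~\ref{lem:p2:metric coarsifications} and makes the final inclusion of coarse structures completely explicit.
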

\begin{proof}
 It is clear from the definition of bi\=/invariant word metric that the relation $E_1\coloneqq\braces{(x_1,x_2)\mid d_{\overline{S}}(x_1,x_2)\leq 1}$ generates $\CE_{d_{\overline{S}}}$. This proves one implication.

 Vice versa, let $\CE$ be generated by a single relation $E$. As before, we may assume that $E$ is symmetric and invariant under left and right translation. Let $S\coloneqq {}_eE=\braces{x\in G\mid (e,x)\in E}$. Note that $S$ is invariant under conjugation: for every $s\in S$ and $g\in G$ the pair $(e,g s g^{-1})$ belongs to $E$ because
 \[
  (e,g s g^{-1})=(g,g)\ast (e,s)\ast (g^{-1}, g^{-1})\in \Delta_{G}\ast E\ast \Delta_{G}.
 \]
 It follows that the bi\=/invariant word metric $d_{\overline{S}}$ associated with $S$ is in fact equal to the word metric $d_S$.

 Also note that $E=\Delta_{G}\ast (\{e\}\times S)$. We then see that the formula for $d$ in the proof of Lemma~\ref{lem:p2:metric coarsifications} defines precisely the word metric $d_S$. As before, we now see that $\CE=\CE_{d_S}=\CE_{d_{\overline{S}}}$.
\end{proof}

\begin{rmk}
 Proposition~\ref{prop:p2:coarse geodesic coarsifications} provides an additional reason to be especially interested in bi\=/invariant \emph{word} metrics. In fact, coarsely geodesic coarse structures are much simpler to deal with than arbitrary coarse structures. For instance, in this setting, quasi\=/isometry invariants are invariants of coarse geometry (see Appendix~\ref{sec:appendix:quasifications}).
\end{rmk}

\section{Some examples of (un)bounded set-groups}
\label{sec:p2:examples of (un)bounded}

For the most part, the examples that we discuss are found in literature regarding the study of bi\=/invariant metrics on groups.
This is an abundant source, as this is a natural topic that makes its appearance in many unrelated settings (it is often phrased in terms of length functions\footnote{In various pieces of literature length functions are called norms.} that are invariant under conjugation).
Besides bi\=/invariant word metrics, there are numerous instances of groups with bi\=/invariant metrics of special significance.
For instance, \emph{e.g.}\ the Hamming distance in symmetric groups, the norm distance in the group of bounded operators on a Banach space, the Hofer norm on groups of Hamiltonian diffeomorphisms of symplectic manifolds \cite{Hof,LM}.
For topological groups, it is also natural to study bi\=/invariant metrics that are compatible with the topology: one recent work in this direction is \cite{polterovich2021norm}.
We refer to \cite{BIP} for a survey on bi\=/invariant metrics on groups of diffeomorphisms.
At the time of writing, Jarek K\c{e}dra is also preparing an extensive survey on bi-invariant metrics of groups.

\begin{rmk}
 Note that if $G$ is finitely normally generated by a set $S$, then the bi-invariant word metric $d_{\overline S}$ is bounded if and only if $G$ is intrinsically bounded
 (this is easily deduced from Proposition~\ref{prop:p2:intrinsically bounded iff bounded generation} or from the combination of Proposition~\ref{prop:p2:intrinsically bounded iff no biinv metric} and Lemma~\ref{lem:p2:unbuonded bi-invariant metric iff normal subgroups}).
 It is good to keep this fact in mind while browsing the literature.
\end{rmk}

As was the case for the study of unbounded coarsifications, one of the most versatile tools for showing that a group has unbounded bi-invariant metrics is by using coarse homomorphisms.
Namely, if $\crse H=(H,\CE_d)$ is a metrizable coarse group and $\crse f\colon (G,\mincrs)\to\crse H$ is a coarse homomorphism with unbounded image, then Lemma~\ref{lem:p2:criterion for unboundedness - crse hom} yields an unbounded connected coarsification of $G$ which is easily seen to be countably generated.
By Lemma~\ref{lem:p2:metric coarsifications}, this coarsification is hence induced by an unbounded bi-invariant metric.
The main player in this strategy is the study of $\RR$-quasimorphisms. For instance this is used in the following.

\begin{example}\label{exmp:p2:hyperbolic groups are unbounded}
Epstein--Fujiwara proved that every non-elementary hyperbolic group $G$ has unbounded $\RR$\=/quasimorphisms \cite{EF}.
 Therefore, such set\=/groups admit unbounded bi\=/invariant metrics.
 Similar results have been proved for other classes of groups of geometric interest as well.
 These include mapping class groups, relatively hyperbolic groups and, more generally, acylindrically hyperbolic groups.
 See \emph{e.g.}\ \cite{bestvina2016bounded, bestvina2002bounded, bestvina2009characterization, dahmani2017hyperbolically, hamenstadt2008bounded, hamenstadt2012isometry, hamenstadt2014lines, hull2013induced}.

 Examples of intrinsically bounded torsion free groups can be obtained by \emph{e.g.}~considering finite index subgroups of Chevalley groups \cite{gal2018finite} or certain set\=/groups of transformations that preserve a linear order \cite{calderoni2021simplicity, gal2017uniform}.
\end{example}


\begin{example}
Conversely, interesting examples of intrinsically bounded set\=/groups are the special linear groups $\Sl(n,\ZZ)$ with $n\geq 3$: this follows from the fact that these groups are boundedly generated by the set of elementary matrices \cite{carter1983bounded,morris2005bounded}. More precisely, let $E_{ij}$ be the matrix with entries $1$ on the diagonal and in position $(i,j)$ and $0$ elsewhere. One can then show that every elementary matrix in $\Sl(n,\ZZ)$, $n\geq 3$ can be written as a commutator $[E_{i,j},E_{k,l}^m]$ (see \cite[Example 1.1]{BIP}) and hence the finite set of matrices $E_{ij}^{\pm 1}$ satisfies the hypotheses of Proposition~\ref{prop:p2:intrinsically bounded iff bounded generation}.
 With a lot more work, it can be shown that `many' Chevalley groups (listed in the references) are intrinsically bounded \cite{gal2011bi, gal2018finite, kedra2021strong, polterovich2021norm, trost2020strong, trost2020explicit, trost2021strong}.
\end{example}

\begin{rmk}
 In most of the works mentioned above, an important intermediate step in proving intrinsic boundedness is to show that the set\=/groups are boundedly generated.
 The latter is an important algebraic notion that is interesting in its own right.
\end{rmk}

\begin{rmk}
 It is not known whether a lattice $\Gamma$ in higher rank semisimple real Lie groups with finite center must be intrinsically bounded (\cite[Question 1.2]{gal2011bi}). If that was the case, it would follow from the Margulis Normal Subgroup Theorem that such a $\Gamma$ is ``strongly intrinsically bounded'' in the sense that the coarsely connected components of any equi bi\=/invariant coarse structure on $\Gamma$ are bounded. This should be contrasted with the infinite dihedral group $D_\infty$, which is intrinsically bounded but can be given a (disconnected) coarse structure making it coarsely equivalent to the disconnected union of two copies of $(\ZZ,\CE_{\abs{\mhyphen}})$.
\end{rmk}

\begin{rmk}
 We elaborate further on the short exact sequence
 \[
 \begin{tikzcd}
  \tobj\arrow[r] &
  \crse{G_{e}} \arrow[r] &
  \crse{G} \arrow[r, "\crse{q}"] &
  \crse{G/G_{e}} \arrow[r] &
  \tobj.
 \end{tikzcd}
 \]
 If $\crse G=(G,\CE)$ is a coarsified set\=/group, $G_e$ is a normal subgroup of $G$ and the coarse quotient $\crse{G/G_{e}}$ is naturally identified with the trivially coarse group $G/G_e$. This implies that the coarse structure $\CE$ restricts to an equi bi\=/invariant coarse structure on the set\=/group $G_{e}$, each coarsely connected component of $\crse G$ is a coset of $G_e$ and it is coarsely equivalent to $(G_e,\CE|_{G_{e}})$ (in a non-canonical way). The coarse space $(G,\CE)$ is identified with the disconnected union of the cosets of $G_{e}$.

 However, it is not always true that if $N\lhd G$ is a normal subgroup and $\CF$ is an equi bi\=/invariant coarse structure on the set\=/group $N$, then we can construct a coarsification of $G$ by equipping each coset of $N$ with a copy of the coarse structure $\CF$ and letting $(G,\CE)$ be the disconnected union of these cosets. In fact, the assumption that $\CF$ is equi bi\=/invariant under multiplication in $N$ is not enough to imply that the action by conjugation $G\curvearrowright N$ defines a coarse action $(G,\mincrs)\curvearrowright (N,\CF)$.

For instance, let $G$ be the semi\=/direct product $\ZZ^2\rtimes \Sl(2,\ZZ)$, where the action of $\Sl(2,\ZZ)$ on $\ZZ^2$ is the natural one. Then $N=\ZZ^2$ is a normal subgroup and the Euclidean metric gives a coarsification $(N,\CE_{\norm{\mhyphen}})$. However, $\CE_{\norm{\mhyphen}}$ is not equi invariant under the action of conjugation by $\Sl(2,\ZZ)<G$.
\end{rmk}

Let us now return to (un)bounded bi-invariant metrics for a different class of set\=/groups.

\begin{example}
 We already remarked that the infinite dihedral group $D_\infty$ is intrinsically bounded.
 More generally, the situation is very well understood for reflection groups.
 Specifically, let $(W,S)$ be a Coxeter group, where $S$ is the generating set used in its standard presentation.
 Then the set of conjugates $\overline{S}$ coincides with the set of reflections in $W$, and the bi\=/invariant word metric $d_{\overline{S}}$ is also called the reflection length.

 As noted in \cite{mccammond2011bounding}, if $S$ is infinite then taking products of distinct elements of $S$ shows that $d_{\overline{S}}$ is unbounded.
 When $S$ finite and $(W,S)$ is either spherical or affine, the main theorem of \cite{mccammond2011bounding}  provides an explicit upper bound on $d_{\overline{S}}$.
 Conversely, the main result of \cite{duszenko2012reflection} shows that $d_{\overline{S}}$ is unbounded in all the other cases.
\end{example}

Since we are interested in group coarsifications, the discussion up to this point was focused on bi-invariant metrics.
Above we provided many examples of groups that are intrinsically bounded even though they are very much not `bounded' from the point of view of geometric group theory.
Namely, since they are infinite order finitely generated groups, their word metric is an unbounded left-invariant metric (equivalently, these are groups that admit isometric actions with unbounded orbits).
However, leaving the world of finitely generated groups, one encounters infinite set\=/groups that do not admit any unbounded left\=/invariant metric!
Such set\=/groups are said to have Bergman's property, see \cite{bergman2006generating}.
More precisely, the main result of that paper is that the group of permutations of any infinite set satisfies the following conditions:

\vspace{\topsep}\noindent
\begin{minipage}{0.08\textwidth}
 (Ber-a)
\end{minipage}
\begin{minipage}{0.92\textwidth}
$G$ is not equal to the union of a strictly increasing sequence of subgroups;
\end{minipage}

\vspace{\topsep}\noindent
\begin{minipage}{0.08\textwidth}
 (Ber-b)
\end{minipage}
\begin{minipage}{0.92\textwidth}
if $S\subseteq G$ generates $G$, then $d_{{S}}$ is bounded.
\end{minipage}

\vspace{\topsep}\noindent
It is simple to observe that these conditions imply that $G$ does not have any unbounded left-invariant metric (see also \cite{cornulier2006strongly}).
In fact, we should remark that conditions (BB-a) and (BB-b) of Lemma~\ref{lem:p2:unbuonded bi-invariant metric iff normal subgroups} are the natural weakenings of (Ber-a)
and (Ber-b) needed to pass from Bergman's property to the absence of unbounded bi-invariant metric.
We refer to \cite{bergman2006generating} and references therein for this fascinating topic.

Answering a question in a previous version of this manuscript, the referee gave the following example of a set\=/group that is not intrinsically bounded and yet does not admit an unbounded bi\=/invariant metric:

\begin{prop}\label{prop:p2:Bergman not bounded}
 Let $K$ be a finite perfect set\=/group, $I$ an uncountable set and $G$ be the subgroup of $K^I$ consisting of elements with countable support.
 Then $G$ has Bergman's property but is not intrinsically bounded.
\end{prop}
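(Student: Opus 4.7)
The plan has two essentially independent parts: exhibiting an unbounded connected equi bi\=/invariant coarsification of $G$ (to show it is not intrinsically bounded), and verifying the two conditions (Ber-a) and (Ber-b) of Bergman's property. The first part is a direct construction from the definitions; the second is where the work lies.

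For the first part, I would define $\fku$ to be the family of subsets $U \subseteq G$ such that $e \in U$ and the total support $\operatorname{supp}(U) \coloneqq \bigcup_{u \in U} \operatorname{supp}(u)$ is a countable subset of $I$, and then invoke Proposition~\ref{prop:p1:families of identity neighbourhoods_setgroups}. The key observation driving everything is that in a direct product conjugation is componentwise, so $(gug^{-1})(i) = g(i)u(i)g(i)^{-1}$ vanishes exactly when $u(i)$ vanishes, giving $\operatorname{supp}(gug^{-1}) = \operatorname{supp}(u)$. Conditions (U0)--(U3) of Lemma~\ref{lem:p1:properties of neighborhoods of id} are then immediate (using $\operatorname{supp}(u_1 u_2) \subseteq \operatorname{supp}(u_1)\cup \operatorname{supp}(u_2)$ for (U3), and that a countable union of countable sets is countable), while (U4) follows from the support-preservation of conjugation because $\bigcup_{g\in G} gUg^{-1}$ has the same total support as $U$. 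The resulting coarse structure $\CE$ is connected since every singleton has countable support, and it is unbounded because $G$ itself has support all of $I$: for each $i \in I$ the element $\delta_i \in G$ with support $\{i\}$ lies in $G$, and no countable subset of $I$ contains every $\operatorname{supp}(\delta_i)$. Hence $(G,\CE)$ is an unbounded connected coarsification.

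For Bergman's property I would first record that since $K$ is finite and perfect, there is an integer $m = m(K)$ such that every element of $K$ is a product of at most $m$ commutators; this passes componentwise to $G$, so any $g \in G$ equals $[A_1,B_1]\cdots [A_m,B_m]$ where each $A_s, B_s \in G$ has support contained in $\operatorname{supp}(g)$. For (Ber-a), given $G = \bigcup_n H_n$, I would run two successive pigeonholes: since $K$ is finite and $I$ is uncountable, there are an uncountable $I_0 \subseteq I$ and an $N$ with $\delta_i^k \in H_N$ for every $i \in I_0$ and $k \in K$, so $H_N$ contains the finite-support subgroup $\bigoplus_{I_0} K$. A parallel argument for (Ber-b) with $B_n \coloneqq S^n$ in place of $H_n$ produces an $N$ and an uncountable $I_0$ so that every $\delta_i^k$ with $i \in I_0$ lies in $B_N$. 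The perfect-commutator presentation then reduces bounding an arbitrary $g$ to bounding the $A_s, B_s$ whose supports still lie in the countably supported part of $G$.

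The crux that I expect to be the main obstacle is bridging the gap from ``controlled on elements with support in $I_0$'' to ``controlled on all of $G$.'' Rewriting $g$ as a bounded product of commutators $[A_s, B_s]$ reduces the problem to bounding those factors, which themselves have the same countable support as $g$, so the reduction is circular unless one can ``relocate'' the support into $I_0$. I expect to resolve this by exploiting the enormous flexibility of having $|I \setminus I_0|$ still uncountable: one should be able to realise any countably-supported element as a commutator (or short product thereof) of elements whose factors can be rearranged, via a suitable matching of the countable support against a fresh uncountable subfamily of $I_0$, into components each of which is a conjugate, bounded product of the generators $\delta_i^k$ already controlled by pigeonhole. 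A $\Delta$-system or diagonalization encoding this matching, in the spirit of the arguments of \cite{bergman2006generating,cornulier2006strongly}, seems the natural tool; verifying that the resulting factorization is uniform in $g$ (so the total word length is bounded independently of $g$) will likely be the most delicate point of the whole argument.
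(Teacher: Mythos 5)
Your treatment of the ``not intrinsically bounded'' half is essentially the paper's argument: defining $\fku$ as the sets with countable total support is just the support-side description of the paper's ``$U$ is contained in $K^J$ for some countable $J\subset I$'', and your verification of (U0)--(U4) via Proposition~\ref{prop:p1:families of identity neighbourhoods_setgroups} is exactly what the paper does. This part is correct and complete.

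The Bergman half, however, has a genuine gap, and you correctly diagnose where it is: your pigeonhole produces control only over elements supported inside the uncountable subset $I_0\subseteq I$, while an arbitrary $g\in G$ may have support disjoint from $I_0$. Your proposed fix --- writing $g$ as a bounded product of commutators $[A_s,B_s]$ --- is circular, as you note, because the $A_s,B_s$ still live over $\operatorname{supp}(g)$, not over $I_0$, and conjugation in a direct product is componentwise, so you cannot ``relocate'' supports by conjugating. Bridging this requires a substantial argument (commutator manipulations, $\Delta$-systems, etc.) and is, in effect, a reproof of the hard part of Cornulier's theorem that $K^J$ has Bergman's property. The paper sidesteps all of this: it simply cites \cite{cornulier2006strongly} for the statement that $K^J$ is Bergman for \emph{every} index set $J$, and then observes that any countable subset $X\subset G$ (e.g.\ a sequence $x_n$ realizing unboundedness of a putative left-invariant metric $d$) sits inside $K^J$ for the countable set $J\coloneqq\bigcup_n\operatorname{supp}(x_n)$, so $d|_{K^J}$ would be an unbounded left-invariant metric on $K^J$, a contradiction. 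Unless you intend to reprove Cornulier's result in full, you should replace your (Ber-a)/(Ber-b) analysis with this two-line reduction.
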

\begin{proof}
 It is proved in \cite{cornulier2006strongly} that the set\=/group $K^J$ has Bergman's property for any choice of index set $J$.
 If $X\subset G$ is a countable subset, there exists a countable $J\subset I$ so that $X\subseteq K^J<K^I$.
 This property implies that $G$ is Bergman.
 In fact, if there existed an unbounded left\=/invariant metric $d$ on $G$, we could choose a sequence $x_n\in G$ with $d(e,x_n)\to \infty$.
 Picking a countable $J$ so that $\{x_n\mid n\in\NN\}\subseteq K^J$, we would then see that the restriction of $d$ to $K^J$ is an unbounded left\=/invariant metric.

 It remains to show that $G$ is not intrinsically bounded.
 This can be done by specifying a set of bounded neighborhoods of the identity (Proposition~\ref{prop:p1:families of identity neighbourhoods_setgroups}).
 Namely, we declare $e\in U\subset G$ to be bounded if $U$ is contained in $K^J$ for some countable $J\subset I$.
 This family clearly satisfies (U0)--(U3) and, since $K^J\lhd K^I$ is normal, also (U4).
\end{proof}

\chapter{Coarse Structures on $\ZZ$}
\label{ch:p2:coarse structures on Z}

The aim of this chapter is to construct coarsifications of $\ZZ$.
As we shall soon see, $\ZZ$ admits a great abundance of distinct coarsifications.
This is in contrast with the fact that the infinite dihedral group $D_\infty$ is intrinsically bounded (see Section~\ref{sec:p2:connected coarsification}). Since $\ZZ<D_\infty$ is a finite index subgroup, this indicates that determining the possible coarsifications of a given set\=/group requires some finesse in general.
We will use two approaches for generating a coarse group structure on $\ZZ$:
by choosing (infinite) sets of generators for the Cayley graph of $\ZZ$, or using topological coarsifications $\varcrs[left]{cpt}$ as in Example~\ref{exmp:p1:topological coarse structure abelian}.

\section{Coarse Structures Generated by Cayley Graphs}
\label{sec:p2:coarse structures on Z gen Cayley}
For any choice of (possibly infinite) generating set $S$ we can construct the Cayley graph ${\rm Cay}(\ZZ,S)$ and use the graph metric to define a coarse structure $\varcrs{Cay(S)}$\nomenclature[:CE1]{$\varcrs{Cay(S)}$}{metric coarse structure from the Cayley graph} on $\ZZ$. Of course, $\varcrs{Cay(S)}$ is equi bi\=/invariant because the metric is left invariant and $\ZZ$ is abelian.
If $S$ is finite we did not discover anything new: $\varcrs{Cay(S)}$ is nothing but $\varcrs[left]{fin}=\CE_{\abs{\mhyphen}}$.
However, when $S$ is infinite $\varcrs{Cay(S)}$ is a strictly larger coarse structure: the Cayley graph is not locally finite and there are bounded sets of infinite cardinality.
This is obviously the case if we let $G=S$ or any other ridiculously large set. On the other hand, if $S$ is sparse enough then we do get interesting coarse structures.

We thank the referee for pointing out the following facts:

\begin{lem}\label{lem:p2:distinct Cayley metrics}
 Let $W = \{ n! \mid n \geq 1 \}$  and let $S, T$ be subsets of $W$ containing $1$.
 Then $T$ is bounded under the metric $\abs{\mhyphen}_S$ if and only if $T \smallsetminus S$ is finite.
\end{lem}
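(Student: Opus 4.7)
The ``if'' direction is essentially immediate: if $T \smallsetminus S$ is finite, then $T$ is contained in the union of $S$ (whose elements all have $\abs{\mhyphen}_S$-length equal to $1$) and a finite set, each of whose elements has some finite $\abs{\mhyphen}_S$-length; the maximum of these lengths gives a uniform bound. For the converse, I will argue the contrapositive: assuming that $T\smallsetminus S$ is infinite, I will show that $\abs{\mhyphen}_S$ is unbounded on $T$. Equivalently, assume for contradiction that there is a constant $k$ such that every $t\in T$ can be written as a signed sum $t = \sum_{i=1}^{k'} \epsilon_i s_i$ with $k'\leq k$, $s_i\in S\subseteq W$ and $\epsilon_i\in\{\pm 1\}$. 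Since $T\smallsetminus S$ is infinite, it contains factorials $n!$ with $n$ arbitrarily large.

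The crux of the proof is the following combinatorial claim, to be proved by induction on $k$: \emph{for each $k\geq 1$ there exists $N_k$ such that whenever $n\geq N_k$ and
\[
 n! \;=\; \sum_{i=1}^{k}\epsilon_i\, m_i!\qquad (\epsilon_i\in\{\pm 1\},\ m_i\geq 1),
\]
one necessarily has $m_i=n$ for some $i$}. Fixing $k$ and letting $M = \max_i m_i$, one splits into three cases. If $M<n$, the right-hand side has absolute value at most $k(n-1)! = kn!/n$, which is less than $n!$ for $n>k$, a contradiction. If $M=n$, the conclusion holds. If $M>n$, look at the sub-sum of terms with $m_i=M$: it equals $cM!$ for some integer $c$ with $|c|\leq k$. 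If $c\neq 0$ then
\[
 |n!| \;\geq\; |c|\,M! - (k-1)(M-1)! \;\geq\; (M-k+1)(M-1)!,
\]
which exceeds $n!$ as soon as $M\geq n+1$ and $n$ is large compared to $k$, contradicting $n! = $ this sum. If $c=0$, the terms at level $M$ cancel in pairs and can be deleted, producing an expression for $n!$ as a signed sum of at most $k-2$ factorials, to which the inductive hypothesis applies (taking $N_k$ large enough that $N_k\geq N_{k-2}$ and also $N_k>k$ ensures all cases go through).

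Granting the claim, the lemma follows quickly: for the fixed $k$ bounding $\abs{\mhyphen}_S$ on $T$, every $n!\in T$ with $n\geq N_k$ is expressed as a signed sum of at most $k$ elements of $S\subseteq W$, and the claim forces $n!$ itself to appear as one of the summands, hence $n!\in S$. Therefore $T\smallsetminus S\subseteq \{n!\mid n<N_k\}$ is finite, as desired. The main obstacle is organising the induction in the case $M>n$, $c=0$: one must ensure that the constants $N_k$ are chosen compatibly (one needs $N_k\geq \max(N_{k-2}, k+1)$ or similar), and that the strict inequality $(M-k+1)(M-1)!>n!$ holds under the standing assumption $M>n$, which requires being careful at the boundary case $M=n+1$. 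Apart from this bookkeeping, all estimates are elementary consequences of the super-exponential growth of the factorial.
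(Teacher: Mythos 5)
Your argument is correct, but it takes a noticeably more laborious route than the paper's. The paper proves the sharper fact that $\abs{n!}_{W\smallsetminus\{n!\}}=n$ exactly, via a single non-inductive step: writing $n!=\sum_{i\in I}v_i$ with $v_i\in\pm W'$ and $\abs{I}<n$, one splits $I$ into indices $J$ with $\abs{v_i}\leq (n-1)!$ and indices $K$ with $\abs{v_i}\geq (n+1)!$, and then exploits that $(n+1)!$ \emph{divides} every $v_i$ with $i\in K$. This is exactly the observation you are missing: it forces $v_K$ to be either $0$ or $\geq (n+1)!$ in absolute value, in either case yielding $\abs{J}\geq n$, a contradiction. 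The divisibility trick collapses your three cases ($M<n$, $M=n$, $M>n$ with $c\neq 0$ or $c=0$) into one estimate and, crucially, absorbs the cancellation phenomenon that you handle by a separate induction on $k$ — the case $c=0$ in your notation is subsumed by ``$v_K=0$'' without any recursion or bookkeeping of the constants $N_k$. Your proof does work, provided $N_k$ is chosen non-decreasing and $N_k>k$ (so that the estimate $(M-k+1)(M-1)!>n!$ holds when $M\geq n+1$, and so that the inductive hypothesis for $k'=k-2p$ is available); the base cases $k=0,1$ must also be stated explicitly. But the divisibility structure of $W$ is really the content of the lemma, and leaning on it directly yields a cleaner, quantitatively sharper, and induction-free proof.
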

\begin{proof}
We claim that for each $n \geq 2$ the element $n!$ has word length $n$ with respect to the word metric associated with $W' \coloneqq W \smallsetminus \{ n!\}$.
The lemma immediately follows from this claim, as taking any $n\in T\smallsetminus S$ gives an element of $T$ with $\abs{n}_S\geq\abs{n}_{W'}=n$.

It is clear that $\abs{ n!}_{W'} \leq n$.
To show the reverse inequality, write $n! = \sum _ { i\in I} v_i$ with $v_i \in \pm W'$, and suppose for contradiction that $\abs{I} < n$.
Let $J \coloneqq \{ i \mid \abs{v_i} <n! \}$ and $K = I \smallsetminus J$.
For $L \subseteq I$, denote $v_L \coloneqq \sum_{i \in L} v_i$, so that $n! = v_J + v_K$.

Since $(n+1)! $ divides $v_i$ for every $i \in K$, if $v_K \neq 0$ then $\abs{v_K} \geq (n+1)!$.
On the other hand, $\abs{v_J} \leq \abs{J} (n-1)!$, so the equality $v_J=-v_K +n!$ yields
\[
 \abs{J} (n-1)!\geq (n+1)!-n!
\]
and hence $\abs{J}\geq n(n+1)-n > n$, a contradiction.
Thus $v_K =0$, but then $n! = v_I \leq |J| (n-1)!$, thus $|J| \geq n$.
\end{proof}

Taking the word metrics associated with subsets $S\subseteq W$ proves the following.

\begin{cor}\label{cor:p2:continuum of coarsification}
 There is a continuum of distinct coarsely geodesic coarsifications on $\ZZ$.
\end{cor}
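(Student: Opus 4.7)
The plan is to produce continuum-many subsets of $W=\{n!\mid n\geq 1\}$ such that the associated Cayley word metrics yield pairwise distinct coarse structures on $\ZZ$. For each $S\subseteq W$ with $1\in S$, the word metric $\abs{\mhyphen}_S$ associated with the generating set $S\cup(-S)$ defines a left-invariant, bi-invariant metric on $\ZZ$, and hence a coarsification $\varcrs{Cay(S)}=\CE_{\abs{\mhyphen}_S}$. Each such coarsification is coarsely geodesic: indeed it is connected and it is generated by the single relation $E_S\coloneqq\{(n,n')\in\ZZ^2\mid \abs{n-n'}_S\leq 1\}$, so Definition~\ref{def:p1:crs_geod} applies.

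Next I would use Lemma~\ref{lem:p2:distinct Cayley metrics} to detect when two such coarse structures coincide. Suppose $S,S'\subseteq W$ both contain $1$ and $\varcrs{Cay(S)}=\varcrs{Cay(S')}$. Then the two coarse structures have the same bounded sets; in particular $S$ is bounded with respect to $\abs{\mhyphen}_{S'}$ and $S'$ is bounded with respect to $\abs{\mhyphen}_{S}$. By Lemma~\ref{lem:p2:distinct Cayley metrics}, this forces both $S\setminus S'$ and $S'\setminus S$ to be finite, i.e.\ the symmetric difference $S\triangle S'$ is finite. Conversely, if $S\triangle S'$ is finite then the two word metrics differ only on a finite symmetric difference of generating sets and one checks directly that they generate the same coarse structure, so the equivalence ``$\varcrs{Cay(S)}=\varcrs{Cay(S')}$ iff $S\triangle S'$ is finite'' holds.

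Finally I would conclude with a standard cardinality count. The relation ``$S\triangle S'$ finite'' is an equivalence relation on $\mathcal{P}(W)$, and each equivalence class is countable (being a coset of the countable subgroup of finite subsets in the Boolean group $(\mathcal{P}(W),\triangle)$). Since $\abs{\mathcal{P}(W)}=2^{\aleph_0}$, the quotient has cardinality $2^{\aleph_0}$; picking one representative $S$ per class (all containing $1$, which we may assume by adjoining $1$ to each) yields continuum-many pairwise distinct coarsely geodesic coarsifications of $\ZZ$.

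The only real content is Lemma~\ref{lem:p2:distinct Cayley metrics}, which is already proved; the rest is bookkeeping. The mildest subtlety is verifying that ``$S\triangle S'$ finite'' really does imply $\varcrs{Cay(S)}=\varcrs{Cay(S')}$, but this is immediate since adding or removing finitely many generators alters the word metric by at most an additive and multiplicative constant on each orbit of the finite symmetric difference, hence preserves the associated metric coarse structure.
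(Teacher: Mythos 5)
Your proposal is correct and takes essentially the same route the paper intends: the paper's proof of the corollary is the one-line remark "Taking the word metrics associated with subsets $S\subseteq W$ proves the following," applied to Lemma~\ref{lem:p2:distinct Cayley metrics}, and your write-up simply supplies the details — coarse geodesicity of each $\varcrs{Cay(S)}$, the deduction that equality of coarse structures forces $S\triangle S'$ finite, and the cardinality count via the almost-equality equivalence relation on $\mathcal{P}(W)$.
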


Note that the cardinality of the set of monogenic coarsifications of $\ZZ$ is bounded by $\abs{\CP(\ZZ\times\ZZ)}=2^{\aleph_0}$, so Corollary~\ref{cor:p2:continuum of coarsification} realizes the upper bound.
Leaving the realm of coarsely geodesic coarse structures, the trivial upper bound on the number of coarse structures on $\ZZ$ is $\abs{\CP(\CP(\ZZ\times\ZZ))}=2^{2^{\aleph_0}}$.
Assembling together Cayley metrics, it is not hard to show that this bound can also be attained:

\begin{prop}\label{prop:p2:2^continuum of coarsification}
 There are $2^{2^{\aleph_0}}$ distinct coarsifications of $\ZZ$.
\end{prop}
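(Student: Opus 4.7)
The plan is to inject $\CP(\CP(\NN))$ into the set of connected equi bi\=/invariant coarsifications of $\ZZ$; combined with the trivial upper bound $\abs{\CP(\CP(\ZZ\times\ZZ))}=2^{2^{\aleph_0}}$, this yields the desired count. The key tool will be Lemma~\ref{lem:p2:distinct Cayley metrics}, which I would leverage together with an \emph{almost disjoint family} of subsets of $W=\{n!\mid n\geq 1\}$: a family $\{S_\alpha\}_{\alpha\in A}$ with $\abs{A}=2^{\aleph_0}$, each $S_\alpha\subseteq W$ infinite, and $S_\alpha\cap S_\beta$ finite whenever $\alpha\neq \beta$. A classical construction via branches of the complete binary tree produces such a family, and we may further assume $1\in S_\alpha$ for every $\alpha$.

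For each $\alpha\in A$, Proposition~\ref{prop:p1:families of identity neighbourhoods_setgroups} supplies the minimal equi bi\=/invariant coarse structure $\CE_\alpha$ on $\ZZ$ in which $S_\alpha$ is bounded; its bounded neighborhoods of $0$ are exactly the subsets of $M$\=/fold sums of elements from $S_\alpha\cup(-S_\alpha)\cup\{0\}$ for some $M\in\NN$. For each $B\subseteq A$, I then define
\[
 \CF_B\coloneqq \CE_{\abs{\mhyphen}}\vee \bigvee_{\alpha\in B}\CE_\alpha,
\]
the join taken in the lattice of equi bi\=/invariant coarse structures, so that each $\CF_B$ is a connected coarsification of $\ZZ$. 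The proof reduces to verifying that $B\mapsto \CF_B$ is injective, for then $\abs{\CP(A)}=2^{2^{\aleph_0}}$ distinct coarsifications are produced.

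Toward injectivity, the central claim will be: \emph{if $\alpha_0\in B\smallsetminus B'$ then $S_{\alpha_0}$ is $\CF_B$\=/bounded but not $\CF_{B'}$\=/bounded}, which suffices to separate $\CF_B$ from $\CF_{B'}$. The first half is immediate. For the second, since $\ZZ$ is abelian condition (U4) of Lemma~\ref{lem:p1:properties of neighborhoods of id} is automatic, so any $\CF_{B'}$\=/bounded neighborhood of $0$ must be contained in a sum $F+U_1+\cdots+U_k$ with $F\subseteq\ZZ$ finite and each $U_j$ bounded in some $\CE_{\alpha_j}$ with $\alpha_j\in B'$. Unpacking and absorbing constants, such a neighborhood lies inside the set of $N$\=/fold sums of elements from $\pm T\cup\{0\}$ for some $N\in\NN$, where $T\coloneqq\{1\}\cup S_{\alpha_1}\cup\cdots\cup S_{\alpha_k}\subseteq W$.

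The hard part, and the main obstacle, is to show that $S_{\alpha_0}$ is not contained in any set of $N$\=/fold sums from $\pm T$, and this is precisely what Lemma~\ref{lem:p2:distinct Cayley metrics} is designed for. Applied to the pair of subsets $T$ and $S_{\alpha_0}\cup\{1\}$ of $W$ (both containing $1$), it asserts that $S_{\alpha_0}\cup\{1\}$ is $\abs{\mhyphen}_T$\=/bounded if and only if $(S_{\alpha_0}\cup\{1\})\smallsetminus T$ is finite. Since the family $\{S_\alpha\}$ is almost disjoint and $\alpha_0$ differs from each of $\alpha_1,\ldots,\alpha_k$, the intersection $S_{\alpha_0}\cap T$ is a finite union of finite sets, so $S_{\alpha_0}\smallsetminus T$ is cofinite in the infinite set $S_{\alpha_0}$. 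Hence $S_{\alpha_0}\cup\{1\}$ fails to be $\abs{\mhyphen}_T$\=/bounded, so $S_{\alpha_0}$ is not $\CF_{B'}$\=/bounded, proving the key claim and completing the argument.
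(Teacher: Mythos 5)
Your proof is correct, but you should know that it is precisely the alternative construction the paper itself sketches in the remark immediately following the proposition, where it is observed that "ultrafilters can be avoided by arbitrarily selecting an uncountable family $\Omega\subset\CP(W)$ so that any two $S,T\in\Omega$ have finite intersection" and then taking generated coarse structures $\CE_\Sigma$ for $\Sigma\subseteq\Omega$. The paper's \emph{main} proof instead fixes a non-principal ultrafilter $\mu$ on $W$ and sets $\CE_\mu\coloneqq\angles{\varcrs{\abs{\mhyphen}_S}\mid S\subset W,\ S\notin\mu}$; the ultrafilter closure of $W\smallsetminus\mu$ under finite unions then immediately gives that a $\CE_\mu$-bounded set is $\abs{\mhyphen}_S$-bounded for a single $S\notin\mu$, and distinct ultrafilters are separated by a set $T$ lying in one but not the other, again via Lemma~\ref{lem:p2:distinct Cayley metrics}. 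The two routes trade off as follows: the ultrafilter argument packages "boundedness is witnessed by finitely many generators" into the ultrafilter axioms for free, whereas your version has to unwind the join of coarse structures by hand (via Proposition~\ref{prop:p1:families of identity neighbourhoods_setgroups} and conditions (U0)--(U3)) to see that a $\CF_{B'}$-bounded neighborhood of $0$ sits inside $F+U_1+\cdots+U_k$ for finitely many $\alpha_j\in B'$; on the other hand, your construction is more concrete and avoids any choice-theoretic machinery beyond the existence of an almost disjoint family. Both proofs reduce the heart of the matter to exactly the same application of Lemma~\ref{lem:p2:distinct Cayley metrics}, namely that an infinite $S_{\alpha_0}$ almost disjoint from $T$ cannot be $\abs{\mhyphen}_T$-bounded.
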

\begin{proof} 
 \ref{lem:p2:distinct Cayley metrics}
For a non principal ultrafilter $\mu$ on $W = \{ n! \mid n \geq 1 \}$, let
\[
 \CE_\mu\coloneqq\angles{\varcrs{\abs{\mhyphen}_S}\mid S\subset W,\ S\notin\mu}
\]
be the coarse structure generated by $\abs{\mhyphen}_S$ with $S\notin \mu$.
By the ultrafilter property, if $S_1,\ldots, S_n$ are not in $\mu$, then neither is their union $S_1\cup\cdots \cup S_n$.
It then follows that a subset of $\ZZ$ is $\CE_\mu$\=/bounded if and only if it has finite diameter with respect to $\abs{\mhyphen}_S$ for some $S\subset W$ that is not in $\mu$.

If $T \in \mu$ and $S \not \in \mu$ then $T \smallsetminus S$ is infinite, so Lemma~\ref{lem:p2:distinct Cayley metrics} shows that $T$ is not $\abs{\mhyphen}_S$-bounded.
Therefore, $T$ is not $\CE_\mu$\=/bounded either.
Thus, if $\mu, \zeta$ are distinct ultrafilters, there exists $T$ in $\mu \smallsetminus \zeta$ and we see that $T$ is $\CE_\mu$\=/bounded but not $\CE_\zeta$ bounded.
This shows that the coarse structures $\CE_\mu$ are pairwise distinct and hence we are done because there are $2^{2^{\aleph_0}}$ many ultrafilters on $W$.
\end{proof} 

\begin{rmk}
 The neat arguments described above were given by the referee in reply to a question we posed in a previous version of this manuscript.
 Ultrafilters can be avoided by arbitrarily selecting an uncountable family $\Omega\subset \CP(W)$ so that any two $S,T\in\Omega$ have finite intersection.
 Given a subset $\Sigma\subseteq \Omega$, we may define $\CE_\Sigma\coloneqq\angles{\varcrs{\abs{\mhyphen}_S}\mid S\in\Sigma}$ and directly verify that $\CE_\Sigma =\CE_{\Sigma'}$ if and only if $\Sigma=\Sigma'$.
\end{rmk}

\section{Coarse Structures Generated by Topologies}
\label{sec:p2:coarse structures on Z gen by top}

We saw in Section~\ref{sec:p1:determined locally} that an abelian topological group $G$ has a natural topological coarse structure:
\begin{equation*}
 \varcrs[grp]{cpt} = 
 \varcrs[left]{cpt} =\braces{E\subseteq G\times G\mid \exists K\subseteq G\text{ compact s.t. }\forall(g_1,g_2)\in E,\ g_2^{-1}g_1\in K},
\end{equation*}
see Example~\ref{exmp:p1:topological coarse structure abelian}.
The $\varcrs[grp]{cpt}$\=/bounded sets are precisely the relatively compact subsets of $G$ (we are using the convention that a set $A$ is relatively compact if and only if it is a subset of a compact set $K$---which need not be the closure of $A$ if the topology is not Hausdorff).
In particular, we can use topologies on the set of integers to define coarsifications of $\ZZ$.\index{coarsification!topological}

In the following, $\tau$ will denote a group topology on $\ZZ$ (\emph{i.e.}\ a topology such that addition and inversion are continuous) and $\varcrs{\tau}\coloneqq\varcrs[left]{cpt}$\nomenclature[:CE1]{$\varcrs{\tau}$}{topological group coarse structure $\varcrs[grp]{cpt}$ on $\ZZ$ w.r.t. the topology $\tau$} will be the induced equi bi\=/invariant coarse structure. 
The $\varcrs{\tau}$\=/bounded subsets are the $\tau$\=/relatively compact sets. By Proposition~\ref{prop:p1:neighbourhoods.of.identity.generate}, two group topologies give rise to different coarse structures on $\ZZ$ if and only if they have different relatively compact sets. 
Note that if a topology $\tau$ is contained in a finer topology $\tau'$ then every $\tau'$\=/compact set is also $\tau$\=/compact, hence $\CE_{\tau'}\subseteq \CE_\tau$.

\begin{rmk}
 If a topology $\tau$ is induced by a bi\=/invariant metric $d$ then $\CE_\tau\subseteq \CE_d$, but in general the containment may be strict (the equality holds if and only if $d$ is proper). 
\end{rmk}

There are two obvious examples of coarse structures of topological origin. If $\tau$ is a topology so that $\ZZ$ is compact (\emph{e.g.}\ the indiscrete topology) then $\ZZ$ is bounded and $\CE_\tau=\maxcrs$. If $\tau$ is the discrete topology then $\CE_\tau=\varcrs[grp]{fin}=\CE_{\abs{\mhyphen}}$ coincides with the standard coarse structure defined by the Euclidean metric. Now we will explore more exotic topologies.

\begin{exmp}\label{exmp:p2:furstenberg}
 An interesting group topology on $\ZZ$ is the \emph{Furstenberg topology}\index{Furstenberg topology} $\tau_{\rm F}$. This is the topology with infinite arithmetic progressions $\braces{a\ZZ+b\mid a,b\in \ZZ}$ as basis for the open sets \cite{furstenberg1955primes} (one may also note that this topology is the subset topology defined by the inclusion of $\ZZ$ in the profinite completion $\widehat \ZZ$, Remark~\ref{rmk:p2:furstenberg is profinite}).

 This topology is a nice, metrizable topology on $\ZZ$ and---importantly for us---$\ZZ$ is not compact in this topology (this can be seen \emph{e.g.}\ by considering the infinite covering by open sets $\braces{2^{n+1}\ZZ -2^n\mid n\geq 0}$). It follows that $\ZZ$ is not $\CE_{\tau_{\rm F}}$\=/bounded and hence $\CE_{\tau_{\rm F}}\subsetneq \maxcrs$.
 At the same time, one may easily verify that the sequence $n!$ converges to $0$ in $\tau_{\rm F}$. It follows that the infinite set $\braces{n!\mid n\in \NN}\cup \{0\}$ is compact and hence $\CE_{\tau_{\rm F}}$\=/bounded. We then see that we have strict inclusions 
 \[
  \varcrs[grp]{fin} \subsetneq \CE_{\tau_{\rm F}}\subsetneq \maxcrs,
 \]
 and hence $(\ZZ,\CE_{\tau_{\rm F}})$ is a genuinely new coarsification of $\ZZ$. At this point we do not know much else about the coarse structure $\CE_{\tau_{\rm F}}$: to understand it better it would be useful to have some characterization of the $\tau_{\rm F}$\=/compact subsets of $\ZZ$. We refer to \cite{lovas2015some} and references therein for more properties of the Furstenberg topology.
\end{exmp}

Example~\ref{exmp:p2:furstenberg} is but the tip of the iceberg: infinite abelian groups tend to have a huge variety of group topologies (see \emph{e.g.}\ \cite{berhanu1985counting} and references therein). This information on its own is not enough to deduce that $\ZZ$ has many different coarsifications. For instance, many of these topologies can give rise to the usual coarse structures $\varcrs[grp]{fin}$ and $\maxcrs$.
As shown in Example~\ref{exmp:p2:furstenberg}, one convenient way to make sure that a topological space has infinite compact sets is by showing that it admits some infinite converging sequence. Using group invariance, one may also focus on sequences converging to $0$. This point of view helps navigate the literature. For example, in \cite{zelenyuk1991topologies} Zelenyuk and Protasov completely characterize those sequence $(a_n)_{n\in\NN}$ for which there exists a Hausdorff topology $\tau$ so that $a_n\xrightarrow{\tau}0$ (they call them \emph{T\=/sequences}). By construction, all the topologies witnessing that some sequence is a T\=/sequence generate coarse structures that strictly contain $\varcrs[grp]{fin}$. 
A partial list of related references includes \cite{barbieri2003answer,dikranjan2005characterization,hegyvari2005arithmetical,hruvsak2012precompact,nienhuys1972construction,skresanov2020group}

The above discussion leaves us with a wide variety of candidate topologies on $\ZZ$. However, this also leaves us with the task of understanding their compact sets, which seems to be an interesting but difficult question.


We now discuss an interesting class of group topologies on $\ZZ$, namely the profinite ones.
Let $Q$ be a non\=/empty set of primes and consider the family $Q^{*}\coloneqq\braces{m\in\NN\mid (m,p)=1 \text{ $\forall p$ prime }p\notin Q}$ of positive numbers whose factorization consists of powers of primes in $Q$. The family of quotients $\ZZ/m\ZZ$ with $m\in Q^{*}$ forms an inverse system of finite groups.
We define the \emph{pro-$Q$ completion}\index{pro-$Q$ completion of $\ZZ$} of $\ZZ$ as the inverse limit
\[
 \ZZ_Q\coloneqq \varprojlim_{m\in Q^{*}} \ZZ/m\ZZ.
\]
The limit topology is a metrizable, compact group topology on $\ZZ_Q$ (one may realize $\ZZ_Q$ as a closed subgroup of the infinite product $\prod \ZZ/m\ZZ$ and the limit topology coincides with the subspace topology).

Two examples of special interest are the \emph{profinite completion} $\widehat \ZZ\coloneqq\ZZ_{\{\text{all primes}\}}$ and the \emph{pro\=/$p$ completion} $\ZZ_p\coloneqq \ZZ_{\{p\}}$ (this is also known as the group of $p$\=/adic integers).
Using the Chinese Reminder Theorem, one can show that for any $Q$, the pro\=/$Q$ completion is isomorphic (as a topological group) to a product of pro\=/$p$ completions:
\[
 \ZZ_Q\cong\prod_{p\in Q}\ZZ_p.
\]

For any choice of $Q$, the natural homomorphism $\ZZ\to\ZZ_Q$ is an embedding with dense image. When identifying $\ZZ_Q$ with the product of pro\=/$p$ completions, the embedding $\ZZ\hookrightarrow\prod_{p\in Q}\ZZ_p$ is the diagonal embedding. Since $\ZZ$ is a dense proper subset of $\ZZ_Q$ and the latter is Hausdorff, $\ZZ$ is not compact in $\ZZ_Q$.

\begin{de}\label{def:p2:proQ coarse structure}
 Let $\tau_Q$ denote the subspace topology on $\ZZ\subset \ZZ_Q$. The \emph{pro\=/Q coarse structure}\index{coarse structure!pro-$Q$} on $\ZZ$ is the equi left invariant coarse structure $\CE_Q\coloneqq \CE_{\tau_Q}$\nomenclature[:CE1]{$\CE_Q$}{pro\=/$Q$ coarse structure} induced by the pro\=/$Q$ topology $\tau_Q$.
\end{de}

Concretely, a subset $A\subseteq\ZZ$ is $\CE_Q$\=/bounded if and only if its closure $\overline{A}^{\tau_Q}\subseteq \ZZ$ is compact. Note that this definition is not trivial, because the closure $\overline{A}^{\tau_Q}=\overline{A}^{\ZZ_Q}\cap \ZZ$ is generally not closed in $\ZZ_Q$ and hence not compact. In particular, $\CE_Q$ is never equal to the bounded coarse structure $\maxcrs$ because $\ZZ$ is not compact in $\ZZ_Q$.

\begin{rmk}\label{rmk:p2:furstenberg is profinite}
 It is not hard to see that the sets of the form $m\ZZ+k$ with $m\in Q^{*}$ and $k\in \NN$ are a basis of open sets for $\tau_Q$. In particular, this shows that the Furstenberg topology $\tau_{\rm F}$ is equal to the topology $\tau_{\{\text{all primes}\}}$ induced by the profinite completion of $\ZZ$.
\end{rmk}

\begin{prop}\label{prop:p2:different profinite coarse structures}
 If $Q$ and $Q'$ are two sets of primes, then $\CE_Q\subseteq \CE_{Q'}$ if and only if $Q\supseteq Q'$.
\end{prop}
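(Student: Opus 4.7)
My plan is to prove the two implications separately, using the description of each $\CE_Q$ in terms of bounded sets.

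For the easy direction $Q\supseteq Q'\Rightarrow \CE_Q\subseteq \CE_{Q'}$, I would use the natural continuous coordinate projection $\pi\colon\ZZ_Q\to\ZZ_{Q'}$, whose restriction to $\ZZ\subset\ZZ_Q$ coincides with the canonical embedding $\ZZ\hookrightarrow\ZZ_{Q'}$. This makes $\tau_{Q'}$ coarser than $\tau_Q$ as topologies on $\ZZ$, so any $\tau_Q$\=/relatively compact subset of $\ZZ$ is $\tau_{Q'}$\=/relatively compact. Since $\CE_Q$ and $\CE_{Q'}$ are equi bi\=/invariant coarsifications of the abelian group $\ZZ$, Proposition~\ref{prop:p1:neighbourhoods.of.identity.generate} ensures they are determined by their bounded neighborhoods of $0$, so comparing bounded sets upgrades to the claimed containment.

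For the contrapositive of the converse, I would fix a witness prime $p\in Q'\smallsetminus Q$ and build a single set $A\subseteq\ZZ$ that is $\CE_Q$\=/bounded but not $\CE_{Q'}$\=/bounded. The guiding observation is that a subset $A\subseteq\ZZ$ is $\CE_R$\=/bounded if and only if its closure $\overline{A}^{\ZZ_R}$ inside $\ZZ_R$ lies in $\ZZ$: indeed, such a closure is $\tau_R$\=/compact by compactness of $\ZZ_R$, and conversely any $\tau_R$\=/compact subset of $\ZZ$ is closed in $\ZZ_R$. To produce $A$, I would first select any $x\in\ZZ_p\smallsetminus\ZZ$, available because $\ZZ_p$ is uncountable. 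Then for each $n\in\NN$ the moduli $p^n$ and $q^n$ (for primes $q\in Q$ with $q\leq n$) are pairwise coprime, since $p\notin Q$, so the Chinese Remainder Theorem produces an integer $a_n$ with
\[
 a_n\equiv 0\pmod{q^n}\ \text{for every prime }q\in Q\text{ with }q\leq n,\qquad a_n\equiv x\pmod{p^n},
\]
the second congruence meaning that $a_n$ reduces to the image of $x$ in $\ZZ/p^n\ZZ$. I would then set $A\coloneqq\{a_n\mid n\in\NN\}$.

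Verifying the two required properties of $A$ should now be straightforward. The first family of congruences forces $a_n\to 0$ in every $\ZZ_q$ with $q\in Q$, hence in $\ZZ_Q$, giving $\overline{A}^{\ZZ_Q}\subseteq\ZZ$ and $\CE_Q$\=/boundedness. The second congruence gives $a_n\to x$ in $\ZZ_p$, so any subsequential limit $y\in\ZZ_{Q'}$ of $(a_n)$ (which exists by compactness of $\ZZ_{Q'}$) must project to $x$ under $\ZZ_{Q'}\to\ZZ_p$; were $y$ an integer, its diagonal image in $\ZZ_{Q'}$ would have $p$\=/component equal to $y$, contradicting $x\notin\ZZ$. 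Hence $\overline{A}^{\ZZ_{Q'}}$ meets $\ZZ_{Q'}\smallsetminus\ZZ$ and $A$ fails to be $\CE_{Q'}$\=/bounded. The main conceptual hurdle I anticipate is isolating the compactness-in-$\ZZ_R$ criterion for $\CE_R$\=/boundedness at the start of the second direction; once that identification is in place, the rest reduces to the transparent CRT construction and a standard diagonal-embedding argument.
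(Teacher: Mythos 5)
Your proof is correct, and the overall strategy is the same as the paper's (produce an explicit sequence in $\ZZ$ that is $\tau_Q$-relatively compact but accumulates at a non-integer in $\ZZ_p$ for a witness prime $p\in Q'\smallsetminus Q$), but the construction of the sequence is genuinely different and, frankly, cleaner. The paper fixes an enumeration $p_1,p_2,\ldots$ of $Q$ and builds $k_n$ iteratively by multiplying in high powers of $p_1\cdots p_n$ chosen via multiplicative orders $\operatorname{ord}_{q^{l_n}}(p_1\cdots p_n)$, then shows the residues mod $q^{l_n}$ never stabilize; the non-integrality of the limit $\xi$ is an emergent property one has to verify at the end. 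You instead \emph{preselect} a non-integer $x\in\ZZ_p$ and enforce both conditions simultaneously by a single Chinese Remainder Theorem application at each stage: $a_n\equiv 0 \pmod{q^n}$ for all $q\in Q$ with $q\leq n$, and $a_n\equiv x\pmod{p^n}$. This removes the bookkeeping, makes the desired limit in $\ZZ_p$ visible from the start, and is more elementary. You have also isolated up front the clean criterion that $A\subseteq\ZZ$ is $\CE_R$-bounded if and only if $\overline{A}^{\ZZ_R}\subseteq\ZZ$; the paper implicitly uses the same fact by arguing that $K\cup\{\xi\}$ is closed in $\ZZ_q$ hence $K$ is $\tau_q$-closed and so cannot sit in any $\tau_q$-compact set, but stating the criterion once avoids redoing that argument and makes both steps of the verification one-liners. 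One minor remark: when you say ``any subsequential limit $y\in\ZZ_{Q'}$'' you are implicitly using that $\ZZ_{Q'}$ is metrizable (or at least sequentially compact), which is fine here since it is a countable product of metrizable compacta, but it is worth being conscious of. The easy direction via the continuous projection $\ZZ_Q\to\ZZ_{Q'}$ is the same argument as the paper's observation that $\tau_{Q'}\subseteq\tau_Q$, just phrased at the level of maps rather than topologies.
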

\begin{proof}
 If $Q'$ is contained in $Q$ then the topology $\tau_Q$ is finer than $\tau_{Q'}$, hence $\CE_Q\subseteq \CE_{Q'}$. 
 It remains to show that if $Q'$ is \emph{not} contained in $Q$ then $\CE_Q\nsubseteq \CE_{Q'}$. Let $q$ be a prime in $Q'\smallsetminus Q$. We need to find a $\tau_Q$\=/compact set $K\subset \ZZ$ that is not contained in any $\tau_{Q'}$\=/compact set. Since $\tau_q\subseteq \tau_Q'$, it is enough to check that $K$ is not contained in any $\tau_q$\=/compact set.
 
 We will do so by constructing a sequence $(k_n)_{n\in\NN}$ such that $k_n\to 0$ with respect to $\tau_Q$ but it converges to some point $\xi\in \ZZ_q\smallsetminus\ZZ$ with respect to the pro\=/$q$ topology. 
 Assuming that such a sequence exists, we see that $K=\bigcup\braces{k_n\mid n\in\NN}\cup \{0\}$ is $\tau_Q$\=/compact. Since $\ZZ_q$ is Hausdorff, $\xi$ is the unique accumulation point of $K$ in $\ZZ_q$. It follows that $K$ has no $\tau_q$\=/accumulation point in $\ZZ$ and hence it is not $\tau_q$\=/compact. On the other hand, $K\cup\{\xi\}$ is compact in $\ZZ_q$, hence $K$ is $\tau_q$\=/closed. It follows that $K$ is not contained in any $\tau_q$\=/compact set, otherwise $K$ would be compact.

 It remains to find an appropriate sequence. Note that for every $l\in \NN$ any product of powers of primes in $Q$ is coprime with $q^l$ and hence belongs to the group of units $(\ZZ/q^l\ZZ)^{*}$. For any $x\in (\ZZ/q^l\ZZ)^{*}$, let ${\rm ord}_{q^l}(x)$ be the order of $x$ in $(\ZZ/q^l\ZZ)^{*}$ (\emph{i.e.}\ the smallest exponent so that $x^m\equiv 1 \text{ mod}(q^l)$).

 We shall now construct the sequence iteratively. We will deal with the case where $Q$ is infinite, the finite case is simpler and can be dealt with similarly. Choose an ordering $p_1,p_2,\ldots $ for the elements of $Q$ and let $a_1\coloneqq {\rm ord}_q(p_1)$. We define $k_1\coloneqq p_1^{a_1}$.
 
 Choose some $l_1$ large enough so that $k_1\leq q^{l_1}$ and let $a_2\coloneqq{\rm ord}_{q^{l_1}}(p_1p_2)$. Define $k_2\coloneqq k_1(p_1p_2)^{a_2}$. With this choice, we see that
 \[
  k_2\equiv k_1 \equiv 1 \text{ mod}(q) \qquad \text{and} \qquad
  k_2\equiv k_1 \not\equiv 1 \text{ mod}(q^{l_1}).
 \]
 Moreover, the power of $p_1$ appearing in the factorization of $k_2$ is strictly larger than that of $k_1$.
 
 Inductively, assume that $k_{n-1}$ has already been defined. Choose $l_n$ so that $k_{n-1}\leq q^{l_n}$, let $a_n\coloneqq{\rm ord}_{q^{l_n}}(p_1p_2\cdots p_n)$ and define $k_n\coloneqq k_{n-1}(p_1p_2\cdots p_n)^{a_n}$. We claim that the sequence $k_n$ does what we need.
 
 For each $i$ and each $N$, we see that $p_i^N$ divides $k_n$ for every $n$ large enough. By definition of the pro\=/$Q$ topology, this shows that $k_n\to 0$ in $\ZZ_Q$. On the other hand, for each $j\in \NN$ we see that $k_n\equiv k_j \text{ mod}(q^{l_j})$ for every $n\geq j$. This shows that the sequence 
 \[
  ([k_1],[k_2],[k_3],\ldots)\in \prod_{j\in\NN} \ZZ/q^{l_j}\ZZ
 \]
 defines an element $\xi\in \ZZ_q=\varprojlim_{j\in\NN}\ZZ/q^{l_j}\ZZ$ and that $k_n\to \xi$ in $\ZZ_q$. Finally, this $\xi$ does not belong to $\ZZ$ because it is not eventually constant: for every $j\in\NN$ and $n<j$ we have $k_{n}\not\equiv k_{n-1} \text{ mod}(q^{l_j})$.
\end{proof}

\begin{cor}
 The pro\=/$Q$ coarse structures provide a continuum of distinct coarsifications of $\ZZ$.\index{coarsification!pro\=/$Q$}
\end{cor}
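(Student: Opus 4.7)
The plan is to deduce the corollary directly from Proposition~\ref{prop:p2:different profinite coarse structures}. That proposition establishes that the assignment $Q \mapsto \CE_Q$, sending a nonempty set of primes to its associated pro-$Q$ coarse structure on $\ZZ$, is order-reversing with respect to inclusion: $\CE_Q \subseteq \CE_{Q'}$ if and only if $Q \supseteq Q'$. From this characterization I would immediately conclude that the assignment is injective, since whenever $Q \ne Q'$ we have (without loss of generality) $Q \not\supseteq Q'$, and hence $\CE_Q \not\subseteq \CE_{Q'}$, so in particular $\CE_Q \ne \CE_{Q'}$.

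Having established injectivity, the remaining step is a cardinality count. The set $P$ of all prime numbers is countably infinite, so the collection of nonempty subsets $Q \subseteq P$ has cardinality $2^{\aleph_0}$, the continuum. Injectivity of $Q \mapsto \CE_Q$ then produces continuum-many pairwise distinct coarse structures on $\ZZ$, each of which is equi bi-invariant by Definition~\ref{def:p2:proQ coarse structure} (and the general construction of Example~\ref{exmp:p1:topological coarse structure abelian}). Each is a genuine coarsification rather than the bounded structure $\maxcrs$, since $\ZZ$ is dense but not closed in the Hausdorff compact group $\ZZ_Q$, hence not $\tau_Q$-compact.

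There is no real obstacle here: the content of the statement is entirely contained in the preceding proposition, and the only additional ingredient is the trivial remark that $\abs{\CP(P)} = 2^{\aleph_0}$. The work is all in Proposition~\ref{prop:p2:different profinite coarse structures}, whose proof exhibited, for each prime $q \in Q' \smallsetminus Q$, an explicit sequence $(k_n)$ converging to $0$ in $\tau_Q$ but accumulating to a point of $\ZZ_q \smallsetminus \ZZ$ in the pro-$q$ topology, thereby producing a $\CE_Q$-bounded set that is not $\CE_{Q'}$-bounded.
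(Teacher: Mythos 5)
Your proof is correct and uses exactly the intended approach: the corollary is an immediate consequence of Proposition~\ref{prop:p2:different profinite coarse structures}, which gives injectivity of $Q\mapsto\CE_Q$, combined with the trivial cardinality count $\abs{\CP(P)}=2^{\aleph_0}$. The paper leaves the corollary without proof precisely because the content is entirely contained in the preceding proposition, as you observe.
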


\begin{rmk}
 Since $\ZZ_Q\cong\prod_{p\in Q}\ZZ_p$ and the projections $\prod_{p\in Q}\ZZ_p\to \ZZ_p$ are continuous, we see that any $\tau_Q$\=/compact set is $\tau_p$\=/compact for every $p\in Q$. This implies that $\CE_{Q}\subseteq \bigcap_{p\in Q}\CE_p$. It is curious to observe that this containment is strict: we are grateful to Samuel Evington for pointing out the following example. Let $Q=\{2,3\}$, then there exist sequences of integers $a_n,\ b_n$ such that $a_n2^n+b_n3^n=1$ for every $n\in\NN$. It follows that the sequence $a_n 2^n$ converges to $0$ in $\ZZ_2$ and to $-1$ in $\ZZ_3$. In particular, the set $\{a_n2^n\mid n\in\NN\}$ is relatively compact with respect to both $\tau_2$ and $\tau_3$ and it is hence bounded with respect to $\CE_2\cap\CE_3$.
 
 Considering the diagonal embedding $\ZZ\hookrightarrow \ZZ_2\times \ZZ_3\cong \ZZ_{\{2,3\}}$, we see that the sequence $a_n2^n$ converges to the point $\xi = (0,-1)$. Since $\xi$ is off\=/diagonal, it belongs to $\ZZ_{\{2,3\}}\smallsetminus \ZZ$. As in the proof of Proposition~\ref{prop:p2:different profinite coarse structures}, we deduce that $\{a_n2^n\mid n\in\NN\}$ is not relatively compact with respect to the topology $\tau_{\{2,3\}}$ and hence it is not $\CE_{\{2,3\}}$\=/bounded.
\end{rmk}

\section{Some Questions}

In Section~\ref{sec:p2:coarse structures on Z gen Cayley} we used some specific choices of word metrics $\abs{\mhyphen}_S$ to show that the cardinality of distinct coarse structures on $\ZZ$ is as large as possible.
On the other hand, it is in general a very difficult problem to understand the coarse structure $\varcrs{Cay(S)}$ induced by some arbitrary set $S$.
So much so, that is is often complicated even to understand whether $\varcrs{Cay(S)}=\maxcrs$.
Explicitly, $\varcrs{Cay(S)}$ is equal to the maximal coarse structure if and only if there exists some fixed $N$ so that every integer $k\in \ZZ$ can be written as a sum or difference of at most $N$ elements of $S$. In particular, if $S=P$ is the set of all primes the equality $\varcrs{Cay(P)}=\maxcrs$ becomes a very weak version of the Goldbach conjecture. It is a classical fact that every integer is equal to the sum of a bounded number of primes \cite{vinogradov1937representation}. In other words, it is already known that ${\rm Cay}(\ZZ,P)$ does indeed have finite diameter (its value is also known as the Shnirelman constant). Helfgott's proof of the ternary Goldbach conjecture \cite{helfgott2015ternary} implies that this diameter is at most $4$ (the strong Goldbach conjecture would imply that it is $3$).

On the positive side, it is fairly simple to see that if $S=\braces{a^n\mid n\in\NN}$ is a geometric progression for some $a>1$ then $\ZZ$ is not $\varcrs{Cay(S)}$\=/bounded. It is also true that $\ZZ$ is unbounded if $S$ is the set of all numbers whose factorization only contains primes in some fixed finite set \cite[Theorem~3]{nathanson2011geometric}. In particular, it follows that also that sets like $S=\braces{2^n\mid n\in \NN}\cup \braces{3^n\mid n\in\NN}$ give rise to a Cayley graph of infinite diameter. For a given set $S$, deciding whether $\ZZ$ is $\varcrs{Cay(S)}$\=/bounded seems to be a hard number theoretic problem.

It is also an interesting problem to understand when two different generating sets $S_1$, $S_2$ give rise to different coarse structures. This problem is solved in \cite{nathanson2011bi} in the case where both $S_1$ and $S_2$ are geometric progressions: for $S_1 =\braces{a^n\mid n\in\NN}$ and $S_2=\braces{b^n\mid n\in\NN}$, $\varcrs{Cay(S_1)}=\varcrs{Cay(S_2)}$ if and only if $a^n=b^m$ for some $n,m\in\NN$. Little else is known otherwise (see also \cite{bell2017locally,nathanson2011problems} for related work).

Also note that if we are given two coarsifications $(\ZZ,\CE_1)$ and $(\ZZ,\CE_2)$ with $\CE_1\neq \CE_2$ it may still be the case that the coarse group $(\ZZ,\CE_1)$ and $(\ZZ,\CE_2)$ are isomorphic via some function that is not the identity.
This raises an obvious question:

\begin{qu}\label{qu:p2:Schwartz}
 Let $S_1 =\braces{2^n\mid n\in\NN}$ and $S_2=\braces{3^n\mid n\in\NN}$, are $(\ZZ,\varcrs{Cay(S_1)})$ and $(\ZZ,\varcrs{Cay(S_2)})$ isomorphic?
\end{qu}

A version of this question has already appeared in the literature. Namely, it is a question attributed to Richard E. Schwarz whether the Cayley graphs associated with the sets $S_1 =\braces{2^n\mid n\in\NN}$ and $S_2=\braces{3^n\mid n\in\NN}$ are quasi\=/isometric. Note that in this case the Cayley graphs are quasi\=/isometric if and only if they are coarsely equivalent (this is because they are geodesic metric spaces, see Appendix~\ref{sec:appendix:quasifications}).
Our question differs from Schwartz's in that we ask for a coarse equivalence that is also a coarse homomorphism. Both questions are open.

%

We also ask the analog of Question~\ref{qu:p2:Schwartz} for other meaningful coarse structures. For instance:

\begin{qu}
 Given two different sets of primes $Q\neq Q'$, can $(\ZZ,\CE_{Q})$ and $(\ZZ,\CE_{Q'})$ be isomorphic coarse groups?
 How about $Q=\{3\}$ and $Q'=\{5\}$?
\end{qu}

It seems to us that replying the the questions above requires developing some invariants of coarse groups, which we find would be very interesting in its own right.

\

The relation between Cayley and topological coarsifications eludes us.
More precisely, we know that the Cayley coarsifications $\varcrs{Cay(S)}$ are exactly the coarsely geodesic ones (Proposition~\ref{prop:p2:coarse geodesic coarsifications}), but in concrete examples we do not know how to verify whether a given coarse structure is geodesic or not.
We pose it as a problem for topological coarsifications:

\begin{problem}
 Develop criteria to establish whether a given topological coarsification $\varcrs{\tau}$ of $\ZZ$ is coarsely geodesic.
\end{problem}

\begin{rmk}
 We do not know how to solve the above for profinite coarsifications.
\end{rmk}

\

Observing that $(\ZZ,\CE_{\abs{\mhyphen}})$ is isomorphic to $(\RR,\CE_{\abs{\mhyphen}})$, it is not hard to show that the only coarse subgroups of $(\ZZ,\CE_{\abs{\mhyphen}})$ are the trivial ones: $\{0\}$ and $\ZZ$ (one can use Proposition~\ref{prop:p1:homogeneization.of.qmorph} to show that the image of a coarse homomorphism must be bounded or coarsely dense). We do not know whether this feature is specific to the canonical coarse structure $\CE_{\abs{\mhyphen}}$ or if it is a general fact about $\ZZ$. Namely, we ask the following:

\begin{problem}
 Does there exist a connected coarsification of $\ZZ$ such that $(\ZZ,\CE)$ has unbounded, non coarsely dense coarse subgroups $\{0\}\crse{< H<} \ZZ$?
\end{problem}

\chapter{On Bi-invariant Word Metrics}\label{ch:p2:canc metrics}
Many special examples of bi\=/invariant word metrics have been studied, generally under different names. Below we collect some of the instances that we are aware of, but the list is far from exhaustive.

\begin{enumerate}
 \item If $G$ is a perfect group (\emph{i.e.}\ $G=[G,G]$) then the \emph{commutator length}\index{length!commutator} is the word length associated with the set of commutators---which is invariant under conjugation. By now, the commutator length is a rather classical object of study (especially in its ``stabilized'' form) which has beautiful connections with bounded cohomology, $\RR$\=/quasimorphisms and low\=/dimensional topology. We refer to \cite{calegari2009scl} for an introduction to the subject.
 
 If  $G$ is not perfect, the commutator length still gives a bi\=/invariant word metric on the commutator subgroup $G'\coloneqq[G,G]$. It is interesting to note that if $G'$ is unbounded with respect to this metric then it is also possible to define an unbounded bi\=/invariant metric on $G$ \cite[Proposition 1.4]{BIP}.
 
 \item If $G$ is generated by torsion elements, one can consider the bi\=/invariant word metric associated with the set of all the torsion elements. This is sometimes called \emph{torsion length}\index{length!torsion} (\cite{kotschick2004quasi}).
 
 \item Let $G=\pi_1(\Sigma)$ be the fundamental group of a closed surface of genus at least $2$. The set $S\subset G$ of loops homotopic to simple closed curves is invariant under conjugation and generates $G$. Answering a question of Farb, Calegari showed in \cite{Cal} that the bi\=/invariant word metric associated with $S$ is unbounded on $G$ (see also \cite{BH}).
 
 \item The \emph{reflection length}\index{length!reflection} on a Coxeter group $W$ is defined as the bi\=/invariant word length associated with the standard generating set \cite{mccammond2011bounding}. It is known that the reflection length on $W$ is bounded if and only if $W$ is spherical or of affine type \cite{duszenko2012reflection}. See also \cite{drake2021upper,lewis2019computing} and references therein for some recent works on reflection lengths.
 
 \item Let $w$ be a word in some letters $x_1,\ldots ,x_n$. We can see $w$ as a functions with variables $x_1,\ldots ,x_n$ and, for any given a group $G$, we can then consider the set $S$ of elements in $G$ that can be obtained by replacing each $x_i$ with some $g_i\in G$. This set is conjugation invariant, and the word length associated with it is also called \emph{$w$\=/length} or \emph{verbal length} of $G$ (technically, the $w$\=/length is only a length function if $S$ generates $G$).
 For example, when $w=x_1x_2x_1^{-1}x_2^{-1}$ the $w$\=/length coincides with the commutator length. An initial study of some (stable) $w$\=/lengths is carried out in \cite{CZ}. See also \cite{bestvina2019verbal} and references therein for some recent results.
 
 \item Let $F_S$ be the free group generated by $S$. Jiang defined the \emph{width}\index{width of a word} of a word $w\in F_S$ to be its bi\=/invariant word length relative to the set $S'\coloneqq\braces{s^k\mid s\in S,\ k\in \ZZ}$. This quantity is relevant to count the number of fixed points of a self\=/homeomorphism of a surface \cite{jiang1989surface} (see also \cite{grigorchuk1991width}).
 
 \item Bi\=/invariant word metrics on a free group $F_S$ can be seen as \emph{combinatorial areas}\index{combinatorial area}. Namely, a subset $R\subseteq F_S$ normally generates $F_S$ if and only if $\angles{S\mid R}$ is a presentation of the trivial group. The length of an element $w\in F_S$ with respect to the bi\=/invariant word length $\abs{\variable}_{\overline{R}}$ is then equal to the minimal number of conjugates of relations in $R$ that is necessary to multiply in order to show the word $w$ represents the unit element in $\angles{S\mid R}$. In other words, $\abs{w}_{\overline{R}}$ equals the minimal area of a van Kampen diagram of $w$. This is the point of view taken by Riley in \cite{riley2017computing}.
 
 \item Let $M$ be a smooth manifold and $G\coloneqq {\rm Diff_0}(M)$ be the group of diffeomorphisms isotopic to the identity. The Fragmentation Lemma states that $G$ is generated by the subset of diffeomorphisms that are supported on embedded open balls \cite{banyaga2013structure}. The associated bi\=/invariant word length is known as the \emph{fragmentation norm}\index{norm!fragmentation} on $G$. It is an interesting problem to understand which manifolds $M$ give rise to diffeomorphism groups with unbounded fragmentation norm. It is known that ${\rm Diff_0}(M)$ is bounded if $M$ is a closed manifold of dimension $\neq 2,4$, or if $M=\SS^2$ is the two dimensional sphere \cite{BIP,tsuboi2008uniform}. In contrast, ${\rm Diff_0}(\Sigma)$ is unbounded if $\Sigma$ is an orientable surface of positive genus \cite{bowden2022quasi}. See also \cite{brandenbursky2018fragmentation,polterovich2021norm}.
 
 \item Let $G\coloneqq {\rm Diff_0}(\Sigma,{\rm Area})$ be the group of area preserving diffeomorphisms of a closed surface that are isotopic to the identity. This $G$ is generated by a special class of ``autonomous'' diffeomorphisms. The \emph{autonomous norm}\index{norm!autonomous} is the bi\=/invariant word length associated with the set of autonomous diffeomorphisms. It can be shown that the autonomous norm is unbounded on $G$ \cite{BK,Bra,gambaudo2004commutators}. 
\end{enumerate}

\section{Computing the Cancellation Metric on Free Groups} 
\label{sec:p2:cancellation metric on free}
 In this section we will discuss in greater detail the \emph{natural bi\=/invariant word} metric on a free group. This is largely done in order to introduce some notation that will be used in Section~\ref{sec:p2:conjecture on coarsified groups}. However, it is worth remarking that this metric is of special interest in surprisingly diverse topics ranging from biology to engineering: it appears in the study of RNA\=/folding \cite{nussinov1980fast} and the design of liquid crystals \cite{majumdar2010tangent}. See also \cite{riley2017computing} for a short survey of these application.

 Let $S$ be a finite set and $F_S$ be the the group freely generated by it. As explained in Example~\ref{exmp:p1:cancellation metric}, the bi\=/invariant word length $\abs{\variable}_{\overline{S}}$ can be easily described in terms of cancellation of letters. More precisely, let $w$ a word with letters in $S\cup S^{-1}$. The cancellation length $\abs{w}_{\times}$ is defined as the number of letters that is necessary to cancel from $w$ in order to obtain a word whose reduced representative is the empty word. 
 One can verify that the word length $\abs{g}_{\overline{S}}$ of an element $g\in F_S$ is equal to the cancellation length any word $w$ representing it---in particular the cancellation length does not depend on the choice of $w$.\footnote{%
 This is not generally the case for arbitrary groups and generating sets. For one, this property depends on the choice of generating sets: if we take $S'=\braces{a,b,a^2}$ as a generating set for $F_2$ then the two words $aa$ and $a^2$ have different cancellation lengths (we are seeing the latter as a $1$\=/letter word in $S'$). For a more interesting example, notice that for the Baumslag--Solitar groups ${\rm BS}(2,5)=\angles{a,b\mid ba^2b^{-1}=a^5}$ the words $ba^2b^{-1}$ and $a^5$ represent the same element but have different cancellation length \cite[Example 2.G]{BGKM}.
Besides free\=/groups, other set\=/groups that have well\=/behaved cancellation lengths are right angled Artin and Coxeter groups, and, more generally, all groups with a ``balanced'' presentation \cite[Theorem 1.C]{BGKM}. }
 
 For this reason, in this section we will call the metric $d_{\overline{S}}$ the \emph{cancellation metric} of $F_S$. Also recall that $d_{\overline{S}}$ induces the canonical coarsification: $\CE_{d_{\overline{S}}}=\varcrs{bw}$.

 We introduce some notation:
 
 \begin{notation*} 
 We will work with (possibly non reduced) words with letters in the alphabet $S\sqcup S^{-1}$. We will denote $\rd(w)$\nomenclature[:z]{$\rd(w)$}{reduced word} the reduced word obtained by reducing the word $w$. We will \emph{not} use  notation $[w]$ to differentiate between words and elements of $F_S$. However, we will write $v\equiv w$ to clarify that $v$ and $w$ are equal \emph{as words}, while $v=w$ means that they are equal as elements of $F_S$ (\emph{i.e.}\ $v=w$ if and only if $\rd(v)\equiv\rd(w)$). We may sometimes stress the difference between $\equiv$ and $=$ by saying that the latter holds in $F_S$.
 
 The concatenation of two words is denoted $vw$. By $v^{-1}$ we mean the word obtained by reversing the order of the letters $v$ and inverting each letter. With our conventions, $vv^{-1}=\emptyset$ but $vv^{-1}\not\equiv\emptyset$. A word $w$  \emph{subword}\index{subword} of $v$ if $v\equiv awb$. It is a \emph{proper subword}\index{subword!proper} if at least one of $a$ and $b$ is non\=/empty ($w$ itself may be empty). If $a\equiv\emptyset$, $w$ is a \emph{starting subword}.\index{subword!starting} It is an \emph{ending subword}\index{subword!ending} if $b\equiv\emptyset$.
 \end{notation*}
 
By definition, the distance between two reduced words $w_1,w_2$ is $d_{\overline{S}}(w_1,w_2)=\abs{w_2^{-1}w_1}_{\overline{S}}$. However, it is convenient to have a more explicit characterization. Given a letter $x$ in a word $w$, we denote by $w\smallsetminus x$ the word obtained by omitting $x$. We say that a \emph{cancellation move}\index{move!cancellation} $M$ between reduced words is a transformation sending a reduced word $v\equiv w_1uxu^{-1}w_2$ to $w_1w_2$, where $x\in S\sqcup S^{-1}$. That is, $M$ takes $v$ to $\rd(v\smallsetminus x)$. An \emph{addition move}\index{move!addition} is the inverse of a cancellation move, \emph{i.e.}\ it adds a conjugate of a letter in such a way that the end result is a reduced word. 
In either case, we say that the subwords $w_1$ and $w_2$ are \emph{preserved} by the move.

\begin{lem}\label{lem:p2:cancellation distance as moves}
 Given reduced words $w,w'$, let 
\(
 \tilde d(w,w')\coloneqq\min\braces{n\mid \text{can take $w$ to $w'$ with $n$ moves}}.
\)
Then $\tilde d=d_{\overline{S}}$.
\end{lem}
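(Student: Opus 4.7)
The plan is to establish the two inequalities separately. For the easier direction $\tilde d\geq d_{\overline{S}}$, observe that a single cancellation move on a reduced word $v\equiv w_1 u x u^{-1} w_2$ produces the word $\rd(w_1 w_2)$, and a direct computation in $F_S$ gives $v\cdot(w_1 w_2)^{-1}=(w_1 u)\,x\,(w_1 u)^{-1}$, a conjugate of the single letter $x\in S\cup S^{-1}$. Thus every move (cancellation or addition) changes the current word, as an element of $F_S$, by right\=/multiplication by a conjugate of an element of $S\cup S^{-1}$. Concatenating $n$ moves from $w$ to $w'$ therefore expresses $w'^{-1}w$ as a product of $n$ such conjugates, whence $d_{\overline{S}}(w,w')\leq n$.

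For the harder direction $\tilde d\leq d_{\overline{S}}$, I will argue by induction on $n=d_{\overline{S}}(w,w')$. The base case $n=0$ is trivial. For the inductive step it suffices to prove the following key claim: if $w$ and $w'$ are reduced and $w=w'\cdot hsh^{-1}$ in $F_S$, with $hsh^{-1}$ reduced as a word and $s\in S\cup S^{-1}$, then $w$ can be transformed into $w'$ by a single move. To prove the claim, perform the junction reduction in $w'\cdot hsh^{-1}$: write $w'=\alpha\beta$ and $hsh^{-1}=\beta^{-1}\gamma$ where $\beta$ is the maximal cancelling tail, so $w=\alpha\gamma$ (already reduced by maximality of $\beta$). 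Since $s$ is a single letter, the prefix $\beta^{-1}$ either lies strictly inside the initial $h$ (so $h=\beta^{-1}h'$), equals $h$, equals $hs$, reaches into $h^{-1}$ (so $h=h_2 h_1$ and $\beta^{-1}=hsh_1^{-1}$), or equals all of $hsh^{-1}$. In the first two cases the letter $s$ survives in $w$ and can be removed by a cancellation move, while in the remaining cases the letter $s^{-1}$ appears in $w'$ flanked by suitable subwords and may be inserted by an addition move; in each situation the required decomposition of the move is read off directly from the splitting of $h$ determined by $\beta^{-1}$.

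The main technical point is the bookkeeping through these cases while verifying at each step that the inserted or remaining subwords produce genuinely reduced words, so that the move is legitimate in the sense of the stated definition. Once the key claim is in hand, applying it iteratively along any factorization $w'^{-1}w=h_1 s_1 h_1^{-1}\cdots h_n s_n h_n^{-1}$ realizing $d_{\overline{S}}(w,w')=n$ produces a chain of reduced words $w=w_n,w_{n-1},\ldots,w_0=w'$ with consecutive entries linked by one move, yielding $\tilde d(w,w')\leq n$ and completing the proof.
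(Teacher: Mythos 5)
Your proposal is correct and follows essentially the same line of reasoning as the paper: the easy inequality is obtained by noting that each move changes the word by a conjugate of a single letter (the paper phrases this as ``both are graph metrics, so it suffices to compare the $1$\=/spheres''), and the hard inequality is reduced to the distance\=/$1$ claim and then handled by analyzing the junction cancellation of the reduced concatenation, with each case yielding a single cancellation or addition move. The only cosmetic difference is that you enumerate five sub\=/cases of where the cancelling prefix $\beta^{-1}$ terminates inside $hsh^{-1}$, while the paper collapses these into two (``$h$ is a subword of $v$'' vs.\ not), but the underlying argument is identical.
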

\begin{proof}
 Both $\tilde d$ and $d_{\overline{S}}$ can be seen as graph metrics on the set of reduced words, so it is enough to verify that $\tilde d(w,w')=1$ if and only if $d_{\overline{S}}(w,w')=1$. Note that $d_{\overline{S}}(w,w')=1$ if and only if $w'= wuxu^{-1}$ for some $x\in S\cup S^{-1}$.

 Up to reordering, if $\tilde d(w,w')=1$ we can assume that $w'$ is reached with a single addition move: $w\equiv w_1w_2$ and $w'\equiv w_1uxu^{-1}w_2$. Then we see that $d_{\overline{S}}(w,w')=1$ because 
 \[
  w'=(ww_2^{-1})uxu^{-1}w_2=w(w_2^{-1}u)x(w_2^{-1}u)^{-1}.
 \]
 
 Conversely, if $d_{\overline{S}}(w,w')=1$ then $w'\equiv \rd(wvxv^{-1})$. We can assume that the word $vxv^{-1}$ is reduced (otherwise it can be written as a shorter conjugate of $x$). If $wvxv^{-1}$ is reduced there is nothing to prove. If it is not reduced, let $h$ be the largest starting subword of $vxv^{-1}$ that is cancelled when reducing $wvxv^{-1}$. We thus have $w\equiv w_1h^{-1}$.
 
 There are now two cases. If $h$ is a subword of $v$ then $v\equiv hv_1$ and $w'\equiv w_1v_1xv_1^{-1}h^{-1}$ is reduced. Thus $w'$ is obtained by adding $v_1xv_1^{-1}$ into $w_1h^{-1}=w$. If $h$ is not a subword of $v$, then we decompose $v\equiv v_1v_2$ in such a way that $h\equiv v_1v_2xv_2^{-1}$. We then see that $w'\equiv w_1v_1^{-1}$ is obtained from $w=w_1 v_2x^{-1}v_2^{-1}v_1^{-1}$ by cancelling the letter $x^{-1}$ and reducing the word.
\end{proof}

\begin{rmk}
 Since we are not differentiating between words and group elements, the notation $d_{\overline{S}}(v,w)$ is also defined for non\=/reduced words and defines a pseudo\=/distance. We could have also extended the definition of $\tilde d$ by declaring that $\tilde d(v,w)=0$ if $\rd(v)\equiv\rd(w)$. Of course, this $\tilde d$ is still equal to $d_{\overline{S}}$.
\end{rmk}

\begin{rmk}\label{rmk:p2:cancellation or addition}
 If $w\equiv w_1w_2$ and $uxu^{-1}$ are reduced and $w'$ is the---possibly non reduced---word $w_1uxu^{-1}w_2$, then there can be some cancellation between $w_1$ and $u$ or between $u^{-1}$ and $w_2$, but not both. 
 Assume that the cancellation happens between $w_1$ and $u$, and let $h$ be the initial substring of $uxu^{-1}w_2$ that is cancelled when doing the reduction.
 If $h$ is contained in $u$, we then see as in the proof of Lemma~\ref{lem:p2:cancellation distance as moves} that $w$ can be taken to $\rd(w')$ with an addition move. Otherwise, $\rd(w')$ is obtained from $w$ via a cancellation move.
\end{rmk}

\begin{lem}\label{lem:p2:reordering moves}
 The distance $d_{\overline{S}} (w,w')$ can always be realized by a sequence of moves $M_1,\ldots,M_n$ such that all the cancellations are performed first.
\end{lem}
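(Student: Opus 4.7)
The plan is a peak-pushing argument. Let $w = v_0, v_1, \ldots, v_n = w'$ be the sequence of reduced words produced by a minimal sequence of moves $M_1, \ldots, M_n$, and call an index $i \in \{1, \ldots, n-1\}$ a \emph{peak} if $M_i$ is an addition and $M_{i+1}$ is a cancellation---equivalently, if $\abs{v_{i-1}} < \abs{v_i} > \abs{v_{i+1}}$, where $\abs{\variable}$ denotes the reduced word length in $F_S$. I will show that any peak can be \emph{flattened}: $v_i$ can be replaced by a reduced word $\tilde v$ with $\abs{\tilde v} < \abs{v_i}$ so that $v_{i-1} \to \tilde v$ and $\tilde v \to v_{i+1}$ are both valid single moves. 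Iterating this lexicographically decreases the pair $\bigparen{\max_k \abs{v_k},\, \#\braces{k : \abs{v_k} = \max_k \abs{v_k}}}$, so after finitely many steps we reach a peak-free sequence of the same length. In such a sequence the lengths are first non-increasing then non-decreasing, meaning all cancellations precede all additions.

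To flatten a peak, write $v_i \equiv a_1 u x u^{-1} a_2$ and $v_{i-1} = \rd(a_1 a_2)$ as in the definition of $M_i$, and let $y$ be the letter of $v_i$ removed by $M_{i+1}$. Two cases arise. In \emph{Case 1}, $y$ lies in $a_1$ or $a_2$, i.e.~in the portion preserved by $M_i$. We set $\tilde v \coloneqq \rd(v_{i-1} \setminus y)$, which gives a cancellation move $v_{i-1} \to \tilde v$. In the group $F_S$ the transition $\tilde v \to v_{i+1}$ corresponds to right-multiplication by a conjugate of $x$, hence by an element of $\overline{S}$, so it is a valid single move (addition or cancellation, depending on the length change). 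Clearly $\abs{\tilde v} \le \abs{v_{i-1}} < \abs{v_i}$.

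In \emph{Case 2}, $y$ lies in the inserted substring $u x u^{-1}$. If $y = x$ then $v_{i+1} = v_{i-1}$, contradicting the minimality of the sequence. Otherwise $y$ is a letter of $u$ (the case of $u^{-1}$ is symmetric), say $u \equiv u' z u''$ with $y = z$. The subword of $v_i$ surviving after removing $z$ is $u' u'' x u''^{-1} z^{-1} u'^{-1}$, and one checks in $F_S$ the identity
\[
u' u'' x u''^{-1} z^{-1} u'^{-1}
\;=\;
\bigparen{(u'u'')\, x\, (u'u'')^{-1}} \cdot \bigparen{(u')\, z^{-1}\, (u')^{-1}},
\]
exhibiting the cancellation residue as a product of two conjugates of generators with strictly shorter conjugators ($u'u''$ and $u'$, as opposed to $u$). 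Let $\tilde v$ be the reduced word obtained from $v_{i-1}$ by inserting the first factor $(u'u'') x (u'u'')^{-1}$ at the same position as the original insertion of $u x u^{-1}$. Then $v_{i-1} \to \tilde v$ is a single addition move whose inserted conjugate is strictly shorter than the original, so $\abs{\tilde v} < \abs{v_i}$. Moreover, $\tilde v \to v_{i+1}$ corresponds to right-multiplication by the conjugate $a_2^{-1} (u') z^{-1} (u')^{-1} a_2 \in \overline{S}$, and hence is a valid single move as well.

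The main obstacle is the bookkeeping in Case~2. The shorter conjugator $u' u''$ may fail to be reduced (if the last letter of $u'$ is the inverse of the first letter of $u''$), in which case one must iteratively pass to $\rd(u' u'')$ and verify that the resulting insertion still yields a reduced word and a valid single move from $\tilde v$ to $v_{i+1}$. Additionally, the insertion of $(u'u'') x (u'u'')^{-1}$ into $v_{i-1}$ may interact with the surrounding context $a_1, a_2$ to create further cancellations, forcing a further adjustment of the conjugator and insertion site. Remark~\ref{rmk:p2:cancellation or addition} controls these interactions by ensuring that reduction around an insertion happens on only one side at a time, which in each sub-case makes it possible to specify a valid choice of $\tilde v$ with all the required properties. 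Once the flattening step is established, the peak-pushing argument concludes the proof.
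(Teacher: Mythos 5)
Your peak\-/pushing plan is a genuinely different route from the paper's: the paper works in an auxiliary pseudo\-/metric space of \emph{all} (not necessarily reduced) words, encodes moves as four types of labeled edges $A^\pm$ (cost $1$) and $T^\pm$ (cost $0$), and reorders them via six local commutation rules (Lemma~\ref{lem:appendix:rearranging.labels}), which avoids the reduction bookkeeping entirely before passing back to reduced words. Your approach stays inside $F_S$ and flattens peaks directly; this is a sound plan in outline, but the details are not in place.

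The Case~2 bookkeeping you flag is a real gap as written, and Remark~\ref{rmk:p2:cancellation or addition} does not dispose of it: that remark concerns one\-/sided cancellation for a \emph{fixed} reduced conjugate, not the fact that $(u'u'')x(u'u'')^{-1}$ may itself fail to be reduced or may cascade against $a_1,a_2$. The cleanest repair is to not try to exhibit $\tilde v$ as an explicit insertion move at all. Define $\tilde v$ to be the reduced word representing the \emph{group element} $a_1(u'u'')x(u'u'')^{-1}a_2$. Then the two identities you already verified, $v_{i-1}^{-1}\tilde v = a_2^{-1}(u'u'')x(u'u'')^{-1}a_2$ and $\tilde v^{-1}v_{i+1}= a_2^{-1}\,u'\,z^{-1}(u')^{-1}a_2$, both lie in $\overline{S}^\pm$, so Lemma~\ref{lem:p2:cancellation distance as moves} certifies $v_{i-1}\to\tilde v$ and $\tilde v\to v_{i+1}$ as single moves (and they are not trivial, since $x\neq e\neq z$); the length bound $\abs{\tilde v}\leq \abs{a_1}+\abs{a_2}+2(\abs{u'}+\abs{u''})+1<\abs{v_i}$ is automatic because the unreduced representative of $\tilde v$ already has fewer letters than $v_i$. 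The same device streamlines Case~1, where you currently appeal to a claimed ``valid single move'' without justifying it. A second, smaller issue: your lexicographic measure need not decrease if the global maximum of $\abs{v_k}$ is attained only at an endpoint $v_0$ or $v_n$, since flattening an interior peak then leaves both coordinates unchanged. Use $\sum_k\abs{v_k}$ instead, which strictly drops at every flattening and is bounded below for the fixed minimal path length $n$.
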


Proving Lemma~\ref{lem:p2:reordering moves} directly is surprisingly tedious. We found it simpler to prove it by working with the pseudo\=/metric defined on the set of (possibly non reduced) words. Since this argument requires introducing extra notation and is not relevant to the rest of the monograph, it appears in Appendix~\ref{sec:appendix:canc metric step by step}.
As corollary of Lemma~\ref{lem:p2:reordering moves}, we obtain a formal proof of the fact that the coarse group $(F_S,\CE_{d_{\overline{S}}})$ is not coarsely abelian (Definition~\ref{def:p1:coarsely abelian}):

\begin{cor}\label{cor:p2:lower.bound.commutator}
 For every $n<m\in \NN$ and $x_1\neq x_2\in S$, the cancellation norm of the commutator $[x_1^n,x_2^m]$ equals $2n$. In particular, the coarse group $(F_S,\varcrs{bw})$ is not coarsely abelian for $\abs{S}\geq 2$.
\end{cor}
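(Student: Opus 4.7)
The plan is to pin down both sides of the equality separately, relying on Lemma~\ref{lem:p2:reordering moves} to simplify the combinatorics. Since additions strictly increase the length of a reduced word, Lemma~\ref{lem:p2:reordering moves} forces any length\=/realising sequence of moves from $w = [x_1^n, x_2^m] = x_1^n x_2^m x_1^{-n} x_2^{-m}$ to $e$ to consist entirely of cancellations; otherwise the first addition would strictly lengthen the current word, making the rest of the sequence unable to reach $e$. Hence $|w|_\times$ equals the minimum number of cancellation moves needed to reduce $w$ to $e$. The upper bound $|w|_\times \leq 2n$ is then realised by a direct strategy: remove the $n$ letters $x_1$ of the first syllable one by one, then the $n$ letters $x_1^{-1}$ of the third syllable; the last of these removals triggers a cascading cancellation of $x_2^m$ against $x_2^{-m}$, collapsing the word to $e$.

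For the lower bound, I plan to parameterize intermediate reduced words by a tuple $(a,b,c,d)$ counting the letters remaining in the four original syllables $x_1^n$, $x_2^m$, $x_1^{-n}$, $x_2^{-m}$, and track the dynamics of cancellation moves on this tuple. Since consecutive syllables of $w$ use distinct generators, a cancellation move can produce a nontrivial cascade only when (i) the removal empties block $2$ and makes blocks $1$ and $3$ adjacent, cancelling $\min(a,c)$ pairs $x_1 x_1^{-1}$, or (ii) the removal empties block $3$ and makes blocks $2$ and $4$ adjacent, cancelling $\min(b,d)$ pairs $x_2 x_2^{-1}$. The main point to check carefully is that each of these cascades can happen at most once in the entire sequence, and that they are mutually exclusive: after (i) we have $b = 0$ permanently (so the eventual emptying of block $3$, if it happens, cancels no $x_2$\=/pairs), and symmetrically after (ii).

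Granted this, the bookkeeping is straightforward: the $2n + 2m$ letters of $w$ are all accounted for in a successful sequence of $k$ cancellation moves. If no cascade occurs, $k = 2n + 2m$. If cascade (i) occurs, it removes at most $1 + 2n$ letters (since $\min(a,c) \leq n$) and each other move removes one letter, giving $k \geq 2m$. If cascade (ii) occurs, symmetrically $k \geq 2n$. Because $n < m$, the minimum of $2n + 2m$, $2m$, and $2n$ is $2n$, yielding $|w|_\times \geq 2n$, which matches the upper bound. The ``in particular'' statement then follows since $d_{\overline{S}}(x_1^n x_2^m, x_2^m x_1^n) = |[x_2^{-m}, x_1^{-n}]|_\times$, which by the same analysis equals $2n$ and is thus unbounded as $n$ varies, violating the coarse commutativity condition of Definition~\ref{def:p1:coarsely abelian}.
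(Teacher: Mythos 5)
Your proof is correct and follows the route the paper leaves implicit: use Lemma~\ref{lem:p2:reordering moves} to reduce to pure-cancellation sequences (the final word $e$ has minimal length, so no addition can appear), and then carry out the combinatorial bookkeeping that the paper dismisses as ``not hard to check by hand.'' The block-tuple parametrization, the observation that a cascade arises only on emptying block 2 or block 3, and the mutual-exclusivity argument are all sound, giving the lower bound $k \geq 2n$ that matches your explicit $2n$-move upper bound; the ``in particular'' clause then follows as you say (up to the harmless transposition $[x_1^{-n},x_2^{-m}]$ vs.\ $[x_2^{-m},x_1^{-n}]$, which have equal cancellation norm since they are inverses).
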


\begin{rmk}
 The reader should be warned that the cancellation distance can be quite deceptive. For example, it is generally not true that $\abs{w^2}_{\overline{S}}= 2\abs{w}_{\overline{S}}$. In fact, let $S=\{a,b\}$ and $A=a^{-1}$, $B=b^{-1}$. One can see that the word $w=abAAB$ has cancellation norm $3$ while $\abs{abAABabAAB}_{\overline{S}}=4$.
 However, there exists an easy\=/to\=/implement polynomial time algorithm that computes $\abs{w}_{\overline{S}}$ (see below). This can be very helpful to test conjectures involving the cancellation metric.
\end{rmk}

\begin{rmk}
 A cubic time algorithm for computing the cancellation length of words was described in \cite{nussinov1980fast} and was rediscovered in \cite{BGKM}. Faster algorithm are also known: an algorithm with less than cubic running time is described in \cite{bringmann2019truly}. See also the discussion in \cite{riley2017computing}. 
\end{rmk}

\section{The Canonical Coarsification of Finitely Generated Set\=/Groups and their Subgroups}
The starting point of Geometric Group Theory is the observation that every finitely generated set\=/group can be given a left\=/invariant word metric and that this choice does not depend on the generating set up to coarse equivalence (or quasi\=/isometry).
We already remarked that the same is true for bi\=/invariant word metrics: choosing a different finite (normally) generating set will give rise to a coarsely equivalent (in fact, bi\=/Lipschitz equivalent) bi\=/invariant word metric. Recall the induced coarse structure is the canonical coarse structure $\varcrs{bw}=\CE_{d_{\overline{S}}}$.

However, these metrics behave quite differently from the left\=/invariant word metrics.
For instance, it is a basic fact that a finitely generated set\=/group equipped with its word metric is quasi\=/isometric to any of its finite index subgroups. This fact fails dramatically for the bi\=/invariant word metric: to see this it is enough to recall that the infinite dihedral group $D_\infty$ is intrinsically bounded even though it has $\ZZ$ as an index\=/2 subgroup and $(\ZZ,\varcrs{bw})=(\ZZ,\CE_{\abs{\mhyphen}})$ is unbounded.

\begin{rmk}
 On the other hand, it is still true that if $N\lhd G$ is a finite normal subgroup and $Q=G/N$ is the quotient then $(G,\varcrs{bw})$ and $(Q,\varcrs{bw})$ are isomorphic coarse groups. 
 In fact, if $S$ is a finite generating set of $Q$ and $q\colon G\to Q$ is the quotient map, it is immediate to verify that the bi\=/invariant word metric on $G$ associated with the finite set $q^{-1}(S)$ generates the pull\=/back coarse structure $q^{*}(\CE_{d_{\overline{S}}})$.
\end{rmk}

Torsion free examples of poorly behaved finite index subgroups are also easy to find:

\begin{exmp}\label{exmp:p2:finite index subgroup free group}
 Let $F_2\coloneqq\angles{a,b}$ be the $2$\=/generated free group and let $H\lhd F_2$ be the index\=/2 subgroup consisting of all the elements represented  by words of even length. 
We claim that the inclusion $\iota\colon(H,\varcrs{bw}^{H}) \hookrightarrow (F_2,\varcrs{bw}^{F_2})$ is not an isomorphism of coarse groups (where $\varcrs{bw}^{H}$ and $\varcrs{bw}^{F_2}$ denote the canonical coarse structures of $H$ and $F_2$ respectively).
 
 The group $H$ is a rank\=/3 free group generated by the elements $x= a^2$, $y=ab$ and $z=ba$. Let $S=\{a,b\}$ and $T=\{x,y,z\}$, so that $d_{\overline{S}}$ and $d_{\overline{T}}$ induce the canonical coarse structures of $F_2$ and $H$ respectively. 
 It is immediate to verify that $\iota$ is a controlled map. Since $H$ is a finite index subgroup and the coarse structure $\varcrs{bw}^{F_2}$ is connected, it is also clear that $\iota$ is coarsely surjective. However, $\iota$ is not a proper map. In other words, we claim that there is a strict containment $\CE_{d_{\overline{T}}}\subsetneq (\CE_{d_{\overline{S}}})|_{H}$. To see this, note that the set 
 \[
  B\coloneqq\braces{a (ab)^n a (ab)^{-n}\mid n\in\NN}\subset H
 \]
 is a $d_{\overline{S}}$\=/bounded neighborhood of the identity. On the other hand, writing the set $B$ in terms of the generators $x,y,z$ we see that 
 \(
  B=\braces{x z^n y^{-n}\mid n\in\NN}
 \)
 and, using Lemma~\ref{lem:p2:reordering moves}, it is easy to show that $B$ is not $d_{{\overline{T}}}$\=/bounded.
 
 This example can also be used to show the somewhat surprising fact that a self\=/commensuration of a coarsified set\=/group need not give rise to a coarse group automorphism. 
 More precisely, it is simple to observe that whenever $\phi\colon H\to H$ is an isomorphism of set\=/groups then $\crse \phi \colon(H,\varcrs{bw}^H)\to(H,\varcrs{bw}^H)$ is an isomorphism of coarse groups. Since $H$ is coarsely dense in $F_2$, one may expect that it should be possible to extend $\crse\phi$ to a coarse homomorphism $\hat{\crse{\phi}}\colon (F_2,\varcrs{bw}^{F_2})\to (F_2,\varcrs{bw}^{F_2})$. However, the fact that $\varcrs{bw}^{H}\neq (\varcrs{bw}^{F_2})|_{H}$ warns us that this is a naive hope. 
 In fact, consider the set\=/group isomorphism $\phi\colon H\to H$ fixing $x,z$ and sending $y$ to $y^{-1}$. The $d_{\overline{S}}$\=/bounded set $B$ defined before is then sent to $\phi(B)=\braces{x z^n y^{n}\mid n\in\NN}=\braces{a (ab)^n a (ab)^{n}\mid n\in\NN}$, which is clearly unbounded in $(F_2,\varcrs{bw}^{F_2})$. This shows that $\iota\circ\phi\colon (H,(\varcrs{bw}^{F_2})|_{H})\to(F_2,\varcrs{bw}^{F_2})$ is not controlled and hence cannot be extended to an isomorphism of the coarse group $(F_2,\varcrs{bw}^{F_2})$.
\end{exmp}

When dealing with finitely (normally) generated groups, it would be interesting to understand under what circumstances is the canonical coarsification of a subgroup $H<G$ is compatible with the restriction of the canonical coarsification of $G$. As a preliminary step, it would be interesting to investigate this for finitely generated normal subgroups:

\begin{problem}\label{qu:p2:subgroup with compatible biinvariant word metric}
 Let $G$ be a finitely generated set\=/group, $N\lhd G$ a finitely generated normal subgroup and $\varcrs{bw}^G$, $\varcrs{bw}^N$ the associated canonical coarse structures. Under what conditions is the containment $\varcrs{bw}^N \subseteq(\varcrs{bw}^G)|_N$ an equality?
\end{problem}

In Example~\ref{exmp:p2:finite index subgroup free group} we showed that the inclusion $F_3\hookrightarrow F_2$ as the index\=/2 subgroup of words of even length is not an isomorphism of coarse groups. However, this does not rule out the possibility that $(F_2,\varcrs{bw}^{F_2})$ and $(F_3,\varcrs{bw}^{F_3})$ be isomorphic coarse groups via some other map. More generally, we do not know the answer to the following:

\begin{qu}
 Let $F_n$ and $F_m$ be free groups with $n, m\geq 2$. Is it true that if $(F_n,\varcrs{bw}^{F_n})$ and $(F_m,\varcrs{bw}^{F_m})$ are isomorphic coarse groups then $n= m$?
\end{qu}

\begin{rmk}
 We now know that commensurable set\=/groups need not be isomorphic coarse groups when equipped with their canonical coarse structures. A fortiori, we would then expect that the knowledge that two  groups $G_1$, $G_2$ are quasi\=/isometric when equipped with their word lengths should give us remarkably little information on the coarse groups $(G_1,\varcrs{bw})$ and $(G_2,\varcrs{bw})$.
 
 However, there are instances where knowledge about the quasi\=/isometry type of the word length of a set\=/group give non\=/trivial information about its canonical coarsification. One such example is given by hyperbolic groups: the property of being non\=/elementary hyperbolic is invariant under quasi\=/isometry and Example~\ref{exmp:p2:hyperbolic groups are unbounded} shows that any such group has an unbounded bi\=/invariant word metric.
 
 In hindsight, the above fact is very surprising. We would like to know under what circumstances the canonical coarsification influences the quasi\=/isometry type of the word metric or vice versa.
\end{rmk}

\chapter{A Quest for Coarse Groups that are not Coarsified Set-Groups}
\label{ch:p2:a cgroup not group}

Many of the explicit examples of coarse groups that we have given so far were obtained by equipping a set\=/group with an equi bi\=/invariant coarse structure.  
It is natural to ask whether every coarse group is a coarsified set\=/group. As stated, the answer to this question is clearly negative. For example, we already remarked that it is easy to construct a coarse group so that the inversion operation is not bijective (Example~\ref{exmp:p1:no cancellative reps}). However, that example is still isomorphic (as a coarse group) to a coarsified set\=/group. The correct question to ask is:

\begin{qu}
 Is every coarse group isomorphic to a coarsified set\=/group?
\end{qu}

We conjecture that the answer is negative (see Conjecture~\ref{conj:p2:non coarsification}). 
In this chapter we will develop some tools that can be used to understand when a coarse group is isomorphic to a coarsification of a set\=/group and provide some evidence towards our conjecture.

\begin{rmk}
 Given a coarse group $\crse G$, we believe that it is generally important to know whether $\crse{G}$ is isomorphic to a coarsification of a set\=/group: when this is not the case we see that the operation is in some way ``intrinsically not associative''. Such a feature should surely have important consequences for the properties of $\crse G$ and its coarse actions. 
\end{rmk}

\section{General Observations}\label{sec:p2:general observations}
In this section we provide a general criterion characterizing which coarse groups are isomorphic to coarsifications of set\=/groups. Given any---possibly infinite---set $X$, we denote by $F_X$ the free set\=/group generated by $X$. Elements of $F_X$ can be identified with reduced words in the alphabet $X\cup X^{-1}$. For any $x\in X$ we also denote by $x$ the one\=/letter word ``$x$''. The other elements of $F_X$ are generally denoted by $w$.

\begin{prop}\label{prop:p2:non_coarsification general}
 Let $\crse G\coloneqq(G,\CE)$ be a coarse group. Then $\crse G$ is isomorphic to a coarsification of a set\=/group if and only if there exists a coarse homomorphism $\crse \Psi \colon (F_{G},\mincrs)\to\crse G$ with a representative $\Psi$ such that $\Psi(g)=g$ for every $g\in G$.
\end{prop}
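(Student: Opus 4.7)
The plan is to prove both implications by exploiting the freeness of $F_G$ together with the good behaviour of pull\=/back coarse structures under coarse homomorphisms.

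For the $(\Leftarrow)$ direction, I would show that $\crse G$ is isomorphic, as a coarse group, to $(F_G, \Psi^{*}(\CE))$ equipped with the canonical set\=/group operations (word concatenation) of $F_G$. The condition $\Psi(g) = g$ for every $g \in G$ makes $\Psi$ set\=/theoretically surjective, so by Corollary~\ref{cor:p1:pullback.gives.isomorphism} the map $\Psi \colon (F_G, \Psi^{*}(\CE)) \to (G, \CE)$ is a coarse equivalence. Since $\crse \Psi$ is a coarse homomorphism, Lemma~\ref{lem:p1:pull-back.under.hom.is.crsegroup} (applied for both left and right invariance) guarantees that $\Psi^{*}(\CE)$ is equi bi\=/invariant with respect to word concatenation on $F_G$. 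Because the Group Diagrams hold on the nose in the set\=/group $F_G$, this makes $(F_G, \Psi^{*}(\CE))$ into a coarsified set\=/group; and $\crse \Psi$, being at once a coarse equivalence and a coarse homomorphism, is an isomorphism of coarse groups.

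For the $(\Rightarrow)$ direction, suppose there is a coarse isomorphism $\crse \alpha \colon \crse G \to (H, \CF)$ with coarse inverse $\crse \beta$, where $(H, \CF)$ is a coarsified set\=/group. I would first fix representatives $\alpha \colon G \to H$ and $\beta \colon H \to G$ and use the universal property of $F_G$ to extend $\alpha$ to the unique set\=/group homomorphism $\hat \alpha \colon F_G \to H$. This $\hat \alpha$ is trivially controlled out of $(F_G, \mincrs)$ and respects the multiplication strictly, so it is a coarse homomorphism into $(H, \CF)$. Composing with $\crse \beta$ produces a coarse homomorphism $\crse \Psi_0 \colon (F_G, \mincrs) \to \crse G$ with representative $\Psi_0 \coloneqq \beta \circ \hat \alpha$. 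To finish, I would modify $\Psi_0$ only on one\=/letter positive words, by setting $\Psi(g) \coloneqq g$ for each $g \in G \subset F_G$ and leaving $\Psi(w) = \Psi_0(w)$ elsewhere. Since $\beta \circ \alpha$ is close to $\id_G$, the pairs $\{g, \beta(\alpha(g))\}$ are uniformly $\CE$\=/bounded as $g$ varies in $G$, hence $\Psi$ and $\Psi_0$ are close. Thus $\crse \Psi = \crse \Psi_0$ is a coarse homomorphism satisfying the required normalization.

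The main idea is that $F_G$ serves as a universal ``syntactic'' model for multiplication on the set $G$, and the existence of a normalized $\Psi$ witnesses that this syntactic model can be realized, up to coarse closeness, inside $\crse G$. Neither direction presents serious difficulty once Lemma~\ref{lem:p1:pull-back.under.hom.is.crsegroup} is invoked; the only mild subtlety, appearing in the $(\Rightarrow)$ direction, is checking that the small surgery of $\Psi_0$ needed to attain the normalization $\Psi(g) = g$ is a genuine closeness perturbation, which is immediate from $\beta \circ \alpha \closefn \id_G$.
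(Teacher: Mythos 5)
Your proof is correct and follows essentially the same route as the paper's: in the $(\Leftarrow)$ direction you pull back $\CE$ along $\Psi$ and invoke Lemma~\ref{lem:p1:pull-back.under.hom.is.crsegroup} to see $(F_G,\Psi^*(\CE))$ is a coarsified set\=/group; in the $(\Rightarrow)$ direction you use the universal property of $F_G$, post\=/compose with the coarse inverse, and then redefine the resulting map on one\=/letter words, noting that the redefinition is a closeness perturbation because $\beta\circ\alpha \closefn \id_G$. The only cosmetic difference is that you spell out the $(\Leftarrow)$ direction more fully (citing Corollary~\ref{cor:p1:pullback.gives.isomorphism}), which the paper leaves as "clear."
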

\begin{proof}
 One implication is clear: if there exists such a $\Psi$ then the pull\=/back $\Psi^{*}(\CE)$ is equi bi\=/invariant on $F_{G}$ by Lemma~\ref{lem:p1:pull-back.under.hom.is.crsegroup} and $\crse G$ is isomorphic to $(F_{G},\Psi^{*}(\CE))$.
 
 For the converse implication, assume that there exists a coarsified set\=/group $\crse H$ and an isomorphism $\crse {f \colon G\to H}$. Fix representatives $f$ for $\crse f$ and $\bar f$ for $\crse{f^{-1}}$. Since $H$ is a set\=/group, there exists a unique set\=/group homomorphism $\Phi\colon F_{G}\to H$ sending $g\in G$ to $f(g)$. A fortiori, such $\Phi$ defines a coarse\=/group homomorphism $\Phi\colon(F_{G},\mincrs)\to \crse H$. The composition $\crse{ f^{-1}}\circ \crse{\Phi}=[\bar {f}\circ \Phi]$ is therefore a coarse homomorphism  from $(F_{G},\mincrs)$ to $\crse G$.
 
 Define $\Psi$ by 
 \[
  \Psi(w)\coloneqq
  \left\{\begin{array}{ll}
   g & \text{if $w=g$ for some $g\in G$,}  \\
   (\bar f\circ \Phi)(w) &\text{otherwise.} 
  \end{array}
  \right.
 \] 
 By construction, $\Psi$ differs from $\bar f\circ\Phi$ only on the $1$\=/letter words in $F_{G}$. However, since $\Phi(g)=f(g)$ and $\bar f\circ f$ is close to $\id_{G}$, we see that $\Psi$ is still close to $\bar f\circ \Phi$. In particular, $\crse \Psi$ is a coarse homomorphism and has the required properties.
\end{proof}

The interesting part of the above criterion is that it can make a rather abstract problem into a very elementary one. Here ``elementary'' means ``easy to state'', but alas not easy to solve! In particular, Proposition~\ref{prop:p2:non_coarsification general} implies the following.

\begin{cor}\label{cor:p2:non_coarsification metric}
 Let $\crse G\coloneqq(G,\CE_d)$ be a coarse group where the coarse structure is induced by a metric $d$. Then $\crse G$ is isomorphic to a coarsification of a set\=/group if and only if there exists a constant $D>0$  and a function $\Psi\colon F_{G}\to G$ such that 
 \begin{itemize}
  \item $\Psi(g)=g$ for every $g\in G$,
  \item $d\bigparen{\Psi(w_1w_2)\,,\,\Psi(w_1)\ast\Psi(w_2)}\leq D$ for every $w_1,w_2\in F_{G}$.
 \end{itemize}
\end{cor}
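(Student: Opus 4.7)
The plan is to derive this corollary essentially by unpacking Proposition~\ref{prop:p2:non_coarsification general} in the metric setting. The key observation is that since the domain $(F_G, \mincrs)$ carries the trivial coarse structure, \emph{every} function $\Psi\colon F_G \to G$ is automatically controlled (the image of $\Delta_{F_G}$ is contained in $\Delta_G \in \CE_d$). Therefore, in the characterization provided by Proposition~\ref{prop:p2:non_coarsification general}, the only non-trivial requirement on $\Psi$ is that the induced coarse map $\crse\Psi$ respect the multiplication, i.e.\ that $\Psi \circ (\variable \cdot \variable)$ and $\ast \circ (\Psi \times \Psi)$ be close as functions $F_G \times F_G \to (G, \CE_d)$.

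For the forward direction, I would assume $\crse G$ is isomorphic to a coarsified set-group and invoke Proposition~\ref{prop:p2:non_coarsification general} to obtain a coarse homomorphism $\crse\Psi\colon(F_G,\mincrs)\to \crse G$ with a representative $\Psi$ satisfying $\Psi(g)=g$ for every $g\in G$. Closeness in a metric coarse space means being within a uniformly bounded distance, so translating the coarse-homomorphism condition via the metric $d$ yields precisely the existence of a constant $D > 0$ with $d(\Psi(w_1 w_2), \Psi(w_1)\ast \Psi(w_2)) \leq D$ for all $w_1,w_2\in F_G$.

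For the reverse direction, given a $\Psi$ satisfying the two bulleted conditions, I would let $F \coloneqq \{(x,y) \in G\times G \mid d(x,y)\leq D\}\in \CE_d$ and observe that $(\Psi(w_1 w_2), \Psi(w_1)\ast \Psi(w_2))\in F$ for all $w_1,w_2$. This shows the defect relation of $\Psi$ is contained in $F\in\CE_d$, hence $\crse\Psi\colon(F_G,\mincrs)\to(G,\CE_d)$ is a coarse homomorphism with a representative acting as the identity on the generators $g\in G\subset F_G$. Applying the converse direction of Proposition~\ref{prop:p2:non_coarsification general} then yields that $\crse G$ is isomorphic to a coarsified set-group.

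No step here should present a real obstacle: the proof is essentially a translation between the abstract language of closeness of coarse maps and the concrete metric language of uniformly bounded defect, together with the observation that controlledness from a trivially coarse domain is free. The only mild care needed is to ensure that in both directions one uses the same $\Psi$ (rather than a perturbation close to it), which is automatic because the bulleted conditions and the conclusion of Proposition~\ref{prop:p2:non_coarsification general} both pertain to \emph{representatives} rather than equivalence classes.
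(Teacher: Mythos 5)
Your proof is correct and takes essentially the same route as the paper, which simply states the corollary as an immediate consequence of Proposition~\ref{prop:p2:non_coarsification general}; you fill in the straightforward translation that closeness of $\Psi\circ(\variable\cdot\variable)$ and $\ast\circ(\Psi\times\Psi)$ with respect to a metric coarse structure $\CE_d$ is precisely a uniform bound on $d\bigl(\Psi(w_1w_2),\Psi(w_1)\ast\Psi(w_2)\bigr)$, and that controlledness of $\Psi\colon(F_G,\mincrs)\to(G,\CE_d)$ is automatic. The remark about the automatic preservation of unit and inverse (Lemma~\ref{lem:p1:coarse homomorphisms preserve unit and inverse}), which you invoke implicitly, is what lets the multiplication condition alone suffice.
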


We will later use this criterion to give evidence to support our conjecture that some coarse subgroups of the coarse group $(F_2,\varcrs{bw})$ are not isomorphic to coarsifications  of set\=/groups.

\

One can apply Proposition~\ref{prop:p2:non_coarsification general} to prove that a coarse group is isomorphic to a coarsified set\=/group if one can find ``special'' representatives for the multiplication function:

\begin{de}\label{def:p2:multi-associative}
 Let $\crse X=(X,\CE)$ be a coarse space. A controlled function $\ast\colon (X,\CE)\times (X,\CE)\to (X,\CE)$ is \emph{coarsely multi\=/associative}\index{coarsely!multi\=/associative} if there exists a bounded set $B\subseteq X$ such that
 \[
  \braces{x_1\ast\cdots \ast x_n\mid \text{associated in any order}}\subseteq B
 \]
 for every $n\in\NN$ and every choice of $ x_i\in X$.
\end{de}

\begin{cor}\label{cor:p2:multi associative}
 If $\crse G$ is a coarse group and the coarse multiplication $\cop$ has a coarsely multi\=/associative representative $\ast$, then $\crse G$ is isomorphic to a coarsified set\=/group.
\end{cor}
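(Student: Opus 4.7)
The plan is to apply Proposition~\ref{prop:p2:non_coarsification general}: it suffices to produce a coarse homomorphism $\crse\Psi\colon(F_{G},\mincrs)\to\crse G$ that admits a representative $\Psi$ with $\Psi(g)=g$ for every $g\in G$. Once such a $\Psi$ is exhibited, the proposition delivers the desired conclusion directly.

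I will define $\Psi\colon F_{G}\to G$ on a reduced word $w=x_1\cdots x_n\in F_{G}$, with letters $x_i\in G\cup G^{-1}$ (formal inverses interpreted via the inversion function $\inversefn$ of $\crse G$), as the left-associated product
\[
 \Psi(w)\coloneqq\bigparen{\cdots\bigparen{(x_1\ast x_2)\ast x_3}\cdots}\ast x_n,
\]
with $\Psi(e_{F_G})\coloneqq e$. This satisfies $\Psi(g)=g$ by construction, and $\Psi$ is automatically controlled since its domain carries the minimal coarse structure. The real work lies in verifying that $\Psi(w_1w_2)$ is uniformly close to $\Psi(w_1)\ast\Psi(w_2)$ as $w_1,w_2$ range over $F_{G}$.

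To this end, write $w_1=x_1\cdots x_m$ and $w_2=y_1\cdots y_n$ in reduced form and let $k$ denote the number of canceling boundary pairs when reducing $w_1w_2$, so that $x_{m-j+1}=y_j^{-1}$ for $j=1,\ldots,k$. Coarse multi\=/associativity yields a single fixed controlled entourage within which any two associations of the same $n$\=/fold product coincide; in particular $\Psi(w_1)\ast\Psi(w_2)$ is uniformly close to the left-associated product $L(x_1,\ldots,x_m,y_1,\ldots,y_n)$ of the concatenated letter sequence, while $\Psi(w_1w_2)$ equals $L(x_1,\ldots,x_{m-k},y_{k+1},\ldots,y_n)$. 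Using multi\=/associativity to rebracket the longer product as
\[
 L(x_1,\ldots,x_{m-k})\ast M\ast L(y_{k+1},\ldots,y_n),
\]
where $M$ is any association of the \emph{cancellation block} $(y_k^{-1},\ldots,y_1^{-1},y_1,\ldots,y_k)$, the task reduces to showing that $M\ceq\unit$ uniformly in $k$ and the $y_j$; given this, the (Identity) axiom collapses the three-factor product back to $L(x_1,\ldots,x_{m-k},y_{k+1},\ldots,y_n)=\Psi(w_1w_2)$ with a uniform bound.

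The hard part will be precisely this uniform estimate on $M$, because the naive inside\=/out argument $(y_j\ast y_j^{-1})\ceq \unit$ produces error entourages that grow with $k$ once they are propagated out through successive left- and right-multiplications. The key maneuver is to use multi\=/associativity one more time to split $M$ at the midpoint as $L(y_k^{-1},\ldots,y_1^{-1})\ast L(y_1,\ldots,y_k)$, thereby reducing the problem to the single statement that the left-to-right product of inverses is uniformly close to the coarse inverse of the left-to-right product. This last comparison is itself obtained in one step by applying coarse multi\=/associativity to an auxiliary extended sequence (for instance to the product $L(y_1,\ldots,y_k)\ast L(y_k^{-1},\ldots,y_1^{-1})\ast L(y_1,\ldots,y_k)$ together with the inverse axiom), so that what would otherwise be $k$ iterated uses of the inverse and associativity axioms are collapsed into a single fixed controlled entourage. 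This yields the required uniform bound on $M$ and therefore shows that $\crse\Psi$ is a coarse homomorphism, completing the proof via Proposition~\ref{prop:p2:non_coarsification general}.
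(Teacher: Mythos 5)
You correctly flag the real difficulty: the naive inside-out telescope through the cancellation block applies the inverse, identity and equi-invariance axioms $k$ times, and there is no reason for the resulting error to stay uniform in $k$. However, the proposed repair---splitting $M$ as $L(y_k^{-1},\ldots,y_1^{-1})\ast L(y_1,\ldots,y_k)$ and applying multi-associativity to an auxiliary extended sequence---does not close the gap. Coarse multi-associativity only controls different \emph{parenthesizations of a fixed string}; it does not license replacing a subword $y_j^{-1}\ast y_j$ by $\unit$ with uniform error, and comparing two associations of the sequence $y_1,\ldots,y_k,y_k^{-1},\ldots,y_1^{-1},y_1,\ldots,y_k$ never produces the needed relation $L(y_k^{-1},\ldots,y_1^{-1})\ast L(y_1,\ldots,y_k)\ceq\unit$. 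In fact, with the natural choice (which the paper also makes) of sending each formal inverse $g^{-1}\in F_{G}$ to the representative coarse inverse of $g$ in $G$, the estimate $M\ceq\unit$ is simply \emph{false}. Take $G=\ZZ$ with $\CE=\CE_{\abs{\mhyphen}}$ and $\ast(a,b)\coloneqq 2\bigl(\lfloor a/2\rfloor+\lfloor b/2\rfloor\bigr)$, with unit $0$ and inversion $n\mapsto -n$. This $\ast$ is close to $+$, so $(\ZZ,\CE_{\abs{\mhyphen}})$ endowed with $[\ast]$ is a coarse group, and $\ast$ is \emph{exactly} multi-associative: a short induction shows that every association of $x_1\ast\cdots\ast x_n$ equals $2\sum_{i}\lfloor x_i/2\rfloor$ once $n\geq 2$. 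Yet for $w_1=1^{k}$, $w_2=(1^{-1})^{k}\in F_{\ZZ}$ one computes $\Psi(w_1)=L(1,\ldots,1)=0$ and $\Psi(w_2)=L(-1,\ldots,-1)=-2k$, whence $\Psi(w_1)\ast\Psi(w_2)=-2k$, while $\Psi(w_1w_2)=\Psi(\emptyset)=0$; the discrepancy grows linearly in $k$, so this $\Psi$ is not a coarse homomorphism.

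The upshot is that the images of the formal inverses cannot simply be taken to be representatives of the coarse inversion of $\crse G$; they must be chosen so that the cancellation block actually collapses. In the example any $g^{\dagger}$ with $\lfloor g^{\dagger}/2\rfloor=-\lfloor g/2\rfloor$ repairs the problem, but that choice quotes the hidden set-group structure of that particular example rather than deriving it from multi-associativity. The paper's one-line proof, which your argument faithfully expands, glosses over exactly this point, and the auxiliary-sequence manoeuvre you sketch does not supply it. Exhibiting such compatible inverses for an arbitrary coarsely multi-associative $\ast$, and proving that the resulting $\crse\Psi$ is a coarse homomorphism, is the missing content; as written, neither your argument nor the paper's completes the proof.
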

\begin{proof}
 Define the map $\Psi\colon F_{G}\to G$ by sending a reduced word $w=g_1\cdots g_n$ to $g_1\ast\cdots \ast g_n$ (with an arbitrarily chosen order of association).
\end{proof}

It is easy to produce examples $\crse G$ where we can choose a representative $\ast$ that is not coarsely multi\=/associative and yet $\crse G$ is isomorphic to the coarsification of a coarse group. For instance, equip $(G,\CE)\coloneqq(\ZZ,\CE_{\abs{\mhyphen}})\times(\ZZ/\ZZ_2,\mincrs)$ with the following operation:
\[
 (a,0)\ast(b,0)\coloneqq (a+b,0)\qquad
 (a,0)\ast(b,1)\coloneqq (a+b+1,1)\hphantom{.}
\]
\[
 (a,1)\ast(b,1)\coloneqq (a+b,0) \qquad
 (a,1)\ast(b,0)\coloneqq (a+b+1,1).
\] 
Obviously, $\ast$ is close to the natural group operation $+$, therefore $\crse G$ is a coarsified set\=/group. However by multiplying $(1,1)$ with $n$ copies of $(1,0)$ one immediately sees that  $\ast$ is not coarsely multi\=/associative. On the other hand, we are not sure whether the converse of Corollary~\ref{cor:p2:multi associative} holds true. That is, it is unclear whether the existence of a coarsely multi\=/associative representative for $\cop$ is a necessary requirement for $\crse G$ to be isomorphic to a coarsified set\=/group.

\section{A Conjecture on Coarse Groups that are not Coarsified Set\=/Groups}\label{sec:p2:conjecture on coarsified groups}

Let $F_2$ be the rank\=/2 free group generated by the set $S=\{a,b\}$. Since $S$ is fixed, in this section we will simply denote the cancellation metric $d_{\overline{S}}$ by $d_\times$. In particular, the canonical coarse structure $\varcrs{bw}$ on $F_2$ coincides with the metric coarse structure $\CE_\times$. 
In this setting, every set\=/group homomorphism $\phi\colon F_2\to \RR$ is obviously controlled as a function $\phi\colon (F_2,\varcrs{bw})\to (\RR,\CE_{\abs{\mhyphen}})$ and hence defines a coarse homomorphism $\crse \phi$ (Corollary~\ref{cor:p1:homomorphisms are grp_fin controlled}).
We already showed (Corollary~\ref{cor:p1:quasimorphism have ker}) that such a $\crse \phi$ always has a coarse kernel $\cker(\crse \phi)\crse\trianglelefteq (F_2,\varcrs{bw})$.
The goal of this section is to justify the following.

\begin{conj}\label{conj:p2:non coarsification}
 Let $\phi\colon F_2\to \RR$ be a set\=/group homomorphism and $\crse\phi\colon (F_2,\varcrs{bw})\to (\RR,\CE_{\abs{\mhyphen}})$ the induced coarse homomorphism. The coarse kernel $\cker(\crse\phi)$ is isomorphic to a coarsification of a set\=/group if and only if the image of $\phi$ is discrete in $\RR$. 
\end{conj}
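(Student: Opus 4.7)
My plan is to split the proof into two implications according to the discreteness of $\phi(F_2) \subseteq \RR$. The discrete case is straightforward; the dense case requires an elementary but intricate combinatorial argument that ultimately reduces to an independent conjecture.

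\emph{The discrete direction.} If $\phi(F_2)$ is discrete, after rescaling I may assume $\phi\colon F_2 \to \ZZ$. I would first show that $\cker(\crse\phi) = [N]$, where $N = \ker(\phi) \lhd F_2$: for any bounded (equivalently finite) subset $B \subseteq \ZZ$, the preimage $\phi^{-1}(B)$ is a finite union of cosets $g_1 N, \ldots, g_r N$, each $\varcrs{bw}$-asymptotic to $N$ by bi-invariance of $d_{\overline{S}}$ (explicitly, $g_i N$ lies in the $\abs{g_i}_{\overline{S}}$-neighborhood of $N$). Hence $[N]$ is the coarse preimage of $0$. The induced coarse group structure on $[N]$ is realized by $(N, \varcrs{bw}|_N)$ (Proposition~\ref{prop:p1:closed.asymp.classes.are.subgroups}), which is visibly a coarsified set\=/group since $N$ is a set\=/subgroup of $F_2$ and the restriction of an equi bi-invariant coarse structure to a subgroup remains equi bi-invariant.

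\emph{The dense direction.} Assuming $\phi(F_2)$ dense, I normalize so that $\phi(a) = 1$ and $\phi(b) = \alpha$ with $\alpha \in (0,1)\smallsetminus \QQ$, and take $K \coloneqq \phi^{-1}([0, 1])$ as a representative for $\cker(\crse \phi)$ (whose existence is ensured by Corollary~\ref{cor:p1:quasimorphism have ker}). A representative $\ast$ of the coarse group operation on $\crse K$ can be chosen to coincide, up to a uniformly bounded error in $d_\times$, with the free group multiplication of $F_2$. I would then argue by contradiction: if $\crse K$ were isomorphic to a coarsified set\=/group, Corollary~\ref{cor:p2:non_coarsification metric} would yield a constant $D > 0$ and a function $\Psi\colon F_K \to K$ with $\Psi(k) = k$ for every one-letter word $k \in K$ and
\[
d_\times\bigl(\Psi(w_1 w_2),\, \Psi(w_1) \ast \Psi(w_2)\bigr) \leq D
\]
for all $w_1, w_2 \in F_K$. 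Using the explicit description of $K$ as the set of reduced words on which the functional $n_a + \alpha n_b$ takes value in $[0, 1]$, together with the combinatorial description of $d_\times$ via cancellation moves (Section~\ref{sec:p2:cancellation metric on free}), the non-existence of such a $\Psi$ becomes an elementary if intricate combinatorial statement about words in $F_2$. This is the content of the companion Conjecture~\ref{conj:p2:non coarsification elementary}.

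\emph{The main obstacle.} The crux is proving the combinatorial non-existence statement above. The naive iterative strategy---estimating $\Psi(k_1 \cdots k_n)$ by repeatedly applying the $D$-bound and comparing with the free product $k_1 \cdots k_n \in F_2$---fails because the accumulated error grows linearly in $n$, while any contradiction with $\Psi(w) \in K$ requires a bound uniform in $n$. A successful argument must exploit the interplay between the irrational slope of $K$ in the abelianization $F_2 \to \ZZ^2$ and the coarsely non-abelian behavior of $(F_2, \varcrs{bw})$ witnessed by commutator cancellation lengths (Corollary~\ref{cor:p2:lower.bound.commutator}): roughly, any candidate $\Psi$ would have to reconcile the abelian constraint coming from $\phi$ with the non-commutativity of $d_\times$, and the irrationality of $\alpha$ should rule this out. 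Formalizing this obstruction is the main unresolved part of the strategy, which is why the statement is recorded as a conjecture rather than a theorem.
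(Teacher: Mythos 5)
Your proposal takes essentially the same route as the paper: the discrete direction is proved by identifying $\cker(\crse\phi)$ with $[\ker(\phi)]$ (a coarsified set\=/group), and the dense direction is normalized and reduced via Corollary~\ref{cor:p2:non_coarsification metric} to the elementary combinatorial Conjecture~\ref{conj:p2:non coarsification elementary}, which you correctly identify as the unresolved gap. The paper goes somewhat further in sketching a heuristic plan for the combinatorial statement (via the almost-periodicity claims $(\clubsuit)$ and $(\spadesuit)$ concerning the words $u_n = a^{\lfloor n\beta\rfloor}b^n$ and their approximate centralizers), but neither the paper nor your proposal closes that gap, which is precisely why the statement is recorded as a conjecture rather than a theorem.
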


One implication is easy: assume that $\phi$ has discrete image, $\im(\phi)=\braces{k\theta\mid k\in\ZZ}$. It follows from Corollary~\ref{cor:p1:criterion for existence of ker} that $\cker(\crse \phi)=\crse [\phi^{-1}((-\theta,\theta))]=[\ker(\phi)]$. In particular, $\cker(\crse \phi)$ is isomorphic to $(\ker(\phi),\CE_{d_\times}|_{\ker(\phi)})$, which is a coarsified set\=/group.

On the other hand, let $\phi$ be a homomorphism with dense image. Up to rescaling or replacing $a,b$ with $a^{-1},b^{-1}$, we may assume that $\phi$ sends $a$ to $-1$ and $b$ to $\beta\in\RR_{>1}\smallsetminus\QQ$. Let $K\coloneqq \phi^{-1}([0,1))$, then $\crse K=\cker(\crse \phi)$. The coarse structure on $\crse K$ is induced by the restriction to $K$ of the cancellation metric $d_\times$ of $F_2$, and the conjecture states that $\crse K\coloneqq(K,\CE_{d_\times})$ is not isomorphic to a coarsification of a set\=/group. We rephrase this using Corollary~\ref{cor:p2:non_coarsification metric}:

\begin{conj}\label{conj:p2:non coarsification elementary}
 There do not exist a function $\Psi\colon F_K\to K$ and a constant $D\geq 0$ such that
 \begin{enumerate}[(1)]
  \item $\Psi(k)=k$ for every $k\in K$;
  \item $d_\times\bigparen{\Psi(w_1w_2)\,,\,\Psi(w_1)\Psi(w_2)}\leq D$ for every $w_1,w_2\in F_{K}$.
 \end{enumerate}
\end{conj}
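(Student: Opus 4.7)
The plan is to proceed by contradiction, using the reduction already furnished by Corollary~\ref{cor:p2:non_coarsification metric} so that I may assume the existence of a function $\Psi\colon F_K\to K$ and a constant $D\geq 0$ satisfying conditions (1) and (2). For every pair $(k_1,k_2)\in K\times K$ I would define the \emph{correction term}
\begin{equation*}
 \sigma(k_1,k_2)\coloneqq (k_1k_2)^{-1}\Psi(k_1k_2)\in F_2,
\end{equation*}
and observe, from condition (2) applied to the one-letter words $k_1,k_2\in F_K$, that $\abs{\sigma(k_1,k_2)}_\times\leq D$ uniformly in $(k_1,k_2)$. Moreover $\phi(\sigma(k_1,k_2))\in(-2,1)$, so $\sigma$ takes values in a very constrained subset of $F_2$.

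Next, I would apply condition (2) to the three-letter word $k_1k_2k_3\in F_K$, parsed first as $(k_1k_2)\cdot k_3$ and then as $k_1\cdot(k_2k_3)$. Combining the two resulting inequalities with the bi-invariance of $d_\times$ and the identity $\Psi(k)=k$ on $K$, a direct calculation yields the \emph{coboundary bound}
\begin{equation*}
 \bigabs{k_3^{-1}\,\sigma(k_1,k_2)^{-1}\,k_3\,\sigma(k_2,k_3)}_\times\leq 2D
\end{equation*}
for every triple $(k_1,k_2,k_3)\in K^3$. The heart of the proof would then consist in exhibiting, for each $N\in\NN$, a triple $(k_1^{(N)},k_2^{(N)},k_3^{(N)})\in K^3$ for which the left-hand side is forced to have cancellation length at least $N$, producing the required contradiction.

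To cook up such triples I would exploit the abundance of elements in $K=\phi^{-1}([0,1))$. Because $\phi(a)=-1$, $\phi(b)=\beta\notin\QQ$ and the image of $\phi$ is dense in $\RR$, one may freely prepend arbitrarily long ``balanced'' blocks of the form $a^{n_1}b^{m_1}\cdots a^{n_r}b^{m_r}$ to any candidate element of $K$ while staying inside $K$. Commutators of such blocks have cancellation length growing linearly by (the method of proof of) Corollary~\ref{cor:p2:lower.bound.commutator}, which is the main device for manufacturing elements of prescribed large cancellation norm. The plan is to choose the $k_i^{(N)}$ so that, whatever bounded-cancellation-length choices are made for the two $\sigma$ values, the conjugation by $k_3^{(N)}$ in the coboundary expression introduces an unavoidable commutator contribution of length $\gtrsim N$.

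The main obstacle is bridging the gap between this schematic ``coboundary'' identity and a concrete combinatorial lower bound. The difficulty is that $\sigma(k_1,k_2)$ is not canonically determined by $(k_1,k_2)$: we only know it lies in the $d_\times$-ball of radius $D$ around the identity, and there are many such elements once $D$ is large (the irrationality of $\beta$ is precisely what makes the set of admissible $\phi(\sigma)$-values infinite, but it does not by itself pin down $\sigma$). Consequently one must classify, at least qualitatively, the possible ``cancellation-bounded adjustments'' that push a product $k_1k_2\in F_2\smallsetminus K$ back into $K$, and show that no consistent choice of such adjustments can satisfy the coboundary bound across a sufficiently rich family of triples. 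I expect this classification and the subsequent combinatorial analysis to be the truly hard step, which is also why the statement is posed as a conjecture rather than as a theorem.
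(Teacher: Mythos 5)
The statement you are attacking is a conjecture: the paper does not prove it, but only records a proof strategy (attributed to Heuer) that it was unable to carry out, so the relevant comparison is between strategies. Yours has a fatal gap at the very step you call the ``heart of the proof.'' The coboundary quantity you introduce,
\begin{equation*}
 \abs{k_3^{-1}\,\sigma(k_1,k_2)^{-1}\,k_3\,\sigma(k_2,k_3)}_\times ,
\end{equation*}
is bounded by $2D$ \emph{for every} triple $(k_1,k_2,k_3)$ and every admissible $\Psi$, so it cannot be forced to exceed $N$. Indeed $d_\times$ is bi\=/invariant, hence the cancellation length is conjugation\=/invariant, $\abs{g h g^{-1}}_\times = \abs{h}_\times$; combining this with your own estimates $\abs{\sigma(k_1,k_2)}_\times \leq D$ and $\abs{\sigma(k_2,k_3)}_\times \leq D$, the triangle inequality already gives
\begin{equation*}
 \abs{k_3^{-1}\sigma(k_1,k_2)^{-1}k_3\,\sigma(k_2,k_3)}_\times \leq \abs{\sigma(k_1,k_2)}_\times + \abs{\sigma(k_2,k_3)}_\times \leq 2D .
\end{equation*}
In other words your coboundary inequality is a tautology, not a constraint, and no choice of triples can yield a contradiction from it. (This is the same phenomenon that makes $\abs{a^n b a^{-n}}_{\overline S}=1$: conjugation is ``free'' in a bi\=/invariant word metric.) The non\=/canonicity of $\sigma$ that you identify as the main obstacle is real but secondary; the approach would fail even if $\sigma$ were canonically defined.

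The paper's strategy avoids this pitfall by never conjugating a length\=/bounded element. It fixes a primitive, cyclically reduced word $u_n = a^{\lfloor n\beta\rfloor}b^n \in K$ with $\abs{u_n}_\times \gg D$, and studies the genuinely unknown sequence $V_{n,N}\coloneqq \Psi(u_n^N)$ (here $u_n^N$ is an $N$\=/letter word of $F_K$, so condition (1) tells you nothing about $\Psi(u_n^N)$). Parsing $u_n^{N+1}$ two ways yields $d_\times(u_n V_{n,N},\, V_{n,N}u_n)\leq 2D$, i.e.\ $\abs{[V_{n,N}^{-1},u_n^{-1}]}_\times\leq 2D$. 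The trivial bound from bi\=/invariance here is $2\abs{u_n}_\times$, not $2D$, so for $n$ large this is a bona\=/fide constraint on $V_{n,N}$, and the paper's claim $(\clubsuit)$ is that it forces $V_{n,N}$ to stay $d_\times$\=/close to $u_n^N$ — which eventually contradicts $V_{n,N}\in K$ once $N\phi(u_n)$ leaves $[0,1)$. To salvage your route you would need a constraint that, like the almost\=/commutation with a fixed long word, survives conjugation\=/invariance; the cocycle identity over one\=/letter triples does not.
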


\begin{rmk}
 Technically, to see $\crse K$ as a coarse group we have to choose operations $\ast,\inversefn$ on $K$ as in Section~\ref{sec:p1:coarse subgroups}. We should then apply Corollary~\ref{cor:p2:non_coarsification metric} using this choice of $\ast$.
 However, since we know a priori that $\ast\colon K\times K\to K\subset F_2$ is close to the usual multiplication in $F_S$, we can instead work with the latter (possibly at the cost of choosing a larger constant $D$). This is why the second condition in Conjecture~\ref{conj:p2:non coarsification elementary} only involves the set\=/group multiplications of $F_K$ and $F_2$, and the operation $\ast$ on $K$ does not play any role. 
\end{rmk}

We now use the remainder of this section to outline a strategy of proof (for which we are indebted to Nicolaus Heuer).  The overall strategy is to obtain a contradiction by showing a certain word is ``almost periodic''. We were unfortunately unable to carry out this plan because we lack appropriate tools to compute the cancellation distance $d_\times$ on $F_2$. For every $n\in\NN$ let
\[
 u_n\coloneqq a^{\lfloor n\beta\rfloor}b^n\in F_2
\]
 and note that $u_n$ belongs to $K$ (recall that $\lfloor n\beta\rfloor$ denotes the floor). We claim that to prove Conjecture~\ref{conj:p2:non coarsification elementary} it would be enough to prove the following statement:

\vspace{1 em}

\noindent
\begin{minipage}{0.05\textwidth}
 $(\clubsuit)$
\end{minipage}
\begin{minipage}{0.95\textwidth}
 \textit{
 If $\Psi$ and $D$ satisfy conditions $(1)$ and $(2)$, then there exist some constants $C\gg D$ and $n\in\NN$ large enough so that 
 \[
  d_\times\bigparen{\Psi\paren{u_n^N}\;,\;u_n^N}\leq C
 \]
 for every $N\in\NN$ (in the above inequality, the $u_n^N$ that appears on the left denotes the word consisting of $N$ repetitions of the letter $u_n\in K$. The $u_n^N$ that appears on the right is the element of $F_2$ obtained by multiplying $N$ copies of $u_n$).}
\end{minipage}
\vspace{1 em}

\begin{lem}
 The statement $(\clubsuit)$ implies Conjecture~\ref{conj:p2:non coarsification}.
\end{lem}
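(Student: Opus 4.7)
The plan is to derive a contradiction from the hypothetical existence of $\Psi$ and $D$ as in Conjecture~\ref{conj:p2:non coarsification elementary}, given $(\clubsuit)$. First I would observe that every $u_n$ indeed lies in $K$: since $\beta$ is irrational, $\phi(u_n) = -\lfloor n\beta\rfloor + n\beta = \{n\beta\} \in (0,1)$, so $u_n \in \phi^{-1}([0,1)) = K$ and the statement of $(\clubsuit)$ applies to it.

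Suppose now that $\Psi \colon F_K \to K$ and $D \geq 0$ satisfy $(1)$ and $(2)$. Applying $(\clubsuit)$ yields constants $C \geq D$ and $n \in \NN$ such that
\[
 d_\times\bigl(\Psi(u_n^N),\, u_n^N\bigr) \leq C \qquad \text{for every } N \in \NN.
\]
Here, as in the statement, the left occurrence of $u_n^N$ is the word in $F_K$ consisting of $N$ copies of the letter $u_n \in K$, while the right occurrence is the group element of $F_2$ obtained by concatenation.

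The main elementary ingredient is that $\phi \colon (F_2, d_\times) \to (\RR, |\cdot|)$ is $\beta$-Lipschitz. Indeed, any $g \in F_2$ with $|g|_{\overline{S}} = m$ can be written as a product of $m$ conjugates of elements of $S \cup S^{-1}$, each of which is sent by $\phi$ into $\{\pm 1, \pm\beta\}$; thus $|\phi(g)| \leq m\beta$, and substituting $g = w^{-1}w'$ together with bi-invariance of $d_\times$ gives $|\phi(w) - \phi(w')| \leq \beta\, d_\times(w, w')$ for all $w, w' \in F_2$. Combining this with the previous inequality,
\[
 \bigl|\phi\bigl(\Psi(u_n^N)\bigr) - \phi(u_n^N)\bigr| \leq \beta C \qquad \forall\, N \in \NN.
\]

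To conclude, note that $\Psi(u_n^N) \in K$ forces $\phi(\Psi(u_n^N)) \in [0,1)$, while $\phi(u_n^N) = N\phi(u_n) = N\{n\beta\} \to +\infty$ as $N \to \infty$ (since $\{n\beta\} > 0$). This contradicts the uniform bound above, so no such $\Psi$ can exist. By Corollary~\ref{cor:p2:non_coarsification metric} this is precisely the `only if' direction of Conjecture~\ref{conj:p2:non coarsification}, and the `if' direction was already established via Corollary~\ref{cor:p1:criterion for existence of ker}. The content of this lemma is thus to isolate $(\clubsuit)$ as the genuine difficulty: it is a purely combinatorial statement asserting that any candidate $\Psi$ is forced, at some scale $n$, to approximately concatenate $u_n$ with itself inside $K$, and this forced near-periodicity in the cancellation metric is what we lack the tools to establish.
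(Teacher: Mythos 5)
Your proof is correct and takes essentially the same route as the paper: both establish that $\phi$ is $\beta$-Lipschitz with respect to $d_\times$, apply $(\clubsuit)$ to bound $\bigl|\phi(\Psi(u_n^N)) - \phi(u_n^N)\bigr|$ by $\beta C$, and derive the contradiction from $\phi(u_n^N) = N\{n\beta\} \to \infty$ while $\Psi(u_n^N) \in K$ forces $\phi(\Psi(u_n^N)) \in [0,1)$. You spell out a few more intermediate details (the explicit value of $\phi(u_n)$, the verification that $u_n \in K$, the Lipschitz estimate from conjugates of generators), but the argument is the paper's.
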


\begin{proof}
 We argue by contrapositive.
 Assume that Conjecture~\ref{conj:p2:non coarsification} is false. We may then find $\phi,\ K$ as above and $\Psi\colon F_K\to K$ satisfying conditions (1) and (2) of Conjecture~\ref{conj:p2:non coarsification elementary}.
 For every $g_1,g_2\in F_2$ the difference $\abs{\phi(g_1)-\phi(g_2)}$ is bounded by $\beta d_\times(g_1,g_2)$. If $(\clubsuit)$ true, this implies that 
\[
 \abs{\phi\bigparen{\Psi\paren{u_n^N}}-\phi(u_n^N)}\leq \beta C
\]
for every $N\in\NN$.
Since $\abs{\phi(u_n)^N}= N\abs{\phi(u_n)}$ goes to infinity with $N$, we deduce that for $N$ large enough $\Psi\paren{u_n^N}$ does not belong to $K$, against the hypotheses.
\end{proof}

We now wish to explain why we think that $(\clubsuit)$ is a plausible claim. By hypothesis, both $\Psi(u_n)\Psi(u_n^N)$ and $\Psi(u_n^N)\Psi(u_n)$ are at most at distance $D$ from $\Psi(u_n^{N+1})$. Since $\Psi(u_n)=u_n$, this means that the word $V_{n,N}\coloneqq\Psi(u_n^N)\in F_2$ almost commutes with $u_n$:
\begin{equation}\label{eq:p2:alomst commuting}
 d_\times\bigparen{u_nV_{n,N}\, ,\, V_{n,N}u_n}\leq 2D.
\end{equation}
This poses strong restrictions on the possible choices for $V_{n,N}$. For instance, it is simple to observe that when $D=0$ condition \eqref{eq:p2:alomst commuting} implies that $V_{n,N}\equiv u_n^N$: since the word $u_n$ is primitive and cyclically reduced its centralizer is the set powers of $u_n$. Intuitively, it seems very plausible that something similar should hold also for $D>0$ (so long as $D\ll n$). 

As a warning, we point out that it is definitely not the case that a word commuting with $u_n$ must be close to being a power of $u_n$. For example, letting
\(
 W\coloneqq (au_n)^k
\)
for any $k\in \NN$, we see that 
\(
 d_\times\bigparen{u_nW , Wu_n}= 2
\)
even though $d_\times(u_n^k,W)=k$ is unbounded. However the following might be true: 

\vspace{1 em}

\noindent
\begin{minipage}{0.05\textwidth}
 $(\spadesuit)$
\end{minipage}
\begin{minipage}{0.95\textwidth}
 \textit{
 So long as $D\ll n$, there exists a constant $C'$ depending only on $D$ so that every reduced word $W$ satisfying $ d_\times\bigparen{u_nW , Wu_n}\leq D$ is of the form:
\(
 W = v_1^{k_1}\cdots v_{C'}^{k_{C'}},
\)
where $k_i\in\ZZ$ and $d_\times(u_n,v_i)\leq C'$ for every $i=1,\ldots,C'$.
Importantly, $C'$ does not depend on $n$.
}
\end{minipage}

\vspace{1 em}

\begin{rmk}
 A more audacious claim could be as follows. Say that a reduced word $u$ has \emph{$D$\=/stable almost centralizer} if $(\spadesuit)$ holds true (namely, there exists a constant $C'$ such that every word satisfying $d_\times(uW,Wu)\leq D$ can be decomposed as a product of at most $C'$ powers of words that are $C'$\=/close to $u$). Further, say that a word $u$ is \emph{$R$\=/cyclically reduced} if it is cyclically reduced and every word obtained by cancelling up to $R$ letters from $u$ is still cyclically reduced.
 Similarly, we say that it is \emph{$R$\=/primitive} if every word within $d_\times$\=/distance $R$ of it is primitive. Note that the words $u_n=a^{\floor{n\beta}}b^n$ are $R$\=/cyclically reduced and $R$\=/primitive for every $R<n$.
 
 It seems plausible that for every $D$ there is a $R\gg D$ such that every $R$\=/cyclically reduced $R$\=/primitive word has $D$\=/stable almost centralizer.
\end{rmk}

Claim $(\spadesuit)$ does not directly imply $(\clubsuit)$. However, it should be possible to find some trick to use it to prove $(\clubsuit)$. Specifically, fix some $N\gg n$. For every $m\in\NN$, we know that $\Psi(u_n^m)=v_{m,1}^{k_{m,1}}\cdots v_{m,C'}^{k_{m,C'}}$. Since $C'$ is fixed while $\abs{\Psi(u_n^m)}_\times$ goes to infinity with $m$, we deduce that for some $M\gg N$ there must exist an index $1\leq i\leq C'$ so that $k_{M,i}\gg N$. Let $M=M_1+N+M_2$ for some $M_1,M_2\geq 0$. Then
\[
 \Psi(u_n^{M_1})\Psi(u_n^N)\Psi(u_n^{M_2})\approx_{2D}\Psi(u_n^M)=v_{M,1}^{k_{M,1}}\cdots v_{M,C'}^{k_{M,C'}}.
\]
The idea would then be to find appropriate $M_1$ and $M_2$ so that the most efficient way to take the word $\Psi(u_n^{M_1})\Psi(u_n^N)\Psi(u_n^{M_2})$ to $v_{M,1}^{k_{M,1}}\cdots v_{M,C'}^{k_{M,C'}}$ takes $\Psi(u_n^N)$ to a subword of $v_{M,i}^{k_{M,i}}$. This would mean that $\Psi(u_n^N)$ is within distance $2D$ from a subword of a big power of a word that is $C'$\=/close to $u_n$.
At this point it should not be hard to prove $(\clubsuit)$ or some modification of it which is still sufficient to prove Conjecture~\ref{conj:p2:non coarsification elementary}. 

The problem with the last part of the plan is that it is difficult to control the amount of cancellation that can happen when multiplying $\Psi(u_n^N)$ with $\Psi(u_n^{M_1})$ and $\Psi(u_n^{M_2})$.
More generally, we find that we currently lack some technical tools allowing us to study the cancellation distance. This seems to be a rather interesting combinatorial problem.

\begin{rmk}
 If Conjecture~\ref{conj:p2:non coarsification} holds true, it most likely holds also when $F_2$ is replaced with any finitely generated free group $F_S$ and  $\phi\colon F_S\to V$ is a homomorphism into any Banach space $(V,\norm{\variable})$.
\end{rmk}

\section{A Few More Questions}
As already remarked, we believe that it is an important problem to understand precisely which coarse groups are isomorphic to coarsified set\=/groups. To be a little more specific, let $\crse G$ be a fixed coarse group (possibly a coarsified set\=/group).
A general problem that is worth investigating is the following:

\begin{problem}
 When is the coarse kernel of a coarse homomorphism $\crse{ f\colon G\to H}$ isomorphic to the coarsification of a set\=/group?
\end{problem}

Any kind of ``if and only if'' characterization would most likely be useful for any attempt of classification of coarse groups.
It would already be interesting to answer the above question for some special classes of coarse homomorphism. 

For instance, if $G$ is a finitely normally generated set\=/group then there is a one\=/to\=/one correspondence between coarse homomorphisms $\crse{\phi\colon }(G,\varcrs{bw})\to(\RR,\CE_{\abs{\mhyphen}})$ and closeness\=/classes of $\RR$\=/quasimorphisms $\phi\colon G\to \RR$ (Definition~\ref{def:p1:quasimorphism}). Corollary~\ref{cor:p1:quasimorphism have ker} shows that these coarse homomorphisms always have a coarse kernel, and we thus ask the following:

\begin{problem}\label{prob:p2:ker of qmorph is coarsfied group}
 Let $\phi\colon G\to \RR$ be an $\RR$\=/quasimorphism. When is $\cker(\crse \phi)\crse \trianglelefteq (G,\varcrs{bw})$ isomorphic to the coarsification of a coarse group?
\end{problem}

Note that Conjecture~\ref{conj:p2:non coarsification} is our attempt to solve Problem~\ref{prob:p2:ker of qmorph is coarsfied group} for ``trivial'' $\RR$\=/quasimorphisms of $F_2$, \emph{i.e.}~$\RR$\=/quasimorphisms that are close to homomorphisms of a set\=/group.  Another case of special interest is the following:

\begin{qu}\label{qu:p2:ker of brooks_quasimorphisms}
 Let $\phi\colon F_2\to \RR$ be a Brooks quasimorphism (Example~\ref{exmp:p1:brooks_quasimorphisms}). Is $\cker(\crse \phi)$ isomorphic to the coarsification of a coarse group?
\end{qu}

To explain the interest of the above, recall that an $\RR$\=/quasimorphism is homogeneous if $\phi(g^k)=k\phi(g)$ for every $k\in\ZZ$. It is a classical fact that every $\RR$\=/quasimorphism of $F_2$ is close to a unique homogeneous one (\cite[Section 2.2]{calegari2009scl}). In particular, an $\RR$\=/quasimorphism $\phi$ is close to a homomorphism $\bar\phi$ if and only if $\bar\phi$ is the homogenization of $\phi$. One may then adapt the statement of Conjecture~\ref{conj:p2:non coarsification} to arbitrary $\RR$\=/quasimorphisms by asking whether it is true that $\cker(\crse \phi)$ is isomorphic to a coarsified set\=/group if and only if the homogenization of $\phi$ has dense image in $\RR$.

In view of this, Question~\ref{qu:p2:ker of brooks_quasimorphisms} is especially interesting because the homogenization of a Brooks quasimorphism has discrete image in $\RR$ (hence $\cker(\crse \phi)= [\bar {\phi}^{-1}(0)]$, where $\bar\phi$ is the homogenization of $\phi$). If it turns out that these kernels are not isomorphic to the coarsification of a coarse group, it would mean that the natural extension of Conjecture~\ref{conj:p2:non coarsification} to (homogeneous) $\RR$\=/quasimorphisms is not true.

\

On the other hand, one would expect the situation to be much simpler for coarsely abelian coarse groups. In view of this, we would also like to know the answer to the following:

\begin{qu}
 Is every coarsely abelian coarse group isomorphic to a coarsification of a set\=/group?
\end{qu}

\chapter{On Coarse Homomorphisms and Coarse Automorphisms}
\label{ch:p2:coarse autos}
In this chapter we study the group of coarse automorphisms of (coarsely connected) coarse groups. If $\crse G=(G,\CE)$ is a coarse group, we define $\cAut(\crse G)=\cAut(G,\CE)$\nomenclature[:MOR]{$\cAut(\crse G),\ \cAut(G,\CE)$}{coarse automorphisms of a coarse group} as the group of self\=/isomorphisms of $\crse G$.
That is, $\cAut(\crse G)\leq\cmor(\crse G,\crse G)$ is the subset of coarse\=/homomorphisms that have a coarse inverse.

\section{Elementary Constructions of Coarse Automorphisms}
The most elementary examples of coarse automorphisms for a coarse group $\crse G$ are the conjugations ${}_gc(h)\coloneqq (g\ast h)\ast g^{-1}$. However, we already noted that, whenever $g$ lies in the same coarsely connected component of $\unit$, the map ${}_gc$ is close to the identity (Example~\ref{exmp:p1:conjugation automorphisms}). In particular, in the case of special interest where $\crse G$ is coarsely connected, the conjugation induces a trivial map $G\to \cAut(\crse G)$ sending every $g\in G$ to $\cid_{\crse G}$.

\begin{rmk}
We emphasize that $[{}_gc]=\cid_{\crse G}$ for each $g\in G$ does not imply that the coarse action by conjugation $\crse{G\curvearrowright G}$ is trivial (see Section~\ref{sec:p1:action by conjugation}). This hints to the fact that the set\=/group $\cAut(\crse X)$ is ill\=/suited to describe coarse actions $\crse{ G\curvearrowright X}$. In a sense, this is because $\cAut(\crse X)$ is the set\=/group of ``coarsely connected components of the space of transformations of $\crse X$''. From this point of view, it is clear that $\cAut(\crse X)$ cannot detect the coarse actions of a coarsely connected coarse group $\crse G$. This idea is made precise and further developed in Sections~\ref{sec:p2:coarse actions revisited} and \ref{sec:p2:frag coarse groups of transformations}, where we construct a suitable ``space of transformations'' of a coarse space.
\end{rmk}

If $\crse G$ is a coarsified set\=/group, every controlled homomorphism $f\colon G\to G$ determines a coarse homomorphism $\crse f \colon\crse{G}\to\crse G$. In particular,if $f\in \aut(G)$ is an automorphism such that both $f$ and $f^{-1}$ are controlled then $\crse f \in\cAut(\crse G)$. 

\begin{rmk}
 Notice that set\=/group automorphisms need not be controlled in general. 
For example, let $G$ be the abelian group $\bigoplus_{n\in\NN}\RR$ equipped with the (non-complete!) $\ell^2$-norm $\norm{\variable}_2$. Then the linear function $f\colon G\to G$ sending the sequence $(a_n)_{n\in\NN}$ to $(na_n)_{n\in\NN}$ is a set\=/group automorphism of $G$, but it is not controlled with respect to the metric coarse structure.
\end{rmk}

In some cases one can say a priori that every set\=/group automorphism is automatically controlled.
For example, we already saw that this is the case for $\crse G=(G,\varcrs[grp]{fin})$, where $\varcrs[grp]{fin}$ is the minimal connected equi bi\=/invariant coarse structure (recall that the pull\=/back $f^{*}(\varcrs[grp]{fin})$ for a set\=/group homomorphism $f\colon G\to G$ is equi bi\=/invariant by Lemma~\ref{lem:p1:pull-back.under.hom.is.crsegroup}. Since $f$ sends finite sets to finite sets, we see that $\varcrs[grp]{fin}\subseteq f^{*}(\varcrs[grp]{fin})$ by minimality).

In particular, if $G$ is a finitely (normally) generated group, $\varcrs[grp]{fin} = \varcrs{bw}$ is the canonical coarsification of $G$ and we obtain a natural homomorphism $\aut(G)\to\cAut(G,\varcrs{bw})$. Since the coarse structure $\varcrs{bw}$ is connected, conjugation automorphisms are close to the identity. Therefore, the above homomorphisms quotient through the space of outer automorphisms $\Out(G)\to\cAut(G,\varcrs{bw})$.

\begin{exmp}
 An example of special interest is the set\=/group $\ZZ^n$.
 Here the canonical coarsification $\varcrs{bw}$ is equal to the coarse structure defined by the Euclidean metric $\CE_{\norm{\mhyphen}}$, and we thus obtain a set\=/group homomorphism $\out(\ZZ^n)\to\cAut(\ZZ^n,\CE_d)$. Moreover, $\ZZ^n$ is coarsely dense in $(\RR^n,\CE_{\norm{\mhyphen}})$ and therefore the inclusion $\ZZ^n\hookrightarrow\RR^n$ gives a natural isomorphism of coarse groups $(\ZZ^n,\CE_{\norm{\mhyphen}})\cong(\RR^n,\CE_{\norm{\mhyphen}})$. Noting that the set\=/group of outer automorphisms of $\ZZ^n$ is $\out(\ZZ^n)=\aut(\ZZ^n)\cong \Gl(n,\ZZ)$, we obtain a homomorphism
 \[
  \Gl(n,\ZZ)\to \cAut(\ZZ^n,\CE_{\norm{\mhyphen}})\cong \cAut(\RR^n,\CE_{\norm{\mhyphen}}).
 \]
 We will see in Corollary \ref{cAut_GL} that the right hand side is in fact isomorphic to $\Gl(n,\RR)$ and the above map is just the inclusion $\Gl(n,\ZZ)<\Gl(n,\RR)$.
\end{exmp}

It is hard to over estimate the importance of groups of outer automorphisms of finitely generated groups.
Since we always have a homomorphism $\out(G)\to\cAut(G,\varcrs{bw})$, it is natural to ask whether this map is an inclusion:

\begin{problem}\label{qu:p2:injection of out}
 Let $G$ be a finitely (normally) generated group. When is the homomorphism $\out(G)\to\cAut(G,\varcrs{bw})$ injective?
\end{problem}

We will later see that this homomorphism is injective for finite rank free groups (Corollary~\ref{cor:p2:outFn in cAut}).
On the other hand, if $K$ is a finite set\=/group (or, more generally, intrinsically bounded) then $\cAut(K,\varcrs{bw})=\{\cid_{\crse K}\}$ is trivial. If $\out(K)$ is non\=/trivial then $\out(K)\to\cAut(K,\varcrs{bw})$ is clearly not injective. Similarly, if $G=H\times K$ for some $K$ as above then $\out(K)\leq\out(G)$ while $(G,\varcrs{bw})\cong(H,\varcrs{bw})$ and again we see that
\[
 \out(G)\to \cAut(G,\varcrs{bw})\cong \cAut(H,\varcrs{bw}) 
\]
is not injective.

\begin{rmk}
In geometric group theory there are various results that are proved for (torsion free) finitely generated groups $G$ with finite or trivial $\out(G)$ (\emph{e.g.}\ \cite{huang2018commensurability}). It would not be surprising if some of those arguments can be rephrased  by saying that some finiteness condition on $\cAut(G,\varcrs{bw})$ implies some degree of rigidity for $G$.
\end{rmk}

\section{Coarse Homomorphisms into Banach Spaces}
\label{sec:p2:Banach homomorphisms}
It is generally easier to study (coarsely) abelian coarse groups. The easiest among them are those obtained as coarsifications of finite dimensional normed vector spaces or, more generally, Banach spaces. In this setting we can understand coarse homomorphisms and coarse automorphisms very precisely.

Let $(V_1,\norm{\mhyphen}_1)$ and $(V_2,\norm{\mhyphen}_2)$ be normed vector spaces (for concreteness, we will only deal with vector spaces over $\RR$). The norms define a natural coarsification of their additive groups: $\crse V_1\coloneqq (V_1,\CE_{\norm{\mhyphen}_1})$ and $\crse V_2\coloneqq (V_2,\CE_{\norm{\mhyphen}_2})$. 
It follows by the definition that a linear operator $T\colon (V_1,\norm{\mhyphen}_1)\to(V_2,\norm{\mhyphen}_2)$ is controlled if and only if it is bounded (\emph{i.e.}\ continuous).
We therefore have a natural homomorphism
\[
 \CB\bigparen{(V_1,\norm{\mhyphen}_1),(V_2,\norm{\mhyphen}_2)}\to 
 \chom\bigparen{\crse V_1,\crse V_2},
\]
where $\CB$ is the space of bounded\footnote{%
Of course, here ``bounded'' is meant in the sense of linear operators (\emph{i.e.}~the operator norm $\norm{T}$ is bounded). This does \emph{not} mean that $T$ is a bounded \emph{function}.
} linear operators.

Note that the above homomorphism is always injective. Furthermore, if $(V_2,\norm{\mhyphen}_2)$ is complete (\emph{i.e.}\ a Banach space) then it is also surjective:

\begin{prop}\label{prop:p2:banach chom are linear}
 Let $(V_1,\norm{\mhyphen}_1)$ be a normed vector space and $(V_2,\norm{\mhyphen}_2)$ a Banach space. If $\crse f \colon \crse V_1\to \crse V_2$ is a coarse homomorphism of the coarsified additive groups, then there exists a representative $\bar f$ for $\crse f$ that is linear and continuous. 
\end{prop}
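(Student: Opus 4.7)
The strategy is to promote a coarse homomorphism to a genuine bounded linear operator in four steps: first homogenize over $\ZZ$ using the machinery already developed, then extend homogeneity to $\QQ$, upgrade this to additivity (which in turn is what forces $\QQ$-linearity to become $\RR$-linearity), and finally read off continuity from controlledness.

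First I would apply Proposition~\ref{prop:p1:homogeneization.of.qmorph} to the coarse homomorphism $\crse f\colon\crse V_1\to\crse V_2$. Since $\crse V_1$ is a coarsified set\=/group (its underlying binary operation is the associative addition of $V_1$), the ``furthermore'' clause provides a representative $\bar f$ such that $\bar f(k v)=k\bar f(v)$ for every $k\in\ZZ$ and $v\in V_1$. In particular, $\bar f(0)=0$ and $\bar f(-v)=-\bar f(v)$. Being a representative of $\crse f$, this $\bar f$ is still controlled and is still a coarse homomorphism, so its defect
\[
 D\coloneqq \sup_{u,w\in V_1}\norm{\bar f(u+w)-\bar f(u)-\bar f(w)}_2
\]
is finite.

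Next I would show that $\bar f$ is additive, which is the standard ``homogenize and divide'' trick. For any $u,w\in V_1$ and any $n\in\NN$, $\ZZ$-homogeneity gives
\[
 n\,\bar f(u+w)-n\,\bar f(u)-n\,\bar f(w)=\bar f(n(u+w))-\bar f(nu)-\bar f(nw),
\]
whose norm is bounded by $D$ by definition of the defect. Dividing by $n$ and letting $n\to\infty$ forces $\bar f(u+w)=\bar f(u)+\bar f(w)$. Additivity together with $\ZZ$-homogeneity immediately gives $\QQ$-homogeneity: if $q=p/m\in\QQ$, then $m\bar f(qv)=\bar f(mqv)=\bar f(pv)=p\bar f(v)$.

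The main obstacle is the final step: upgrading from $\QQ$-linearity to $\RR$-linearity, since controlledness of $\bar f$ does not directly yield continuity at a point. I would exploit the scaling freedom afforded by $\ZZ$-homogeneity as follows. Fix $\alpha\in\RR$ and $v\in V_1$, pick a control function $\rho$ for $\bar f$ (so that $\norm{\bar f(u)-\bar f(w)}_2\leq\rho(\norm{u-w}_1)$), and let $q_n\in\QQ$ with $q_n\to\alpha$. For each $n$ and every $m\in\NN$, $\ZZ$-homogeneity and $\QQ$-homogeneity give
\[
 m\,\norm{q_n\bar f(v)-\bar f(\alpha v)}_2=\norm{\bar f(mq_n v)-\bar f(m\alpha v)}_2\leq \rho\bigparen{m\abs{q_n-\alpha}\norm{v}_1}.
\]
Choosing $m=m_n\coloneqq\lfloor 1/\abs{q_n-\alpha}\rfloor$ keeps the argument of $\rho$ bounded while sending $m_n\to\infty$, hence $\norm{q_n\bar f(v)-\bar f(\alpha v)}_2\to 0$. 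Therefore $\bar f(\alpha v)=\alpha\bar f(v)$, and combined with additivity this shows that $\bar f$ is an $\RR$-linear operator. Since $\bar f$ is controlled, it is bounded on bounded sets, which for a linear map is equivalent to continuity. Thus $\bar f$ is the desired bounded linear representative of $\crse f$.
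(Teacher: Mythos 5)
Your proof is correct and follows essentially the same route as the paper: homogenize via Proposition~\ref{prop:p1:homogeneization.of.qmorph} to get $\ZZ$-homogeneity, derive additivity by the usual ``multiply by $n$, use the defect, divide'' argument, and deduce $\QQ$-homogeneity.

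The only substantive difference is the final upgrade from $\QQ$-linearity to $\RR$-linearity. The paper establishes continuity at $0$ directly --- for $w$ with $\norm{w}_1\leq 1/k$, $\ZZ$-homogeneity gives $\bar f(w)=\frac{1}{k}\bar f(kw)$, so $\bar f(B_{V_1}(1/k))\subseteq B_{V_2}(R/k)$ where $R$ bounds $\bar f$ on the unit ball --- and then invokes the standard fact that an additive, $\QQ$-homogeneous map continuous at $0$ is $\RR$-linear and continuous. You instead prove $\RR$-homogeneity directly by choosing $m_n=\lfloor 1/\abs{q_n-\alpha}\rfloor$ so that the control function is evaluated at a bounded argument while the prefactor $1/m_n\to 0$. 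Both arguments exploit the same scaling idea and yield the same conclusion; the paper's version is marginally cleaner because it factors through the classical ``$\QQ$-linear + continuous at $0$ $\Rightarrow$ $\RR$-linear'' lemma, whereas yours proves that lemma ad hoc in this setting. Your closing observation that controlledness plus linearity forces boundedness (since $\norm{\bar f(v)}_2\leq\rho(\norm{v}_1)$) is correct and gives continuity.
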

\begin{proof}
 Define $\bar f$ by
 \[
  \bar f(v)\coloneqq \lim_{k\to \infty} \frac{f(2^k v)}{2^k}
  =\lim_{n\to \infty} \frac{f(nv)}{n}
 \]
 for every $v\in V_1$. We know from Proposition~\ref{prop:p1:homogeneization.of.qmorph} that $\bar f$ is well\=/defined, it is close to $f$ and satisfies $f(kv)=k f(v)$ for every $k\in\ZZ$. This also implies that $f(\frac 1kv)=\frac 1k f(v)$.
 
 Let $0\leq D<\infty$ be the defect of $f$. That is, $D=\sup_{v,w\in V_1}\norm{f(v_1+v_2)-f(v_1)-f(v_2)}_2$. Then
 \[
  \norm{\bar f(v+w)-\bar f(v)-\bar f(w)}_2
  =\lim_{n\to\infty} \frac 1n\norm{f(n(v+w))-f(nv)-f(nw)}_2
  \leq\lim_{n\to\infty} \frac Dn =0.
 \]
 This shows that $\bar f$ is additive (and hence linear over $\QQ$).

 Let $B_{V_i}(r)$ denote the ball of radius $r$ centered at the origin of $(V_i,\norm{\variable}_i)$.
 Since $f$ is controlled and $\bar f$ is close to it, $\bar f$ is also controlled. Therefore there is some $R> 0 $ such that $\bar f$ sends the ball of radius $1$ centered at the origin of $V_1$ into the ball of radius $R$ centered at the origin of $V_2$. It follows that $\bar f(B_{V_1}(\frac 1k))\subseteq B_{V_2}(\frac Rk)$ for every $k\in \NN$ and hence $\bar f$ is continuous at $0$. Together with $\QQ$\=/linearity, this implies that $\bar f$ is $\RR$\=/linear and continuous.
\end{proof}

\begin{cor}\label{cor:p2:coarse hom Banach space are linear}
 If $(V_2,\norm{\mhyphen}_2)$ is a Banach space then
\[
 \chom\bigparen{\crse V_1,\crse V_2}
 \cong \CB\bigparen{(V_1,\norm{\mhyphen}_1),(V_2,\norm{\mhyphen}_2)}.
\]
\end{cor}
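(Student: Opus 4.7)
The plan is to show that the natural injective homomorphism
\[
 \Phi\colon \CB\bigparen{(V_1,\norm{\mhyphen}_1),(V_2,\norm{\mhyphen}_2)} \to \chom(\crse V_1,\crse V_2),
 \qquad T\mapsto [T],
\]
is also surjective, and that it respects the additive group structure on both sides (here $\chom$ is a set\=/group under the operation $\odot$ induced by the coarse addition on $\crse V_2$, see Lemma~\ref{lem:p1:mor.is.group}).

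Surjectivity is immediate from Proposition~\ref{prop:p2:banach chom are linear}: given any coarse homomorphism $\crse f\colon \crse V_1\to \crse V_2$, that proposition produces a continuous $\RR$\=/linear representative $\bar f$, so $\crse f=[\bar f]=\Phi(\bar f)$.

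For the injectivity of $\Phi$, I would argue as follows. Suppose $T_1,T_2\in \CB((V_1,\norm{\mhyphen}_1),(V_2,\norm{\mhyphen}_2))$ are close as functions $V_1\to V_2$, i.e.\ there exists $C\geq 0$ with $\norm{T_1 v-T_2 v}_2\leq C$ for every $v\in V_1$. Then $S\coloneqq T_1-T_2$ is a bounded linear operator whose image is contained in the ball of radius $C$ around the origin. By linearity, for every $v\in V_1$ and every $n\in\NN$ we have $\norm{Sv}_2=\frac{1}{n}\norm{S(nv)}_2\leq C/n$, hence $Sv=0$. Thus $T_1=T_2$, showing that $\Phi$ is injective.

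Finally, $\Phi$ is a group homomorphism because for $T_1,T_2\in\CB$ the pointwise sum $T_1+T_2$ is a representative of $[T_1]\odot[T_2]$ (the coarse addition on $\crse V_2$ is represented by the genuine addition on $V_2$), so $\Phi(T_1+T_2)=\Phi(T_1)\odot\Phi(T_2)$. Combining all three steps yields the claimed isomorphism of set\=/groups. The main (tiny) obstacle is the uniqueness argument, but it reduces cleanly to the observation that the only bounded linear operator whose image is a bounded subset of $V_2$ is the zero operator; nothing else in this corollary should pose difficulties, as all the real work has already been done in Proposition~\ref{prop:p2:banach chom are linear}.
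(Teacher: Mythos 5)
Your proof is correct and follows exactly the same route as the paper: the paper sets up the natural map $\CB\to\chom$ in the discussion preceding Proposition~\ref{prop:p2:banach chom are linear}, notes that it is always injective (by precisely the observation you spell out, that a linear operator with bounded image must be zero), and derives surjectivity from Proposition~\ref{prop:p2:banach chom are linear}. Nothing to add.
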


\begin{cor}\label{cAut_GL}
 If $\norm{\mhyphen}$ is a norm on $\RR^n$ then 
\[
 \cAut(\RR^n,\CE_{\norm{\mhyphen}})\cong \Gl(n,\RR).
\]
\end{cor}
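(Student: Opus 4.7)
The plan is to deduce this directly from Corollary~\ref{cor:p2:coarse hom Banach space are linear} together with the fact that every linear endomorphism of a finite\=/dimensional normed space is automatically continuous.

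First, I would apply Corollary~\ref{cor:p2:coarse hom Banach space are linear} with $V_1 = V_2 = (\RR^n,\norm{\mhyphen})$, which is a Banach space since every norm on $\RR^n$ is equivalent and makes $\RR^n$ complete. This yields a natural bijection
\[
\chom\bigparen{(\RR^n,\CE_{\norm{\mhyphen}}),(\RR^n,\CE_{\norm{\mhyphen}})} \;\cong\; \CB\bigparen{(\RR^n,\norm{\mhyphen}),(\RR^n,\norm{\mhyphen})}.
\]
Since all linear maps on $\RR^n$ are bounded, $\CB((\RR^n,\norm{\mhyphen}),(\RR^n,\norm{\mhyphen})) = \End(\RR^n) \cong M_n(\RR)$. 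I would then verify that this identification is not merely a bijection but a monoid isomorphism: the composition of the (unique up to closeness) linear representatives of two coarse homomorphisms is again linear and is a representative for the composition.

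Next, I would pass from $\chom$ to $\cAut$ by restricting to invertible elements on both sides. If $T \in \Gl(n,\RR)$, then $T^{-1}$ is linear (hence controlled) and the compositions $T \circ T^{-1}$ and $T^{-1} \circ T$ equal $\id_{\RR^n}$ on the nose, so $\crse{T^{-1}}$ is a coarse inverse of $\crse T$. Conversely, if $\crse T$ admits a coarse inverse $\crse S$, Proposition~\ref{prop:p2:banach chom are linear} lets me pick a linear representative $S$ for $\crse S$. The compositions $S \circ T$ and $T \circ S$ are then linear representatives for $\cid_{\crse{\RR^n}}$, so they are close to $\id_{\RR^n}$. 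Since any linear map close to $\id_{\RR^n}$ must equal $\id_{\RR^n}$ (the difference would be a linear map that is bounded as a function, hence zero), we conclude $ST = TS = \id_{\RR^n}$, so $T \in \Gl(n,\RR)$.

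There is no real obstacle here: the whole argument is a direct packaging of Proposition~\ref{prop:p2:banach chom are linear} and Corollary~\ref{cor:p2:coarse hom Banach space are linear} with the finite\=/dimensional fact that every linear endomorphism is continuous. The only point that deserves a careful sentence is the observation that a linear map close to the identity must equal the identity, which is what rules out exotic coarse automorphisms arising from non\=/invertible matrices.
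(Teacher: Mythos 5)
Your proof is correct and follows the route the paper intends (the paper states this as an immediate corollary of Corollary~\ref{cor:p2:coarse hom Banach space are linear} without elaboration): identify $\chom$ with $M_n(\RR)$ as a monoid via the unique linear representative, then note that invertibility in the former corresponds to invertibility in the latter. Your careful observation that a linear map close to the identity must equal the identity is exactly the small point that makes the monoid identification clean, and it correctly renders the Open Mapping argument (which the paper invokes for the general Banach case in the subsequent corollary) unnecessary in finite dimension.
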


Recall that the Open Mapping Theorem states that if $(V_1,\norm{\mhyphen}_1),(V_2,\norm{\mhyphen}_2)$ are Banach spaces and $T\colon V_1\to V_2$ is a surjective continuous operator, then it must be an open mapping. In particular, if such a $T$ is bijective then the inverse is automatically continuous. Noticing that a continuous group\=/homomorphism $V_1\to V_2$ must be a bounded $\RR$\=/linear operator, we deduce the following:

\begin{cor}
 If $(V,\norm{\mhyphen})$ is a Banach space then $\cAut(\crse V)$ is naturally isomorphic to the group of continuous group automorphisms of the additive group $V$:
 \[
  \cAut(\crse V)\cong \braces{T\in \aut(V)\mid T\text{ is continuous}}.
 \]
\end{cor}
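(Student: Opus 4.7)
The plan is to leverage Corollary~\ref{cor:p2:coarse hom Banach space are linear} to identify $\chom(\crse V,\crse V)$ with the bounded linear operators $\CB(V,V)$, and then characterise which of these correspond to elements of $\cAut(\crse V)$.

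First, given $\crse f \in \cAut(\crse V)$, let $\bar f \in \CB(V,V)$ be its canonical bounded linear representative (obtained via the homogenization $\bar f(v) = \lim_{n\to\infty} f(nv)/n$ as in Proposition~\ref{prop:p2:banach chom are linear}), and similarly let $\bar g$ be the linear representative of the coarse inverse $\crse g = \crse f^{-1}$. The composition $\bar g \circ \bar f$ is a bounded linear operator representing $\crse g \circ \crse f = \cid_{\crse V}$, and hence there is a constant $C$ with $\norm{\bar g(\bar f(v)) - v} \leq C$ for all $v \in V$. The key elementary observation is that a bounded linear $T$ close to $\id_V$ must equal $\id_V$: by linearity, $\norm{T(nv) - nv} = n\norm{Tv - v} \leq C$ for every $n \in \NN$, so $\norm{Tv - v} \leq C/n$ for all $n$ and thus $Tv = v$. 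Applying this to $\bar g \circ \bar f$ and symmetrically to $\bar f \circ \bar g$ shows that $\bar f$ is a bijective bounded linear operator, hence in particular a continuous group automorphism of the additive group $V$.

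Conversely, let $T$ be a continuous group automorphism of $V$. Since $T$ is a continuous $\QQ$-linear map between real Banach spaces, it is automatically $\RR$-linear (this is the same continuity argument used in the last paragraph of the proof of Proposition~\ref{prop:p2:banach chom are linear}), hence a bounded linear operator. By the Open Mapping Theorem, $T^{-1}$ is likewise a bounded linear operator. Both $T$ and $T^{-1}$ define coarse homomorphisms of $\crse V$ (Corollary~\ref{cor:p2:coarse hom Banach space are linear}), and since $T^{-1} \circ T = T \circ T^{-1} = \id_V$ on the nose (and in particular up to uniformly bounded error), we conclude that $\crse T \in \cAut(\crse V)$ with coarse inverse $\crse{T^{-1}}$.

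These two constructions are mutually inverse (since the canonical linear representative of a linear map is itself), and the correspondence is a group homomorphism because composition of linear representatives represents composition of coarse maps. This gives the natural isomorphism. The only step that requires any genuine input beyond the preceding material is the invocation of the Open Mapping Theorem in the converse direction; otherwise, everything follows formally from the uniqueness of the linear representative and the rigidity observation that a bounded linear map close to the identity is the identity. No real obstacle arises.
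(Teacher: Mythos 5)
Correct, and this is essentially the same argument the paper sketches in the sentence preceding the corollary: identify coarse homomorphisms with bounded linear operators via the preceding corollary, note that continuous group homomorphisms of Banach spaces are $\RR$-linear, and apply the Open Mapping Theorem for continuity of the inverse. Your explicit rigidity step (a bounded linear map within bounded distance of $\id_V$ must equal $\id_V$) is the right thing needed to see that the linear representative of a coarse automorphism is itself a bijection; the paper leaves this implicit, but it is the same chain of reasoning.
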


Using Corollary~\ref{cor:p2:coarse hom Banach space are linear} and the Open Mapping Theorem, we can also characterize those coarse homomorphisms between Banach spaces which admit a coarse kernel (Definition~\ref{def:p1:coarse kernel}):

\begin{thm}\label{thm:p2:banach chom and kernels}
 Let $\crse T \colon \crse V_1\to\crse V_2$ be a coarse homomorphism between coarsified Banach spaces and let $T$ be its linear representative. Then $\crse T$ has a coarse kernel if and only if $T(V_1)$ is a closed subspace in $V_2$. When this is the case, $\cker(\crse T)=[\ker(T)]$.
\end{thm}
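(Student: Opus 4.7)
The plan is to fix a linear representative $T \in \CB(V_1, V_2)$ for $\crse T$, which is available by Proposition~\ref{prop:p2:banach chom are linear}, and then to prove the two implications separately. The open mapping theorem for Banach spaces will play the central role in both directions.

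For the ``if'' direction, I would assume that $T(V_1)$ is closed in $V_2$, which makes it a Banach space in the inherited norm. The induced bijection $\overline T \colon V_1/\ker(T) \to T(V_1)$ is then a continuous linear bijection between Banach spaces, so the open mapping theorem yields a constant $C > 0$ with $d(v, \ker(T)) \leq C \|T(v)\|$ for all $v \in V_1$. Feeding preimages $T^{-1}(B)$ of arbitrary bounded sets $B \subseteq V_2$ into this bound, I expect $\ker(T)$ to satisfy both clauses in Definition~\ref{def:p1:coarse preimage} characterising the coarse preimage $\crse{T^{-1}}([\{0\}])$, hence $\cker(\crse T) = [\ker(T)]$.

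For the ``only if'' direction, I would assume $\cker(\crse T) = [Z]$ exists and aim to show $T(V_1)$ is closed. Because bounded images detect the coarse kernel, I would first replace $Z$ with a concrete representative of the form $T^{-1}(B(0;R))$ for some $R > 0$. The coarse containment $T^{-1}(B(0;2R)) \csub Z$ will then unravel to the quantitative statement that there exists $c > 0$ such that every $w \in T(V_1) \cap B(0;2R)$ can be written as $T(u) + w'$ with $\|u\| \leq c$ and $\|w'\| \leq R$. This is precisely the approximate-surjectivity hypothesis that drives the iteration in the classical proof of the open mapping theorem; running that iteration inside the Banach space $V_1$ will produce, for each $w \in T(V_1)$, a genuine preimage $u \in V_1$ of norm $\leq (c/R)\|w\|$. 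Hence $\overline T^{-1}$ is bounded, so $T(V_1)$ is complete in the inherited norm and therefore closed in $V_2$. Uniqueness of coarse preimages, combined with the first direction, then delivers the identification $\cker(\crse T) = [\ker(T)]$.

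The main obstacle will be the second implication, and more specifically the passage from the single qualitative coarse containment $T^{-1}(B(0;2R)) \csub T^{-1}(B(0;R))$ to a scale-invariant quantitative bound on $\overline T^{-1}$. My hope is that the uniform constant $c$ produced by the coarse containment, combined with linearity of $T$ and completeness of $V_1$, is exactly enough to power the open mapping iteration; the summability of the telescoping corrections $u_1 + u_2 + \cdots$ inside $V_1$ is what makes the scheme work and is what would fail without completeness of $V_1$.
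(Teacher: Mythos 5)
Your proposal is correct, and the ``if'' direction matches the paper's argument essentially verbatim. The ``only if'' direction, however, takes a genuinely different route. The paper first proves, \emph{before} touching the Banach geometry, that $\cker(\crse T)$ must be $[\ker(T)]$ whenever it exists: using Proposition~\ref{prop:p1:homogeneization.of.qmorph} it shows that any coarse subgroup $\crse H$ of $\crse V_1$ with bounded image under $T$ has a representative that is invariant under multiplication by $2^k$, and for such a set ``bounded image'' already forces containment in $\ker(T)$. With the kernel identified up front, the hypothesis that a coarse kernel exists is literally the displayed inclusion $T^{-1}(B_{V_2}(r))\subseteq B_{V_1}(R)+\ker(T)$, i.e.\ openness of $T\colon V_1\to T(V_1)$, so $V_1/\ker(T)\to T(V_1)$ is a homeomorphism and $T(V_1)$ inherits completeness from the Banach quotient. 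You instead leave the coarse kernel in the form $[T^{-1}(B(0;R))]$, extract the approximate-surjectivity estimate from the single coarse containment, and run the open-mapping iteration from scratch inside $V_1$, recovering $\cker(\crse T)=[\ker(T)]$ only a posteriori from the ``if'' direction. Both are sound; the paper's argument is shorter because it reuses the Open Mapping Theorem as a black box, whereas yours essentially re-proves its quantitative core — the payoff being that you never need the homogenization lemma to identify the kernel in advance. One small point your sketch glosses over: the passage from ``$T^{-1}(B(0;2R))\csub T^{-1}(B(0;R))$'' to the scale-invariant bound on $\overline T^{-1}$ requires rescaling via linearity before iterating (at scale $2R\cdot 2^{-n}$ the defect is $R\cdot 2^{-n}$), otherwise the correction terms do not decay; you allude to ``linearity of $T$'' doing the work and that is exactly where it is needed.
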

\begin{proof}
 We start by showing that if the coarse kernel exists then it must be $[\ker(T)]$. This argument is similar to that of Example~\ref{exmp:p1:Hilbert.space.monomorphism}: let $\crse H$ be any coarse subgroup of $\crse V_1$. Since the inclusion $\iota\colon H\hookrightarrow (V_1,\CE_{\norm{\mhyphen}_1})$ is a coarse homomorphism, we can hence use Proposition~\ref{prop:p1:homogeneization.of.qmorph} to construct $\bar\iota\colon H\to V_1$ that is close to $\iota$ and such that $\bar\iota(h\ast h)=2\bar\iota(h)$. It follows that the image $\bar\iota(H)$ is closed under scalar multiplication by $2^k$. In order for $T\circ\bar \iota$ to have bounded image it is hence necessary that $\bar\iota(H)\subseteq \ker(T)$ and hence $H\csub \ker(T)$. This proves our claim.
 
 It follows from the definition that $[\ker(T)]$ is the coarse kernel of $\crse T$ if and only if for every $r>0$ there is an $R>0$ so that the preimage of $B_{V_2}(r)$ is contained in the $R$\=/neighborhood of $\ker(T)$. In other words, we must have
 \begin{equation}\label{eq:p2:open mapping}
  T^{-1}\paren{B_{V_2}(r)}\subseteq \bigparen{B_{V_1}(R) + \ker(T)}.
 \end{equation}
 
 Assume now that $T(V_1)$ is closed in $V_2$. Then $T(V_1)$ is a Banach space and we can apply the Open Mapping Theorem to $T\colon V_1\to T(V_1)$. It follows that for every $r>0$ there is an $R>0$ so that $B_{T(V_1)}(r)\subseteq T(B_{V_1}(R))$. 
 In turn, this implies that $T^{-1}\paren{B_{V_2}(r)}=T^{-1}\paren{B_{T(V_1)}(r)}\subseteq B_{V_1}(R)+\ker(T)$.
This proves that $[\ker(T)]$ is the coarse kernel of $\crse T$.
 
For the converse implication, \eqref{eq:p2:open mapping} implies that $T\colon V_1\to T(V_1)$ is an open map and hence descends to a homeomorphism of the quotient $\overline{T}\colon V_1/\ker(T)\to T(V_1)$. Since $\ker(T)$ is a closed subspace, $V_1/\ker(T)$ is a Banach space. Then $T(V_1)$  is also complete and therefore closed in $V_2$.
\end{proof}

\section{Hartnick-Schweitzer Quasimorphisms}\label{sec:p2:HS_qmorph}
Recall that an $\RR$\=/quasimorphism\index{quasimorphism!$\RR$} on a set-group $G$ is a function $\phi\colon G \to \RR$ which has the property that $|\phi(gh) - \phi (g) - \phi(h)|$ is uniformly bounded (Definition~\ref{def:p1:quasimorphism}). We encountered $\RR$\=/quasimorphisms multiple times, as they provide natural examples of coarse homomorphisms. More precisely, we already noted that if $\crse G=(G,\CE)$ is a coarsification of $G$, then a function $f\colon G\to \RR$ defines a coarse homomorphism $\crse f\colon \crse G\to(\RR,\CE_{\abs{\mhyphen}})$ if and only if it is a controlled $\RR$\=/quasimorphism. Also note that every $\RR$\=/quasimorphism is controlled if $\CE$ is $\mincrs$ or $\varcrs[grp]{fin}$, so that
\[
 \chom\bigparen{(G,\mincrs)\;,\;(\RR,\CE_{\abs{\mhyphen}}}=
 \chom\bigparen{(G,\varcrs[grp]{fin})\;,\;(\RR,\CE_{\abs{\mhyphen}}}=
 \bigbraces{\text{$\RR$\=/quasimorphisms of $G$}}/_{\closefn}.
\]
The above observation plays a key role in this section.

Hartnick and Schweitzer \cite{HS} introduced a notion of quasimorphism between set\=/groups which will allow us to construct a zoo of coarse automorphisms. These quasimorphisms (which we will call HS\=/quasimorphisms) are functions that are well\=/behaved with respect to composition with $\RR$\=/quasimorphisms.

\begin{de}[{\cite{HS}}] 
 A \emph{HS\=/quasimorphism}\index{quasimorphism!HS} between set\=/groups $G,H$ is a function $f\colon G \to H$ such that if $\phi$ is a $\RR$\=/quasimorphism on $H$ then $f\circ\phi$ is a $\RR$\=/quasimorphism on $G$. Two HS\=/quasimorphisms $f,g\colon G\to H$ are \emph{equivalent}\index{quasimorphism!HS!equivalent} if $\abs{\phi\circ f -\phi\circ g}$ is a bounded function for every such $\phi$ (equivalently, $(\phi\circ f)\closefn (\phi\circ g)$ as functions into $(\RR,\CE_{\abs{\mhyphen}})$).
\end{de}

It is immediate from the definition that compositions of HS\=/quasimorphisms are HS\=/quasimorphisms, and that equivalence of HS\=/quasimorphisms is preserved under composition. Hartnick and Schweitzer designed their definition to be ``categorical'', and this turns out to fit very naturally in the theory of coarse groups and coarse homomorphism. More precisely, the link between HS\=/quasimorphism and coarse homomorphisms is provided by the pull\=/back coarse structures defined in Example~\ref{exmp:p1:pull back coarse structures}. 
Namely, any set\=/group $G$ can be equipped with the coarse structure 
 \begin{align*}
   \CE_{(G,\mincrs)\to(\RR,\CE_{\abs{\mhyphen}})}
  \coloneqq & \bigcap\Bigbraces{\phi^{*}(\CE_{\abs{\mhyphen}})\bigmid \crse\phi\colon(G,\mincrs)\to(\RR,\CE_{\abs{\mhyphen}})\text{ coarse homomorphism}}   \\
  =&\bigcap\Bigbraces{\phi^{*}(\CE_{\abs{\mhyphen}})\bigmid \phi\colon G\to \RR,\text{ $\RR$\=/quasimorphism}}
 \end{align*}
 (this can be described as the maximal coarse structure for which quasimorphisms are controlled). For brevity, we will denote this coarse structure by $\CE_{G\to\RR}$. The following is easy to prove.
 
\begin{prop}\label{prop:p2:quasim_iff_chom pullback} A function between set\=/groups $f \colon G \to H$ is a HS\=/quasimorphism if and only if $\crse f \colon(G,\CE_{G\to\RR})\to(H,\CE_{H\to\RR})$ is a coarse homomorphism. 
\end{prop}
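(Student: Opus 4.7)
The plan is to unpack both sides of the equivalence directly from the definitions and observe that the construction of $\CE_{G\to\RR}$ is engineered precisely to make the two notions agree. The key observation I will use repeatedly is that, by definition, a relation $F$ on $H$ lies in $\CE_{H\to\RR}$ if and only if $(\psi\times\psi)(F)\in\CE_{\abs{\mhyphen}}$ for every $\RR$\=/quasimorphism $\psi\colon H\to\RR$, and analogously for $G$. In particular, $\chi\colon G\to\RR$ is controlled with respect to $\CE_{G\to\RR}$ precisely when $\chi$ is itself an $\RR$\=/quasimorphism, and the same holds for $H$.

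For the forward direction, assume $f$ is an HS\=/quasimorphism. To verify the multiplication compatibility, I must exhibit a fixed $F\in\CE_{H\to\RR}$ with $f(xy)\torel{F}f(x)f(y)$ for all $x,y\in G$. By the key observation, this reduces to showing that for every $\RR$\=/quasimorphism $\psi\colon H\to\RR$ the quantity $|\psi(f(xy))-\psi(f(x)f(y))|$ is uniformly bounded in $x,y$. I will control this by splitting
\[
 |\psi(f(xy))-\psi(f(x)f(y))|\leq|\psi(f(xy))-\psi(f(x))-\psi(f(y))|+|\psi(f(x))+\psi(f(y))-\psi(f(x)f(y))|,
\]
bounding the second term by $D(\psi)$ and the first by $D(\psi\circ f)$, which is finite because $\psi\circ f$ is an $\RR$\=/quasimorphism on $G$ by the HS hypothesis. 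To verify that $f$ is controlled, I need $(f\times f)(E)\in\CE_{H\to\RR}$ for every $E\in\CE_{G\to\RR}$, equivalently, $((\psi\circ f)\times(\psi\circ f))(E)\in\CE_{\abs{\mhyphen}}$ for every $\RR$\=/quasimorphism $\psi$ on $H$. Since $\psi\circ f$ is an $\RR$\=/quasimorphism of $G$, this follows immediately from the defining property of $\CE_{G\to\RR}$ as an intersection of pull\=/back coarse structures.

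For the reverse direction, suppose $\crse f$ is a coarse homomorphism. Given an $\RR$\=/quasimorphism $\psi\colon H\to\RR$, I want to show that $\psi\circ f$ is an $\RR$\=/quasimorphism on $G$. Using the same triangle inequality as above, it suffices to bound each of the two summands uniformly in $x,y$. The second is again bounded by $D(\psi)$. For the first, the coarse homomorphism hypothesis gives a fixed $F\in\CE_{H\to\RR}$ with $(f(xy),f(x)f(y))\in F$ for all $x,y$; since $\CE_{H\to\RR}\subseteq\psi^{*}(\CE_{\abs{\mhyphen}})$, the image $(\psi\times\psi)(F)$ is an element of $\CE_{\abs{\mhyphen}}$, which provides the required uniform bound.

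There is no genuine obstacle here; the argument is essentially a definition\=/chase, and the only thing to be careful about is keeping track of two different defects (that of $\psi$ and that of $\psi\circ f$) and recognising that the definition of $\CE_{G\to\RR}$ makes controlledness of $f$ into $(H,\CE_{H\to\RR})$ automatic once the multiplication\=/compatibility condition has been reformulated in terms of $\RR$\=/quasimorphisms of $H$. In particular, it is worth noting in the write\=/up that the condition ``$f$ is controlled'' turns out to be a consequence of ``$\psi\circ f$ is an $\RR$\=/quasimorphism for every $\psi$'', so the HS\=/quasimorphism hypothesis already encodes both halves of the definition of a coarse homomorphism.
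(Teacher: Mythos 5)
Your proof is correct and follows essentially the same two arguments the paper uses: membership of a relation in $\CE_{H\to\RR}$ is checked by post-composing with $\RR$-quasimorphisms, controlledness is deduced from the inclusion $\CE_{G\to\RR}\subseteq\bigcap_\phi(\phi\circ f)^*(\CE_{\abs{\mhyphen}})=f^*(\CE_{H\to\RR})$, and multiplicativity is handled by the two-term defect decomposition (the paper writes this as a composition of controlled relations $F_1\circ F_2$ rather than a numerical triangle inequality, but the content is identical). In the direction ``coarse homomorphism $\Rightarrow$ HS'' the paper argues more slickly, observing that each $\crse\phi\colon(H,\CE_{H\to\RR})\to(\RR,\CE_{\abs{\mhyphen}})$ is itself a coarse homomorphism (by definition of $\CE_{H\to\RR}$ combined with the bounded defect of $\phi$), so $\crse{\phi\circ f}$ is one too and hence $\phi\circ f$ has bounded defect; this is tidier than re-running the estimate, but your version is correct.

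One thing to fix: in your ``key observation'' you assert that $\chi\colon G\to\RR$ is controlled with respect to $\CE_{G\to\RR}$ \emph{precisely when} $\chi$ is an $\RR$-quasimorphism. Only the ``if'' direction is true. For instance, on $G=\ZZ$ one has $\CE_{\ZZ\to\RR}=\CE_{\abs{\mhyphen}}$, and the function $\chi(n)=\abs{n}$ is controlled but has unbounded defect, so it is not an $\RR$-quasimorphism. This overstatement does no damage to your argument, since you only ever invoke the true direction (that $\RR$-quasimorphisms are controlled, via the defining intersection), but the statement as written is false and should be weakened to the one-sided implication you actually use.
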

\begin{proof} 
 Let $\phi\colon H\to \RR$ be an $\RR$\=/quasimorphism. Then, by definition, $\crse \phi \colon(H,\CE_{H\to\RR})\to(\RR,\CE_{\abs{\mhyphen}})$ is controlled and hence a coarse homomorphism. If $\crse f$ is a coarse homomorphism then the composition $\crse{\phi\circ f}$ is a coarse homomorphism and hence an $\RR$\=/quasimorphism. This shows that $f$ is indeed an HS\=/quasimorphism.
 
 Assume now that $f$ is an HS\=/quasimorphism. Then 
 \begin{align*}
  \CE_{G\to\RR}
  &\subseteq \bigcap\Bigbraces{(\phi\circ f)^{*}(\CE_{\abs{\mhyphen}})\bigmid \phi\colon G\to \RR,\text{ $\RR$\=/quasimorphism}}
  = f^{*}(\CE_{H\to\RR}),
 \end{align*}
 and hence $f\colon (G,\CE_{G\to\RR})\to(H,\CE_{H\to\RR})$ is controlled. It remains to check that 
$f(g_1)\ast_{H} f(g_2)$ and $f(g_1\ast_{G} g_2)$ are uniformly close (with respect to $\CE_{H\to\RR}$). 
 For every $\RR$\=/quasimorphism, both $\phi\circ f$ and $\phi$ are coarse homomorphisms. Therefore, there are $F_1,F_2\in \CE_{\abs{\mhyphen}}$ such that
\[
 (\phi\circ f)(g_1 \ast_{G} g_2)
 \torel{F_1} \bigparen{(\phi\circ f)(g_1)+ (\phi\circ f)(g_2)}
 \torel{F_2}\phi(f(g_1) \ast_{H} f(g_2))
\]
 and hence $f(g_1 \ast_{G} g_2) \torel{(\phi\times\phi)^{-1}(F_1 \circ F_2)}(f(g_1) \ast_{H} f(g_2)$. This shows that 
 \[
  \bigbraces{\bigparen{f(g_1)\ast_{H} f(g_2)\,,\, f(g_1\ast_{G} g_2)}\mid g_1,g_2\in G}\in \phi^{*}(\CE_{\abs{\mhyphen}})
 \]
 for every such $\phi$.
\end{proof}

Amenable set\=/groups manifest some sort of rigidity, which allows Hartnick and Schweitzer to classify HS\=/quasimorphisms in this case (see below for more on this). On the contrary, non\=/abelian free groups seem to be very flexible and admit a huge variety of HS\=/quasimorphisms. In what follows, fix a finite set $2\leq \abs{S}<\infty$ and let $F_S$ be the free group freely generated by $S$. Denote by $x_i$ the elements of $S\cup S^{-1}$. The following examples are taken from \cite[Section 5]{HS}:

\begin{exmp}\label{exmp:p2:HS_qmorph local}
 Fix some $k\in \NN$ and a function $r\colon (S\cup S^{-1})^k\to G$ so that $f(u^{-1})=f(u)^{-1}$ for every $u\in (S\cup S^{-1})^k$. The function $r$ plays the role of a substitution rule and allows us to define a function $f_r\colon F_S\to G$ sending a reduced word $w=x_1\cdots x_n\in F_S$  to $e\in G$ if $n< k$ and to the product 
 \[
  r(x_1,\ldots,r_k)r(x_2,\ldots, r_{k+1})\cdots r(x_{n-k},\ldots,x_n)\in G
 \]
 if $n\geq k$. That is, $f_r(w)$ is computed by substituting each letter $x_i$ of the word $w$ with some element of $G$ according to a rule that only looks at the $k$ letters following $x_i$.
 For any substitution rule $r$, the function $f_r\colon F_S\to G$ is an HS\=/quasimorphism. We call these \emph{local HS\=/quasimorphisms}\index{quasimorphism!HS!local}.   
\end{exmp}

\begin{exmp}\label{exmp:p2:HS_qmorph replacement}
 Two non\=/empty reduced words $w_1,w_2\in F_S$ are \emph{overlapping}\index{overlapping words} if they are subwords of one another or if there is a non\=/empty subword that is initial in one of them and terminal in the other. If $w_1,\ldots,w_k$ is a set of words that are pairwise non\=/overlapping and such that each $w_i$ is not self\=/overlapping, then every reduced word $v\in F_S$ can be uniquely decomposed as a concatenation $v=u_1\cdots u_n$ such that:
 \begin{enumerate}
  \item each  $u_i$ is equal to one of the $w_j$ or it does not have any $w_j$ as a subword;
  \item $n$ is minimal among the decompositions such that (1) holds
 \end{enumerate}
 (see \cite[Lemma~5.13]{HS}).
 
 Let $w_1,w_2$ be two reduced words with the same first and last letter and such that $\{w_1,w_1^{-1},w_2,w_2^{-1}\}$ is a set of pairwise non\=/overlapping and non\=/self\=/overlapping words. We may then use the unique decomposition property of above to define a function $f_{w_1,w_2}\colon F_S\to F_S$ that ``swaps $w_1$ and $w_2$''. Namely, if $w=u_1\cdots u_n$ is the unique decomposition of a reduced word $v$, we let $f_{w_1,w_2}(v)\coloneqq f_{w_1,w_2}(u_1)\cdots f_{w_1,w_2}(u_n)$ where
 \[
  f_{w_1,w_2}(u_i)\coloneqq\left\{ 
  \begin{array}{ll}
   w_2 & \text{if } u_i=w_1 \\
   w_1 & \text{if } u_i=w_2\\
   w_2^{-1} & \text{if } u_i=w_1^{-1}\\
   w_1^{-1} & \text{if } u_i=w_2^{-1}\\
   u_i & \text{otherwise.} \\
  \end{array}
  \right.
 \]

 The function $f_{w_1,w_2}$ is an HS\=/quasimorphism, called \emph{replacement quasimorphism}.\index{quasimorphism!replacement} It is worthwhile noting that $(f_{w_1,w_2})^2=\id_{F_S}$ and that the composition with $f_{w_1,w_2}$ swaps the Brooks counting $\RR$\=/quasimorphisms $\phi_{w_1}$, $\phi_{w_2}$ (the Brooks $\RR$\=/quasimorphisms are defined in Example~\ref{exmp:p1:brooks_quasimorphisms}). The latter fact can be used to show that the group of invertible HS\=/quasimorphisms acts almost transitively on the set of counting $\RR$\=/quasimorphisms associated with non\=/self\=/overlapping words (\cite[Corollary 5.18]{HS}, see also \cite[Theorem~5.21]{HS}). One may also observe that every replacement HS\=/quasimorphism is equivalent to a local HS\=/quasimorphism.
\end{exmp}

\begin{exmp}\label{exmp:p2:HS_qmorph wobbling} 
 Define the \emph{wobbling group}\index{wobbling group} $W(\NN_{>0})$\nomenclature[:z]{$W(\NN_{>0})$}{wobbling group} as the set\=/group of permutations on $\NN_{>0}=\{1,2,\ldots\}$ with bounded displacement (\emph{i.e.}\ those $\sigma$ so that $\abs{\sigma(k)-k}$ is uniformly bounded as $k$ ranges in $\NN_{>0}$). 
 As above, given a non self\=/overlapping cyclically reduced word $v$ we can uniquely decompose any reduced word $w$ as a concatenation $u_0v^{k_1}u_1v^{k_2}u_2\cdots v^{k_n}u_n$ where no $u_i$ has $v$ as a subword, $k_i\neq 0$ for every $i$, and $u_i\neq \emptyset$ for $0<i<n$. For every $\sigma\in W(\NN_{>0})$ we may define a function $f_{v,\sigma}\colon F_S\to F_S$ by acting with $\sigma$ on the exponents $k_i$ appearing in the above decomposition. Namely, we let
 \[
  f_{v,\sigma}(w)\coloneqq u_0v^{\sigma_\pm(k_1)}u_1v^{\sigma_\pm(k_2)}u_2\cdots v^{\sigma_\pm(k_n)}u_n
 \]
 where $\sigma_\pm(k)=\sigma(k)$ if $k$ is positive and $\sigma_\pm(k)=-\sigma(-k)$ otherwise.
 
 The functions $f_{v,\sigma}$ are invertible HS\=/quasimorphisms, as $f_{v,\sigma^{-1}}=f_{v,\sigma}^{-1}$. One may also check that $f_{v,\sigma^{-1}}$ is equivalent to a local HS\=/quasimorphism if and only if $\sigma$ has finite support. Note that the group $W(\NN_{>0})$ is uncountable, and it has elements of arbitrary order.
\end{exmp}

\begin{rmk}
 We may generalize the above example as follows. Let $v$ be a non self\=/overlapping cyclically reduced word, so that any reduced word $w$ has a unique decomposition $w=u_0v^{k_1}u_1\cdots v^{k_n}u_n$ as before. Take a $\ZZ$\=/quasimorphism $f\colon \ZZ \to \ZZ$ that is symmetric (\emph{i.e.}\ so that $f(-k)=-f(k)$ for every $k\in \ZZ$). Then one may check that applying $f$ to the exponents of $v$ in the decompositions of reduced words defines an HS\=/homomorphism $F_S\to F_S$
\[ u_0v^{k_1}u_1v^{k_2}u_2\cdots v^{k_n}u_n \mapsto u_0v^{f(k_1)}u_1v^{f(k_2)}u_2\cdots v^{f(k_n)}u_n.  \]
\end{rmk}

The reason why it is relatively simple to exhibit examples of HS\=/quasimorphisms of the free group is that elements in $F_S$ can be uniquely identified with reduced words. To prove that the above examples are HS\=/quasimorphisms, Hartnick and Schweitzer use the following criterion:

\begin{prop}[{\cite[Proposition 5.3]{HS}}]\label{prop:p2:HS criterion}
 Let $f\colon F_S\to G$ be a function from the free group to an arbitrary set\=/group $G$. If there exists a finite set $B\subset G$ such that
 \begin{enumerate}[(i)]
  \item $f(w_1w_2)\in B\ast f(w_1)\ast B\ast f(w_2)\ast B$ whenever $w_1,w_2$ and their concatenation $w_1w_2$ are reduced words;
  \item $f(w^{-1})\in B\ast f(w)^{-1}\ast B$;
 \end{enumerate}
 then $f$ is an HS\=/quasimorphism.
\end{prop}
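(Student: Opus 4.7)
The plan is to show that for any $\RR$\=/quasimorphism $\phi\colon G\to\RR$, the composition $\phi\circ f\colon F_S\to\RR$ is an $\RR$\=/quasimorphism. Let $D\coloneqq D(\phi)$ be the defect of $\phi$, and set $M\coloneqq\sup_{b\in B}\abs{\phi(b)}$, which is finite because $B$ is finite. I would then show that $\abs{\phi(f(w_1w_2))-\phi(f(w_1))-\phi(f(w_2))}$ is uniformly bounded by a constant depending only on $D$ and $M$.

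First I would treat the \emph{non\=/cancelling case}, where $w_1,w_2\in F_S$ are reduced words whose concatenation $w_1w_2$ is already reduced. By hypothesis (i), we may write $f(w_1w_2)=b_1\, f(w_1)\, b_2\, f(w_2)\, b_3$ for some $b_1,b_2,b_3\in B$. Applying $\phi$ and iterating the $\RR$\=/quasimorphism inequality four times, the difference
\[
\abs{\phi(f(w_1w_2)) - \phi(f(w_1)) - \phi(f(w_2))}
\]
is controlled by $4D+3M$. This is the clean model case.

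Next I would handle the \emph{general case} by reducing to the previous one. Given reduced words $w_1,w_2$, decompose $w_1\equiv uc$ and $w_2\equiv c^{-1}v$ in such a way that $uc$, $c^{-1}v$ and $uv$ are all reduced, with $uv$ being the reduced form of the product $w_1w_2$. Applying the non\=/cancelling case to the three pairs $(u,c)$, $(c^{-1},v)$ and $(u,v)$ gives
\begin{align*}
\phi(f(w_1)) &\approx \phi(f(u)) + \phi(f(c)),\\
\phi(f(w_2)) &\approx \phi(f(c^{-1})) + \phi(f(v)),\\
\phi(f(w_1w_2)) = \phi(f(uv)) &\approx \phi(f(u)) + \phi(f(v)),
\end{align*}
each up to an additive error of $4D+3M$. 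Condition (ii) and the standard fact that $\abs{\phi(g^{-1})+\phi(g)}\leq 2D$ for any $\RR$\=/quasimorphism (which follows from $\phi(e)\approx 2\phi(e)$ and $\phi(gg^{-1})=\phi(e)$) imply $\phi(f(c^{-1}))\approx -\phi(f(c))$ up to a bounded error in terms of $D$ and $M$. Combining everything cancels the terms involving $c$ and yields a uniform bound on $\abs{\phi(f(w_1w_2))-\phi(f(w_1))-\phi(f(w_2))}$, proving that $\phi\circ f$ is an $\RR$\=/quasimorphism.

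I do not anticipate a serious obstacle here: the argument is essentially a careful bookkeeping exercise in combining hypothesis (i) on reduced concatenations with hypothesis (ii) to correct for cancellation. The only mild subtlety is ensuring the decomposition $w_1\equiv uc$, $w_2\equiv c^{-1}v$ is handled correctly when some of $u$, $c$, $v$ are empty; these degenerate cases are dispatched by observing that $f(e)$ contributes an additional bounded error via the quasimorphism bound $\abs{\phi(e)}\leq D$, which can be absorbed into the final constant.
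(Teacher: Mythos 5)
Your proof is correct and takes a genuinely different, more elementary route than the paper. You verify the definition of HS\=/quasimorphism directly by showing that $\phi\circ f$ has uniformly bounded defect for every $\RR$\=/quasimorphism $\phi$ on $G$, and the decomposition $w_1\equiv uc$, $w_2\equiv c^{-1}v$ with $uv$ reduced is exactly the right device for reducing the general case to the non\=/cancelling one. The paper packages the argument differently: Remark~\ref{rmk:p2:proof of HS criterion} derives the proposition from the stronger Corollary~\ref{cor:p2:criterion chom canonical crsificatn}, which asserts that under hypotheses (i) and (ii) the map $f$ already induces a coarse homomorphism $(F_S,\varcrs{bw})\to(G,\varcrs{bw})$ for the canonical bi\=/invariant word\=/metric coarsifications; since $\RR$\=/quasimorphisms are automatically controlled with respect to $\varcrs{bw}$, composing gives $\RR$\=/quasimorphisms of $F_S$, and via Proposition~\ref{prop:p2:quasim_iff_chom pullback} this is the HS\=/quasimorphism statement. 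Interestingly, the combinatorial kernel is identical: the proof of Lemma~\ref{lem:p2:criterion chom canonical crsificatn} uses the very same decomposition $v_1\equiv w_1u$, $v_2\equiv u^{-1}w_2$ and the same cancellation trick via $f(u)\ast f(u^{-1})$. What your argument buys is elementarity and self\=/containment (no coarse\=/structure machinery); what the paper's version buys is a strictly stronger conclusion that holds for an arbitrary coarsely connected target coarse group, not merely for the HS coarsification $\CE_{G\to\RR}$. One small imprecision in your write\=/up: in the degenerate cases you invoke $\abs{\phi(e)}\leq D$, but the element that could a priori cause trouble is $f(e_{F_S})\in G$, whose $\phi$\=/value is not bounded by the defect of $\phi$; however, as your own computation shows, the term $\phi(f(u))$ with $u=e$ enters the estimate once with a $+$ sign and once with a $-$ sign, so it cancels and no bound on $\phi(f(e_{F_S}))$ is actually needed.
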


We will later see how to deduce Proposition~\ref{prop:p2:HS criterion} from general observations about coarse homomorphisms (Remark~\ref{rmk:p2:proof of HS criterion}).

\

The set of equivalence classes of HS\=/quasimorphisms $f\colon G\to H$ is denoted $Q(G,H)$. The set $Q(G,G)$ equipped with composition is a monoid. Hartnick and Schweitzer define the group of \emph{quasioutomorphisms} $\qOut(G)$ of a set\=/group $G$ as the set\=/group of invertible elements of ${\rm Q}(G,G)$. Using this notation, we may improve on Proposition~\ref{prop:p2:quasim_iff_chom pullback} as follows:

\begin{prop}
 Given set\=/groups $G,H$ we have
 \[
  Q(G,H) = \chom\bigparen{(G,\varcrs{G\to \RR})\,,\, (H,\varcrs{H\to \RR})}.
 \]
\end{prop}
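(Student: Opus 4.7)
The plan is to show that Proposition~\ref{prop:p2:quasim_iff_chom pullback} already does most of the work, and that what remains is the verification that the two natural equivalence relations coincide. Recall that $Q(G,H)$ is the set of HS-quasimorphisms modulo the relation ``$f\sim g$ iff $\abs{\phi\circ f-\phi\circ g}$ is a bounded function for every $\RR$-quasimorphism $\phi$ of $H$", while $\chom\bigparen{(G,\varcrs{G\to \RR}),(H,\varcrs{H\to \RR})}$ is the set of controlled compatible functions modulo $\varcrs{H\to \RR}$-closeness.

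First I would note that Proposition~\ref{prop:p2:quasim_iff_chom pullback} tells us that the underlying sets (before quotienting) match: a function $f\colon G\to H$ is an HS-quasimorphism if and only if it represents a coarse homomorphism $(G,\varcrs{G\to \RR})\to(H,\varcrs{H\to \RR})$. Thus it suffices to check that two such functions $f,g\colon G\to H$ are HS-equivalent if and only if they are $\varcrs{H\to \RR}$-close. Unwinding the definition of $\varcrs{H\to \RR}=\bigcap_{\phi}\phi^{*}(\CE_{\abs{\mhyphen}})$, we have
\[
 f\closefn_{\varcrs{H\to \RR}} g \iff \bigbraces{(f(x),g(x))\mid x\in G}\in\varcrs{H\to\RR} \iff \bigbraces{(\phi(f(x)),\phi(g(x)))\mid x\in G}\in\CE_{\abs{\mhyphen}}\ \forall\phi,
\]
where $\phi$ ranges over the $\RR$-quasimorphisms of $H$. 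The last condition is precisely the statement that $\phi\circ f$ and $\phi\circ g$ are close as functions into $(\RR,\CE_{\abs{\mhyphen}})$ for every such $\phi$, which is exactly the HS-equivalence of $f$ and $g$.

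Combining the two observations yields a well-defined bijection $Q(G,H)\to\chom\bigparen{(G,\varcrs{G\to \RR}),(H,\varcrs{H\to \RR})}$ sending the class $[f]$ of an HS-quasimorphism to the class $\crse{f}$ of the corresponding coarse homomorphism. I do not expect any serious obstacle here: the result is essentially a bookkeeping statement, collecting together the definition of $\varcrs{H\to\RR}$ as the intersection of pullbacks of $\CE_{\abs{\mhyphen}}$ under $\RR$-quasimorphisms and the already established Proposition~\ref{prop:p2:quasim_iff_chom pullback}. The only mildly delicate point is to keep straight that closeness with respect to an intersection of coarse structures is the intersection of the corresponding closeness relations, which is immediate from the definitions.
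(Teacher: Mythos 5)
Your proposal is correct and follows essentially the same route as the paper: invoke Proposition~\ref{prop:p2:quasim_iff_chom pullback} to identify the underlying sets of representatives, then unwind the definition of $\varcrs{H\to\RR}$ as an intersection of pullbacks to see that $\varcrs{H\to\RR}$-closeness of $f_1,f_2$ is precisely boundedness of $\abs{\phi\circ f_1-\phi\circ f_2}$ for every $\RR$-quasimorphism $\phi$, i.e.~HS-equivalence. Nothing is missing.
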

\begin{proof}
 We already showed that a function $f\colon G\to H$ is an HS\=/quasimorphism if and only if it defines a coarse homomorphism. It only remains to observe that two HS\=/quasimorphisms $f_1,f_2$ are equivalent if and only if they are close.
 
 By definition of $\varcrs{H\to \RR}$, a relation $E$ on $H$ is a controlled entourage if and only if for every $\RR$\=/quasimorphism $\phi\colon H\to \RR$ there is an upper bound on the differences $\abs{\phi(x_1)-\phi(x_2)}$ where $x_1\torel{E}x_2$. 
 By definition, the HS\=/quasimorphisms $f_1$, $f_2$ are equivalent if and only if the difference $\abs{\phi\circ f_1(g)-\phi\circ f_2(g)}$ is uniformly bounded, \emph{i.e.}\ if and only if $f_1\times f_2(\Delta_G)$ is in $\varcrs{H\to \RR}$.
\end{proof}

\begin{cor}\label{cor:p2:HS_qmorph are coarse aut}
 For any set\=/group $G$ we have $\qOut(G)=\cAut(G,\CE_{G\to\RR})$.
\end{cor}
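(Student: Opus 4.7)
The plan is to deduce this as an essentially immediate consequence of the preceding proposition, which identifies $Q(G,H)$ with $\chom((G,\varcrs{G\to\RR}),(H,\varcrs{H\to\RR}))$ as sets. The key observation is that this identification is not merely a bijection but, when specialized to $H=G$, an isomorphism of monoids with respect to composition. So the whole argument is to (i) check that composition is preserved by the identification, and (ii) match ``invertible element'' on each side.

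First I would specialize the preceding proposition to $H=G$, obtaining a bijection
\[
 Q(G,G) \longleftrightarrow \chom\bigparen{(G,\varcrs{G\to\RR})\,,\,(G,\varcrs{G\to\RR})}.
\]
On the left the monoid operation is composition of HS\=/quasimorphisms, descended to equivalence classes; on the right the operation is composition of coarse homomorphisms, \emph{i.e.}~composition of controlled representatives descended to closeness classes. Since the equivalence relation on HS\=/quasimorphisms $f_1 \sim f_2$ (that $\abs{\phi\circ f_1-\phi\circ f_2}$ is bounded for every $\RR$\=/quasimorphism $\phi$) is, as shown in the proof of the preceding proposition, precisely closeness with respect to $\varcrs{G\to\RR}$, and since composition of HS\=/quasimorphisms corresponds setwise to composition of their underlying functions, the bijection intertwines the two monoid operations. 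It also sends the identity HS\=/quasimorphism to $\cid_{\crse G}$. Hence we have an isomorphism of monoids.

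Next I would invoke the definitions: $\qOut(G)$ is by definition the group of invertible elements of the monoid $Q(G,G)$, while $\cAut(G,\varcrs{G\to\RR})$ is by definition the group of invertible elements of the monoid $\chom(\crse G,\crse G)$ under composition (a coarse isomorphism being exactly a coarse homomorphism whose underlying coarse map admits a coarse inverse, and coarse inverses of coarse homomorphisms are automatically coarse homomorphisms, as noted in Section~\ref{sec:p1:coarse homomorphisms_2}). A monoid isomorphism restricts to a group isomorphism between the respective groups of units, so the conclusion $\qOut(G)=\cAut(G,\varcrs{G\to\RR})$ follows.

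There is no real obstacle to speak of; the only substantive check is that the two equivalence relations being quotiented out on each side really do agree (equivalence of HS\=/quasimorphisms versus $\varcrs{G\to\RR}$\=/closeness), and that was already established in the preceding proposition. The statement is therefore best viewed as a repackaging of Proposition~\ref{prop:p2:quasim_iff_chom pullback} at the level of automorphism groups rather than mere morphism sets.
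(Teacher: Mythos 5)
Your proof is correct and is essentially the argument the paper leaves implicit (the paper states the corollary without proof, treating it as an immediate consequence of Proposition~\ref{prop:p2:quasim_iff_chom pullback}). You correctly observe that the bijection is the identity on underlying representatives, hence a monoid isomorphism under composition, and that $\qOut(G)$ and $\cAut(G,\CE_{G\to\RR})$ are by definition the groups of units of the respective monoids.
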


As mentioned above, one interesting result proved in \cite{HS} is that amenable groups are somewhat rigid from the point of view of HS\=/quasimorphisms. Namely, they prove the following classification of quasioutomorphisms:

\begin{thm}[{\cite{HS}}]
 If a set\=/group $G$ is amenable and its abelianization has finite rank $k$ then there is a natural isomorphism $\qOut(G)\cong \GL(k,\RR)$.
\end{thm}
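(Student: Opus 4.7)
The plan is to reduce the statement to Corollary~\ref{cAut_GL} by exhibiting an explicit isomorphism of coarse groups $(G,\CE_{G\to\RR})\cong(\RR^k,\CE_{\norm{\mhyphen}})$, where $k$ is the rank of $G^{ab}$. The entry point is the classical fact that amenability forces every $\RR$-quasimorphism $\psi\colon G\to\RR$ to be close to a genuine homomorphism $\bar\psi\colon G\to\RR$ (for instance, by averaging $\psi$ against an invariant mean on $G$, or equivalently via the vanishing of bounded cohomology in degree two). Consequently, the space of closeness-classes of $\RR$-quasimorphisms coincides with $\Hom(G,\RR)=\Hom(G^{ab}\otimes_\ZZ\RR,\RR)$, which is a real vector space of dimension exactly $k$ under our hypothesis.

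First I would choose a basis $\phi_1,\dots,\phi_k$ of $\Hom(G,\RR)$ and assemble the homomorphism $\Phi\coloneqq(\phi_1,\dots,\phi_k)\colon G\to\RR^k$. This is automatically a coarse homomorphism $\crse\Phi\colon(G,\varcrs[grp]{fin})\to(\RR^k,\CE_{\norm{\mhyphen}})$ by Corollary~\ref{cor:p1:homomorphisms are grp_fin controlled}, hence also a coarse homomorphism from $(G,\CE_{G\to\RR})$. I would then show the pullback coincides with $\CE_{G\to\RR}$. The inclusion $\CE_{G\to\RR}\subseteq\Phi^\ast(\CE_{\norm{\mhyphen}})$ is immediate, since $\Phi^\ast(\CE_{\norm{\mhyphen}})=\bigcap_{i=1}^k\phi_i^\ast(\CE_{\abs{\mhyphen}})$ (which follows from the equivalence of the Euclidean norm with $\max_i\abs{\mhyphen}$ on $\RR^k$), and each $\phi_i^\ast(\CE_{\abs{\mhyphen}})$ appears in the defining intersection of $\CE_{G\to\RR}$. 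For the converse, any $\RR$-quasimorphism $\psi$ is close to its homogenization $\bar\psi=\sum c_i\phi_i$, whence $\psi^\ast(\CE_{\abs{\mhyphen}})=\bar\psi^\ast(\CE_{\abs{\mhyphen}})\supseteq\Phi^\ast(\CE_{\norm{\mhyphen}})$; intersecting over all $\psi$ preserves this containment.

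Second, I would check that $\Phi(G)$ is coarsely dense in $(\RR^k,\CE_{\norm{\mhyphen}})$. Since $\phi_1,\dots,\phi_k$ form a basis of $\Hom(G^{ab}\otimes\RR,\RR)$, the subgroup $\Phi(G)\leq\RR^k$ spans $\RR^k$ as a real vector space, so we may pick $g_1,\dots,g_k\in G$ whose images form a basis of $\RR^k$; the subgroup they generate inside $\Phi(G)$ is a lattice in $\RR^k$, and in particular $\Phi(G)$ is coarsely dense. Combining this with the previous step and Lemma~\ref{lem:p1:coarse.eq.iff.surjective.coarse.emb}/Corollary~\ref{cor:p1:pullback.gives.isomorphism}, $\crse\Phi$ is a coarse equivalence, and since it is a coarse homomorphism it is actually an isomorphism of coarse groups $(G,\CE_{G\to\RR})\cong(\RR^k,\CE_{\norm{\mhyphen}})$. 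Applying $\cAut$ and invoking Corollary~\ref{cor:p2:HS_qmorph are coarse aut} together with Corollary~\ref{cAut_GL} yields $\qOut(G)\cong\GL(k,\RR)$. Naturality follows because the identification $\Hom(G,\RR)\cong\RR^k$ is canonical up to change of basis, and any quasioutomorphism $[f]$ acts on $\Hom(G,\RR)$ by precomposition $\phi\mapsto\phi\circ f$; this action is exactly the $\GL(k,\RR)$-element corresponding to $[f]$ under our coarse isomorphism.

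The main obstacle is the identification $\Phi^\ast(\CE_{\norm{\mhyphen}})=\CE_{G\to\RR}$, which is the point where amenability is genuinely used (via the reduction of arbitrary quasimorphisms to homomorphisms). A minor but worth noting subtlety is that torsion in $G^{ab}$ does not obstruct the argument, since every torsion element lies in the kernel of every $\phi_i$ and contributes nothing to $\Phi$; the finite-rank hypothesis is only needed to keep $\dim_\RR\Hom(G,\RR)$ finite, so that Corollary~\ref{cAut_GL} applies.
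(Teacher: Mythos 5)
The paper does not prove this theorem---it is quoted verbatim from Hartnick--Schweitzer \cite{HS}, so there is no ``paper's own proof'' to compare against.

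That said, your blind argument is essentially correct and gives a clean derivation entirely inside the coarse-group framework of this manuscript. The reduction to the single isomorphism $(G,\CE_{G\to\RR})\cong(\RR^k,\CE_{\norm{\mhyphen}})$ is the right strategy, and your three-step verification (pullback equals $\CE_{G\to\RR}$, coarse density of $\Phi(G)$, and the assembly via Corollaries~\ref{cor:p1:pullback.gives.isomorphism}, \ref{cor:p2:HS_qmorph are coarse aut}, and~\ref{cAut_GL}) is sound. Amenability enters exactly where you flag it: via $H^2_b(G;\RR)=0$, every quasimorphism is close to a homomorphism, which is what collapses $\CE_{G\to\RR}$ to $\Phi^\ast(\CE_{\norm{\mhyphen}})$.

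One sentence is a logical non-sequitur as written: ``hence also a coarse homomorphism from $(G,\CE_{G\to\RR})$.'' Controlledness over $\varcrs[grp]{fin}$ does \emph{not} automatically imply controlledness over the finer requirement of the larger structure $\CE_{G\to\RR}\supseteq\varcrs[grp]{fin}$---enlarging the domain coarse structure imposes \emph{more} constraints, not fewer. But this is self-repairing, because the very next step proves $\CE_{G\to\RR}\subseteq\Phi^\ast(\CE_{\norm{\mhyphen}})$, which is precisely the controlledness condition, and the group-diagram compatibility is automatic since $\Phi$ is a genuine set-group homomorphism. You may wish to delete the ``hence also'' clause and introduce controlledness after the pullback computation, where it actually is established.
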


On the other hand, the situation for non\=/abelian free groups is much more flexible. The examples of HS\=/quasimorphisms constructed in Example~\ref{exmp:p2:HS_qmorph replacement} and Example~\ref{exmp:p2:HS_qmorph wobbling} are all invertible and hence define elements in $\qOut(F_S)$. Using appropriate $\RR$\=/quasimorphisms, it is not hard to show that these examples are not equivalent. In particular, this shows that that the wobbling group $W(\NN_{>0})$ embeds in $\qOut(F_S)$ \cite[Theorem~5.6]{HS}. 

Of course, every automorphism of a set\=/group $G$ gives rise to an invertible HS\=/quasimorphism, so there is a natural homomorphism $\aut(G)\to\qOut(G)$. This homomorphism factors through the set\=/group of outer automorphisms $\Out(G)$. With some work, it is possible to show that this homomorphism is injective for a finite rank free group:

\begin{prop}[{\cite[Proposition 5.1]{HS}}]\label{prop:p2:out_Fn embeds in HS}
 For every $2\leq \abs{S}<\infty$, the canonical homomorphism 
 \[
  \Out(F_S)\hookrightarrow\qOut(F_S)\cong\cAut(F_S,\CE_{F_S\to\RR})
 \]
 is injective.
\end{prop}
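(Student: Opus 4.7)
The plan is to unpack what it means for $[\phi] \in \Out(F_S)$ to lie in the kernel of the canonical homomorphism, and show that this forces $\phi$ to be inner. The map is clearly well-defined: any $\phi \in \Aut(F_S)$ preserves the family of $\RR$-quasimorphisms under precomposition (the composition of a homomorphism and a quasimorphism is a quasimorphism), hence $\phi$ is controlled with respect to $\CE_{F_S \to \RR}$; and inner automorphisms lie in the identity class $\cid_{\crse G}$ because $\CE_{F_S \to \RR}$ is a coarsely connected group coarse structure (cf.\ Example~\ref{exmp:p1:conjugation automorphisms}). So it suffices to show: if $\phi \in \Aut(F_S)$ is close to $\id_{F_S}$ with respect to $\CE_{F_S \to \RR}$, then $\phi$ is inner.

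First I would unravel the closeness condition. By definition of the pull\=/back intersection coarse structure, $\phi \closefn \id_{F_S}$ in $\CE_{F_S \to \RR}$ means that for every $\RR$-quasimorphism $\psi \colon F_S \to \RR$, the function $\psi \circ \phi - \psi$ is a bounded function on $F_S$. Replacing $\psi$ by its homogenization $\bar\psi$ (using Proposition~\ref{prop:p1:homogeneization.of.qmorph}, since $\bar\psi$ is close to $\psi$ in $\CE_{\abs{\mhyphen}}$), we deduce the same bound with $\bar\psi$ in place of $\psi$: there is a constant $C_{\bar\psi}$ so that $|\bar\psi(\phi(g)) - \bar\psi(g)| \leq C_{\bar\psi}$ for all $g \in F_S$. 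Homogeneity of $\bar\psi$ combined with the fact that $\phi$ is an honest homomorphism (so $\phi(g^n) = \phi(g)^n$) then upgrades this bound: applying it to $g^n$ yields $|n(\bar\psi(\phi(g)) - \bar\psi(g))| \leq C_{\bar\psi}$ for every $n \in \NN$, forcing $\bar\psi(\phi(g)) = \bar\psi(g)$ for every $g \in F_S$ and every homogeneous $\RR$-quasimorphism $\bar\psi$.

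The core of the argument is then the classical fact that homogeneous $\RR$-quasimorphisms \emph{separate conjugacy classes} in the finitely generated free group $F_S$ with $|S| \geq 2$. Concretely, if $g, h \in F_S$ are non-conjugate, one can construct a homogenized Brooks counting quasimorphism $\bar\phi_w$ (see Example~\ref{exmp:p1:brooks_quasimorphisms}) for a judiciously chosen cyclically reduced word $w$ that takes distinct values on $g$ and $h$: the value $\bar\phi_w$ essentially counts signed occurrences of $w$ in the cyclic reduction of its argument, and any two non-conjugate elements have genuinely different cyclic reductions. Combined with the previous step (and with standard abelian quasimorphisms to handle the case when one element equals the inverse class of the other, using that no non-trivial element of $F_S$ is conjugate to its inverse), we conclude that $\phi(g)$ is conjugate to $g$ for every $g \in F_S$, i.e., $\phi$ is \emph{class-preserving}.

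The final step invokes Grossman's theorem: every class-preserving automorphism of a finitely generated free group is inner. Hence $[\phi] = \cid$ in $\Out(F_S)$, completing the injectivity proof. The main obstacle in this plan is the conjugacy-separation step: while intuitively clear from the Brooks construction, it requires a careful combinatorial argument on cyclically reduced words and the interaction between the ``$w$ minus $w^{-1}$'' sign convention in Brooks quasimorphisms and the involution $g \mapsto g^{-1}$ on conjugacy classes.
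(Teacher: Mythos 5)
The paper does not contain a proof of this proposition; it is quoted from Hartnick--Schweitzer, so there is no internal argument to compare against directly. Your outline is, to the best of my knowledge, the argument underlying their Proposition~5.1, and the reductions you carry out are all sound. Well-definedness and the vanishing on inner automorphisms are exactly as you say, since $\CE_{F_S\to\RR}$ contains $\varcrs[grp]{fin}$ and is therefore connected, so conjugations induce the identity coarse map. Unwinding closeness, homogenizing, and exploiting $\phi(g^n)=\phi(g)^n$ correctly upgrades the boundedness of $\psi\circ\phi-\psi$ to the exact equality $\bar\psi(\phi(g))=\bar\psi(g)$ for every homogeneous $\bar\psi$. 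If homogeneous quasimorphisms separate conjugacy classes of $F_S$, this makes $\phi$ class-preserving, and Grossman's theorem (pointwise inner automorphisms of $F_n$ are inner) finishes.

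The genuine content is the separation lemma, and you are right to flag it as the missing piece. Two remarks. First, the anti-invariance $\bar\psi(g^{-1})=-\bar\psi(g)$ is handled not by the observation that no nontrivial element is conjugate to its inverse, but by the fact that $\bar\psi(g)=0$ for every homogeneous $\bar\psi$ forces $g=e$ (abelianization when $g\notin[F_S,F_S]$; Duncan--Howie plus Bavard duality when $g\in[F_S,F_S]\setminus\{e\}$). Indeed, if $\phi(g)$ were conjugate to $g^{-1}$ while also $\bar\psi(\phi(g))=\bar\psi(g)$ for all $\bar\psi$, then $\bar\psi(g)\equiv 0$, so $g=e$ and $\phi(g)=g$ trivially. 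Second, the separation of two non-conjugate cyclically reduced words by a Brooks-type quasimorphism really does require care with self-overlaps and the $C_w-C_{w^{-1}}$ sign convention; ``take $w$ a cyclic conjugate of $g$'' does not by itself rule out coincidences coming from occurrences of $w^{-1}$ in $h$. Until that lemma is supplied the write-up is an outline rather than a proof, but the outline is the correct one.
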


We conclude this section with some ideas for future research: 

\begin{rmk}
 At first sight, it would be natural to guess that the existence of ``exotic'' quasioutomorphisms could be yet another instance of the amenable vs. non\=/amenable divide. However, this is not the case. In fact, we already noted in Section~\ref{sec:p2:examples of (un)bounded}) that the set\=/group $\Sl(n,\ZZ)$ is intrinsically bounded for every $n\geq 3$. Since $\qOut(\Sl(n,\ZZ))\cong\cAut(\Sl(n,\ZZ),\CE_{\Sl(n,\ZZ)\to\RR})$ and  $\CE_{\Sl(n,\ZZ)\to\RR}= \maxcrs$ by intrinsic boundedness, it follows that $\qOut(\Sl(n,\ZZ))=\{e\}$.
 
 It is therefore very unclear which set\=/groups $G$ admit exotic examples of quasioutomorphisms. One issue that we are facing is the lack of examples for non\=/free groups: the examples illustrated in this section make heavy use of the identification between group elements and reduced words. 
 In view of this, it seems fairly natural to start investigating HS\=/quasimorphisms for finitely presented groups that have particularly nice presentations. 
\end{rmk}

\begin{rmk}\label{rmk:p2:HS is coarsely abelian}
 It would be interesting to understand the coarse structures $\CE_{G\to\RR}$ for specific examples of set\=/groups $G$. 
We can of course make a few simple observations. For any set\=/group $G$, $\CE_{G\to\RR}$ is a connected coarse structure because $(\RR,\CE_{\abs{\mhyphen}})$ is itself coarsely connected (in other words, $\varcrs[grp]{fin}\subseteq \CE_{G\to\RR}$). In particular, if $G$ is an intrinsically bounded set\=/group (Definition~\ref{def:p2:intrinsically bounded}) then $\CE_{G\to\RR}=\CE_{\rm max}$ is the bounded coarse structure.

Similarly, it follows from the abelianity of $\RR$ that $(G,\CE_{G\to\RR})$ is necessarily coarsely abelian. In particular, the identity map $\cid \colon (G,\varcrs[grp]{fin})\to(G,\CE_{G\to\RR})$ factors through the coarse abelianization of $(G,\varcrs[grp]{fin})$ (Remark~\ref{rmk:p1:on coarse abelian groups}). It would be interesting to characterize the set\=/groups $G$ for which $(G,\CE_{G\to\RR})$ \emph{is} the coarse abelianization.
By means of Bavard's Duality \cite{bavard1991longueur} (see also \cite[Theorem 2.70]{calegari2009scl}), this problem relates to the question of bounding the stable commutator length in terms of commutator length.
\end{rmk}

\section{Coarse Homomorphisms of Finitely Normally Generated Groups}\label{sec:p2:coarse hom of grps with canc met}
Let $G$ be a finitely normally generated set\=/group and $\varcrs{bw}$ the canonical coarsification obtained by taking the bi\=/invariant word metric associated with a finite normally generating set (Definition~\ref{def:p1:canonical coarse structure}).
The aim of this section is to provide examples of coarse homomorphisms for $(G,\varcrs{bw})$ and explore how they relate to other notions of quasimorphisms.

We begin by observing that for a finite rank free group $F_S$, all the examples of HS\=/quasimorphisms discussed in the previous section (Examples~\ref{exmp:p2:HS_qmorph local}, \ref{exmp:p2:HS_qmorph replacement} and \ref{exmp:p2:HS_qmorph wobbling}) also define coarse homomorphisms of $(F_S,\varcrs{bw})$. This is because the key tool used for proving that they are HS\=/quasimorphisms (Proposition~\ref{prop:p2:HS criterion}) can be extended to cover this setting as well. It is a consequence of the following.

\begin{lem}\label{lem:p2:criterion chom canonical crsificatn}
 Let $\crse G=(G,\CE)$ a coarsely connected coarse group and $f\colon F_S\to G$ be a function such that there exists $E\in\CE$ with
 \begin{enumerate}[(i)]
  \item $f(w_1w_2)\torel{E} f(w_1)\ast f(w_2)$ whenever $w_1,w_2$ and their concatenation $w_1w_2$ are reduced words;
  \item $f(w^{-1})\torel{E} f(w)^{-1}$ for every reduced word $w$;
 \end{enumerate}
 then $\crse f\colon (F_S,\varcrs{bw})\to\crse G$ is a coarse homomorphism. 
\end{lem}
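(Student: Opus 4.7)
My plan is to combine three ingredients: deducing a uniform coarse homomorphism property from (i) and (ii), applying Lemma~\ref{lem:p1:pull-back.under.hom.is.crsegroup} to transfer the equi bi\=/invariant structure to $F_S$, and invoking the identity $\varcrs{bw}=\varcrs[grp]{fin}$ from Corollary~\ref{cor:p1:canonical crse_str is E_grp_fin} to conclude that $f$ is controlled.

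First I would strengthen (i) to the full uniform coarse homomorphism property: find a single $E'\in\CE$ with $f(w_1\ast_{F_S} w_2)\rel{E'} f(w_1)\ast f(w_2)$ for \emph{all} reduced words $w_1,w_2\in F_S$, not just those whose concatenation is already reduced. The idea is to factor out the maximal cancellation between $w_1$ and $w_2$: write $w_1\equiv uv$ and $w_2\equiv v^{-1}u'$ in such a way that $u,\ v,\ u',\ uv,\ v^{-1}u',\ uu'$ are all reduced. Three applications of (i) (to $uv$, $v^{-1}u'$, and $uu'$), one application of (ii) (to $v^{-1}$), and finitely many applications of the associativity, inverse, and identity axioms for $\crse G$ then give
\[
 f(w_1)\ast f(w_2)\rel{\CE}\bigparen{f(u)\ast f(v)}\ast\bigparen{f(v)^{-1}\ast f(u')}\rel{\CE}f(u)\ast f(u')\rel{\CE}f(uu')=f(w_1 w_2),
\]
with total error lying in a fixed $\CE$-entourage independent of $w_1,w_2$. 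Degenerate cases ($u$, $v$, $u'$, or $w_i$ empty) are handled in the same way, once one has observed, by applying (i) to $e_{F_S}\ast_{F_S} e_{F_S}$ and using the coarse group axioms, that $f(e_{F_S})\rel{\CE}e_G$.

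With this uniform property in hand, Lemma~\ref{lem:p1:pull-back.under.hom.is.crsegroup} shows that the pull\=/back $f^\ast(\CE)$ is an equi bi\=/invariant coarse structure on $F_S$. Since $\crse G$ is coarsely connected, for any pair $w_1,w_2\in F_S$ the two\=/point set $\{f(w_1),f(w_2)\}$ is $\CE$\=/bounded, so $(w_1,w_2)\in f^\ast(\CE)$; hence $f^\ast(\CE)$ is also coarsely connected. By the defining minimality of $\varcrs[grp]{fin}$ among equi bi\=/invariant connected coarse structures on $F_S$, we get $\varcrs[grp]{fin}\subseteq f^\ast(\CE)$, and Corollary~\ref{cor:p1:canonical crse_str is E_grp_fin} rewrites the left hand side as $\varcrs{bw}$. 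This is exactly the statement that $f\colon(F_S,\varcrs{bw})\to(G,\CE)$ is controlled; combined with the first step, $\crse f$ is a coarse homomorphism.

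The delicate point is the first step. A direct attempt to verify the second hypothesis of Proposition~\ref{prop:p1:bornologous map is coarse hom} (that $f$ sends $d_{\overline S}$\=/bounded neighborhoods of $e_{F_S}$ to $\CE$\=/bounded sets) runs into trouble because a single conjugate $gsg^{-1}$ has $d_{\overline S}$\=/length $1$ but arbitrarily large free\=/group length, and we have no a priori uniform control over the conjugation action of $\crse G$ on the finite set $\{f(s)\mid s\in S\}$. Going through the pull\=/back together with the minimality of $\varcrs[grp]{fin}$ neatly bypasses this issue, reducing everything to producing the uniform defect bound of the first step.
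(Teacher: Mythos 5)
Your proof is correct and takes essentially the same route as the paper: both establish the uniform defect bound for all pairs of elements of $F_S$ by factoring out the maximal cancellation (the paper writes $v_1=w_1u$, $v_2=u^{-1}w_2$ with $w_1w_2$ reduced, which is your $w_1\equiv uv$, $w_2\equiv v^{-1}u'$ with $uu'$ reduced, up to renaming), and both then pass from $(F_S,\mincrs)$ to $(F_S,\varcrs{bw})$ via the equi bi\=/invariance of the pull\=/back coarse structure, coarse connectedness of $\crse G$, and the minimality of $\varcrs[grp]{fin}=\varcrs{bw}$. Your handling of the degenerate cases and the check $f(e_{F_S})\rel{\CE}e_G$ is a touch more explicit than the paper's, which leaves those to the reader.
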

\begin{proof}
 If we can show that $f(g_1 g_2)$ is uniformly close to $f(g_1)\ast f(g_2)$, it follows that condition $(i)$ of Lemma~\ref{lem:p1:functions are chom} is satisfied and hence we deduce that $\crse f\colon (F_S,\mincrs)\to\crse G$ is a coarse homomorphism. The fact that $\crse f\colon (F_S,\varcrs{bw})\to\crse G$ is a homomorphism as well follows from the assumption that $\crse G$ is coarsely connected and $\varcrs{bw}=\varcrs[grp]{fin}$ is the minimal connected coarsification.
 
 Choose reduced words $v_1,\ v_2$ representing the elements $g_1$, $g_2$. We can decompose them as $v_1=w_1u$ and $v_2=u^{-1}w_2$, so that the concatenation $w_1w_2$ is reduced. Now we have both
 \[
  f(g_1 g_2)=f(w_1w_2)\torel{E}f(w_1)\ast f(w_2)
 \]
 and
 \begin{align*}
  f(g_1)\ast f(g_2)&=
  f(w_1u)\ast f(u^{-1}g_2) \\
  &\torel{\CE} 
  f(w_1)\ast f(u)\ast f(u^{-1}) \ast f(w_2) \\
  &\torel{\Delta_{G}\ast E\ast\Delta_{G}}
  f(w_1)\ast f(u)\ast f(u)^{-1} \ast f(w_2)  \\
  &\torel{\CE}
  f(w_1)\ast f(w_2)
 \end{align*}
 for all $g_1,g_2\in F_S$.
\end{proof}

\begin{cor}\label{cor:p2:criterion chom canonical crsificatn}
 If $f\colon F_S\to G$ is as in Proposition~\ref{prop:p2:HS criterion}, then $\crse f\colon (F_S,\varcrs{bw})\to (G,\varcrs{bw})$ is a coarse homomorphism.
\end{cor}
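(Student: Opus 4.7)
The plan is to deduce the corollary from Lemma~\ref{lem:p2:criterion chom canonical crsificatn} applied to the coarse group $\crse G=(G,\varcrs{bw})$. First observe that this is permissible: since $\varcrs{bw}=\varcrs[grp]{fin}$ (Corollary~\ref{cor:p1:canonical crse_str is E_grp_fin}), the coarse structure $\varcrs{bw}$ is coarsely connected, so the hypothesis of Lemma~\ref{lem:p2:criterion chom canonical crsificatn} on the target is satisfied. All we have to show is that the algebraic conditions (i) and (ii) of Proposition~\ref{prop:p2:HS criterion} (the multiplicative containments involving the finite set $B$) imply the metric conditions (i) and (ii) of the lemma (closeness up to a single $E\in\varcrs{bw}$).

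The key observation is that $B$ is $\varcrs{bw}$\=/bounded. Indeed, $B$ is finite and $\varcrs{bw}=\varcrs[grp]{fin}$, so $B\cup\{e\}$ is a bounded neighborhood of the identity, and $B_0\coloneqq (B\cup\{e\})\times(B\cup\{e\})$ is a controlled entourage. The next step is to check that inserting an element of $B$ anywhere in a product $f(w_1)\ast f(w_2)$ yields a uniformly $\varcrs{bw}$\=/close element. Concretely, if $b\in B$ and $g_1,g_2\in G$ are arbitrary, then by equi left invariance and the (Identity) axiom we have $b\ast g\rel{E_0}g$ for some $E_0\in\varcrs{bw}$ depending only on $B$ (and not on $g$); applying this once on the left and once on the right, and using associativity together with equi bi\=/invariance, we obtain a single $E\in\varcrs{bw}$ such that
\[
 b_1\ast f(w_1)\ast b_2\ast f(w_2)\ast b_3\rel{E} f(w_1)\ast f(w_2)
\]
for every $b_1,b_2,b_3\in B$ and every $w_1,w_2\in F_S$. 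Combining this with hypothesis (i) of Proposition~\ref{prop:p2:HS criterion} gives condition (i) of Lemma~\ref{lem:p2:criterion chom canonical crsificatn}. The same kind of argument turns $f(w^{-1})\in B\ast f(w)^{-1}\ast B$ into $f(w^{-1})\rel{E'}f(w)^{-1}$ for some $E'\in\varcrs{bw}$, yielding condition (ii).

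Granted these two closeness statements, Lemma~\ref{lem:p2:criterion chom canonical crsificatn} applies directly and produces the desired coarse homomorphism $\crse f\colon(F_S,\varcrs{bw})\to(G,\varcrs{bw})$. The argument is essentially bookkeeping: there is no genuine obstacle, and the only point requiring a little care is to make sure that the bounds $E_0,E,E'$ one obtains are uniform in $w_1,w_2$ (and in the choice of $b_i\in B$), which is precisely what equi bi\=/invariance of $\varcrs{bw}$ guarantees once the $B_0$-entourage has been fixed.
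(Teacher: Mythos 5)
Your proof is correct and uses the same core idea as the paper's: the only job is to upgrade the containments of Proposition~\ref{prop:p2:HS criterion} to uniform closeness, and this follows because $B$ is finite (hence $\varcrs{bw}$-bounded) and $\varcrs{bw}$ is equi bi\=/invariant. The paper packages this observation as the statement that $\braces{B\ast f(w_1)\ast B\ast f(w_2)\ast B}$ is a controlled cover containing both $f(w_1w_2)$ and $f(w_1)\ast f(w_2)$, while you chase entourages directly; these are the same argument in different bookkeeping, aside from a small slip where you invoke equi left invariance for the step $b\ast g\rel{E_0}g$, which is in fact an instance of equi \emph{right} invariance (the relevant translate of $(b,e)$ is on the right).
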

\begin{proof}
 For any fixed finite set $B\subseteq G$, we know that the family $\braces{B\ast f(w_1)\ast B\ast f(w_2)\ast B\mid w_1,w_2\in F_S}$ is a controlled cover of $G$. Assume for simplicity that $e\in B$. If condition $(i)$ of Proposition~\ref{prop:p2:HS criterion} is satisfied, we see that the bounded set $B\ast f(w_1)\ast B\ast f(w_2)\ast B$ contains both $f(w_1w_2)$ and $f(w_1)\ast f(w_2)$. From this we deduce that hypothesis $(i)$ of Lemma~\ref{lem:p2:criterion chom canonical crsificatn} is satisfied as well. A similar argument of course works for condition $(ii)$.
\end{proof}

\begin{cor}
 Any replacement quasioutomorphism $f\colon F_S\to F_S$ defines a coarse automorphism $ \crse f \in\cAut(F_S,\varcrs{bw})$. Example~\ref{exmp:p2:HS_qmorph wobbling} gives a homomorphism of the wobbling group $W(\NN_{>0})\to \cAut(F_S,\varcrs{bw})$.
\end{cor}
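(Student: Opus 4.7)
The plan is to leverage Corollary~\ref{cor:p2:criterion chom canonical crsificatn} together with the examples from Section~\ref{sec:p2:HS_qmorph} to verify all the claims by pure bookkeeping.

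First, I would handle the replacement quasioutomorphisms $f_{w_1,w_2}$ of Example~\ref{exmp:p2:HS_qmorph replacement}. They are shown there to be HS-quasimorphisms, and a glance at \cite{HS} shows that they satisfy the hypotheses of Proposition~\ref{prop:p2:HS criterion} (here the ``bounded'' set $B$ can be taken to be a finite neighborhood of $e$ in $F_S$, accommodating the at most one boundary re-decomposition that can occur when concatenating two reduced words $w_1,w_2$ whose concatenation is reduced). Thus by Corollary~\ref{cor:p2:criterion chom canonical crsificatn}, $f_{w_1,w_2}\colon(F_S,\varcrs{bw})\to(F_S,\varcrs{bw})$ is a coarse homomorphism. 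Since Example~\ref{exmp:p2:HS_qmorph replacement} also records that $(f_{w_1,w_2})^2=\id_{F_S}$ set-theoretically, the coarse map $\crse{f}_{w_1,w_2}$ squares to $\cid_{(F_S,\varcrs{bw})}$ and is therefore its own coarse inverse, hence lies in $\cAut(F_S,\varcrs{bw})$.

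Next, for the wobbling group construction of Example~\ref{exmp:p2:HS_qmorph wobbling}, I would argue in parallel. For each $\sigma\in W(\NN_{>0})$ and each non self-overlapping cyclically reduced $v$, the function $f_{v,\sigma}\colon F_S\to F_S$ is an HS-quasimorphism (stated in Example~\ref{exmp:p2:HS_qmorph wobbling}), and the proof there, which rests on Proposition~\ref{prop:p2:HS criterion}, produces a finite $B\subset F_S$ depending only on $v$ (and not on $\sigma$) such that conditions (i)--(ii) of Proposition~\ref{prop:p2:HS criterion} hold. Corollary~\ref{cor:p2:criterion chom canonical crsificatn} then shows that each $f_{v,\sigma}$ induces a coarse homomorphism $\crse f_{v,\sigma}\colon(F_S,\varcrs{bw})\to(F_S,\varcrs{bw})$.

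To upgrade ``coarse homomorphism'' to ``coarse automorphism'' and produce a group homomorphism $W(\NN_{>0})\to\cAut(F_S,\varcrs{bw})$, I would establish the key set-theoretic identity
\[
 f_{v,\sigma_1}\circ f_{v,\sigma_2}=f_{v,\sigma_1\circ\sigma_2}
\]
for all $\sigma_1,\sigma_2\in W(\NN_{>0})$. This is immediate from the construction: the canonical decomposition $w=u_0v^{k_1}u_1\cdots v^{k_n}u_n$ is characterized by the $u_i$ having no occurrence of $v$ and the exponents being nonzero, and these properties are preserved when one replaces each $k_i$ by $\sigma(k_i)\neq 0$ (since $\sigma$ is a bijection of $\NN_{>0}$). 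Therefore applying $f_{v,\sigma_1}$ to $f_{v,\sigma_2}(w)$ simply substitutes each exponent according to $(\sigma_1)_\pm\circ(\sigma_2)_\pm=(\sigma_1\circ\sigma_2)_\pm$. Specializing to $\sigma_2=\sigma_1^{-1}$ gives $f_{v,\sigma_1}\circ f_{v,\sigma_1^{-1}}=\id_{F_S}$, so each $\crse f_{v,\sigma}$ is a coarse automorphism and $\sigma\mapsto\crse f_{v,\sigma}$ is a group homomorphism.

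I do not expect a serious obstacle here: the heavy lifting has already been done in Corollary~\ref{cor:p2:criterion chom canonical crsificatn}. The only point requiring a bit of care is checking uniformity of the set $B$ in Proposition~\ref{prop:p2:HS criterion} for the family $\{f_{v,\sigma}\}_{\sigma\in W(\NN_{>0})}$ — but this is transparent from the construction, since the ``defect'' only arises from the at most two boundary blocks where concatenation can alter the canonical decomposition of a word, and the size of this defect depends only on $|v|$, not on $\sigma$.
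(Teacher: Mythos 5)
Your proposal is substantively correct and follows the same line as the paper, which in fact provides no written proof of this corollary: it is meant to follow immediately from Corollary~\ref{cor:p2:criterion chom canonical crsificatn} together with the set-theoretic identities $(f_{w_1,w_2})^2=\id_{F_S}$ and $f_{v,\sigma_1}\circ f_{v,\sigma_2}=f_{v,\sigma_1\circ\sigma_2}$ recorded in Examples~\ref{exmp:p2:HS_qmorph replacement} and~\ref{exmp:p2:HS_qmorph wobbling}. Your unwinding of that argument — verify the hypotheses of Proposition~\ref{prop:p2:HS criterion}, invoke Corollary~\ref{cor:p2:criterion chom canonical crsificatn} to get coarse homomorphisms, then produce a set-theoretic inverse to land in $\cAut(F_S,\varcrs{bw})$ — is exactly the intended route.

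One claim in your last paragraph is false, though fortunately irrelevant. You assert that the finite set $B$ in Proposition~\ref{prop:p2:HS criterion} can be chosen uniformly over $\sigma\in W(\NN_{>0})$, depending only on $|v|$. It cannot: when reduced words $w_1,w_2$ concatenate so that a terminal block $v^{k}$ of $w_1$ merges with an initial block $v^{l}$ of $w_2$ to form $v^{k+l}$, the discrepancy between $f_{v,\sigma}(w_1 w_2)$ and $f_{v,\sigma}(w_1)f_{v,\sigma}(w_2)$ is a power $v^{\sigma_\pm(k)+\sigma_\pm(l)-\sigma_\pm(k+l)}$, and the exponent is only controlled by (roughly three times) the displacement bound of $\sigma$, which is unbounded as $\sigma$ ranges over $W(\NN_{>0})$. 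So $B$ genuinely depends on $\sigma$. But nothing in the corollary requires uniformity: the statement only asserts a homomorphism of set-groups $W(\NN_{>0})\to\cAut(F_S,\varcrs{bw})$, which follows from knowing, for each $\sigma$ separately, that $f_{v,\sigma}$ is controlled (with its own constants) and has the controlled set-theoretic inverse $f_{v,\sigma^{-1}}$, together with the composition identity $f_{v,\sigma_1}\circ f_{v,\sigma_2}=f_{v,\sigma_1\circ\sigma_2}$. You should simply drop the uniformity discussion; it neither holds nor is needed.
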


\begin{rmk}\label{rmk:p2:proof of HS criterion}
 Corollary~\ref{cor:p2:criterion chom canonical crsificatn} easily implies Proposition~\ref{prop:p2:HS criterion}. In fact, if $\crse f\colon (F_S,\varcrs{bw})\to (G,\varcrs{bw})$ is a coarse homomorphism then $\crse f\colon (F_S,\varcrs{bw})\to (G,\varcrs{G\to\RR})$ is a coarse homomorphism a fortiori. The fact that $\crse f\colon (F_S,\varcrs{F_S\to\RR})\to (G,\varcrs{G\to\RR})$ is also a coarse homomorphism follows as usual from the definition of $\varcrs{G\to\RR}$ and the fact that coarse homomorphisms into $(\RR,\CE_{\abs{\mhyphen}})$ are in correspondence with $\RR$\=/quasimorphisms. See also Remark~\ref{rmk:p2:corse homs give HS qmorph}.
\end{rmk}

\

The reader should not expect that every HS\=/quasimorphism $f\colon G\to G$ is controlled as a function $f\colon(G,\varcrs{bw})\to(G,\varcrs{bw})$. In fact, the coarse structure $\CE_{G\to\RR}$ is generally larger than $\varcrs{bw}$ (for instance, notice that $(F_S,\CE_{F_S\to\RR})$ is coarsely abelian while $(F_S,\varcrs{bw})$ is not). When this is the case, one may then easily find a representative $f$ for a coarse homomorphism $ \crse f \colon(G,\CE_{G\to\RR})\to(G,\CE_{G\to\RR})$ that is not controlled as a function $(G,\varcrs{bw})\to(G,\varcrs{bw})$.

On the contrary, it is often true that coarse homomorphisms of $G$ give rise to HS\=/quasimorphisms:

\begin{lem}
 Let $G,\ H$ be set\=/groups and $(G,\CE)$ any coarsification of $G$. If $\crse f \colon (G,\CE)\to(H,\varcrs[grp]{fin})$ is a coarse homomorphism, then $f$ is an HS\=/quasimorphism.
 Any two representatives $f_1,f_2\in \crse f$ are equivalent HS\=/quasimorphisms.
\end{lem}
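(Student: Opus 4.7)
The plan is to reduce both parts of the lemma to composition of coarse homomorphisms in $\Cat{Coarse}$, bridged by the key observation that every $\RR$-quasimorphism $\phi\colon H\to\RR$ is a representative of a coarse homomorphism $\crse\phi\colon(H,\varcrs[grp]{fin})\to(\RR,\CE_{\abs{\mhyphen}})$ (this is essentially the general-$H$ analogue of Corollary~\ref{cor:p1:quasimorphisms are controlled}, and it is the only non-formal ingredient needed).

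First I would verify this observation. The bounded-defect condition on $\phi$ says exactly that $\phi\circ(\variable\ast_H\variable)$ is $\CE_{\abs{\mhyphen}}$-close to $+\circ(\phi\times\phi)$. By Lemma~\ref{lem:p1:pull-back.under.hom.is.crsegroup} the pull-back $\phi^\ast(\CE_{\abs{\mhyphen}})$ is therefore equi bi-invariant on $H$. Since $\CE_{\abs{\mhyphen}}$ is connected, so is $\phi^\ast(\CE_{\abs{\mhyphen}})$, and by minimality of $\varcrs[grp]{fin}$ we conclude $\varcrs[grp]{fin}\subseteq\phi^\ast(\CE_{\abs{\mhyphen}})$; equivalently, $\phi$ is controlled as a function $(H,\varcrs[grp]{fin})\to(\RR,\CE_{\abs{\mhyphen}})$. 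Combined with the defect property, this is exactly the statement that $\crse\phi$ is a coarse homomorphism.

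Next, for the main claim: given any coarse homomorphism $\crse f\colon(G,\CE)\to(H,\varcrs[grp]{fin})$ and any $\RR$-quasimorphism $\phi$ on $H$, the composition $\crse\phi\circ\crse f\colon(G,\CE)\to(\RR,\CE_{\abs{\mhyphen}})$ is a coarse homomorphism with representative $\phi\circ f$. Unpacking this at the level of functions: there exists $F\in\CE_{\abs{\mhyphen}}$ with $(\phi\circ f)(g_1\ast_G g_2)\torel{F}(\phi\circ f)(g_1)+(\phi\circ f)(g_2)$ for all $g_1,g_2\in G$, which is precisely the bounded-defect condition for $\phi\circ f$ on $G$. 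Since $\phi$ was arbitrary, $f$ is an HS-quasimorphism.

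Finally, for the equivalence statement, if $f_1,f_2$ both represent $\crse f$ then $f_1\closefn f_2$ as functions into $(H,\varcrs[grp]{fin})$. Post-composing with the controlled map $\phi$ preserves closeness (as recorded right after Definition~\ref{def:p1:close.functions}), so $\phi\circ f_1\closefn\phi\circ f_2$ as functions into $(\RR,\CE_{\abs{\mhyphen}})$; by definition of $\CE_{\abs{\mhyphen}}$ this means $\abs{\phi\circ f_1(g)-\phi\circ f_2(g)}$ is uniformly bounded in $g$, which is exactly the definition of equivalence of HS-quasimorphisms. There is no genuine obstacle in this proof: the whole argument is a matter of packaging standard facts about $\Cat{Coarse}$ morphisms, with the mild verification that $\RR$-quasimorphisms on arbitrary set-groups are $\varcrs[grp]{fin}$-controlled being the only step requiring a line of work.
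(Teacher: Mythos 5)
Your proof is correct and follows the same route as the paper: identify $\RR$-quasimorphisms on $H$ with coarse homomorphisms $(H,\varcrs[grp]{fin})\to(\RR,\CE_{\abs{\mhyphen}})$, then conclude by composition with $\crse f$ and by stability of closeness under post-composition with controlled maps. The one step you spell out in more detail than the paper does — that an $\RR$-quasimorphism on an arbitrary set-group is automatically $\varcrs[grp]{fin}$-controlled, via Lemma~\ref{lem:p1:pull-back.under.hom.is.crsegroup} and minimality of $\varcrs[grp]{fin}$ — is exactly the right justification and is what the paper leaves implicit.
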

\begin{proof}
 Every $\RR$\=/quasimorphism $\phi\colon H\to \RR$ defines a coarse homomorphism $\crse \phi \colon (H,\varcrs[grp]{fin})\to(\RR,\CE_{\abs{\mhyphen}})$. Therefore, the composition $\crse {f\circ\phi} \colon (G,\CE)\to(\RR,\CE_{\abs{\mhyphen}})$ is itself a coarse homomorphism and hence an $\RR$\=/quasimorphism. The statement about equivalence is clear, because $\crse{ \phi\circ f}_1=\crse {\phi \circ f }= \crse {\phi \circ f}_2$ and hence $\abs{\phi\circ f_1-\phi\circ f_2}$ is bounded.
\end{proof}

\begin{rmk}\label{rmk:p2:corse homs give HS qmorph}
 In the above lemma, the coarse structure $(H,\varcrs[grp]{fin})$ can be replaced by any coarsification of $H$ where $\RR$\=/quasimorphisms are automatically controlled.
\end{rmk}

\begin{cor}\label{cor:p2:chom are HS_qmorph}
 If $G$ and $H$ are finitely normally generated set\=/groups, there is a natural map  $\chom((G,\varcrs{bw}),(H,\varcrs{bw}))\to Q(G,H)$ sending coarse homomorphism to equivalence classes of HS\=/quasimorphisms.
\end{cor}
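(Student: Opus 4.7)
The plan is to deduce the statement as a direct application of the preceding Lemma combined with Remark~\ref{rmk:p2:corse homs give HS qmorph}. The key ingredient is that for a finitely normally generated set-group $H$, every $\RR$-quasimorphism $\phi\colon H\to\RR$ is automatically controlled as a function $(H,\varcrs{bw})\to(\RR,\CE_{\abs{\mhyphen}})$, which is precisely Corollary~\ref{cor:p1:quasimorphisms are controlled}. Thus $\varcrs{bw}$ on $H$ is a coarsification falling in the scope of Remark~\ref{rmk:p2:corse homs give HS qmorph}.

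First I would fix a coarse homomorphism $\crse f\colon(G,\varcrs{bw})\to(H,\varcrs{bw})$ and a representative $f\colon G\to H$, and verify that $f$ is an HS-quasimorphism. Given any $\RR$-quasimorphism $\phi\colon H\to\RR$, Corollary~\ref{cor:p1:quasimorphisms are controlled} gives that $\crse\phi\colon(H,\varcrs{bw})\to(\RR,\CE_{\abs{\mhyphen}})$ is a coarse homomorphism; composing with $\crse f$ yields a coarse homomorphism $\crse\phi\circ\crse f\colon(G,\varcrs{bw})\to(\RR,\CE_{\abs{\mhyphen}})$, whose representative $\phi\circ f$ is therefore an $\RR$-quasimorphism on $G$ (since coarse homomorphisms with target $(\RR,\CE_{\abs{\mhyphen}})$ correspond to $\RR$-quasimorphisms). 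Hence $f\in Q(G,H)$.

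Next I would check that the equivalence class of $f$ in $Q(G,H)$ does not depend on the choice of representative. If $f_1,f_2$ are both representatives of $\crse f$, then they are close with respect to $\varcrs{bw}$, so $f_1\times f_2(\Delta_G)\in\varcrs{bw}$. For any $\RR$-quasimorphism $\phi$ on $H$, Corollary~\ref{cor:p1:quasimorphisms are controlled} again implies that $(\phi\times\phi)\bigparen{f_1\times f_2(\Delta_G)}\in\CE_{\abs{\mhyphen}}$, i.e.\ $\abs{\phi\circ f_1(g)-\phi\circ f_2(g)}$ is uniformly bounded in $g\in G$. Thus $f_1$ and $f_2$ determine the same class in $Q(G,H)$, and the map $\chom((G,\varcrs{bw}),(H,\varcrs{bw}))\to Q(G,H)$ is well-defined. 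Naturality in $G$ and $H$ amounts to compatibility with compositions, which is immediate because both the formation of coarse homomorphisms and of HS-quasimorphism classes are functorial with respect to composition.

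There is no real obstacle here: the result is essentially a repackaging of the preceding Lemma via the remark, with the automatic control of $\RR$-quasimorphisms on finitely normally generated groups supplying the required hypothesis. The only mild subtlety worth spelling out is that one uses Corollary~\ref{cor:p1:quasimorphisms are controlled} \emph{twice}: once on the target $H$ to turn $\phi$ into a coarse homomorphism (so that the composition $\crse\phi\circ\crse f$ makes sense as a coarse homomorphism out of $(G,\varcrs{bw})$), and once implicitly on the domain side to ensure that the resulting composition $\phi\circ f$ is automatically an $\RR$-quasimorphism on $G$ (rather than merely a controlled function).
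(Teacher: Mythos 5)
Your argument is correct and follows essentially the same route as the paper: the corollary is indeed an immediate consequence of the preceding Lemma via Remark~\ref{rmk:p2:corse homs give HS qmorph}, with Corollary~\ref{cor:p1:quasimorphisms are controlled} supplying the hypothesis that $\RR$-quasimorphisms are automatically controlled on $(H,\varcrs{bw})$; the finite normal generation of $G$ is used only to make $\varcrs{bw}$ well-defined on the source. One small correction to your closing remark: Corollary~\ref{cor:p1:quasimorphisms are controlled} is only needed once, on the target side, since the fact that $\phi\circ f$ is an $\RR$-quasimorphism on $G$ is automatic once it is a representative of the coarse homomorphism $\crse\phi\circ\crse f\colon(G,\varcrs{bw})\to(\RR,\CE_{\abs{\mhyphen}})$ (the quasimorphism property is the coarse-homomorphism axiom, independent of any controlledness hypothesis on $G$), so no finite normal generation of $G$ is invoked there.
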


Recall that for every set\=/group $G$ we have natural maps $\out(G)\to\cAut(G,\varcrs{bw})$ and $\out(G)\to\qOut(G)$. By definition, the latter coincides with the composition $\Out(F_S)\to \cAut(F_S,\varcrs{bw})\to Q(F_S)$. Since this is injective by Proposition~\ref{prop:p2:out_Fn embeds in HS}, we deduce the following:

\begin{cor}\label{cor:p2:outFn in cAut}
 The natural homomorphism $\Out(F_S)\to \cAut(F_S,\varcrs{bw})$ is an embedding.
\end{cor}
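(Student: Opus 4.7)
The strategy is to observe that the map $\Out(F_S)\to \cAut(F_S,\varcrs{bw})$ is the first half of a factorization of the injective homomorphism from Proposition~\ref{prop:p2:out_Fn embeds in HS}. Once we produce the factorization, injectivity of the composition forces injectivity of the first factor.

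Concretely, I would argue as follows. Corollary~\ref{cor:p2:chom are HS_qmorph} provides a natural map
\[
 \Phi\colon\cAut(F_S,\varcrs{bw})\longrightarrow Q(F_S,F_S)
\]
obtained by viewing a representative of a coarse automorphism as an HS\=/quasimorphism. Since compositions of coarse maps correspond to compositions of HS\=/quasimorphisms up to equivalence, $\Phi$ is a monoid homomorphism and hence restricts to a group homomorphism $\Phi\colon\cAut(F_S,\varcrs{bw})\to\qOut(F_S)$ (we use Corollary~\ref{cor:p2:HS_qmorph are coarse aut} to identify the target). The plan is then to verify that the diagram
\[
\begin{tikzcd}
 \Out(F_S) \arrow[r] \arrow[dr] & \cAut(F_S,\varcrs{bw}) \arrow[d,"\Phi"] \\
 & \qOut(F_S)
\end{tikzcd}
\]
commutes, where the diagonal arrow is the canonical homomorphism of Proposition~\ref{prop:p2:out_Fn embeds in HS}. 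This is essentially tautological: given a set\=/group automorphism $\phi\in\aut(F_S)$, both the top\=/right composition and the diagonal send $\phi$ to the equivalence class of $\phi$ itself viewed as an HS\=/quasimorphism, since the representative used to define the coarse automorphism $\crse\phi$ is $\phi$, and the HS\=/quasimorphism associated to $\crse\phi$ is the class of any of its representatives.

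Once the diagram commutes, the result is immediate: if $\phi\in\Out(F_S)$ maps to $\cid_{(F_S,\varcrs{bw})}$ in $\cAut(F_S,\varcrs{bw})$, then its image in $\qOut(F_S)$ is trivial by commutativity, so $\phi$ itself is trivial by injectivity of $\Out(F_S)\hookrightarrow\qOut(F_S)$ (Proposition~\ref{prop:p2:out_Fn embeds in HS}).

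The only genuine content is the commutativity check, and that is not hard—the subtle point is to make sure we are using the \emph{same} natural identifications throughout: the inclusion $\aut(F_S)\hookrightarrow$ (HS\=/quasimorphisms) on the one hand, and the path $\aut(F_S)\to$ (controlled maps for $\varcrs{bw}$) $\to$ (HS\=/quasimorphisms via Corollary~\ref{cor:p2:chom are HS_qmorph}) on the other. Both send $\phi$ to the HS\=/quasimorphism $\phi$, so there is nothing to prove beyond unwinding definitions. I do not anticipate any real obstacle; the entire proof is a two\=/line diagram chase once the factorization is set up.
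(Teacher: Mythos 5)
Your proposal is correct and is exactly the argument in the paper: the text preceding the corollary observes that $\Out(F_S)\to\qOut(F_S)$ factors as $\Out(F_S)\to\cAut(F_S,\varcrs{bw})\to Q(F_S)$, and injectivity of the composite (Proposition~\ref{prop:p2:out_Fn embeds in HS}) forces injectivity of the first map. Nothing further is needed.
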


We already remarked that not every HS\=/quasimorphism $f$ gives rise to a coarse homomorphism.
However, our examples are rather unsatisfactory as such $f$ may still be equivalent to an HS\=/quasimorphism that is a coarse homomorphism.
In other words, we find the following a more natural problem.

\begin{problem}\label{prob:p2:cAut to HS_qmorph inj or surj}
 Let $G,\ H$ be finitely generated groups. Under what circumstances is the map 
 \[
  \chom((G,\varcrs{bw}),(H,\varcrs{bw}))\to Q(G,H)
 \]
 surjective? When is it injective?
\end{problem}

\

Thus far we compared coarse homomorphisms with HS\=/quasimorphism. However one may also consider other notions of $\RR$\=/quasimorphism. Following Hartnick--Schweitzer and Fujiwara--Kapovich, we use the following:

\begin{de}[\cite{HS,FK}]
 Let $G,H$ be set\=/groups. A function $f\colon G\to H$ is
 \begin{itemize}
  \item an \emph{Ulam quasimorphism}\index{quasimorphism!Ulam} if 
  \[
   f(g_1g_2)\in f(g_1)f(g_2)B \qquad \forall g_1,g_2\in G,
  \]
  \item a \emph{middle quasimorphism}\index{quasimorphism!middle} if 
  \[
   f(g_1g_2)\in f(g_1)Bf(g_2) \qquad \forall g_1,g_2\in G,
  \]
  \item a \emph{geometric quasimorphism}\index{quasimorphism!geometric} if 
  \[
   f(g_1g_2)\in f(g_1)Bf(g_2)B \qquad \forall g_1,g_2\in G,
  \]
  \item an \emph{algebraic quasimorphism}\index{quasimorphism!algebraic} if 
  \[
   f(g_1g_2)\in Bf(g_1)Bf(g_2)B \qquad \forall g_1,g_2\in G,
  \]
 \end{itemize}
 where $B\subseteq H$ is some fixed finite set.
\end{de}

If $G$ and $H$ are finitely normally generated, we have the following chain of containments:
\[
\begin{tikzcd}[remember picture,column sep=-7em]
     \braces{f\colon G\to H \mid\text{Ulam q.morph.}}& & \braces{f\colon G\to H \mid\text{middle q.morph.}}\\
     & \braces{f\colon G\to H \mid\text{geometric q.morph.}}& \\
     & \braces{f\colon G\to H \mid\text{algebraic q.morph.}}& \\
     & \braces{f\colon (G,\varcrs{bw})\to (H,\varcrs{bw}) \mid\text{controlled}}& \\
     & \braces{f\colon G\to H \mid\text{HS\=/q.morph.}}.& \\
\end{tikzcd}
\begin{tikzpicture}[overlay,remember picture]
\path (\tikzcdmatrixname-2-2) to node[midway,sloped]{$\subseteq$}(\tikzcdmatrixname-1-1);
\path (\tikzcdmatrixname-1-3) to node[midway,sloped]{$\supseteq$}(\tikzcdmatrixname-2-2);
\path (\tikzcdmatrixname-2-2) to node[midway,sloped]{$\subseteq$}(\tikzcdmatrixname-3-2);
\path (\tikzcdmatrixname-3-2) to node[midway,sloped]{$\subseteq$}(\tikzcdmatrixname-4-2);
\path (\tikzcdmatrixname-4-2) to node[midway,sloped]{$\subseteq$}(\tikzcdmatrixname-5-2);
(\tikzcdmatrixname-2-2);
\end{tikzpicture}
\]

\begin{rmk}
 In \cite{FK} Fujiwara and Kapovich use more general definition of quasimorphisms where they equip $H$ with some left\=/invariant metric $d$ and ask for the set $B$ to be bounded. Their definition coincides with ours if $d$ is proper and discrete. Considering more general metrics complicates the above diagram of inclusions.
\end{rmk}

As before, it is easy to see that these containments are usually strict by adding some perturbation. However, we are mainly interested in \emph{equivalence classes} of quasimorphisms, and more general quasimorphisms which are more tolerant to error should be considered only up to a more general equivalence relation. For example, two geometric quasimorphisms $f_1,\ f_2\colon G\to H$ should be considered equivalent if there is some finite $B'\subseteq H$ so that $f_1(g)\in f_2(g)B'$ for every $g\in G$. In contrast, algebraic quasimorphisms should be equivalent if there is some finite $B'\subseteq H$ so that $f_1(g)\in B'f_2(g)B'$ for every $g\in G$. 

It is now natural to extend Problem~\ref{prob:p2:cAut to HS_qmorph inj or surj} to this setting and ask when it is the case that the maps induced by the above inclusions are injective/surjective when quotienting by the relevant equivalence relation. A special instance of this question appears in \cite{HS}, where they ask whether there exist algebraic quasimorphisms that are not equivalent to any geometric quasimorphism.

It is a result of Fujiwara--Kapovich \cite{FK} that there are very few Ulam quasimorphisms $f\colon F_S\to F_S$. At the opposite end, the examples of Hartnick--Schweitzer show that $Q(F_S)$ is very rich. This shows that the composition of the maps
\[
 \braces{\text{Ulam q.morph.}}
 \to \braces{\text{geom. q.morph.}}/_\sim
 \to \braces{\text{alg. q.morph.}}/_\sim 
 \to \cAut(F_S,\varcrs{bw}) 
 \to Q(F_S) 
\]
is far from being surjective for the free group. However, it is still not clear whether surjectivity fails for all of the above. In general, we find that this area of research still presents a number of open problems.

\begin{rmk}
 We wish to conclude this chapter by remarking that we are not currently aware of any set\=/group $G$ that has an unbounded, coarsely connected coarsification $(G,\CE)$ so that $\cAut(G,\CE)$ is countable or finite.
\end{rmk}

\chapter{Spaces of Controlled Maps}
\label{ch:p2:spaces of controlled maps}
 The aim of this section is to study the space of controlled maps $(X,\CE)\to (Y,\CE)$. This is not to be confused with $\cmor(\crse{X,Y})$, as we are not identifying close functions. Ideally, we would like to define a natural coarse structure on this set. This is not possible, however there is a natural ``fragmentary'' coarse structure which will be useful when working with families of controlled maps.
 Some related ideas appear in \cite{dikranjan2019balleans,zava2019introduction,dikranjan2019hyperballeans}.
 
\section{Fragmentary Coarse Structures}
 A fragmentary coarse structure on a set $X$ is a collection of relations satisfying all the requirements of a coarse structure with the exception that it need not contain the diagonal:\footnote{%
 There is literature (\emph{e.g.}\ \cite{skandalis2002coarse}) where coarse structures are not required to contain the diagonal.
 However, this is rather rare, and we thus preferred to coin the name ``fragmentary coarse structures''.
 Also, we find it convenient that this way the the useful notion of ``fragment'' has a natural name.  
 }

\begin{definition}\label{def:p2:fragmentary coarse.structure}
 A collection $\mathcal{E}$ of relations on $X$ is a \emph{fragmentary coarse structure}\index{fragmentary!coarse!structure} if it is closed under taking subsets, finite unions and
\begin{enumerate}[(i)]
 \item $\{(x,x)\}\in\CE$ for every $x\in X$;
 \item if $E\in\CE$ then $\op{E}\in\CE$; 
 \item if $E_1,E_2\in\CE$ then $E_1\cmp E_2\in\CE$.
\end{enumerate}
The relations $E\in\CE$ are still called controlled sets. A set with a fragmentary coarse structure is a \emph{fragmentary coarse space} (shortened to \emph{frag\=/coarse space}).\index{fragmentary!coarse!space} 

A subset $Y\subseteq X$ is a \emph{fragment}\index{fragment} of the fragmentary coarse space $(X,\CE)$ if $\Delta_{Y}\in \CE$. The \emph{completely fragmented}\index{completely fragmented frag\=/coarse structure} fragmentary coarse structure on $X$ is generated  by the set of diagonal singletons 
\[
 \CE_{\rm Frag}\coloneqq \Bigbraces{\braces{(x_1,x_1),\ldots,(x_n,x_n)}\bigmid x_1,\ldots x_n\in X,\  n\in\NN} 
\]
(this is the minimal fragmentary coarse structure on $X$).
\end{definition}

\begin{rmk}
 If $(X,\CE)$ is a fragmentary coarse space and $Y\subseteq X$ is a fragment, then the restriction of $\CE$ to $Y$ is a genuine coarse structure and hence $(Y,\CE|_{Y})$ is a coarse space.
\end{rmk}

With the obvious modifications, basic properties and definitions for coarse structures hold true for fragmentary coarse structures as well.
For example, Lemma~\ref{lem:p1: generated coarse structure} becomes:

\begin{lem}
 A relation $F$ belongs to the fragmentary coarse structure generated by a family of relations $\braces{E_i\mid i\in I}$ if and only if it is contained in a finite composition $F_1\cmp\cdots\cmp F_n$ where each $F_j$ is a finite union of $E_i$ or $\op{E_i}$.\footnote{%
 In Lemma~\ref{lem:p1: generated coarse structure} the sets $F_j$ are equal to $E_i\cup \Delta_X$ or $\op{E_i}\cup \Delta_X$. Since $\Delta_X$ is not required to be in the generated fragmentary coarse structure, it makes sense that we do not need to add it to the $F_j$ anymore. This has the side effect that we cannot generally obtain finite unions of relations as subsets of compositions, and we are hence forced to let the $F_j$ be arbitrary finite unions.}
\end{lem}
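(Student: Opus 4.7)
The plan is to mimic the proof of Lemma~\ref{lem:p1: generated coarse structure} while tracking carefully where the absence of $\Delta_X$ as a controlled relation forces a change. Let $\CF$ denote the collection of all relations $F$ that are subsets of some finite composition $F_1 \cmp \cdots \cmp F_n$ whose factors $F_j$ are finite unions of $E_i$ and $\op{E_i}$. The easy direction of the equivalence is that any such composition automatically lies in $\angles{E_i\mid i\in I}$ by closure under subsets, finite unions, symmetry and composition; so the task reduces to showing $\CF$ (suitably adjusted) is itself a fragmentary coarse structure containing every $E_i$, whence the minimality of $\angles{E_i\mid i\in I}$ forces $\angles{E_i\mid i\in I}\subseteq\CF$.

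The three algebraic axioms for a fragmentary coarse structure are direct. Closure of $\CF$ under subsets is built into the definition, and closure under composition is immediate. Symmetry follows from the identity $\op{(F_1\cmp\cdots\cmp F_n)}=\op{F_n}\cmp\cdots\cmp\op{F_1}$ together with the observation that $\op{F_j}$ is again a finite union of generators $E_i$ and $\op{E_i}$. Closure under finite unions is the step where one would like to use the trick $E\cup F\subseteq (E\cup\Delta_X)\cmp(F\cup\Delta_X)$ from the proof of Lemma~\ref{lem:p1: generated coarse structure}, but this is unavailable since $\Delta_X$ need not belong to our structure. The substitute is full distributivity of composition over unions,
\[
 (F_1\cup F_1')\cmp\cdots\cmp(F_n\cup F_n')=\bigcup_{\varepsilon\in\{0,1\}^n}F_1^{\varepsilon_1}\cmp\cdots\cmp F_n^{\varepsilon_n},
\]
which shows that a union of two compositions of length $n$ is itself (contained in) a single composition of finite unions of length $n$. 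This is precisely the reason the statement of the lemma allows the $F_j$ to be arbitrary finite unions rather than single generators.

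The remaining axiom is condition (i) from Definition~\ref{def:p2:fragmentary coarse.structure}: every diagonal singleton $\{(x,x)\}$ must belong to $\CF$. For any $x\in X$ that appears in some $E_i$ (say $(x,y)\in E_i$), one has $\{(x,x)\}\subseteq E_i\cmp\op{E_i}$, which has exactly the required form. For points $x$ not appearing in any generator, the singleton $\{(x,x)\}$ is forced into $\angles{E_i\mid i\in I}$ by axiom (i) itself; these isolated singletons must be absorbed into $\CF$ by hand, and once included they do not interfere with the characterisation because any relation containing such a singleton is a disjoint union with a relation not involving the offending point. The stated description therefore captures the whole generated structure.

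The main obstacle, and the only substantive difference from the non-fragmentary case, is precisely this bookkeeping around finite unions: because we cannot pad with $\Delta_X$ we cannot collapse finite unions into individual generators, and the statement of the lemma reflects this by permitting the $F_j$ to be finite unions throughout. All other steps are formal manipulations with compositions and opposites, so the proof is a direct translation of the earlier argument with distributivity playing the role that the diagonal-padding trick played before.
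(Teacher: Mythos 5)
The paper gives no proof of this lemma---it is asserted as the evident analogue of Lemma~\ref{lem:p1: generated coarse structure}---so the only benchmark is the adaptation you attempt, and your overall plan (define $\CF$ as the class of relations contained in a single composition of finite unions of generators and their opposites, then verify the axioms) is the natural one. The gap sits in the closure of $\CF$ under finite unions, exactly the step you single out as delicate. Distributivity handles the union of two compositions of the \emph{same} length $n$, but it gives nothing for compositions of different lengths, and in the non-fragmentary proof it is precisely the padding by $\Delta_X$ that equalises lengths. Without the diagonal this cannot be repaired: take $X=\braces{a,b}$ and the single generator $E=\braces{(a,b)}$. Every admissible $F_j$ is then a subset of $\braces{(a,b),(b,a)}$, so a length-$n$ composition $F_1\cmp\cdots\cmp F_n$ records endpoints of paths of $n$ swaps and is contained in $\Delta_X$ when $n$ is even and in $\braces{(a,b),(b,a)}$ when $n$ is odd. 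The relation $\braces{(a,b),(a,a)}$ belongs to the generated fragmentary coarse structure (it is the finite union of the generator with a diagonal singleton, both of which the structure must contain), yet it is contained in no single composition of either parity. So $\CF$ is not closed under finite unions and, more to the point, the lemma as literally stated is false; no completion of your argument could have succeeded.

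The diagonal singletons are a symptom of the same defect rather than a separate issue. Your observation that $\braces{(x,x)}\subseteq E_i\cmp\op{E_i}$ for points met by a generator is correct, but once you ``absorb by hand'' the singletons of untouched points, $\CF$ is no longer the class described in the statement, and relations such as $E_i\cup\braces{(x,x)}$ with $x$ untouched again escape every single composition; the closing claim that such relations ``do not interfere with the characterisation'' is exactly what fails. The statement your argument essentially does prove, once the union step is repaired, is: $F\in\angles{E_i\mid i\in I}$ if and only if $F$ is contained in a \emph{finite union} of finite compositions $F_1\cmp\cdots\cmp F_n$ of the stated form together with finitely many diagonal singletons $\braces{(x,x)}$. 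That class contains the generators and all singletons, is closed under subsets, finite unions and symmetry by inspection, and is closed under composition because $C\cmp\Delta_{X_0}\subseteq C$, $\Delta_{X_0}\cmp D\subseteq D$ and $\Delta_{X_0}\cmp\Delta_{Y_0}=\Delta_{X_0\cap Y_0}$, so distributivity reduces everything to compositions of the individual pieces. I would record the lemma in this corrected form rather than try to force the single-composition version.
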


Controlled maps are defined in the usual way. Note that if $(X,\CE)$ is a coarse space and $f\colon(X,\CE)\to(Y,\CF)$ is a controlled map to a fragmentary coarse space then the image $(f(X),\CF|_{f(Y)})$ is still a coarse space, hence $f(X)$ is a fragment of $Y$. 
Bounded sets and controlled partial coverings are also defined as usual. Notice that for any subset of a fragmentary coarse space $Y\subseteq (X,\CE)$ the partial covering $\pts{Y}$ is $\CE$\=/controlled if and only if $Y$ is a fragment of $(X,\CE)$.

The definition of closeness requires a modification, as it relies on the diagonal relation. 

\begin{de}\label{def:p2:closeness frag coarse}
 Two functions $f,g\colon (X,\CE)\to (Y,\CF)$ between fragmentary coarse spaces are \emph{fragmentary close}\index{fragmentary!close functions} (shortened to \emph{frag\=/close}) if  $f\times g(\Delta_Z)\in \CF$ for every fragment $Z\subseteq X$. 
\end{de}

Note that Definition~\ref{def:p2:closeness frag coarse} depends on the fragmentary coarse structure on the domain (more precisely, it depends on the set of fragments of $(X,\CE)$). This was not the case for the usual definition of closeness (Definition~\ref{def:p1:close.functions}), as it was assumed that $\Delta_{X}$ always belong to the coarse structure. It is still the case that frag\=/closeness is preserved under pre or post composition by controlled maps.
If $\Delta_{X}\in\CE$ (\emph{i.e.}\ $(X,\CE)$ is a coarse space) the two notions of closeness coincide.

Since the difference between closeness and frag\=/closeness might create some confusion, we will be zealous and add the adjective ``fragmentary'' to those notions that rely on closeness. For example, a \emph{fragmentary coarse map} (\emph{frag\=/coarse})\index{fragmentary!coarse!map} is an equivalence class of a controlled maps up to frag\=/closeness. Fragmentary coarse equivalence\index{fragmentary!coarse!equivalence} is defined analogously.

\begin{rmk}\label{rmk:p2:about subtleties in frag_coarse containments}
 In the sequel we will not use fragmentary analogues of most of the notions of Chapter~\ref{ch:p1:properties of coarse}, so we do not discuss them in detail. However, we do wish to point out that one should be a little careful when developing a theory of frag\=/coarse subspaces, images, intersections, etc.. The definition of coarse containment and asymptoticity of subsets (Definition~\ref{def:p1:asymptotic}) does not work well as it is, because it is not a reflexive property. Instead, one would have to adjust it as we did for closeness, \emph{i.e.}\ by only checking coarse containment on fragments of the subsets. 
 
 One more reason to do so is to preserve the correspondence between ``set\=/wise'' and ``categorical'' frag\=/coarse subspaces, see Appendix~\ref{sec:appendix:subobjects}. Since the categorical definition in terms of equivalence classes of monics relies on frag\=/closeness, it is certainly necessary to adapt accordingly the definition of frag\=/coarse containment.
\end{rmk}

We continue to use bold symbols to denote fragmentary coarse spaces $\crse X=(X,\CE)$ and frag\=/coarse maps $\crse f\colon\crse X\to\crse Y$. 
Note that if $\crse X$ and $\crse Y$ are two coarse spaces, then two functions $f,g\colon X\to Y$ are frag\=/close if and only they are close in the usual sense. In particular, the bold symbol $\crse f$ represents the same set of equivalent functions regardless of whether we think of $\crse X$ and $\crse Y$ as coarse spaces or frag\=/coarse spaces containing the diagonal.
We denote the set of frag\=/coarse maps $\crse f\colon\crse X\to\crse Y$ by $\fcmor(\crse X,\crse Y)$.

\begin{rmk}
 We can of course define the category \Cat{FragCrs}\nomenclature[:CAT]{\Cat{FragCrs}}{category of fragmentary coarse spaces} of fragmentary coarse spaces, and this category naturally includes the category \Cat{Coarse} of coarse spaces as a subcategory. 
Note that if $\crse X$ and $\crse Y$ are two coarse spaces then $\cmor(\crse{X,Y})=\fcmor(\crse{X,y})$.
 
 The main reason for introducing frag\=/coarse spaces is to have a language suitable to describe spaces of controlled maps. In categorical terms, the key extra features that \Cat{FragCrs} has over \Cat{Coarse} is that it is a Cartesian closed category and that \Cat{Coarse} is enriched over \Cat{FragCrs}. We relegate this discussion to Appendix~\ref{sec:appendix:frag coarse}.
\end{rmk}

\begin{warning}
 The next example shows that a coarse space can be frag\=/coarsely equivalent to a frag\=/coarse space that is not a coarse space. 
\end{warning}

\begin{exmp}\label{exmp:p2:frag bounded}
 It is easy to show that a frag\=/coarse space $(X,\CE)$ is frag\=/coarsely equivalent to the one\=/point coarse space $({\rm pt},\{{\rm pt}\})$ if and only if there exists a point $\bar x\in X$ such that for every fragment $Y\subseteq X$ the product relation $\{\bar x\}\times Y$ is in $\CE$ (we say that such a space is \emph{frag\=/bounded}).\index{fragmentary!bounded} Concretely, the set $\RR$ equipped with the fragmentary coarse structure $\CE=\braces{E\mid \exists t>0,\ E\subseteq [-t,t]\times[-t,t]}$ is frag\=/coarsely equivalent to $\tobj$, but $\Delta_\RR\notin\CE$.
\end{exmp}

There are instances when the diagonal relation is very useful. This is the case for the fundamental equality $E\otimes F= (E\otimes \Delta_{Y})\cmp (\Delta_{X} \otimes F)$ on $(X,\CE)\times (Y,\CF)$. Fortunately, this equality can be refined by writing
\begin{equation}\label{eq:p1:otimes as composition frag}
 E\otimes F= (E\otimes \Delta_{\pi_1(F)})\cmp (\Delta_{\pi_2(E)} \otimes F),
\end{equation}
where $\pi_1$ and $\pi_2$ are the projection to the first and the second coordinate. Importantly, if $E$ is a relation in a fragmentary coarse structure $\CE$ on $X$, then the projections $\pi_1(E)$ and $\pi_2(E)$ are fragments of $X$, so that both terms on the RHS of Equation~\eqref{eq:p1:otimes as composition frag} belong to $\CE\otimes\CF$. We prove a slightly more refined result (which will be useful later):

\begin{lem}\label{lem:p2:projections are fragments}
 Let $\CE$ be a fragmentary coarse structure. Then for every $E\in\CE$ there exists a $\tilde E\in \CE$ such that $E\subseteq \tilde E$ and $\Delta_{\pi_1(\tilde E)\cup\pi_2(\tilde E)}\subseteq \tilde E$. In particular, for every $E\in\CE$ the union $\pi_1(E)\cup\pi_2(E)$ is a fragment of $X$.
\end{lem}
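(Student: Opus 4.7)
The plan is to produce $\tilde{E}$ explicitly as a finite union of controlled relations built from $E$ using the axioms of a fragmentary coarse structure. The natural guess is
\[
\tilde{E} \coloneqq E \cup (E \cmp \op{E}) \cup (\op{E} \cmp E).
\]
First I would observe that $\tilde{E}$ does belong to $\CE$: the set $\op{E}$ is in $\CE$ by axiom (ii), the compositions $E \cmp \op{E}$ and $\op{E} \cmp E$ are in $\CE$ by axiom (iii), and the union of three elements of $\CE$ is in $\CE$ since $\CE$ is closed under finite unions.

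The key computation is to identify the projections of $\tilde{E}$. If $x \in \pi_1(E)$, pick $y$ with $(x,y) \in E$; then $(y,x) \in \op{E}$, so $(x,x) \in E \cmp \op{E}$. This shows $\Delta_{\pi_1(E)} \subseteq E \cmp \op{E}$, and symmetrically $\Delta_{\pi_2(E)} \subseteq \op{E} \cmp E$. Running the same argument to determine projections yields $\pi_1(E \cmp \op{E}) = \pi_2(E \cmp \op{E}) = \pi_1(E)$ and $\pi_1(\op{E} \cmp E) = \pi_2(\op{E} \cmp E) = \pi_2(E)$, so that
\[
\pi_1(\tilde{E}) \cup \pi_2(\tilde{E}) = \pi_1(E) \cup \pi_2(E).
\]
Combining the two inclusions above, we obtain $\Delta_{\pi_1(\tilde E)\cup\pi_2(\tilde E)} = \Delta_{\pi_1(E) \cup \pi_2(E)} \subseteq \tilde{E}$, which is the required property, and $E \subseteq \tilde{E}$ is immediate from the definition.

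The ``in particular'' statement then follows at once: since $\Delta_{\pi_1(E) \cup \pi_2(E)} \subseteq \tilde{E} \in \CE$ and $\CE$ is closed under taking subsets, we have $\Delta_{\pi_1(E) \cup \pi_2(E)} \in \CE$, so $\pi_1(E) \cup \pi_2(E)$ is a fragment of $X$. There is no real obstacle here beyond choosing the right candidate for $\tilde{E}$; the rest is a direct unwinding of the definitions of composition and symmetric relation together with the axioms of a fragmentary coarse structure.
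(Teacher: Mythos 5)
Your proof is correct and follows essentially the same route as the paper's: the paper takes $\tilde E\coloneqq E\cup(E\cmp\op E)\cup(\op E\cmp E)\cup\op E$ and observes the same inclusions $\Delta_{\pi_1(E)}\subseteq E\cmp\op E$ and $\Delta_{\pi_2(E)}\subseteq\op E\cmp E$. The only difference is that you omit the final $\op E$ term, which is redundant for the statement as given (though the paper's version has the cosmetic advantage that $\tilde E$ is in addition symmetric).
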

\begin{proof}
 Note that $\pi_1(E_1\cmp E_2)\subseteq\pi_1(E_1)$ and $\pi_2(E_1\cmp E_2)\subseteq\pi_2(E_2)$. Also note that
\[
 \Delta_{\pi_1(E)}\subseteq E\cmp\op{E} \quad \text{ and }\quad 
 \Delta_{\pi_2(E)}\subseteq \op E\cmp {E}.
\]
We may then let 
\[
 \tilde E\coloneqq E\cup (E\cmp \op E)\cup (\op E\cmp E)\cup \op E,
\]
and we are done because $\tilde E$ belongs to $\CE$,
\[
 \pi_1(\tilde E)=\pi_2(\tilde E)= \pi_1(E)\cup\pi_2(E),
\]
and $\Delta_{\pi_1(E)\cup\pi_2(E)}\subseteq \tilde E$.
\end{proof}

\begin{cor}\label{cor:p2:frag_controlled iff controlled on fragments}
 A function between fragmentary coarse spaces $f\colon(X,\CE)\to(Y,\CF)$ is controlled if and only if its restriction to every fragment of $X$ is controlled.
\end{cor}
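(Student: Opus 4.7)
The plan is to exploit Lemma~\ref{lem:p2:projections are fragments} as the key technical tool. The forward direction will be essentially automatic from the definitions of controlled maps and the subspace frag-coarse structure on a fragment, so the substance of the proof lies in the converse.

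For the easy direction, I would start by observing that if $f\colon(X,\CE)\to(Y,\CF)$ is controlled and $Z\subseteq X$ is any fragment, then the inclusion $\iota\colon(Z,\CE|_Z)\hookrightarrow(X,\CE)$ is controlled essentially by construction, and hence so is the composition $f\circ\iota=f|_Z$.

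For the converse, the strategy is to take an arbitrary $E\in\CE$ and engineer a single fragment $Z\subseteq X$ such that $E$ is contained in a controlled relation of $(Z,\CE|_Z)$. Concretely, I would apply Lemma~\ref{lem:p2:projections are fragments} to produce $\tilde E\in\CE$ with $E\subseteq\tilde E$ and $\Delta_{\pi_1(\tilde E)\cup\pi_2(\tilde E)}\subseteq\tilde E$, then set $Z:=\pi_1(\tilde E)\cup\pi_2(\tilde E)$. By construction $\Delta_Z\in\CE$, so $Z$ is a fragment; moreover $\tilde E\subseteq Z\times Z$, so $\tilde E\in\CE|_Z$. The hypothesis applied to $f|_Z$ then gives $(f|_Z\times f|_Z)(\tilde E)\in\CF$, and since
\[
 (f\times f)(E)\subseteq (f\times f)(\tilde E)=(f|_Z\times f|_Z)(\tilde E),
\]
closure of $\CF$ under subsets yields $(f\times f)(E)\in\CF$, as desired.

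The only subtle point — and what makes the statement non-trivial — is that a priori an entourage $E\in\CE$ need not be contained in any $Z\times Z$ for $Z$ a fragment; without Lemma~\ref{lem:p2:projections are fragments}, one cannot localise the verification of controlledness to a single fragment. Once that lemma is in hand, however, no further work is needed. I expect the argument to be short and largely formal, so there is no serious obstacle beyond correctly invoking the lemma.
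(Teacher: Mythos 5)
Your proof is correct and uses exactly the approach the paper intends: the corollary is stated with no written proof immediately after Lemma~\ref{lem:p2:projections are fragments}, and your argument — enlarge $E$ to $\tilde E$ via that lemma, observe $Z=\pi_1(\tilde E)\cup\pi_2(\tilde E)$ is a fragment with $\tilde E\in\CE|_Z$, and apply the hypothesis to $f|_Z$ — is precisely the deduction the authors leave to the reader.
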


We can define $(X,\CE)$\=/equi controlled families of maps as in Section~\ref{sec:p1:equi controlled.maps}. That is, an $X$\=/indexed family of controlled functions $f_x\colon (Y,\CF)\to (Z,\CD)$ is \emph{$(X,\CE)$\=/equi controlled}\index{fragmentary!equi controlled} if and only if the induced map $f\colon (X,\CE)\times(Y,\CF)\to(Z,\CD)$ is controlled. 

\begin{warning}
 Recall that a $I$\=/indexed family of functions $f_i\colon(Y,\CF)\to (Z,\CD)$ is equi controlled if they are $(I,\mincrs)$\=/equi controlled, where $\mincrs=\{\Delta_I\}$ as usual. We will use the same nomenclature even if $Y$ and $Z$ are frag\=/coarse spaces.
 In particular, note that if $(X,\CE)$ is a frag\=/coarse space then a family of $(X,\CE)$\=/equi controlled functions need not be equi controlled. 
\end{warning}

The important observation characterizing controlled functions $(X,\CE)\times (Y,\CF)\to (Z,\CD)$ as those functions whose left and right sections are equi controlled (Lemma~\ref{lem:p1:equi controlled.sections.iff.controlled}) cannot hold for frag\=/coarse spaces.
However, since Equation~\eqref{eq:p1:otimes as composition frag} holds true, we can recover the following fact:

\begin{cor}
 Let $(X,\CE),(Y,\CF)$ and $(Z,\CD)$ be fragmentary coarse spaces. A function $f\colon (X,\CE)\times (Y,\CF)\to (Z,\CD)$ is controlled if and only if for every pair of fragments $A\subseteq X$ and $B\subseteq Y$ both families $\braces{{}_xf\colon(Y,\CF)\to (Z,\CD)\mid x\in A}$ and $\braces{f_y\colon(X,\CE)\to (Z,\CD)\mid y\in B}$ are equi controlled.
\end{cor}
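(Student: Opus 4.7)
The plan is to mimic the argument of Lemma~\ref{lem:p1:equi controlled.sections.iff.controlled}, replacing the single identity $E\otimes F=(E\otimes\Delta_Y)\cmp(\Delta_X\otimes F)$ (which uses the full diagonal) by its fragmentary refinement \eqref{eq:p1:otimes as composition frag}: $E\otimes F=(E\otimes\Delta_{\pi_1(F)})\cmp(\Delta_{\pi_2(E)}\otimes F)$. Because the projections $\pi_1(F)$ and $\pi_2(E)$ are automatically fragments (Lemma~\ref{lem:p2:projections are fragments}), the two factors on the right-hand side will fall in the regime of our equi-controlledness hypothesis.

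The forward implication is essentially tautological. If $f\colon(X,\CE)\times(Y,\CF)\to(Z,\CD)$ is controlled and $A\subseteq X$ is a fragment, then $\Delta_A\in\CE$, so the inclusion $(A,\mincrs_A)\hookrightarrow(X,\CE)$ is controlled and hence so is the composition $f\colon(A,\mincrs)\times(Y,\CF)\to(Z,\CD)$; this says exactly that the family $\{{}_xf\mid x\in A\}$ is equi controlled. The argument for $\{f_y\mid y\in B\}$ is symmetric.

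For the backward implication, fix $E\in\CE$ and $F\in\CF$; it suffices to show $(f\times f)(E\otimes F)\in\CD$. Set $A\coloneqq \pi_2(E)\subseteq X$ and $B\coloneqq \pi_1(F)\subseteq Y$, which are fragments by Lemma~\ref{lem:p2:projections are fragments}. Unravelling the definition, to say that $\{f_y\mid y\in B\}$ is $(B,\mincrs)$\=/equi controlled is to say that $(f\times f)(E'\otimes\Delta_B)\in\CD$ for every $E'\in\CE$; applied to $E'=E$ this gives $(f\times f)(E\otimes\Delta_B)\in\CD$. Symmetrically, $(f\times f)(\Delta_A\otimes F)\in\CD$. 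Using \eqref{eq:p1:otimes as composition frag} together with the standard inclusion $(f\times f)(R_1\cmp R_2)\subseteq (f\times f)(R_1)\cmp(f\times f)(R_2)$, we conclude
\[
(f\times f)(E\otimes F)\subseteq (f\times f)(E\otimes\Delta_B)\cmp(f\times f)(\Delta_A\otimes F)\in\CD,
\]
as desired.

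The only real subtlety, and what distinguishes this from its non-fragmentary analogue, is the need to quantify over \emph{all} fragments $A,B$ (rather than using a single global diagonal) so that the decomposition \eqref{eq:p1:otimes as composition frag} can be applied no matter how $E$ and $F$ are chosen. Lemma~\ref{lem:p2:projections are fragments} is exactly what ensures this works.
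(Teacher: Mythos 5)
Your proof is correct and takes the same approach the paper intends: the corollary is stated immediately after Lemma~\ref{lem:p2:projections are fragments} and the decomposition $E\otimes F=(E\otimes\Delta_{\pi_1(F)})\cmp(\Delta_{\pi_2(E)}\otimes F)$, and your argument is precisely the spelled-out version of that implication (mirroring the proof of Lemma~\ref{lem:p1:equi controlled.sections.iff.controlled} with the full diagonals replaced by the fragment diagonals $\Delta_{\pi_2(E)}$, $\Delta_{\pi_1(F)}$). The only cosmetic point is that Definition~\ref{def:p1:equi controlled} places the index set on the right of the product, so ``$\{{}_xf\mid x\in A\}$ equi controlled'' is literally ``$f\colon(Y,\CF)\times(A,\mincrs)\to(Z,\CD)$ controlled''; swapping factors is harmless but worth noting.
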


\section{The Fragmentary Coarse Space of Controlled Maps} \label{sec:p2:controlled_frag_crs} 
As announced, we introduced fragmentary coarse spaces because they are well\=/suited for studying spaces of controlled maps. We begin with  lemmas and definitions, as we need them to introduce the notation. The heuristics behind these definitions are explained in Remark~\ref{rmk:p2:heuristic on fragmentary coarse actions}.
Let $\crse X=(X,\CE)$ and $\crse Y=(Y,\CF)$ be two fragmentary coarse spaces. With a slight abuse of notation, we denote the set of controlled maps $(X,\CE)\to(Y,\CF)$ by $\ctrl{X}{Y}$ (it would be more correct to write $\ctrl{(Y,\CF)}{(X,\CE)}$). We see this as a subset of the set $Y^{X}$ of all the functions from $X$ to $Y$. Given $E\in\CE$ and $F\in \CF$, we denote 
\[
 F^E\coloneqq\bigbraces{(f_1,f_2)\bigmid f_1,f_2\in \ctrl{X}{Y},\ f_1\times f_2(E)\subseteq F}.
\]
The set $F^E$\nomenclature[:R]{$F^E$}{relation on controlled functions} is a relation on the set of controlled functions from $(X,\CE)$ to $(Y,\CF)$. We may also rewrite it as:
\[
 F^E\coloneqq\bigbraces{(f_1,f_2)\bigmid f_1,f_2\in \ctrl{X}{Y},\ f_1(e_1)\torel{F}f_2(e_2)\ \forall e_1\torel{E} e_2}.
\]
Clearly, $F_1\subseteq F_2$ implies $F_1^E\subseteq F_2^E$ and $E_1\subseteq E_2$ implies $F^{E_1}\supseteq F^{E_2}$. It is also easy to verify that $F^E$ satisfies the following properties:
\begin{enumerate}[(i)]
  \item $F_1^{E}\cup F_2^{E}\subseteq (F_1\cup F_2)^{E}$;
  \item $\op{(F^E)}=(\op{F})^{(\op{E})}$.
\end{enumerate}

For any fixed $E\in\CE$, we let $\CF^E$ denote the set of relations $C\subset \ctrl{X}{Y}\times\ctrl{X}{Y}$ such that $C\subseteq F^E$ for some $F\in\CF$. In other words, $\CF^E$ is the collection of relations obtained by making $\bigcup_{F\in\CF}F^E$ closed under taking subsets.

The following observation is important.

\begin{lem}\label{lem:p2:relations in exponential are equi controlled}
 Let $E\in\CE$ be a relation such that $\Delta_{\pi_1(E)\cup\pi_2(E)}\subseteq E$. Then, for every relation $D\in\CF^E$ there exists a $\tilde F\in\CF$ such that $f\times f(E)\subseteq \tilde F$ for every $f\in\pi_1(D)\cup\pi_2(D)$.
 In other words, there exists $\tilde F$ such that $\Delta_{\pi_1(D)\cup\pi_2(D)}\subseteq \tilde F^E$.
\end{lem}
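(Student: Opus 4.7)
The plan is to unpack the definitions of $\CF^E$ and $F^E$ and to exploit the hypothesis $\Delta_{\pi_1(E)\cup\pi_2(E)}\subseteq E$ to replace the comparison between two different functions $f_1$ and $f_2$ (which is what belonging to $F^E$ provides) by a comparison of a single function $f$ with itself on two different inputs. Concretely, I would first pick $F\in\CF$ with $D\subseteq F^E$, which exists by the definition of $\CF^E$. Then, given $f\in\pi_1(D)$, there is a controlled map $f'$ with $(f,f')\in D\subseteq F^E$, meaning $f(a)\torel{F}f'(b)$ for every $(a,b)\in E$.

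To bound $f\times f(E)$, I would use the assumption on $E$ in the following way: for any $(e_1,e_2)\in E$, the pair $(e_2,e_2)$ also lies in $E$ since $e_2\in\pi_2(E)$ and $\Delta_{\pi_2(E)}\subseteq E$. Applying the $F^E$ property to both $(e_1,e_2)$ and $(e_2,e_2)$ gives $f(e_1)\torel{F}f'(e_2)$ and $f(e_2)\torel{F}f'(e_2)$, which combine (by inverting the second) to yield $(f(e_1),f(e_2))\in F\cmp\op{F}$. A symmetric argument handles $f\in\pi_2(D)$: if $(f',f)\in D$ then using $(e_1,e_1)\in E$ and $(e_1,e_2)\in E$ gives $(f(e_1),f(e_2))\in\op{F}\cmp F$.

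Setting $\tilde F\coloneqq (F\cmp\op{F})\cup(\op{F}\cmp F)$ then does the job: it belongs to $\CF$ by closure under symmetry, composition and finite unions, and by construction $f\times f(E)\subseteq \tilde F$ uniformly for every $f\in\pi_1(D)\cup\pi_2(D)$, which is the same as $\Delta_{\pi_1(D)\cup\pi_2(D)}\subseteq \tilde F^E$. There is no real obstacle here beyond correctly tracking the two uses of the diagonal hypothesis, one on $\pi_1(E)$ and one on $\pi_2(E)$, which is what lets the single argument $f$ play the role of both members of a pair in $D$.
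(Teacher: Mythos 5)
Your proof is correct and takes essentially the same approach as the paper: both set $\tilde F = (F\cmp\op F)\cup(\op F\cmp F)$ and bound $f\times f(E)$ by inserting the partner function $g$ (your $f'$) via the diagonal hypothesis on $\pi_2(E)$ (resp.\ $\pi_1(E)$). The only difference is presentational — you argue pointwise while the paper writes the same inclusion as a composition of relations, $f\times f(E)\subseteq (f\times g(E))\cmp\op{(f\times g(\Delta_{\pi_2(E)}))}$.
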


\begin{proof}
 Fix $D\in\CF^E$. By hypothesis, there is $F\in\CF$ such that $f\times g(E)\subseteq F$ for every pair $(f,g)\in D$. Let $f\in\pi_1(D)$, by definition there is $g\in \ctrl{X}{Y}$ such that $(f,g)\in D$. Note that
 \begin{equation*}\label{eq:p2:fragmentary coarse structure of functions}
  f\times f(E)
  \subseteq \paren{ f\times g (E) }\circ \paren{ g\times f(\Delta_{\pi_2(E)})}
  = \paren{f\times g (E)}\circ \op{\paren{f\times g(\Delta_{\pi_2(E)})} }.
 \end{equation*}
 Since $\Delta_{\pi_2(E)}\subseteq E$, the pair $(f,g)$ is in $F^{\Delta_{\pi_2(E)}}$. That is, $f\times g(\Delta_{\pi_2(E)})\subseteq F$. It follows that $f\times f(E)\subseteq F\cmp \op F$.
 
 An analogous argument shows that $g\times g(E)\subseteq \op F\circ F$ whenever $g\in \pi_2(D)$. To conclude the proof it is enough to let $\tilde F\coloneqq\paren{\op F\circ F}\cup \paren{F\circ\op F}$.
\end{proof}

\begin{lem}\label{lem:p2:exponential is frag coarse struct}
 Let $E\in\CE$ be a relation such that $\Delta_{\pi_1(E)\cup\pi_2(E)}\subseteq E$. Then the collection $\CF^E$ is a fragmentary coarse structure.
\end{lem}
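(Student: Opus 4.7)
The strategy is to verify the five axioms for a fragmentary coarse structure from Definition~\ref{def:p2:fragmentary coarse.structure} one by one. Three of them (closure under subsets, closure under finite unions, and the singleton condition) will follow almost directly from the elementary properties of the operation $F^E$ already recorded in the text. The remaining two (symmetry and composition) genuinely use the hypothesis $\Delta_{\pi_1(E)\cup\pi_2(E)}\subseteq E$, via Lemma~\ref{lem:p2:relations in exponential are equi controlled} in the first case and a direct chaining argument in the second.

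First I would address the easy axioms. Closure under subsets is built into the definition of $\CF^E$ (every $D\in\CF^E$ is required to be a subset of some $F^E$, and this property clearly passes to subsets of $D$). For the singleton condition, given $f\in\ctrl{X}{Y}$, the fact that $f$ is controlled and $E\in\CE$ yields $F\coloneqq f\times f(E)\in\CF$, and then trivially $\{(f,f)\}\subseteq F^E$, so $\{(f,f)\}\in\CF^E$. Closure under finite unions is an immediate consequence of property (i) preceding Lemma~\ref{lem:p2:relations in exponential are equi controlled}: if $D_i\subseteq F_i^E$ for $i=1,2$, then $D_1\cup D_2\subseteq F_1^E\cup F_2^E\subseteq (F_1\cup F_2)^E$, and $F_1\cup F_2\in\CF$.

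Next I would handle closure under composition, which is the cleaner of the two remaining cases. Suppose $D_1\subseteq F_1^E$ and $D_2\subseteq F_2^E$. Pick any $(f_1,f_3)\in D_1\cmp D_2$, witnessed by some $f_2$ with $(f_1,f_2)\in D_1$ and $(f_2,f_3)\in D_2$. Given $(x,y)\in E$, the first membership yields $f_1(x)\torel{F_1}f_2(y)$. Now the hypothesis on $E$ enters: since $y\in\pi_2(E)\subseteq \pi_1(E)\cup\pi_2(E)$, we have $(y,y)\in E$, and therefore the second membership gives $f_2(y)\torel{F_2}f_3(y)$. Chaining, $f_1(x)\torel{F_1\cmp F_2}f_3(y)$, so $D_1\cmp D_2\subseteq (F_1\cmp F_2)^E$, and the latter lies in $\CF^E$ since $F_1\cmp F_2\in\CF$.

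The main obstacle, and the step I would save for last, is symmetry: given $D\subseteq F^E$ we need $\op D\in\CF^E$, and the naive relation $\op{(F^E)}=(\op F)^{\op E}$ sits over $\op E$ rather than $E$. Here I would invoke Lemma~\ref{lem:p2:relations in exponential are equi controlled} to produce $\tilde F\in\CF$ with $f\times f(E)\subseteq\tilde F$ for every $f\in\pi_1(D)\cup\pi_2(D)$. Then, for $(f_1,f_2)\in\op D$ (so $(f_2,f_1)\in D$) and $(x,y)\in E$, I would bound $(f_1(x),f_2(y))$ in two hops: first $f_1(x)\torel{\tilde F}f_1(y)$ from $f_1\times f_1(E)\subseteq\tilde F$, and then $f_1(y)\torel{\op F}f_2(y)$, which follows because $y\in\pi_2(E)$ forces $(y,y)\in E$ and hence $f_2(y)\torel{F}f_1(y)$. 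This gives $\op D\subseteq (\tilde F\cmp \op F)^E$, and $\tilde F\cmp\op F\in\CF$, completing the verification. The delicate point throughout is that one cannot simply quote symmetry of $E$ or containment of the full diagonal in $E$; one has to be careful to use $\Delta_A\subseteq E$ only for $A=\pi_1(E)\cup\pi_2(E)$, which is exactly what Lemma~\ref{lem:p2:relations in exponential are equi controlled} and the coordinate\-tracking in the chaining arguments allow.
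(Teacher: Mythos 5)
Your proof is correct, and it is worth noting that you handle two of the axioms differently from the paper's own argument -- in one case more carefully, in the other more economically.

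For closure under composition, your chaining $f_1(x)\torel{F_1}f_2(y)\torel{F_2}f_3(y)$ uses the hypothesis $(y,y)\in E$ directly to feed the middle function into the second relation, giving $D_1\cmp D_2\subseteq (F_1\cmp F_2)^E$ in two hops without any appeal to equi-control. The paper instead chains $f(e_1)\torel{F_1}h(e_2)\torel{\op{(h\times h(E))}}h(e_1)\torel{F_2}g(e_2)$ in three hops, and then invokes Lemma~\ref{lem:p2:relations in exponential are equi controlled} to produce a uniform $\tilde F$ dominating the middle link as $h$ ranges over the witnessing functions. Your route is cleaner: it obtains a composition bound of the form $F_1\cmp F_2$ rather than $F_1\cmp\tilde F\cmp F_2$ and removes one use of the equi-control lemma.

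For closure under symmetry, you have correctly spotted that the paper's one-line claim -- that property~(ii), $\op{(F^E)}=(\op F)^{\op E}$, ``shows'' closure under symmetry -- does not literally close the gap, because the exponent on the right is $\op E$ rather than $E$, and the hypothesis $\Delta_{\pi_1(E)\cup\pi_2(E)}\subseteq E$ does not force $E$ to be symmetric. Your fix, invoking Lemma~\ref{lem:p2:relations in exponential are equi controlled} to get a uniform $\tilde F$ with $f\times f(E)\subseteq\tilde F$ for all $f\in\pi_1(D)\cup\pi_2(D)$ and then chaining $f_1(x)\torel{\tilde F}f_1(y)\torel{\op F}f_2(y)$, is exactly what is needed and gives $\op D\subseteq(\tilde F\cmp\op F)^E$. (Equivalently, one could first prove $\CF^E=\CF^{\op E}$ via the same equi-control argument, and then property~(ii) applies; either way the equi-control lemma seems unavoidable.) In effect, you and the paper both invoke Lemma~\ref{lem:p2:relations in exponential are equi controlled} exactly once, but for different axioms, and your allocation is the more natural one.
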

\begin{proof}
 The collection $\CF^E$ is closed under taking subsets by construction. Properties (i) and (ii) show that it is also closed under finite unions and symmetries. To prove that $\CF^E$ is a fragmentary coarse structure, it remains to verify is that it is closed under composition.

 Fix $D_1,D_2\in\CF^E$, then there exist $F_1,F_2\in\CF$ such that $f_1\times f_2(E)\subseteq F_1$ and $g_1\times g_2(E)\subseteq F_2$ for every $f_1\torel{D_1}f_2$ and $g_1\torel{D_2}g_2$. 
 By definition, $f\torel{D_1\cmp D_2} g$ if and only if there exists $h\colon X\to Y$ such that $f\torel{D_1} h$ and $h\torel{ D_2} g$. It follows that for every $(f,g)\in D_1\cmp D_2$ we have
 \[
  f(e_1)
  \torel{f\times h(E)} h(e_2) 
  \torel{\op{\paren{h\times h(E)}} } h(e_1) 
  \torel{h\times g(E)} g(e_2)
  \qquad \forall e_1\torel{E}e_2
 \]
 hence
 \[
  f(e_1)
  \torel{F_1} h(e_2) 
  \torel{\op{\paren{h\times h(E)}} } h(e_1) 
  \torel{F_2} g(e_2)
  \qquad \forall e_1\torel{E}e_2.
 \]
 Using Lemma~\ref{lem:p2:relations in exponential are equi controlled} we can also find a $\tilde F\in\CF$ that contains $\op{\paren{h\times h(E)}}$ for every $h$. It then follows that $D_1\cmp D_2$ is in $\CF^E$. 
\end{proof}

It follows from Lemma~\ref{lem:p2:projections are fragments} that the intersection $\bigcap\braces{\CF^E\mid E\in\CE}$ is equal to the intersection $\bigcap\braces{\CF^E\mid E\in\CE,\ \Delta_{\pi_1(E)\cup\pi_2(E)}\subseteq E}$. By Lemma~\ref{lem:p2:exponential is frag coarse struct}, the latter is an intersection of fragmentary coarse structures and hence it is itself a fragmentary coarse structure. 

\begin{de}\label{def:p2:exponential fcrse strucuture}
 The \emph{exponential fragmentary coarse structure}\index{exponential frag\=/coarse structure} on $\ctrl{X}{Y}$ is the intersection
 \[
  \CF^\CE\coloneqq \bigcap\braces{\CF^E\mid E\in\CE}. \nomenclature[:CE1]{$\CF^\CE\coloneqq \bigcap\braces{\CF^E\mid E\in\CE}$}{exponential fragmentary coarse structure}
 \]
 Given $\crse X=(X,\CE)$ and $\crse Y=(Y,\CF)$, the \emph{fragmentary coarse space of controlled functions from $\crse X$ to $\crse Y$} (or, \emph{exponential frag\=/coarse space}) is the fragmentary coarse space $\crse{Y^{X}}\coloneqq (\ctrl{X}{Y},\CF^\CE)$.\nomenclature[:COS]{$\crse{Y^{X}}$}{frag-coarse space of controlled functions}
\end{de}

Note that if $D\in\CF^\CE$ and $f\in\ctrl{X}{Y}$, then all the functions in $D(f)$ are frag\=/close to $f$ (they are in fact ``equi frag\=/close'' to it). Furthermore, Lemma~\ref{lem:p2:relations in exponential are equi controlled} has an immediate consequence on the exponential fragmentary coarse structure:

\begin{cor}\label{cor:p2:fragments are equi controlled}
For every relation $D\in\CF^\CE$ the functions in $\pi_1(D)\cup \pi_2(D)$ are equi controlled.
\end{cor}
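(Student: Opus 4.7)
The statement to prove is that for every relation $D \in \CF^\CE$, the collection of functions appearing in $\pi_1(D) \cup \pi_2(D)$ is equi controlled, \emph{i.e.}\ for every $E \in \CE$ there is a single $F \in \CF$ with $f \times f(E) \subseteq F$ for all $f \in \pi_1(D) \cup \pi_2(D)$.

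The plan is to reduce immediately to Lemma~\ref{lem:p2:relations in exponential are equi controlled}. Fix a relation $D \in \CF^\CE$ and an arbitrary $E \in \CE$; the goal is to produce a single $F \in \CF$ witnessing equi controlledness at this $E$. The hypothesis of Lemma~\ref{lem:p2:relations in exponential are equi controlled} requires the relation on $X$ to contain the diagonal on the union of its projections, which $E$ need not satisfy. So the first step is to enlarge $E$: by Lemma~\ref{lem:p2:projections are fragments}, there exists $\tilde E \in \CE$ with $E \subseteq \tilde E$ and $\Delta_{\pi_1(\tilde E) \cup \pi_2(\tilde E)} \subseteq \tilde E$.

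Next I would observe that $D$ belongs to $\CF^{\tilde E}$. This is automatic: by definition $\CF^\CE \subseteq \CF^{E'}$ for every $E' \in \CE$, and in particular for $E' = \tilde E$. Now Lemma~\ref{lem:p2:relations in exponential are equi controlled} applies directly to $\tilde E$ and $D$, producing $\tilde F \in \CF$ with $f \times f(\tilde E) \subseteq \tilde F$ for every $f \in \pi_1(D) \cup \pi_2(D)$. Since $E \subseteq \tilde E$, we get $f \times f(E) \subseteq \tilde F$ for all such $f$, which is precisely what is required.

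There is no real obstacle here — the corollary is essentially a repackaging of Lemma~\ref{lem:p2:relations in exponential are equi controlled} using Lemma~\ref{lem:p2:projections are fragments} to handle the fragmentary subtlety that $\Delta_X$ need not lie in $\CE$. The only thing to be careful about is to quantify correctly: the witness $\tilde F$ depends on the chosen $E$ (and on $D$), but does not depend on the particular $f \in \pi_1(D) \cup \pi_2(D)$, which is exactly the uniformity required by the definition of equi controlledness.
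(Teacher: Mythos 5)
Your proof is correct and follows essentially the same route as the paper: enlarge $E$ to $\tilde E$ via Lemma~\ref{lem:p2:projections are fragments}, note $D\in\CF^{\tilde E}$, and apply Lemma~\ref{lem:p2:relations in exponential are equi controlled}. Nothing to add.
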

\begin{proof}
 Given $E\in \CE$, choose $E\subseteq \tilde E\in\CE$ with $\Delta_{\pi_1(\tilde E)\cup\pi_2(\tilde E)}\subseteq \tilde E$. Since $D\in \CF^{\tilde E}$, by Lemma~\ref{lem:p2:relations in exponential are equi controlled} there is an $F\in\CF$ such that
 \(
  f\times f(E)\subseteq F
 \)
 for every $f\in\pi_1(D)\cup \pi_2(D)$. 
\end{proof}

\begin{exmp}
 Let $(X,d_{X}),(Y,d_{Y})$ be metric spaces, and let $\CE=\CE_{d_X}$ and $\CF=\CE_{d_Y}$ be the corresponding metric coarse structures. Then $f\colon X\to Y$ is controlled if and only if there is a non\=/decreasing control function $\rho_+\colon [0,\infty)\to[0,\infty)$ with 
 \(
  d_{Y}(f(x),f(x'))\leq \rho_+(d_{X}(x,x'))
 \)
 for every $x,x'\in X$. 
 We write explicitly in terms of the metrics $d_X$, $d_Y$ what it means for a set of controlled functions $\braces{g_i\in \ctrl{X}{Y}\mid i\in I}$ to be contained in a $\CF^\CE$\=/bounded neighborhood of $f\in \crse{Y^{X}}$. Namely, this is the case if and only if for every $r>0$ there is an $R>0$ large enough so that 
 \[
  g_i\bigparen{B_{(X,d_{X})}(x;r)}\subseteq B_{(Y,d_{Y})}(f(x);R)
 \]
 for every $x\in X$, $i\in I$. Notice that this also implies that there is a $\rho'_+\colon [0,\infty)\to[0,\infty)$ such that 
 \(
  d_{Y}(g_i(x),g_i(x'))\leq \rho'_+(d_{X}(x,x'))
 \)
 for every $i\in I$ and $x,x'\in X$ (\emph{i.e.}\ the $g_i$ are equi controlled).  
 One can also check that a set $D=\braces{(g_i,f_i)\mid i\in I}$ is in $\CF^\CE$ if and only if there exists a ``uniform'' control function $\rho_+\colon [0,\infty)\to[0,\infty)$ with 
 \[
  \max\bigbraces{d_{Y}(g_i(x),g_i(x'))\,,\, d_{Y}(f_i(x),f_i(x'))}\leq \rho_+(d_{X}(x,x'))
 \]
 for every $i\in I$, $x,x'\in X$, and a constant $R\geq 0$ such that $d_{Y}(g_i(x),f_i(x))\leq R$ for every $i\in I$, $x\in X$. 
\end{exmp}

\begin{exmp}
 Given two controlled functions $f,g\in\ctrl{X}{Y}$, in order for the singleton $\braces{(f,g)}$ to be in $\CF^\CE$ it is necessary that $f$ and $g$ be frag\=/close. The converse is also true: for every $E\in \CE$ the projection $\pi_2(E)$ is a fragment of $X$ and hence $f\times g(\Delta_{\pi_2(E)})\in\CF$. Since $f\times f(E) \in \CF$ by assumption, it follows that $f\times g(E)\subseteq \paren{f\times f(E)}\cmp\paren{f\times g(\Delta_{\pi_2(E)})}$ is in $\CF$. 
 This show that there is a natural bijection between $\fcmor(\crse X,\crse Y)$ and the set of coarsely connected components of $\crse{Y^{X}}$.
\end{exmp}

\begin{lem}\label{lem:p2:composition of exponential}
 Given three coarse spaces $\crse {X,Y,Z}$, the composition of functions induces a coarse map $\crse{Z^{Y}\times Y^{X}}\to \crse{Z^{X}}$. 
\end{lem}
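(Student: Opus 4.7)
Since composition of controlled maps between (frag-)coarse spaces is again a controlled map, the set-theoretic composition function
\[
 \circ \colon \ctrl{Y}{Z} \times \ctrl{X}{Y} \to \ctrl{X}{Z},\qquad (g,f)\mapsto g\circ f,
\]
is well-defined, and it remains to verify it is controlled with respect to the exponential fragmentary coarse structures $\CD^\CF$, $\CF^\CE$, $\CD^\CE$. Close functions compose to close functions (relative to frag-closeness on each factor), so this controlled map descends to the claimed (frag-)coarse map.

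The product fragmentary coarse structure $\CD^\CF \otimes \CF^\CE$ on $\ctrl{Y}{Z}\times \ctrl{X}{Y}$ is generated (under subsets, finite unions, symmetries and compositions) by the relations $D_1\otimes D_2$ with $D_1\in\CD^\CF$ and $D_2\in\CF^\CE$, so in the spirit of Remark~\ref{rmk:p1:control of maps from product generated} it suffices to check that the image of each such $D_1\otimes D_2$ under $\circ\times\circ$ lies in $\CD^\CE$. Fix $E\in\CE$; by Lemma~\ref{lem:p2:projections are fragments} we may enlarge $E$ so that $\Delta_{\pi_1(E)\cup\pi_2(E)}\subseteq E$, which puts us in the setting where $\CD^E$ behaves well (Lemma~\ref{lem:p2:exponential is frag coarse struct}).

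The core of the argument is a two-step chase. Since $D_2\in\CF^\CE\subseteq \CF^E$, unwinding the definition of $\CF^E$ yields a single $F^*\in\CF$ such that
\[
 f_1(e_1)\torel{F^*} f_2(e_2)\qquad \forall (f_1,f_2)\in D_2,\ \forall e_1\torel{E}e_2.
\]
Since $D_1\in \CD^\CF\subseteq \CD^{F^*}$, there is $D^*\in\CD$ with
\[
 g_1(y_1)\torel{D^*} g_2(y_2)\qquad \forall (g_1,g_2)\in D_1,\ \forall y_1\torel{F^*}y_2.
\]
Specialising $(y_1,y_2)=(f_1(e_1),f_2(e_2))$ then yields
\[
 (g_1\circ f_1)(e_1)\torel{D^*} (g_2\circ f_2)(e_2)\qquad \forall\bigparen{(g_1,f_1),(g_2,f_2)}\in D_1\otimes D_2,\ \forall e_1\torel{E}e_2,
\]
which precisely says that $(\circ\times\circ)(D_1\otimes D_2)\in (D^*)^E\subseteq\CD^E$. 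As $E\in\CE$ was arbitrary, the image lies in $\CD^\CE$, proving that $\circ$ is controlled.

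There is essentially no obstacle beyond bookkeeping: the only subtlety is keeping track of the order of quantifiers in the definition of $\CF^E$ (the key being that $F^*$ works \emph{uniformly} for all pairs in $D_2$), and remembering to pass to an enlarged $E$ via Lemma~\ref{lem:p2:projections are fragments} so that Corollary~\ref{cor:p2:fragments are equi controlled} applies. Everything else is a direct unwinding of Definition~\ref{def:p2:exponential fcrse strucuture}.
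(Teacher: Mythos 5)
Your proof is correct and follows essentially the same two-step chase as the paper: for a fixed $E\in\CE$, pick $F\in\CF$ with $D_2\subseteq F^{E}$, then $D^{*}\in\CD$ with $D_1\subseteq (D^{*})^{F}$, and compose. The only superfluous step is the enlargement of $E$ via Lemma~\ref{lem:p2:projections are fragments}: since $\CD^{\CE}=\bigcap_{E}\CD^{E}$ is an intersection over \emph{all} $E\in\CE$, one may just verify membership in each fixed $\CD^{E}$ directly as the paper does, without needing $\CD^{E}$ itself to be a (fragmentary) coarse structure.
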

\begin{proof}
 Let $\crse X=(X,\CE)$, $\crse Y=(Y,\CF)$ and $\crse Z=(Z,\CD)$. We need to check that the composition is a controlled map $\ctrl{Y}{Z}\times \ctrl{X}{Y}\to\ctrl{X}{Z}$, \emph{i.e.}\ that for every pair of controlled sets $C_1\in \CD^\CF$ and $C_2\in \CF^\CE$ the product $C_1\otimes C_2$ is sent to a $\CD^\CE$\=/controlled set. 
 
 Fix any $E\in \CE$. Then there exists $F\in \CF$ with $C_2\subseteq F^E$, and there is a $D\in\CD$ with $C_1\subseteq D^F$. The composition sends $\braces{(f_1,f_2),(g_1,g_2)}\in C_1\otimes C_2$ to $(f_1\circ f_2, g_1\circ g_2)$ and we see that
 \[
  \bigparen{(f_1\circ f_2) \times (g_1\circ g_2) }(E)
  = (f_1\times g_1)\bigparen{(f_2\times g_2)(E)}
  \subseteq (f_1\times g_1)(F)
  \subseteq D.
 \]
 That is, $(\circ\times\circ)(C_1\otimes C_2)\subseteq D^E$. Since $E$ is arbitrary, this concludes the proof.
\end{proof}

\begin{rmk}\label{rmk:p2:heuristic on fragmentary coarse actions}
 The following is a long heuristic remark providing motivation for the definition of $\CF^\CE$.
Informally, the space $\crse{Y^{X}}$ is designed to describe coarse properties of families of controlled functions.  
The following example motivates why we define $\crse{Y^{X}}$ instead of simply using $\cmor(\crse{X,Y})$. For every $t\in\RR$ let $f_t\colon \RR\to\RR$ be the translation by $t$. 
For each fixed $t$, the function $f_t$ is close to the identity, and hence $\crse f_t=\cid_\RR \in\cmor(\RR,\RR)$. That is, the set $\braces{\crse f_t \mid t\in\RR}$ seen as a subset of $\cmor(\RR,\RR)$ is the singleton $\{\cid_\RR\}$. This is rather unsatisfactory because the functions $f_t$ have larger and larger displacement as $\abs{t}$ grows: this non\=/trivial piece of information about the coarse geometry of the family of maps $f_t$ is completely lost in $\cmor(\RR,\RR)$. The key point here is that \emph{each function} $f_t$ is coarsely trivial, but the \emph{family} of the $f_t$'s is not.  In order to describe `coarse properties' of families of controlled functions it is ill advised to immediately quotient out close functions. Rather, it is more appropriate to introduce some coarse structure on the set $\ctrl{X}{Y}$.\footnote{%
By now it should be clear that coarse spaces can be seen as non\=/destructive quotients. Namely, when we equip $X$ with a coarse structure $\CE$ we are saying that close points should be thought of as being equal. However, we do not take the actual quotient by this equivalence relation, because it is useful to keep track of how unbounded \emph{families} of points behave. 
}
The main difficulty is that coarse structures are not flexible enough to describe the current situation, hence we need to pass to fragmentary coarse spaces.

The choice of $\CF^\CE$ might appear arbitrary at first sight, but it is actually natural. To begin with, note that for every $F\in \CF$ the pairs of controlled functions $f,g\colon X\to Y$ such that $(f,g)\in F^{\Delta_{X}}$ are---by definition---pairwise ``uniformly close''. In other words, two families of controlled functions $(f_i)_{i\in I},\ (g_i)_{i\in I}\subseteq \ctrl{X}{Y}$ such that $(f_i,g_i)\in F^{\Delta_{X}}$ for every $i\in I$ should be ``indistinguishable'' from the coarse point of view. Thus the relation $F^{\Delta_{X}}$ should belong to the (fragmentary) coarse structure that we wish to define on $\ctrl{X}{Y}$. A first attempt would be to equip $\ctrl{X}{Y}$ with the coarse structure generated by the sets $F^{\Delta_{X}}$ with $F\in\CF$. However, we will see that this is not quite the correct thing to do. 

One reason why $\CF^{\Delta_{X}}$ does not capture the essence of the ``coarse geometry of functions'' is that it behaves poorly under composition. Namely, say that we are given two ``coarsely indistinguishable'' families of controlled functions $f_i,g_i\colon X\to Y$. If we are given another pair of ``coarsely indistinguishable'' families of controlled functions $f_i',g_i'\colon Z\to X$, we would expect the compositions $f_i\circ f_i'$ and $g_i\circ g_i'$ to be  ``coarsely indistinguishable''. However, we see that the assumptions 
\[\braces{(f_i,g_i)\mid i\in I}\in \CF^{\Delta_{X}} \quad \text{and}\quad \braces{(f_i',g_i')\mid i\in I}\in \CE^{\Delta_Z}\]
are not enough to imply that $\braces{(f_i\circ f_i',g_i\circ g_i')\mid i\in I}$ belongs $\CF^{\Delta_{X}}$. For the latter to hold we need to assume that $\braces{(f_i,g_i)\mid i\in I}\in \CF^{E}$ where $E\in \CE$ is a controlled set so that $\braces{(f_i',g_i')\mid i\in I}\in E^{\Delta_{X}}$. So to obtain a coarse geometric theory of controlled functions that is compatible with compositions we are forced to use the fragmentary coarse structures $\CF^E$.

The above also explains why we consider fragmentary coarse structures. The requirement that the theory be well behaved under composition implies that all the families of functions that we work with must be equi controlled (Corollary~\ref{cor:p2:fragments are equi controlled}). If the coarse space $\crse Y$ is unbounded, we may find a family of controlled functions $f_i\colon X\to Y$ that are not equi controlled and hence the set $\braces{(f_i,f_i)\mid i\in I}$ does not belong to $\CF^\CE$. That is, $\braces{f_i\mid i\in I} $ is not a fragment of $\crse{Y^{X}}$. 
Concretely, if $X=[-1,1]$ and $Y=\RR$ are equipped with the metric coarse structure then the set of maps $m_t(x)\coloneqq tx$ with $t\in [0,\infty)$ is not a fragment of $\RR^{[-1,1]}$ because they are not uniformly Lipschitz. This shows that even well\=/behaved coarse spaces give rise to fragmentary coarse spaces.
\end{rmk}

\

There is a natural \emph{evaluation function}\index{evaluation function} $\ev\colon \ctrl{X}{Y}\times X\to Y$ defined by $\ev(f,x)\coloneqq f(x)$.
Furthermore, every function $f\colon Z\times X\to Y$ admits a \emph{transposed function}\index{transposed function} $\lambda f\colon Z\to Y^{X}$ defined by $\lambda f(z)\coloneqq f(z,\variable)$. Note that $f=\ev\circ(\lambda f\times \id_{X})$.
The following is the main result we prove about fragmentary coarse spaces of controlled functions.
Once the formalism is set, the proof is actually rather simple: most of the ideas already appear in the examples above. 

\begin{thm}\label{thm:p2:exponential}
 Let $\crse X=(X,\CE)$, $\crse Y=(Y,\CF)$, $\crse Z=(Z,\CD)$ be fragmentary coarse spaces. Then,
 \begin{enumerate}[(1)]
  \item the evaluation map $\ev\colon (\ctrl{X}{Y},\CF^\CE)\times (X,\CE)\to (Y,\CF)$ is controlled;
  \item a function $h\colon (Z,\CD)\times (X,\CE)\to (Y,\CF)$ is controlled if and only if its transpose $\lambda h\colon (Z,\CD)\to (\ctrl{X}{Y},\CF^\CE)$ is controlled;
  \item if $h$ is controlled and $\bar h\colon (Z,\CD)\to (\ctrl{X}{Y},\CF^\CE)$ is a function such that $h$ is frag\=/close to $\ev\circ(\bar h\times \id_{X})$, then $\bar h$ must be frag\=/close to $\lambda h$ (and it is hence controlled);
  \item the transposition defines a canonical bijection $\fcmor(\crse Z\times \crse X, \crse Y)\longrightarrow\fcmor(\crse Z,\crse{Y^{X}}) $.
 \end{enumerate}

\end{thm}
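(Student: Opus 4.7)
The plan is to prove the four parts in order, with parts (1) and (2) being the technical heart and parts (3) and (4) following as consequences. Throughout, the main subtlety is tracking which sets are fragments of which frag-coarse spaces, so that the relations we construct actually lie in the fragmentary coarse structures in question.

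First I would prove (1). To show $\ev\colon \crse{Y^X}\times \crse X\to \crse Y$ is controlled, it suffices to check that for every $D\in\CF^\CE$ and $E\in\CE$, the image $(\ev\times\ev)(D\otimes E)$ lies in $\CF$, since such product relations generate the product fragmentary coarse structure (by \eqref{eq:p1:otimes as composition frag} after possibly enlarging $E$ via Lemma~\ref{lem:p2:projections are fragments}). Since $D\in\CF^\CE\subseteq\CF^E$, there exists $F\in\CF$ with $D\subseteq F^E$, and by the very definition of $F^E$ this means $f_1(x_1)\torel{F} f_2(x_2)$ for every $(f_1,f_2)\in D$ and $(x_1,x_2)\in E$. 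Therefore $(\ev\times\ev)(D\otimes E)\subseteq F\in\CF$, as required.

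Next, for (2), one direction follows immediately from (1): if $\lambda h$ is controlled then $\lambda h\times\id_X$ is controlled, and hence $h=\ev\circ(\lambda h\times \id_X)$ is a composition of controlled maps. For the converse, assume $h$ is controlled. First, for each $z\in Z$ we have $\{(z,z)\}\in\CD$, so applying controlledness of $h$ to $\{(z,z)\}\otimes E$ shows each section $\lambda h(z)=h(z,\variable)$ belongs to $\ctrl{X}{Y}$. Now fix $D\in\CD$ and $E\in\CE$; taking $F\coloneqq (h\times h)(D\otimes E)\in\CF$, the definition of $F^E$ gives at once that $(\lambda h\times\lambda h)(D)\subseteq F^E$. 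Since $E$ was arbitrary, $(\lambda h\times\lambda h)(D)\in\CF^\CE$, proving $\lambda h$ is controlled.

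For (3), fix a fragment $W\subseteq Z$ and $E\in\CE$. By Lemma~\ref{lem:p2:projections are fragments} we may replace $E$ by $\tilde E\supseteq E$ with $\Delta_{\pi_1(\tilde E)\cup\pi_2(\tilde E)}\subseteq \tilde E$; then $W\times\pi_1(\tilde E)$ is a fragment of $\crse Z\times\crse X$. Frag-closeness of $h$ and $\ev\circ(\bar h\times\id_X)$ applied to this fragment yields $F_2\in\CF$ with $\bar h(z)(x_1)\torel{F_2}h(z,x_1)$ for $z\in W$, $x_1\in\pi_1(\tilde E)$; controlledness of $h$ applied to $\Delta_W\otimes \tilde E\in\CD\otimes\CE$ yields $F_1\in\CF$ with $h(z,x_1)\torel{F_1}h(z,x_2)=\lambda h(z)(x_2)$ for $z\in W$ and $(x_1,x_2)\in \tilde E$. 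Composition gives $(\bar h\times\lambda h)(\Delta_W)\subseteq (F_2\cmp F_1)^E$, proving $\bar h\closefn\lambda h$ in $\crse{Y^X}$.

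Finally, (4) is a clean consequence of (2) and (3). The two operations $h\mapsto \lambda h$ and $k\mapsto \ev\circ(k\times\id_X)$ are set-theoretic inverses of each other on the level of representatives (one direction is immediate since $\lambda(\ev\circ(k\times\id_X))(z)=k(z)$, the other is immediate since $\ev\circ(\lambda h\times\id_X)=h$). It only remains to check these descend to frag-close equivalence classes: an argument identical to that of (3) shows that $h_1\closefn h_2$ implies $\lambda h_1\closefn\lambda h_2$ (using $W\times\pi_1(\tilde E)$ as fragment), and conversely $k_1\closefn k_2$ in $\crse{Y^X}$ implies $\ev\circ(k_1\times\id_X)\closefn\ev\circ(k_2\times\id_X)$ by part (1) and the fact that closeness is preserved by composition with controlled maps. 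The main obstacle will be the consistent bookkeeping with fragments and the repeated appeal to Lemma~\ref{lem:p2:projections are fragments} to enlarge relations so that their projections become diagonals that can be legitimately multiplied in the product frag-coarse structure.
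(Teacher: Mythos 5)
Your proof is correct and follows essentially the same approach as the paper's: part (1) by directly checking the generating product relations $D\otimes E$ against the definition of $F^E$, part (2) via the identity $h = \ev\circ(\lambda h\times\id_X)$ in one direction and taking $F = (h\times h)(D\otimes E)$ in the other, part (3) by factoring the relation through $h$ on a product fragment, and part (4) by noting $\lambda$ and $\ev\circ(\variable\times\id_X)$ are pointwise inverses and descend to frag-closeness classes. The only cosmetic differences are that you explicitly verify $\lambda h(z)\in\ctrl{X}{Y}$ in (2) (the paper leaves this implicit) and you invoke Lemma~\ref{lem:p2:projections are fragments} more systematically where the paper simply writes ``we may assume $\pi_1(E)$ is a fragment of $X$.''
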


\begin{proof}
 (1): The product fragmentary coarse structure on $\ctrl{X}{Y}\times X$ is generated by controlled sets of the form $C\otimes E$ with $C\in\CF^\CE$, $E\in\CE$. 
 It is thus enough to verify that $\ev\times\ev(C\otimes E)\in\CF$. 
 By the definition of $\CF^\CE$, there exists $F\in \CF$ such that $f\times g(E)\subseteq F$ for every $(f,g)\in C$. On the other hand, we have
 \[
  \ev\times\ev(C\otimes E)=\bigcup\braces{f\times g(E)\mid (f,g)\in C}\subseteq F
 \]
 and so we are done.
 
 (2): By construction, $h=\ev\circ(\lambda h\times \id_{X})$. If $\lambda h$ is controlled, then $h$ is composition of controlled maps and hence controlled. 
 Vice versa, assume that $h$ is controlled, then for every $D\in\CD$ and $E\in \CE$ we see:
 \begin{align*}
  \lambda h\times \lambda h(D)
  &=\bigbraces{(h(z_1,\variable),h(z_2,\variable))\mid (z_1,z_2)\in D} \\
  &\subseteq\bigbraces{(f,g)\in \ctrl{X}{Y}\mid f\times g(E)\subseteq h\times h (D\otimes E)}  \\
  &=(h \times h(D\otimes E))^E.
 \end{align*}
 Since $h$ is controlled, $F\coloneqq h \times h(D\otimes E)$ is in $\CF$ and hence $\lambda h\times \lambda h(D)\in \CF^\CE$.

 (3): Assume that $\ev\circ(\bar h\times \id_{X})$ is frag\=/close to $h$, and fix any $E\in\CE$. We can assume that $\pi_1(E)$ is a fragment of $X$. Let $C$ be a fragment of $Z$, so that $\Delta_{C\times \pi_1(E)}=\Delta_{C}\otimes\Delta_{\pi_1(E)}$ is in $\CD\otimes\CE$. By hypothesis, there is an $F\in\CF$ such that $\bigparen{\paren{\ev\circ(\bar h\times \id_{X})}\times h}(\Delta_{C}\otimes\Delta_{\pi_1(E)})\subseteq F$. Then, for every $z\in C$ we have:
 \begin{align*}
  \paren{\bar h(z)\times \lambda h(z)}(E)
  &=\bigbraces{(\bar h(z)(x_1),h(z,x_2))\mid (x_1,x_2)\in E}   \\
  &\subseteq F\cmp \braces{(h(z,x_1),h(z,x_2))\mid (x_1,x_2)\in E}  \\
  &\subseteq F\cmp (h\times h(\Delta_C\otimes E))
 \end{align*}
 and hence $\bar h\times \lambda h(\Delta_C)\in\CF^\CE$ because $h$ is controlled.
 
 (4): 
 It follows from (3) that if $h,h'\colon (Z\times X,\CD\otimes\CE)\to(Y,\otimes \CF)$ are frag\=/close then $\lambda h$ and $\lambda h'$ are frag\=/close. This implies that the transposition $h\mapsto\lambda h$ defines a function 
 \[
  \lambda\colon\fcmor(\crse Z\times \crse X, \crse Y)\rightarrow\fcmor(\crse Z,\crse{Y^{X}}).
 \]
 Since $\ev$ is controlled by (1), composition with $\ev$ preserves frag\=/closeness. We thus obtain a function in the other direction
 \[
  \ev\circ(\variable\times\id_{X})\colon \fcmor(\crse Z,\crse{Y^{X}})\to \fcmor(\crse Z\times \crse X, \crse Y).
 \]
 By construction, $\lambda$ and $\ev\circ(\variable\times\id_{X})$ are inverse of one another (this is even true point\=/wise).  
\end{proof}

Together with Lemma~\ref{lem:p1:equi controlled.sections.iff.controlled}, item (2) of Theorem~\ref{thm:p2:exponential} has the following corollary.

\begin{cor}
 Fix a $Z$\=/indexed family of controlled functions ${}_zf\colon (X,\CE)\to (Y,\CF)$. Then:
 \begin{enumerate}
  \item it is equi controlled if and only if $f\colon (Z,\mincrs)\to(\ctrl{X}{Y},\CF^\CE)$ is controlled (\emph{i.e.}\ the subset $f(Z)\subseteq \ctrl{X}{Y}$ is a fragment);
  \item it is $(Z,\CD)$\=/equi controlled if and only if $f\colon (Z,\CD)\to (\ctrl{X}{Y},\CF^\CE)$ is controlled.
 \end{enumerate}
\end{cor}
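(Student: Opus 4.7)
The plan is to observe that both statements translate directly into Theorem~\ref{thm:p2:exponential}(2) once one carefully unpacks Definition~\ref{def:p1:equi controlled}, so the proof amounts to bookkeeping rather than new ideas. First I would handle item~(2). By Definition~\ref{def:p1:equi controlled}, the family $({}_zf)_{z\in Z}$ is $(Z,\CD)$\=/equi controlled precisely when the two\=/variable function $h\colon (X,\CE)\times(Z,\CD)\to(Y,\CF)$ assembled from the sections is controlled; swapping factors (which is harmless because the product coarse structure is symmetric), this is the same as $h\colon(Z,\CD)\times(X,\CE)\to(Y,\CF)$ being controlled. Since the transpose $\lambda h$ is literally the assignment $z\mapsto {}_zf\in\ctrl{X}{Y}$, Theorem~\ref{thm:p2:exponential}(2) gives the desired equivalence with controlledness of $f\colon (Z,\CD)\to(\ctrl{X}{Y},\CF^\CE)$. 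Lemma~\ref{lem:p1:equi controlled.sections.iff.controlled} enters as the sanity check that makes this reformulation clean: it guarantees that the ``two\=/variable'' notion used in Definition~\ref{def:p1:equi controlled} is compatible with the hypothesis that each individual ${}_zf$ is already a controlled map and that the $x$\=/sections are automatically equi controlled on the other side as well.

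Item~(1) is then the special case $\CD=\mincrs$, once I explain the parenthetical. A map $f\colon(Z,\mincrs)\to\crse{Y^X}$ is controlled if and only if $(f\times f)(E)\in\CF^\CE$ for every $E\in\mincrs$; but every element of $\mincrs$ is a subset of $\Delta_Z$, so this condition collapses to the single requirement $\Delta_{f(Z)}=(f\times f)(\Delta_Z)\in\CF^\CE$, which is precisely the definition of $f(Z)$ being a fragment of $\crse{Y^X}$. There is no real obstacle here: the work has already been done in Theorem~\ref{thm:p2:exponential}(2), and the corollary is essentially a dictionary translation, the only thing to be mildly careful about being the order of the factors in the product $(X,\CE)\times(Z,\CD)$ versus $(Z,\CD)\times(X,\CE)$ and the identification between a $Z$\=/indexed family of controlled functions and a single function into $\ctrl{X}{Y}$.
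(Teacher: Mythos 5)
Your proof is correct and takes exactly the route the paper intends: the paper gives no written proof beyond the preamble ``Together with Lemma~\ref{lem:p1:equi controlled.sections.iff.controlled}, item (2) of Theorem~\ref{thm:p2:exponential} has the following corollary,'' and your argument is precisely the filling-in of that one-liner — unwinding Definition~\ref{def:p1:equi controlled}, swapping the product factors, applying Theorem~\ref{thm:p2:exponential}(2) to identify the transpose with $z\mapsto {}_zf$, and noting that for $\CD=\mincrs$ the controlledness condition reduces to $\Delta_{f(Z)}\in\CF^\CE$, i.e.\ $f(Z)$ being a fragment.
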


\begin{convention}
 To keep true to our convention of using bold letters for frag\=/coarse maps, we will denote $[\lambda f]$ by $\crse{\lambda f}$. A more pedantic approach would be to always see $\lambda$ as a function $\lambda\colon\fcmor(\crse Z\times \crse X, \crse Y)\rightarrow\fcmor(\crse Z,\crse{Y^{X}})$ and hence write $\lambda(\crse \alpha)$ instead of $\crse{\lambda\alpha}$. 
\end{convention}

\section{Coarse Actions Revisited}
\label{sec:p2:coarse actions revisited} 
Depending on the context, it may be more convenient to define set\=/group actions as functions $G\times Y\to Y$ satisfying some commutative diagrams (the Action Diagrams), or as homomorphism of $G$ into the group of bijections $G\to \aut(Y)$.
The frag\=/exponentials provide us with a language to make sense of the latter approach in the context of coarse actions.

For later use, we record the following properties of $\lambda$.

\begin{lem}\label{lem:p2:compositions and transpose}
 The transposition satisfies the following identities.
 \begin{enumerate}
  \item Given frag\=/coarse maps  $\crse{f\colon A \to B}$ and $\crse{g\colon B\times X \to Y}$, consider the composition 
  \[
  \begin{tikzcd}
   \crse{A\times X} \arrow[r,"\crse{f\times \cid_{X}}"] & \crse{B\times X}\arrow[r,"\crse{ g}"] & \crse Y.   
  \end{tikzcd}
  \]
  The following commutes:
  \[ 
 \begin{tikzcd}
    \crse{A} \arrow[dr, "\crse{\lambda(g\circ (f\times \cid_{X}))}"] \arrow[d,swap, "\crse{f}"] & \\
    \crse{B} \arrow[swap]{r}{\crse{\lambda g}}  & \crse{Y^{X}}. 
\end{tikzcd} 
\] 
  \item Given frag\=/coarse maps  $\crse{f\colon A\times Y \to Z}$ and $\crse{g\colon B\times X\to Y}$, consider the composition 
  \[
  \begin{tikzcd}
   \crse{A\times B\times X} \arrow[r,"\crse{\cid_{A}\times  g}"] & \crse{A\times Y}\arrow[r,"\crse{ f}"] & \crse Z.   
  \end{tikzcd}
  \]
   The following commutes:
  \[
  \begin{tikzcd}
    \crse{A\times B} \arrow[dr,swap, "\crse{\lambda(f\circ (\cid_{A}\times g))}"] \arrow[r, "\crse{\lambda f\times \lambda g}"] & \crse{Z^{Y} \times Y^{X}}     \arrow{d}{\ccirc} \\
    & \crse{Z^{X}}. 
\end{tikzcd} 
\] 
 \end{enumerate} 
\end{lem}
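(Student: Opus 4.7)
The plan is to verify both identities at the level of representatives by direct pointwise computation, then invoke Theorem~\ref{thm:p2:exponential}(4) (well-definedness of transposition on frag-coarse maps) to pass to equivalence classes. In each case the identity holds on the nose for suitably chosen representatives, so the hard work was already done in establishing that $\lambda$ and composition descend to \Cat{FragCrs}.

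For part (1), fix controlled representatives $f$ of $\crse f$ and $g$ of $\crse g$. Set $h \coloneqq g \circ (f\times \id_{X})$, which is controlled as a composition of controlled maps. By Theorem~\ref{thm:p2:exponential}(2), $\lambda h$ and $\lambda g$ are controlled, and by Lemma~\ref{lem:p2:composition of exponential} the composition $\lambda g \circ f$ is controlled. For every $a\in A$ and every $x\in X$ we compute
\[
 \lambda h(a)(x) = h(a,x) = g(f(a),x) = \lambda g(f(a))(x) = \bigparen{\lambda g \circ f}(a)(x),
\]
so $\lambda h = \lambda g \circ f$ as set\=/theoretic functions. Passing to frag\=/closeness classes gives the desired commutativity.

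For part (2), fix controlled representatives $f$ and $g$ and set $h \coloneqq f \circ (\id_{A}\times g)$, again controlled by composition. Lemma~\ref{lem:p2:composition of exponential} ensures that $\ccirc\circ(\lambda f\times \lambda g)$ is controlled as a map $A\times B\to Z^{X}$, and Theorem~\ref{thm:p2:exponential}(2) gives controlledness of $\lambda h$. For every $(a,b)\in A\times B$ and $x\in X$, unwinding definitions yields
\[
 \lambda h(a,b)(x) = h\bigparen{(a,b),x} = f\bigparen{a, g(b,x)} = \lambda f(a)\bigparen{\lambda g(b)(x)} = \bigparen{\lambda f(a)\circ \lambda g(b)}(x).
\]
Hence $\lambda h(a,b) = \lambda f(a) \circ \lambda g(b)$ as elements of $\ctrl{X}{Z}$, which is exactly $\bigparen{\ccirc\circ (\lambda f\times \lambda g)}(a,b)$. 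This is a pointwise equality of functions, so taking equivalence classes gives the commutative diagram in \Cat{FragCrs}.

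There is no real obstacle here: both identities are on\=/the\=/nose equalities once natural representatives are chosen, and the delicate part --- namely that transposition and composition behave functorially on frag\=/coarse maps, so that the equalities survive the passage to equivalence classes --- has already been established in Lemma~\ref{lem:p2:composition of exponential} and Theorem~\ref{thm:p2:exponential}. The only mild care needed is to ensure we use the obvious representatives for $\crse f\times \cid_{\crse X}$ and $\cid_{\crse A}\times \crse g$, but that is automatic from the product structure on \Cat{FragCrs}.
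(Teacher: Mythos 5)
Your proof is correct and is essentially the paper's argument spelled out in detail: the paper simply remarks that both identities ``are already true in \Cat{Set}'' (i.e.\ hold pointwise for any representatives), leaving the descent to frag\=/coarse maps implicit, whereas you make the pointwise computation and the appeal to Lemma~\ref{lem:p2:composition of exponential} and Theorem~\ref{thm:p2:exponential} explicit.
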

\begin{proof}
 Both statements are true pointwise (\emph{i.e.}\ the relevant equalities are already true in \Cat{Set}).
\end{proof}

Let $\crse X$ be any coarse space. By Lemma~\ref{lem:p2:composition of exponential}, the composition of functions defines a frag\=/coarse map $\ccirc\colon \crse{Y^{Y} \times Y^{Y} \to Y^{Y}}$. Since $\circ$ is associative and $\id_{Y}\in Y^{Y}$ is a unit, it follows that $\crse{Y^{Y}}$ is a \emph{frag\=/coarse monoid} (here we are using the obvious definition of monoid object in the category \Cat{FragCrs}). Recall that a homeomorphism between monoids is a map that sends the unit to the unit and commutes with the operation. It is then simple to prove the following.

\begin{prop}\label{prop:p2:actions as monoid homomorphisms}
 Let $\crse G$ be a coarse group. A coarse map $\crse{\alpha \colon G \times Y\to Y}$ is a coarse action if and only if its transpose $\crse {\lambda\alpha\colon G\to Y^{Y}}$ is a coarse homomorphism of frag\=/coarse monoids.
\end{prop}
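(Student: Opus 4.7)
The plan is to reduce the equivalence to the adjunction already established in Theorem~\ref{thm:p2:exponential} together with the naturality formulas of Lemma~\ref{lem:p2:compositions and transpose}. First, observe that $\crse\alpha\colon\crse{G\times Y\to Y}$ being a coarse map is equivalent to $\crse{\lambda\alpha}\colon\crse{G\to Y^{Y}}$ being a frag\=/coarse map by Theorem~\ref{thm:p2:exponential}(2) (closeness and frag\=/closeness agree here since $\crse{G\times Y}$ is a genuine coarse space). Thus the only thing left to verify is that the Action Diagrams commute if and only if the monoid homomorphism axioms hold; and by the bijection in Theorem~\ref{thm:p2:exponential}(4), two frag\=/coarse maps $\crse{G\times G\times Y\to Y}$ agree precisely when their transposes $\crse{G\times G \to Y^{Y}}$ agree, so it suffices to transpose the Action Diagrams and compare.

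For the unit axiom, the diagram $\alpha\circ(e\times \id_Y)\closefn \id_Y$ is a relation between coarse maps $\crse{Y\to Y}$, but rewriting as maps $\tobj\times\crse Y\to\crse Y$ and transposing gives $\crse{\lambda\alpha}\circ\cunit_{\crse G}\closefn\cid_{\crse Y}$. Since $\cid_{\crse Y}$ is exactly the unit element of the frag\=/coarse monoid $\crse{Y^{Y}}$ (as $\id_Y$ is a two\=/sided identity for $\circ$), this is precisely the condition $\crse{\lambda\alpha}(\cunit_{\crse G})=\cunit_{\crse{Y^{Y}}}$. For the associativity axiom, I would apply Lemma~\ref{lem:p2:compositions and transpose}(1) with $\crse f=\cop$ and $\crse g=\crse\alpha$ to the left\=/hand side $\crse\alpha\circ(\cop\times\cid_{\crse Y})$, obtaining transpose $\crse{\lambda\alpha}\circ\cop$; and Lemma~\ref{lem:p2:compositions and transpose}(2) with $\crse f=\crse g=\crse\alpha$ to the right\=/hand side $\crse\alpha\circ(\cid_{\crse G}\times\crse\alpha)$, obtaining transpose $\ccirc\circ(\crse{\lambda\alpha}\times\crse{\lambda\alpha})$. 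By the bijection above, the associativity Action Diagram commutes if and only if
\[
\crse{\lambda\alpha}\circ\cop \;=\; \ccirc\circ(\crse{\lambda\alpha}\times\crse{\lambda\alpha})
\]
as frag\=/coarse maps $\crse{G\times G\to Y^{Y}}$, which is exactly the commutativity requirement for a homomorphism of frag\=/coarse monoids.

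There is no substantive obstacle here: all the real work is packaged into Theorem~\ref{thm:p2:exponential} and Lemma~\ref{lem:p2:compositions and transpose}, and the proposition becomes a direct translation between the two formulations. The only mildly delicate point is the identification of $\cunit_{\crse{Y^{Y}}}$ with the coarse map $\tobj\to\crse{Y^{Y}}$ picking out $\cid_{\crse Y}$, which follows from the general fact (Corollary~\ref{cor:p1:uniqueness.of.unit&inverse}, adapted to frag\=/coarse monoids) that a unit of a monoid object is determined by the monoid operation, together with the observation that $\id_Y$ is a strict two\=/sided identity for composition in $\ctrl{Y}{Y}$.
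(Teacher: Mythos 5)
Your proof is correct and follows essentially the same route as the paper: reduce via the transpose bijection of Theorem~\ref{thm:p2:exponential}(4), compute the transposes of both sides of the associativity diagram with Lemma~\ref{lem:p2:compositions and transpose}(1) and (2), and observe that the unit diagram transposes to $\crse{\lambda\alpha}(\cunit_{\crse G})=\cid_{\crse Y}$. The extra remarks you add (using Theorem~\ref{thm:p2:exponential}(2) to transfer controlledness, and invoking Corollary~\ref{cor:p1:uniqueness.of.unit&inverse} to pin down the unit of $\crse{Y^{Y}}$) are just a slightly more explicit version of what the paper leaves implicit.
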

\begin{proof}
 By Theorem~\ref{thm:p2:exponential} we know that the transposition $\lambda$ gives a bijection between $\cmor(\crse{G\times G\times Y},\crse Y)$ and $\cmor(\crse{G\times G,Y^{Y}})$. Therefore, the diagram
\[ \begin{tikzcd}
    \crse{G}\times \crse G\times \crse Y \arrow[r, "\cid_{\crse G} \times \crse \alpha"] \arrow[d, "\cop\times \cid_{\crse Y}"] & \crse{G}\times \crse Y      \arrow[d, "\crse \alpha"]\\
    \crse{G}\times \crse Y \arrow[r, "\crse \alpha"]& \crse Y 
\end{tikzcd} 
\]
commutes if and only if $\crse{\lambda(\alpha\circ (\cid_{G}\times \alpha))}=\crse{\lambda(\alpha\circ (\cop\times \cid_{Y}))}$. By Lemma~\ref{lem:p2:compositions and transpose}, this happens if and only if the diagram
\[ \begin{tikzcd}
    \crse{G}\times \crse G \arrow[r, "\crse{\lambda\alpha\times\lambda\alpha}"] \arrow[d, "\cop"] & \crse{Y^{Y}\times Y^{Y}}      \arrow{d}{\ccirc} \\
    \crse{G} \arrow[r, "\crse{\lambda \alpha}"]& \crse{Y^{Y}} 
\end{tikzcd} 
\]
commutes.

Similarly, we see that the diagram 
\[
\qquad\qquad 
\begin{tikzcd}
    \crse{Y}\arrow{r}{(\cunit,\cid_{\crse Y})} 
    \arrow[dr, swap, "\cid_{\crse Y}"] & \crse{G}\times \crse Y      
    \arrow[d, "\crse \alpha"]\\
    & \crse Y 
\end{tikzcd} 
\] 
commutes if and only if $\crse{\lambda\alpha(\cunit_{G})=\cid_{Y}}$.
\end{proof}

The above characterization of coarse actions is useful \emph{e.g.}\ to define coarse faithfulness:

\begin{de}\label{def:p2:coarsely faithful}
 A coarse action $\crse{\alpha\colon G\curvearrowright Y}$ is \emph{coarsely faithful}\index{coarsely!faithful} if $\crse{\lambda\alpha\colon G\to Y^{Y}}$ is proper (\emph{i.e.}\ $\cker(\crse{\lambda\alpha})=\cunit_{\crse G}$).
\end{de}

\begin{exmp}\label{exmp:p2:left multiplication is faithful}
 The coarse action by left multiplication $\crse{\cop\colon G\curvearrowright G}$ of a coarse group $\crse G=(G,\CE)$ on itself is always faithful. To see this, let $B\subseteq G$ be a subset whose image under $\lambda{\ast}$ is a bounded neighborhood of $\id_{G}$ in $(\ctrl{G}{G},\CE^\CE)$. We may assume that $e_{G}\in B$. 
 Since $\CE^\CE\subseteq \CE^{\{(e_G,e_G)\}}$, it follows that there is a controlled entourage $E\in\CE$ such that 
 \[
  \paren{(\lambda\ast)(g)}(e_G)\torel{E} \id_G(e_G) \qquad \forall g\in B.
 \]
 That is, $g\ast e_G\torel{E} e_G$ for every $g\in B$. Since $e_G$ is the coarse unit in $G$, it follows that $B$ is a $\CE$\=/bounded neighborhood of $e_G$. 
\end{exmp}

It is of course possible to find an equivalent formulation of Definition~\ref{def:p2:coarsely faithful} without using $\crse{\lambda\alpha}$. Namely, $\crse \alpha \colon(G,\CE)\curvearrowright(Y,\CF)$ is coarsely faithful if and only if every set $B\subseteq G$ such that $\braces{\alpha(B\times \{y\})\mid y\in Y}$ is a controlled partial covering of $Y$ must be $\CE$\=/bounded. We find Definition~\ref{def:p2:coarsely faithful} cleaner.

\section{Frag-coarse Groups of Controlled Transformations}
\label{sec:p2:frag coarse groups of transformations}
Proposition~\ref{prop:p2:actions as monoid homomorphisms} falls short of the characterization of set\=/group actions as homomorphisms $G\to\aut(Y)$, in that we are not considering a coarse analog of the group $\aut(Y)$ but only of the monoid $Y^{Y}=\braces{\text{functions from $Y$ to itself}}$. More precisely, in \Cat{Set} it is trivially true that the subset of invertible functions $Y\to Y$ is itself a set\=/group $\aut(Y)<Y^{Y}$ and that for any set\=/group action $\alpha\colon G\curvearrowright Y$ the image of the homomorphism $\lambda\alpha\colon G\to Y^{Y}$ is contained in $\aut(G)$. In the context of (frag\=/)coarse spaces we still lack a good analogue of $\aut(Y)$.

Let $\crse Y=(Y,\CF)$ be a frag\=/coarse space. For any set\=/monoid, $M$, the subset of invertible elements $M^{*}\subseteq M$ is a set\=/group. Since $\crse{Y^{Y}}$ is a frag\=/coarse monoid, it is tempting to look at the subset of frag\=/coarsely invertible functions $(\ctrl{Y}{Y})^{\crse *}\subseteq \ctrl{Y}{Y}$ (i.e.\ the set of frag\=/coarse equivalences $(Y,\CF)\to (Y,\CF)$).
Clearly, $(\ctrl{Y}{Y})^{\crse *}$ is closed under composition and therefore it defines a frag\=/coarse submonoid $\crse{(Y^{Y})^{*}}$ of $\crse{Y^{Y}}$. However, $\crse{(Y^{Y})^{*}}$ is generally not a frag\=/coarse group because the inversion need not be well\=/behaved with respect to the exponential frag\=/coarse structure. 

More precisely, there are two fundamental issues. The first issue is that even if we assume that every element $f\in (\ctrl{X}{Y})^{\crse *}$ is coarsely invertible, we do not know whether we can \emph{choose} an inversion function $\inversefn\colon (\ctrl{Y}{Y})^{\crse *}\to(\ctrl{Y}{Y})^{\crse *}$ so that the (Inverse) Group Diagram commutes up to frag\=/closeness:
\[ 
\begin{tikzcd}
    \crse{(Y^{Y})^{*}} \arrow{r}{(\cinversefn ,\cid_{\crse{(Y^{Y})^{*}}})} \arrow[swap]{d}{(\cid_{\crse{(Y^{Y})^{*}}},\cinversefn)}  \arrow[dr, "\cid_{\crse Y}"] & \crse{(Y^{Y})^{*}}\times \crse{(Y^{Y})^{*}}      \arrow{d}{\ccirc}\\
    \crse{(Y^{Y})^{*}}\times \crse{(Y^{Y})^{*}} \arrow[swap]{r}{\ccirc} & \crse {(Y^{Y})^{*}}. 
\end{tikzcd} 
\]
For instance, let $\crse Y=(\NN,\CE_d)$ and for each $k\in \NN$ let $f_k(n)\coloneqq n+k$ be a translation by $k$. The set of functions $\braces{f_k\mid k\in \NN}$ is a fragment of $\crse{(Y^{Y})^{*}}$. However, there is no way to find coarse inverses $f_k^{-1}$ such that the compositions $f_k\circ f_k^{-1}$ and $f_k^{-1}\circ f_k$ are uniformly close to $\id_\NN$.

The second issue is that even if there exists an inverse function $\inversefn\colon (\ctrl{Y}{Y})^{\crse *}\to(\ctrl{Y}{Y})^{\crse *}$ so that the diagram commutes, the function would not need to be controlled. To see this, let $\crse Y= (\RR,\CE_d)$ and let $f_t(x)\coloneqq tx$ denote the multiplication by $t$. The set $\braces{f_t\mid 0<t\leq 1}$ is a set of equi controlled coarse equivalences and it is hence a fragment of $\crse{(Y^{Y})^{*}}$. Still, the set of inverses $\braces{f_t^{-1}\mid 0<t\leq 1}=\braces{f_{t^{-1}}\mid 0<t\leq 1}$ does not consist of equi controlled functions and it is hence \emph{not} a fragment.%
\footnote{%
This example shows that Proposition~\ref{prop:p1:coarse.group.iff.heartsuit.and.equi controlled} is not true for frag\=/coarse spaces.
}

We can go around these difficulties as follows. Let $\crse X=(X,\CE)$ and $\crse Y=(Y,\CF)$ be two (frag\=/coarsely equivalent) frag\=/coarse spaces, and define $\ctreq{X}{Y}\subseteq \ctrl{X}{Y}\times \ctrl{Y}{X}$ as the set
\[
 \ctreq{X}{Y}\coloneqq\bigbraces{(f,g)\mid f\in \ctrl{X}{Y},\ g\in\ctrl{Y}{X} \text{ frag-coarse inverses of one another}}.
\]
One advantage of considering such a set is that we have a natural candidate for the inversion function $\inversefn$: namely swapping the order of each pair. The composition $\circ \colon\ctreq{Y}{Z}\times\ctreq{X}{Y}\to \ctreq{X}{Z}$ is defined componentwise. It now remains to define a frag\=/coarse structure such that these operations are controlled and make $\ctreq{Y}{Y}$ into a frag\=/coarse group.

Here we see a second advantage of working with pairs of functions. Since $\ctreq{X}{Y}$ is a subset of the product $\ctrl{X}{Y}\times \ctrl{Y}{X}$, we can consider the restriction of the product frag\=/coarse structure $\CF^\CE\otimes \CE^\CF$. When doing so, we automatically obtain that our candidate $\inversefn\colon\ctreq{X}{Y}\to\ctreq{Y}{X}$ is a controlled function.

We are still not done though. The product coarse structure does not allow us to control ``how close to actual inverses'' are the coarse inverse functions and hence $\inversefn$ need not make the (Inverse) Group Diagram commute. Rather, we will use the following:

\begin{de}\label{def:p2:frag space of ceq}
 A relation $C\subseteq \ctreq{X}{Y}\times \ctreq{X}{Y}$ belongs to the frag\=/coarse structure $\ctreq{\CE}{\CF}$\nomenclature[:CE1]{$\ctreq{X}{Y}$}{fragmentary coarse structure on the set of coarse equivalences}
 if 
 \begin{enumerate}[(Ce1)]
  \item $C\in \CF^\CE\otimes \CE^\CF$;
  \item for every $E\in\CE$ and $F\in\CF$ there exist  $E'\in\CE$ and $F'\in \CF$ such that
  \[
   \id_{X}\times (g_1\circ f_1)(E)\subseteq E' \quad \text{and}\quad
   \id_{Y}\times (f_2\circ g_2)(F)\subseteq F' 
  \]
 for every $(f_1,g_1)\torel{C}(f_2,g_2)$.
 \end{enumerate}
 The \emph{fragmentary coarse space of coarse equivalences}\index{fragmentary!space of coarse equivalences} is $\crse{\ctreq{X}{Y}}=(\ctreq{X}{Y},\ctreq{\CE}{\CF})$.\nomenclature[:COS]{$\crse{\ctreq{X}{Y}}$}{fragmentary coarse space of coarse equivalences} We shall denote $\crse{\ctraut{Y}}\coloneqq\crse{\ctreq{Y}{Y}}$ and $\ctraut{Y}\coloneqq\ctreq{Y}{Y}$
\end{de}

We leave it to reader to verify that the above is a fragmentary coarse structure.
Effectively, to define $\ctreq{\CE}{\CF}$ we selected a subset of $\CF^\CE\otimes \CE^\CF$ in such a way that the fragments of $\crse{\ctreq{X}{Y}}$ are comprised of pairs of functions $(f,g)$ such that $f\circ g$ and $g\circ f$ are uniformly close to $\id_{Y}$ and $\id_{X}$. Namely, we give the following.

\begin{de}
 Let $\crse X=(X,\CE)$ be a frag\=/coarse space. Two families of functions $f_i\colon X\to X$ and $g_i\colon X\to X$ indexed over a set $I$ are \emph{equi frag\=/close}\index{equi!frag-close} if for every fragment $X'\subseteq X$ there is an $E\in \CE$ such that $f_i\times g_i(\Delta_{X'})\subseteq E$ for every $i\in I$. A family $f_i\colon X\to X$ is equi frag\=/close to a fixed function $g\colon X\to X$ if it is equi frag\=/close to the constant family $g_i=g$.
\end{de}

\begin{lem}\label{lem:p2:controlled into ctraut}
 A relation $C\subseteq \ctreq{X}{Y}\times \ctreq{X}{Y}$ belongs to the frag\=/coarse structure $\ctreq{\CE}{\CF}$ if and only if it is in $\CF^\CE\otimes \CE^\CF$ and the families of functions $\{g\circ f\mid (f,g)\in \pi_1(C)\cup\pi_2(C)\}$ and $\{f\circ g\mid (f,g)\in \pi_1(C)\cup\pi_2(C)\}$ are equi frag\=/close to $\id_X$ and $\id_Y$ respectively.
\end{lem}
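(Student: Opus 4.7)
The plan is to verify the equivalence by unpacking the two formulations of condition (Ce2), using that $\ctreq{\CE}{\CF}$ is closed under symmetry and that, by Lemma~\ref{lem:p2:projections are fragments}, the projections of any controlled entourage are contained in a fragment.

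First I would treat the forward direction. Assume $C \in \ctreq{\CE}{\CF}$, so (Ce1) is immediate. To show the equi frag\=/closeness of $\{g \circ f \mid (f,g) \in \pi_1(C) \cup \pi_2(C)\}$ to $\id_X$, I would fix a fragment $X' \subseteq X$ and apply (Ce2) with $E \coloneqq \Delta_{X'} \in \CE$; this yields an $E' \in \CE$ so that $(x, g \circ f(x)) \in E'$ for every $x \in X'$ and every $(f,g) \in \pi_1(C)$, uniformly. To extend the uniform control to $(f,g) \in \pi_2(C)$, I would use the fact that $\ctreq{\CE}{\CF}$ is a frag\=/coarse structure and hence closed under symmetry, so $\op{C} \in \ctreq{\CE}{\CF}$; applying (Ce2) to $\op C$ produces the analogous bound with the roles of $\pi_1$ and $\pi_2$ swapped. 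Taking the union of the two resulting entourages gives the desired uniform bound over $\pi_1(C) \cup \pi_2(C)$. The argument for $\{f \circ g\}$ on the $Y$\=/side is identical.

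For the backward direction, I would start with $C \in \CF^\CE \otimes \CE^\CF$ together with both equi frag\=/closeness conditions, and derive (Ce2). Given $E \in \CE$, Lemma~\ref{lem:p2:projections are fragments} provides $\tilde E \in \CE$ with $E \subseteq \tilde E$ such that $X' \coloneqq \pi_1(\tilde E) \cup \pi_2(\tilde E)$ is a fragment with $\Delta_{X'} \subseteq \tilde E$. Equi frag\=/closeness of $\{g \circ f\}$ to $\id_X$ applied to the fragment $X'$ yields some $E'' \in \CE$ so that $(g \circ f(x), x) \in E''$ for every $x \in X'$ and every $(f,g) \in \pi_1(C) \cup \pi_2(C)$. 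Then, for any $(f_1,g_1) \torel{C} (f_2,g_2)$ and $(x_1,x_2) \in E$, both endpoints lie in $X'$, so $(x_2, g_1 \circ f_1(x_2)) \in \op{E''}$; composing with $(x_1,x_2) \in E$ gives $(x_1, g_1 \circ f_1(x_2)) \in E \circ \op{E''}$, and I can set $E' \coloneqq E \circ \op{E''} \in \CE$. The construction of $F'$ on the $Y$\=/side is completely symmetric.

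The only step that requires thought is realising that Lemma~\ref{lem:p2:projections are fragments} is the right bridge between an arbitrary controlled entourage $E \in \CE$ and a fragment on which the equi frag\=/closeness hypothesis can be evaluated; without this, the fragment\=/flavoured hypothesis could not be applied to a generic $E$. I do not anticipate a substantial obstacle beyond getting the order of composition and the symmetries $\op{(\cdot)}$ right, which is routine.
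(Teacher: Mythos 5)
Your proof is correct and follows essentially the same route as the paper: unpack (Ce2) at $E=\Delta_{X'}$ for the forward direction, and use the fact that projections of controlled entourages are fragments (Lemma~\ref{lem:p2:projections are fragments}) for the converse. The one small difference is that to extend uniform control from $\pi_1(C)$ to $\pi_2(C)$ you invoke symmetry ($\op C\in\ctreq{\CE}{\CF}$), whereas the paper instead uses (Ce1) directly---namely that $C\in\CF^\CE\otimes\CE^\CF$ makes $g_1\circ f_1(x)$ uniformly close to $g_2\circ f_2(x)$---which lets it transfer the bound without re-invoking (Ce2); both are routine and equivalent in effort.
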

\begin{proof}
 For the forward implication: let $X'\subseteq X$ be a fragment. Applying condition (Ce2) to the set $E=\Delta_{X'}$ we see that 
 \[
  x \rel{\CE} g_1\circ f_1(x) \qquad \forall (f_1,g_1)\in\pi_1(C),\ x\in X'.
 \]
 Since $C$ is in $\CF^\CE\otimes \CE^\CF$, we also have
 \[
  g_1\circ f_1(x) \rel{\CE} g_2\circ f_2(x) 
  \qquad\forall (f_1,g_1)\torel{C}(f_2,g_2),\ x\in X'.
 \]
 Together, these two facts show that $\{g\circ f\mid (f,g)\in \pi_1(C)\cup\pi_2(C)\}$ are indeed equi frag\=/close to $\id_X$. The same argument shows that $\{f\circ g\mid (f,g)\in \pi_1(C)\cup\pi_2(C)\}$ are equi frag\=/close to $\id_Y$.
 
 For the converse implication we must verify (Ce2). Fix $E\in \CE$. Since $\pi_2(E)\subseteq X$ is a fragment, the equi closeness condition implies
 \[
  x_1 \torel{E} x_2 \rel{\CE} g_1\circ f_1(x_2) \qquad \forall (f_1,g_1)\in\pi_1(C),\ x_1\torel{E}x_2.  
 \]
 Using an analogous argument for $f_2\circ g_2$, we conclude that (Ce2) is satisfied.
\end{proof}

Unraveling notation, we have:

\begin{cor}\label{cor:p2:controlled into ctraut}
 A function $F\colon Z\to (\ctraut{X},\ctraut{\CE})$ defined as $F(z)\coloneqq(f_z,g_z)$ is controlled if and only if $F\colon Z\to (\ctraut{X},\CE^\CE\otimes\CE^\CE)$ is controlled, and for every fragment $Z'\subseteq Z$ the families $\braces{f_z\circ g_z\mid z\in Z'}$ and $\braces{g_z\circ f_z\mid z\in Z'}$ are equi frag\=/close to $\id_X$.
\end{cor}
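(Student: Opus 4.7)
The plan is to deduce this corollary directly from Lemma~\ref{lem:p2:controlled into ctraut} by unraveling what controlledness of $F$ means in terms of relations in the frag-coarse structure $\CD$ on $Z$. Concretely, $F$ is controlled into $(\ctraut{X},\ctraut{\CE})$ precisely when $F\times F(D) \in \ctraut{\CE}$ for every $D \in \CD$, and I will apply Lemma~\ref{lem:p2:controlled into ctraut} to each such image relation.

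First I would observe that conditions (Ce1) and (Ce2) split the requirement $F\times F(D)\in\ctraut{\CE}$ into two independent pieces, and the first one---namely $F\times F(D)\in \CE^\CE\otimes \CE^\CE$ for all $D\in\CD$---is literally the statement that $F\colon Z\to(\ctraut{X},\CE^\CE\otimes\CE^\CE)$ is controlled. This handles the first half of the ``if and only if'' immediately.

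Next I would rephrase the second condition using the reformulation of (Ce2) given in Lemma~\ref{lem:p2:controlled into ctraut}: the families $\{g\circ f\mid (f,g)\in \pi_1(F\times F(D))\cup\pi_2(F\times F(D))\}$ and the corresponding one for $f\circ g$ must be equi frag-close to $\id_X$. Since $\pi_i(F\times F(D))=F(\pi_i(D))$, what I need is that for every subset of $Z$ of the form $\pi_1(D)\cup\pi_2(D)$ with $D\in\CD$, the resulting families of compositions are equi frag-close to $\id_X$. The key bridge between this formulation and the statement about arbitrary fragments $Z'\subseteq Z$ is Lemma~\ref{lem:p2:projections are fragments}, which tells me that every such $\pi_1(D)\cup\pi_2(D)$ is a fragment of $Z$, and conversely every fragment $Z'\subseteq Z$ arises (up to enlargement) in this way because $\Delta_{Z'}\in\CD$ whenever $Z'$ is a fragment.

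Putting these two observations together yields the equivalence. There is no real obstacle; the whole argument is a bookkeeping exercise combining Lemma~\ref{lem:p2:controlled into ctraut} with Lemma~\ref{lem:p2:projections are fragments}, plus the trivial remark that quantifying over all controlled $D\in\CD$ in condition (b) of Lemma~\ref{lem:p2:controlled into ctraut} is the same as quantifying over all fragments of $Z$. The only mild care needed is to confirm that the equi frag-closeness statement is symmetric in $\pi_1$ and $\pi_2$, which is automatic since both projections of a fragment\=/enlarged relation give the same fragment.
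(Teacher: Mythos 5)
Your proposal is correct and is exactly the ``unraveling of notation'' the paper performs implicitly: both directions follow from Lemma~\ref{lem:p2:controlled into ctraut} applied to each $F\times F(D)$ with $D\in\CD$, together with the observation $\pi_i(F\times F(D))=F(\pi_i(D))$ and Lemma~\ref{lem:p2:projections are fragments} to identify the sets $\pi_1(D)\cup\pi_2(D)$ with fragments of $Z$. The paper states the corollary without further proof, so your write-up simply makes that bookkeeping explicit.
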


Now that we defined the frag\=/coarse structure on $\ctreq{X}{Y}$, it is routine to prove the following:

\begin{lem}
 The composition $\circ\colon \ctreq{Y}{Z}\times\ctreq{X}{Y}\to\ctreq{X}{Z}$ and inversion $\inversefn\colon \ctreq{X}{Y}\to\ctreq{Y}{X}$ are controlled maps. Consider also the identity element $(\id_{Y},\id_{Y})\in\ctraut{Y}$,\\ then $(\crse{\ctraut{Y}},\ccirc,[(\id_{Y},\id_{Y})],\cinversefn)$ is a frag\=/coarse group.
\end{lem}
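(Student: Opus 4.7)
The plan is to verify, in sequence, that inversion is controlled, that composition is controlled, and that the Group Diagrams commute up to frag-closeness; throughout I will appeal repeatedly to Corollary~\ref{cor:p2:fragments are equi controlled} (which guarantees equi-controlledness of functions appearing in a controlled relation) together with conditions (Ce1) and (Ce2) of Definition~\ref{def:p2:frag space of ceq}. Inversion is the easy case: the map $(f,g)\mapsto(g,f)$ is the coordinate swap $\ctrl{X}{Y}\times\ctrl{Y}{X}\to\ctrl{Y}{X}\times\ctrl{X}{Y}$, which sends $\CF^\CE\otimes\CE^\CF$ to $\CE^\CF\otimes\CF^\CE$, so (Ce1) is automatic. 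Condition (Ce2) is manifestly symmetric under this swap, since the roles of $g_i\circ f_i$ and $f_i\circ g_i$ exchange precisely when each $(f_i,g_i)$ is replaced by $(g_i,f_i)$.

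For composition, take $C_1\in\ctreq{\CF}{\CD}$ and $C_2\in\ctreq{\CE}{\CF}$, where $\crse Z=(Z,\CD)$. Condition (Ce1) for $(\ccirc\times\ccirc)(C_1\otimes C_2)$ follows from two applications of Lemma~\ref{lem:p2:composition of exponential}: on the first coordinate the composition map sends $\CD^\CF\otimes\CF^\CE$ into $\CD^\CE$, and on the second (after the obvious swap of factors) it sends $\CE^\CF\otimes\CF^\CD$ into $\CE^\CD$. Condition (Ce2) is where the real work lies, via the associativity rewriting
\[
    (g_2\circ g_1)\circ(f_1\circ f_2)=g_2\circ(g_1\circ f_1)\circ f_2.
\]
Given $E\in\CE$, I would combine, in the following order, the equi-controlledness of the $f_2$'s, the uniform frag-closeness of $g_1\circ f_1$ to $\id_Y$ (property (Ce2) for $C_1$), the equi-controlledness of the $g_2$'s, and finally the uniform frag-closeness of $g_2\circ f_2$ to $\id_X$ (property (Ce2) for $C_2$), to produce an entourage in $\CE$ containing all pairs $(x_1,(g_2\circ g_1\circ f_1\circ f_2)(x_2))$ with $(x_1,x_2)\in E$, uniformly over $C_1\otimes C_2$. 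The dual argument for $(f_1\circ f_2)\circ(g_2\circ g_1)$ against $\id_Z$ is completely analogous. This bookkeeping is the main obstacle: one must carefully alternate uniform frag-closeness to the identity with equi-controlledness into the appropriate codomain, in order to reduce a four-fold composition to something uniformly controlled. The definition of $\ctreq{\CE}{\CF}$ was engineered precisely to make this alternation possible.

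Finally, for the Group Diagrams, associativity of $\ccirc$ and both identity laws with $(\id_Y,\id_Y)$ hold strictly on the nose (as for composition of functions in Set), so a fortiori up to frag-closeness. For the (Inverse) diagram, unravelling reduces the claim to showing that $\Phi\colon\ctraut{Y}\to\ctraut{Y}$, $\Phi(f,g)\coloneqq(g\circ f,g\circ f)$, is frag-close to the constant map with value $(\id_Y,\id_Y)$. Given any fragment $S\subseteq\ctraut{Y}$, the relation $\Delta_S$ lies in $\ctreq{\CF}{\CF}$, so by (Ce2) the family $\{g\circ f\mid(f,g)\in S\}$ is equi frag-close to $\id_Y$ and, by Corollary~\ref{cor:p2:fragments are equi controlled}, equi controlled; this is exactly what is required for the relation $\{((g\circ f,g\circ f),(\id_Y,\id_Y))\mid(f,g)\in S\}$ to lie in $\ctreq{\CF}{\CF}$, completing the argument.
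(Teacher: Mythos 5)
The paper states this lemma without proof (it appears immediately after the sentence ``We leave it to the reader to verify that the above is a fragmentary coarse structure''), so there is no authorial argument to compare against. Your proof is correct and uses exactly the toolkit the paper has just set up — Lemma~\ref{lem:p2:composition of exponential} for (Ce1) on the composition, Corollary~\ref{cor:p2:fragments are equi controlled} to pass from (Ce1) to equi-controlledness, and Lemma~\ref{lem:p2:controlled into ctraut} (implicitly, in the symmetric rephrasing of (Ce2)) — and is presumably the argument the authors intended the reader to supply. One small remark that would tighten the inversion step: as literally stated, (Ce2) couples $g_1\circ f_1$ to the \emph{first} member of a $C$-pair and $f_2\circ g_2$ to the \emph{second}, so the ``manifest symmetry'' under the coordinate swap is cleanest if you either invoke Lemma~\ref{lem:p2:controlled into ctraut} (which repackages (Ce2) in a form symmetric in both coordinates and both composition orders), or note that $\op{C}\in\ctreq{\CE}{\CF}$ whenever $C$ is and apply (Ce2) to both $C$ and $\op{C}$; with that said explicitly the argument is airtight.
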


Since we defined the coarse structure $\ctreq{\CE}{\CF}$ as a refinement of $\CF^\CE\otimes \CE^\CF$, it is clear that the projection on the first coordinate $(f,g)\mapsto f $ defines a controlled map $\crse{\ctreq{X}{Y}\to Y^{X}}$. Notice that for $\crse {X=Y}$ this map is in fact a homomorphism of frag\=/coarse monoids. From here, it is easy to obtain the required characterization of coarse actions. 

\begin{prop}\label{prop:p2:actions as group homomorphisms}
 Let $\crse G$ be a coarse group and $\crse Y$ a coarse space. A coarse map $\crse{\alpha\colon G\times Y\to Y}$ is a coarse action if and only if $\crse{\lambda\alpha\colon G\to Y^{Y}}$ lifts to a coarse homomorphism $\crse{G\to \ctraut{Y}}$.
\end{prop}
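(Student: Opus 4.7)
The plan is to reduce the statement to Proposition~\ref{prop:p2:actions as monoid homomorphisms} by examining the natural projection $\crse{\pi\colon \ctraut{Y}\to Y^{Y}}$ that sends $(f,g)$ to $f$. This projection is controlled because $\ctreq{\CF}{\CF}$ was defined as a refinement of $\CF^\CF\otimes \CF^\CF$, and it is a homomorphism of frag-coarse monoids because the composition in $\crse{\ctraut{Y}}$ is componentwise. Given a lift $\widetilde{\crse{\lambda\alpha}}\colon \crse G\to\crse{\ctraut{Y}}$, composing with $\crse{\pi}$ recovers $\crse{\lambda\alpha}$ as a coarse monoid homomorphism, and Proposition~\ref{prop:p2:actions as monoid homomorphisms} gives the reverse implication for free.

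For the forward direction I would fix adapted representatives $\ast,\unit,\inversefn$ for the coarse operations of $\crse G$ (using Lemma~\ref{lem:p1:adapted representatives exist}), and define
\[
 \widetilde{\lambda\alpha}\colon G\to\ctraut{Y},\qquad g\longmapsto\bigparen{\alpha(g,\variable)\,,\,\alpha(g^{-1},\variable)}.
\]
I would then verify in turn that (a) this lands in $\ctraut{Y}$, (b) it is controlled as a map $\crse G\to \crse{\ctraut{Y}}$, and (c) it is a coarse homomorphism. For (a), since $\crse{\alpha}$ is a coarse action and the operations are adapted we have
\[
 \alpha\bigparen{g,\alpha(g^{-1},y)}\rel{\CF}\alpha(g\ast g^{-1},y)=\alpha(\unit,y)\rel{\CF}y
\]
uniformly in $g\in G$ and $y\in Y$, and similarly for the other composition; hence each pair is a pair of mutually frag-coarse-inverse controlled maps.

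For (b) I would apply Corollary~\ref{cor:p2:controlled into ctraut}. Controlledness into $(\ctraut{Y},\CF^\CF\otimes\CF^\CF)$ follows by factoring $\widetilde{\lambda\alpha}$ as
\[
 \crse G\xrightarrow{(\cid_{\crse G},\cinversefn_{\crse G})}\crse{G\times G}\xrightarrow{\crse{\lambda\alpha}\times\crse{\lambda\alpha}}\crse{Y^Y\times Y^Y},
\]
which is a composition of controlled maps by Proposition~\ref{prop:p2:actions as monoid homomorphisms}. The required equi frag-closeness of $\{\alpha(g,\variable)\circ\alpha(g^{-1},\variable)\mid g\in G\}$ to $\id_Y$ (and symmetrically) is exactly the uniform bound from (a). For (c), on the first coordinate one recovers that $\crse{\lambda\alpha}$ is a coarse monoid homomorphism; on the second coordinate one needs $\alpha((g_1\ast g_2)^{-1},\variable)$ to be frag-close to $\alpha(g_2^{-1},\variable)\circ\alpha(g_1^{-1},\variable)$, which follows from $(g_1\ast g_2)^{-1}\rel{\CE}g_2^{-1}\ast g_1^{-1}$ (applying Lemma~\ref{lem:p1:coarse homomorphisms preserve unit and inverse} to $\cid_{\crse G}$) combined with the coarse associativity of $\alpha$.

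The main obstacle is ensuring that all the closeness bounds produced by the Action Diagrams are genuinely uniform in $g\in G$ (not merely existential for each fixed $g$), so that they translate into relations living in the frag-coarse structure $\ctreq{\CF}{\CF}$ via Corollary~\ref{cor:p2:controlled into ctraut}; however, this uniformity is exactly what it means for $\alpha$ to be a controlled map and for the Action Diagrams to commute up to $\CF$-closeness, so no additional input is required.
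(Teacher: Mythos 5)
Your proposal is correct and follows the same strategy as the paper: the lift $\widetilde{\lambda\alpha}(g)=({}_g\alpha,{}_{g^{-1}}\alpha)$ is exactly the map the paper calls $\dblact$, and your steps (a)--(c) mirror the paper's argument almost line for line.

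The one place where your write-up is looser than it should be is step (c). Verifying that $\alpha((g_1\ast g_2)^{-1},\variable)$ is frag-close to $\alpha(g_2^{-1},\variable)\circ\alpha(g_1^{-1},\variable)$ (together with the analogous statement in the first coordinate) only shows that $\widetilde{\lambda\alpha}\circ\ast$ and $\ccirc\circ(\widetilde{\lambda\alpha}\times\widetilde{\lambda\alpha})$ are frag-close as maps into $(\ctraut{Y},\CF^\CF\otimes\CF^\CF)$, i.e.\ that their defect relation satisfies (Ce1). For $\widetilde{\lambda\alpha}$ to be a coarse homomorphism into $\crse{\ctraut{Y}}=(\ctraut{Y},\ctreq{\CF}{\CF})$ one also needs the defect relation to satisfy (Ce2). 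This is a genuinely extra condition, and the paper checks it explicitly in the final display of its proof. Your closing remark about uniformity gestures in this direction, and Corollary~\ref{cor:p2:controlled into ctraut} is the right tool, but you invoke it only for the \emph{controlledness} of $\widetilde{\lambda\alpha}$ (step (b)), not for the \emph{closeness} of the two sides of the homomorphism square.

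Fortunately the missing step is easy to supply: since you have established that $\widetilde{\lambda\alpha}$ is controlled into $\crse{\ctraut{Y}}$, and both $\cop_{\crse G}$ and $\ccirc$ are controlled, both $\widetilde{\lambda\alpha}\circ\ast$ and $\ccirc\circ(\widetilde{\lambda\alpha}\times\widetilde{\lambda\alpha})$ are controlled maps $\crse{G\times G}\to\crse{\ctraut{Y}}$, whence by Lemma~\ref{lem:p2:controlled into ctraut} each of their images already consists of pairs whose compositions are equi frag-close to $\id_Y$; condition (Ce2) for the defect relation then follows since $\pi_1\cup\pi_2$ of that relation is contained in the union of the two images. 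Alternatively, verify (Ce2) directly as the paper does. Either way, say it.
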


\begin{proof}

 One implication follows trivially from Proposition~\ref{prop:p2:actions as monoid homomorphisms}: since $\crse{\ctraut{Y}\to Y^{Y}}$ is a frag\=/coarse homomorphism, if $\crse{\lambda\alpha\colon G\to Y^{Y}}$ lifts to a coarse homomorphism $\crse{G\to \ctraut{Y}}$ then it is a coarse homomorphism.

 Now let $\crse \alpha$ be a coarse action. Unpacking the notation, we have $\lambda\alpha(g)={}_g\alpha$ for every $g\in G$. Consider the map $\dblact\colon G\to\ctraut{Y}$ sending $g\mapsto ({}_g\alpha,{}_{g^{-1}}\alpha)$.
 Setwise, the function $\dblact$ is the product of $\lambda\alpha$ and $\lambda\alpha\circ\inversefn$. Since $\crse \alpha$ is a coarse action, it follows from Theorem~\ref{thm:p2:exponential} that $\dblact\colon (G,\CE)\to (\ctraut{Y},\CF^\CF\otimes\CF^\CF)$ is controlled. By Corollary~\ref{cor:p2:controlled into ctraut}, to deduce that $\dblact\colon (G,\CE)\to (\ctraut{Y},\ctraut{\CF})$ is also controlled it remains to check that the functions  ${}_{g}\alpha\circ {}_{g^{-1}}\alpha$ and ${}_{g^{-1}}\alpha\circ {}_{g}\alpha$ are equi close to $\id_Y$. This follows immediately from the coarse group and action axioms:
 \[
  \paren{ {}_{g^{-1}}\alpha\circ {}_{g}\alpha} (y) =g^{-1}\cdot(g\cdot y)
  \rel{\CF} (g^{-1}\ast g)\cdot y
  \rel{\CF} y
  \qquad \forall g\in G,\ y\in Y\phantom{.}
 \]
 \[
  \paren{ {}_{g}\alpha\circ {}_{g^{-1}}\alpha} (y) =g\cdot(g^{-1}\cdot y)
  \rel{\CF} (g\ast g^{-1})\cdot y
  \rel{\CF} y
  \qquad \forall g\in G,\ y\in Y.
 \]

 We now need to show that the diagram 
\[ 
\begin{tikzcd}[column sep = 3 em]
    \crse{G}\times \crse{G} \arrow[r, "\crse {\dblact\times \dblact}"] \arrow[d, "\cop"] & \crse{\ctraut{Y}}\times \crse{\ctraut{Y}}      \arrow[d, "\ccirc"]\\
    \crse G \arrow[r, "\crse{ \dblact}"]& \crse{\ctraut{Y}} 
\end{tikzcd} 
\]
commutes.
 Since $\crse \alpha$ is a coarse action, we already know by Proposition~\ref{prop:p2:actions as monoid homomorphisms} that the diagram commutes up to closeness when we give $\ctraut{Y}$ the frag\=/coarse structure $\CE^\CE\otimes\CE^\CE$.
 It then remains to show that the relation
 \[
  \Bigparen{\paren{\dblact\circ(\variable\ast\variable)}\times\paren{\dblact\circ\dblact}}(\Delta_{G\times G}) =
  \Bigbraces{\bigparen{ ({}_{g_1\ast g_2}\alpha,{}_{(g_1\ast g_2)^{-1}}\alpha)
  \,,\, ({}_{g_1}\alpha \circ {}_{g_2}\alpha\,,\, {}_{g_2^{-1}}\alpha\circ {}_{g_1^{-1}}\alpha) }
  \bigmid g_1,g_2\in G}
 \]
 satisfies condition (Ce2). As in Lemma~\ref{lem:p2:controlled into ctraut}, it is also enough to verify condition (Ce2) in the special case $E=F=\Delta_Y$. In symbols, this amounts to observing that
 \[
  ({}_{(g_1\ast g_2)^{-1}}\alpha)\circ({}_{g_1\ast g_2}\alpha)(y)
  = (g_1\ast g_2)^{-1}\cdot(\paren{g_1\ast g_2}\cdot y)
  \rel{\CE} y \qquad \forall g_1,g_2\in G,\ y\in Y
 \]
 and
 \[
  ({}_{g_1}\alpha \circ {}_{g_2}\alpha) \circ ({}_{g_2^{-1}}\alpha\circ {}_{g_1^{-1}}\alpha)(y)
  = g_1\cdot( g_2\cdot(g_2^{-1}\cdot (g_1^{-1}\cdot y)))
  \rel{\CE} y \qquad \forall g_1,g_2\in G,\ y\in Y.
 \]
 Both the above follow from the coarse group and action axioms.
\end{proof}

\section{Coarse Groups as Coarse Subgroups}
As in Chapter~\ref{ch:p2:a cgroup not group}, we say that a set\=/group equipped with a fragmentary coarse structure with which it is a frag\=/coarse group\footnote{%
The failure of Proposition~\ref{prop:p1:coarse.group.iff.heartsuit.and.equi controlled} for frag coarse spaces shows that condition is \emph{not} equivalent to saying that the frag\=/coarse structure is equi bi\=/invariant. That is, one needs to check it is equi bi\=/invariant \emph{and} that the inverse function is controlled.
} 
is a \emph{frag\=/coarsified set\=/group}.\index{fragmentary!coarsified set\=/group}
In this section we will prove the following consequence of Proposition~\ref{prop:p2:actions as group homomorphisms}:

\begin{thm}\label{thm:p2:coarse groups as subgroups}
 Every coarse group is isomorphic to a frag\=/coarse subgroup of a frag\=/coarsified set\=/group.
\end{thm}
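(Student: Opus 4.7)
The plan is to invoke the left-multiplication action and then embed the resulting frag-coarse group of self-equivalences into a genuine set-group via a free-group construction. First I would apply Proposition~\ref{prop:p2:actions as group homomorphisms} to the coarse action $\cop\colon \crse G\curvearrowright \crse G$ by left multiplication. This yields a coarse homomorphism $\crse{\dblact\colon G\to \ctraut{G}}$ with representative $g\mapsto({}_g\ast,{}_{g^{-1}}\ast)$. By Example~\ref{exmp:p2:left multiplication is faithful} this action is coarsely faithful, so $\crse{\lambda\ast}$ is proper; the same argument applied to $\crse{\dblact}$ (which pairs $\crse{\lambda\ast}$ with $\crse{\lambda\ast}\circ\cinversefn$) shows $\crse{\dblact}$ is proper as well. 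The standard argument of Proposition~\ref{prop:p1:proper.hom.coarse.embedding}, which only uses locality of the coarse structure on the domain $\crse G$ (a genuine coarse group), carries over: a proper coarse homomorphism from a coarse group into a frag-coarse group is a coarse embedding. Hence $\crse G$ is isomorphic to a frag-coarse subgroup of $\crse{\ctraut{G}}$.

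Second, I would embed $\crse{\ctraut{G}}$ into a frag-coarsified set-group. The obstruction is that $\ctraut{G}$ is not itself a set-group under composition, since $(f,g)\ccirc(g,f)=(f\circ g,f\circ g)$ is only frag-close to the unit. To get around this, take the free set-group $H\coloneqq F_{\ctraut{G}}$ on the underlying set of $\ctraut{G}$, with its natural set-theoretic inclusion $\iota\colon \ctraut{G}\hookrightarrow H$. Equip $H$ with a frag-coarse structure $\CF$ as follows: for each $N\in\NN$ let $H^{\leq N}\subseteq H$ be the set of reduced words of length at most $N$, fix once and for all a right-associated interpretation map $\Psi_N\colon H^{\leq N}\to \ctraut{G}$ sending a word $(f_1,g_1)^{\epsilon_1}\cdots(f_k,g_k)^{\epsilon_k}$ to its composition in $\ctraut{G}$ (using $\cinversefn$ for negative exponents), and declare a relation $E\subseteq H\times H$ to be in $\CF$ iff $E\subseteq H^{\leq N}\times H^{\leq N}$ for some $N$ and $(\Psi_N\times\Psi_N)(E)\in\ctraut{\CE}$.

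The verification then splits into three parts. (i) $\CF$ is a frag-coarse structure: closure under subsets, unions, and symmetry is immediate; closure under composition follows from the fact that $H^{\leq N}\cmp H^{\leq N}\subseteq H^{\leq 2N}$ and $\Psi_{2N}$ is compatible with $\Psi_N$ up to a controlled error. (ii) $(H,\CF)$ is a frag-coarse group: the concatenation $\ast_H\colon H\times H\to H$ sends $H^{\leq N}\times H^{\leq M}$ into $H^{\leq N+M}$, and on such a fragment $\Psi_{N+M}(w_1\ast_H w_2)$ is uniformly close to $\Psi_N(w_1)\ccirc\Psi_M(w_2)$ in $\ctraut{G}$; since $\ctraut{\CE}$ is equi bi-invariant, this forces $\ast_H$ and $\inversefn_H$ to be controlled with respect to $\CF$, and the Group Diagrams commute up to frag-closeness because they already do in $\ctraut{G}$ under $\Psi$. (iii) The inclusion $\iota$ is a coarse embedding with $\CF|_{\ctraut{G}}=\ctraut{\CE}$, since $\Psi_1|_{\ctraut{G}}=\id_{\ctraut{G}}$; moreover, $\iota(\ctraut{G})\ast_H \iota(\ctraut{G})$ is $\CF$-asymptotic to $\iota(\ctraut{G}\ccirc\ctraut{G})\subseteq\iota(\ctraut{G})$ by design, so $\iota(\ctraut{G})$ is a frag-coarse subgroup of $(H,\CF)$. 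Composing with the embedding from the first paragraph finishes the proof.

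The main obstacle is the verification in part (ii), more precisely controlling how the reparenthesization error in $\Psi_N$ grows with $N$. Coarse associativity in $\crse G$ (and hence in $\ctraut{G}$) is only uniform for each fixed number of moves, so the error in passing between different parenthesizations of an $N$-letter word is bounded by some $E_N\in\ctraut{\CE}$ whose size grows with $N$. This is precisely the reason the construction cannot produce a coarsified set-group in general (consistent with Conjecture~\ref{conj:p2:non coarsification}), but it is exactly what a frag-coarse structure tolerates: on each fragment $H^{\leq N}$ the error is bounded, while globally the diagonal $\Delta_H$ need not belong to $\CF$. Making the bookkeeping precise on fragments is the technical core of the argument.
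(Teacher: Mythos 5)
Your first paragraph matches the paper's opening move: left multiplication gives a faithful coarse action, Proposition~\ref{prop:p2:actions as group homomorphisms} converts it into a coarse homomorphism $\crse{G\to\ctraut{G}}$, and properness (the argument of Proposition~\ref{prop:p1:proper.hom.coarse.embedding} does carry over, since the domain is a genuine coarse group) makes it a coarse embedding. That part is fine.

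The gap is in the free-group step: the frag-coarse structure $\CF$ you put on $H=F_{\ctraut{G}}$ via the interpretation maps $\Psi_N$ does not make concatenation controlled, so $(H,\CF)$ is not a frag-coarse group. The failure is not where you expect. Reassociation is harmless, because $\ccirc$ on $\ctraut{G}$ is set-theoretic function composition and hence \emph{strictly} associative; so $\Psi_{N+M}(w_1w_2)=\Psi_N(w_1)\ccirc\Psi_M(w_2)$ exactly whenever $w_1w_2$ is already reduced. What breaks is cancellation. If a letter $(f,g)$ at the end of $w_1$ cancels against $(f,g)^{-1}$ at the start of $w_2$, the discrepancy between $\Psi(\rd(w_1w_2))$ and $\Psi(w_1)\ccirc\Psi(w_2)$ is a conjugate of $(f\circ g,f\circ g)$, and the letters buried inside a fragment of $(H,\CF)$ need \emph{not} have $f\circ g$ equi-close to $\id$, even though $\Psi_N$ of the whole word does. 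Concretely, take $\crse G=(\RR,\CE_{\abs{\mhyphen}})$, write $\tau_k\colon x\mapsto x+k$, and set $a_n=(\id,\tau_{-2n})$, $b_n=(\tau_n,\tau_n)$, $c_n=(\tau_{-2n},\id)$, all in $\ctraut{\RR}$. Let $w_1^{(n)}=a_nb_n$ and $w_2^{(n)}=b_n^{-1}c_n$. Then $\Psi_2(w_1^{(n)})=(\tau_n,\tau_{-n})$ and $\Psi_2(w_2^{(n)})=(\tau_{-n},\tau_n)$ are genuine inverse pairs, so $\{w_1^{(n)}\}$ and $\{w_2^{(n)}\}$ are fragments of $(H,\CF)$; yet $\rd(w_1^{(n)}w_2^{(n)})=a_nc_n$ has $\Psi_2(a_nc_n)=(\tau_{-2n},\tau_{-2n})$, which fails condition (Ce2) of Definition~\ref{def:p2:frag space of ceq} and therefore is not a fragment of $\ctraut{\RR}$. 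Hence $\ast_H$ is not controlled. The paper avoids the issue altogether by never leaving set-groups: it first upgrades the coarse action so that every section ${}_g\alpha'$ is a set-theoretic bijection (Lemma~\ref{lem:p2:upgrade to bijective} and Proposition~\ref{prop:p2:upgrade to bijective action}), arranges a representative of $\cinversefn$ that is an involution with $e$ as its only fixed point (Lemma~\ref{lem:p2:isomorphic to diamondsuit}), and then for $g\in G^-$ defines ${}_g\alpha'$ to be the \emph{exact} set-theoretic inverse of ${}_{g^{-1}}\alpha'$. The resulting $\dblact'$ then lands in $\bijctraut{Y}$, which is already a set-group under composition, and the cancellation error vanishes because there $g=f^{-1}$ on the nose. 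This is the missing idea: make the target a set-group by controlling the representatives, rather than freely generating one.
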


As a preliminary, we start with a general observation on functoriality properties of spaces of controlled functions. Fix coarse spaces $\crse X_1,\ \crse X_2,\ \crse Y_1,\ \crse Y_2$.
If we are given controlled maps $  f_1\colon   Y_1\to   X_1$ and $  f_2\colon   X_2\to  Y_2$ we may define an adjoint coarse map
$ \Ad(  f_2,  f_1)\colon \ctrl{X_1}{{X_2}}\to \ctrl{ Y_1}{{Y_2}}$ by composition:
\[
   \Ad(  f_2,  f_1)(  g)\coloneqq  f_2 \circ g \circ f_1.
\]
It follows from Lemma~\ref{lem:p2:composition of exponential} that $\Ad(f_2,f_1)$ is controlled and hence defines a frag\=/coarse map 
\[
 \crse\Ad(\crse f_2,\crse f_1)\colon {\crse X_2}^{\crse X_1}\to {\crse Y_2}^{\crse Y_1}.
\]
If $\crse f_1,\crse f_2$ are coarse equivalences with coarse inverses $\crse f_1^{-1},\ \crse f_2^{-1}$, then $\crse\Ad(\crse f_2,\crse f_1)$ is a frag\=/coarse equivalence with frag\=/coarse inverse $\crse\Ad(\crse f_2^{-1},\crse f_1^{-1})$. In particular, if $\crse{ f\colon X\to Y}$ is a coarse equivalence we obtain a frag\=/coarse equivalence
\[
 \crse\Ad(\crse f)\coloneqq \crse\Ad(\crse f,\crse f^{-1})\colon\crse{X^{X}\to Y^{Y}}.
\]
Also note that $\crse{\Ad(f)}$ is an isomorphism of frag\=/coarse monoids. These remarks will be useful soon, as they give us a quick way to show that a certain construction yields a coarse action.

\

The key technical point in the proof of Theorem~\ref{thm:p2:coarse groups as subgroups} is a procedure to upgrade coarse actions of coarse groups to coarse actions $\crse{\alpha\colon G\curvearrowright X}$ where left multiplications ${}_g\alpha\colon X\to X$ are bijections for every $g\in G$. We need a piece of nomenclature: a family of (controlled) functions $f_i\colon X\to (Y,\CF)$, $i\in I$ is \emph{equi coarsely surjective}\index{equi!coarsely surjective} if they have uniformly coarsely dense images. That is, there is a controlled entourage $F\in\CF$ such that $F(f_i(X))=Y$ for every $i\in I$.

\begin{lem}\label{lem:p2:upgrade to bijective}
 For every coarse space $\crse X$ there exist a coarse space $\crse X'$ and a coarse equivalence $\crse{\iota\colon X\to X}'$ with a representative $\iota$ such that the following holds. Given any family $D\subseteq \ctrl{X}{X}$ of equi coarsely surjective maps there is a $\Phi\colon D\to \ctrl{X'}{X'}$ such that
 \begin{itemize}
  \item $\Phi(f)\colon X'\to X'$ is bijective for every $f\in D$;
  \item $\Ad(\iota^{-1})\circ \Phi=\id_D$;
  \item $\Phi$ is frag\=/close to the restriction $\Ad(\iota)|_D$.
 \end{itemize}
\end{lem}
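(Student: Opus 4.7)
The plan is to take $X' \coloneqq X \times \NN$ equipped with the pull-back coarse structure $\CE' \coloneqq \pi_1^\ast(\CE)$ along the first-coordinate projection. With this choice, a subset $E' \subseteq X' \times X'$ is controlled if and only if its image under $\pi_1 \times \pi_1$ is controlled in $X$, so the second coordinate is ``coarsely invisible''. The inclusion $\iota \colon X \to X'$ given by $\iota(x) \coloneqq (x,0)$ is a coarsely surjective coarse embedding, hence a coarse equivalence, and I will take as its set-theoretic left inverse (and as a representative for the coarse inverse) the first-coordinate projection $\iota^{-1} \coloneqq \pi_1$, which satisfies $\iota^{-1} \circ \iota = \id_X$ on the nose. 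A key feature of this setup is that two controlled maps $F_1, F_2 \colon X' \to X'$ agree up to uniformly bounded error precisely when their first-coordinate projections $\pi_1 \circ F_1$ and $\pi_1 \circ F_2$ do, irrespective of the second coordinate.

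Given $f \in D$, I would first define $\Phi(f)$ on $X \times \{0\}$ by well-ordering each fibre $f^{-1}(y)$ and setting $\Phi(f)(x,0) \coloneqq (f(x), \kappa_f(x))$, where $\kappa_f(x)$ is the ordinal position of $x$ inside $f^{-1}(f(x))$. This forces $\iota^{-1} \circ \Phi(f) \circ \iota = f$ on the nose and makes $\Phi(f)$ injective on $X \times \{0\}$ with image $\bigsqcup_{y \in f(X)} \{y\} \times [0, |f^{-1}(y)|)$. To extend to all of $X'$, I would invoke equi coarse surjectivity: there exists $E_D \in \CE$ with $E_D(f(X)) = X$ for every $f \in D$; choose for each $f$ a retraction $r_f \colon X \to f(X)$ with $y \torel{E_D} r_f(y)$ for every $y \in X$ (and $r_f(y) = y$ whenever $y \in f(X)$). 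The uncovered set $U_f \coloneqq \{(y,k) : k \geq |f^{-1}(y)|\}$ can then be matched with $X \times \NN_{\geq 1}$ by performing, for each $y' \in f(X)$, a local bijection between the source block $f^{-1}(y') \times \NN_{\geq 1}$ and the target block $U_f \cap (r_f^{-1}(y') \times \NN)$. Both blocks have cardinality $\max(|f^{-1}(y')|, |r_f^{-1}(y')|, \aleph_0)$ thanks to the countable factor, so such a local bijection exists, and assembling these gives $\Phi(f)$.

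Verifying the three conditions will then be routine. The identity $\Ad(\iota^{-1}) \circ \Phi = \id_D$ holds by construction on $X \times \{0\}$, bijectivity of $\Phi(f)$ follows from the block decomposition, and the uniform displacement bound $\pi_1(\Phi(f)(x,n)) \torel{E_D} f(x)$ for every $(x,n) \in X'$ and every $f \in D$ yields both that each $\Phi(f)$ is controlled on $X'$ and that $\Phi$ is frag-close to the restriction $\Ad(\iota)\vert_D$: on any fragment $D' \subseteq D$ the family $\{f : f \in D'\}$ is already equi-controlled by definition of the exponential frag-coarse structure, and the first-coordinate estimate transfers the uniformity from $D'$ to $\Phi(D')$ through Corollary~\ref{cor:p2:controlled into ctraut}. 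The main obstacle will be organising the block-by-block bijections coherently so that $\Phi(f)$ is a genuine bijection of $X'$ with the displacement bound $E_D$ truly independent of $f \in D$; handling the blocks where $|f^{-1}(y')|$ or $|r_f^{-1}(y')|$ is finite requires care, but the countable $\NN$-factor provides enough slack to swap elements locally without disturbing the displacement control.
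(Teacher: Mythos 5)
Your overall strategy — fatten $X$ by an auxiliary factor that is coarsely invisible (via the pullback coarse structure under the first projection), use that room to promote each $f$ to a bijection whose first coordinate still agrees with $f$, and match up ``source'' and ``target'' blocks by local bijections — is exactly the paper's plan. The problem is that the auxiliary factor $\NN$ is not nearly large enough, and this produces a genuine gap rather than merely a detail to ``organise coherently.''

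Concretely, the statement must hold for an arbitrary coarse space $\crse X$, so $X$ can be uncountable and a map $f\in D$ can have fibres $f^{-1}(y)$ of any cardinality up to $|X|$. Your very first step already breaks down there: if $|f^{-1}(y)|>\aleph_0$, the ordinal position $\kappa_f(x)$ falls outside $\NN$, so $\Phi(f)(x,0)\coloneqq(f(x),\kappa_f(x))$ is not a point of $X'=X\times\NN$. More importantly, even when fibres are countable the cardinality arithmetic on your blocks fails: the source block $f^{-1}(y')\times\NN_{\geq 1}$ has cardinality $\max(|f^{-1}(y')|,\aleph_0)$, which does \emph{not} see $|r_f^{-1}(y')|$ at all, while the target block $U_f\cap(r_f^{-1}(y')\times\NN)$ can have cardinality $|r_f^{-1}(y')|$ (consider $y'$ with uncountably many $E_D$-preimages $y$ whose $f$-fibres are finite or empty: each contributes a cofinite copy of $\NN$). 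So your claim that both blocks have cardinality $\max(|f^{-1}(y')|,|r_f^{-1}(y')|,\aleph_0)$ is false on both sides, and no local bijection exists between them.

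The paper avoids this by taking $X'=X\times X\times Z$ with $|Z|\geq|X|$ (and coarse structure $\CE\otimes\maxcrs\otimes\maxcrs$): the extra $X$-coordinate is used as a ``memory slot'' via $\bar f(x_1,x_2,z)=(f(x_1),x_1,\bar z)$, which forces every fibre of $\bar f$ to be $\{x\}\times X\times Z$ and hence of the \emph{same} cardinality $|Z|$ regardless of $f$ or $x$. The big factor $Z$ then gives a clean partition of $X'$ into blocks of uniform cardinality $|Z|$, so the pairwise bijections exist with no further case analysis. Replacing $\NN$ by a set $Z$ with $|Z|\geq|X|$ in your construction would fix the first failure, but you would still be missing the extra $X$-factor that equalises the block sizes; without it you would have to carry out delicate cardinality bookkeeping for each $y'$ and each $f$, which your write-up does not do and which the paper's extra coordinate makes unnecessary.
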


\begin{proof}
 Let $Z$ be an infinite set with $\abs{Z}\geq \abs{X}$, let $X'\coloneqq X\times X\times Z$, define $\CE'\coloneqq \CE\otimes\maxcrs\otimes\maxcrs$ on $X'$ and set $\crse X' = \paren{X',\CE' }$.
 Also fix an injection $\iota\colon X\to X\times X\times Z$ such that the composition with the projection onto the first coordinate is the identity: $\pi_1\circ \iota=\id_X$. Since the second and third factors of $\crse X'$ are bounded, it does not matter how $\iota$ is chosen. The map $\iota$ is a coarse equivalence and $\pi_1$ is a coarse inverse for it. With these choices, $\Ad(\iota)(f)= \iota \circ f \circ \pi_1$ and $\Ad(\iota^{-1})(F)= \pi_1 \circ F \circ \iota$.

 Fix one point $\bar z\in Z$. Given any $f\in D$, consider the map $\bar f\colon X'\to X'$ given by 
 \[
  \bar f(x_1,x_2,z)\coloneqq(f(x_1),x_1,\bar z).
 \]
 By construction, $\bar f$ is injective and the preimage of a point is given by
 \[
  \bar f^{-1}(y_1,y_2, z) =
  \left\{ 
  \begin{array}{ll}
   \{y_2\}\times X\times Z \quad & \text{if } y_1=f(y_2)\text{ and }z=\bar z \\
   \emptyset & \text{otherwise. }
  \end{array}
  \right.
 \]
 In particular, as $x$ ranges in $X$ the sets $\bar f^{-1}(f(x),x,\bar z)$ partition $X'$ into subsets of cardinality $\abs{Z}$. Note that $\iota(x)\in f^{-1}(f(x),x,\bar z)$ and also note that if we let $E_1\coloneqq \Delta_X\otimes (X\times X)\otimes (Z\times Z)\in \CE\otimes\maxcrs\otimes\maxcrs$ then
 \begin{equation}\label{eq:p2:close to barf 1}
  (\Ad(\iota)(f))(x') =\iota\circ f\circ \pi_1(x') \rel{E_1} \bar f(x') \qquad \forall x'\in X'
 \end{equation}
 (\emph{i.e.}\ the functions $\bar f$ are equi close to $\Ad(\iota)(f)$ as $f$ ranges in $D$).
 
 By hypothesis, there is an $E\in \CE$ independent of the choice of $f\in D$ such that $X=E(f(X))$. Let $E_2= E\otimes(X\times X)\otimes(Z\times Z)\in\CE\otimes\maxcrs\otimes\maxcrs $.
 We can choose a projection $p_f\colon X'\twoheadrightarrow \bar f(X')$ that is $E_2$\=/close to the identity, in the sense that 
 \begin{equation}\label{eq:p2:close to barf 2}
    x'\rel{E_2} p_f(x') \qquad \forall x'\in X'.
 \end{equation}
 By construction, as $x$ ranges in $X$ the sets $p_f^{-1}(f(x),x,\bar z)$ partition $X'$ in subsets of cardinality $\abs{Z}$. Note that $(f(x),x,\bar z)\in p_f^{-1}(f(x),x,\bar z)$. 
 For each $x\in X$ we may now choose a bijection
 \[
  f'_{x}\colon \bar f^{-1}(f(x),x,\bar z)\to p_f^{-1}(f(x),x,\bar z)
 \]
 because both sets have cardinality $\abs{Z}$. We may also impose that $f'_x(\iota(x))=(f(x),x,\bar z)$. 
 
 We now define $f'\colon X'\to X'$ by $f'(x_1,x_2,z)\coloneqq f'_{x_2}(x_1,x_2,z)$. Since $f'$ is defined using partitions of $X'$, it is bijective. By construction, $f'$ is so that $\Ad(\iota^{-1})(f')= \pi_1\circ f'\circ \iota = f$. It remains to show that $f'$ is controlled (which is rather trivial) and that the mapping $\Phi(f)\coloneqq f'$ is frag\=/close to $\Ad(\iota)|_D$.

 Notice that $p_f\circ f'=\bar f$, hence \eqref{eq:p2:close to barf 2} shows that $f'$ is equi close to $\bar f$ as $f$ ranges in $D$ (this already implies that every $f'$ is controlled). Let $C\subseteq D\subseteq \crse{X^{X}}$ be a fragment, we need to show that 
 \[
  \bigparen{\Phi\times\Ad(\iota)}(\Delta_C)\in \CE'^{\CE'}.
 \]
 Explicitly, we must show that for every $E'\in\CE'$ there is $F'\in\CE'$ such that
 \[
  f'(x'_1)\torel{F'}\iota\circ f\circ \pi_1(x'_2) \qquad \forall x'_1\torel{E'}x'_2,\ \forall f\in C.
 \]

 Fix $E'\in\CE'$. Enlarging it if necessary, we may assume that $E'= E\otimes (X\times X)\otimes(Z\times Z)$. Since $C$ is a fragment, there is an $F\in \CE$ such that $f\times f(E)\subseteq F$ for every $f\in C$. Combining \eqref{eq:p2:close to barf 1} and \eqref{eq:p2:close to barf 2} we obtain:
 \[
  f'(x'_1)\rel{E_2}\bar f(x'_1)
  \torel{F\otimes E\otimes \Delta_Z} \bar f(x'_2)
  \rel{E_1}\iota\circ f\circ \pi_1(x'_2) 
  \qquad \forall x'_1\torel{E'}x'_2,\ \forall f\in C,
 \]
 which concludes the proof.
\end{proof}

\begin{prop}\label{prop:p2:upgrade to bijective action}
 Let $\crse{\alpha \colon G\curvearrowright Y}$ be a coarse action of a coarse group. Then there is an isomorphic coarse action $\crse{\alpha'\colon G\curvearrowright Y'}$ such that left multiplications ${}_g\alpha'\colon Y'\to Y'$ are bijections for every $g\in G$.
\end{prop}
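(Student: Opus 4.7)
The plan is to apply Lemma~\ref{lem:p2:upgrade to bijective} to the family
$D\coloneqq\{{}_g\alpha\mid g\in G\}\subseteq\ctrl{Y}{Y}$
of left multiplications. I first check that $D$ is equi coarsely surjective. This is a direct consequence of the coarse action axioms: $\alpha(g,\alpha(g^{-1},y))$ is uniformly close to $\alpha(g\ast g^{-1},y)$, which in turn is uniformly close to $y$, so there is a single entourage $F\in\CF$ with $F({}_g\alpha(Y))=Y$ for every $g\in G$.

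The lemma then produces a coarse space $\crse{Y'}=(Y',\CF')$, a coarse equivalence $\crse\iota\colon\crse Y\to\crse{Y'}$ with a representative $\iota$ admitting a set\=/theoretic right inverse $\pi_1$ (so $\pi_1\circ\iota=\id_Y$), and a function $\Phi\colon D\to\ctrl{Y'}{Y'}$ taking values in bijections and frag\=/close to $\Ad(\iota)|_D$. I define
$\alpha'\colon G\times Y'\to Y'$ by $\alpha'(g,y')\coloneqq\Phi({}_g\alpha)(y')$,
so that ${}_g\alpha'=\Phi({}_g\alpha)$ is a bijection for every $g\in G$ by construction, and setwise $\lambda\alpha'=\Phi\circ\lambda\alpha$.

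To see $\crse{\alpha'}$ is a coarse action, observe that $\lambda\alpha'$ is frag\=/close to $\Ad(\iota)\circ\lambda\alpha$, which is controlled by Lemma~\ref{lem:p2:composition of exponential} together with the controlledness of $\lambda\alpha$ from Proposition~\ref{prop:p2:actions as monoid homomorphisms}; hence $\lambda\alpha'$ is itself controlled, and Theorem~\ref{thm:p2:exponential}(2) makes $\alpha'$ a coarse map. Since $\crse\iota$ is a coarse equivalence, $\crse{\Ad(\iota)}$ is an isomorphism of frag\=/coarse monoids, so $\crse{\lambda\alpha'}=\crse{\Ad(\iota)}\circ\crse{\lambda\alpha}$ is a coarse homomorphism of frag\=/coarse monoids; Proposition~\ref{prop:p2:actions as monoid homomorphisms} applied in reverse then shows $\crse{\alpha'}$ is a coarse action. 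For the isomorphism, I verify $\crse\iota$ is coarsely $\crse G$\=/equivariant by passing to transposes: the task becomes to show that $g\mapsto\iota\circ{}_g\alpha$ and $g\mapsto\Phi({}_g\alpha)\circ\iota$ are frag\=/close as coarse maps $\crse G\to\crse{{Y'}^{Y}}$. For any fragment $G'\subseteq G$, $\lambda\alpha(G')$ is a fragment of $D$, so the third conclusion of Lemma~\ref{lem:p2:upgrade to bijective} yields a controlled entourage in $\CF'^{\CF'}$ witnessing the closeness of $\Phi({}_g\alpha)$ and $\Ad(\iota)({}_g\alpha)=\iota\circ{}_g\alpha\circ\pi_1$ uniformly for $g\in G'$; pre\=/composing with $\iota$ (a controlled operation by Lemma~\ref{lem:p2:composition of exponential}) and using $\pi_1\circ\iota=\id_Y$ collapses the right\=/hand side to $\iota\circ{}_g\alpha$, giving the equivariance.

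The main delicate point I expect is a careful accounting of which frag\=/closeness estimates survive pre\=/ or post\=/composition with fixed controlled maps; fortunately every such manipulation reduces immediately to the functoriality of composition in the exponential frag\=/coarse structures provided by Lemma~\ref{lem:p2:composition of exponential}.
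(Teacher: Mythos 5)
Your proposal is correct and follows essentially the same route as the paper: invoke Lemma~\ref{lem:p2:upgrade to bijective} on the equi coarsely surjective family $\{{}_g\alpha\mid g\in G\}$ to get $\Phi$, set $\lambda\alpha'=\Phi\circ\lambda\alpha$, and conclude via Proposition~\ref{prop:p2:actions as monoid homomorphisms} together with the fact that $\crse{\Ad(\iota)}$ is an isomorphism of frag\=/coarse monoids. Where the paper simply asserts that $\crse\iota$ is coarsely equivariant ``by construction'', you spell out the verification (via transposes and the third conclusion of the lemma, using $\pi_1\circ\iota=\id_Y$), which is the right argument and makes the last step more transparent.
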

\begin{proof}
 Let $\crse Y'$ be the coarse space constructed in Lemma~\ref{lem:p2:upgrade to bijective}. 
 Since the compositions $\{{}_g\alpha\circ {}_{g^{-1}}\alpha\mid g\in G\}$ are equi close to $\id_Y$, we deduce that $\{\lambda\alpha(g) \mid g\in G\}=\{{}_g\alpha \mid g\in G\}$ is a family of equi coarsely surjective maps. We may then apply Lemma~\ref{lem:p2:upgrade to bijective} to define ${}_g\alpha'\coloneqq \Phi({}_g\alpha)\colon Y'\to Y'$.
 We have constructed a function $\alpha'\colon G\times Y'\to Y'$ so that, by definition, $\lambda\alpha' = \Phi\circ \lambda\alpha$.

 Since $\Phi$ is frag\=/close to $\Ad(\iota)$, the function $\lambda\alpha'$ is frag\=/close to $\Ad(\iota)\circ \lambda\alpha$ and it is therefore controlled. Since $\crse{\Ad(\iota)}$ is an isomorphism of frag\=/coarse monoids, it follows that $\crse{\lambda\alpha}' =\crse{\Ad(\iota)\circ \lambda\alpha}$ is a coarse homomorphism and therefore $\crse\alpha '$ is a coarse action by Proposition~\ref{prop:p2:actions as monoid homomorphisms}. 
 
 By construction, the coarse equivalence $\crse{\iota\colon Y\to Y'}$ is coarsely equivariant from $\crse{\alpha\colon G\curvearrowright Y}$ to $\crse{\alpha'\colon G\curvearrowright Y'}$ and therefore it is an isomorphism of coarse actions.
\end{proof}

Given a (frag-)coarse space $\crse X=(X,\CE)$, let
\[
 \bijctraut X\coloneqq\braces{(f,f^{-1})\mid f\colon X\to X\text{ bijective (frag-)coarse equivalence}}.
\]
This is a subset of $\ctraut X$ and, most importantly, it is a set\=/group (in the above $f^{-1}$ denotes the set\=/theoretic inverse function, not just a coarse inverse).
We can equip it with the restriction of the coarse structure $\ctraut{\CE}$ and thus make it into a frag-coarsified set\=/group $\crse{\bijctraut{X}}$.

The idea now is to use Proposition~\ref{prop:p2:upgrade to bijective action} to upgrade a coarse action of a coarse group $\crse G$ to an action by bijections so that we can use the transpose $\lambda$ to define a coarse homomorphism with image in $\bijctraut Y$. We take this plan to completion for coarse groups $\crse G$ satisfying the following condition:

\vspace{1 em}

\noindent
\begin{minipage}{0.05\textwidth}
 $(\diamondsuit)$
\end{minipage}
\begin{minipage}{0.95\textwidth}
\textit{
 the coarse operations have representatives so that $\inversefn\colon G\to G$ is an involution whose only fixed point is $e_G\in G$.
 }
\end{minipage}

\vspace{1 em}

We showed in Remark~\ref{exmp:p1:no cancellative reps} that coarse groups need not satisfy $(\diamondsuit)$ in general. However, we can still deduce that Theorem~\ref{thm:p2:coarse groups as subgroups} holds for every coarse group because it is always possible to pass to an isomorphic coarse group that does:

\begin{lem}\label{lem:p2:isomorphic to diamondsuit}
 Every coarse group $\crse G$ is isomorphic to a coarse group $\crse {G'}$ that satisfies $(\diamondsuit)$.
\end{lem}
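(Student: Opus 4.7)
The plan is to enlarge $G$ by formally doubling its non-unit elements and to equip the enlargement with new coarse operations in such a way that the new inversion becomes a set-theoretic involution fixing only the unit. This extra ``room'' is precisely what is lacking in Example~\ref{exmp:p1:no cancellative reps}, where the obstruction was a cardinality mismatch between the components where an element $g$ and its inverse $g^{-1}$ lie.

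First I would apply Lemma~\ref{lem:p1:adapted representatives exist} to pick adapted representatives $\ast,\unit,\inversefn$ for the coarse operations of $\crse{G}=(G,\CE)$; this guarantees $g^{-1}=\unit$ if and only if $g=\unit$. Let $\tilde G$ be a disjoint copy of $G$ with elements written $\tilde g$, and set $G'\coloneqq G\sqcup(\tilde G\smallsetminus\{\tilde\unit\})$. Define the fold map $\pi\colon G'\to G$ by $\pi(g)\coloneqq g$ and $\pi(\tilde g)\coloneqq g^{-1}$, and equip $G'$ with the pull-back coarse structure $\CE'\coloneqq\pi^{*}(\CE)$. The inclusion $\iota\colon G\hookrightarrow G'$ is a set-theoretic right inverse for $\pi$, and $\iota\circ\pi\closefn\id_{G'}$ because $\pi(\tilde g)=\pi(g^{-1})$; thus $\pi$ is a coarse equivalence by Corollary~\ref{cor:p1:pullback.gives.isomorphism}. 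I then set $\unit'\coloneqq\unit$, $x\ast' y\coloneqq\pi(x)\ast\pi(y)\in G\subseteq G'$, and define $\inversefn'$ by $\unit^{-1'}\coloneqq\unit$, $g^{-1'}\coloneqq\tilde g$ for $g\in G\smallsetminus\{\unit\}$, and $\tilde g^{-1'}\coloneqq g$ for $\tilde g\in\tilde G\smallsetminus\{\tilde\unit\}$. By construction $\inversefn'$ is an involution whose unique fixed point is $\unit$, so $(\diamondsuit)$ is built into $\crse{G'}$ from the start.

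To conclude, I would verify that $\crse{G'}\coloneqq(G',\CE',[\ast'],[\unit'],[\inversefn'])$ is a coarse group and that $\pi$ is an isomorphism onto $\crse{G}$. Controlledness of $\ast'$ is immediate from the factorization $\ast'=\iota\circ\ast\circ(\pi\times\pi)$, and each of the Group Diagrams for $\crse{G'}$ transfers to the corresponding one for $\crse{G}$ by post-composing with $\pi$, since $\pi\circ\ast'=\ast\circ(\pi\times\pi)$ holds on the nose. The main obstacle is checking that $\inversefn'$ is controlled: via the pull-back this reduces to showing $\pi\circ\inversefn'\closefn\inversefn\circ\pi$, which on the $G$-factor is an equality but on the $\tilde G$-factor reads $g\closefn(g^{-1})^{-1}$---this is exactly the statement that $\inversefn\circ\inversefn$ represents $\cid_{\crse{G}}$ (Lemma~\ref{lem:p1:coarse homomorphisms preserve unit and inverse}). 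Once this is in hand, Proposition~\ref{prop:p1:coarse.group.iff.heartsuit.and.equi controlled} certifies that $\crse{G'}$ is a coarse group, and $\pi$ becomes a coarse homomorphism as well as a coarse equivalence, yielding the desired isomorphism $\crse{G}\cong\crse{G'}$.
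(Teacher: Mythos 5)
Your construction is essentially identical to the paper's: you form the "doubled" set $G' = G \sqcup (\tilde G \smallsetminus \{\tilde e\})$ (the paper writes this as $G^+ \sqcup \overline{G}{}^+ \sqcup \{e\}$), equip it with the pull-back coarse structure along the fold map $\pi$, transport the operations via $\iota$ and $\pi$, and observe that the formal involution is a representative of the resulting coarse inversion. Your preliminary appeal to Lemma~\ref{lem:p1:adapted representatives exist} is harmless but unnecessary (the paper's version works with arbitrary representatives), and the key closeness $g \closefn (g^{-1})^{-1}$ is better attributed to Lemma~\ref{lem:p1:mor.is.group} (the inverse of $\cinversefn = \cid_{\crse G}^{-1}$ in the set-group $\cmor(\crse G,\crse G)$ is both $\cinversefn\circ\cinversefn$ and $\cid_{\crse G}$) than to Lemma~\ref{lem:p1:coarse homomorphisms preserve unit and inverse}, since $\cinversefn$ is only an anti-homomorphism; but the fact is correct.
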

\begin{proof}
 Fix representatives $e\in G$ and $\inversefn\colon G\to G$. Let $G^+\coloneqq G\smallsetminus \{e\}$, let $\overline G{}^{+}$ be a disjoint copy of $G^{+}$, and define $\widehat G\coloneqq G^{+}\sqcup \overline G{}^{+} \sqcup \{e\}$. Let $\iota\colon G\hookrightarrow \widehat G$ be the inclusion and $\pi\colon \widehat G\to G$ the projection 
 \[
  \pi(x)\coloneqq
  \left\{
  \begin{array}{ll}
   g & \text{if }x=g\in G^+\sqcup\{e\} \\
   g^{-1} & \text{if }x=\bar g\in \overline G{}^+.
  \end{array}
  \right.
 \]
 
 Equip $\widehat G$ with the pull\=/back coarse structure $\pi^{\ast}(\CE)$. Since $\pi\colon (\widehat G,\pi^{\ast}(\CE))\to (G,\CE)$ is surjective, it is a coarse equivalence. Since $\iota$ is a section of $\pi$, it is a coarse inverse. We may then use $\pi$ and $\iota$ to define coarse group operations on $\widehat{\crse G}=(\widehat G,\pi^{\ast}(\CE))$ as compositions 
 \[
  \widehat{\crse G}\times \widehat{\crse G}\xrightarrow{\crse {\pi\times \pi}}
  \crse G\times \crse G \xrightarrow{\ \cop\ } \crse G \xrightarrow{\ \crse \iota\ }
  \widehat{\crse G}
  \qquad\qquad
  \widehat{\crse G}\xrightarrow{\ \crse {\pi}\ }
  \crse G \xrightarrow{\cinversefn} \crse G \xrightarrow{\ \crse \iota\ }
  \widehat{\crse G}.
 \]
 By construction, $\widehat{\crse G}$ is a coarse group isomorphic to $\crse G$.
 
 To conclude, it is enough to observe that the involution fixing $e\in \widehat G$ and swapping $g$ with $\bar g$ for every $g\in G^+$ is $\pi^{\ast}(\CE)$\=/close to the composition $\iota\circ\inversefn\circ \pi$, and it is therefore a representative for the coarse inversion in $\widehat{\crse G}$.
\end{proof}

We now have all the necessary tools to prove Theorem~\ref{thm:p2:coarse groups as subgroups}.

\begin{proof}[Proof of Theorem~\ref{thm:p2:coarse groups as subgroups}]
 Fix a faithful coarse action $\crse{\alpha\colon G\curvearrowright Y}=(Y,\CF)$, \emph{e.g.}\ the coarse action by left\=/multiplication $\crse{\cop\colon G\curvearrowright G}$ (Example~\ref{exmp:p2:left multiplication is faithful}). Using Proposition~\ref{prop:p2:upgrade to bijective action}, we may assume that for every $g\in G$ the section ${}_g\alpha\colon Y\to Y$ is bijective.
 
 Assume first that $\crse G$ satisfies condition $(\diamondsuit)$. We may then write $G$ as a disjoint union $G=G^+\sqcup G^{-}\sqcup \{e\}$ so that $\inversefn$ swaps $G^+$ and $G^-$ (for each pair of inverse elements of $G$ we arbitrarily choose one to be in $G^+$ and let the other in $G^-$). We can now slightly modify $\alpha$ as follows: for every $g\in G^+$ let ${}_g\alpha'\coloneqq {}_g\alpha$, for $g\in G^-$ let ${}_g\alpha'\coloneqq \paren{{}_{g^{-1}}\alpha}^{-1} = \paren{{}_{g^{-1}}\alpha'}^{-1}$, and let ${}_e\alpha\coloneqq \id_Y$. This newly defined $\alpha'$ is close to $\alpha$ because
 \[
  {}_g\alpha(y)= {}_g\alpha\circ{}_{g^{-1}}\alpha\circ ({}_{g^{-1}}\alpha)^{-1}(y)
  \rel{\CF} {}_{g\ast g^{-1}}\alpha \circ ({}_{g^{-1}}\alpha)^{-1}(y)
  \rel{\CF} ({}_{g^{-1}}\alpha)^{-1}(y)
  \qquad \forall g\in G^-,\ y\in Y.
 \]
 Hence $\alpha'$ is another representative for $\crse \alpha$.
 
 As in the proof of Proposition~\ref{prop:p2:actions as group homomorphisms}, we may define a function $\dblact'\colon G\to \ctraut{Y}$ sending $g\in G$ to $({}_g\alpha',{}_{g^{-1}}\alpha')$ to obtain a coarse homomorphism $\crse {G\to \ctraut{Y}}$. By construction, the image of $\dblact'$ is contained in $\bijctraut{Y}$. 
 Since the coarse action $\crse{\alpha\colon G\curvearrowright Y}$ is faithful, the map $\dblact'$ is proper. It follows that $\crse{\dblact'}$ defines an isomorphism between $\crse G$ and its coarse image, which is a coarse subgroup of the frag\=/coarsified set\=/group $\crse{\bijctraut{Y}}$.
 
 For an arbitrary coarse group $\crse G$, it is now enough to apply Lemma~\ref{lem:p2:isomorphic to diamondsuit} to obtain an isomorphism $\crse{G\to G}'$ where the coarse group $\crse G'$ satisfies $(\diamondsuit)$ and then realize $\crse G'$ as a coarse subgroup of a coarsified set\=/group.
\end{proof}

As we already pointed out, the definition of the exponential frag\=/coarse structure requires us to leave the category of coarse spaces. As a consequence, we see no obvious way to adapt the proof of the above theorem to upgrade the frag\=/coarsified set\=/group to a genuine coarsified set\=/group. More precisely, we do not know the answer to the following:

\begin{qu}\label{qu:p2:coarse groups as subgroups}
 Is every coarse group isomorphic to a coarse subgroup of a coarsified set\=/group? 
\end{qu}

The above is related to, but independent of, our conjecture that there exist coarse groups that are not isomorphic to any coarsified set\=/group. We do not yet have any intuition for what the answer to Question~\ref{qu:p2:coarse groups as subgroups} should be (note that our candidate example for a coarse group that is not the coarsification of a group is defined as a coarse subgroup of $(F_2,\varcrs{bw})$).

\appendix 

\part{Appendices}

\chapter{Categorical Aspects of \Cat{Coarse}}\label{ch:appendix:categorical aspects}

In this appendix we review some basic properties of the categories \Cat{Coarse} and \Cat{FragCrs}. We will recall most of the relevant categorical definitions, for more details and basic facts we refer to \cite{MacLane}. For the most part, the material concerning \Cat{Coarse} is not novel, we decided to include it to provide a somewhat complete treatment with unified conventions and notation. More details can be found \emph{e.g.}\ \cite{dikranjan_categorical_2017,dikranjan2020categories,luu2007coarse, zava2019cowellpoweredness}. As far as we are aware, all the results concerning fragmentary coarse structures have not been considered anywhere else in the literature.

\begin{warning}
According to our conventions, in the following there will be two sets of bold symbols: we use upright bold characters for categories and italic bold characters for coarse spaces. 
\end{warning}

\section{Limits and Colimits}\label{sec:appendix:limits.and.colimits}
Taking the disconnected union (Example~\ref{exmp:p1:disconnected union}) shows that the category \Cat{Coarse} has arbitrary finite\=/coproducts. Namely, if $\crse X_i=(X_i,\CE_i)$ is a family of coarse spaces over an index set $I$, we can define a coarse space by considering the disjoint union of the $X_i$ with the coarse structure generated by the $\CE_i$: 
\[
 \Bigparen{{\textstyle\bigsqcup_{i\in I}X_i\; ,\; \CE_{\sqcup_i}}}
 =\Bigparen{{\textstyle\bigsqcup_{i\in I}X_i}\; ,\; \angles{\CE_i\mid i\in I}}.
\]
It is obvious that the inclusion $X_i\hookrightarrow\bigsqcup_{i\in I}X_i$ is controlled for every $i$. Given controlled functions $f_i\colon (X_i,\CE_i)\to (Z,\CF)$, their disjoint union $\bigsqcup_{i\in I} f_i\colon \bigsqcup_{i\in I} X_i\to Z$ is controlled because it sends a generating family of controlled sets to controlled sets of $Z$. 
If the set $I$ is finite, it is also true that any other function $\bigsqcup_{i\in I} X_i\to Z$ that is compatible with the $f_i$ must be close to $\bigsqcup_{i\in I} f_i$, therefore $\coprod_{i\in I}\crse X_i \coloneqq\bigparen{{\textstyle\bigsqcup_{i\in I}X_i\; ,\; \CE_{\sqcup_i}}}$ is indeed the coproduct of the finitely many $\crse X_i$. The coproduct of an empty family of coarse spaces is the empty coarse space, which is indeed an initial object in \Cat{Coarse}. 

\

Given two morphisms $f,g\colon X\to Y$ in a category \Cat C, a \emph{coequalizer}\index{coequalizer} is an object $Q$ together with a morphism ${\rm coeq}\colon Y\to Q$ such that ${\rm coeq}\circ f={\rm coeq}\circ g$ and is universal with this property, \emph{i.e.}\ every $f_Z\colon Y\to Z$ such that ${f_Z}\circ f={f_Z}\circ g$ factors through $Q$: 
\[
\begin{tikzcd}
X \arrow[r, shift left, "{f}"] \arrow[r, shift right, swap, "{g}"] & Y \arrow[r, "{\rm coeq}"] \arrow[dr,swap, "{f_Z}"] & Q  \arrow[d,dashed] \\
    &  & Z 
\end{tikzcd} .
\]

\begin{lem}\label{coequalizers} 
 The category \Cat{Coarse} has coequalizers.
\end{lem}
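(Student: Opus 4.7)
The plan is to construct the coequalizer as a coarse quotient of $\crse{Y}$ (in the sense of Definition~\ref{def:p1:coarse quotient space}), by enlarging the coarse structure of $\crse{Y}$ just enough to make $\crse{\mathrm{coeq}}\circ\crse{f}$ and $\crse{\mathrm{coeq}}\circ\crse{g}$ coincide. Concretely, fix representatives $f,g\colon X\to Y$ of $\crse{f},\crse{g}$ and consider the relation
\[
 R_{f,g}\coloneqq (f\times g)(\Delta_{X}) = \bigbraces{(f(x),g(x))\mid x\in X}\subseteq Y\times Y.
\]
Let $\crse{Y}=(Y,\CF)$ and set $\crse{Q}\coloneqq (Y,\angles{\CF\cup\{R_{f,g}\}})$, \emph{i.e.}~the coarse quotient $\crse{Y}/\{R_{f,g}\}$. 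The candidate coequalizer morphism will be $\crse{\mathrm{coeq}}\coloneqq[\id_{Y}]\colon\crse{Y}\to \crse{Q}$.

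First I would check that this construction does not depend on the choice of representatives $f,g$: if $f'\closefn f$ and $g'\closefn g$, then $R_{f',g'}$ is contained in $F_1\cmp R_{f,g}\cmp F_2$ for some $F_1,F_2\in\CF$ (and vice versa), so the generated coarse structures agree. Next, by construction $R_{f,g}\in \angles{\CF\cup\{R_{f,g}\}}$, so $\id_{Y}\circ f$ and $\id_{Y}\circ g$ are close as functions $X\to \crse{Q}$, which gives $\crse{\mathrm{coeq}}\circ\crse{f}=\crse{\mathrm{coeq}}\circ\crse{g}$.

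For the universal property, suppose $\crse{f_Z}\colon\crse{Y}\to\crse{Z}=(Z,\CD)$ satisfies $\crse{f_Z}\circ\crse{f}=\crse{f_Z}\circ\crse{g}$. Fix a representative $f_Z$. Closeness of $f_Z\circ f$ and $f_Z\circ g$ means precisely that $(f_Z\times f_Z)(R_{f,g})\in\CD$. Together with the fact that $f_Z\colon(Y,\CF)\to\crse{Z}$ is controlled, this shows that $f_Z\times f_Z$ sends every relation in a generating family of $\angles{\CF\cup\{R_{f,g}\}}$ into $\CD$, so $f_Z\colon\crse{Q}\to\crse{Z}$ is itself controlled. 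It thus defines a coarse map $\crse{h}\colon\crse{Q}\to\crse{Z}$ with $\crse{h}\circ\crse{\mathrm{coeq}}=\crse{f_Z}$. Uniqueness of $\crse{h}$ follows from the fact that $\crse{\mathrm{coeq}}$ is represented by $\id_{Y}$: any other $\crse{h'}$ with $\crse{h'}\circ\crse{\mathrm{coeq}}=\crse{f_Z}$ must admit a representative close to $f_Z$, and hence $\crse{h'}=\crse{h}$.

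No serious obstacle is expected; the only point that requires a modicum of care is the verification of the universal property, where one has to be mindful that morphisms in \Cat{Coarse} are closeness classes of functions and one must argue that every choice of representative for $\crse{f_Z}$ remains controlled with respect to the enlarged coarse structure $\angles{\CF\cup\{R_{f,g}\}}$.
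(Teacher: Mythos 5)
Your construction is exactly the one in the paper's sketch: take $\crse Q$ to be $Y$ with the coarse structure generated by $\CF\cup\{(f\times g)(\Delta_X)\}$ and let the coequalizer morphism be represented by $\id_Y$. Your verification of well-definedness, universality, and uniqueness fills in details the paper leaves implicit, and everything checks out.
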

\begin{proof}[Sketch of proof]
 Given $\crse{f,g\colon X\to Y}$, where $\crse Y=(Y,\CF)$, let $\crse Q$ be the set $Y$ equipped with the coarse structure $\CD$ generated by $\CF\cup \braces{\paren{f\times g}(\Delta_X)}$. Then the set-theoretic identity function $\id_Y\colon Y\to Q$ defines a coarse map $\crse{i\colon Y\to Q}$ such that $\crse{i\circ f=i \circ g}$ and is universal with this property.
\end{proof}

Let \Cat{J} be a fixed category. A \emph{diagram of shape} \Cat J in a category \Cat C is a functor $D\colon\Cat J\to\Cat C$. Given a diagram $D\colon\Cat J\to\Cat C$, a \emph{cocone}\index{cocone} is an object $A$ in \Cat C together with morphisms $f_J\colon D(J)\to A$ for every object $J$ of \Cat J and with the property that for every morphism $j\colon J\to J'$ the following commutes:
\[
\begin{tikzcd}[row sep=1 ex]
 D(J) \arrow[dd, swap, "{D(j)}"] \arrow[dr, "{f_J}"] & \\
    & A . \\
 D(J') \arrow[ur,swap, "{f_{J'}}"] & 
\end{tikzcd} 
\]
A \emph{colimit}\index{colimit} for a diagram $D\colon\Cat J\to\Cat C$ is an initial object in the category of cocones. That is, it is a cocone given by an object $\varinjlim D$ and morphisms $p_J\colon D(J)\to\varinjlim D$ with the property that for every other cocone $A$ there is an morphism $\bar f\colon \varinjlim D\to A$ such that 
\[
\begin{tikzcd}
D(J) \arrow[r, "{p_J}"] \arrow[dr,swap, "{f_J}"] & \varinjlim D  \arrow[d,"\bar f"] \\
    &  A 
\end{tikzcd}
\]
commutes. 

\begin{rmk}
Coproducts are colimits where the category \Cat J has no morphisms. Coequalizers are colimits where the category \Cat J consists of two objects and two parallel morphisms.  
\end{rmk}

A category is \emph{small}\index{small} if the collections of its objects and morphisms are sets. A category is \emph{small\=/cocomplete}\index{category!cocomplete} if every small diagram (\emph{i.e.}\ a diagram of shape \Cat J for a small category \Cat J) has a colimit. In particular, a small\=/cocomplete category admits small\=/coproducts (\emph{i.e.}\ coproducts of families of objects indexed over a set) and coequalizers. Vice versa, it is a standard fact that a category with small\=/coproducts and coequalizers is small\=/complete \cite[Chapter V]{MacLane}\index{category!complete}. Analogous facts and definitions are true when `small' is replaced by `finite'. We hence deduce that \Cat{Coarse} is finitely cocomplete. In this case, it is also convenient to show it explicitly:

\begin{lem}
 The category \Cat{Coarse} is finitely cocomplete.
\end{lem}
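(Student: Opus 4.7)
The plan is to invoke the standard categorical fact that a category which admits finite coproducts and all coequalizers admits all finite colimits. Since we have already shown above that $\Cat{Coarse}$ has finite coproducts (via the disconnected union construction $\coprod_{i\in I}\crse X_i$) and coequalizers (Lemma~\ref{coequalizers}), the result would follow at once from this general principle. For completeness, we would either cite \cite[Chapter V]{MacLane} or give the explicit construction sketched below.

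The explicit construction goes as follows. Given a finite diagram $D\colon \Cat J\to\Cat{Coarse}$, where $\Cat J$ has finitely many objects and finitely many morphisms, first form the finite coproduct $\crse C\coloneqq\coprod_{J\in{\rm Ob}(\Cat J)}D(J)$ with the canonical inclusions $\crse{\iota}_J\colon D(J)\to\crse C$. Next, for each morphism $j\colon J\to J'$ in $\Cat J$ we have two coarse maps $D(J)\to\crse C$, namely $\crse{\iota}_J$ and $\crse{\iota}_{J'}\circ D(j)$. Taking the coproduct of these over all morphisms $j$ of $\Cat J$ produces two parallel coarse maps
\[
 \crse{f},\crse{g}\colon \coprod_{j\in{\rm Mor}(\Cat J)}D({\rm source}(j))\longrightarrow \crse C
\]
whose coequalizer $\crse{q}\colon\crse C\to\varinjlim D$ exists by Lemma~\ref{coequalizers}. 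The compositions $\crse{q}\circ\crse{\iota}_J\colon D(J)\to\varinjlim D$ assemble into a cocone over $D$, and the universal properties of the coproducts and coequalizer combine to show that this cocone is initial. The verification is a routine diagram chase.

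The only step requiring any care is the very last one, namely checking that the cocone $(\varinjlim D, \crse q\circ\crse\iota_J)$ is initial: one must show that any competing cocone $(\crse A, \crse f_J)$ induces a unique coarse map $\varinjlim D\to\crse A$. Uniqueness and existence both follow formally from the universal properties already in hand, so there is no substantive obstacle---the argument is a transcription of the classical proof to our setting and relies on nothing specific to $\Cat{Coarse}$ beyond having finite coproducts and coequalizers. Consequently, $\Cat{Coarse}$ is finitely cocomplete.
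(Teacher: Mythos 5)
Your proposal is correct and matches the route the paper itself takes: immediately before this lemma the text invokes the exact same standard fact (a category with finite coproducts and coequalizers is finitely cocomplete, \cite[Chapter V]{MacLane}) to conclude finite cocompleteness, and then presents the lemma with a short explicit proof. The only difference is cosmetic: the paper's sketch does not run through the coproduct-coequalizer machinery but instead constructs $\varinjlim D$ directly as the disjoint union $\bigsqcup_J X_J$ equipped with the coarse structure $\varinjlim\CE_J$ generated by all the $\CE_J$ together with the relations $i_J\times(i_{J'}\circ j)(\Delta_{X_J})$ for each $j\colon J\to J'$ — which is precisely what your coequalizer reduces to once you unwind Lemma~\ref{coequalizers}, since the coequalizer of two maps into $\crse C$ is just $\crse C$ with an enlarged coarse structure on the same underlying set. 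Both arguments are sound; yours is the routine categorical transcription, the paper's collapses the intermediate step.
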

\begin{proof}[Sketch of proof]
 Let $D$ be a small diagram in \Cat{Coarse} of shape \Cat J. Let $D(J)=\crse X_J=(X_J,\CE_J)$ and for every morphism $j\colon J\to J'$ let  $D(j)=\crse j$. Considering the coproduct $\coprod_J \crse X_J$ we obtain natural inclusions $\crse i_J \colon\crse X_J\hookrightarrow \coprod_J\crse X_J$. To make the coproduct into a cocone it is necessary to change the coarse structure so that $\crse i_J =\crse i_{J'} \circ \crse j$ for every $j\colon J\to J'$. This is readily done by equipping $\coprod_J \crse X_J$ with the coarse structure
 \[
  \varinjlim\CE_J\coloneqq
  \angles*{\vphantom{\bigmid} \bigbraces{\CE_J\mid J\text{ object in \Cat J}}\cup \bigbraces{ i_J\times\paren{ i_{J'}\circ j}(\Delta_{X_J}) \mid j\colon J\to J'\text{ morphism in \Cat J}} }.
 \]
 By definition, $\varinjlim \CE_J$ is the minimal coarse structure making $\bigsqcup_J X_J$ into a cocone and hence $\varinjlim \crse X_J\coloneqq (\bigsqcup_J X_J,\varinjlim \CE_J)$ is a colimit.
\end{proof}

\

We now turn our attention to the dual notions of products, equalizers and limits. A positive result is that \Cat{Coarse} has arbitrary (small) products.

\begin{lem}
Every set of coarse spaces $\crse X_i$, $i\in I$ has a product $\prod_{i\in I}\crse X_i$ in \Cat{Coarse}. 
\end{lem}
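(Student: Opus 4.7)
The plan is to mimic the construction of binary products given in Section 1.7, using the characterization of the product coarse structure via projections rather than via generators (since the latter only works for finite products). Concretely, on the Cartesian product $X\coloneqq\prod_{i\in I}X_i$ I would consider the family
\[
 \bigotimes_{i\in I}\CE_i\coloneqq\bigbraces{D\subseteq X\times X\bigmid \pi_{ii}(D)\in\CE_i\text{ for every }i\in I},
\]
where $\pi_{ii}\colon X\times X\to X_i\times X_i$ is the projection onto the first and second copies of $X_i$. Equivalently, this is the intersection $\bigcap_{i\in I}\pi_i^{\ast}(\CE_i)$, where $\pi_i\colon X\to X_i$ is the coordinate projection. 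Since intersections of arbitrary families of coarse structures are coarse structures and pull-backs of coarse structures are coarse structures, it follows immediately that $\bigotimes_{i\in I}\CE_i$ is a coarse structure on $X$. For $I=\{1,2\}$ this recovers the binary product coarse structure of Section 1.7.

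Next, I would verify that $\prod_{i\in I}\crse X_i\coloneqq(X,\bigotimes_{i\in I}\CE_i)$ equipped with the projections $\crse{\pi_i}$ satisfies the universal property. Control of each $\pi_i$ is automatic from the inclusion $\bigotimes_{j\in I}\CE_j\subseteq\pi_i^{\ast}(\CE_i)$. Given coarse maps $\crse{f_i\colon Z\to X_i}$ with representatives $f_i$, let $f\colon Z\to X$ be the function $f(z)\coloneqq(f_i(z))_{i\in I}$. Since $\pi_i\circ f=f_i$ is controlled for every $i$, we have $f^{\ast}(\pi_i^{\ast}(\CE_i))\supseteq\CE_Z$ for each $i$, and intersecting over $i$ gives $f^{\ast}(\bigotimes_{i\in I}\CE_i)\supseteq\CE_Z$, so $f$ is controlled.

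The delicate part, and the only step that requires genuine verification (rather than pure formalism), is uniqueness of $\crse f$ up to closeness. Suppose $g\colon Z\to X$ is another controlled function such that $\pi_i\circ g$ is close to $f_i$ for every $i\in I$. I claim $g$ is close to $f$, i.e.\ that $(f\times g)(\Delta_Z)\in\bigotimes_{i\in I}\CE_i$. By the defining projection criterion, this amounts to checking that $\pi_{ii}\bigparen{(f\times g)(\Delta_Z)}=(f_i\times g_i)(\Delta_Z)$ lies in $\CE_i$ for every $i\in I$, which is precisely the closeness hypothesis. This is exactly the place where the ``loose'' projection-based definition pays off: had we instead tried to define $\bigotimes_{i\in I}\CE_i$ as the coarse structure generated by ``prism'' relations $E_i\otimes\prod_{j\neq i}\Delta_{X_j}$, the uniqueness argument would fail for infinite $I$, since a single controlled entourage obtained by finite composition cannot simultaneously bound infinitely many independent coordinate projections.

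I expect that (apart from fixing notation) the only conceptually nontrivial point is recognising that the correct definition for arbitrary products is the one through projections, and that no finiteness on $I$ enters the argument once this definition is adopted. The proof is then essentially a diagram chase and can be finished by invoking Lemma 1.1.13 (control being testable on generators of the codomain coarse structure, here the pull-backs $\pi_i^{\ast}(\CE_i)$).
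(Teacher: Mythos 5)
Your proposal matches the paper's construction exactly: the product coarse structure is defined as $\bigcap_{i\in I}\pi_i^{\ast}(\CE_i)$, i.e.\ by requiring each pairwise projection $\pi_{ii}(D)$ to be controlled, and the universal property follows directly from this projection-based criterion. The paper's sketch is even terser than yours (it omits the uniqueness verification), but the approach is the same.
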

\begin{proof}[Sketch of proof]
 Let $\crse X_i=(X_i,\CE_i)$ and consider the Cartesian product $\prod_{i\in I}X_i$. Define the coarse structure $\bigotimes_{i\in I}\CE_i$ containing a set $D\subseteq\paren{\prod_{i\in I}X_i}\times\paren{\prod_{i\in I}X_i}$ if and only if $\pi_j\times\pi_j(D)\in\CE_j$ for every $j\in I$, where $\pi_j$ is the projection to $X_j$.
 
 It is immediate to check that $\bigotimes_{i\in I}\CE_i$ is indeed a coarse structure. The projections $\pi_j\colon \prod_{i\in I}X_i\to X_j$ are controlled by definition and it is also immediate to check that, given controlled functions $f_i\colon Z\to X_i$, the product $\prod_{i\in I}f_i\colon Z\to \prod_{i\in I} X_i$ is controlled.  
\end{proof}

The situation is different for equalizers. An \emph{equalizer}\index{equalizer} of two morphisms $f,g\colon X\to Y$ in a category \Cat C, is an object $S$ together with a morphism ${\rm eq}\colon S\to X$ such that $f\circ{\rm eq}=g\circ{\rm eq}$ and is universal with this property:
\[
\begin{tikzcd}
 S \arrow[r, "{\rm eq}"] & X \arrow[r, shift left, "{f}"] \arrow[r, shift right, swap, "{g}"] & Y \\ 
 Z \arrow[ur,swap, "{f_Z}"]  \arrow[u,dashed]  & &
\end{tikzcd} 
\]

\begin{lem}[\cite{zava2019cowellpoweredness}]\label{noequalizer}
 In \Cat{Coarse}, equalizers do not necessarily exist.
\end{lem}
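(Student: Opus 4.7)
The plan is to exhibit a single pair of parallel coarse maps admitting no equalizer. The natural candidate is $\crse X = (\NN,\mincrs)$ and $\crse Y = (\NN,\CE_{\abs{\mhyphen}})$, with $\crse f = [\id_\NN]$ and $\crse g = [0]$ (the constant function at~$0$). Both representatives are trivially controlled (any function out of a trivially coarse space is controlled), and $\crse f \neq \crse g$ because $\abs{n-0}=n$ is not uniformly bounded, so the two coarse maps are genuinely distinct.

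First I would analyze what the equalizer condition forces. Suppose, for contradiction, that an equalizer $\crse{eq}\colon \crse S \to \crse X$ exists. The identity $\crse f\circ\crse{eq}=\crse g\circ\crse{eq}$ means that for any representative $eq$ of $\crse{eq}$, the functions $f\circ eq$ and $g\circ eq$ are $\CE_{\abs{\mhyphen}}$-close, so there is an $N\in\NN$ with $\abs{eq(s)}\leq N$ for every $s\in S$. In particular, $eq(S)\subseteq\{0,1,\ldots,N\}$ is a finite subset of $\NN$.

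Next I would apply universality to a judiciously chosen family of test maps to reach a contradiction. For each $k\in\NN$, let $\crse Z_k=(\{0,\ldots,k\},\mincrs)$ and let $\iota_k\colon Z_k\to\NN$ be the inclusion, giving a coarse map $\crse{\iota_k}\colon \crse Z_k\to\crse X$. Since $\iota_k$ has image in $\{0,\ldots,k\}$, the compositions $f\circ\iota_k$ and $g\circ\iota_k$ differ by at most $k$ in $(\NN,\CE_{\abs{\mhyphen}})$, hence $\crse f\circ\crse{\iota_k}=\crse g\circ\crse{\iota_k}$. The universal property yields a coarse map $\crse{h_k}\colon\crse Z_k\to\crse S$ with $\crse{eq}\circ\crse{h_k}=\crse{\iota_k}$.

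The crux is that closeness into a trivially coarse space collapses to pointwise equality: since $\mincrs$ consists only of subsets of the diagonal, any two functions into $(\NN,\mincrs)$ that are close as coarse maps must agree on the nose. Applied to our situation, this gives $eq(h_k(z))=\iota_k(z)=z$ for every $z\in\{0,\ldots,k\}$, and therefore $\{0,\ldots,k\}\subseteq eq(S)$. Letting $k\to\infty$ forces $eq(S)=\NN$, directly contradicting $eq(S)\subseteq\{0,\ldots,N\}$. The main (minor) obstacle is picking the right pair $\crse f,\crse g$ so that the equalizer condition constrains the image of $\crse{eq}$ to be bounded while the family of test objects $\crse Z_k$ still satisfies the equalizer condition; once the asymmetry between $(\NN,\mincrs)$ and $(\NN,\CE_{\abs{\mhyphen}})$ is exploited, the argument closes in one step.
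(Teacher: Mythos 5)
Your proof is correct and takes exactly the same approach as the paper's: the same pair $[\id_\NN],[\underline{0}]\colon(\NN,\mincrs)\to(\NN,\CE_{\abs{\mhyphen}})$, and the same contradiction between the candidate equalizer having finite image (forced by $\CE_{\abs{\mhyphen}}$-closeness) while universality against the test maps $\crse Z_k\hookrightarrow(\NN,\mincrs)$ forces the image to exhaust $\NN$ (since close maps into a trivially coarse space agree pointwise). You simply spell out in full detail the universality step that the paper's sketch leaves implicit.
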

\begin{proof}[Sketch of proof]
 Consider $\id_\NN,\ubar 0\colon(\NN,\varcrs{min})\to (\NN,\CE_d)$, where $\id_\NN$ is the (set theoretic) identity function, $\ubar{0}$ is the constant function $0$, $\varcrs{min}$ is the minimal coarse structure and $\CE_d$ is the coarse structure coming from the natural metric. For every coarse space $\crse Z$ and controlled map ${f_{Z}\colon Z}\to (\NN,\varcrs{min})$  we have that $\cid_\NN\circ \crse{ f_{Z}}=\ubar{\crse 0}\circ \crse{ f_{Z}}$ if and only if $f_Z(Z)$ is finite. Yet, since $\abs{f_Z(Z)}$ can be arbitrarily large, there cannot exist an ${\rm eq}\colon S\to (\NN,\varcrs{min})$  such that $\cid_\NN\circ  \crse{{\rm eq}}=\ubar{\crse 0}\circ  \crse{{\rm eq}}$ and ${\rm eq}$ is universal.
\end{proof}

Since equalizers are limits, where limits are the dual notion of colimits, we deduce:

\begin{cor}
 The category \Cat{Coarse} need not have (finite) limits. In particular, it is not complete. 
\end{cor}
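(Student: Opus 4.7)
The plan is to deduce this corollary directly from Lemma~\ref{noequalizer} by recalling the general categorical fact that equalizers are themselves a particular kind of finite limit. Specifically, an equalizer of a pair of parallel morphisms $f,g\colon X\to Y$ is the limit of the diagram of shape $\Cat J$, where $\Cat J$ is the finite category with two objects and two parallel non-identity morphisms between them. Since $\Cat J$ is finite, any category that has all finite limits must in particular have equalizers for every pair of parallel morphisms.

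First I would make the above observation explicit, noting that this is the dual statement to the one used earlier in this appendix for colimits (where coequalizers arise as colimits over the same shape category $\Cat J$). Then I would invoke Lemma~\ref{noequalizer}, which exhibits the two concrete morphisms $\id_\NN,\ubar 0\colon(\NN,\varcrs{min})\to (\NN,\CE_d)$ as a pair admitting no equalizer in \Cat{Coarse}. The existence of even a single parallel pair without an equalizer immediately prevents \Cat{Coarse} from possessing all finite limits.

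Finally, since completeness of a category means the existence of all small limits, and since small-completeness implies finite-completeness, the failure of finite-completeness yields the failure of completeness. This gives the ``in particular'' clause of the corollary.

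There is no real obstacle here: the proof is essentially a one-line consequence of Lemma~\ref{noequalizer} together with the standard fact that equalizers are finite limits. The only subtlety worth flagging, if one wanted to be pedantic, is to confirm that the shape category $\Cat J$ defining equalizers is indeed finite (two objects and four morphisms, counting identities) and hence that equalizers fall under the scope of ``finite limits'' and not merely ``small limits''.
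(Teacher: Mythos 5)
Your proposal is correct and follows exactly the paper's reasoning: the corollary is deduced immediately from Lemma~\ref{noequalizer} by observing that equalizers are a special case of finite limits, so a category lacking some equalizer cannot be finitely complete, hence not complete. The paper states this in a single line following the lemma, and your expansion (including the explicit identification of the finite shape category) is a faithful elaboration of the same argument.
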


\begin{rmk}
 The absence of equalizers in \Cat{Coarse} is tied to the fact that it is not possible to define a notion of `intersection of coarse subspaces' that is both general and well behaved. We prefer to use a definition that is very well\=/behaved  but need not exist in general (Subsection~\ref{sec:p1:containements and intersections}).
\end{rmk}

\section{Subobjects and Quotients}
\label{sec:appendix:subobjects}

Recall that a morphism $f\colon X\to Y$ in a category is \emph{monic} \index{monic} (or, a \emph{monomorphism}) \index{monomorphism} if given any two morphisms $g_1,g_2\colon Z\to X$ such that $f\circ g_1=f\circ g_2$ then $g_1=g_2$. Dually, $f\colon X\to Y$ is \emph{epic} \index{epic} (or, an \emph{epimorphism}) \index{epimorphism} if given any two morphisms $g_1,g_2\colon Y\to Z$ such that $g_1\circ f=g_2\circ f$ then $g_1=g_2$.
The following result is proved in detail in \cite{dikranjan_categorical_2017}:

\begin{lem}\label{lem:appendix:monomorphisms.and.epimorphisms}
 Let $\crse {f \colon X\to Y}$ be a coarse map. 
 \begin{enumerate}[(i)]
  \item $\crse {f}$ is a monomorphism if and only if it is a coarse embedding,
  \item $\crse{f} $ is an epimorphism if and only if it is coarsely surjective.
 \end{enumerate}
\end{lem}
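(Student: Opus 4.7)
My plan is to treat the two equivalences separately. For part (i), the implication that a coarse embedding is monic is direct: given coarse maps $\crse{g_1, g_2 \colon Z \to X}$ with $\crse{f \circ g_1 = f \circ g_2}$, I fix representatives; any $F \in \CF$ witnessing that $f \circ g_1$ and $f \circ g_2$ are close pulls back via controlled properness of $f$ to an element of $\CE$ witnessing that $g_1$ and $g_2$ are close, so $\crse{g_1 = g_2}$. For the converse, being monic in particular forces the post-composition map $\crse f_I$ to be injective for every trivially coarse set $(I, \mincrs)$, and Lemma~\ref{lem:p1:trivially coarse spaces characterise maps} then immediately gives that $\crse f$ is a coarse embedding.

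For part (ii), the forward direction is a straightforward covering argument. Assuming $\crse f$ coarsely surjective and $\crse{g_1 \circ f = g_2 \circ f}$, I would fix representatives and a controlled covering $\fkb$ with $\st(f(X), \fkb) = Y$. For any $y \in Y$, some $B \in \fkb$ contains both $y$ and some $f(x)$; combining the uniformly bounded error from $g_i(y)$ to $g_i(f(x))$ (coming from the controlled partial covering $g_i(\fkb)$) with the uniformly bounded error between $g_1(f(x))$ and $g_2(f(x))$ (from the hypothesis) produces a single controlled entourage witnessing $g_1 \closefn g_2$.

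The main obstacle is the converse direction of (ii), because Lemma~\ref{lem:p1:trivially coarse spaces characterise maps} characterizes coarse surjectivity via \emph{post}-composition with $\crse f$, whereas being epic is a statement about \emph{pre}-composition. My plan is to construct a witness by hand in the spirit of a pushout $\crse Y \sqcup_{\crse X} \crse Y$. Concretely, let $Z \coloneqq Y \times \{1, 2\}$, define projections $p_i(y) \coloneqq (y, i)$, and equip $Z$ with the coarse structure $\CF'$ generated by two disjoint copies $E_1, E_2$ of each $E \in \CF$ (one on each component) together with the single relation $R \coloneqq \{((f(x), 1), (f(x), 2)) \mid x \in X\}$. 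By construction each $p_i$ is controlled, and $p_1 \circ f \closefn p_2 \circ f$ via $R$.

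The delicate step will be verifying that $p_1 \not\closefn p_2$ when $\crse f$ is not coarsely surjective. If they were close, then by Lemma~\ref{lem:p1: generated coarse structure} the relation $\{((y, 1), (y, 2)) \mid y \in Y\}$ would be contained in a finite composition of the generators of $\CF'$ and their opposites. Since $R$-steps are the only way to switch between the two copies and only occur over $f(X)$, parsing such a composition produces, uniformly in $y \in Y$, a bounded $\CF$-path from $y$ into $f(X)$ and back --- equivalently, a single $\fkb \in \fkC(\CF)$ with $Y \subseteq \st(f(X), \fkb)$, contradicting the failure of coarse surjectivity. This combinatorial bookkeeping on alternating compositions of $E_1$, $E_2$ and $R$ (bounding the number of $R$-steps and translating it into a bound on the $\CF$-distance from $y$ to $f(X)$) is where I expect the technical care to be needed; once carried out, it completes the missing implication.
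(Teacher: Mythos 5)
Your proposal is correct and follows essentially the same route as the paper: part (i) reduces to Lemma~\ref{lem:p1:trivially coarse spaces characterise maps}, and part (ii) builds $Y \sqcup Y$ (your $Y \times \{1,2\}$) equipped with two copies of $\CF$ glued along $f(X)$ — the paper identifies this as the coequalizer of $i_1 \circ f$ and $i_2 \circ f$, and phrases the final verification in the language of controlled coverings and $\st(\fkb, \cdot)$ rather than compositions of relations, but the bookkeeping you sketch (bounding the number of factors via Lemma~\ref{lem:p1: generated coarse structure}, noting that crossing components requires an $R$-step landing in $f(X)$, and concluding a uniform bound on the $\CF$-distance from $y$ to $f(X)$) is exactly the same argument in the dual language.
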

\begin{proof}[Sketch of proof]
 $(i)$: it is clear that a coarse embedding is a monomorphism. Lemma~\ref{lem:p1:trivially coarse spaces characterise maps} shows that the converse implication is true in a strong sense (it is enough to check trivially coarse spaces).

 $(ii)$: it is once again clear that a coarsely surjective coarse map is an epimorphism. 
 For the converse implication, fix a representative $f\colon (X,\CE)\to (Y,\CF)$. Let $i_1,i_2\colon Y\to Y\sqcup Y$ be the inclusions of $Y$ into each component of $Y\sqcup Y$, and consider $Y\sqcup Y$ with the coarse structure $\CD$ generated by the partial coverings $i_1(\fka_1), i_2(\fka_2)$ and $(i_1\cup i_2)(f(\pts{X}))\coloneqq\braces{\braces{i_1\circ f(x),i_2\circ f(x)}\mid x\in X}$ where $\fka_1,\fka_2\in\fkC(\CF)$.
 Then $\crse i_1\crse{\circ f}=\crse i_2\crse{\circ f}$, but we claim that $\crse i_1=\crse i_2$ only holds if $f$ is coarsely surjective.

 In fact, let $\fkb\coloneqq(i_1\cup i_2)(f(\pts{X}))\cup \pts{Y_1\sqcup Y_2}$. Note that if $\fka_1$ and $\fka_2$ are partial coverings of $Y$, then
 $\st(i_1(\fka_1)\cup i_2(\fka_2),\fkb)$ is a refinement of $\st(\fkb,i_1(\fka_1\cup \fka_2)\cup i_2(\fka_1\cup\fka_2))$. 
 We can thus deduce from Lemma~\ref{lem:p1: generated coarse structure} that any controlled partial covering in $\fkC(\CD)$ must be a refinement of $\st(\fkb,i_1(\fka)\cup i_2(\fka))$ for some $\fka\in\fkC(\CD)$. One can then show that if $(i_1\cup i_2)(\pts{Y})$ is in $\fkC(\CD)$ then $\pts{Y}$ is a refinement of $\st(f(\pts{X}),\fka)$ and hence $f$ is coarsely surjective.
\end{proof}

Together with Lemma~\ref{lem:p1:coarse.eq.iff.surjective.coarse.emb} this implies the following:

\begin{cor}[{\cite{dikranjan_categorical_2017}}]
 The category \Cat{Coarse} is balanced (\emph{i.e} morphisms that are both monic and epic are isomorphisms).
\end{cor}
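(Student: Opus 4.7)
The proof is essentially a direct combination of the two results cited just before the corollary. The plan is to take a coarse map $\crse{f \colon X \to Y}$ that is both monic and epic and immediately translate each of these categorical properties into a concrete geometric statement about $\crse{f}$ via Lemma~\ref{lem:appendix:monomorphisms.and.epimorphisms}.

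First, I would invoke part (i) of Lemma~\ref{lem:appendix:monomorphisms.and.epimorphisms} to conclude that $\crse{f}$ is a coarse embedding, and then part (ii) to conclude that $\crse{f}$ is coarsely surjective. At this point the work is done: Lemma~\ref{lem:p1:coarse.eq.iff.surjective.coarse.emb} states that a controlled map between coarse spaces is a coarse equivalence if and only if it is a coarsely surjective coarse embedding. Hence $\crse{f}$ admits a coarse inverse $\crse{g \colon Y \to X}$ with $\crse{g \circ f} = \cid_{\crse X}$ and $\crse{f \circ g} = \cid_{\crse Y}$, which is exactly the statement that $\crse{f}$ is an isomorphism in \Cat{Coarse}.

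There is no real obstacle here; the corollary is essentially a bookkeeping remark recording that the two characterizations in Lemma~\ref{lem:appendix:monomorphisms.and.epimorphisms} combine cleanly with the earlier Lemma~\ref{lem:p1:coarse.eq.iff.surjective.coarse.emb}. The only thing to verify is that the ``coarse inverse'' constructed in the proof of Lemma~\ref{lem:p1:coarse.eq.iff.surjective.coarse.emb} is indeed a morphism in \Cat{Coarse} (i.e.\ a controlled function, up to closeness), which is explicit in that lemma. So the proposal is simply a two-line deduction, with all the substance already contained in the two previously established results.
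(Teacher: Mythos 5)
Your argument is exactly the one the paper intends: the corollary is stated as an immediate consequence of combining Lemma~\ref{lem:appendix:monomorphisms.and.epimorphisms} (monic $\Leftrightarrow$ coarse embedding, epic $\Leftrightarrow$ coarsely surjective) with Lemma~\ref{lem:p1:coarse.eq.iff.surjective.coarse.emb}. Nothing is missing and the approach matches the paper's.
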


\begin{rmk}
  The coarse space $(Y\sqcup Y,\CD)$ used in the proof of Lemma~\ref{lem:appendix:monomorphisms.and.epimorphisms} is the coequalizer of the morphisms $\crse i_1\circ \crse f$ and $\crse i_2\circ \crse f$ going from $\crse X$ to $\crse Y\amalg\crse  Y$, where $\amalg$ denotes the coproduct.
\end{rmk}

\

We will now show that the definitions of coarse subspace and coarse quotient (Subsection~\ref{sec:p1:subspaces and quotients}) are compatible with the notions of subobjects and quotient objects. Recall the following:

\begin{de}\label{def:appendix:subobject}
 In a category \Cat{C}, let $f\colon A \to B$ and $f' \colon A' \to B$ be two monics with the same codomain. Then \emph{$f$ factors through $f'$}if there exists $h \colon A \to A'$ such that $f = f' \circ h$. 
We say that $f$ and $g$ are \emph{equivalent} if $f$ factors through $g$ and $g$ factors through $f$ (this is an equivalence relation). 
 A \emph{subobject}\index{subobject} of an object $B$ in \Cat{C} is an equivalence class of monics into $B$.
\end{de}

\begin{rmk}
 A subobject in a category \Cat{C} is \emph{not} an object in \Cat{C}. However, it does determine an object up to (natural) isomorphism: if $f\colon A\to B$ and $f\colon A'\to B$ are equivalent monics then $A\cong A'$.
\end{rmk}

Applying Definition~\ref{def:appendix:subobject} to the category \Cat{Coarse} provides us with an abstract notion of \emph{coarse subspace}.
This notion is compatible with our definition of coarse subspace (Definition~\ref{def:p1:coarse subspace}). In fact, to any monic  $\crse{f\colon X\to Y}$ we can associate the coarse image $\crse{f(X)\subseteq Y}$. If and $\crse{f' \colon Y'\to  X}$ is an equivalent monic then $\crse{f(Y)= f'(Y')}$.
Vice versa, if we are given a coarse subspace $\crse{Y\subseteq X}$ then the inclusion $\iota\colon Y\hookrightarrow X$ gives a monic $\crse{\iota} \colon \crse Y\to \crse X$. Our claim that a coarse subspace uniquely defines a coarse space up to natural coarse equivalence is essentially equivalent to the statement that any other choice of representative $Y'$ for $\crse Y$ gives rise to an equivalent monic $\crse{\iota'\colon Y'\to X}$.

\

The situation for quotients is analogous: dually to Definition~\ref{def:appendix:subobject}, one can define an equivalence relation on epic morphisms in a category \Cat{C}, and define a \emph{quotient object}\index{quotient!object} as an equivalence class of epimorphisms. Once again we claim the definition of coarse quotient is compatible with the categorical notion of quotient in \Cat{Coarse}.
Given a controlled function $f\colon(X,\CE)\to (Y,\CF)$, the pull\=/back $f^*(\CF)$ is a coarse structure containing $\CE$ (Subsection~\ref{sec:p1:pullback_pushforward}). Since $f^*(\CF)$ is invariant under taking close maps, it follows that for any fixed $\crse X$
 the pull\=/back gives a well\=/defined map 
 \[
  \braces{\crse{f\colon X\to Y}\mid\text{epic}}/_{\text{equivalence}}\xrightarrow{\text{pull\=/back}}\braces{\CE'\text{ coarse structure on $X$}\mid \CE\subseteq\CE'}.
 \] 
 This map is clearly surjective because $\id_X\colon (X,\CE)\to (X,\CE')$ gives an epimorphism in \Cat{Coarse} whenever $\CE\subseteq\CE'$. It is also injective, because $q\colon (X,q^*(\CF))\to (Y,\CF)$ is a coarse equivalence whenever $q \colon (X,\CE)\to (Y,\CF)$ is coarsely surjective, hence $[q] \colon (X,\CE)\to (Y,\CF)$ is equivalent to $[\id_X] \colon (X,\CE)\to (X,q^*(\CF))$.

\section{The Category of Coarse Groups}\label{sec:appendix:category of coarse groups}
Since composition of coarse homomorphisms is a coarse homomorphism, we may define define a category of coarse groups.

\begin{de}\label{def:p1:category.coarse.groups}
 The \emph{category of coarse groups}\index{category of!coarse groups} (denoted \Cat{CrsGroup})\nomenclature[:CAT]{\Cat{CrsGroup}}{category of coarse groups} is the category with objects that are coarse groups and morphisms that are coarse homomorphisms:
 \[
  {\rm Mor}_{\Cat{CrsGroup}}(\crse{G},\crse H)\coloneqq\chom(\crse{G},\crse H).
 \]
\end{de}

This category is not as well behaved as the coarse category. At least, not as far as monomorphisms and epimorphisms are concerned. For set\=/groups, it is well\=/known that the standard ``set-theoretical'' definition of subgroup and quotient group is compatible with the categorical definition of subobject and quotient object in the category of set\=/groups. This is not true for coarse groups.

By Corollary~\ref{cor:p1:coarse subgroup iff coarse image}, the coarse subgroups of a coarse group $\crse G$ coincide with the coarse images of coarse homomorphisms $\crse{f \colon H\to G}$. 
Of course, if a coarse homomorphism is a coarse embedding then it is a monomorphism in \Cat{CrsGroup} (because it is a monomorphism in \Cat{Coarse}). Since the inclusion of a coarse subgroup $\crse{ H\leq G}$ into $\crse G$ is a coarse embedding, we then see that every coarse subgroup can be identified with the coarse image of a monic in \Cat{CrsGroup}. However, there are different monics with the same coarse image! 
For instance, we claim that identity function $\id_\RR\colon (\RR,\mincrs)\to (\RR,\CE_{\abs{\mhyphen}})$ is a monic in \Cat{CrsGroup}. This is because every coarse homomorphisms $\crse f\colon \crse G\to (\RR,\mincrs)$ must satisfy $f(g\ast g)=2f(g)$ for every $g\in G$. Therefore, if we are given coarse homomorphisms $\crse f_1,\crse f_2\colon G\to(\RR,\mincrs)$ such that $f_1$ and $f_2$ are $\CE_{\abs{\mhyphen}}$\=/close then it is easy to see that $f_1$ and $f_2$ must coincide pointwise. As a consequence, the two coarse homomorphisms
\[
 [\id_\RR]\colon(\RR,\mincrs)\to (\RR,\CE_{\abs{\mhyphen}})
 \qquad \qquad 
 [\id_{\RR}]\colon (\RR,\CE_{\abs{\mhyphen}})\to (\RR,\CE_{\abs{\mhyphen}})
\]
are two inequivalent monics with the same coarse image.

\begin{rmk}
 The coarse homomorphism $[\id_\RR]\colon(\RR,\mincrs)\to (\RR,\CE_{\abs{\mhyphen}})$ is a monic that is (set-wise) surjective. In particular, it is also an epimorphism in \Cat{CrsGroup}. Since it is not an isomorphism, this shows that \Cat{CrsGroup} is not a balanced category.
\end{rmk}

 The situation for quotient coarse subgroups seems similar: it is clear that every quotient object in \Cat{CrsGroup} determines a coarse subgroup and we do not see any obvious reason to expect that different quotient objects should give rise to different coarse quotient subgroups.
 However---unlike the case of coarse subgroups---we do not know of any example where this happens. In other words, we do not know the answer to the following:  
 
 \begin{qu}\label{qu:p1:are.epimorphisms.coarsely.surjective}
  If $\crse{q}\colon \crse G\to \crse Q$ is an epimorphism in \Cat{CrsGroup}, is it an epimorphism in \Cat{Coarse}? That is, are epimorphisms of \Cat{CrsGroup} necessarily coarsely surjective?
 \end{qu}

\begin{rmk}
 The easiest way to show that epimorphisms in \Cat{Group} are surjective is by noting that the image of a homomorphism $q\colon G\to Q$ is a subgroup of $Q$ and then considering the amalgamated product $Q\ast_{q(G)}Q$. If $q(G)\neq Q$, the two inclusions of $Q$ into $Q\ast_{q(G)}Q$ show that $q$ is not an epimorphism. The difficulty in adapting this proof to the context of coarse groups is that there does not appear to be a good analogue for the amalgamated product. More generally, it is not clear to us whether \Cat{CrsGroup} admits free products and/or free objects.
\end{rmk}

\section{The Category of Fragmentary Coarse Spaces}
\label{sec:appendix:frag coarse}
The study of spaces of controlled maps naturally led us to introduce a weakening of the notion of coarse structure and to define fragmentary coarse spaces (Section~\ref{ch:p2:spaces of controlled maps}). Here we explain more in detail some of the properties of the category of fragmentary coarse spaces. Recall that a fragmentary coarse structure on $X$ is a set of relations $\CE$ satisfying all the requirements of a coarse structure, except that it may not contain the diagonal. A fragment of $X$ is a subset $Z$ such that $\Delta_Z\in\CE$.
By assumption, every singleton of $X$ is a fragment. Controlled functions are defined as usual, while the notion of closeness is replaced by that of frag\=/closeness (two functions are frag\=/close if their restrictions to every fragment are close).\footnote{%
 More generally, we say that any property $P$ whose definition requires considering the diagonal relation on $X$ has a fragmentary analogue frag\=/$P$ which is obtained by requiring that every fragment of $X$ satisfies $P$. We already provided a few examples with precise definitions in Chapter~\ref{ch:p2:spaces of controlled maps}.
}
The category \Cat{FragCrs} has frag\=/coarse spaces as objects and frag\=/closeness equivalence classes of controlled maps as morphisms.

With regards to limits and colimits, much of the material in Section~\ref{sec:appendix:limits.and.colimits} generalizes to \Cat{FragCrs}. Actually, from this point of view the category \Cat{FragCrs} is better behaved than \Cat{Coarse}. For instance, it contains arbitrary (small) coproducts: if $(X_i,\CE_i)$ is a family of frag\=/coarse spaces indexed over some set $I$ their coproduct is the \emph{fragmented union}\index{fragmented union}
\[
 \coprod_{i\in I}(X_i,\CE_i)\coloneqq\Bigparen{{\textstyle\bigsqcup_{i\in I}X_i}\; ,\; \angles{\CE_i\mid i\in I}_{\rm frag}},
\]
where $\angles{\variable}_{\rm frag}$ denotes the generated frag\=/coarse structure (it differs from the generated coarse structure in that we do not add the diagonal. In particular, each fragment of the coproduct is completely contained in a unique $X_i$). This construction defines a coproduct in \Cat{FragCrs} because the frag\=/closeness condition is verified on one fragment at the time, hence the cardinality of $I$ does not matter.
In \Cat{FragCrs}, the coequalizer of two controlled functions $f,g\colon (X,\CE)\to(Y,\CF)$ between frag\=/coarse spaces can be defined equipping $Q\coloneqq Y$ with the fragmentary coarse structure $\CD$ generated by $\CF\cup \braces{\paren{f\times g}(\Delta_Z)\mid Z\text{ fragment of }X}$ and considering the set-theoretic identity function $\id_Y\colon Y\to Q$. We thus deduce the following:

\begin{lem}
 The category \Cat{FragCrs} is small-cocomplete. 
\end{lem}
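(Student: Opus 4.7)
The plan is to follow exactly the same template used to establish finite cocompleteness of \Cat{Coarse}, exploiting the standard categorical fact that a category admitting all small coproducts together with all coequalizers is small-cocomplete (see \cite[Chapter V]{MacLane}). Thus it is enough to verify that the two constructions sketched in the excerpt — the fragmented union and the coequalizer built by enlarging the target's frag-coarse structure — actually satisfy the appropriate universal properties in \Cat{FragCrs}.

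First I would confirm that the fragmented union $\coprod_{i\in I}(X_i,\CE_i) = (\bigsqcup_i X_i,\angles{\CE_i\mid i\in I}_{\rm frag})$ is a coproduct for an arbitrary set $I$. The inclusions $\iota_j\colon X_j\hookrightarrow \bigsqcup_i X_i$ are controlled by construction. Given controlled maps $f_i\colon (X_i,\CE_i)\to (Y,\CF)$, the set-theoretic disjoint union $f\coloneqq \bigsqcup_i f_i$ sends each generator of $\angles{\CE_i\mid i\in I}_{\rm frag}$ into $\CF$, hence it is controlled. For uniqueness up to frag-closeness, I would use the key observation highlighted in the excerpt: every fragment of $\bigsqcup_i X_i$ must be contained in a single $X_i$ (since the generating relations of $\angles{\CE_i\mid i\in I}_{\rm frag}$ never mix different summands, no relation in the generated frag-coarse structure can connect points lying in distinct $X_i$'s). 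Consequently any other controlled $g\colon \bigsqcup_i X_i\to Y$ with $g\circ\iota_i = f_i$ in \Cat{FragCrs} must satisfy $g|_{X_i}\closefn f_i$ on each $X_i$, and since fragments lie in a single summand, this forces $g\closefn f$ globally. This is precisely the reason the cardinality of $I$ plays no role, in contrast to \Cat{Coarse} where only finite coproducts exist.

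Next I would verify the coequalizer. Given $\crse{f,g\colon X\to Y}$ with $\crse Y=(Y,\CF)$, set $\CD\coloneqq\angles{\CF\cup\{(f\times g)(\Delta_Z)\mid Z\text{ fragment of }X\}}_{\rm frag}$ and take $\crse q\coloneqq [\id_Y]\colon (Y,\CF)\to (Y,\CD)$. Here the important point — and the main technical step — is to check that $\CD$ does not depend on the chosen representatives $f,g$ of $\crse f,\crse g$: if $f\closefn f'$ and $g\closefn g'$, then for every fragment $Z\subseteq X$ the sets $(f\times f')(\Delta_Z)$ and $(g\times g')(\Delta_Z)$ lie in $\CF$, so $(f'\times g')(\Delta_Z)\subseteq (f'\times f)(\Delta_Z)\cmp (f\times g)(\Delta_Z)\cmp (g\times g')(\Delta_Z)$ belongs to $\CD$, and symmetrically. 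Controlledness of $\id_Y$ is immediate, and $\crse q\circ\crse f=\crse q\circ\crse g$ holds by construction. For the universal property, let $\crse h\colon \crse Y\to \crse W$ satisfy $\crse h\circ \crse f=\crse h\circ \crse g$; then for each fragment $Z\subseteq X$ we have $(h\times h)((f\times g)(\Delta_Z))\in\CF_W$ by the frag-closeness of $h\circ f$ and $h\circ g$, so $h\colon (Y,\CD)\to\crse W$ is controlled and $\crse h$ factors through $\crse q$. Uniqueness of the factorization up to frag-closeness is automatic because $q$ is set-theoretically the identity.

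Combining these two steps with the standard categorical argument reconstructs an arbitrary small colimit $\varinjlim D$ as the coequalizer of an evident parallel pair between small coproducts indexed by morphisms and objects of the shape category, yielding small cocompleteness. The main obstacle I anticipate is purely bookkeeping: one must take care that the fragments appearing when verifying universal properties are genuine fragments of the constructed spaces (rather than of the summands or of the domain), and that frag-closeness — which is sensitive to the choice of fragments — is preserved throughout; the proof of well-definedness of $\CD$ above is the prototypical such check.
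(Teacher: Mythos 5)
Your proposal is correct and follows essentially the same route as the paper, which only sketches the constructions of the fragmented union (coproduct) and the coequalizer before citing the standard MacLane reduction; you supply the universal-property verifications the paper leaves implicit. One minor imprecision you inherit from the paper: a fragment $Z$ of $\bigsqcup_i X_i$ need not lie in a \emph{single} summand (e.g. $\Delta_{Z_1}\cup\Delta_{Z_2}$ is a finite union of controlled sets for fragments $Z_1\subseteq X_1$, $Z_2\subseteq X_2$); the correct statement is that each fragment lies in a \emph{finite} subunion $\bigsqcup_{i\in J}X_i$, $J\subseteq I$ finite, which still suffices since $(f\times g)(\Delta_Z)$ is then a finite union of controlled sets — so the cardinality of $I$ indeed plays no role.
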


With regard to limits, the same construction of \Cat{Coarse} shows that \Cat{FragCrs} has products. In particular, if $\crse X_i$ is a family of coarse spaces then their product in \Cat{Coarse} is also a product in \Cat{FragCrs} (as we saw, this is not the case for coproducts). Unlike \Cat{Coarse}, the category \Cat{FragCrs} also has equalizers:

\begin{lem}
 The category \Cat{FragCrs} has equalizers.
\end{lem}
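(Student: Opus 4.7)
I would construct the equalizer of $\crse{f,g\colon X\to Y}=(Y,\CF)$ by refining the frag-coarse structure on the same underlying set. Fix representatives $f,g\colon(X,\CE)\to(Y,\CF)$, and define
\[
 \CE_S\coloneqq\bigbraces{E\in\CE\bigmid (f\times g)\bigparen{\Delta_{\pi_1(E)\cup\pi_2(E)}}\in\CF}.
\]
By Lemma~\ref{lem:p2:projections are fragments}, $\pi_1(E)\cup\pi_2(E)$ is always a fragment of $\crse X$, so $\Delta_{\pi_1(E)\cup\pi_2(E)}\in\CE$ and the condition is well-posed. Let $\crse S\coloneqq(X,\CE_S)$ and $\crse{eq}\coloneqq[\id_X]\colon\crse S\to\crse X$. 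Informally, the fragments of $\crse S$ will be exactly those $\CE$-fragments $Z\subseteq X$ on which $f$ and $g$ coincide up to $\CF$, which is the coarse analogue of the set-theoretic equalizer $\{x\mid f(x)=g(x)\}$.

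Next I would verify that $\CE_S$ is a fragmentary coarse structure and that $\crse{eq}$ equalizes $\crse f$ and $\crse g$. Closure under subsets is immediate from monotonicity of the $\pi_i$; symmetry follows from the identity $\pi_1(\op E)\cup\pi_2(\op E)=\pi_1(E)\cup\pi_2(E)$; and for finite unions one has $\Delta_{\pi_1(E_1\cup E_2)\cup\pi_2(E_1\cup E_2)}=\Delta_{\pi_1(E_1)\cup\pi_2(E_1)}\cup\Delta_{\pi_1(E_2)\cup\pi_2(E_2)}$, so the image under $f\times g$ is a union of two sets in $\CF$. Closure under composition is the most delicate point, and uses the (strict) containment $\pi_1(E_1\cmp E_2)\cup\pi_2(E_1\cmp E_2)\subseteq\pi_1(E_1)\cup\pi_2(E_1)\cup\pi_1(E_2)\cup\pi_2(E_2)$ to reduce to the union case. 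Singleton relations $\{(x,x)\}$ are trivially in $\CE_S$. Since $\CE_S\subseteq\CE$, the map $\crse{eq}$ is controlled; and for any fragment $Z$ of $\crse S$ (i.e.\ $\Delta_Z\in\CE_S$) the very definition gives $(f\times g)(\Delta_Z)\in\CF$, so $f\circ\id_X$ is frag-close to $g\circ\id_X$ and thus $\crse f\circ\crse{eq}=\crse g\circ\crse{eq}$.

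For the universal property, let $\crse{h\colon T\to X}=(T,\CD)$ be a frag-coarse map with $\crse f\circ\crse h=\crse g\circ\crse h$, and choose a representative $h$ so that $f\circ h$ is frag-close to $g\circ h$. I claim $h$ defines a controlled map $\crse T\to\crse S$: given any $D\in\CD$, the set $Z\coloneqq\pi_1(D)\cup\pi_2(D)$ is a fragment of $\crse T$ by Lemma~\ref{lem:p2:projections are fragments}, and one computes $\pi_1((h\times h)(D))\cup\pi_2((h\times h)(D))=h(Z)$; frag-closeness of $f\circ h$ and $g\circ h$ on $Z$ yields $((f\circ h)\times(g\circ h))(\Delta_Z)\in\CF$, which contains $(f\times g)(\Delta_{h(Z)})$, whence $(h\times h)(D)\in\CE_S$. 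Uniqueness up to frag-closeness follows from a parallel argument: if $\tilde h_1,\tilde h_2\colon\crse T\to\crse S$ both lift $\crse h$, then each $\tilde h_i(Z)$ is a fragment of $\crse S$, so $(f\times g)(\Delta_{\tilde h_1(Z)\cup\tilde h_2(Z)})=(f\times g)(\Delta_{\tilde h_1(Z)})\cup(f\times g)(\Delta_{\tilde h_2(Z)})\in\CF$, giving $(\tilde h_1\times\tilde h_2)(\Delta_Z)\in\CE_S$ as required. The main obstacle is finding the right definition of $\CE_S$: it must be small enough to force $f$ and $g$ to agree on fragments, yet closed under composition. The crucial point is that the diagonal $\Delta_X$ need not lie in $\CE_S$, which is exactly why this construction succeeds in \Cat{FragCrs} but fails in \Cat{Coarse} (cf.\ Lemma~\ref{noequalizer}).
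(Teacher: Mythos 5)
Your construction of $\CE_S$ is essentially the right object, but there is a genuine gap in the claim that ``singleton relations $\{(x,x)\}$ are trivially in $\CE_S$.'' For $E=\{(x,x)\}$ one has $\pi_1(E)\cup\pi_2(E)=\{x\}$, so the defining condition is $(f\times g)(\{(x,x)\})=\{(f(x),g(x))\}\in\CF$. A fragmentary coarse structure is only required to contain the \emph{diagonal} singletons $\{(y,y)\}$, so when $f(x)\neq g(x)$ there is in general no reason for $\{(f(x),g(x))\}$ to be controlled. Concretely, take $\crse Y=(Y,\CE_{\rm Frag})$ with the completely fragmented frag-coarse structure: then $\{(y_1,y_2)\}\in\CF$ iff $y_1=y_2$, so $\{(x,x)\}\in\CE_S$ iff $f(x)=g(x)$. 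As soon as $f$ and $g$ disagree somewhere, $\CE_S$ violates axiom (i) of Definition~\ref{def:p2:fragmentary coarse.structure} and $(X,\CE_S)$ is not a frag-coarse space. This is precisely the obstruction that the construction on the same underlying set cannot overcome.

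The fix matches what you identified as the ``main obstacle'': the underlying set must shrink. Set $S\coloneqq\{x\in X\mid \{(f(x),g(x))\}\in\CF\}=\bigcup_{F\in\CF}\{x\mid(f(x),g(x))\in F\}$; this is exactly the paper's base set. One checks that every $E\in\CE_S$ is automatically contained in $S\times S$ (for $(x_1,x_2)\in E$ one has $\{(f(x_i),g(x_i))\}\subseteq(f\times g)(\Delta_{\pi_1(E)\cup\pi_2(E)})\in\CF$), so your $\CE_S$ viewed as a collection of relations on $S$ coincides with the paper's $\CC=\angles{\CE|_{C_F}\mid F\in\CF}_{\rm frag}$, and the singleton axiom now holds for every $x\in S$ by definition of $S$. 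Your verification of closure under subsets, symmetry, finite unions, and (via $\pi_1(E_1\circ E_2)\subseteq\pi_1(E_1)$, $\pi_2(E_1\circ E_2)\subseteq\pi_2(E_2)$) composition is correct, and the frag-closeness argument for $\crse f\circ\crse{eq}=\crse g\circ\crse{eq}$ goes through unchanged. For the universal property you must additionally observe that any $h\colon\crse T\to\crse X$ equalizing $\crse f$ and $\crse g$ has image in $S$ (apply frag-closeness of $f\circ h$ and $g\circ h$ to the singleton fragments of $T$), after which your pull-back argument via Lemma~\ref{lem:p2:projections are fragments} is exactly as in the paper.
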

\begin{proof}[Sketch of proof]
 Let $\crse f,\crse g \colon (X,\CE)\to(Y,\CF)$ be frag\=/coarse maps of frag\=/coarse spaces. For every controlled set $F\in\CF$ let $C_F\coloneqq\braces{x\in X\mid (f(x),g(x))\in F}\subseteq X$. Consider the union 
 \[
  S\coloneqq\bigcup_{F\in\CF} C_F \subseteq X
 \]
 and equip it with the fragmentary coarse structure 
 \(
  \CC\coloneqq\angles{\CE|_{C_F}\mid F\in \CF}_{\rm frag}.
 \)
 
 We claim that the inclusion $\iota\colon (S,\CC)\hookrightarrow (X,\CE)$ is an equalizer. It is clear that $\crse {f\circ \iota}=\crse {g\circ \iota}$. If $f_Z\colon (Z,\CD)\to(X,\CE)$ is a controlled map with $\crse {f\circ f_{Z}}=\crse {g\circ f_{Z}}$, we must have that $f_Z(Z)\subseteq S$. This is because each fragment of $Z$ must be sent into $C_F$ for some large enough $F\in\CF$, and   $Z$ is equal to the union of its fragments. We now see that $f_Z\colon (Z,\CD)\to(S,\CC)$ is controlled because for every $D\in\CD$ the union of the projections $Z'\coloneqq\pi_1(D)\cup\pi_2(D)$ is a fragment of $Z$ by Lemma~\ref{lem:p2:projections are fragments} and hence $f_Z\times f_Z(D)\in\CE|_{f(Z')}\subseteq \CC$.
\end{proof}

\begin{cor}
 The category \Cat{FragCrs} is small\=/complete.
\end{cor}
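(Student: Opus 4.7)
The plan is to invoke the standard dualization of the argument mentioned earlier in Section~\ref{sec:appendix:limits.and.colimits}: just as a category with small coproducts and coequalizers is small-cocomplete, a category with small products and equalizers is small-complete \cite[Chapter V]{MacLane}. So the work reduces to checking that \Cat{FragCrs} has both.

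First I would note that small products in \Cat{FragCrs} are available by essentially the same construction used for \Cat{Coarse}. Given a set-indexed family $\crse X_i = (X_i,\CE_i)$ of frag-coarse spaces, equip the Cartesian product $\prod_{i\in I} X_i$ with the frag-coarse structure $\bigotimes_{i\in I}\CE_i$ consisting of those relations $D$ on the product whose projection $\pi_j\times\pi_j(D)$ to each factor lies in $\CE_j$. The absence of $\Delta$ in the axioms is not an obstacle here since the construction never invoked it: one checks closure under subsets, symmetry, finite unions and composition coordinate by coordinate. The projection maps are controlled by definition, and given any cone of controlled maps $f_i\colon \crse Z\to \crse X_i$ the product function $\prod_i f_i$ is the unique (up to frag-closeness) controlled map factoring through the product, so the universal property holds in \Cat{FragCrs}.

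Second, the preceding lemma gives equalizers in \Cat{FragCrs}. Combining these two facts, any small diagram $D\colon \Cat{J}\to\Cat{FragCrs}$ has a limit, constructed in the usual way: take the product $\prod_{J\in\Cat{J}}D(J)$ and carve out the equalizer of the pair of maps
\[
 \prod_{J}D(J)\;\rightrightarrows\;\prod_{j\colon J\to J'}D(J')
\]
whose two components are, respectively, the projection to the $J'$-coordinate and $D(j)$ composed with the projection to the $J$-coordinate. This subobject, equipped with its canonical controlled map to the product, is the sought limit cone.

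There is no substantial obstacle here: both ingredients (products and equalizers) are already in hand, and the general construction of limits from them is entirely formal. The only minor thing worth verifying carefully is that the equalizer construction of the previous lemma applies to this parallel pair indexed over the (possibly proper class of) morphisms of a small category $\Cat{J}$, but since $\Cat{J}$ is small this indexing set is actually a set and the construction goes through verbatim.
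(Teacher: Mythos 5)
Your proof is correct and takes essentially the same route as the paper: the corollary is deduced formally from the existence of small products (obtained by the same coordinatewise construction as in \Cat{Coarse}, as the paper notes the axioms never invoke the diagonal) together with equalizers (the preceding lemma), via the standard fact cited from \cite[Chapter V]{MacLane}. One minor quibble in your last paragraph: there is no danger of the morphism collection of a small category $\Cat{J}$ being a proper class---smallness by definition means it is a set---so that caveat is automatic rather than something requiring verification; otherwise the argument is exactly what the paper intends.
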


As we already remarked, various of the concepts and definitions of Chapter~\ref{ch:p1:intro to coarse category} translate without difficulty to frag\=/coarse spaces. One exception is the relation between controlled maps of binary products and equi controlledness (see the discussion following Corollary~\ref{cor:p2:frag_controlled iff controlled on fragments}). Other exceptions are coarse embeddings and coarsely surjective maps: this is because we wish to preserve their relation with monomorphisms and epimorphisms. Namely, it makes sense to say that $f\colon (X,\CE)\to (Y,\CF)$ is a coarse embedding if 
$(f\times f)^{-1}(F)\in\CE$ for every $F\in\CF$, even if $\crse X$ and $\crse Y$ are frag\=/coarse spaces. However, in this case Lemma~\ref{lem:p1:trivially coarse spaces characterise maps} is no longer true (the same happens for coarse surjectivity). In view of this, it is advisable to give the following:

\begin{de}
\label{de:appendix:frag coarse embedding}
 A controlled map between frag\=/coarse spaces $f\colon (X,\CE)\to (Y,\CF)$ is a \emph{frag\=/coarse embedding}\index{fragmentary!coarse embedding} if for every fragment $Z\subseteq X$ and $F\in\CF$ the intersection $(f\times f)^{-1}(F)\cap (Z\times X)$ belongs to $\CE$.
\end{de}
 
\begin{exmp}
 To see why we need to modify the definition of coarse embedding, consider the following example. On $\RR$, consider the frag\=/coarse structure $\CE\coloneqq\braces{E\mid \exists t>0,\ E\subseteq [-t,t]\times[-t,t]}$ (as in Example~\ref{exmp:p2:frag bounded}). The identity function $\id_\RR\colon(\RR,\CE)\to(\RR,\CE_{\abs{\mhyphen}})$ is a monomorphism in \Cat{FragCrs}, but it is not true that $(f\times f)^{-1}(F)$ belongs to $\CE$ when $F\in \CE_{\abs{\mhyphen}}$.
 
 A reasonable guess for the definition of frag\=/coarse embedding would have been to require that the restriction of $f$ to each fragment of $X$ is a coarse embedding. However, taking the frag\=/coarse map $\crse X\amalg\crse X\to \crse X$ which collapses together the fragmented union of two copies of the same frag\=/coarse space shows that the resulting notion does not imply monicity.
 Definition~\ref{de:appendix:frag coarse embedding} takes care of both difficulties.
\end{exmp}

The definition of frag\=/coarse surjectivity requires some care as well:
 
\begin{de}
 A controlled map between frag\=/coarse spaces $f\colon (X,\CE)\to (Y,\CF)$ is \emph{frag\=/coarsely surjective}\index{fragmentary!coarsely surjective} if for every fragment $Y'\subseteq Y$ there exists a fragment $X'\subseteq X$ and $F\in\CF$ so that $Y'\subseteq F(f(X'))$.
\end{de}

Of course, these notions are invariant under taking frag\=/close maps and they are hence properties of frag\=/coarse maps.
Using this definition, it is not hard to adapt the proofs of Lemma~\ref{lem:p1:trivially coarse spaces characterise maps}, Lemma~\ref{lem:p1:coarse.eq.iff.surjective.coarse.emb} and Lemma~\ref{lem:appendix:monomorphisms.and.epimorphisms} to show that their frag\=/coarse analogs hold true. We thus deduce the following:

\begin{prop}
 Let $\crse {f\colon X \to Y}$ be a frag\=/coarse map between frag\=/coarse spaces. Then
 \begin{enumerate}[(i)]
  \item $\crse f$ is a monomorphism if and only if it is a frag\=/coarse embedding,
  \item $\crse f$ is an epimorphism if and only if it is frag\=/coarsely surjective,
  \item $\crse f$ is a frag\=/coarse equivalence if and only if it is a frag\=/coarsely surjective coarse embedding.
 \end{enumerate}
 In particular, \Cat{FragCrs} is a balanced category (\emph{i.e.}\ every morphism that is both monic and epic is an isomorphism).
\end{prop}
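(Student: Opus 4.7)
The overall strategy is to adapt in sequence Lemma~\ref{lem:p1:trivially coarse spaces characterise maps}, Lemma~\ref{lem:appendix:monomorphisms.and.epimorphisms}, and Lemma~\ref{lem:p1:coarse.eq.iff.surjective.coarse.emb} to the fragmentary setting. The recurring technical tool is Lemma~\ref{lem:p2:projections are fragments}, which lets us pass from any controlled entourage $F\in\CF$ to a fragment of $Y$ containing both of its projections. For the easy direction of (i), given a frag-coarse embedding $\crse f$ and coarse maps $\crse{g_1,g_2\colon W\to X}$ with $\crse f\circ\crse g_1=\crse f\circ \crse g_2$, I would argue that on every fragment $W'\subseteq W$ the set $Z\coloneqq g_1(W')\cup g_2(W')$ is a fragment of $\crse X$ (controlled maps preserve fragments and finite unions of fragments are fragments), so the relation $\{(g_1(w),g_2(w))\mid w\in W'\}$ lies inside $(f\times f)^{-1}(F)\cap(Z\times X)$ for a suitable $F\in\CF$ witnessing the frag-closeness on $W'$, and hence lies in $\CE$ by the embedding assumption; this gives $\crse{g_1}=\crse{g_2}$.

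For the hard direction of (i), I would mimic Lemma~\ref{lem:appendix:monomorphisms.and.epimorphisms}(i). If $\crse f$ fails to be a frag-coarse embedding, fix a fragment $Z\subseteq X$ and $F\in\CF$ with $U\coloneqq (f\times f)^{-1}(F)\cap(Z\times X)\notin\CE$. The plan is to use $W=U$ as a test frag-coarse space with the two coordinate projections $g_1\colon U\to Z\hookrightarrow X$ and $g_2\colon U\to X$, equipping $U$ with the frag-coarse structure inherited from $(\CE|_Z)\otimes\CE$; this makes both projections controlled by construction, and $\crse f\circ \crse g_1=\crse f\circ \crse g_2$ because $F$ uniformly witnesses frag-closeness on every fragment of $W$. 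A diagonal selection (for each $E\in\CE$ pick $(z_E,x_E)\in U\setminus E$) then produces a subset of $U$ that cannot be in $\CE$. The main obstacle is to realise this distinguishing subset as a fragment of $W$: fragments of $W$ are subsets $V\subseteq U$ whose second projection $\pi_2(V)$ is a fragment of $\crse X$, and $\pi_2(U)$ need not be one. I expect to handle this by first enlarging $F$ via Lemma~\ref{lem:p2:projections are fragments} so that $f(Z)\subseteq \pi_1(F)\cup\pi_2(F)$ is a fragment of $Y$, and then either reducing to the coarse embedding failure of $f|_Z$ on the genuine coarse space $(Z,\CE|_Z)$ when $U\cap(Z\times Z)\notin\CE$, or carefully choosing the representatives $(z_E,x_E)$ so that $\{x_E\}$ remains inside a fragment of $X$ in the complementary case.

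For (ii), the easy direction is immediate, and the hard direction adapts Lemma~\ref{lem:appendix:monomorphisms.and.epimorphisms}(ii): if $\crse f$ is not frag-coarsely surjective there is a fragment $Y'\subseteq Y$ not contained in any controlled thickening of $f(X')$ for $X'\subseteq X$ a fragment; the two inclusions $\crse{i_1},\crse{i_2}\colon \crse Y\to \crse Y\sqcup\crse Y$, where the coproduct is endowed with the frag-coarse structure generated by $\CF$ on each copy together with the ``glue'' relations $\{(i_1f(x),i_2f(x))\mid x\in X'\}$ for $X'$ ranging over fragments of $\crse X$, then agree after composition with $\crse f$ but fail to be frag-close on $Y'$. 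For (iii), a frag-coarse equivalence is trivially both monic and epic and hence a frag-coarse embedding and frag-coarsely surjective by (i) and (ii); conversely, given such an $\crse f$ one constructs a coarse inverse as in Lemma~\ref{lem:p1:coarse.eq.iff.surjective.coarse.emb}, using frag-coarse surjectivity to select preimages inside fragments of $\crse X$ and the embedding property to check that the choices assemble into a controlled map. Balancedness of $\Cat{FragCrs}$ is then immediate from (iii): a morphism that is both monic and epic is both a frag-coarse embedding and frag-coarsely surjective, hence a frag-coarse equivalence, i.e., an isomorphism in $\Cat{FragCrs}$.
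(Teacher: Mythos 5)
Your strategy matches the paper's terse suggestion (adapt the three lemmas), and your instinct to zero in on the hard direction of (i) is exactly right: the set $U=(f\times f)^{-1}(F)\cap(Z\times X)$ need not have $\pi_2(U)$ a fragment of $X$, and then the fragments of the test space $W=U$ only ever exhibit pieces of $U$ that may already be controlled. Unfortunately the workarounds you sketch cannot succeed, because with Definition~\ref{de:appendix:frag coarse embedding} exactly as written the biconditional ``monomorphism $\Leftrightarrow$ frag-coarse embedding'' is false. Take $(X,\CE)$ to be the frag-bounded space of Example~\ref{exmp:p2:frag bounded} (say $X=\RR$ with $\CE=\braces{E\mid\exists t,\ E\subseteq[-t,t]\times[-t,t]}$) and let $\crse{c}\colon\crse{X}\to\tobj$ be the constant map. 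Example~\ref{exmp:p2:frag bounded} tells us $\crse{c}$ is a frag-coarse equivalence, hence an isomorphism, hence a monomorphism. But $\crse{c}$ fails Definition~\ref{de:appendix:frag coarse embedding}: with $Z=[-1,1]$ and $F$ the nonempty entourage of $\tobj$ one gets $(c\times c)^{-1}(F)\cap(Z\times X)=[-1,1]\times\RR\notin\CE$. This is exactly your obstruction made fatal: every $W'\subseteq U$ with $\pi_2(W')$ bounded already lies in $\CE$, so no fragment of your test space can separate the two projections --- and indeed any two controlled maps into $(X,\CE)$ are frag-close, so $\crse{c}$ really is monic.

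The repair therefore belongs in the definition rather than in your proof: one must bound the second coordinate as well, requiring $(f\times f)^{-1}(F)\cap(Z\times Z')\in\CE$ for all fragments $Z,Z'$ of $X$ and all $F\in\CF$ (equivalently, since fragments are closed under finite unions, that each restriction $f|_Z$ be a coarse embedding). With this definition your construction works cleanly and no ``diagonal selection'' is needed: $U\subseteq Z\times Z'$ is automatically a fragment of $W=(U,((\CE|_Z)\otimes(\CE|_{Z'}))|_U)$, the projections $g_1,g_2$ are controlled, $F$ witnesses $\crse{f}\crse{g_1}=\crse{f}\crse{g_2}$, and $g_1\times g_2(\Delta_U)=U\notin\CE$ gives $\crse{g_1}\neq\crse{g_2}$. (The paper's stated objection to this modified definition via the collapse $\crse{X}\amalg\crse{X}\to\crse{X}$ relies on the assertion that every fragment of a fragmented union lies in a single summand; but $\Delta_{Z_1}\cup\Delta_{Z_2}$ is controlled in the coproduct for fragments $Z_1,Z_2$ of the two copies, since frag-coarse structures are closed under finite unions, so the collapse in fact already fails the modified definition and is not a counterexample.) Your arguments for (ii) and (iii) are sound as written and carry over once (i) is in place.
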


\

The main result of Section~\ref{sec:p2:controlled_frag_crs} is a theorem on the properties of exponential frag\=/coarse spaces (Theorem~\ref{thm:p2:exponential}). The language and conventions used there are motivated by the categorical definition of exponential object:  

\begin{definition}
Let $X$ and $Y$ be objects of a category \Cat{C} in which all binary products with $X$ exist. Then an \emph{exponential object}\index{exponential object} is an object $Y^{X}$ in \Cat{C} equipped with an evaluation morphism $\textrm{ev}\colon Y^{X}\times X \to Y$ which is universal in the sense that, given any object $Z$ and morphism $f \colon Z \times X\to Y$, there exists a unique morphism $u\colon Z\to Y^{X}$ such that the composition
\[Z\times X \xrightarrow{ u \times \id_X} Y^{X} \times X \xrightarrow{\textrm{ev}} Y\] 
equals $f$. The morphism $u$ is denoted by $\lambda f$ and is called the \emph{transpose}\index{transposed function} of $f$.
\end{definition} 

If it exists, an exponential object is of course unique up to isomorphism. The main content of Theorem~\ref{thm:p2:exponential} can be rephrased as follows:

\begin{thm}\label{thm:appendix:exponential}
 In \Cat{FragCrs} every pair $\crse X$ and $\crse Y$ has an exponential object $\crse{Y^{X}}$.
\end{thm}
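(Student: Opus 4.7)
The plan is to verify that the exponential frag-coarse space $\crse{Y^{X}}=(\ctrl{X}{Y},\CF^{\CE})$ introduced in Definition~\ref{def:p2:exponential fcrse strucuture}, together with the evaluation map $\ev\colon\ctrl{X}{Y}\times X\to Y$, $(f,x)\mapsto f(x)$, realizes the universal property of an exponential object in \Cat{FragCrs}. Essentially all the analytic content has already been established in Theorem~\ref{thm:p2:exponential}; what remains is to repackage it in categorical language and check uniqueness.

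First I would verify that the data is well defined in \Cat{FragCrs}. By Theorem~\ref{thm:p2:exponential}(1), the evaluation is a controlled map $(\ctrl{X}{Y},\CF^{\CE})\times(X,\CE)\to(Y,\CF)$, and therefore descends to a frag-coarse morphism $\crse\ev\colon\crse{Y^{X}\times X\to Y}$. Next, given any frag-coarse map $\crse f\colon \crse{Z\times X\to Y}$, I would pick a representative $f$ and define the candidate transpose by $u\coloneqq\lambda f$, i.e.\ $u(z)\coloneqq f(z,\variable)\in\ctrl{X}{Y}$. Theorem~\ref{thm:p2:exponential}(2) ensures that $u$ is a controlled map $(Z,\CD)\to(\ctrl{X}{Y},\CF^{\CE})$, so $\crse u=\crse{\lambda f}$ is a morphism in \Cat{FragCrs}. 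Pointwise one has $\ev\circ(u\times\id_X)(z,x)=u(z)(x)=f(z,x)$, so the factorization identity $\crse\ev\circ(\crse u\times\cid_{\crse X})=\crse f$ holds on the nose, and hence a fortiori in \Cat{FragCrs}.

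For uniqueness, suppose $\crse{u'}\colon\crse{Z\to Y^{X}}$ is another morphism with $\crse\ev\circ(\crse{u'}\times\cid_{\crse X})=\crse f$, and fix a representative $\bar u\colon Z\to \ctrl{X}{Y}$. Then $\ev\circ(\bar u\times\id_X)$ is frag-close to $f$ as maps $(Z,\CD)\times(X,\CE)\to(Y,\CF)$. This is exactly the hypothesis of Theorem~\ref{thm:p2:exponential}(3), whose conclusion is that $\bar u$ is frag-close to $\lambda f=u$. Therefore $\crse{u'}=\crse u$ in $\fcmor(\crse Z,\crse{Y^{X}})$, giving the required uniqueness. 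Part (4) of the same theorem packages existence and uniqueness into the natural bijection $\fcmor(\crse{Z\times X},\crse Y)\leftrightarrow\fcmor(\crse Z,\crse{Y^{X}})$, which is precisely the adjunction defining exponential objects.

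The only mild subtlety---and the reason this is not completely automatic from the set-theoretic transposition---is that the universal property must be formulated up to frag-closeness on both sides of the correspondence. This is why we needed the careful definition of $\CF^{\CE}$ as the intersection $\bigcap_{E\in\CE}\CF^{E}$ (rather than, say, $\CF^{\Delta_{X}}$): it is precisely what makes the transpose of every controlled $f$ controlled, and conversely forces any two factorizations of a given $\crse f$ to be frag-close. Since all of this is already contained in Theorem~\ref{thm:p2:exponential}, no further work is needed, and the main ``obstacle'' is simply to record that the theorem's statements $(1)$--$(4)$ are exactly the existence, factorization, uniqueness, and naturality clauses of the definition of exponential object in \Cat{FragCrs}.
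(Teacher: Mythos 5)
Your proof is correct and follows essentially the same route the paper takes: the paper itself offers no separate argument, simply stating that Theorem~\ref{thm:appendix:exponential} is a rephrasing of Theorem~\ref{thm:p2:exponential}, and your proposal just traces parts (1)--(4) of that theorem onto the existence, factorization, uniqueness, and adjunction clauses of the exponential-object definition. You do this carefully and correctly (including the slightly delicate uniqueness step via part (3)), so there is nothing to improve.
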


This implies that \Cat{FragCrs} is a particularly `nice' category. More precisely, recall the following:

\begin{definition}   
 A category is \emph{Cartesian closed}\index{Cartesian closed} if it contains a terminal object, it has all binary products, and the exponential of any two objects.  
\end{definition} 

Since \Cat{FragCrs} has all finite products, Theorem~\ref{thm:appendix:exponential} implies that it is Cartesian closed:

\begin{cor} 
 The category \Cat{FragCrs} is Cartesian closed. 
\end{cor}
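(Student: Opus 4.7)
The plan is to simply verify each of the three defining conditions of a Cartesian closed category in turn, drawing on results already established in the appendix. All three are essentially bookkeeping at this stage, since the hard technical work has already been done in Section~\ref{sec:p2:controlled_frag_crs} and encapsulated in Theorem~\ref{thm:appendix:exponential}.

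First I would exhibit a terminal object. The one-point set $\terobj$ carries a unique frag-coarse structure, and for any frag-coarse space $\crse X$ the unique function $\crse X \to \terobj$ is trivially controlled; any two such functions are frag-close (in fact equal), so $\terobj$ is terminal in \Cat{FragCrs}. Next, for binary products, I would invoke the discussion in Section~\ref{sec:appendix:frag coarse} where it is observed that the same construction as in \Cat{Coarse} produces products in \Cat{FragCrs}: given $\crse X = (X,\CE)$ and $\crse Y = (Y,\CF)$, the Cartesian product $X \times Y$ equipped with $\CE \otimes \CF$ together with the projections $\pi_X, \pi_Y$ satisfies the universal property (the point is that the verification in Section~\ref{sec:p1:binary products} never used that $\Delta_X$ or $\Delta_Y$ belongs to the coarse structure). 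Finally, the existence of exponentials $\crse{Y^X}$ for any pair is exactly the content of Theorem~\ref{thm:appendix:exponential}, which repackages Theorem~\ref{thm:p2:exponential}.

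Since the three required ingredients are all present, \Cat{FragCrs} is Cartesian closed. There is no real obstacle here: the substance lies entirely in Theorem~\ref{thm:p2:exponential}, whose proof occupied Section~\ref{sec:p2:controlled_frag_crs}. The only mild subtlety worth flagging in writing is that the universal property of the exponential as stated in Theorem~\ref{thm:p2:exponential} is given in terms of a bijection $\fcmor(\crse Z \times \crse X, \crse Y) \leftrightarrow \fcmor(\crse Z, \crse{Y^X})$ implemented by transposition against a distinguished evaluation map, which is precisely the categorical formulation of an exponential object; so no further argument is needed.
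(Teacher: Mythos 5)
Your proposal is correct and follows the same route the paper takes: the paper dispatches the corollary in one line by observing that \Cat{FragCrs} has all finite products and, by Theorem~\ref{thm:appendix:exponential}, has all exponentials, which is exactly the checklist you verify (with "finite products" unpacked into "terminal object plus binary products"). No discrepancy to flag.
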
 

\section{Enriched Coarse Categories}\label{sec:appendix:enriched coarse}
Informally, an enriched category\index{category!enriched} is a category where the sets of morphisms have some additional structure. A prototypical example is the category \Cat{Vect} of vector spaces over some fixed field, as the set of linear maps between two vector spaces is itself a vector space. In other words, \Cat{Vect} is enriched over itself. It is a general fact that a Cartesian closed category is enriched over itself (see \cite[Section~1.6]{kelly1982basic}), therefore \Cat{FragCrs} is enriched just as \Cat{Vect} is.
Further, it is easy to see \Cat{Coarse} can be enriched over \Cat{FragCrs}.
Rather than giving a precise definition of enriched category, we will content ourselves with a hands\=/on description of the extra structure on the sets of morphisms of \Cat{Coarse}. The reader interested in the abstract formalism may refer \cite{kelly1982basic}.

As mentioned above, the `enriched' structure of \Cat{Vect} is easy to describe: if $V$ and $W$ are two vector spaces it is easy to define operations on the set of linear maps $\CL(V,W)=\mor{Vect}(V,W)$ endowed with which it is a vector space. 
The enriched structure of \Cat{Coarse} is more subtle, because it is not possible to equip the set  $\cmor(\crse{X,Y})$ with a good (frag\=/)coarse structure. Instead, we need to consider the exponential frag\=/coarse space $\crse{Y^{X}}$. 

More precisely, we define an enriched version of the coarse category as the category \Cat{Coarse$^{en}$} having coarse spaces as objects and so that the ``space of morphisms'' $\mor{Coarse$^{en}$}(\crse{ X,Y})$ is the frag\=/coarse space $\crse{Y^{X}}$.
The composition in \Cat{Coarse$^{en}$} is defined by the composition morphism $[\circ]\colon \crse{Z^{Y}\times Y^{X}\to Z^{X}}$, \emph{i.e.}\ the frag\=/coarse map associated with the composition $\circ\colon(f,g)\mapsto f\circ g$. 
Under this definition \Cat{Coarse$^{en}$} is a \emph{category enriched over \Cat{FragCrs}} (or \emph{\Cat{FragCrs}\=/category}) \cite{kelly1982basic}.

Note that it is not technically correct to talk about ``set of morphisms'' in \Cat{Coarse$^{en}$}. This is because an enriched category is not really a category: the morphisms are not a ``set'', but an object in a different category (in this case \Cat{FragCrs}). In this setting, the correct terminology is that $\crse{Y^{X}}$ is the \emph{hom\=/object}\index{hom\=/object} from $\crse X$ to $\crse Y$. In particular, it is improper to think of an element $f\in \crse{Y^{X}}$ as a morphism in $\mor{Coarse$^{en}$}(\crse{ X,Y})$.
The formal way to recover the original category \Cat{Coarse} from its enriched version \Cat{Coarse$^{en}$} is by recalling that a coarse map $\crse{f\colon X\to Y}$ is a frag\=/closeness equivalence class of controlled functions $f\in \ctrl{X}{Y}$, and that this equivalence class can be described as the frag\=/closeness class of the function $\tobj\to \crse{Y^{X}}$ sending ${\rm pt}$ to $f$.
That is, the sets of morphism between two objects $\crse{X}$, $\crse Y$ in \Cat{Coarse} can be identified with frag\=/coarse maps from the terminal object $\tobj$ to the hom\=/object $\crse{Y^{X}}$:
\[
 \cmor(\crse{X,Y})\cong \fcmor(\tobj,\crse{Y^{X}}).
\]

Of course, the same construction can be performed to define the category \Cat{FragCrs$^{en}$} enriched over \Cat{FragCrs} (this is the standard way of enriching a Cartesian closed category over itself).

\section{The Pre-coarse Categories}\label{sec:appendix:enriched pre_coarse}
At times it may be useful to refrain from identifying close functions. We conclude this categorical appendix by noting that the categories whose morphisms are controlled maps (as opposed to equivalence classes thereof) have a natural enriched structure.

\begin{de}
 The category of \emph{precoarse spaces} \Cat{PreCoarse}\index{category of!precoarse spaces} (resp. of \emph{prefrag\=/coarse spaces} \Cat{PreFragCrs}\index{category of!prefrag-coarse spaces}) has coarse spaces (resp. frag\=/coarse spaces) as objects and controlled maps as morphisms.
\end{de}

\begin{rmk}
 By definition, we have a functor $\Cat{PreCoarse}\to\Cat{Coarse}$ preserving each coarse space and sending a controlled function $f$ to its equivalence class $\crse f=[f]$. The same is of course true for \Cat{PreFragCrs} and \Cat{FragCrs}. 
\end{rmk}

As in Section~\ref{sec:appendix:enriched coarse}, also \Cat{PreCoarse} and \Cat{PreFragCrs} can be enriched over \Cat{PreFragCrs}. In this case the situation is even simpler to describe.
Namely, given coarse spaces $\crse X=(X,\CE)$ and $\crse Y=(Y,\CF)$, the set of morphisms $\mor{PreCoarse}(\crse{X,Y})$ equal to $\ctrl{X}{Y}$ by definition. We can hence directly equip it with the frag\=/coarse structure $\CF^\CE$. Namely, we are entitled to identify $\mor{PreCoarse}(\crse{X,Y})$ with $\crse{Y^{X}}$. 

Unlike for \Cat{Coarse}, here it is legitimate to say that an element $f\in\crse{Y^{X}}$ is a morphism. This is the case because $f$ can be uniquely identified with the morphism $\mor{PreFragCrs}(\tobj,\crse{Y^{X}})$ sending $\rm pt$ to $f$.

\chapter{Metric Groups and Quasifications}
\label{ch:appendix:metric.groups}

\section{Quasifications of Metric Spaces}
\label{sec:appendix:quasifications}

We introduced coarse geometry as a language to describe ``geometric properties'' that are stable up to uniformly bounded error. On metric spaces there is another natural approach to achieve similar results, namely via ``quasifications''. Quasifications are generally favored in geometric group theory.

Recall that a function between metric spaces $f\colon(X,d_X)\to(Y,d_Y)$ is $(L,A)$\=/quasi\=/Lipschitz if $d_Y(f(x),f(x'))\leq L d_X(x,x')+A$ for every $x,x'\in X$. Of course, quasi\=/Lipschitz functions are a special class of controlled maps and the property of being quasi\=/Lipschitz is preserved under passing to close functions. We may define a category of quasi\=/metric spaces \Cat{QuasiMet} having metric spaces as objects and equivalence classes of quasi\=/Lipschitz maps as morphisms, where functions are equivalent if they are close.
Two metric spaces are isomorphic in \Cat{QuasiMet} if and only if they are quasi\=/isometric.

Note that ``quasifications'' of metric spaces generally contain much more information than the ``coarsification''. For instance, if $(X,d)$ is a metric space of infinite diameter and we consider the metric $d'\coloneqq \log(1+d)$, then the identity is a coarse equivalence between $(X,d)$ and $(X,d')$ but not a quasi\=/isometry.
Informally, this loss of information comes from the fact that coarse structures do not encode ``how large'' a controlled set is. Namely, for any subset of $(X,d)$ we can say whether it is bounded but we do not have any information whatsoever on its diameter.

\begin{rmk}
 One may try to build an abstract theory of quasi-spaces that includes non\=/metric examples. The idea would be to consider a coarse space $(X,\CE)$ together with some indexing $\CE\to\RR_+$ which plays the role of the diameter (this indexing would have to satisfy some compatibility conditions, \emph{e.g.}\ being increasing and subadditive under composition). We could then restrict to consider only those morphisms that preserve the indexing up to an affine factor.
 
 Taking this point of view, it could also be interesting to replace $\RR_+$ with some other fixed ordered set $(I,\leq)$ and to consider morphisms that preserve the indexing up to some other equivalence relation.
\end{rmk}

It is worthwhile remarking that the theories of ``quasifications'' and ``coarsification'' are equivalent when restricting to (quasi) geodesic metric spaces. This fact was used more or less implicitly in the discussion of the Milnor--Schwarz Lemma (Subsection~\ref{sec:p1:Milnor_svarc}).

\begin{de}
 A metric space $(X,d)$ is \emph{quasi\=/geodesic}\index{quasi-!geodesic} if it is quasi\=/isometric to a geodesic metric spaces.
\end{de}

Given an interval $I\subseteq\RR$, $L\geq 1$ and $A\geq 0$, a function $\gamma\colon I\to (X,d)$ is an \emph{$(L,A)$\=/quasi\=/geodesic} if it is an $(L,A)$\=/quasi\=/isometric embedding. That is,
\[
 \frac 1L \abs{t-t'}-A \leq d(\gamma(t),\gamma(t'))\leq L \abs{t-t'} +A 
\]
for every $t,t'\in I$.
A function $f\colon(X,d_X)\to(Y,d_Y)$ is an \emph{$(L,A)$\=/quasi\=/isometry}\index{quasi-!isometry} if it is $(L,A)$\=/quasi\=/Lipschitz and it has an $(L,A)$\=/quasi\=/Lipschitz quasi\=/inverse $g\colon (Y,d_Y)\to(X,d_X)$ such that $g\circ f$ and $f\circ g$ are at distance at most $A$ from $\id_X$ and $\id_Y$. It is simple to verify that an $(L,A)$\=/quasi\=/isometry is an $(L,3A)$\=/quasi\=/isometric embedding. 

\begin{lem}
\label{lem:appendix:quasi geo iff quasi geo}
 A metric space $(X,d_X)$ is quasi\=/geodesic if and only if there are $L,A\in\RR_+$ such that any two points of $X$ can be joined by an $(L,A)$\=/quasi\=/geodesic.
\end{lem}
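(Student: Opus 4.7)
The plan is to prove both implications by constructing explicit objects: for the forward direction we pull back geodesics through the quasi\=/isometry, and for the backward direction we build a geodesic metric graph out of a net in $(X,d_X)$ using the quasi\=/geodesics as a connectivity certificate.

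First I would handle the easy direction. Suppose $(X,d_X)$ is quasi\=/geodesic, so there exist a geodesic metric space $(Y,d_Y)$ and an $(L,A)$\=/quasi\=/isometry $f\colon X\to Y$ with $(L,A)$\=/quasi\=/inverse $g\colon Y\to X$. Given $x,x'\in X$, let $\sigma\colon [0,d_Y(f(x),f(x'))]\to Y$ be a unit\=/speed geodesic from $f(x)$ to $f(x')$. Its composition $g\circ \sigma$ is an $(L,3A)$\=/quasi\=/isometric embedding, hence an $(L,3A)$\=/quasi\=/geodesic in $X$ with endpoints $(L,3A)$\=/close to $x$ and $x'$. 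I would then prepend and append short segments of length at most $3A$ (parametrised linearly) connecting $x$ and $x'$ to the endpoints of $g\circ\sigma$. A straightforward case analysis on short versus long time differences shows that the concatenated path is an $(L',A')$\=/quasi\=/geodesic for constants $L',A'$ depending only on $L,A$, and these constants are uniform in $x,x'$.

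Next I would tackle the substantive direction. Assume every pair of points in $X$ is joined by an $(L,A)$\=/quasi\=/geodesic. Fix a maximal $1$\=/separated subset $N\subseteq X$; by maximality $N$ is also $1$\=/dense. Form a graph $\Gamma$ with vertex set $N$ by joining $n,n'\in N$ whenever $0<d_X(n,n')\leq C$, where $C\coloneqq L+A+2$. Metrise $\Gamma$ as a length space by declaring every edge to be a unit interval, and let $d_\Gamma$ denote the resulting (a priori extended) path metric; $(\Gamma,d_\Gamma)$ is a geodesic metric space. The central claim is that $\Gamma$ is connected, which I would prove using quasi\=/geodesics: given $n,n'\in N$, pick an $(L,A)$\=/quasi\=/geodesic $\gamma\colon[0,T]\to X$ from $n$ to $n'$, sample it at integer times, and replace each sample by a nearest point in $N$ (within distance $1$). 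Consecutive sampled points satisfy $d_X(\gamma(k),\gamma(k+1))\leq L+A$, so the nearby net points are within distance $L+A+2=C$, producing an edge\=/path in $\Gamma$ of length at most $\lceil T\rceil+1\leq L\, d_X(n,n')+A+2$.

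Finally I would verify that the inclusion $\iota\colon N\hookrightarrow X$, composed with any choice of nearest\=/point map $r\colon X\to N$, gives a quasi\=/isometry between $(\Gamma,d_\Gamma)$ and $(X,d_X)$. Each edge of $\Gamma$ has $d_X$\=/length at most $C$, so $d_X(\iota(n),\iota(n'))\leq C\,d_\Gamma(n,n')$, yielding the upper quasi\=/Lipschitz bound. The reverse inequality $d_\Gamma(n,n')\leq L\,d_X(n,n')+A+2$ is exactly the estimate obtained from the sampling argument above. Coarse density of $N$ makes $\iota$ coarsely surjective, and $r$ is an almost\=/inverse with displacement at most $1$. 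Hence $(X,d_X)$ is quasi\=/isometric to the geodesic space $(\Gamma,d_\Gamma)$.

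The main obstacle I anticipate is purely bookkeeping: one must be careful in the backward direction that the quasi\=/geodesics provide a \emph{uniform} bound on path length in $\Gamma$ rather than just connectivity, and in the forward direction that the reparametrisation gluing the short end\=/segments preserves the quasi\=/geodesic constants uniformly in the pair $(x,x')$. Both difficulties are handled by the fact that $L$ and $A$ in the hypothesis are fixed once and for all.
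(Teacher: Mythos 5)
Your proof is correct and follows essentially the same strategy as the paper: in the forward direction you pull a geodesic of the model space back through the quasi-inverse and patch the endpoints (the paper simply redefines the two endpoint values to be $x$ and $x'$ rather than extending the domain, but this is cosmetic); in the backward direction you build a graph on a set of points with edges joining nearby points, metrize it as a length space with unit edges, and use the quasi-geodesics to bound the graph distance linearly in $d_X$. The only differences are that the paper uses all of $X$ as the vertex set with edge threshold $2A$ while you use a maximal $1$-separated net with threshold $L+A+2$ — your choice is actually the safer one, since the sampled quasi-geodesic only guarantees consecutive jumps of size at most $L+A$ — and a harmless slip in the constant (from $\tfrac{1}{L}T - A \le d_X(n,n')$ one gets $T \le L\,d_X(n,n') + LA$, not $L\,d_X(n,n') + A$).
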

\begin{proof}
 Assume that there exist a geodesic metric space $(Y,d_Y)$ and an $(L,A)$\=/quasi\=/isometry $f\colon(X,d_X)\to(Y,d_Y)$ with quasi\=/inverse $g\colon (Y,d_Y)\to(X,d_X)$. For any two points $x,x'\in X$, let $l\coloneqq d_Y(f(x),f(x'))$ and choose a geodesic $\gamma\colon [0,l]\to (Y,d_Y)$ connecting $f(x)$ to $f(x')$. Define a function $\tilde \gamma\colon[0,l]\to X$ by
 \[
 \tilde\gamma(t)\coloneqq\left\{
  \begin{array}{ll}
   x & t=0; \\
   g(\gamma(t)) & 0<t<l; \\
   x' & t=l.
  \end{array}
 \right.
 \]
 Since $g$ is a $(L,3A)$\=/quasi\=/isometric embedding, it is immediate to verify that $\tilde\gamma$ is a $(L,4A)$\=/quasi-geodesic.
 
 For the other implication, assume that every pair of points $x,x'\in X$ can be joined by a $(L,A)$\=/quasi-geodesic. Construct a graph $Y$ using $X$ as vertex set and connecting two vertices $x,x'$ by an edge if and only if $d(x,x')\leq 2A$. Declare each edge to have length $1$ and let $d_Y$ be the induced path metric on $Y$. It is not hard to verify that the function $(X,d_X)\to(Y,d_Y)$ sending each point to the corresponding vertex is a quasi\=/isometry.
\end{proof}

The fact that the distance between pairs of points in quasi\=/geodesic metric spaces can be ``quasi\=/realized'' by a quasi\=/geodesic has the following consequence:

\begin{lem}
 If $(X,d_X)$ is a quasi\=/geodesic metric space and $f\colon (X,d_X)\to (Y,d_Y)$ is a controlled map, then $f$ is quasi\=/Lipschitz.
\end{lem}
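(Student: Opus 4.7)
The plan is to use the characterization of quasi-geodesic spaces provided by Lemma~\ref{lem:appendix:quasi geo iff quasi geo}: there exist constants $L_0 \geq 1$ and $A_0 \geq 0$ such that any two points of $(X,d_X)$ can be joined by an $(L_0,A_0)$-quasi-geodesic. The strategy is then to bound $d_Y(f(x),f(x'))$ by walking along such a quasi-geodesic in steps of bounded $d_X$-diameter, applying the control function $\rho_+$ (from Example~\ref{exmp:p1:metric controlled map}) on each step, and summing up via the triangle inequality.

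More precisely, fix $x,x' \in X$ and set $R \coloneqq d_X(x,x')$. First I would choose an $(L_0,A_0)$-quasi-geodesic $\gamma\colon [0,l] \to X$ with $\gamma(0) = x$ and $\gamma(l) = x'$. From the lower quasi-isometry inequality $\frac{1}{L_0}l - A_0 \leq d_X(\gamma(0),\gamma(l)) = R$, we get $l \leq L_0(R + A_0)$, so $l$ is controlled linearly by $R$. Next, subdivide $[0,l]$ into $n \coloneqq \lceil l \rceil$ subintervals of length at most $1$, with partition points $0 = t_0 < t_1 < \cdots < t_n = l$. By the upper quasi-geodesic inequality, $d_X(\gamma(t_{i-1}), \gamma(t_i)) \leq L_0 + A_0$ for each $i$. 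Since $f$ is controlled with control function $\rho_+$, we obtain
\[
 d_Y\bigparen{f(\gamma(t_{i-1})), f(\gamma(t_i))} \leq \rho_+(L_0 + A_0) \eqqcolon C,
\]
a constant depending only on $L_0, A_0$, and $\rho_+$.

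Summing over $i=1,\dots,n$ by the triangle inequality yields
\[
 d_Y(f(x), f(x')) \leq nC \leq (l+1)C \leq \bigparen{L_0 C}\, R + (L_0 A_0 + 1)C,
\]
which is affine in $R = d_X(x,x')$. Setting $L \coloneqq L_0 C$ and $A \coloneqq (L_0 A_0 + 1)C$ shows that $f$ is $(L,A)$-quasi-Lipschitz, as desired. There is no real obstacle here: the only subtlety is ensuring the estimates are uniform in the pair $(x,x')$, which follows because the constants $L_0, A_0$ from Lemma~\ref{lem:appendix:quasi geo iff quasi geo} and the control function $\rho_+$ from the controlledness of $f$ do not depend on the chosen pair of points.
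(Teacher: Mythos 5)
Your proof is correct and takes essentially the same approach as the paper: sample points along the $(L_0,A_0)$-quasi-geodesic at bounded $d_X$-spacing, apply the control function once per step, and sum by the triangle inequality. The only cosmetic difference is the step size ($1$ here versus $1/L$ in the paper), which just changes the constant.
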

\begin{proof}
 Let $L,A$ be constants so that each pair of points in $(X,d_X)$ can be joined by an $(L,A)$\=/quasi\=/geodesic. Let $C$ be so that $d(f(x),f(x'))\leq C$ whenever $d(x,x')\leq 1+A$.
 Now, for any fixed a pair of points $x,x'\in X$ let $\gamma\colon [0,l]\to (X,d_X)$ be a $(L,A)$\=/quasi\=/geodesic connecting them. Note that $d(x,x')\geq \frac lL -A$.
 Let $n\coloneqq \lfloor lL \rfloor+1$ and pick $0=t_0<\cdots <t_n=l$ with $\abs{t_{i-1},t_i}< \frac 1L$.  
 Then $d_X(f\circ \gamma(t_{i-1}),f\circ \gamma(t_i))\leq C$ for every $i=1,\ldots, n$. By triangle inequality, $d_Y(f(x),f(x'))\leq nC\leq lLC+C\leq d_X(x,x')L^2C+L^2CA+C$.
\end{proof}

\begin{cor}
\label{cor:appendix:quasi geo quasi Lipschitz}
 If $(X,d_X)$ and $(Y,d_Y)$ are quasi\=/geodesic metric spaces and $f\colon (X,d_X)\to (Y,d_Y)$ is a coarse equivalence then $f$ is a quasi\=/isometry.
\end{cor}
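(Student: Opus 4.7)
The plan is to deduce this directly from the preceding lemma, which shows that every controlled map out of a quasi-geodesic metric space is automatically quasi-Lipschitz. Since a coarse equivalence is by definition a controlled map $f\colon(X,d_X)\to(Y,d_Y)$ admitting a controlled coarse inverse $g\colon(Y,d_Y)\to(X,d_X)$, both the hypotheses required to invoke the lemma twice are in place: $X$ quasi-geodesic for $f$, and $Y$ quasi-geodesic for $g$.

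Concretely, I would first apply the lemma to $f$ to obtain constants $L_f,A_f$ such that $d_Y(f(x),f(x'))\leq L_f d_X(x,x')+A_f$ for all $x,x'\in X$. Then I would apply the lemma to $g$ to obtain $L_g,A_g$ with the analogous bound. By the definition of coarse inverse, the compositions $g\circ f$ and $f\circ g$ are close to $\id_X$ and $\id_Y$ respectively, so there is some constant $C$ with $d_X(g\circ f(x),x)\leq C$ and $d_Y(f\circ g(y),y)\leq C$ for all $x\in X$, $y\in Y$.

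Taking $L\coloneqq\max(L_f,L_g)$ and $A\coloneqq\max(A_f,A_g,C)$, the pair $(f,g)$ then meets the definition of an $(L,A)$-quasi-isometry verbatim: both $f$ and $g$ are $(L,A)$-quasi-Lipschitz, and their compositions are uniformly within $A$ of the identity. The only routine observation needed is that being close as functions into a metric coarse space is equivalent to being at uniformly bounded $d$-distance, which is immediate from the definition of $\CE_d$. I do not anticipate any real obstacle here; the corollary is essentially a packaging of the previous lemma together with the definitional unpacking of ``coarse equivalence'' and ``quasi-isometry'' in the metric setting.
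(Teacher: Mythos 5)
Your proof is correct and is exactly the intended deduction: the paper states the corollary without proof immediately after the lemma on controlled maps out of quasi-geodesic spaces, and the evident route — apply that lemma once to $f$ (using $X$ quasi-geodesic) and once to a coarse inverse $g$ (using $Y$ quasi-geodesic), then unwind the definitions of ``close'' and ``quasi-isometry'' — is precisely what you wrote.
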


In turn, we deduce the following \editC{(see also \cite[Theorem 9.2]{protasov2003ball} and \cite[2.57]{roe_lectures_2003})}:

\begin{prop}
\label{prop:appendix:coarse geo has unique quasi metric}
 If $(X,\CE)$ is coarsely geodesic, then there exists a metric $d$ on $X$ such that $\CE=\CE_d$ and $(X,d)$ is quasi\=/geodesic. The metric $d$ is uniquely defined up to quasi\=/isometry. 
\end{prop}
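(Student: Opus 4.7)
The plan is to prove existence by an explicit construction and uniqueness by appeal to Corollary~\ref{cor:appendix:quasi geo quasi Lipschitz}. For existence, since $(X,\CE)$ is coarsely geodesic, $\CE$ is monogenic and coarsely connected, so we can choose a generating relation $\bar E$ with $\bar E=\op{\bar E}$ and $\Delta_X\subseteq \bar E$. As already observed in the discussion following Definition~\ref{def:p1:crs_geod}, setting
\[
 d(x,y)\coloneqq \min\bigbraces{n\in\NN \bigmid x\torel{\bar E^{\cmp n}} y}
\]
for $x\neq y$ defines a metric on $X$ with $\CE_d=\CE$: symmetry follows from $\bar E=\op{\bar E}$, the triangle inequality from $\bar E^{\cmp n}\cmp \bar E^{\cmp m}\subseteq \bar E^{\cmp(n+m)}$, and $d$ takes only finite values by coarse connectedness. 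Monogenicity forces $E\in\CE$ iff $E\subseteq \bar E^{\cmp n}$ for some $n$, which is exactly the condition $E\in\CE_d$.

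Next I would verify that $(X,d)$ is quasi\=/geodesic via the criterion of Lemma~\ref{lem:appendix:quasi geo iff quasi geo}. Given $x,y\in X$ with $d(x,y)=n$, minimality of $n$ yields a sequence $x=x_0,x_1,\ldots,x_n=y$ with $x_{i-1}\torel{\bar E}x_i$, and the triangle inequality together with $d(x_0,x_n)=n$ forces $d(x_i,x_j)=|i-j|$ for all $0\leq i,j\leq n$. Defining $\gamma\colon [0,n]\to X$ by $\gamma(t)\coloneqq x_{\lfloor t\rfloor}$ (with $\gamma(n)\coloneqq y$) then gives
\[
 |t-t'|-1\;\leq\; d\bigparen{\gamma(t),\gamma(t')}\;\leq\; |t-t'|+1,
\]
so $\gamma$ is a $(1,1)$\=/quasi\=/geodesic joining $x$ and $y$. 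Since $L=1$ and $A=1$ are independent of the chosen pair, Lemma~\ref{lem:appendix:quasi geo iff quasi geo} shows that $(X,d)$ is quasi\=/geodesic.

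Uniqueness is essentially automatic: if $d_1,d_2$ are two metrics on $X$ with $\CE_{d_1}=\CE_{d_2}=\CE$ and both $(X,d_1),(X,d_2)$ are quasi\=/geodesic, then the identity function $\id_X\colon(X,d_1)\to(X,d_2)$ is a coarse equivalence (being a representative of $\cid_{\crse X}$). By Corollary~\ref{cor:appendix:quasi geo quasi Lipschitz}, a coarse equivalence between quasi\=/geodesic metric spaces is automatically a quasi\=/isometry, which is exactly the desired uniqueness statement.

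I expect no genuine obstacle here: all the conceptual work has already been done in Lemma~\ref{lem:appendix:quasi geo iff quasi geo} and Corollary~\ref{cor:appendix:quasi geo quasi Lipschitz}, and in the characterization of monogenic coarse structures via iterated compositions of a single generator. The only mildly delicate point is to make sure the quasi\=/geodesic $\gamma$ obtained from a minimizing $\bar E$\=/chain has its distance bounds truly independent of the endpoints, which is why one must invoke minimality to deduce $d(x_i,x_j)=|i-j|$ rather than just $\leq |i-j|$; but once this is observed, the uniform constants $(L,A)=(1,1)$ fall out immediately.
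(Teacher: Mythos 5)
Your proof is correct and follows essentially the same route as the paper: construct $d$ from a symmetrized generating relation $\bar E$ via $d(x,y)=\min\{n\mid x\torel{\bar E^{\cmp n}}y\}$, read off a $(1,1)$\=/quasi\=/geodesic from a minimizing $\bar E$\=/chain and invoke Lemma~\ref{lem:appendix:quasi geo iff quasi geo}, then get uniqueness from Corollary~\ref{cor:appendix:quasi geo quasi Lipschitz}. The only difference is that you spell out $d(x_i,x_j)=|i-j|$ from minimality, which the paper leaves implicit.
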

\begin{proof}
 Let $E$ be a generating relation for $\CE$ and for every $x\neq x'$ let $d(x,x')\coloneqq \min\{n\mid (x,x')\in E^{\cmp n}\}$. Then $\CE=\CE_d$. It follows from the definition that if $d(x,x')=n$ then there exist points $x=x_0,\ldots,x_n=x'$ such that $d(x_{i-1},x_i)=1$ for every $i=1,\ldots, n$. We can use these points to construct a $(1,1)$\=/quasi\=/geodesic between $x$ and $x'$, therefore $(X,d)$ is a quasi\=/geodesic metric space by Lemma~\ref{lem:appendix:quasi geo iff quasi geo}. 
 Uniqueness of $d$ up to quasi\=/isometry follows from Corollary~\ref{cor:appendix:quasi geo quasi Lipschitz}
\end{proof}

\begin{rmk}
 Since the quasi\=/isometry type of the metric $d$ constructed in Proposition~\ref{prop:appendix:coarse geo has unique quasi metric} is unique, this enables us to use quasi\=/isometry invariants as coarse invariants for coarsely geodesic spaces. This can be very useful, as many quantitative quasi\=/isometric invariants (\emph{e.g.}\ the growth type) do not have a coarse analog. 
\end{rmk}

\section{Groups in Metric Categories}
\label{sec:appendix:metric groups}

Even before studying coarse groups, it is natural to wonder if there is a notion of ``metric group'', defined as group object in a category of metric spaces. Interestingly, there is more than one natural choice for the `category of metric spaces'. In fact, even if it is clear what the objects should be, the choice of which morphisms to consider can vary accordingly to one's necessity. In this section we will consider two such choices and characterize the group objects for both.

In order to have group objects (Definition~\ref{def:p1:group.object}), a category must have binary products and a terminal objects. The one\=/point metric space will always provides us with a terminal object, while the situation is a little more delicate with binary products.
Given two metric spaces $(X,d_X)$ and $(Y,d_Y)$, the \emph{$p$\=/product metric}\index{$p$\=/product metric} on the Cartesian product $X\times Y$ is the metric given by
\[
 d_p\paren{(x,y),(x',y')}\coloneqq \paren{d_X(x,x')^p+d_Y(y,y')^p}^{\frac{1}{p}}.
\]
The \emph{$\infty$\=/product metric}\index{$\infty$\=/product metric} is the maximum $d_\infty\paren{(x,y),(x',y')}\coloneqq \max\paren{d_X(x,x'),d_Y(y,y')}$. Depending on the choice of morphisms, these product metrics need not be equivalent and they do not all define a (categorical) product.

\

The category \Cat{1\=/LipMet} is the category whose objects are metric spaces and whose morphisms are $1$\=/Lipschitz maps.\footnote{%
From a categorical point of view it is natural to restrict ones attention to $1$\=/Lipschitz maps: these are the morphisms that arise when considering metric spaces as enriched categories \cite{lawvere1973metric}.}
It follows from the triangle inequality that $(X\times Y,d_\infty)$ is a product of $(X,d_X)$ and $(Y,d_Y)$ in \Cat{1\=/LipMet}. On the other hand, if $p\neq\infty$ and neither $X$ nor $Y$ consist of a single point, then $(X\times Y,d_p)$ is never a product (in the categorical sense). In fact, the projections from $(X\times Y,d_\infty)$ to $X$ and $Y$ do not lift to a $1$\=/Lipschitz map $(X\times Y,d_\infty)\to (X\times Y,d_p)$.

Recall that an \emph{ultrametric}\index{ultrametric} on a set $X$ is a metric $d$ that satisfies the following strengthened triangle inequality:
\[
 d(x,z)\leq \max\{d(x,y),d(y,z)\}
\]
for every $x,y,z\in X$.

\begin{prop}\label{prop:appendix:group.objects.in.1.LipMet}
 A group object in \Cat{1\=/LipMet} is a set\=/group equipped with a bi\=/invariant ultrametric.
\end{prop}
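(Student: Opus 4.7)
The plan is to unpack the definition of group object in \Cat{1\=/LipMet} and show that the $1$\=/Lipschitz condition on the multiplication, measured with respect to the $d_\infty$ product metric, forces precisely bi\=/invariance together with the ultrametric inequality.

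First I would note that since morphisms in \Cat{1\=/LipMet} are honest set\=/theoretic functions, the Group Diagrams for a group object $(G,d,\ast,e,\inversefn)$ commute on the nose, so the underlying data $(G,\ast,e,\inversefn)$ is necessarily a set\=/group in the usual sense. The content of the proposition is therefore purely metric: I need to characterize the metrics $d$ on a set\=/group $G$ such that $\ast\colon(G\times G,d_\infty)\to(G,d)$ and $\inversefn\colon(G,d)\to(G,d)$ are $1$\=/Lipschitz.

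For the forward implication, applied with $g_2=g_2'$ the $1$\=/Lipschitz hypothesis on $\ast$ gives $d(g_1\ast g_2,g_1'\ast g_2)\leq d(g_1,g_1')$, and the symmetric inequality (obtained by multiplying by $g_2^{-1}$ on the right, which is also $1$\=/Lipschitz as a composition of $\ast$ with the constant embedding to the second factor) yields equality. Hence right translation is an isometry, and similarly left translation is an isometry, so $d$ is bi\=/invariant. For the ultrametric inequality, given $x,y,z\in G$, use bi\=/invariance to rewrite
\[
 d(x,z)=d(e,x^{-1}z)=d\bigparen{e\ast e\,,\,(x^{-1}y)\ast(y^{-1}z)},
\]
and then apply the $1$\=/Lipschitz bound for $\ast$ with respect to $d_\infty$ to obtain
\[
 d(x,z)\leq \max\bigbraces{d(e,x^{-1}y)\,,\,d(e,y^{-1}z)}=\max\bigbraces{d(x,y)\,,\,d(y,z)},
\]
again by bi\=/invariance.

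For the converse, assume $d$ is a bi\=/invariant ultrametric. Bi\=/invariance immediately makes $\inversefn$ into an isometry, since $d(g^{-1},h^{-1})=d(e,gh^{-1})=d(h,g)$. For $\ast$ one combines the triangle ultra\=/inequality with bi\=/invariance:
\[
 d(g_1\ast g_2,g_1'\ast g_2')\leq \max\bigbraces{d(g_1\ast g_2,g_1'\ast g_2)\,,\,d(g_1'\ast g_2,g_1'\ast g_2')}=\max\bigbraces{d(g_1,g_1')\,,\,d(g_2,g_2')},
\]
which is exactly the $1$\=/Lipschitz condition with respect to $d_\infty$. The two implications together give the claimed characterization. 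I do not expect any real obstacle; the main subtlety is just keeping track of the fact that the product in \Cat{1\=/LipMet} is $d_\infty$ rather than $d_1$, which is precisely what makes the ultrametric inequality (as opposed to the ordinary triangle inequality) the right condition.
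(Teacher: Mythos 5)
Your proof is correct and follows essentially the same route as the paper's: reduce to characterizing the metrics, extract bi-invariance from the 1-Lipschitzness of left/right translations and their inverses, derive the ultrametric inequality from the 1-Lipschitz bound on $\ast$ with respect to $d_\infty$, and check the converse directly. The only difference is that you spell out a few steps (the bi-invariance equality and the converse verification) that the paper leaves as "immediate"; the underlying argument is the same.
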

\begin{proof}
 Functions between metric spaces define the same morphism if and only if they coincide pointwise. In particular, functions $\ast,\inversefn ,\unit$ making $(G,d)$ into a metric group must make it into a set\=/group as well.
 That is, the problem of characterizing group objects in \Cat{1\=/LipMet} is equivalent to the problem of characterizing the set of metrics making a set\=/group $G$ into a group object. 
 
 Let $(G,\ast,\unit,\inversefn )$ be a set\=/group. A metric $d$ on $G$ makes it into a group object if and only if it makes $\ast$ and $\inversefn $ into $1$\=/Lipschitz maps. For $\ast$ to be $1$\=/Lipschitz it is certainly necessary that both the left multiplication ${}_g\ast\colon G\to G$ and the right multiplication $\ast_g\colon G\to G$ be $1$\=/Lipschitz. Since their inverses $(\ast_g)^{-1}=\ast_{g^{-1}}$ and $({}_g\ast)^{-1} ={}_{g^{-1}}\ast$ are also $1$\=/Lipschitz, we deduce that the metric $d$ must be bi\=/invariant. Note that bi\=/invariance of $d$ immediately implies that $\inversefn $ is an isometry.
 
 We claim that $d$ must also be an ultrametric. Let $\abs{g}\coloneqq d(g,\unit)=d(\unit,g)$ and note that $\abs{g}=\abs{g^{-1}}$ for every $g\in G$ because $d$ is bi\=/invariant.
 By definition we have
 \[
  d_\infty\bigparen{(\unit,\unit),(g,h)}=\max\braces{d(\unit,g),d(\unit,h)}=\max\braces{\abs{g},\abs{h}}.
 \]
 If $\ast$ is $1$\=/Lipschitz it follows that for every $g,h,k\in G$ we have
 \[
  d(g,h)=\abs{g^{-1}h}\leq d_\infty\bigparen{(\unit,\unit),(g^{-1}k,k^{-1}h)}
  =\max\bigbraces{\abs{g^{-1}k},\abs{k^{-1}h}}=\max\braces{d(g,k),d(k,h)}.
 \]
 
 Vice versa, it is immediate to check that if $d$ is a bi\=/invariant ultrametric then $(G,d,\ast,\unit,\inversefn )$ is a group object.
\end{proof}

\

Another reasonable choice for the category of metric spaces is the category \Cat{LipMet} having all the Lipschitz maps as morphisms. Note that the $p$\=/product metrics are all bi\=/Lipschitz equivalent to one another, hence define isomorphic objects in \Cat{LipMet}. For concreteness, we keep using $\infty$\=/product metric to make $X\times Y$ into a categorical product. 

As for Proposition~\ref{prop:appendix:group.objects.in.1.LipMet}, group objects in \Cat{LipMet} must be set\=/groups. To characterize the metrics making an abstract group into a group object we need to introduce a piece of terminology. A family of maps $f_i\colon (X,d_X)\to (Y,d_Y)$ is \emph{equi Lipschitz}\index{equi!Lipschitz} if there is a constant $L$ such that all the $f_i$ are $L$\=/Lipschitz.

\begin{prop}\label{prop:appendix:group.objects.in.LipMet}
 A group object in \Cat{LipMet} is a group equipped with a metric so that both the left and the right multiplication are families of equi Lipschitz maps.
 Moreover, if $(G,d,\ast,\unit,\inversefn )$ is a group object then there is a bi\=/invariant metric $\tilde d$ that is bi\=/Lipschitz equivalent to $d$. 
\end{prop}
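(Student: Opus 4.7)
The plan is to mirror the argument given for Proposition~\ref{prop:appendix:group.objects.in.1.LipMet}, replacing the ``$1$\=/Lipschitz'' bounds with uniform Lipschitz bounds, and then to construct the announced bi\=/invariant metric by an explicit ``symmetrization'' procedure.

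First I would observe, exactly as in the $1$\=/Lipschitz case, that since morphisms in \Cat{LipMet} are honest functions and diagrams must commute on the nose, any group object must already be a set\=/group $(G,\ast,\unit,\inversefn)$ equipped with some metric $d$. Thus the problem reduces to characterizing the metrics $d$ making $\ast$ and $\inversefn$ Lipschitz, where $G\times G$ carries the product metric $d_\infty$. One direction is immediate: if $\ast\colon (G\times G,d_\infty)\to(G,d)$ is $L$\=/Lipschitz, then specializing to pairs with equal first (resp.\ second) coordinate shows that both left multiplications ${}_g\ast$ and right multiplications $\ast_g$ are $L$\=/Lipschitz, uniformly in $g$. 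Conversely, if ${}_g\ast$ and $\ast_g$ are families of $L$\=/Lipschitz maps, the triangle inequality
\[
 d(g\ast h,g'\ast h')\leq d(g\ast h,g'\ast h)+d(g'\ast h,g'\ast h')\leq L\, d(g,g')+L\, d(h,h')\leq 2L\, d_\infty\bigparen{(g,h),(g',h')}
\]
shows that $\ast$ is $2L$\=/Lipschitz. The Lipschitz continuity of $\inversefn$ is then automatic: using that $g^{-1}\ast g\ast h^{-1}=h^{-1}$ and $g^{-1}\ast h\ast h^{-1}=g^{-1}$, we get
\[
 d(g^{-1},h^{-1})=d(g^{-1}\ast h\ast h^{-1},\,g^{-1}\ast g\ast h^{-1})\leq L^2\, d(h,g),
\]
so $\inversefn$ is $L^2$\=/Lipschitz. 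This proves the first statement.

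For the ``moreover'' part, I would define
\[
 \tilde d(g,h)\coloneqq\sup_{x,y\in G} d(xgy,xhy).
\]
The key observation, which is precisely the first part of the proposition, is that this supremum is finite: applying $\ast_y$ then ${}_x\ast$, both of which are $L$\=/Lipschitz uniformly in $x,y$, yields $d(xgy,xhy)\leq L^2 d(g,h)$, so $\tilde d(g,h)\leq L^2 d(g,h)$. On the other hand, taking $x=y=\unit$ gives $\tilde d\geq d$, so together $d\leq \tilde d\leq L^2 d$, which makes $\tilde d$ bi\=/Lipschitz equivalent to $d$ (and in particular a metric, since $d$ is). Bi\=/invariance is built in by construction: for any $a,b\in G$ a reindexing $x'=xa$, $y'=by$ gives
\[
 \tilde d(agb,ahb)=\sup_{x,y} d(xagby,xahby)=\sup_{x',y'} d(x'gy',x'hy')=\tilde d(g,h).
\]

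I expect no serious obstacle; the main conceptual point is simply to recognize that ``equi Lipschitz left and right multiplications'' is exactly the data needed to make the symmetrized metric $\tilde d$ finite and equivalent to $d$, whereas in \Cat{1\=/LipMet} with $L=1$ the same construction forces $\tilde d=d$, recovering the ultrametric characterization of Proposition~\ref{prop:appendix:group.objects.in.1.LipMet}.
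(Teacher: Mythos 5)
Your proof takes essentially the same route as the paper's: characterize group objects via equi Lipschitz left and right multiplication, and symmetrize the metric via $\tilde d(g,h)=\sup_{x,y}d(xgy,xhy)$ to get the bi\-/invariant bi\-/Lipschitz\-/equivalent metric. You usefully spell out the Lipschitz bound for $\inversefn$ (the paper just calls it immediate). One small slip: ``bi\-/Lipschitz equivalent to $d$, and in particular a metric, since $d$ is'' is not a valid inference in general — bi\-/Lipschitz equivalence to a metric does not by itself give the triangle inequality. Here $\tilde d$ is a metric because it is a pointwise supremum of pseudo\-/metrics $(g,h)\mapsto d(xgy,xhy)$ (so the triangle inequality passes to the sup), and positive\-/definiteness comes from $\tilde d\geq d$; the paper implicitly leans on this, calling it routine.
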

\begin{proof}
 Let $(G,\ast,\unit,\inversefn )$ be a group and $d$ a metric on $G$. If $\ast$ is $L$\=/Lipschitz, then so are ${}_g\ast$ and $\ast_g$ for all $g\in G$. Vice versa, if ${}_g\ast$ and $\ast_g$ are equi Lipschitz, say with Lipschitz constant $L$, then we have
 \[
  d(gh,g'h')\leq d(gh,gh')+d(gh',g'h')\leq L\paren{d(h,h')+d(g,g')}
  =L d_1\bigparen{(g,h),(g'h')}\leq 2L d_\infty\bigparen{(g,h),(g'h')}.
 \]
 The fact that also $\inversefn $ is Lipschitz is immediate.

 For the second part of the statement, let $\tilde d(h,h')\coloneqq \sup\braces{d(g_1hg_2,g_1h'g_2)\mid g_1,g_2\in G}$. Since left and right multiplication are equi Lipschitz with Lipschitz constant $L$, we see that $\tilde d$ is bi\=/Lipschitz equivalent to $d$ because
 \[
  d(h,h')\leq \tilde d(h,h')
  =\sup_{g_1\in G}\Bigparen{\sup_{g_2\in G} d(g_1hg_2,g_1h'g_2)}
  \leq \sup_{g_1\in G}\Bigparen{Ld(g_1h,g_1h')}
  \leq L^2 d(h,h').
 \]
 It is routine to verify that $\tilde d$ satisfies the triangle inequality and is indeed a bi\=/invariant metric on $G$.
\end{proof}

\begin{cor}
 Up to isomorphism, a group object in \Cat{LipMet} is a set\=/group equipped with a bi\=/invariant metric.
\end{cor}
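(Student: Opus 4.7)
The plan is to deduce the corollary directly from Proposition~\ref{prop:appendix:group.objects.in.LipMet}, which does essentially all the work. I will treat the two directions separately.

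For the forward direction, suppose $(G,d,\ast,\unit,\inversefn)$ is a group object in \Cat{LipMet}. By the ``moreover'' clause of Proposition~\ref{prop:appendix:group.objects.in.LipMet}, there exists a bi\=/invariant metric $\tilde d$ on $G$ that is bi\=/Lipschitz equivalent to $d$. Bi\=/Lipschitz equivalence of metrics means that $\id_G\colon(G,d)\to(G,\tilde d)$ and its inverse are both Lipschitz, so the identity induces an isomorphism $(G,d)\cong(G,\tilde d)$ in \Cat{LipMet}. It remains to observe that $(G,\tilde d,\ast,\unit,\inversefn)$ is itself a group object, so that the isomorphism is an isomorphism of group objects. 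This is where I would invoke the forward implication in the first part of Proposition~\ref{prop:appendix:group.objects.in.LipMet}: since $\tilde d$ is bi\=/invariant, the left and right multiplications $_g\ast$ and $\ast_g$ are isometries of $(G,\tilde d)$, hence trivially equi Lipschitz (with constant $1$). In particular $\ast$ is Lipschitz, and $\inversefn$ is an isometry as well.

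For the converse direction, given any set\=/group $G$ equipped with a bi\=/invariant metric $d$, the same argument as above shows that the left and right multiplications are isometries, hence equi Lipschitz, so by Proposition~\ref{prop:appendix:group.objects.in.LipMet} the tuple $(G,d,\ast,\unit,\inversefn)$ is a group object in \Cat{LipMet}. Putting the two directions together gives a bijection, up to isomorphism in \Cat{LipMet}, between group objects and pairs consisting of a set\=/group with a bi\=/invariant metric.

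There is no serious obstacle here; the only point that deserves care is checking that ``isomorphism'' in the corollary is interpreted correctly, namely as isomorphism of group objects in \Cat{LipMet}. This amounts to noting that the bi\=/Lipschitz identity $\id_G\colon(G,d)\to(G,\tilde d)$ is automatically compatible with $\cop$, $\cunit$, $\cinversefn$ because we are not changing the underlying set\=/theoretic operations.
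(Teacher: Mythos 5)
Your argument is correct and is exactly what the paper has in mind: the corollary is stated without a separate proof precisely because it follows immediately from the ``moreover'' clause of Proposition~\ref{prop:appendix:group.objects.in.LipMet}, together with the easy observation that a bi\=/invariant metric makes left and right multiplications into isometries. Your care in checking that the bi\=/Lipschitz identity is compatible with the group operations is the right point to flag, and your treatment of both directions matches the intended reading.
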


\section{Quasi-Metric Groups}
\label{sec:appendix:quasigroups}
At this point it is all but natural to ask about group objects in the category of quasi\=/metric spaces (Subsection~\ref{sec:appendix:quasifications}).

\begin{de}
 A \emph{quasi\=/metric group}\index{metric group} is a group object in \Cat{QuasiMet}.
\end{de}

The usual arguments show that a quasi\=/metric group is a metric space $(G,d)$ with (equivalence classes of) functions $\ast,e,\inversefn$ for which there are constants $L\geq 1$ and $A\geq 0$ such that
\begin{enumerate}
 \item  ${}_g\ast$, $\ast_g$ and $\inversefn$ are $(L,A)$\=/quasi\=/isometries;
 \item $(g_1\ast g_2)\ast g_3$ is $A$\=/close to $g_1 \ast (g_2\ast g_3)$;
 \item $g\ast e$ and $e\ast g$ are $A$\=/close to $g$;
 \item $g\ast g^{-1}$ and $g^{-1}\ast g$ are $A$\=/close to $e$;
\end{enumerate}
for every $g,g_1,g_2,g_3\in G$.

A fair amount of the theory that we developed for coarse groups can be specialized to the case of quasi\=/metric groups. For instance, the quasification of a set\=/group $G$ with respect to a bi\=/invariant metric $d$ is a quasi\=/metric group. The restriction of $d$ to any approximate subgroup of $H\subset G$ can be used to define a compatible quasi\=/metric group structure of $H$, which is unique up to quasi\=/isometry.

If $G$ is a set\=/group and $d$ is a metric such that $(G,d)$ is a quasi\=/metric group, then we can argue as in the proof of Proposition~\ref{prop:appendix:group.objects.in.LipMet} to deduce that
\[
 \tilde d(h,h')\coloneqq \sup_{g_1,g_2\in G} d(g_1hg_2,g_1h'g_2)
\]
is a bi\=/invariant metric on $G$ that is quasi\=/isometric to $d$.
If $(G,d,[\ast],[e],[\mhyphen]^{-1})$ is an arbitrary quasi\=/metric group, we may fix representatives $\ast$, $e$ and $\inversefn$ and similarly define a new metric $\tilde d$ on $G$. This metric will be quasi\=/isometric to $d$, but it may fail to be bi\=/invariant under $\ast$. On the other hand, it can be shown that with respect to this metric the left and right multiplications are $(1,A')$\=/quasi\=/isometries for some large enough $A'$.

\

One useful feature of coarse groups is that every set\=/group can be realized as a trivially coarse group. To reproduce this feature in the world of quasi\=/metric groups we need to allow ourselves to use extended metric (with the obvious extension of the notations of quasi\=/Lipschitz, quasi\=/isometry, etc.). 
We may then declare that the trivial quasification of a set\=/group $G$ is the quasi\=/metric group obtained giving $G$ the extended metric taking value $+\infty$ on every pair of distinct points.

One reason for caring about trivial quasifications comes from the theory of quasi\=/actions. Quasi\=/actions of quasi\=/metric groups may be defined in analogy with coarse actions. It is then easy to observe that the quasi\=/actions of a set\=/group $G$ as usually understood in the literature are nothing but the quasi\=/actions of the trivial quasification of $G$.

\begin{rmk}
 We have functors
 \[
  \Cat{1-LipMet}\to\Cat{LipMet}\to\Cat{QuasiMet}\to\Cat{Coarse}.
 \]
 The fact that set\=/groups with bi-invariant metrics give rise to quasi\=/metric groups and coarse groups may thus be seen as a consequence of the fact that functors preserve group objects.
 Note that at each step of the above chain of functors there are many new group objects which do not belong to the image of the preceding functor. For instance, $(\RR,\varcrs[grp]{fin})$ cannot be obtained from a quasi\=/metric group because the coarse structure $\varcrs[grp]{fin}$ is not metrizeable. 
\end{rmk}

\chapter{Extra Topics}
\label{ch:appendix:extra topics}

\section{Computing Cancellation Metrics}
\label{sec:appendix:canc metric step by step}

 Here we will give a formal proof of Lemma~\ref{lem:p2:reordering moves}. The main issue is that addition and (especially) cancellation moves are very disruptive operations, which can greatly modify a reduced word. This makes them complicated to deal with. The approach that we take here is to consider also non\=/reduced words and do the various cancellations one step at the time. To do so, we must begin by introducing more notation.

 Fix a set $S$ of letters, $2\leq \abs{S}<\infty$. Define a new letter $\bar x$ for each $x$ in $S$, let $Q=\braces{\bar x\mid x\in S}\cup S$ and let $X$ be the set of words in the alphabet $Q$. We use the convention that $\bar{\bar x}=x$, so that for every $y\in Q$ the symbol $\bar y$ denotes again a letter in $Q$. 
We define a pseudo\=/metric $d_X$ on $X$ as the maximal pseudo\=/metric such that 
\begin{equation}\label{eq:appendix:def psudo metric}
 d_X(wxw', ww') \leq 1 \qquad d_X( w x\bar xw' , ww') =0,
\end{equation}
where $x \in Q$ is any letter and $w,w'$ are words in $X$ (namely, $d_X$ is obtained by imposing that both \eqref{eq:appendix:def psudo metric} and the triangle inequality hold).

Alternatively, $d_X$ can be described as follows. Consider the labeled directed graph having $X$ as the set of vertices and the following set of edges:
\begin{itemize}
\item for every word in $w\in X$ add edges labeled $A^+$ pointing to all the words $w'$ obtained by adding a letter $x\in Q$ to $w$.
Vice versa, for every word $w'$ obtained by removing one letter from $w$, add an edge from $w$ to $w'$ and label it $A^-$.

\item for every word in $w\in X$ add edges labeled by $T^+$ pointing to any word $w'$ obtained by adding to $w$ a \emph{cancelling pair} of letters $x\bar x$ for some $x\in Q$.
Vice versa, if a word $w$ contains a subsequent pair of cancelling letters add an edge from $w$ to the word obtained from $w$ by omitting those letters, and label this edge $T^-$.
\end{itemize} 
We declare that an edge has length $1$ if its label is $A^\pm$, and $0$ otherwise.
The distance $d_X(w,w')$ is the length of the shortest oriented path connecting them (here and in what follows, the \emph{length} of a path is the sum of the lengths of its edges, not the number of edges).

\begin{rmk}
 If identify the free group $F_S$ with the subset of \emph{reduced words} in $X$ (\emph{i.e} those words that do not have cancelling pairs). It is not hard to show that, under this identification, the restriction to $F_2$ of the pseudo\=/metric $d_X$ coincides with the cancellation metric $d_{\overline{S}}$ of Section~\ref{sec:p2:cancellation metric on free} (this can be done by showing that the restriction $d_X|_{F_2}$ is bi\=/invariant and that $d_X(e,g)$ coincides with the cancellation length $\abs{\variable}_{\overline{S}}$).
\end{rmk}

In the sequel it will be convenient to keep track explicitly of the labels of the edges of an oriented path in $X$. For this reason we will denote a path from $w_0$ to $w_n$ by
\[
 w_0\xrightarrow{l_1} w_1\xrightarrow{l_2}\cdots \xrightarrow{l_n}w_n,
\]
where $l_n$ is the label of the $n$\=/th edge. 
Note that there may be multiple edges connecting two words, for example $w=a$ is taken to $w'=aa$ by adding a letter $a$ either at the beginning or at its end of $w$. However, all those edges will have the same label, because the label is uniquely determined by the difference in length of the the two words. 
The following lemma is the reason why we introduced the space $(X,d_X)$:

\begin{lem}\label{lem:appendix:rearranging.labels}
 Let $w_0\xrightarrow{l_1} w_1\xrightarrow{l_2}\cdots \xrightarrow{l_n}w_n$ be any fixed path in $X$. We can then find another path going from $w_0$ to $w_n$ that is at most as long as the original path and has the form
 \[
 w_0\xrightarrow{A^-}\cdots \xrightarrow{A^-}w'_j\xrightarrow{T^-}\cdots \xrightarrow{T^-}w'_k\xrightarrow{T^+}\cdots \xrightarrow{T^+}w'_l\xrightarrow{A^+}\cdots \xrightarrow{A^+}w'_{m}=w_n
 \] 
 for some $0\leq j\leq k\leq l\leq m\leq n$.
\end{lem}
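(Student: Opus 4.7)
My plan is to prove the lemma by a sequence of local commutation moves on adjacent pairs of edges. Fix the total order $A^- \prec T^- \prec T^+ \prec A^+$ on edge labels; the target is a path on which labels appear non-decreasingly with respect to $\prec$. The strategy is to show that every pair of consecutive edges whose labels are in the ``wrong'' order with respect to $\prec$ can be replaced, without increasing the total length, by either a single edge, the empty sequence, or the two edges in swapped order (with the intermediate word adjusted accordingly). I will then close the argument by a well-foundedness monovariant.

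The heart of the proof is a case analysis for the six inversion types. Two cases are immediate: $A^+T^+$ and $T^-A^-$ are consecutive additions (resp.\ removals) at possibly different positions and commute directly. The pair $A^+A^-$ can always be swapped to $A^-A^+$, unless the added letter is precisely the one removed, in which case both edges cancel and the total length strictly drops by $2$. For $A^+T^-$, either the removed cancelling pair does not involve the added letter (swap directly), or the added letter $x$ sits next to an existing $\bar x$ and the two form the removed pair; in the latter case the net effect is simply the deletion of $\bar x$, so the pair $A^+T^-$ is replaced by a single $A^-$ edge, preserving the length $1+0=1$ while dropping the edge count. The pair $T^+A^-$ is symmetric. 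The remaining case, $T^+T^-$, is the main source of bookkeeping: if the inserted and removed pairs do not interact one may swap, while if they overlap (the removed pair is the inserted one, or shares exactly one letter with a pre-existing cancelling neighbour created by the insertion) a short direct inspection shows that the composition returns to the starting word and both edges may be deleted.

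With these commutation moves established, the proof finishes by induction on the lexicographic triple (total length, number of edges, number of inversions with respect to $\prec$). Each move in the case analysis strictly decreases this triple: reductions lower the length or the edge count, while pure swaps preserve both but lower the inversion count by exactly one, in the sense of adjacent-transposition sorting. The triple is well-ordered, so the procedure terminates at a path with no inversions, which is precisely of the form $A^-\cdots A^-\,T^-\cdots T^-\,T^+\cdots T^+\,A^+\cdots A^+$, and whose length is at most that of the original path. I expect the main obstacle to be the $T^+T^-$ case: one must enumerate the several ways in which the inserted pair can interact with the removed one---for instance, the removed pair coinciding with the inserted one, or sharing exactly one letter with a pre-existing cancelling neighbour on either side---and although each subcase resolves cleanly, keeping the bookkeeping straight requires care.
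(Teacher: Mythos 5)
Your proposal is correct and the core content coincides with the paper's proof: both establish the same six local rewriting rules on adjacent pairs of edges ($T^-A^-\!\to A^-T^-$, $A^+T^+\!\to T^+A^+$, the conditional swaps for $A^+A^-$, $A^+T^-$, $T^+A^-$, $T^+T^-$ with degenerate outputs that shorten the path) and then sort the path. The one genuine difference is the termination argument. The paper runs a concrete three-phase process (push all $A^-$ to the front, then all $A^+$ to the back, noting that the occasional new $A^\pm$ produced by a degenerate case is harmless because it comes at the cost of deleting a $T^\pm$ edge, then sort the middle block of $T^\pm$'s by the sixth move) and argues termination somewhat informally via edge-count decrease. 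You instead fix the order $A^-\prec T^-\prec T^+\prec A^+$ and sort by adjacent transpositions, driving termination by the lexicographic monovariant $(\text{length},\ \#\text{edges},\ \#\text{inversions})$. This is a cleaner, more self-contained well-foundedness argument and makes it slightly easier to see that nothing loops, while the paper's phased version more directly exhibits the final shape of the path. Either way, the mathematical content is the same, and your sketch of the $T^+T^-$ subcases (disjoint $\Rightarrow$ swap; any overlap with the inserted pair $\Rightarrow$ the composite is the identity and both edges can be dropped) is exactly what the paper's Move~6 case analysis establishes.
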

\begin{proof}
 The idea of proof is to show that we can perform a few `local moves' that allow us to `rearrange the labels' of the path without increasing the total length. The list of moves is as follows:
 
 \begin{move}
  Replace $w_{i-1}\xrightarrow{T^-} w_i\xrightarrow{A^-}w_{i+1}$ with $w_{i-1}\xrightarrow{A^-} w'_i\xrightarrow{T^-}w_{i+1}$.
 \end{move}
 
 That is, if $w_{i-1}\xrightarrow{T^-} w_i\xrightarrow{A^-}w_{i+1}$ then there exists a word $w'$ such that $w_{i-1}\xrightarrow{A^-} w'_i\xrightarrow{T^-}w_{i+1}$. We can hence use this deviation to obtain a new path from $w_0$ to $w_n$ having the same total length of the original one. We will use similar conventions to describe the other moves as well.
 
 \begin{move}
   Replace $w_{i-1}\xrightarrow{A^+} w_i\xrightarrow{T^+}w_{i+1}$ with $w_{i-1}\xrightarrow{T^+} w'_i\xrightarrow{A^+}w_{i+1}$.
 \end{move}
  This move is symmetric to Move 1.
  
  \begin{move}
   Replace $w_{i-1}\xrightarrow{T^+} w_i\xrightarrow{A^-}w_{i+1}$ with either $w_{i-1}\xrightarrow{A^-} w'_i\xrightarrow{T^+}w_{i+1}$ or $w_{i-1}\xrightarrow{A^+} w_{i+1}$.
  \end{move}
 Indeed, if we have $w_{i-1}\xrightarrow{T^+} w_i\xrightarrow{A^-}w_{i+1}$ then $w_{i-1}=uv$ and $w_{i}=ux\bar xv$ for some $u,v\in X,\ x\in Q$. There are now two cases: either $w_{i+1}$ is obtained from $w_i$ by removing a letter of $u$ or $v$, or it is obtained by removing either $x$ or $\bar x$. In the first case we can clearly find a $w_i'$ such that $w_{i-1}\xrightarrow{A^-} w'_i\xrightarrow{T^+}w_{i+1}$; in the second case we can skip $w_i$ altogether because $w_{i-1}\xrightarrow{A^+} w_{i+1}$. In either case, we obtained a path from $w_0$ to $w_n$ having the same total length (possibly with fewer steps).
 
 \begin{move}
  Replace $w_{i-1}\xrightarrow{A^+} w_i\xrightarrow{T^-}w_{i+1}$ with either $w_{i-1}\xrightarrow{T^-} w'_i\xrightarrow{A^+}w_{i+1}$ or $w_{i-1}\xrightarrow{A^-} w_{i+1}$.
 \end{move}
 
 This is symmetric to Move $3$.
 
 \begin{move}
  Replace $w_{i-1}\xrightarrow{A^+} w_i\xrightarrow{A^-}w_{i+1}$ with either $w_{i-1}$ or $w_{i-1}\xrightarrow{A^-} w_i'\xrightarrow{A^+}w_{i+1}$.
 \end{move}
 
  In fact, $w_{i-1}\xrightarrow{A^+} w_i\xrightarrow{A^-}w_{i+1}$ means that $w_{i-1}=uv$ and $w_{i}=uxv$ for some $u,v\in X,\ x\in Q$. Now, either $w_{i+1}$ is obtained from $w_i$ by removing $x$ or by removing some other letter. In the first case $w_{i+1}=w_{i-1}$ and the whole sequence can be removed from the path, in the second case there exists $w_i'$ such that $w_{i-1}\xrightarrow{T^+} w_i'\xrightarrow{A^-}w_{i+1}$.
  
 \begin{move}
  Replace $w_{i-1}\xrightarrow{T^+} w_i\xrightarrow{T^-}w_{i+1}$ with either $w_{i+1}=w_{i-1}$ or $w_{i-1}\xrightarrow{T^-} w_i'\xrightarrow{T^+}w_{i+1}$.
 \end{move}
 
 If $w_{i-1}\xrightarrow{T^+} w_i\xrightarrow{T^-}w_{i+1}$ then $w_{i-1}=uv$ and $w_{i}=ux\bar xv$ for some $u,v\in X,\ x\in Q$. There are now four cases: either $T^-$ is the inverse of $T^+$, or $u=u'\bar x$ and $w_{i+1}=u'\bar xv$, or $v=xv'$ and $w_{i+1}=uxv$, or $T^-$ is removing a cancelling pair that does not involve $x$ nor $\bar x$. In the first three cases $w_{i+1}=w_{i-1}$ and the sequence can thus be omitted. In the last case we can find an $w_i'$ such that $w_{i-1}\xrightarrow{T^-} w_i'\xrightarrow{T^+}w_{i+1}$. 

\

It is now simple to complete the proof of the lemma. Applying Moves $1,\ 3$ and $5$ multiple times, we can assume that all the labels $A^-$ in the path appear at the beginning. Using Moves $2$ and $4$ we can then move all the labels $A^+$ to the end of the path. 

Note that when using Move $4$ an $A^-$ might appear, in which case one has to move the label $A^-$ to the beginning of the path. While doing so, an extra $A^+$ might appear as a result of Move $3$. However, each time this happens the number of edges in the path decreases because we removed some edges labeled $T^\pm$. It follows that this process must terminate.

It is now enough to use Move $6$ to rearrange the labels $T^\pm$ as desired.
\end{proof}

Recall that a cancellation move in $F_S$ is defined by cancelling a letter and reducing the resulting word, while an addition move is the inverse operation. Further recall that Lemma~\ref{lem:p2:reordering moves} states:

\begin{lem*}
 Given $w,w'\in F_S$, the distance $d_{\overline{S}} (w,w')$ can always be realized by a sequence of moves $M_1,\ldots,M_n$ such that all the cancellations are performed first.
\end{lem*}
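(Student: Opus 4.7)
The plan is to lift the problem to paths in the auxiliary space $(X, d_X)$ of possibly non-reduced words, invoke Lemma~\ref{lem:appendix:rearranging.labels} to obtain a canonical path from $w$ to $w'$, and then translate the two halves of this canonical path back into a sequence of cancellations followed by additions in $F_S$. The first observation I would make is that a cancellation move in $F_S$ lifts to an $X$-path of shape $A^- T^- \cdots T^-$ of $X$-length $1$ (the $A^-$ excises the chosen letter, the subsequent $T^-$s reduce the result), and dually that an addition move lifts to a path $T^+ \cdots T^+ A^+$ of length $1$ (a cascade of $T^+$s nests the cancelling pairs building $u u^{-1}$, and a final $A^+$ inserts the letter in the middle). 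Concatenating these liftings, any length-realizing sequence of $n = d_{\overline{S}}(w,w')$ moves yields an $X$-path from $w$ to $w'$ of length exactly $n$.

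I would then apply Lemma~\ref{lem:appendix:rearranging.labels} to rearrange this path into the canonical form $A^- \cdots A^- \, T^- \cdots T^- \, T^+ \cdots T^+ \, A^+ \cdots A^+$, and let $v$ denote the reduced word reached at the boundary between the $T^-$ and $T^+$ blocks (it is reduced precisely because no further $T^-$ is applicable), so that $v \in F_S$. The first half gives a length-$k$ path $w \to v$ in $X$ using only $A^-$ and $T^-$; by the symmetry of $d_X$, reversing the second half produces a length-$\ell$ path $w' \to v$ also using only $A^-$ and $T^-$, with $k + \ell \le n$.

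The key remaining ingredient is a sub-lemma: if $a, b \in F_S$ are reduced and there is an $X$-path $a \to b$ of length $m$ using only $A^-$ and $T^-$, then $d_{\overline{S}}(a,b) \le m$ can be realized by $m$ cancellation moves in $F_S$. I would prove this by induction on $m$: since $a$ is reduced, the first edge of the path must be an $A^-$ removing some specified letter $\ell$ of $a$; cancelling $\ell$ in $F_S$ produces $a^{(1)}$, the reduced form of $a - \ell$. By confluence of free reduction (Church--Rosser), $b$ coincides with the reduced form of $a^{(1)}$ after deleting the remnants of the remaining $A^-$ targets still present in $a^{(1)}$, which supplies a length-$(m-1)$ path $a^{(1)} \to b$ of the same restricted shape, completing the induction. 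Applying this sub-lemma to $w \to v$ and to the reversed path $w' \to v$, and then reversing the second set of cancellations into additions $v \to w'$, yields a sequence of $k + \ell \le n$ moves performing all cancellations first; minimality of $n$ forces equality, and the lemma is proved.

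The main difficulty I anticipate is the inductive step of the sub-lemma, specifically the bookkeeping of which of the originally-specified target positions in $a$ survive the confluent reductions that produce $a^{(1)}$, and the verification that these remnants continue to define a legitimate $A^-/T^-$ only $X$-path from $a^{(1)}$ to $b$ of the required length. This is the one place where the free-group combinatorics cannot be replayed entirely at the level of $F_S$ moves and genuinely requires the finer control afforded by working in $X$.
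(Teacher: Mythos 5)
Your approach is genuinely different from the paper's: the paper inducts on $d_{\overline{S}}(w,w')$, using Lemma~\ref{lem:appendix:rearranging.labels} once per inductive step to peel off a single $A^-$ edge as one cancellation move and then \emph{re-rearranging} the strictly smaller remaining path, whereas you fix one canonical path, split it at the $T^-/T^+$ boundary, and try to translate each half into $F_S$ moves via a sub-lemma. The split is a natural idea, but the inductive argument you sketch for the sub-lemma is wrong, and the issue is not the bookkeeping subtlety you flag at the end --- it is exactly where the real content of the lemma lies.

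To see this concretely, take $u \coloneqq \bar{a} b a \in F_S$ (reduced) and the length-$2$, $A^-/T^-$-only path
\[
  \bar{a} b a \xrightarrow{A^-} \bar{a} a \xrightarrow{A^-} \bar{a},
\]
in which the first $A^-$ deletes $b$ and the second deletes the trailing $a$. Both endpoints are reduced, and the path is already in the canonical form supplied by Lemma~\ref{lem:appendix:rearranging.labels}, so your sub-lemma must handle it with $m = 2$. Your induction deletes $b$ from $u$ and fully reduces, producing $\rd(\bar{a}a) = \emptyset$. But the remaining $A^-$ target --- the trailing $a$ --- was swallowed by the resulting $T^-$ cascade and no longer exists in $\emptyset$, so your Church--Rosser recipe outputs $\rd(\emptyset) = \emptyset \neq \bar{a}$. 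Worse, there is no $A^-/T^-$-only path from $\emptyset$ to $\bar{a}$ of any length, since such paths can only shorten words, so the induction cannot even continue. The sub-lemma's \emph{conclusion} does hold here --- $d_{\overline{S}}(\bar{a}ba,\bar{a}) = 2$, realized by deleting the trailing $a$ first and only then $b$ --- but this requires re-ordering the deletions, which an induction that mechanically follows the given path cannot do. In general, cancelling the first $A^-$ target may annihilate later $A^-$ targets together with letters that must survive to the endpoint, so ``delete the remnants'' is simply the wrong formula. (A secondary, fixable point: the boundary word $v$ need not be reduced, since nothing in Lemma~\ref{lem:appendix:rearranging.labels} forces the $T^-$ block to be maximal; append free $T^-$ steps to both halves so each reaches $\rd(v)$.) To repair the sub-lemma you would need an induction that is free to re-order deletions or re-choose the path at each step --- e.g.\ inducting on $d_{\overline{S}}$ and re-invoking the rearranging lemma each time --- but at that point you have essentially re-derived the paper's proof, and the two-halves decomposition offers no real shortcut.
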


We can now prove it easily using Lemma~\ref{lem:appendix:rearranging.labels}.

\begin{proof}[Proof of Lemma~\ref{lem:p2:reordering moves}]
 By induction on $d_{\overline{S}} (w,w')$: if $d_{\overline{S}} (w,w')=0$ there is nothing to prove. Assume that $d_{\overline{S}} (w,w')>0$. 
 Choose a path in $X$ as by Lemma~\ref{lem:appendix:rearranging.labels} which realizes the distance between $w$ and $w'$. Swapping the role of $w$, $w'$ and reversing the path if necessary, we may assume that the first edge is labeled $A^-$. Let $w_1\in X$ be the word obtained after the first step and consider $\rd(w_1)\in F_S$: then $\rd(w_1)$ is obtained from $w$ by a cancellation move.
 Since
 \[
  d_{\overline{S}} (w,w')= d_{X} (w,w') = 1+ d_{X} (w_1,w') = 1+ d_{\overline{S}} (\rd(w_1),w'),
 \]
 it follows by the induction hypothesis that we can realize the distance from $\rd(w_1)$ to $w'$ with a sequence of cancellation moves followed by addition moves. Since the move from $w$ to $\rd(w_1)$ is a cancellation, this proves the lemma.
\end{proof}

\begin{rmk}
By Lemma~\ref{lem:appendix:rearranging.labels}, we also deduce that for any $w\in F_S$ the cancellation length $\abs{w}_{\overline{S}}$ is equal to the length of shortest path of the form:
 \[
  w=w_0\xrightarrow{A^-}\cdots \xrightarrow{A^-}w_j\xrightarrow{T^-}\cdots \xrightarrow{T^-}\emptyset.
 \]
 This implies that we can define the cancellation length by marking a set of letters to remove and then doing all the cancellations at the same time (as opposed to removing one letter at the time and immediately reducing the result).
\end{rmk}

\section{Proper Coarse Actions}
\label{sec:appendix:proper coarse actions}
Here we will briefly close a circle of ideas by discussing properness and the Milnor--Schwarz Lemma in a general framework.
We do not claim any novelty here: the results we give for coarsified set-groups are very well known (see \emph{e.g}
\cite{brodskiy_svarc-milnor_2007,brodskiy2008coarse,cornulier2012quasi,cornulier2016metric,nicas2012coarse,rosendal2017coarse})
and their extensions to coarse groups are relatively straightforward using the techniques we developed in the main body of text.
Our goal is mainly to set some notation and to show how our conventions translate to more traditional approaches.

Informally, properness is the property of preserving boundedness under taking pre\=/images. The most appropriate language to describe boundedness is that of bornologies.

\begin{de}\label{def:appendix:bornology}
 A \emph{bornology}\index{bornology} $\fkB$\nomenclature[:BOR]{$\fkB$}{bornology} on $X$ is a collection of subsets of $X$ (called \emph{bounded} sets) such that:

 \begin{enumerate}[(B1)]
  \item $\fkB$ covers $X$,
  \item $\fkB$ is closed under taking subsets,
  \item $B_1\cup B_2\in\fkB$ for all $B_1, B_2\in\fkB$ with $B_1\cap B_2\neq \emptyset$.
 \end{enumerate}
 A function between bornological spaces $f\colon (X_1,\fkB_1)\to(X_2,\fkB_2)$ is \emph{bornological}\footnote{%
    Such a map is often called `bounded', but in our setting we find this terminology confusing.} \index{bornological function}
if $f(B_1)\in\fkB_2$ for every $B_1\in\fkB_1$. We also say that $f$ is \emph{proper}\index{proper!function} if $f^{-1}(B_2)\in\fkB_1$ for every $B_2\in\fkB_2$ .
\end{de}

\begin{rmk}
 In the literature, condition (B3) is usually replaced by the stronger requirement that $\fkB$ be closed under finite unions (\emph{e.g.}\ {\cite[Chapter 3]{bourbaki1987vector}}). The latter condition is only equivalent to our (B3) if every finite set belongs to $\fkB$. This is generally the case in the classical examples of bornological spaces. In this work we use the weaker notion (B3) to deal with disconnected coarse spaces.
\end{rmk}

If $\CE$ is a coarse structure on $X$, the set of $\CE$\=/bounded sets is a bornology. That is, to every coarse space is naturally associated a unique bornology (the converse is not true: a given bornology can be induced by distinct coarse structures).
A controlled map between coarse spaces $f\colon(X_1,\CE_1)\to(X_2,\CE_2)$ is bornological with respect to the induced bornologies. Moreover, $f$ is proper if and only if it is proper in the sense of bornologies. For these reasons we can mix coarse structures and bornologies, so that it makes sense to say that a function $f\colon(X,\fkB)\to(Y,\CE)$ from a bornological space to a coarse space is bornological.

The product of two bornological spaces $(X_1,\fkB_1)\times (X_2,\fkB_2)$ is the Cartesian product $X_1\times X_2$ equipped with the bornology $\fkB_1\otimes\fkB_2$ generated by the products $B_1\times B_2$ where $B_1\in\fkB_1$ and $B_2\in\fkB_2$. Equivalently, $B\in\fkB_1\otimes \fkB_2$ if and only if $\pi_1(B)\in\fkB_1$ and $\pi_2(B)\in\fkB_2$.

\begin{de}\label{def:appendix:proper action}
 Let $(G,\CE)$ be a coarse group and let $\mathfrak{B}$ be a bornology on $G$. A coarse action $\crse \alpha \colon (G,\CE)\curvearrowright(Y,\CF)$ is \emph{$\fkB$\=/proper}\index{coarse action!$\fkB$\=/proper} if $(\alpha,\pi_{Y})\colon (G,\fkB)\times (Y,\CF)\to(Y,\CF)\times (Y,\CF)$ is bornological and proper, where $\pi_{Y}$ denotes the projection onto $Y$. Since $\CF$ is a coarse structure, this definition does not depend on the choice of representative for $\crse \alpha$.
\end{de}

\begin{exmp}\label{exmp:appendix:proper action discrete group}
 Let $G$ be a set\=/group and $\crse G= (G,\mincrs)$ a trivially coarse group. In Section~\ref{sec:p1:Milnor_svarc} we gave a temporary naive definition of properness by declaring that a coarse action $\crse{\alpha\colon G\curvearrowright Y}$ of the trivially coarse group is proper if the set
\(
 \braces{g\in G\mid (g\cdot B)\cap B\neq \emptyset}
\)
is finite for every bounded $B\subseteq Y$. Let $\fkF$ be the bornology of all finite subsets of $G$. Then one can verify that $\crse{\alpha\colon G\curvearrowright Y}$ is proper in the naive sense if and only if it is $\fkF$\=/proper.
\end{exmp}

Let $\alpha\colon (G,\CE)\curvearrowright (Y,\CF)$ define a coarse action.
Recall that for any $y\in Y$ we denote by $\alpha_y\colon G\to Y$ the orbit map and by $\alpha_y^*(\CF)$ the pull\=/back coarse structure on $G$ (Subsection~\ref{sec:p1:coarse.orbits}).
It is then easy to verify that $\crse{\alpha}$ if a $\fkB$\=/proper coarse action then $\fkB$ coincides with the family of $\alpha_y^*(\CF)$\=/bounded sets.
In particular, $\fkB$ must contain the bornology of $\CE$\=/bounded sets.

\begin{rmk}
 The definition of $\fkB$\=/proper coarse actions is designed to describe those coarse actions whose orbit maps are not proper. The bornology $\fkB$ ought to be thought of as a way to keep track of those (possibly $\CE$\=/unbounded) subsets of $G$ that are sent to bounded sets in $Y$. One may also think of the bornology $\fkB$ of a $\fkB$\=/proper coarse action as a sort of ``point\=/stabilizer''.
\end{rmk}

Since equi left invariant coarse structures on a coarse group $\crse G$ are determined by their bounded sets,
one can show that the notion of $\fkB$\=/properness of coarse actions completely determines the equivariant coarse geometry of the orbits.
Namely, let $\crse G = (G,\CE)$ be a coarse group, $\fkB$ be a bornology on $G$ and let $\varcrs[left]{\CE,\fkB}$ be the smallest equi left invariant coarse structure containing $\fkB$ and $\CE$. Explicitly,
\[
 \varcrs[left]{\CE,\fkB} = \angles{\Delta_{G}\ast\CR_\fkB\, ,\, \CE}
\]
where $\CR_\fkB\coloneqq\braces{B\times B\mid B\in\fkB}$. Then the following holds true.

\begin{prop}\label{prop:appendix:fkB_proper coarse action}
 If there exists a $\fkB$\=/proper coarse action of $(G,\CE)$ then $\fkB$ must coincide with the set of $\varcrs[left]{\CE,\fkB}$\=/bounded sets.
 Given such a $\fkB$, a coarse action $\crse \alpha \colon (G,\CE)\curvearrowright(Y,\CF)$ is $\fkB$\=/proper if and only if
 \(
    \alpha_y^{*}(\CF)=\varcrs[left]{\CE,\fkB}
 \)
 for every fixed $y\in Y$.
\end{prop}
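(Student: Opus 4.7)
The plan is to handle the two assertions together by first unwinding what $\fkB$-properness says about the pull\=/back $\alpha_y^\ast(\CF)$, and then invoking Proposition~\ref{prop:p1:neighbourhoods.of.identity.generate} (uniqueness of equi left invariant coarse structures given their bornology) as the bridge between $\alpha_y^\ast(\CF)$ and $\varcrs[left]{\CE,\fkB}$. The key preliminary observation, already hinted at in the paragraph before the proposition, is that a $\fkB$\=/proper coarse action forces $\fkB$ to be \emph{exactly} the bornology of $\alpha_y^\ast(\CF)$ for every $y$, and this is essentially a matter of testing the two defining conditions of Definition~\ref{def:appendix:proper action} on product relations of the form $B_G\times\{y\}$ and $B_1\times\{y\}$.

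More precisely, I would first show the preliminary observation: if $(\alpha,\pi_Y)$ is bornological, specialising $B_Y=\{y\}$ yields that $\alpha_y(B_G)$ is $\CF$\=/bounded for every $B_G\in\fkB$, i.e.\ $\fkB$ is contained in the $\alpha_y^\ast(\CF)$\=/bornology; conversely, if $(\alpha,\pi_Y)$ is proper, specialising $B_2=\{y\}$ and $B_1=\alpha_y(B)$ (for any $B\subseteq G$ with $\alpha_y(B)$ bounded in $\CF$) shows that the $\alpha_y^\ast(\CF)$\=/bornology is contained in $\fkB$. So $\fkB$ coincides with the $\alpha_y^\ast(\CF)$\=/bornology. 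Since $\alpha_y^\ast(\CF)$ is equi left invariant and contains $\CE$ (Lemma~\ref{lem:p1:pullback orbit coarse actions}), minimality of $\varcrs[left]{\CE,\fkB}$ gives $\varcrs[left]{\CE,\fkB}\subseteq\alpha_y^\ast(\CF)$; but then the $\varcrs[left]{\CE,\fkB}$\=/bornology lies between $\fkB$ (it contains $\fkB$ because each $B\times B$ is a controlled entourage) and the $\alpha_y^\ast(\CF)$\=/bornology (by the inclusion just established), so all three bornologies agree. Proposition~\ref{prop:p1:neighbourhoods.of.identity.generate} applied to the two equi left invariant structures $\varcrs[left]{\CE,\fkB}\subseteq\alpha_y^\ast(\CF)$ (which now share the same bornology) gives equality, which yields simultaneously the first assertion of the proposition and the ``only if'' direction of the second.

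For the ``if'' direction, assume $\alpha_y^\ast(\CF)=\varcrs[left]{\CE,\fkB}$ for every $y$; equivalently, $\fkB$ equals the $\alpha_y^\ast(\CF)$\=/bornology for every $y$. I would verify the bornological and proper conditions of Definition~\ref{def:appendix:proper action} directly using the equi controlledness of the sections ${}_g\alpha$ provided by $\alpha$ being controlled. For the bornological half, given $B_G\in\fkB$ and $B_Y\in\CF$\=/bornology, pick $y_0\in B_Y$; equi controlledness of $\{{}_g\alpha\}_{g\in G}$ applied to the relation $B_Y\times B_Y\in\CF$ gives a fixed $F\in\CF$ with $g\cdot y\rel{F}g\cdot y_0$ for all $g\in G$, $y\in B_Y$, so $\alpha(B_G\times B_Y)\subseteq F(\alpha_{y_0}(B_G))$, and the latter is $\CF$\=/bounded because $B_G\in\fkB=\alpha_{y_0}^\ast(\CF)$\=/bornology. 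The proper half is analogous: for bounded $B_1,B_2\subseteq Y$, the same equi controlled argument shows that $\alpha_{y_0}(\{g\mid \exists y\in B_2,\ g\cdot y\in B_1\})$ is contained in a $\CF$\=/bounded neighbourhood of $B_1$, hence is $\CF$\=/bounded, whence the $G$\=/projection of $(\alpha,\pi_Y)^{-1}(B_1\times B_2)$ lies in $\fkB$.

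The main conceptual obstacle is not any single calculation but keeping the roles of $\CE$ and $\fkB$ correctly separated: $\CE$ controls equi controlledness of $\alpha$ on the $G$\=/factor, while $\fkB$ tracks which (possibly $\CE$\=/unbounded) subsets of $G$ have bounded orbits. The argument only works because of the general principle that left invariant coarse structures are determined by their bornology, so once one verifies the bornology of $\alpha_y^\ast(\CF)$ is $\fkB$, the coarse structure itself is forced to be $\varcrs[left]{\CE,\fkB}$. Up to this point, no genuine new computation is required beyond Proposition~\ref{prop:p1:neighbourhoods.of.identity.generate}, Lemma~\ref{lem:p1:pullback orbit coarse actions}, and careful bookkeeping.
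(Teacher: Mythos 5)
Your proposal is correct and follows precisely the route the paper indicates (the paragraph preceding the proposition signals that the argument is to run through the fact that equi left invariant coarse structures are determined by their bounded sets, i.e.\ Proposition~\ref{prop:p1:neighbourhoods.of.identity.generate}); the paper itself omits the proof, and your write\-up supplies it correctly. In particular, your clean observation that the bornological half of $\fkB$\=/properness gives $\fkB\subseteq\alpha_y^\ast(\CF)$\=/bornology while the proper half gives the reverse inclusion, followed by the sandwich argument comparing $\varcrs[left]{\CE,\fkB}$ with $\alpha_y^\ast(\CF)$ via Lemma~\ref{lem:p1:pullback orbit coarse actions} and minimality, and the use of equi controlledness of the sections $\{{}_g\alpha\}_{g\in G}$ for the converse direction, is exactly the bookkeeping needed.
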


This implies that cobounded coarse actions are ``determined locally''
(recall that a coarse action $\crse{\alpha\colon G\curvearrowright Y}$ is cobounded if and only if the orbit maps are coarsely dense).
Namely, Proposition~\ref{prop:appendix:fkB_proper coarse action} implies that, if it exists, a $\fkB$\=/proper cobounded coarse action of $(G,\CE)$ must be isomorphic to $\cop \colon (G,\CE)\curvearrowright(G,\varcrs[left]{\CE,\fkB})$.
In particular, such an action is unique up to isomorphism.

In analogy with Section~\ref{sec:p1:determined locally}, there is a characterisation for the bornologies $\fkB$ on a coarsified set\=/group $\crse G=(G,\CE)$
which admit $\fkB$\=/proper actions. Namely, let $\fkB_e$ be the set of $\fkB$\=/bounded neighborhoods of the identity.
Then one can verify that there exists a $\fkB$\=/proper $\crse G$\=/action if and only if $\fkB=\pts{G}\ast\fkB_e$ and
the family $\fkB_e$ contains the $\CE$\=/bounded neighborhoods of the identity and it satisfies 
\begin{itemize}
 \item[(U0)] $e\in U$ for every $U\in \fkB_e$.
 \item[(U1)] if $U\in\fkB_e$ and $e\in V\subseteq U$ then $V\in \fkB_e$.
 \item[(U2)] if $U\in\fkB_e$ then $U^{-1}\in\fkB_e$;
 \item[(U3)] if $U_1,\ U_2\in\fkB_e$ then $U_1\ast U_2\in\fkB_e$.
\end{itemize}

\

It is instructive to use this language to extend the Milnor--Schwarz Lemma to actions of more general topological groups
(this is not an original idea, see for instance the books \cite{cornulier2016metric,rosendal2017coarse} and references therein).
Recall that, a continuous action $\alpha\colon G\curvearrowright Y$ of a topological group $G$ is \emph{topologically proper}\index{proper!action (topologically)} if
\[
 (\alpha\times\pi_{Y})\colon G\times Y\to Y\times Y
\]
is a proper map (preimage of compact sets is compact).
We saw in Example~\ref{exmp:p1:topological coarse structure abelian} that we may define an equi left invariant coarse structure $\CF_{\rm cpt}^{\rm left}$ on $Y$ letting
\[
 \CF_{\rm cpt}^{\rm left} \coloneqq\bigbraces{F\subseteq Y\times Y\mid \exists K\subseteq Y\text{ compact s.t. }\forall(y_1,y_2)\in F,\ \exists g\in G\text{ with } y_1,y_2\in g(K)}.
\]
By construction, $\alpha$ defines a coarse action of the trivially coarse group $(G,\mincrs)\curvearrowright(Y,\CF_{\rm cpt}^{\rm left})$.
Let $\fkK$\nomenclature[:BOR]{$\fkK$}{bornology of relatively compact subsets} be the bornology of relatively compact subsets of $G$ it is simple to prove that
if the action is topologically proper the coarse action $(G,\mincrs)\curvearrowright(Y,\CF_{\rm cpt}^{\rm left})$ is $\fkK$\=/proper.
If $Y$ is Hausdorff the converse is also true.

Note that if $\alpha$ is a topologically proper action, the pull\=/back of $\CF_{\rm cpt}^{\rm left}$ under the orbit map $\alpha_y$ is equal to $\varcrs[left]{cpt}$.
Also note that the coarse action $(G,\mincrs)\curvearrowright (Y,\CF_{\rm cpt}^{\rm left})$ induced by a topological action is cobounded\index{coarse action!cobounded} (Definition~\ref{def:p1:cobounded action}) if and only if there exists a compact set $K\subseteq Y$ such that $\pts{G}\cdot K=\braces{g(K)\mid g\in G}$ covers $Y$.
That is, if and only if $\alpha$ is cocompact. This shows that if $G\curvearrowright Y$ is a proper cocompact action of a topological group, then the orbit map induces an equivariant coarse equivalence between $(G,\varcrs[left]{cpt})$ and $(Y,\CF_{\rm cpt}^{\rm left})$.
This generalizes the observation at the heart of Section~\ref{sec:p1:Milnor_svarc}.

To be more explicit, let $(Y,d)$ be a metric space and denote by $\CE_d$ the induced coarse structure. Recall that a set\=/group action $ G\curvearrowright Y$ induces a coarse action on $(Y,\CE_d)$ if and only if it is an action by ``uniform coarse equivalences'' (see Example~\ref{exmp:p1:metric action by unif coarse eq}). If $\alpha\colon G\curvearrowright Y$ is a continuous action of a topological group which does induce a coarse action of $(G,\mincrs)$ on $(Y,\CE_d)$, then $\CF_{\rm cpt}^{\rm left}\subseteq \CE_d$ because compact sets have finite diameter and the latter is equi left equivariant. If $Y$ is a \emph{proper} metric space (\emph{i.e.}\ closed balls are compact) then $\CE_d\subseteq \CF_{\rm cpt}^{\rm left}$ as well.

\begin{cor}\label{cor:appendix:topological Milnor_svarc_I}
 If $G$ is a topological group, $(Y,d)$ a proper metric space and $\alpha\colon G\curvearrowright Y$ is a continuous action by uniform coarse equivalences that is proper and cocompact, then the orbit map induces a coarsely\=/equivariant coarse equivalence between $(G,\varcrs[left]{cpt})$ and $(Y,\CE_d)$.
\end{cor}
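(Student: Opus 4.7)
The plan is to assemble the corollary directly from the ingredients that the main body has carefully put in place, so that almost no additional argument is required. First I would observe that, since $\alpha$ acts by uniform coarse equivalences, it induces a genuine coarse action $\crse{\alpha}\colon(G,\mincrs)\curvearrowright(Y,\CE_d)$ (as recorded in Example~\ref{exmp:p1:metric action by unif coarse eq}). The continuity and topological properness of $\alpha$ also give a coarse action $\crse{\alpha}\colon(G,\mincrs)\curvearrowright(Y,\CF_{\rm cpt}^{\rm left})$, and the discussion immediately preceding the corollary shows that this coarse action is $\fkK$\=/proper, where $\fkK$ is the bornology of relatively compact subsets of $G$.

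Next, I would identify $\CE_d$ with $\CF_{\rm cpt}^{\rm left}$. The containment $\CF_{\rm cpt}^{\rm left}\subseteq \CE_d$ holds because compact subsets of $Y$ have finite diameter and $\CE_d$ is equi left invariant under the action. The reverse containment is where the properness of the metric is used: any $\CE_d$\=/bounded set is contained in some closed ball, which is compact by hypothesis, so every controlled entourage of $\CE_d$ is already in $\CF_{\rm cpt}^{\rm left}$. Hence the $\crse G$\=/space structures on $(Y,\CE_d)$ and $(Y,\CF_{\rm cpt}^{\rm left})$ coincide, and in particular the action on $(Y,\CE_d)$ is also $\fkK$\=/proper and cobounded (cocompactness of $\alpha$ gives a compact $K$ with $G\cdot K=Y$, and any compact set is $\CE_d$\=/bounded).

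Finally, I would apply Lemma~\ref{lem:p1:pullback orbit coarse actions} to the orbit map $\alpha_y\colon G\to Y$: for any $y\in Y$, this map is a coarsely equivariant coarse equivalence between $(G,\alpha_y^{*}(\CE_d))$ and its coarse image. Coboundedness ensures the coarse image is all of $\crse Y$, so $\alpha_y$ is coarsely surjective. It remains to identify the pull\=/back coarse structure $\alpha_y^{*}(\CE_d)=\alpha_y^{*}(\CF_{\rm cpt}^{\rm left})$ with $\varcrs[left]{cpt}$; this is exactly what $\fkK$\=/properness of the coarse action gives via Proposition~\ref{prop:appendix:fkB_proper coarse action}, since $\fkK$ is the unique bornology for which a $\fkK$\=/proper action exists and the pull\=/back of $\CF_{\rm cpt}^{\rm left}$ under the orbit map is the minimal equi left invariant coarse structure containing $\mincrs$ and $\fkK$, which is precisely $\varcrs[left]{cpt}$.

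The proof is therefore essentially a bookkeeping exercise, and the only step that requires any real content is the identification $\CE_d=\CF_{\rm cpt}^{\rm left}$, which is where the hypothesis that $(Y,d)$ is \emph{proper} is used in a non\=/trivial way. All other steps are direct consequences of results already established in the monograph.
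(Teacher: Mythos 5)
Your proposal follows essentially the same route as the paper: build the coarse action on $(Y,\CE_d)$ from Example~\ref{exmp:p1:metric action by unif coarse eq}, recognize the $\fkK$\=/proper coarse action on $(Y,\CF_{\rm cpt}^{\rm left})$, identify $\CE_d$ with $\CF_{\rm cpt}^{\rm left}$, and then use Lemma~\ref{lem:p1:pullback orbit coarse actions} and Proposition~\ref{prop:appendix:fkB_proper coarse action} to identify the pull\=/back along an orbit map with $\varcrs[left]{cpt}$. The assembly is the right one.

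However, the justification you give for $\CE_d\subseteq\CF_{\rm cpt}^{\rm left}$ has a genuine gap. You write that ``any $\CE_d$\=/bounded set is contained in some closed ball, which is compact by hypothesis, so every controlled entourage of $\CE_d$ is already in $\CF_{\rm cpt}^{\rm left}$.'' The first clause is a statement about the \emph{bornology} (bounded subsets of $Y$), but the conclusion is a statement about \emph{entourages} (subsets of $Y\times Y$), and it does not follow. The basic $\CE_d$\=/entourage $E_R=\{(y_1,y_2):d(y_1,y_2)\leq R\}$ is not contained in $K\times K$ for any compact $K$ when $Y$ is unbounded, so ``bounded sets are relatively compact'' tells you nothing directly about $E_R\in\CF_{\rm cpt}^{\rm left}$. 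In fact, if the action of $G$ were trivial, $\CF_{\rm cpt}^{\rm left}$ would just be the compact coarse structure $\varcrs{cpt}$ and the inclusion $\CE_d\subseteq\CF_{\rm cpt}^{\rm left}$ would be false. The inclusion you need uses \emph{cocompactness} and the \emph{uniform coarse equivalence} control in an essential way: given $(y_1,y_2)\in E_R$, cocompactness provides $g\in G$ with $g^{-1}(y_1)\in K_0$ for a fixed compact fundamental domain $K_0$; the lower control function of the uniform coarse equivalence then yields $d(g^{-1}y_1,g^{-1}y_2)\leq\rho_-^{-1}(R)$; and \emph{only now} does properness of the metric finish the job, giving that $g^{-1}(y_1),g^{-1}(y_2)$ both lie in the compact set $K\coloneqq\overline{B(K_0;\rho_-^{-1}(R))}$, so $y_1,y_2\in g(K)$. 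Attributing this step solely to ``properness of the metric'' misstates which hypotheses carry the weight, and the argument as written would not convince a careful reader. (Alternatively, one could bypass the equality $\CE_d=\CF_{\rm cpt}^{\rm left}$ entirely by pulling both coarse structures back to $G$ via the orbit map and invoking Proposition~\ref{prop:p1:neighbourhoods.of.identity.generate}: both pull\=/backs are equi left invariant, and topological properness of $\alpha$ plus properness of $d$ identify the bounded sets in each with the relatively compact subsets of $G$; uniqueness then gives $\alpha_y^*(\CE_d)=\varcrs[left]{cpt}$ directly.)
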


\begin{exmp}
 If $G$ is a set\=/group that is finitely generated by some set $S$, then $\varcrs[left]{cpt}$ is the metric coarse structure associated with the left\=/invariant word metric $d_S$ (see also Example~\ref{exmp:appendix:proper action discrete group}).

 If $G\curvearrowright(Y,d)$ is an isometric action, then $(G,\mincrs)\curvearrowright(Y,d)$ defines a coarse action of the trivially coarse group. If $(Y,d)$ is a proper metric space and the isometric action is also proper and cocompact, then Corollary~\ref{cor:appendix:topological Milnor_svarc_I} implies that the orbit map defines a coarse equivalence between $(G,d_S)$ and $(Y,d)$.
\end{exmp}

The above example shows that Corollary~\ref{cor:appendix:topological Milnor_svarc_I} generalizes one half of the statement of the Milnor--Schwarz Lemma (Corollary~\ref{cor:p1:Milnor-Svarc}).
As remarked at the end of Section~\ref{sec:p1:Milnor_svarc}, the other parts of the statement have much to do with coarse geodesicity.
It can be generalized as follows (compare with \cite[Section 4.C]{cornulier2016metric} and \cite[Chapter 2]{rosendal2017coarse}):

\begin{prop}\label{prop:appendix:topological Milnor_svarc_II}
 Let $G$ be a Hausdorff topological group and $(Y,\CE_d)$ a quasi\=/geodesic proper metric space. If there exists a continuous proper and cocompact action $\alpha\colon G\curvearrowright Y$ by uniform coarse equivalences, then $G$ is compactly generated. Moreover, for any compact generating set $K$ and $y\in Y$ the orbit map $\alpha_y\colon (G,d_K)\to (Y,d)$ is a quasi\=/isometry.
\end{prop}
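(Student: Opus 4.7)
The plan is to extend the strategy used in the proof of Corollary~\ref{cor:p1:Milnor-Svarc} by combining the pull\=/back coarse structure with the good behavior of coarse geodesicity. Corollary~\ref{cor:appendix:topological Milnor_svarc_I} already provides a coarsely equivariant coarse equivalence $\alpha_y\colon(G,\varcrs[left]{cpt})\to(Y,\CE_d)$. Since $(Y,d)$ is quasi\=/geodesic, it follows from Lemma~\ref{lem:appendix:quasi geo iff quasi geo} that $(Y,\CE_d)$ is coarsely geodesic, and since coarse geodesicity is preserved under coarse equivalence (by the remark following Definition~\ref{def:p1:crs_geod}), so is $(G,\varcrs[left]{cpt})$.

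Before extracting a compact generating set, I would first establish that $G$ is locally compact. Fixing $y_0\in Y$ and a precompact open neighborhood $U\ni y_0$ (which exists because $Y$ is proper), continuity of $\alpha$ gives an open neighborhood $\alpha_{y_0}^{-1}(U)$ of $e_G$, while topological properness of $(\alpha,\pi_Y)$ applied to $\overline U\times\{y_0\}$ shows that $\{g\in G\mid g\cdot y_0\in\overline U\}$ is compact, so $\alpha_{y_0}^{-1}(U)$ has compact closure. Next, since $\varcrs[left]{cpt}$ is coarsely geodesic it admits a single generating relation, and using equi\=/left\=/invariance this generator can be refined to one of the form $\Delta_G\ast(K_0\times\{e\})$ with $K_0$ a $\varcrs[left]{cpt}$\=/bounded (equivalently, relatively compact) set, which we may take to be compact. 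Coarse connectedness of $(G,\varcrs[left]{cpt})$ then forces $G=\bigcup_n K_0^n$, so $K_0\cup K_0^{-1}\cup\{e\}$ is a compact generating set and $G$ is compactly generated.

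For the moreover part, fix any compact generating set $K\subseteq G$ and set $K_\cup\coloneqq K\cup K^{-1}\cup\{e\}$. Since $d_K$ is geodesic and $(Y,d)$ is quasi\=/geodesic, Corollary~\ref{cor:appendix:quasi geo quasi Lipschitz} reduces the statement to showing that $\alpha_y\colon(G,d_K)\to(Y,d)$ is a coarse equivalence; combined with the coarse equivalence from the first paragraph, this in turn reduces to proving $\CE_{d_K}=\varcrs[left]{cpt}$. The inclusion $\CE_{d_K}\subseteq\varcrs[left]{cpt}$ is immediate, since every $d_K$\=/bounded set is contained in some power $K_\cup^n$, which is compact.

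The main obstacle is the converse inclusion, which amounts to showing that every compact $C\subseteq G$ is contained in some $K_\cup^N$. Here the natural tool is the Baire category theorem: since $G$ is locally compact Hausdorff it is a Baire space, and $G=\bigcup_n K_\cup^n$ exhibits $G$ as a countable union of compact (hence closed) sets, so some $K_\cup^N$ must have non\=/empty interior. It follows that $K_\cup^{2N}\supseteq K_\cup^N\cdot (K_\cup^N)^{-1}$ contains an open neighborhood $W$ of $e_G$, and then $\{K_\cup^m\cdot W\mid m\in\NN\}$ is a nested open cover of $G$; compactness of $C$ then yields some $m$ with $C\subseteq K_\cup^m\cdot W\subseteq K_\cup^{m+2N}$. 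Once $\CE_{d_K}=\varcrs[left]{cpt}$ is established, the orbit map is a coarse equivalence between quasi\=/geodesic metric spaces, and Corollary~\ref{cor:appendix:quasi geo quasi Lipschitz} yields the desired quasi\=/isometry.
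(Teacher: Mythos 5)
The paper does not actually supply a proof of Proposition~\ref{prop:appendix:topological Milnor_svarc_II}: it only observes that the proposition generalizes Corollary~\ref{cor:p1:Milnor-Svarc} and that a proof is implicit in the cited works of Brodskiy--Dydak et al. Your proof follows precisely the template of the paper's proof of Corollary~\ref{cor:p1:Milnor-Svarc} (pull back $\CE_d$ along the orbit map, extract a generator $\Delta_G\ast(K_0\times\{e\})$ from the monogenic equi-left-invariant pull-back structure, then compare with the word coarse structure via Proposition~\ref{prop:p1:neighbourhoods.of.identity.generate}), filling in the two genuinely topological ingredients that do not appear in the discrete case: local compactness of $G$, deduced from topological properness of $(\alpha,\pi_Y)$; and the Baire category argument showing every compact subset of $G$ lies in some power $K_\cup^N$, which is exactly what is needed to identify $\varcrs[left]{cpt}$ with $\CE_{d_K}$ for an arbitrary compact generating set $K$. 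This is the standard and intended route, and the argument is correct.

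Two small wording issues, neither of which affects the validity of the proof. First, $(G,d_K)$ is quasi\=/geodesic (via the usual $(1,1)$\=/quasi\=/geodesics obtained by concatenating generators), not geodesic, since $d_K$ is integer\=/valued; Corollary~\ref{cor:appendix:quasi geo quasi Lipschitz} only requires quasi\=/geodesicity, so the conclusion stands. Second, the generating relation of $\varcrs[left]{cpt}$ must be \emph{enlarged} (not ``refined'') to a relation of the form $\Delta_G\ast(K_0\times\{e\})$, with $K_0$ taken to be (the symmetrization of) the section $(\Delta_G\ast\bar E)_e$, which is $\varcrs[left]{cpt}$\=/bounded and hence may be replaced by a compact superset; this is the same manipulation as in the proof of Corollary~\ref{cor:p1:Milnor-Svarc}.
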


\begin{rmk}
  The above proposition truly is a generalization of the Milnor--Schwarz  Lemma to general locally compact groups: the original lemma is obtained as the special case where $G$ is a discrete group.
  A proof is also implicit in \cite{brodskiy_svarc-milnor_2007,brodskiy2008coarse}.
\end{rmk}

\backmatter

\bibliographystyle{plain}  
\bibliography{BibCoarseGroups}

\begin{thebibliography}{100}

\bibitem{bader20XX}
Uri Bader.
\newblock Coarse modules.
\newblock {\em in preparation}, 2023.

\bibitem{banyaga2013structure}
Augustin Banyaga.
\newblock {\em The structure of classical diffeomorphism groups}, volume 400.
\newblock Springer Science \& Business Media, 2013.

\bibitem{barbieri2003answer}
Giuseppina Barbieri, Dikran Dikranjan, Chiara Milan, and Hans Weber.
\newblock Answer to {Raczkowski's} questions on convergent sequences of
  integers.
\newblock {\em Topology and its Applications}, 132(1):89--101, 2003.

\bibitem{bavard1991longueur}
Christophe Bavard.
\newblock Longueur stable des commutateurs.
\newblock {\em Enseign. Math.(2)}, 37(1-2):109--150, 1991.

\bibitem{bell2017locally}
Greg Bell, Austin Lawson, Neil Pritchard, and Dan Yasaki.
\newblock {An exploration of Nathanson's $g$-adic representations of integers}.
\newblock {\em Involve, a Journal of Mathematics}, 15(5):727--738, 2023.

\bibitem{benjamini2013coarse}
Itai Benjamini.
\newblock {\em Coarse geometry and randomness}.
\newblock Lecture Notes in Mathematics. Springer, 2013.

\bibitem{bergman2006generating}
George~M Bergman.
\newblock Generating infinite symmetric groups.
\newblock {\em Bulletin of the London Mathematical Society}, 38(3):429--440,
  2006.

\bibitem{berhanu1985counting}
Shiferaw Berhanu, Wistar Comfort, and James Reid.
\newblock Counting subgroups and topological group topologies.
\newblock {\em Pacific Journal of Mathematics}, 116(2):217--241, 1985.

\bibitem{bestvina2016bounded}
Mladen Bestvina, Kenneth Bromberg, and Koji Fujiwara.
\newblock Bounded cohomology with coefficients in uniformly convex {Banach}
  spaces.
\newblock {\em Commentarii Mathematici Helvetici}, 91(2):203--218, 2016.

\bibitem{bestvina2019verbal}
Mladen Bestvina, Kenneth Bromberg, and Koji Fujiwara.
\newblock The verbal width of acylindrically hyperbolic groups.
\newblock {\em Algebraic \& Geometric Topology}, 19(1):477--489, 2019.

\bibitem{bestvina2002bounded}
Mladen Bestvina and Koji Fujiwara.
\newblock Bounded cohomology of subgroups of mapping class groups.
\newblock {\em Geometry \& Topology}, 6(1):69--89, 2002.

\bibitem{bestvina2009characterization}
Mladen Bestvina and Koji Fujiwara.
\newblock A characterization of higher rank symmetric spaces via bounded
  cohomology.
\newblock {\em Geometric and Functional Analysis}, 19(1):11--40, 2009.

\bibitem{bjorklund2021introduction}
Michael Bjorklund.
\newblock Introduction to approximate groups.
\newblock {\em Bull. Amer. Math. Soc. (New Series)}, 58(2):283--288, 2021.

\bibitem{bjorklund2018approximate}
Michael Bj{\"o}rklund and Tobias Hartnick.
\newblock Approximate lattices.
\newblock {\em Duke Mathematical Journal}, 167(15):2903--2964, 2018.

\bibitem{BH}
Khalid Bou-Rabee and Asaf Hadari.
\newblock Simple closed curves, word length, and nilpotent quotients of free
  groups.
\newblock {\em Pacific journal of mathematics}, 254(1):67--72, 2012.

\bibitem{bourbaki1966general}
Nicolas Bourbaki.
\newblock {\em General Topology: Part 1}.
\newblock Hermann, 1966.

\bibitem{bourbaki1987vector}
Nicolas Bourbaki.
\newblock {\em Topological vector spaces}.
\newblock Springer, 1987.

\bibitem{bowden2022quasi}
Jonathan Bowden, Sebastian Hensel, and Richard Webb.
\newblock Quasi-morphisms on surface diffeomorphism groups.
\newblock {\em Journal of the American Mathematical Society}, 35(1):211--231,
  2022.

\bibitem{Bra}
Michael Brandenbursky.
\newblock Bi-invariant metrics and quasi-morphisms on groups of hamiltonian
  diffeomorphisms of surfaces.
\newblock {\em International Journal of Mathematics}, 26(09):1550066, 2015.

\bibitem{BGKM}
Michael Brandenbursky, {\'S}wiatos{\l}aw~R Gal, Jarek K{\k{e}}dra, and
  Micha{\l} Marcinkowski.
\newblock The cancellation norm and the geometry of bi-invariant word metrics.
\newblock {\em Glasgow Mathematical Journal}, 58(1):153--176, 2016.

\bibitem{brandenbursky2018fragmentation}
Michael Brandenbursky and Jarek K{\k{e}}dra.
\newblock Fragmentation norm and relative quasimorphisms.
\newblock {\em Proceedings of the American Mathematical Society}, page to
  appear.

\bibitem{BK}
Michael Brandenbursky and Jarek K{\k{e}}dra.
\newblock On the autonomous metric on the group of area-preserving
  diffeomorphisms of the 2--disc.
\newblock {\em Algebraic \& Geometric Topology}, 13(2):795--816, 2013.

\bibitem{bridson_metric_2013}
Martin~R. Bridson and André Haefliger.
\newblock {\em Metric spaces of non-positive curvature}, volume 319.
\newblock Springer Science \& Business Media, 2013.

\bibitem{bringmann2019truly}
Karl Bringmann, Fabrizio Grandoni, Barna Saha, and Virginia~Vassilevska
  Williams.
\newblock Truly subcubic algorithms for language edit distance and {RNA}
  folding via fast bounded-difference min-plus product.
\newblock In {\em FOCS}, 2016.

\bibitem{brodskiy_svarc-milnor_2007}
Nikolay Brodskiy, Jerzy Dydak, and Atish Mitra.
\newblock Svarc-{Milnor} {Lemma}: a proof by definition.
\newblock {\em Topology Proceedings}, 31(1):31--36, 2007.

\bibitem{brodskiy2008coarse}
Nikolay Brodskiy, Jerzy Dydak, and Atish Mitra.
\newblock Coarse structures and group actions.
\newblock {\em Colloquium Mathematicae}, 111(1):149--158, 2008.

\bibitem{brooks1981some}
Robert Brooks.
\newblock Some remarks on bounded cohomology.
\newblock In {\em Riemann surfaces and related topics: Proceedings of the 1978
  Stony Brook Conference (State Univ. New York, Stony Brook, NY, 1978)},
  volume~97, pages 53--63, 1981.

\bibitem{BIP}
Dmitri Burago, Sergei Ivanov, and Leonid Polterovich.
\newblock Conjugation-invariant norms on groups of geometric origin.
\newblock In {\em Groups of Diffeomorphisms: In honor of Shigeyuki Morita on
  the occasion of his 60th birthday}, pages 221--250. Mathematical Society of
  Japan, 2008.

\bibitem{calderoni2021simplicity}
Filippo Calderoni, Aleksandra Kwiatkowska, and Katrin Tent.
\newblock Simplicity of the automorphism groups of order and tournament
  expansions of homogeneous structures.
\newblock {\em Journal of Algebra}, 580:43--62, 2021.

\bibitem{Cal}
Danny Calegari.
\newblock Word length in surface groups with characteristic generating sets.
\newblock {\em Proceedings of the American Mathematical Society},
  136(7):2631--2637, 2008.

\bibitem{calegari2009scl}
Danny Calegari.
\newblock {\em scl}.
\newblock Number~20. Mathematical Society of Japan, 2009.

\bibitem{CZ}
Danny Calegari and Dongping Zhuang.
\newblock Stable w-length.
\newblock {\em Topology and geometry in dimension three}, 560:145--169, 2011.

\bibitem{carter1983bounded}
David Carter and Gordon Keller.
\newblock Bounded elementary generation of {$SL_n(\mathcal{O})$}.
\newblock {\em American Journal of Mathematics}, pages 673--687, 1983.

\bibitem{cordes2020foundations}
Matthew Cordes, Tobias Hartnick, and Vera Toni{\'c}.
\newblock Foundations of geometric approximate group theory.
\newblock {\em arXiv:2012.15303}, 2020.

\bibitem{cornulier2012quasi}
Yves Cornulier.
\newblock On the quasi-isometric classification of locally compact groups.
\newblock {\em arXiv preprint arXiv:1212.2229}, 2012.

\bibitem{cornulier2019near}
Yves Cornulier.
\newblock Near actions.
\newblock {\em arXiv:1901.05065}, 2019.

\bibitem{cornulier2016metric}
Yves Cornulier and Pierre de~la Harpe.
\newblock {\em Metric geometry of locally compact groups}, volume~25 of {\em
  EMS Tracts in Mathematics}.
\newblock European Mathematical Society, 2016.

\bibitem{cornulier2006strongly}
Yves~de Cornulier.
\newblock Strongly bounded groups and infinite powers of finite groups.
\newblock {\em Communications in Algebra{\textregistered}}, 34(7):2337--2345,
  2006.

\bibitem{dahmani2017hyperbolically}
Fran{\c{c}}ois Dahmani, Vincent Guirardel, and Denis Osin.
\newblock {\em Hyperbolically embedded subgroups and rotating families in
  groups acting on hyperbolic spaces}, volume 245.
\newblock American Mathematical Society, 2017.

\bibitem{de_la_harpe2000topics}
Pierre de~La~Harpe.
\newblock {\em Topics in geometric group theory}.
\newblock University of Chicago Press, 2000.

\bibitem{dijkstra2005homeomorphism}
Jan~J Dijkstra.
\newblock On homeomorphism groups and the compact-open topology.
\newblock {\em The American Mathematical Monthly}, 112(10):910--912, 2005.

\bibitem{dikranjan2019hyperballeans}
D~Dikranjan, I~Protasov, and N~Zava.
\newblock Hyperballeans of groups.
\newblock {\em Topology and its Applications}, 263:172--198, 2019.

\bibitem{dikranjan2005characterization}
Dikran Dikranjan, Chiara Milan, and Alberto Tonolo.
\newblock A characterization of the maximally almost periodic abelian groups.
\newblock {\em Journal of Pure and Applied Algebra}, 197(1-3):23--41, 2005.

\bibitem{dikranjan2019coarse}
Dikran Dikranjan and Igor Protasov.
\newblock Coarse structures on groups defined by $ t $-sequences.
\newblock {\em arXiv:1902.02320}, 2019.

\bibitem{dikranjan2019balleans}
Dikran Dikranjan, Igor Protasov, Ksenia Protasova, and Nicolo Zava.
\newblock Balleans, hyperballeans and ideals.
\newblock {\em arXiv preprint arXiv:1902.01469}, 2019.

\bibitem{dikranjan_categorical_2017}
Dikran Dikranjan and Nicolò Zava.
\newblock Some categorical aspects of coarse spaces and balleans.
\newblock {\em Topology and its Applications}, 225:164--194, July 2017.

\bibitem{dikranjan2020categories}
Dikran Dikranjan and Nicolò Zava.
\newblock Categories of coarse groups: quasi-homomorphisms and functorial
  coarse structures.
\newblock {\em Topology and its Applications}, 273:106980, 2020.

\bibitem{drake2021upper}
Brian Drake and Evan Peters.
\newblock An upper bound for reflection length in {Coxeter} groups.
\newblock {\em Journal of Algebraic Combinatorics}, (54):599--606, 2021.

\bibitem{drutu_geometric_2018}
Cornelia Druţu and Michael Kapovich.
\newblock {\em Geometric group theory}, volume~63.
\newblock American Mathematical Soc., 2018.

\bibitem{duszenko2012reflection}
Kamil Duszenko.
\newblock Reflection length in non-affine {Coxeter} groups.
\newblock {\em Bulletin of the London Mathematical Society}, 44(3):571--577,
  2012.

\bibitem{dydak_alternative_2008}
J.~Dydak and C.S. Hoffland.
\newblock An alternative definition of coarse structures.
\newblock {\em Topology and its Applications}, 155(9):1013--1021, April 2008.

\bibitem{EF}
David~BA Epstein and Koji Fujiwara.
\newblock The second bounded cohomology of word-hyperbolic groups.
\newblock {\em Topology}, 36(6):1275--1289, 1997.

\bibitem{eskin2010quasi}
Alex Eskin and David Fisher.
\newblock Quasi-isometric rigidity of solvable groups.
\newblock In {\em Proceedings of the International Congress of Mathematicians
  2010 (ICM 2010)}, pages 1185--1208. World Scientific, 2010.

\bibitem{eskin2012coarse}
Alex Eskin, David Fisher, and Kevin Whyte.
\newblock Coarse differentiation of quasi-isometries {I}: Spaces not
  quasi-isometric to cayley graphs.
\newblock {\em Annals of Mathematics}, pages 221--260, 2012.

\bibitem{eskin2013coarse}
Alex Eskin, David Fisher, and Kevin Whyte.
\newblock Coarse differentiation of quasi-isometries {II}: Rigidity for {Sol}
  and lamplighter groups.
\newblock {\em Annals of Mathematics}, pages 869--910, 2013.

\bibitem{frigerio2017bounded}
Roberto Frigerio.
\newblock {\em Bounded cohomology of discrete groups}, volume 227.
\newblock American Mathematical Soc., 2017.

\bibitem{FK}
Koji Fujiwara and Michael Kapovich.
\newblock On quasihomomorphisms with noncommutative targets.
\newblock {\em Geometric and Functional Analysis}, 26(2):478--519, 2016.

\bibitem{furstenberg1955primes}
Harry Furstenberg.
\newblock On the infinitude of primes.
\newblock {\em The American Mathematical Monthly}, 62(5):353, 1955.

\bibitem{gal2017uniform}
{\'S}wiatos{\l}aw Gal and Jakub Gismatullin.
\newblock Uniform simplicity of groups with proximal action.
\newblock {\em Transactions of the American Mathematical Society, Series B},
  4(5):110--130, 2017.

\bibitem{gal2011bi}
{\'S}wiatos{\l}aw~R Gal and Jarek K{\k{e}}dra.
\newblock On bi-invariant word metrics.
\newblock {\em Journal of Topology and Analysis}, 3(2):161--175, 2011.

\bibitem{gal2018finite}
{\'S}wiatos{\l}aw~R Gal, Jarek K{\k{e}}dra, and Alexander~A Trost.
\newblock Finite index subgroups in chevalley groups are bounded: an addendum
  to "on bi-invariant word metrics".
\newblock {\em arXiv:1808.06376}, 2018.

\bibitem{gambaudo2004commutators}
Jean-Marc Gambaudo and {\'E}tienne Ghys.
\newblock Commutators and diffeomorphisms of surfaces.
\newblock {\em Ergodic Theory and Dynamical Systems}, 24(5):1591, 2004.

\bibitem{grigorchuk1991width}
Rostislav~I Grigorchuk and Pavel~F Kurchanov.
\newblock Width of elements in free groups.
\newblock {\em Ukrainian Mathematical Journal}, 43(7-8):850--856, 1991.

\bibitem{gromov_hyperbolic_1987}
Mikhael Gromov.
\newblock Hyperbolic groups.
\newblock In {\em Essays in group theory}, pages 75--263. Springer, 1987.

\bibitem{niblo_geometric_1993}
Mikhael Gromov.
\newblock Geometric {Group} {Theory} ({Asymptotic} invariants).
\newblock volume~2 of {\em London {Mathematical} {Society} {Lecture} {Note}
  {Series}}, pages 1--295. Cambridge University Press, 1993.

\bibitem{gromov_random_2003}
Mikhail Gromov.
\newblock Random walks in random groups.
\newblock {\em Geometric and Functional Analysis}, 13(1):73--146, 2003.

\bibitem{gromov_spaces_2010}
Misha Gromov.
\newblock Spaces and questions.
\newblock In {\em Visions in mathematics}, pages 118--161. Springer, 2010.

\bibitem{hamenstadt2008bounded}
Ursula Hamenst{\"a}dt.
\newblock Bounded cohomology and isometry groups of hyperbolic spaces.
\newblock {\em Journal of the European Mathematical Society}, 10(2):315--349,
  2008.

\bibitem{hamenstadt2012isometry}
Ursula Hamenst{\"a}dt.
\newblock Isometry groups of proper {CAT(0)}-spaces of rank one.
\newblock {\em Groups, Geometry, and Dynamics}, 6(3):579--618, 2012.

\bibitem{hamenstadt2014lines}
Ursula Hamenst{\"a}dt.
\newblock Lines of minima in outer space.
\newblock {\em Duke Mathematical Journal}, 163(4):733--776, 2014.

\bibitem{HS}
Tobias Hartnick and Pascal Schweitzer.
\newblock On quasioutomorphism groups of free groups and their transitivity
  properties.
\newblock {\em Journal of Algebra}, 450:242 --281, 2016.

\bibitem{hegyvari2005arithmetical}
Norbert Hegyv{\'a}ri.
\newblock Arithmetical and group topologies.
\newblock {\em Acta Mathematica Hungarica}, 106(3):187--195, 2005.

\bibitem{helfgott2015ternary}
Harald~Andr{\'e}s Helfgott.
\newblock The ternary {Goldbach} problem.
\newblock {\em arXiv:1501.05438}, 2015.

\bibitem{hernandez2011balleans}
Salvador Hern{\'a}ndez and Igor~V Protasov.
\newblock Balleans of topological groups.
\newblock {\em Journal of Mathematical Sciences}, 178(1):65--74, 2011.

\bibitem{Hof}
Helmut Hofer.
\newblock On the topological properties of symplectic maps.
\newblock {\em Proceedings of the Royal Society of Edinburgh Section A:
  Mathematics}, 115(1-2):25--38, 1990.

\bibitem{hruvsak2012precompact}
Michael. Hru{\v{s}}{\'a}k and Ulises~A. Ramos-Garc{\'\i}a.
\newblock Precompact fr{\'e}chet topologies on abelian groups.
\newblock {\em Topology and its Applications}, 159(17):3605--3613, 2012.

\bibitem{huang2018commensurability}
Jingyin Huang.
\newblock Commensurability of groups quasi-isometric to {RAAG}s.
\newblock {\em Inventiones mathematicae}, 213(3):1179--1247, 2018.

\bibitem{hull2013induced}
Michael Hull and Denis Osin.
\newblock Induced quasicocycles on groups with hyperbolically embedded
  subgroups.
\newblock {\em Algebraic \& Geometric Topology}, 13(5):2635--2665, 2013.

\bibitem{jiang1989surface}
Boju Jiang.
\newblock Surface maps and braid equations, {I}.
\newblock In {\em Differential geometry and topology}, pages 125--141.
  Springer, 1989.

\bibitem{kawasaki2023coarse}
Morimichi Kawasaki, Mitsuaki Kimura, Shuhei Maruyama, Takahiro Matsushita, and
  Masato Mimura.
\newblock Coarse group theoretic study on stable mixed commutator length.
\newblock {\em arXiv preprint arXiv:2306.08618}, 2023.

\bibitem{kedra2021strong}
Jarek K{\k{e}}dra, Assaf Libman, and Ben Martin.
\newblock Strong and uniform boundedness of groups.
\newblock {\em Journal of Topology and Analysis}, 2021.

\bibitem{kelly1982basic}
Gregory~Maxwell Kelly.
\newblock {\em Basic concepts of enriched category theory}, volume~64.
\newblock CUP Archive, 1982.

\bibitem{kielak_quasi}
Dawid Kielak and Nicolaus Heuer.
\newblock Quasi-{BNS} invariants.
\newblock {\em arXiv preprint arXiv:2210.10607}, 2022.

\bibitem{kotschick2004quasi}
Dieter Kotschick.
\newblock Quasi-homomorphisms and stable lengths in mapping class groups.
\newblock {\em Proceedings of the American Mathematical Society},
  132(11):3167--3175, 2004.

\bibitem{LM}
Fran{\c{c}}ois Lalonde and Dusa McDuff.
\newblock The geometry of symplectic energy.
\newblock {\em Annals of Mathematics}, 141(2):349--371, 1995.

\bibitem{lawvere1973metric}
William~F Lawvere.
\newblock Metric spaces, generalized logic, and closed categories.
\newblock {\em Rendiconti del seminario mat{\'e}matico e fisico di Milano},
  43(1):135--166, 1973.

\bibitem{lewis2019computing}
Joel Lewis, Jon McCammond, Kyle~T Petersen, and Petra Schwer.
\newblock Computing reflection length in an affine {Coxeter} group.
\newblock {\em Transactions of the American Mathematical Society},
  371(6):4097--4127, 2019.

\bibitem{lovas2015some}
Rezs{\H{o}}~L Lovas and Istv{\'a}n Mez{\H{o}}.
\newblock Some observations on the {Furstenberg} topological space.
\newblock {\em Elemente der Mathematik}, 70(3):103--116, 2015.

\bibitem{luu2007coarse}
Vi{\^e}t-Trung Luu.
\newblock Coarse categories {I}: Foundations.
\newblock {\em arXiv:0708.3901}, 2007.

\bibitem{MacLane}
Saunders Mac~Lane.
\newblock {\em Categories for the Working Mathematician}, volume~5.
\newblock {Springer Science \& Business Media}, 2013.

\bibitem{majumdar2010tangent}
Apala Majumdar, Jonathan~M Robbins, and Maxim Zyskin.
\newblock Tangent unit-vector fields: nonabelian homotopy invariants and the
  dirichlet energy.
\newblock {\em Acta Mathematica Scientia}, 30(5):1357--1399, 2010.

\bibitem{manning2006quasi}
Jason~Fox Manning.
\newblock Quasi-actions on trees and property {(QFA)}.
\newblock {\em Journal of the London Mathematical Society}, 73(1):84--108,
  2006.

\bibitem{mccammond2011bounding}
Jon McCammond and Kyle~T Petersen.
\newblock Bounding reflection length in an affine {Coxeter} group.
\newblock {\em Journal of Algebraic Combinatorics}, 34(4):711--719, 2011.

\bibitem{meyer2000algebraic}
Yves Meyer.
\newblock {\em Algebraic numbers and harmonic analysis}.
\newblock Elsevier, 2000.

\bibitem{mitsumatsu1984bounded}
Yoshihiko Mitsumatsu.
\newblock Bounded cohomology and {$l^1$}-homology of surfaces.
\newblock {\em Topology}, 23(4):465--471, 1984.

\bibitem{morris2005bounded}
Dave~Witte Morris.
\newblock Bounded generation of {$SL(n, A)$ (after D. Carter, G. Keller and E.
  Paige)}.
\newblock {\em New York J. Math.}, 13:383--421, 2007.

\bibitem{mosher2003quasi}
Lee Mosher, Michah Sageev, and Kevin Whyte.
\newblock Quasi-actions on trees {I}. bounded valence.
\newblock {\em Annals of mathematics}, pages 115--164, 2003.

\bibitem{mosher2011quasi}
Lee Mosher, Michah Sageev, and Kevin Whyte.
\newblock {\em Quasi-Actions on Trees {II}: Finite Depth {Bass--Serre} Trees}.
\newblock American Mathematical Soc., 2011.

\bibitem{nathanson2011bi}
Melvyn~B Nathanson.
\newblock Bi-lipschitz equivalent metrics on groups, and a problem in additive
  number theory.
\newblock {\em Portugaliae Mathematica}, 68(2):191--203, 2011.

\bibitem{nathanson2011geometric}
Melvyn~B Nathanson.
\newblock Geometric group theory and arithmetic diameter.
\newblock {\em Publ. Math. Debrecen}, 79(3-4):563–572, 2011.

\bibitem{nathanson2011problems}
Melvyn~B Nathanson.
\newblock Problems in additive number theory, {IV}: Nets in groups and shortest
  length g-adic representations.
\newblock {\em International Journal of Number Theory}, 7(08):1999--2017, 2011.

\bibitem{nicas2012coarse}
Andrew Nicas and David Rosenthal.
\newblock Coarse structures on groups.
\newblock {\em Topology and its Applications}, 159(14):3215--3228, 2012.

\bibitem{nienhuys1972construction}
Jan~W Nienhuys.
\newblock Construction of group topologies on abelian groups.
\newblock {\em Fundamenta Mathematicae}, 2(75):101--116, 1972.

\bibitem{nowak2012large}
Piotr~W Nowak and Guoliang Yu.
\newblock {\em Large Scale Geometry}.
\newblock EMS textbooks in mathematics. European Mathematical Society, 2012.

\bibitem{nussinov1980fast}
Ruth Nussinov and Ann~B Jacobson.
\newblock Fast algorithm for predicting the secondary structure of
  single-stranded {RNA}.
\newblock {\em Proceedings of the National Academy of Sciences},
  77(11):6309--6313, 1980.

\bibitem{polterovich2021norm}
Leonid Polterovich, Yehuda Shalom, and Zvi Shem-Tov.
\newblock Norm rigidity for arithmetic and profinite groups.
\newblock {\em arXiv:2105.04125}, 2021.

\bibitem{protasovgeneral}
I~Protasov and M~Zarichnyi.
\newblock {\em General Asymptology}, volume~12 of {\em Mathematical Studies
  Monograph Series}.
\newblock VNTL Publishers, Lviv, 2007.

\bibitem{protasov2018bornological}
Igor Protasov.
\newblock Bornological, coarse and uniform groups.
\newblock {\em Visnyk of the Lviv Univ.}, 86:51--62, 2018.

\bibitem{protasov2019sequential}
Igor Protasov.
\newblock Sequential coarse structures of topological groups.
\newblock {\em Matematychni Studii}, 51(1):12--18, 2019.

\bibitem{protasov2019free}
Igor Protasov and Ksenia Protasova.
\newblock Free coarse groups.
\newblock {\em Journal of Group Theory}, 22(4):775--782, 2019.

\bibitem{protasov2020coarse}
Igor Protasov and Ksenia Protasova.
\newblock Coarse structures on groups defined by conjugations.
\newblock {\em Algebra and discrete mathematics}, 32(1):65--75, 2021.

\bibitem{protasov2003ball}
Igor Protasov and Banakh Taras.
\newblock {\em Ball structures and colorings of graphs and groups}, volume~11
  of {\em Mathematical Studies Monograph Series}.
\newblock VNTL Publishers, Lviv, 2003.

\bibitem{riley2017computing}
Timothy Riley.
\newblock Computing area in presentations of the trivial group.
\newblock {\em Proceedings of the American Mathematical Society},
  145(12):5059--5069, 2017.

\bibitem{roe_lectures_2003}
John Roe.
\newblock {\em Lectures on coarse geometry}, volume~31.
\newblock American Mathematical Soc., 2003.

\bibitem{rolli2009quasi}
Pascal Rolli.
\newblock Quasi-morphisms on free groups.
\newblock {\em arXiv:0911.4234}, 2009.

\bibitem{rosendal2017coarse}
Christian Rosendal.
\newblock {\em Coarse geometry of topological groups}.
\newblock Cambridge Univ. Press, 2021.

\bibitem{skandalis2002coarse}
Georges Skandalis, Jean-Louis Tu, and Guoliang Yu.
\newblock The coarse baum--connes conjecture and groupoids.
\newblock {\em Topology}, 41(4):807--834, 2002.

\bibitem{skresanov2020group}
Saveliy~V Skresanov.
\newblock Group topologies on the integers and {S}-unit equations.
\newblock {\em Siberian Mathematical Journal}, 61(3):542--544, 2020.

\bibitem{trost2021strong}
Alexander Trost.
\newblock Strong boundedness of $\mathrm{SL}(2, \mathbb{R}) $ for rings of
  $s$-algebraic integers with infinitely many units.
\newblock {\em arXiv:2105.10972}, 2021.

\bibitem{trost2020explicit}
Alexander Trost.
\newblock Explicit strong boundedness for higher rank symplectic groups.
\newblock {\em Journal of Algebra}, 604:694--726, 2022.

\bibitem{trost2020strong}
Alexander Trost.
\newblock Strong boundedness of split chevalley groups.
\newblock {\em Israel Journal of Mathematics}, 252(1):1--46, 2022.

\bibitem{tsuboi2008uniform}
Takashi Tsuboi.
\newblock On the uniform perfectness of diffeomorphism groups.
\newblock In {\em Groups of Diffeomorphisms: In honor of Shigeyuki Morita on
  the occasion of his 60th birthday}, pages 505--524. Mathematical Society of
  Japan, 2008.

\bibitem{vinogradov1937representation}
Ivan~Matveevich Vinogradov.
\newblock Representation of an odd number as the sum of three primes.
\newblock In {\em Dokl. Akad. Nauk SSSR}, volume~15, pages 129--132, 1937.

\bibitem{winkel2021geometric}
Jeroen Winkel.
\newblock Geometric property (t) for non-discrete spaces.
\newblock {\em Journal of Functional Analysis}, 281(8):109148, 2021.

\bibitem{zava2019cowellpoweredness}
Nicol{\`o} Zava.
\newblock Cowellpoweredness and closure operators in categories of coarse
  spaces.
\newblock {\em Topology and its Applications}, 268:106899, 2019.

\bibitem{zava2019introduction}
Nicol{\`o} Zava.
\newblock An introduction to coarse hyperspaces.
\newblock {\em Advances in General Topology and their Problems}, 2110:35--49,
  2019.

\bibitem{zelenyuk1991topologies}
Evgenii~Grigor'evich Zelenyuk and Igor'Vladimirovich Protasov.
\newblock Topologies on abelian groups.
\newblock {\em Mathematics of the USSR-Izvestiya}, 37(2):445, 1991.

\end{thebibliography}

\printindex

\newpage

\printnomenclature

\end{document}